\documentclass[12pt]{article}

\usepackage[utf8]{inputenc}
\usepackage[T1]{fontenc}

\usepackage[a4paper, vmargin=2cm, hmargin=2cm]{geometry}
\usepackage{ragged2e}
\usepackage[pagebackref, hidelinks]{hyperref}
\hypersetup{
	colorlinks=true, 
	linkcolor = magenta, 
}

\usepackage{amsmath, amsthm, amsfonts}
\usepackage{amscd, amssymb, latexsym}
\usepackage{upgreek} 
\usepackage{mathtools} 
\usepackage[bb=libus]{mathalpha} 

\usepackage{comment, color}
\usepackage{enumitem}

\usepackage{tikz}
\usepackage{cleveref}

\theoremstyle{plain}
\newtheorem{theorem}{Theorem}[section]
\newtheorem{corollary}[theorem]{Corollary}
\newtheorem{lemma}[theorem]{Lemma}
\newtheorem{proposition}[theorem]{Proposition}

\newtheorem{introtheorem}{Theorem}

\newtheorem{introproposition}[introtheorem]{Proposition}

\newtheorem{introcorollary}[introtheorem]{Corollary}

\theoremstyle{definition}
\newtheorem{definition}[theorem]{Definition}
\newtheorem{example}[theorem]{Example}
\newtheorem{introdefinition}[introtheorem]{Definition}

\theoremstyle{remark}
\newtheorem{remark}[theorem]{Remark}
\newtheorem{introremark}[introtheorem]{Remark}

\newtheorem{question}[theorem]{Question}
\newtheorem{introquestion}[introtheorem]{Question}

\newcommand{\N}{\mathbb{N}}
\newcommand{\Z}{\mathbb{Z}}
\newcommand{\Q}{\mathbb{Q}}
\newcommand{\R}{\mathbb{R}}
\newcommand{\C}{\mathbb{C}}
\newcommand{\K}{\mathbb{K}}

\newcommand{\Kres}{\mathbb{k}}
\newcommand{\Hyp}{\mathbf{H}}
\newcommand{\Sph}{\mathbf{S}}
\newcommand{\Proj}{\mathbf{P}}

\newcommand{\EE}{\mathcal{E}}

\newcommand{\OO}{\mathcal{O}}

\newcommand{\ufi}{\mathfrak{u}}

\DeclareMathOperator{\Hom}{Hom}
\DeclareMathOperator{\Aut}{Aut}

\DeclareMathOperator{\Isom}{Isom}

\DeclareMathOperator{\GL}{GL}
\DeclareMathOperator{\SL}{SL}
\DeclareMathOperator{\PGL}{PGL}
\DeclareMathOperator{\PSL}{PSL}
\DeclareMathOperator{\PO}{PO}

\DeclareMathOperator{\Mod}{Mod}

\DeclareMathOperator{\Tr}{Tr}
\DeclareMathOperator{\sign}{sign}
\DeclareMathOperator{\Bus}{Bus}
\DeclareMathOperator{\It}{\mathfrak{I}}

\DeclareMathOperator{\Card}{Card}
\DeclareMathOperator{\Span}{Span}
\DeclareMathOperator{\Conv}{Conv}

\DeclareMathOperator{\Grp}{Grp}
\DeclareMathOperator{\Mon}{Mon}
\DeclareMathOperator{\Stab}{Stab}

\DeclareMathOperator{\exoH}{\mathfrak{c}}
\DeclareMathOperator{\exoT}{\mathfrak{p}}

\DeclareMathOperator{\dH}{\mathbf{d}}

\newcommand{\Cr}[1]{\ensuremath{\left[ {#1} \right]}}
\newcommand{\class}[1]{\ensuremath{\left[ {#1} \right]}}

\title{The variety of group actions on \\ all algebraic real hyperbolic spaces}

\author{Bruno Duchesne, Christopher-Lloyd Simon}
\date{\today}

\begin{document}
	
	\maketitle
	
	\begin{abstract}
		For a cardinal $\kappa$, denote by $\mathbf{H}^\kappa$ the algebraic real hyperbolic space of dimension $\kappa$.
		For a topological group $\Gamma$, we study the set of continuous representations $\Gamma \to \operatorname{Isom}(\mathbf{H}^\kappa)$ up to continuous self-representations $\operatorname{Isom}(\mathbf{H}^\kappa)\to \operatorname{Isom}(\mathbf{H}^\kappa)$.
		
		The novelty of this work relies in considering simultaneously all cardinals, finite or infinite.
		We will endow this set of classes of representations with a natural topology, and show that this character variety is compact.
		This will also enable us to recover all previous compactifications of actions on $\mathbf{H}^n$ by certain actions on real trees for the equivariant Gromov-Hausdorff topology.
		
		A class of representations recovers in particular the homothety class of its marked length spectrum.
		We will define the notion of algebraic cross-ratio and prove a GNS-embedding result, enabling us to generalize some rigidity properties of the marked length spectrum. 
		
		We will also introduce a notion of abstract cross-ratio, and use it to show that a wide class of groups $\Gamma$ (characterized by the existence of what we call a $3$-full action on a $\operatorname{CAT}(-1)$-space) admit at most one class of irreducible representations into $\operatorname{Isom}(\mathbf{H}^\kappa)$ whose boundedness properties are controlled by those of $(X,d)$.
		We will apply this to topological groups $\Gamma$ such as the isometry group $\operatorname{Isom}(\mathbf{H}^\kappa)$ itself, the automorphism group $\operatorname{Aut}(T_\omega)$ of the simplicial tree with countably infinite valency, and the automorphism group $\operatorname{PGL}_2(\mathbb{K}, \lvert\cdot \rvert)$ of the projective line over a non-Archimedean field.
		
	
	This work of foundational nature opens many directions of research: the size of the character variety for other special examples of groups (mapping class groups, Cremona groups), the definition and study of geometric structures on the character variety (symmetric Thurston metric, Goldman symplectic form), the description of the subset associated to convex cocompact representations.
	
	The background sections are detailed enough to make the work self-contained, and contain various new observations of independent interest (about strongly hyperbolic spaces, cross-ratios, kernels of hyperbolic type).
	
	\textbf{Keywords:} 
	strong hyperbolicity, $\operatorname{CAT}(-1)$-space,
	Ptolemaic metric, cross-ratio,
	kernel of hyperbolic type, M\"obius group, exotic representations, 
	character variety, marked length spectrum, cross-ratio, rigidity,
	Polish group, Gromov-Hausdorff convergence.
\end{abstract}

\begin{centering}
	\subsubsection*{Acknowledgements}
\end{centering}

We thank Nicolas Monod for discussions concerning algebraic cross-ratios, Federico Viola for discussing some points on the representation theory of $\SL_2(\K)$, Gilles Courtois and Antonin Guilloux for comments on their work about Hausdorff dimension of limiting representations, Pierre Py and David Xu for their comments on a first version of this work as well as Eduardo Reyes and Vincent Guirardel for exchanging ideas neighbouring this work.
We thank the organizers of the webcasted \href{https://sites.google.com/site/annaerschler/grseminar}{grseminar}, through which this collaboration was born.\\

Bruno DUCHESNE is supported by the \href{https://anr.fr/Projet-ANR-24-CE40-3137}{PLAGE project ANR-24-CE40-3137}.

Christopher-Lloyd SIMON is supported by the \href{https://perso.univ-rennes1.fr/serge.cantat/ERCGroupsOfAlgebraicTransformations.html}{ERC GOAT 101053021}.

\renewcommand{\contentsname}{Plan of the paper}
\setcounter{tocdepth}{2}
\tableofcontents

\section*{Introduction}
\addcontentsline{toc}{section}{Introduction}

This work concerns continuous actions of topological groups by isometries on algebraic hyperbolic spaces of arbitrary dimensions.
For a cardinal $\kappa$, denote by $\Hyp^\kappa$ the algebraic hyperbolic space of dimension $\kappa$.
For a topological group $\Gamma$, we wish to study the set of continuous representations $\Gamma \to \Isom(\Hyp^\kappa)$ up to continuous representations $\Isom(\Hyp^\kappa)\to \Isom(\Hyp^{\kappa'})$.
The main questions are: 
\begin{enumerate}[noitemsep]
	\item 
	How to distinguish actions, measure their proximity, and move between one another?
	\item
	What is a reasonable topology on this space and its properties (compact, connected)?
	\item
	When is that space empty (obstruction) or reduced to a singleton (rigidity) ?
\end{enumerate}
This work addresses some aspects of these questions, as explained in this introduction.

\subsection*{Variety of actions up to exotic deformations}
\addcontentsline{toc}{subsection}{Variety of actions up to exotic deformations}

An element $g\in\Isom(\Hyp^\kappa)$ has translation length $\ell(g)=\inf\{d(gx,x) \colon x\in\Hyp^\kappa\}$.
A group representation $\rho\colon\Gamma\to\Isom(\Hyp^\kappa)$ has a \emph{length function} $\ell_\rho \colon \Gamma \to \R_{\ge 0}$ defined by $\ell_\rho \colon \gamma \mapsto \ell(\rho(\gamma))$.
If two representations are conjugate (related by inner automorphism of $\Isom(\Hyp^\kappa)$) then they have the same length functions.

When the dimension $\kappa$ is a finite cardinal $n\in \N$, the only self-representations of $\Isom(\Hyp^n)$ are inner automorphisms, hence for a discrete group $\Gamma$ we recover its well-studied character varieties of actions on $\Hyp^n$.
In particular, its subset corresponding to irreducible representations is well understood: they are characterized by their length functions \cite{Kim_marked-length-rigidity-symmetric-space_2001} and they form a smooth variety compactified by actions on real trees with small edge stabilizers \cite{Morgan_actions-trees-compactification-SOn1-representation_1986, Paulin_1988}.

The main novelty of this work is to consider simultaneously all cardinals $\kappa$. Observe that denoting $\omega=\Card(\N)$ the infinite countable cardinal, we have $\kappa+\omega = \max\{\kappa,\omega\}$.
A crucial difference with finite dimension is the existence of a family parametrized by $t\in (0,1]$ of exotic deformations $\exoH_t\colon\Hyp^\kappa\to\Hyp^{\kappa+\omega}$ characterized by the metric relations (where $\dH$ is the distance on a algebraic hyperbolic space):
\begin{equation*}
	\forall x,y\in\Hyp^\kappa \colon \cosh \dH(\exoH_t(x),\exoH_t(y)) = (\cosh \dH(x,y))^t.
\end{equation*}
These exotic embeddings come with exotic representations $\chi_t\colon\Isom(\Hyp^\kappa)\to\Isom(\Hyp^{\kappa+\omega})$. 
It turns out that by \cite{Monod-Py_self-representations-Mobius_2019}, all irreducible representations $\Isom(\Hyp^{\kappa}) \to \Isom(\Hyp^{\kappa'})$ are induced by isometric embeddings or exotic inclusions of $\Hyp^{\kappa}$ into $\Hyp^{\kappa'}$. 

Exotic deformations induce homotheties of length functions 
\begin{equation*}
	\forall g\in\Isom(\Hyp^\kappa) \colon  \ell(\chi_t(g))=t\ell(g)
\end{equation*}
so a representation $\rho\colon\Gamma\to\Isom(\Hyp^\kappa)$ and its exotic deformation $\chi_t\circ\rho$ have homothetic length functions. 
In particular the marked length spectrum rigidity for irreducible representations \cite[Theorem A]{Kim_marked-length-rigidity-symmetric-space_2001} breaks down for infinite dimensions, and we will see how to restore it.
Let us emphasize that we wish to consider all possible representations $\Gamma\to \Isom(\Hyp^\kappa)$, not only irreducible ones, and for that (the homothety class of) the length function is insufficient as it cannot distinguish elliptic or parabolic representations (with fixed points in $\Hyp^\kappa$ or $\partial\Hyp^\kappa$, and no hyperbolic elements), since they all have trivial length function. 

Thus, to consider representations $\Gamma \to \Isom(\Hyp^\kappa)$ up to self-representations, we will use a tool introduced by Monod and Py in \cite{Monod-Py_self-representations-Mobius_2019} called \emph{functions of hyperbolic type}.
To a representation $\rho\colon\Gamma\to\Isom(\Hyp^\kappa)$ and a point $o\in\Hyp^\kappa$, is attached the function of hyperbolic type $F_{\rho,o} \colon \Gamma \to \R$ defined by $F_{\rho,o} \colon \gamma\mapsto \cosh \dH(\rho(\gamma) x,x)$, which captures all distances between points in the orbit $\rho(\Gamma)\cdot o$.
Let us explain how such functions of hyperbolic type can be characterized with only one inequality.

On a topological space $X$, a \emph{kernel} is a continuous function $K\colon X^2 \to \R$ which is symmetric ($\forall x,y\in X \colon K(x,y)=K(y,x)$). We say that it has diagonal $s\in \R$ when $\forall z\in X \colon K(z,z)=s$. We say that it is non-negative when $\forall x,y\in X \colon K(x,y)\ge 0$.

\begin{introdefinition}[kernel of hyperbolic type. \Cref{def:kernel-hyperbolic-type}]
	A kernel $K\colon X\times X\to \R$ is of \emph{hyperbolic type} when it is nonnegative with diagonal $1$, and for all $x_0\in X$ the \emph{visual kernel} $K_0 \colon X\times X \to \R_{\ge0}$ defined by $K_0\colon (x_i,x_j)\mapsto K(x_0, x_i)K(x_0,x_j)-K(x_i,x_j)$ is of positive type, namely we have the following \emph{hyperbolic Cauchy-Schwartz inequalities}:
	\begin{equation}
		\label{introeq:kernel-hyperbolic-CS}
		\tag{HCS}
		\textstyle 
		\forall n\in \N,\;
		\forall x\in X^n,\;
		\forall \lambda \in \R^n \colon 
		\quad
		\sum_{i,j=1}^n\lambda_i\lambda_jK(x_i,x_j)\leq \left( \sum_{k=1}^n \lambda_k K(x_0,x_k)\right)^2.
	\end{equation}
\end{introdefinition}

Similarly to the GNS construction for Hilbert spaces, from such a kernel of hyperbolic type, one constructs a hyperbolic space $\Hyp^\kappa$ and a continuous map $f \colon X\to \Hyp^\kappa$ such that for all $x,y\in X$, $K(x,y)=\cosh \dH(f(x),f(y))$. 
Moreover, if the kernel is invariant under some group action $\Gamma \to \mathfrak{S}(X)$ then there is a representation $\Gamma \to\Isom(\Hyp^\kappa)$ such that $f$ is equivariant.

\begin{introremark}[Cayley-Menger derterminants]
	The idea to characterize when a metric space $(X,d)$ embeds isometrically in some $\Hyp^\kappa$ is not recent and goes back (at least) to Blumenthal's work \cite{Blumenthal_distance_geometry_1950} (first published in 1950).
	An isometric embedding exists if and only if all its hyperbolic Cayley-Menger determinants are non-positive:
	\begin{equation*}
		\forall n\in \N,\, \forall x_0,\dots,x_n\in X\colon  \det(-\cosh(d(x_i,x_j))\leq0.
	\end{equation*}
	We will explain the relation with kernels of hyperbolic type in \Cref{subsec:kernel_hyperbolic_type}.
	One advantage of Cayley-Menger determinants is to provide, when $X$ is finite, a simple characterization of the minimal dimension $\kappa$ for which an isometric embedding exists.
\end{introremark}

To encode group representations in some $\Isom(\Hyp^\kappa)$ we will use the following notion.

\begin{introdefinition}[function of hyperbolic type]
	On a topological group $\Gamma$, a function $F\colon \Gamma \to\R$ is of \emph{hyperbolic type} when the kernel $\Gamma \times \Gamma \ni (\alpha,\beta) \mapsto F(\alpha^{-1}\beta) \in \R$ is of hyperbolic type.
	
	We denote by $\mathcal{F}(\Gamma)$ the space of functions of hyperbolic type on $\Gamma$ endowed with the pointwise convergence topology. 
	When $\Gamma$ is discrete, it is a closed subspace of the product $\R_{\ge 1}^\Gamma$.
\end{introdefinition}

For a continuous representation $\rho\colon \Gamma \to \Isom(\Hyp^\kappa)$ and $o\in\Hyp^\kappa$, the function
\begin{equation*}
	F_{\rho,o}\colon \Gamma\to\R_{\ge 1}
	\qquad
	F_{\rho,o}\colon \gamma \to \cosh \dH(\rho(\gamma)o,o)
\end{equation*} is of hyperbolic type. 
Conversely, for every function of hyperbolic type $F\colon \Gamma \to \R$, there is a continuous representation $\rho\colon \Gamma \to \Isom(\Hyp^\kappa)$ and a point $o\in\Hyp^\kappa$ such that $F=F_{\rho,o}$.
Note that one may recover the length function $\ell_F \colon \Gamma \to \R_{\ge 0}$ of any associated representation by the formula $\ell_F(\gamma)= \lim_{+\infty} \tfrac{1}{n}\log F(\gamma^n)$.

For two representations $\rho,\rho'$ that are conjugated by $\Isom(\Hyp^\kappa)$ and two points $o,o'\in \Hyp^\kappa$, the associated functions of hyperbolic type are \emph{comparable}:
$\| \log(F_{\rho,o})-\log(F_{\rho',o'})\|_\infty<\infty$.
Moreover for $t\in (0,1]$ the exotic deformation and representation $(\chi_t, \exoH_t)$ exponentiates functions of hyperbolic type: $F_{\chi_t\circ\rho, \exoH_t(o)}=F_{\rho,o}^t$.
To encompass all self-representations of $\Isom(\Hyp^\kappa)$, we define two functions of hyperbolic type $F,F'\colon \Gamma \to \R$ as \emph{homothetic} when there exists $t>0$ such that $F^t, F'$ are comparable, namely $\log(F^t/F')$ is bounded on $\Gamma$. 
We thus define the \emph{variety of actions} $\Proj\mathcal{C}(\Gamma)$ as the topological quotient space of $\mathcal{F}(\Gamma)$ by this homothety equivalence. 

The space $\Proj\mathcal{C}(\Gamma)$ has some pathological properties. For example, the class of the function $\mathbf{1}$ constant to $1$ lies in the closure of any other class. 
But classes of representation can develop other (less dramatic) degeneracies leading to singular points of the character variety (as in the classical Teichm\"uller theory) and this leads to considering the following subsets of representations (for instance analogue to quasi-Fuchsian loci).

A representation $\Gamma \to\Isom(\Hyp^\kappa)$ (or a function of hyperbolic type) is called \emph{neutral} when the length function is trivial and \emph{elementary} when there is an invariant non-empty subset with at most $2$ points in the boundary $\partial\Hyp^\kappa$ or a fixed point in $\Hyp^\kappa$. The notions of non-neutrality and non-elementarity are preserved by homothety of functions of hyperbolic type (\Cref{cor:homothetic_length_spectrum}), and we denote the corresponding subspaces of $\Proj\mathcal{C}(\Gamma)$ by $\Proj\mathcal{C}_{nn}(\Gamma)\supset\Proj\mathcal{C}_{ne}(\Gamma)$.

If two functions of hyperbolic type are comparable or homothetic then they have equal or homothetic length functions, so we obtain a function $\Proj\ell \colon \Proj\mathcal{C}(\Gamma)\to \Proj(\R_{\ge 0}^\Gamma)$.
A function of hyperbolic type which is not trivial (not in the class of the constant function $\mathbf{1}$) may still have trivial length function which is trivial (equal to $0$). 

Our main Theorems \ref{thm:length-non-neutral-classes} and \ref{thm:topo-PC-PCnn-PCne} on the topology of $\Proj\mathcal{C}(\Gamma)\supset \Proj\mathcal{C}_{nn}(\Gamma)\supset \Proj\mathcal{C}_{ne}(\Gamma)$ and of the projectivized length function $\ell \colon \Proj\mathcal{C}(\Gamma)\to \Proj(\R_{\ge 0}^\Gamma)$ are the following.

\begin{introtheorem}[length function on $\Proj\mathcal{C}_{nn}(\Gamma)\supset \Proj\mathcal{C}_{ne}(\Gamma)$]
	Let $\Gamma$ be a topological group. 
	
	The length function $\ell \colon \mathcal{F}(\Gamma) \to \R^\Gamma$ defined by $F \mapsto \ell_F$ quotients to a function $\ell \colon \Proj\mathcal{C}(\Gamma)\to\Proj\R^\Gamma$ which is continuous and injective in restriction to $\Proj\mathcal{C}_{nn}(\Gamma)$; in particular the sets $\Proj\mathcal{C}_{nn}(\Gamma)$ and $\Proj\mathcal{C}_{ne}(\Gamma)$ are Hausdorff and open in $\Proj\mathcal{C}(\Gamma)$.
\end{introtheorem}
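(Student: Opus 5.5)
The plan is to prove three things in order: that $\ell$ is well defined and continuous on $\mathcal{F}(\Gamma)$; that it descends to a continuous map on $\Proj\mathcal{C}_{nn}(\Gamma)$, which is open; and that the descended map is injective there. The point-set conclusions then follow formally. The injectivity in the elementary case is the step I am least sure of.

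\textbf{Well-definedness.} If $\|\log F-\log F'\|_\infty\le C$, then $\tfrac1n\log F(\gamma^n)$ and $\tfrac1n\log F'(\gamma^n)$ differ by at most $C/n$, so $\ell_F=\ell_{F'}$. Also $\ell_{F^t}=t\ell_F$. Hence $\ell$ descends to $\Proj\mathcal{C}(\Gamma)\to\Proj\R^\Gamma$ on the classes where $\ell_F\neq 0$, i.e.\ on $\Proj\mathcal{C}_{nn}(\Gamma)$.

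\textbf{Continuity on $\mathcal{F}(\Gamma)$.} Write $F=F_{\rho,o}$ and $d_n=\operatorname{arccosh}F(\gamma^n)=\dH(\rho(\gamma)^n o,o)$.
\begin{itemize}[noitemsep]
\item \emph{Upper semicontinuity.} The sequence $(d_n)$ is subadditive, so $\ell_F(\gamma)=\inf_n d_n/n$. Each $F\mapsto d_n$ is continuous for the pointwise topology, so $\ell_F(\gamma)$ is an infimum of continuous functions of $F$.
\item \emph{Lower semicontinuity.} Let $\delta$ be a uniform Gromov hyperbolicity constant of all $\Hyp^\kappa$. The standard estimate gives $\ell(g)\ge \dH(g^2o,o)-\dH(go,o)-c\delta$ for a universal constant $c$. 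Applying it to $g=\rho(\gamma)^n$ gives $\ell_F(\gamma)\ge (d_{2n}-d_n-c\delta)/n$, and letting $n\to\infty$ shows this is an equality in the limit. So $\ell_F(\gamma)=\sup_n (d_{2n}-d_n-c\delta)/n$, a supremum of continuous functions.
\end{itemize}
Thus $F\mapsto\ell_F(\gamma)$ is continuous for each $\gamma$.

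\textbf{Openness and continuity of the quotient map.} The set $\mathcal{F}_{nn}=\bigcup_\gamma\{\ell_F(\gamma)>0\}$ is open and saturated, so its image $\Proj\mathcal{C}_{nn}(\Gamma)$ is open in the quotient. The composition $\mathcal{F}_{nn}\to\R^\Gamma\setminus\{0\}\to\Proj\R^\Gamma$ is continuous and constant on classes, so the induced map on $\Proj\mathcal{C}_{nn}(\Gamma)$ is continuous.

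\textbf{Openness of $\Proj\mathcal{C}_{ne}(\Gamma)$.} I will characterise non-elementarity among non-neutral $F$ by finitely many strict inequalities on length functions. The candidate criterion is that there exist $\gamma,\eta$ with $\ell_F(\gamma),\ell_F(\eta)>0$ and $\ell_F([\gamma^N,\eta^N])>0$ for some $N$. Its justification is a case analysis on the fixed points at infinity:
\begin{itemize}[noitemsep]
\item if $\rho(\gamma),\rho(\eta)$ share both endpoints, the commutator is elliptic;
\item if they share exactly one endpoint, the commutator is parabolic;
\item if the endpoints are disjoint, ping-pong makes the commutator hyperbolic for large $N$;
\item conversely, a non-elementary non-neutral action contains two hyperbolic elements with disjoint fixed pairs.
\end{itemize}
The condition is open because $\ell$ is continuous.

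\textbf{Injectivity (main obstacle).} Suppose $\ell_{F'}=t\ell_F$. Replacing $F$ by $F^t$, I may assume $\ell_F=\ell_{F'}$ and must show $F,F'$ are comparable.
\begin{itemize}
\item \emph{Elementary non-neutral case.} There is an invariant boundary point $\xi$ and a hyperbolic element. The Busemann character $\beta\colon\Gamma\to\R$ at $\xi$ satisfies $\ell=|\beta|$. I expect $F$ to be comparable to $\gamma\mapsto\cosh\beta(\gamma)$ up to bounded error coming from the horospherical or axial displacement. I will try to control that error using (HCS) together with the fact that the elliptic or parabolic part acts with bounded orbits on horospheres; this is the step I am least sure works. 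Since $|\beta|$ determines $\beta$ up to sign, and comparability is insensitive to that sign, this would settle the case.
\item \emph{Non-elementary case.} After passing to the minimal invariant convex subspace (the GNS hull), the length function determines the cross-ratio of fixed points of hyperbolic elements. This uses the classical asymptotic formula expressing $\ell(\gamma^n\eta^m)-\ell(\gamma^n)-\ell(\eta^m)$ through a cross-ratio of the endpoints. Density of such fixed-point quadruples in the limit set then gives an equivariant Möbius identification of limit sets. That identification extends to an equivariant isometry of hulls, so $F$ and $F'$ are comparable, with bounded error given by the displacement between basepoints.
\end{itemize}

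\textbf{Hausdorffness.} Finally, a continuous injective map into the Hausdorff space $\Proj\R^\Gamma$ forces $\Proj\mathcal{C}_{nn}(\Gamma)$, and hence $\Proj\mathcal{C}_{ne}(\Gamma)$, to be Hausdorff.
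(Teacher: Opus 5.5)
Your continuity argument, the openness of $\Proj\mathcal{C}_{nn}(\Gamma)$ and the non-elementary injectivity follow the paper's route. Your commutator criterion for $\Proj\mathcal{C}_{ne}(\Gamma)$ is a valid alternative to the paper's sets $\mathcal{U}(\alpha,\beta)$: in a focal or lineal action, commutators of loxodromics lie in the kernel of the Busemann (or axial translation) character.

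\emph{The gap.} It is the elementary non-neutral case of injectivity, and it cannot be filled, because injectivity is false there. Your expectation that $F$ is comparable to $\cosh\beta$ fails, since the kernel of the Busemann character may act parabolically with unbounded orbits.

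Take $\Gamma=\langle a,b\rangle$ free, and let $\rho$ act on the upper half-plane $\Hyp^2$ by $\rho(a)\colon z\mapsto 2z$ and $\rho(b)\colon z\mapsto 2z+1$. Let $\rho'(a)=\rho'(b)\colon z\mapsto 2z$. Every $\rho(\gamma)$ and every $\rho'(\gamma)$ has the form $z\mapsto 2^{e(\gamma)}z+c$, where $e$ is the exponent sum. Hence $\ell_\rho=\ell_{\rho'}=\log 2\,\lvert e\rvert\neq 0$.

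However $\rho(ba^{-1})\colon z\mapsto z+1$, so $F_{\rho,i}((ba^{-1})^m)=1+m^2/2$, whereas $F_{\rho',i}((ba^{-1})^m)=1$. So these functions are not comparable. Since their length functions are equal and non-zero, no powers of them are comparable either. Thus $\class{F_{\rho,i}}\neq\class{F_{\rho',i}}$ are two points of $\Proj\mathcal{C}_{nn}(\Gamma)$ with the same image under $\Proj\ell$.

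Worse, conjugate $\rho$ by $z\mapsto sz$. This gives representatives $F_{\rho,i/s}$ of $\class{F_{\rho,i}}$ (for $\rho(b)$ replaced by $z\mapsto 2z+s$) which converge pointwise to $F_{\rho',i}$ as $s\to 0$. So every neighbourhood of $\class{F_{\rho',i}}$ contains $\class{F_{\rho,i}}$, and $\Proj\mathcal{C}_{nn}(\Gamma)$ is not even $T_1$.

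\emph{What survives.} What you can actually prove is: continuity; openness of $\Proj\mathcal{C}_{nn}(\Gamma)$ and of $\Proj\mathcal{C}_{ne}(\Gamma)$; and injectivity and Hausdorffness on $\Proj\mathcal{C}_{ne}(\Gamma)$. Your reduction of the elementary case to a single character also misses lineal actions that flip the axis. For instance, the infinite dihedral group acting on a line by two reflections has a length function that is not $\lvert\beta\rvert$ for any character.

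\emph{The paper has the same defect.} Its proof goes through \Cref{prop:same-length-spectrum-means-conjugate}, whose elementary case only treats two actions on a line. The example above gives minimal representations on $\Hyp^2$ and on $\Hyp^1$ (the axis of $\rho'$) with equal non-zero length functions. Likewise, the matching $\rho_1(\gamma)^+\mapsto\rho_2(\gamma)^+$ in \Cref{lem:equivariant_cross_ratio_preserving_map} is not a well-defined bijection for focal actions.

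\emph{A remaining point in the non-elementary case.} Your argument is sound there, but you must still justify that $\ell$ detects when two loxodromics share their attracting point. \Cref{lem:equal-fixed-points-from-length} only detects $(\alpha^+=\beta^+)\vee(\alpha^-=\beta^-)$.
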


The following statement shows how the topology of $\Proj\mathcal{C}(\Gamma), \Proj\mathcal{C}_{nn}(G)$ and $\Proj\mathcal{C}_{ne}(G)$ behaves when $\Gamma$ is finitely generated. The subspaces $\mathcal{A}_{nn}(S; 2,\varepsilon)$ and $\mathcal{A}_{nn}(S; 2,\varepsilon)$ of $\mathcal{F}$, defined in \Cref{subsec:topology_PC}, are parametrized by a generating set $S$ and a positive number $\varepsilon$.

\begin{introtheorem}[topology of $\Proj\mathcal{C}(\Gamma)\supset \Proj\mathcal{C}_{nn}(G) \supset \Proj\mathcal{C}_{ne}(G)$]
	Let $\Gamma$ be a finitely generated group. 
	\begin{enumerate}[noitemsep]
		\item The variety of actions $\Proj\mathcal{C}(\Gamma)$ is compact.
		\item 
		The subspace $\Proj\mathcal{C}_{nn}(\Gamma)$ is a filtering union of compact metrizable subspaces $\mathcal{A}_{nn}(S; 2,\varepsilon)$ so the restriction $\Proj\ell \colon \mathcal{A}_{nn}(S; 2,\varepsilon)\to\Proj\R^\Gamma\setminus\{0\}$ is a homeomorphism onto its image.
		
		\item
		The subspace $\Proj\mathcal{C}_{ne}(\Gamma)$ is a filtering union of compact metrizable subspaces $\mathcal{A}_{ne}(S; 2,\varepsilon)$ so the restriction $\Proj\ell \colon \mathcal{A}_{ne}(S; 2,\varepsilon)\to\Proj\R^\Gamma\setminus\{0\}$ is a homeomorphism onto its image.
	\end{enumerate}
\end{introtheorem}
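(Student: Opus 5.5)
We treat the three items in turn. Fix a finite symmetric generating set $S$ of $\Gamma$, containing the identity. Throughout we normalize a function of hyperbolic type $F$ by its size on $S$, $M_S(F)=\max_{s\in S}F(s)\ge 1$, and write $|\gamma|_S$ for the word length.

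\textbf{Compactness of $\Proj\mathcal{C}(\Gamma)$.} The key estimate is submultiplicativity. From the triangle inequality in the GNS space we get $\cosh \dH(x,z)\le 2\cosh\dH(x,y)\cosh\dH(y,z)$, hence
\[
F(\alpha\beta)\le 2F(\alpha)F(\beta),
\qquad\text{so}\qquad
F(\gamma)\le (2M_S(F))^{|\gamma|_S}.
\]
If $M_S(F)=1$, then $F$ is constant equal to $1$. Otherwise, choose $t>0$ such that $M_S(F^t)=2$; this is possible by continuity in $t$, since the exponent $t\mapsto M_S(F)^t$ increases from $1$ to $\infty$.

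We must check that $F^t$ remains of hyperbolic type also for $t>1$. For $t\le 1$ this is the exotic deformation. For $t>1$ I would avoid the issue by normalizing differently: take $t=\min\{1,\ \log 2/\log M_S(F)\}$, so that $M_S(F^t)\le 2$. Functions with $M_S(F)\in\{1\}$ or with $M_S(F)$ in a range bounded away from $1$ can then be handled separately.

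The cleaner route is to view $\Proj\mathcal{C}(\Gamma)$ as the image of the compact set
\[
\mathcal{K}=\{F\in\mathcal{F}(\Gamma)\colon M_S(F)\le 2\},
\]
which is compact by Tychonoff: $F(\gamma)\in[1,4^{|\gamma|}]$, and the inequalities \eqref{introeq:kernel-hyperbolic-CS} are closed conditions. Every class meets $\mathcal{K}$: if $M_S(F)>2$, exponentiate by $t<1$. Since the quotient map is continuous, its image $\Proj\mathcal{C}(\Gamma)$ is compact.

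\textbf{The subspaces $\mathcal{A}_{nn}$ and $\mathcal{A}_{ne}$.} Define
\[
\mathcal{A}_{nn}(S;2,\varepsilon)
\]
as the image of those $F\in\mathcal{K}$ with $M_S(F)=2$ and $\max_{\gamma\in S^2}\ell_F(\gamma)\ge\varepsilon$. Using lengths on $S^2$ rather than $S$ matters because a non-neutral finitely generated action has a hyperbolic element in $S\cup S^2$; this is the standard ping-pong / Serre-type lemma. Define $\mathcal{A}_{ne}(S;2,\varepsilon)$ analogously, with an extra condition guaranteeing two independent hyperbolic elements of length $\ge\varepsilon$ in a bounded ball.

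These sets exhaust $\Proj\mathcal{C}_{nn}(\Gamma)$ and $\Proj\mathcal{C}_{ne}(\Gamma)$ as $\varepsilon\to 0$: every non-neutral class has a representative with $M_S=2$, obtained by exponentiation with $t\le1$ when $M_S>2$, or possibly $t\ge 1$ otherwise. If exponents $t>1$ are not allowed, I would instead allow normalizations $M_S\in[2,C]$ and let $C\to\infty$ as well, so that the filtration is indexed by $(C,\varepsilon)$.

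\textbf{Compactness and the homeomorphism (main obstacle).} We need compactness of these sets, which means closedness in $\mathcal{K}$. The obstacle is that $\ell_F=\lim\frac1n\log F(\gamma^n)$ is only upper semicontinuous in $F$, since it is an infimum of continuous functions by the submultiplicativity above. Upper semicontinuity goes the wrong way for the condition $\ell\ge\varepsilon$.

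I would use a quantitative hyperbolic-geometry estimate. In $\Hyp^\kappa$, an isometry $g$ satisfies
\[
\ell(g)\ge \dH(g^2o,o)-\dH(go,o)-C_0,
\]
where $C_0$ is a universal constant (from $\delta$-hyperbolicity). Hence $\ell_F(\gamma)\ge \operatorname{arccosh}F(\gamma^2)-\operatorname{arccosh}F(\gamma)-C_0$, with $\operatorname{arccosh}$ the inverse of $\cosh$. This lower bound is continuous in $F$. So I would define the sets by this continuous displacement condition instead, for $\gamma$ in a fixed finite ball, which makes them closed and hence compact. This still exhausts the relevant locus after exponentiating with small $t$ is disallowed; but large displacements arise from large $M_S$, so the index $C$ is needed here.

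Metrizability follows because $\Gamma$ is countable. On these compact sets $\ell$ is continuous: the lower bound above combined with upper semicontinuity gives continuity, via uniform convergence of $\frac1n\log F(\gamma^n)$ on sets where displacement is bounded below. The length map is injective by the first Theorem. A continuous injection from a compact space into the Hausdorff space $\Proj\R^\Gamma\setminus\{0\}$ is a homeomorphism onto its image, which gives items 2 and 3.
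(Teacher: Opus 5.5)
Your item 1 is correct and is essentially the paper's argument (\Cref{lem:compact-Bmu(Gamma)}): a growth bound in terms of the values on $S$, Tychonoff, and the fact that every class meets the normalized set via $F\mapsto F^t$ with $t\le 1$. Your bound $F(\alpha\beta)\le 2F(\alpha)F(\beta)$ is even cleaner. Exponents $t>1$ are genuinely unavailable, but you never need them: normalize by $M_S(F)\le 2$ rather than $M_S(F)=2$, since the length condition already excludes the trivial class.

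Items 2 and 3 have genuine gaps. First, the ``Serre-type lemma'' you invoke is false for $\Hyp^\kappa$; it is specific to trees. Take the cocompact triangle group $\Gamma=\langle a,b,c\rangle$ generated by the reflections in the sides of a compact triangle of $\Hyp^2$, with $S=\{1,a,b,c\}$. Every element of $S\cup S^2$ is trivial, a reflection or a rotation, hence elliptic. Yet $abc$ is a glide reflection, and the action is non-elementary. So with a fixed test set such as $S^2$ and only $(C,\varepsilon)$ varying, your sets exhaust neither $\Proj\mathcal{C}_{nn}(\Gamma)$ nor $\Proj\mathcal{C}_{ne}(\Gamma)$. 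This is why the paper filters over increasing finite generating sets $S$ as well as decreasing $\varepsilon$. Given a non-neutral class, pick $\alpha$ with $\ell_F(\alpha)>0$, take $S\ni\alpha$, rescale to $F^t$ normalized on $S$, and choose $\varepsilon\le t\,\ell_F(\alpha)$ (\Cref{lem:PCnn_union-of-compact}).

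Second, the ``main obstacle'' does not exist, and your workaround for it is unsound. Apply your own inequality to $\gamma^n$ and divide by $n$: this gives $\ell_F(\gamma)=\sup_n\tfrac1n\bigl(\cosh^{-1}F(\gamma^{2n})-\cosh^{-1}F(\gamma^n)-C_0\bigr)$, because each term is at most $\ell_F(\gamma)$ and the terms converge to $\ell_F(\gamma)$. Hence $F\mapsto\ell_F$ is lower as well as upper semicontinuous, so continuous on all of $\mathcal{F}(\Gamma)$ (\Cref{lem:F-to-ellF-continuity}). Then $\{\max\ell_F(S)\ge\varepsilon\}$ is closed, and the detour through a one-step displacement condition with an extra index $C$ is unnecessary. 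By contrast, your own justification of continuity (``uniform convergence where displacement is bounded below'') fails. A lower bound on the displacement of one test element gives no control on $\ell_F(\gamma')$ for the other $\gamma'\in\Gamma$, and every coordinate is needed for $\Proj\ell$ to be continuous.

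Third, $\mathcal{A}_{ne}$ is left unspecified. ``Two independent hyperbolic elements'' is not a closed condition, since fixed points may collide in the limit, so compactness is not established. You need a closed quantitative condition that still forces non-elementarity. The paper asks for $\alpha,\beta\in S$ with $\ell_F(\alpha),\ell_F(\beta)\ge\varepsilon$ and $\ell_F(\alpha\beta)\ge\ell_F(\alpha)+\ell_F(\beta)+\varepsilon$.
\begin{itemize}
\item This condition is closed by continuity of $\ell$.
\item It rules out $\ell_F=\lvert\Xi\rvert$ for a homomorphism $\Xi\colon\Gamma\to\R$, which would give $\ell_F(\alpha\beta)\le\ell_F(\alpha)+\ell_F(\beta)$. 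Hence it forces non-elementarity by \Cref{cor:elem-Hyp_length-homomorphism}.
\item Exhaustion comes from \Cref{lem:cross-ratio-from-translation-length}: for suitable loxodromic $\alpha,\beta$, the excess $\ell_F(\alpha^m\beta^n)-\ell_F(\alpha^m)-\ell_F(\beta^n)$ converges to a positive cross-ratio term. After passing to powers and putting $\alpha,\beta$ into $S$, the rescaled $F^t$ satisfies the condition with $\varepsilon$ proportional to $t$ (\Cref{lem:PCne_union-of-compact}).
\end{itemize}
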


The injectivity of $\Proj \ell\colon \Proj\mathcal{C}_{nn}(\Gamma) \to \R_{\ge 0}$ means that the homothety class of the length function determines the class of the function of hyperbolic type: this a length spectrum rigidity which we obtained using the methods developed in the next subsection. 

\subsection*{Cross-ratios and length spectrum rigidity}
\addcontentsline{toc}{subsection}{Cross-ratios and length spectrum rigidity}

We will be interested in character varieties of groups admitting faithful actions on certain hyperbolic spaces of non-positive curvature: such actions are often better understood on the boundary at infinity, where the cross-ratio becomes an important tool encoding the asymptotic properties of the metric.

Fix a metric space $(X,d)$, which may not be proper or complete.
At a base point $o\in X$ are defined the \emph{Gromov product} $(x \mid y)_o = \tfrac{1}{2}\left(d(o,x)+d(o,y)-d(x,y)\right)$ and the \emph{visual metametric} $d_o(x, y)=\exp\left(-(x\mid y)_o\right)$. 
Hence the \emph{cross-ratio} of $(x,x'),(y,y')\in X^2\times X^2$ is defined (for any $o\in X$) as the positive real number:
\begin{equation*}
	\Cr{x,x';y,y'}
	= \frac{d_o(x,x')\times d_o(y,y')}{d_o(x,y') \times d_o(y,x')}
	= \exp \tfrac{1}{2}\left(d(x,x')+d(y,y')-d(x,y')-d(y,x')\right)
\end{equation*}
The cross-ratio satisfies the \eqref{eq:Cr-Invariance}, \eqref{eq:Cr-Inversion} and \eqref{eq:Cr-Cocycle} identities (see \Cref{subsec:strong-hyp_cross-ratio}).

For $\varepsilon>0$, $(X,d)$ is strongly-$\varepsilon$-hyperbolic when every tetrahedron $(x,x',y,y') \in X^4$ satisfies $1\le \Cr{x,x';y,y'}^{-\varepsilon}+ \Cr{x,x';y',y}^{-\varepsilon}$, that is the Ptolemy inequality for $d_o^{\varepsilon}$:
\begin{equation*}
	d_o^{\varepsilon}(x,x') d_o^{\varepsilon}(y, y')
	\le 
	d_o^{\varepsilon}(x,y') d_o^{\varepsilon}(y, x')
	+ 
	d_o^{\varepsilon}(x,y) d_o^{\varepsilon}(x', y')
\end{equation*}
This implies Gromov-$\delta$-hyperbolicity for $\delta = \log(2)/\varepsilon$; indeed, it is an exponential strengthening of the Gromov-hyperbolicity, see \Cref{rem:NS-strongly-hyperbolic-characterisation}).
The class of strongly-$1$-hyperbolic spaces contains the class of $\operatorname{CAT}(-1)$-spaces, hence of trees and algebraic hyperbolic spaces of any dimension.

The \Cref{sec:strong_hyperbolicity} will provide background (and new results) about strongly hyperbolic spaces and their isometry groups. 
Our main motivation for this setting stems from the fact that strong-hyperbolicity ensures a continuous extension of the cross-ratio to subsets of quadruples in the bordification $(X\cup \partial X)$ of which at least $3$ are distinct, which further enjoys separation properties in restriction to such quadruples in the boundary $\partial X$ (see \Cref{eq:Cr=0} and \Cref{eq:Cr=infty}).

\begin{introremark}[coarse strong hyperbolicity]
	We will see that to have this continuous extension property, it suffices to assume that $X$ is coarsely strongly hyperbolic, a notion equivalent to that of asymptotic $PT(-1)$-space introduced in \cite{Miao-Schroeder_Hyperbolic-Ptolemey-Mobius_2012} in relation to the Ptolemaic inequality.    
\end{introremark}

\begin{introremark}[why strongly-hyperbolic?]
	Even though all strongly hyperbolic spaces for the applications in this paper will be $\operatorname{CAT}(-1)$, this level of generality is natural because of the following remark and question. 
	
	All ``exotic'' embeddings metric spaces  (such as trees \cite{Burger-Iozzi-Monod_embedding-trees-hyperbolic-spaces_2005} or algebraic hyperbolic spaces \cite[Proposition 8.2]{Monod_notes-functions-hyperbolic-groups_2020}) into $\Hyp^\kappa$ derive from the fact that for some $\lambda>1$ the function $\lambda^d$ is a kernel of hyperbolic type on $X$. 
	Thus we may wonder: for which metric spaces $(X,d)$ is there some $\lambda>1$ for which $\lambda^d$ is a kernel of hyperbolic type on $X$? 
	
	Our \Cref{prop:coarsely-strongly-hyperbolic} will show that such a space must be coarsely strongly hyperbolic.
\end{introremark}

For a topological space $Y$, denote $Y^{(4)}\subset Y^4$ the quadruples of which at least $3$ are distinct. 
We will define the notion of \emph{abstract cross-ratio} on a topological space $Y$ as a continuous function $B\colon Y^{(4)}\to [0,\infty]$ satisfying the \eqref{eq:Cr-Invariance}, \eqref{eq:Cr-Inversion} and \eqref{eq:Cr-Cocycle} relations as well as the characterizations \eqref{eq:Cr=0} and \eqref{eq:Cr=infty} of the fibres above $0$ and $\infty$.
We say that an abstract cross-ratio \emph{generates the topology} when for every triple of distinct points $x,y,z \in Y$, the collection of subsets $\{w\in Y\colon B(w,x,y,z)<\varepsilon\}$ for $\varepsilon>0$ is a basis of neighbourhoods of $x$.

For example, the cross-ratio in a strongly hyperbolic space is an abstract cross-ratio that generates the topology.
We will show the following \Cref{prop:power_cross_ratio} and \Cref{cor:power_cross_ratio}.

\begin{introproposition}[abstract cross-ratios on $3$-full strongly hyperbolic spaces]\label{introprop:abstract_cross_ratio}
	Consider a strongly hyperbolic space $(X,d)$ and a subgroup $\Gamma<\Isom(X)$ acting $3$-transitively on $\partial X$.
	If an abstract cross-ratio $B$ on $\partial X$ generating the topology is $\Gamma$-invariant, then there exists $t>0$ such that for all $(w,x,y,z) \in (\partial X)^{(4)}$ we have  $B(w,x,y,z)=\Cr{w,x,y,z}^t$.
	
	Consequently if an embedding of strongly hyperbolic spaces $f\colon \partial X_1\to \partial X_2$ is a homeomorphism on its image, and is equivariant under the action of a subgroup $\Gamma\subset \Isom(X_1)$ acting $3$-transitively on $\partial X_1$, then there is $t>0$ such that for all $(w,x,y,z)\in (\partial X_1)^{(4)}$  we have
	\begin{equation*}
		\Cr{f(w),f(x),f(y),f(z)}_{\partial{X_2}}=\Cr{w,x,y,z}_{\partial X_1}^t
	\end{equation*}
\end{introproposition}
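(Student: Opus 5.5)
The plan is to compare $B$ with $\Cr{\cdot}$ via the ratio $\log B/\log \Cr{\cdot}$, using the cocycle identity to reduce to a function of one variable, and $3$-transitivity to make that function a homomorphism.

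Fix three distinct points $a,b,c\in\partial X$. By \eqref{eq:Cr-Cocycle} and \eqref{eq:Cr-Inversion}, an abstract cross-ratio is determined on $(\partial X)^{(4)}$ by its values on quadruples having three coordinates in the fixed triple. For instance, $B(w,x,y,z)$ can be written as a product of ratios of terms of the form $B(u,a,b,c)$ with $u\in\{w,x,y,z\}$, after first treating the generic case where all points avoid $\{a,b,c\}$ and then extending by continuity. So it suffices to compare the two functions $\phi_B(u)=B(u,a,b,c)$ and $\phi(u)=\Cr{u,a,b,c}$ on $\partial X\setminus\{a,b,c\}$. By \eqref{eq:Cr=0} and \eqref{eq:Cr=infty}, both are valued in $(0,\infty)$ there and tend to $0$ as $u\to a$.

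The key step is to show $\phi_B=\phi^t$ for some $t>0$. I would use the stabilizer $\Gamma_{a,b}$ of the pair $\{a,b\}$, or rather its subgroup fixing $a$ and $b$ pointwise. By invariance and the cocycle relation, for $g$ in this subgroup, the quantity $B(gu,a,b,c)/B(u,a,b,c)=B(gc,a,b,c)^{\pm1}$ does not depend on $u$. The same holds for $\Cr{\cdot}$. Hence $g\mapsto \log B(gc,a,b,c)$ and $g\mapsto\log\Cr{gc,a,b,c}$ are homomorphisms $\Gamma_{a,b}\to\R$.

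By $3$-transitivity, $\Gamma_{a,b}$ acts transitively on $\partial X\setminus\{a,b\}$, so both $\log\phi_B$ and $\log\phi$ are determined by these homomorphisms: $\log\phi_B(u)=h_B(g)$ and $\log\phi(u)=h(g)$ whenever $gc=u$. It then suffices to show that $\ker h\subseteq\ker h_B$ and that the induced map between images is linear and increasing.

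Injectivity of $\log\phi$ modulo the stabilizer of $c$ comes from the fact that points with the same $\phi$ lie on a common ``horosphere-type'' level set. This should follow from the separation properties together with the fact that $B$ generates the topology. Monotonicity should follow from the same two properties: $\phi_B$ and $\phi$ both tend to $0$ exactly at $a$, and to $\infty$ exactly at $b$, by \eqref{eq:Cr=infty}.

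This yields an order-preserving map between additive subgroups of $\R$ which is a homomorphism, and the image of $h$ is nontrivial. A monotone additive map on a subgroup of $\R$ is multiplication by some $t\ge0$, and $t>0$ by nontriviality. Hence $\log\phi_B=t\log\phi$.

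The main obstacle is this monotonicity/injectivity step: showing that $h_B$ factors through $h$ monotonically, using only \eqref{eq:Cr=0}, \eqref{eq:Cr=infty} and the topology-generating hypothesis. I would first try to prove that $\{u: \phi(u)<\varepsilon\}$ and $\{u:\phi_B(u)<\varepsilon'\}$ form nested neighbourhood bases of $a$ that are permuted by the one-parameter family of ``dilations'' in $\Gamma_{a,b}$.

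For the consequence, pull back the cross-ratio of $\partial X_2$ along $f$, setting $B(w,x,y,z)=\Cr{f(w),f(x),f(y),f(z)}_{\partial X_2}$. Then:
\begin{itemize}
\item $B$ is an abstract cross-ratio, since $f$ is injective and continuous and the cross-ratio on $\partial X_2$ is one.
\item $B$ generates the topology, because $f$ is a homeomorphism onto its image and the cross-ratio on $\partial X_2$ generates the topology.
\item $B$ is $\Gamma$-invariant, by equivariance of $f$ and isometric invariance of $\Cr{\cdot}$ on $X_2$.
\end{itemize}
The first part then applies and gives the exponent $t>0$.
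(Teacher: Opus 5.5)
Your architecture matches the paper's. You restrict to the subgroup of $\Gamma$ fixing two boundary points, turn $\log B$ and $\log\Cr{\cdot}$ into homomorphisms via the cocycle relation, compare their kernels and signs, deduce proportionality, and transport to arbitrary quadruples by $3$-transitivity. Your final step (an order-preserving homomorphism from a subgroup of $\R$ to $\R$ is $x\mapsto tx$) neatly replaces the paper's comparison of $t_\alpha$ and $t_\beta$, and your treatment of the consequence matches the paper.

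The set-up, however, has two errors.
\begin{enumerate}
\item No identity derived from the cocycle and inversion relations expresses $B(w,x,y,z)$ through terms with three entries in $\{a,b,c\}$. On $\partial\Hyp^3=\C\Proj^1$ with $\{a,b,c\}=\{0,1,\infty\}$, such terms depend only on $(\lvert u\rvert,\lvert u-1\rvert)$. Yet $\Cr{i,2i;0,\infty}=\tfrac12$ while $\Cr{-i,2i;0,\infty}=\tfrac32$. The reduction must use $\Gamma$-invariance instead: pick $\gamma\in\Gamma$ with $\gamma(x,y,z)=(a,b,c)$.
\item The cocycle relation changes the first and third entries while keeping the second and fourth. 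Moreover $\phi(u)=\Cr{u,a;b,c}$ vanishes at $a$, blows up at $c$ (not at $b$), and equals $1$ at $b$. So the right group is the pointwise stabilizer $H$ of $a$ and $c$. For $g\in H$ one has $\phi(gu)/\phi(u)=\phi(gb)$, so $h(g)=\log\phi(gb)$ is a homomorphism, and likewise for $\phi_B$. For $g$ fixing $a$ and $b$ as you wrote, the ratio depends on $u$: on $\partial\Hyp^2$ with $a=\infty$, $b=0$, $g(z)=\lambda z$, it equals $\lvert (c-u)/(c-\lambda u)\rvert$.
\end{enumerate}

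The genuine gap is the step you flag as the main obstacle, and the mechanisms you suggest do not close it.
\begin{itemize}
\item $\ker h$ is typically much larger than the stabilizer of the base point, e.g.\ rotations about the axis $(a,c)$ in $\Hyp^3$, or elliptic elements of a tree fixing that axis. So ``injectivity modulo the stabilizer'' fails.
\item There is in general no one-parameter group of dilations; for simplicial trees $h(H)$ is discrete.
\item Knowing only where $\phi$ and $\phi_B=B(\cdot,a,b,c)$ vanish or blow up does not compare their values at a given point.
\end{itemize}

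The missing idea is to iterate, turning signs into limits: $\phi(g^nb)=e^{nh(g)}$ and $\phi_B(g^nb)=e^{nh_B(g)}$.
\begin{itemize}
\item If $h(g)>0$, then $\Cr{g^nb,c;b,a}=\phi(g^nb)^{-1}\to 0$. Since $\Cr{\cdot}$ generates the topology, $g^nb\to c$. Continuity of $B$ at $(c,a,b,c)$, together with the characterisation of the fibre over $\infty$, then gives $\phi_B(g^nb)\to\infty$, i.e.\ $h_B(g)>0$.
\item If $h(g)=0$ but, say, $h_B(g)<0$, then $\phi_B(g^nb)\to 0$. Because $B$ generates the topology, $g^nb\to a$. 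Continuity of $\Cr{\cdot}$ then forces $\phi(g^nb)\to 0$, contradicting $\phi(g^nb)=1$.
\end{itemize}
This is exactly how the paper uses the continuity of $B$ and the topology-generating hypothesis. With this sign preservation in hand, your monotone-homomorphism argument completes the proof.
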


Such notions of abstract cross-ratios have been used to show many rigidity results for representations (see for example \cite{Otal_geometrie-symplectique-geodesiques_1992, Bourdon_cross-ratio-CAT(-1)_1996, Labourie_Cross-ratios-Anosov-rep-energy_2008}).

Here, we will characterize abstract cross-ratios coming from an embedding into some $\Hyp^\kappa$, which we call \emph{algebraic cross-ratios}. 
Let us first mention that bijections of $\partial \Hyp^\omega$ preserving the cross-ratio are induced by a unique isometry of $X$, namely the M\"obius groups of $\partial\Hyp^\kappa$ coincides with the isometry group of $\Hyp^\kappa$.
Next, observe that in the upper half-space model for $\Hyp^\kappa$, for any three distinct boundary points $0,1,\infty\in\partial\Hyp^\kappa$, there is a Hilbert space structure on $\partial \Hyp^\kappa\setminus\{\infty\}$ such that for all $x,y\neq\infty$:
\begin{equation*}
	\Cr{x,y,0,\infty}\Cr{y,0,1,\infty}=\tfrac{\|y-x\|}{\|0-1\|}
\end{equation*}
Moreover isometries of $\Hyp^\kappa$ fixing $\infty$ acts by similarities on $\partial \Hyp^\kappa\setminus\{\infty\}$.
This suggests the following \Cref{def:algebraic-cross-ratio}.

\begin{introdefinition}[algebraic cross-ratio]
	\label{introdef:algebraic-cross-ratio}
	On a topological space $Y$, we define an \emph{algebraic cross-ratio} on $Y$ 
	as an abstract cross-ratio
such that for any triple of distinct points denoted $0,1,\infty \in Y$ the function on $(Y\setminus \{\infty\})^2 \to \R_{\ge 0}$ defined by 
\begin{equation*}
	(x,y) \mapsto \left(R(x,y,0,\infty)R(y,0,1,\infty)\right)^2
\end{equation*}
is a kernel conditionally of negative type on $Y\setminus \{\infty\}$.
\end{introdefinition}

For a topological space $Y$, the classical GNS construction for kernel conditionally of negative type $K\colon Y^2\to\R$ gives the existence of a Hilbert space $\R^\kappa$ and a continuous map $f\colon Y\to\R^\kappa$ such that $K(x,y)=\|f(x)-f(y)\|^2$. Moreover, if there is a group $\Gamma$ acting on $Y$ preserving $K$ then the construction also yields a representation $\rho\colon G\to\Isom(\R^\kappa)$ such that $f$ is $\rho$-equivariant. 
The following \Cref{thm:algebraic-cross-ratio} is an analogous statement for algebraic cross-ratios. Recall that a subset of Hilbert space is total when it is not included in a strict closed affine subspace.

\begin{introtheorem}[GNS for algebraic cross-ratios]
\label{introthm:algebraic-cross-ratio}
Consider a topological space $Y$ having at least four points with an algebraic cross-ratio $R\colon Y^{(4)} \to [0,+\infty)$.

There is a cardinal $\kappa$ and a continuous function $f\colon Y \to \partial \Hyp^{\kappa}$ with total image such that for all $(x^-,x^+,y^-, y^+) \in Y^{(4)}$ we have $\Cr{f(x^-),f(x^+);f(y^-),f(y^+)}=R(x^-, x^+, y^-, y^+)$.

Moreover, for any two such maps $f_1\colon Y \to \partial \Hyp^{\kappa_1}$ and $f_2\colon Y \to\partial \Hyp^{\kappa_2}$, there exists a unique isometry $\varphi\colon\Hyp^{\kappa_1}\to\Hyp^{\kappa_2}$ such that $f_2=\varphi \circ f_1$, in particular $\kappa_1 = \kappa_2$.

Consequently, if a group $\Gamma$ acts on $Y$ preserving $R$ then there is a representation $\rho\colon \Gamma\to \Isom(\Hyp^\kappa)$ such that $f$ is $\rho$-equivariant.
\end{introtheorem}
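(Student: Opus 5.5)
Fix three distinct points of $Y$, named $0,1,\infty$, and work in the upper half-space model. Define on $Y\setminus\{\infty\}$ the kernel
\[
N(x,y)=\bigl(R(x,y,0,\infty)\,R(y,0,1,\infty)\bigr)^2 .
\]
By assumption $N$ is conditionally of negative type. The classical GNS construction for such kernels then gives a Hilbert space $\R^\kappa$ and a continuous map $g\colon Y\setminus\{\infty\}\to\R^\kappa$ with total image and $\|g(x)-g(y)\|^2=N(x,y)$. We normalise $g(0)=0$; the relations \eqref{eq:Cr-Inversion} and \eqref{eq:Cr-Cocycle} force $N(0,1)=1$. Identify $\R^\kappa\cup\{\infty\}$ with $\partial\Hyp^{\kappa+1}$, or with $\partial\Hyp^\kappa$ after adjusting the cardinal convention, via the upper half-space model. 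Set $f=g$ on $Y\setminus\{\infty\}$ and $f(\infty)=\infty$.

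\textbf{Cross-ratio identity.} We must check $\Cr{f(x^-),f(x^+);f(y^-),f(y^+)}=R(x^-,x^+,y^-,y^+)$. In the half-space model the boundary cross-ratio is the classical one, $\Cr{a,b;c,d}=\bigl(\|a-b\|\,\|c-d\|/(\|a-d\|\,\|c-b\|)\bigr)^{1/2}$ up to the paper's normalisation, with the usual conventions when a point is $\infty$. So it suffices to show that
\[
D(x,y):=R(x,y,0,\infty)\,R(y,0,1,\infty)
\]
plays the role of $\|x-y\|$, i.e.\ that $R$ is expressed through $D$ by the same formula. Using only \eqref{eq:Cr-Invariance}, \eqref{eq:Cr-Inversion} and \eqref{eq:Cr-Cocycle}, we write $R(x^-,x^+,y^-,y^+)$ as a product of ratios of the form $D(a,b)D(c,d)/(D(a,d)D(c,b))$, together with the cases where one argument is $\infty$. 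Concretely:
\begin{itemize}
\item The cocycle identity shows that $D(x,y)$ equals $R(x,y,z,\infty)R(y,z,z',\infty)$ times a factor depending only on the normalisation. Hence $D$ is symmetric and is ``the metric seen from $\infty$'' up to a global constant, which is fixed by $D(0,1)=1$.
\item The ratio $D(a,b)/D(a,d)$ then equals $R(a,b,d,\infty)$ up to the conventions, and a general $R$ splits as a product of two such terms by the cocycle relation.
\end{itemize}
Continuity of $f$ at $\infty$ follows because $R$ generates the topology, via \eqref{eq:Cr=infty}: $\|g(x)\|\to\infty$ as $x\to\infty$. Totality in $\partial\Hyp^\kappa$ follows from totality of $g$ in $\R^\kappa$.

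\textbf{Uniqueness.} Given two maps $f_1,f_2$ with total image, post-compose each by an isometry so that $f_i(0),f_i(1),f_i(\infty)$ become $0,e,\infty$ in the half-space model. Then both $f_i|_{Y\setminus\{\infty\}}$ realise the same negative-type kernel $N$ in the Hilbert spaces $\partial\Hyp^{\kappa_i}\setminus\{\infty\}$, with total images and the same basepoint. Uniqueness in the GNS construction gives an affine isometry $\R^{\kappa_1}\to\R^{\kappa_2}$, fixing $0$, that intertwines them. It extends to a Möbius map, hence to an isometry $\varphi$ of the hyperbolic spaces, since the Möbius group equals $\Isom(\Hyp^\kappa)$. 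Uniqueness of $\varphi$ holds because an isometry fixing a total set of boundary points pointwise is the identity.

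\textbf{Equivariance.} For $\gamma\in\Gamma$, the map $f\circ\gamma$ is another realisation with total image, so uniqueness gives a unique $\rho(\gamma)$ with $f\circ\gamma=\rho(\gamma)\circ f$. The uniqueness statement also gives the homomorphism property.

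The main obstacle is the cross-ratio identity: verifying from the axioms alone that every value of $R$, including degenerate quadruples with coincident points and quadruples containing $\infty$, is reproduced by the Euclidean formula in terms of $D$. A secondary difficulty is pinning down the exact normalisation (square versus square root) of the boundary cross-ratio in the paper's convention.
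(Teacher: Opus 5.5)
Your outline follows the paper's proof. It uses GNS for the kernel on $Y\setminus\{\infty\}$, extends by $f(\infty)=\infty$ in the upper half-space model, and gets uniqueness by promoting GNS uniqueness to a hyperbolic isometry. The paper rescales by a scalar $\lambda$ where you pre-normalise $f_i(0),f_i(1),f_i(\infty)$ by triple transitivity, which is equivalent. Equivariance then follows from uniqueness.

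The step you leave open as the ``main obstacle'' closes in two lines, once the common point sits in the \emph{second} slot; your $D(a,b)/D(a,d)$ has it in the wrong place. With $D(x,y)=R(x,y,0,\infty)R(y,0,1,\infty)=\|g(x)-g(y)\|$, the factor $R(y,0,1,\infty)$ cancels. By \eqref{eq:Cr-Inversion} and then \eqref{eq:Cr-Cocycle},
\[
\frac{D(x,y)}{D(z,y)}=\frac{R(x,y,0,\infty)}{R(z,y,0,\infty)}=R(x,y,0,\infty)\,R(0,y,z,\infty)=R(x,y,z,\infty).
\]
If $y=0$, run the same computation with $1$ in place of $0$.

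In the paper's normalisation there is no square root: $\Cr{a,b;c,\infty}=\|b-a\|/\|b-c\|$ (\Cref{cor:cross-ratio-H3}, \Cref{eg:cross_ratio_algebraic_cross_ratio}). So the left-hand side is exactly $\Cr{f(x),f(y);f(z),f(\infty)}$. With the square-root formula you wrote, $f$ would realise $R^{1/2}$ rather than $R$. The hypothesis that $D^2$ is of conditionally negative type is calibrated to the unsquared convention.

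Quadruples containing $\infty$ reduce to this case by \eqref{eq:Cr-Invariance}. For four distinct points of $Y\setminus\{\infty\}$, use \eqref{eq:Cr-Invariance} to reverse both pairs, then \eqref{eq:Cr-Cocycle} with intermediate point $\infty$, to get
\[
R(a,b,c,d)=R(a,b,c,\infty)\,R(c,d,a,\infty).
\]
The same identity holds for $\Cr{\cdot}$, which finishes the non-degenerate case. The paper's displayed chain for this step has some slots permuted: it uses $\Cr{a,b;\infty,d}=\Cr{a,b;d,\infty}^{-1}$, which is false in general. The identity above is what the argument needs.

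Degenerate quadruples are automatic. By \eqref{eq:Cr=0} and \eqref{eq:Cr=infty}, $N(x,y)>0$ for $x\neq y$, so $f$ is injective. On quadruples with a coincidence, both sides are then forced to equal $0$, $\infty$ or $1$ by the same axioms together with \eqref{eq:Cr-Inversion}.

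One correction on continuity at $\infty$. \emph{Generating the topology} is not part of the definition of an algebraic cross-ratio, and you do not need it. Since $R(0,x,0,\infty)=1$, you have $\|g(x)\|=D(0,x)=R(x,0,1,\infty)$. By continuity of $R$ on $Y^{(4)}$ and \eqref{eq:Cr=infty}, this tends to $R(\infty,0,1,\infty)=\infty$ as $x\to\infty$. The paper's proof does not discuss continuity at $\infty$ at all, so this corrected check is a genuine addition.
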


We finally turn to results of (global) marked length spectrum rigidity, namely distinguishing representations $\rho\colon \Gamma\to\Isom(X,d)$ from their length function $\ell \colon \Gamma \to \R_{\ge 0}$. 
The following \Cref{prop:same-length-spectrum-means-conjugate} provides rigidity of the marked length spectrum, generalizing the finite dimensional case \cite{Kim_marked-length-rigidity-symmetric-space_2001}. A representation $\rho\colon \Gamma \to\Isom(\Hyp^\kappa)$ is \emph{minimal} when there is no non-trivial closed invariant totally geodesic subspaces $\emptyset \subsetneq \Hyp^{\kappa'} \subsetneq \Hyp^\kappa$).

\begin{introproposition}[rigidity of marked length spectrum]\label{introprop:rigidity_length_spectrum}
Consider a group $\Gamma$ with minimal representations $\rho_i\colon G\to \Isom(\Hyp^{\kappa_i})$ for $i=1,2$. 
If $\rho_1,\rho_2$ have non-trivial and equal length functions $\ell_{\rho_1}=\ell_{\rho_2}$, then $\kappa_1=\kappa_2$ and $\rho_1,\rho_2$ are conjugated by an isometry $\Hyp^{\kappa_1}\to \Hyp^{\kappa_2}$.
\end{introproposition}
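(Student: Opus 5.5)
Since $\ell_{\rho_1}=\ell_{\rho_2}$ is non-trivial, both representations are non-neutral: some $\gamma$ acts hyperbolically, with the same translation length, in both. The plan is to recover the boundary cross-ratios of the two limit sets from the common length function, build an equivariant cross-ratio preserving boundary map between them, and then apply the uniqueness part of \Cref{introthm:algebraic-cross-ratio}.

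\textbf{Step 1: limit sets and attracting points.} Minimality rules out a fixed point in $\Hyp^{\kappa_i}$, and with a hyperbolic element also a single fixed point at infinity, since the boundary of the orbit of the axes would then give an invariant totally geodesic subspace. I would treat the elementary case where an invariant pair $\{\xi^\pm\}$ exists separately: minimality then forces $\Hyp^{\kappa_i}$ to be the geodesic line, and the length function (a homomorphism up to sign) determines the action up to isometry. In the non-elementary case, let $L_i\subset\partial\Hyp^{\kappa_i}$ be the limit set. The attracting fixed points $\rho_i(\gamma)^+$ of hyperbolic elements are dense in $L_i$, and by minimality the closed convex hull of $L_i$ is all of $\Hyp^{\kappa_i}$, so $L_i$ is total.

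\textbf{Step 2: cross-ratios from lengths.} This is the classical Kim/Otal argument. For hyperbolic $\alpha,\beta$ with pairwise distinct endpoints,
\begin{equation*}
\lim_{n\to\infty}\bigl(\ell(\alpha^n\beta^n)-\ell(\alpha^n)-\ell(\beta^n)\bigr) = 2\log \Cr{\alpha^-,\alpha^+;\beta^-,\beta^+}^{\pm1},
\end{equation*}
with the sign fixed by the convention. I would prove this in $\Hyp^\kappa$ by reducing to the finite-dimensional totally geodesic subspace spanned by the four endpoints, which contains both axes and is invariant under $\alpha,\beta$; there it is the classical computation. It follows that $\Cr{\rho_1(\alpha)^\pm;\rho_1(\beta)^\pm}=\Cr{\rho_2(\alpha)^\pm;\rho_2(\beta)^\pm}$. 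Note that $\alpha$ is hyperbolic for $\rho_1$ if and only if it is for $\rho_2$, since $\ell_{\rho_1}=\ell_{\rho_2}$.

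\textbf{Step 3: the boundary map.} Define $\phi(\rho_1(\gamma)^+)=\rho_2(\gamma)^+$ on attracting points. This is well defined and injective: if two elements share an attracting point in one representation but not in the other, then choosing a third hyperbolic element produces, by Step 2, a degenerate cross-ratio ($0$ or $\infty$) on one side and a finite positive one on the other. The map $\phi$ is $\Gamma$-equivariant because $\rho_i(\delta\gamma\delta^{-1})^+=\rho_i(\delta)\rho_i(\gamma)^+$. It preserves the cross-ratio on quadruples of such points. Since the boundary cross-ratio generates the topology, $\phi$ is uniformly continuous in the visual metric based at three fixed points, so it extends to a cross-ratio preserving homeomorphism $L_1\to L_2$.

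\textbf{Step 4: conclusion.} The restriction of the boundary cross-ratio to $L_1$ is an algebraic cross-ratio. The inclusion $f_1\colon L_1\hookrightarrow\partial\Hyp^{\kappa_1}$ and the map $f_2=\phi\colon L_1\to L_2\subset\partial\Hyp^{\kappa_2}$ both realize it and both have total image. By the uniqueness in \Cref{introthm:algebraic-cross-ratio}, there is a unique isometry $\varphi\colon\Hyp^{\kappa_1}\to\Hyp^{\kappa_2}$ with $\phi=\varphi|_{L_1}$, and in particular $\kappa_1=\kappa_2$. By equivariance, $\varphi\rho_1(\gamma)\varphi^{-1}$ and $\rho_2(\gamma)$ agree on the total set $L_2$, and by the same uniqueness they coincide.

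\textbf{Main obstacle.} I expect the hardest part to be Steps 1 and 3: justifying, in infinite dimension, the density of attracting points and the well-definedness and continuity of $\phi$. The main tool there is that $\partial\Hyp^\kappa$ need not be compact, so limit sets must be handled through cross-ratio topology rather than compactness.
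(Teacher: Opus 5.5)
Your Step 1 contains a false claim, and it is exactly where the statement itself breaks down. In the presence of a loxodromic element, minimality does \emph{not} exclude a global fixed point at infinity. Focal minimal actions exist, and for them the conclusion fails.

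Take $\Gamma=\langle a,b\rangle$ free. Let $\rho_1$ act on $\Hyp^2$ by $a\mapsto(z\mapsto 2z)$, $b\mapsto(z\mapsto z+1)$. Let $\rho_2$ act on $\Hyp^3$ by $a\mapsto(z\mapsto 2e^{i\theta}z)$, $b\mapsto(z\mapsto z+1)$, with $\theta\notin\pi\Z$.
\begin{itemize}
\item Each $\rho_j(\gamma)$ is a similarity $z\mapsto\lambda_j^{k}z+c$ with $\lvert\lambda_j\rvert=2$ and $k$ the exponent sum of $a$ in $\gamma$. Hence $\ell_{\rho_1}(\gamma)=\ell_{\rho_2}(\gamma)=\lvert k\rvert\log 2$.
\item Both actions are minimal. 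The parabolic $\rho_j(b)$ rules out invariant points and geodesics. A $\rho_2(b)$-invariant round circle must be a horizontal line (plus $\infty$), and $\rho_2(a)$ tilts it by the angle $\theta$.
\item Yet $\kappa_1=2\neq 3=\kappa_2$. Two such representations on $\Hyp^3$ with angles $\theta'\not\equiv\pm\theta \bmod 2\pi$ even give a counterexample in equal dimensions.
\end{itemize}

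Your Steps 2--3 fail precisely here. All loxodromic elements share the fixed point, so no pair has four distinct endpoints from which to read cross-ratios. In fact no cross-ratio preserving bijection can carry the cocyclic $\Lambda_1\subset\partial\Hyp^2$ onto the non-cocyclic $\Lambda_2$ (\Cref{cor:cocyclic-cross-ratio}). So this gap cannot be closed. The statement has to exclude focal actions, for instance by assuming the $\rho_i$ irreducible (minimal without fixed point at infinity). The lineal case, where minimality forces a line, can be kept as you treat it.

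Under that hypothesis your plan is exactly the paper's. \Cref{lem:equivariant_cross_ratio_preserving_map} builds the equivariant cross-ratio preserving map on attracting points from the common length function. \Cref{cor:Poincare-extension}, which is the uniqueness part of the GNS theorem for algebraic cross-ratios, turns it into a conjugacy. The paper's own proof has the same hole, since it treats every infinite limit set uniformly.

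Two smaller repairs are also needed.
\begin{itemize}
\item In Step 2, once $\kappa\ge 4$ the span of the four endpoints is in general \emph{not} invariant under $\alpha,\beta$. A loxodromic may rotate the directions normal to its axis and move $\beta^\pm$ out of that span. Use instead the dimension-free Busemann argument of \Cref{lem:cross-ratio-from-translation-length}.
\item In Step 3, Step 2 only evaluates cross-ratios of four distinct endpoints. Detecting whether $\rho_i(\alpha)^+=\rho_i(\beta)^+$ therefore needs an extra limiting argument, or a length criterion in the spirit of \Cref{lem:equal-fixed-points-from-length}, rather than a direct appeal to Step 2.
\end{itemize}
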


In finite dimension $n$, \cite[Theorem A]{Kim_marked-length-rigidity-symmetric-space_2001} proves that two Zariski-dense representations $\rho_i\colon \Gamma \to\Isom(\Hyp^n)$ for $i=1,2$ with homothetic length functions (for which there is $\lambda>0$ such that $\ell_{\rho_1} =\lambda \ell_{\rho_2}$) must be conjugated. 
In infinite dimensions this statement fails, since for every representation $\rho\colon \Gamma \to\Isom(\Hyp^\kappa)$ with non-trivial length function, for every $t\in (0,1)$ the exotic deformations $\chi_t\circ\rho$ has homothetic length function but is not conjugated to $\rho$. The following \Cref{cor:homothetic_length_spectrum} provides the appropriate generalization to arbitrary dimensions characterizing representations and functions of hyperbolic type corresponding to homothetic marked length spectrum.

\begin{introcorollary}[homothetic marked length spectra]
Consider functions of hyperbolic type $F_i\colon \Gamma\to \R_{\ge 1}$ yielding non-zero length functions $\ell_i \colon \Gamma\to \R_{\ge 0}$, and any minimal representations $\rho_i\colon \Gamma \to \Isom(\Hyp^{\kappa_i})$ realizing $F_i$. 
For every $t\in (0,1]$ the following are equivalent:
\begin{itemize}[noitemsep]
	\item[$F$:] The $F_i$ are \emph{$t$-equivalent}, namely $\log(F_1/F_2^t)$ is bounded.
	\item[$\ell_F$:] The $\ell_i$ are \emph{$t$-homothetic}, namely $\ell_1=t\ell_2$.
	\item[$\rho_F$:] The representation $\rho_1$ is conjugate to the $t$-deformed representation $\chi_t \circ \rho_2$.
\end{itemize}
\end{introcorollary}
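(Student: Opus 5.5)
We prove the cycle of implications $\rho_F \Rightarrow F \Rightarrow \ell_F \Rightarrow \rho_F$; only the last one needs real work.

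\emph{From $\rho_F$ to $F$.} Suppose $\rho_1$ is conjugate to $\chi_t\circ\rho_2$. Take base points $o_1\in\Hyp^{\kappa_1}$ and $o_2\in\Hyp^{\kappa_2}$ with $F_i=F_{\rho_i,o_i}$. The intertwining relation $F_{\chi_t\circ\rho_2,\exoH_t(o_2)}=F_2^t$ shows that $F_2^t$ is realized by $\chi_t\circ\rho_2$ at the point $\exoH_t(o_2)$. Conjugate representations give comparable functions of hyperbolic type for any choice of base points, by the triangle inequality. Hence $\log(F_1/F_2^t)$ is bounded.

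\emph{From $F$ to $\ell_F$.} We use the formula $\ell_F(\gamma)=\lim_n \tfrac1n\log F(\gamma^n)$. Since $\log F_1(\gamma^n)-t\log F_2(\gamma^n)$ is bounded, dividing by $n$ and letting $n\to\infty$ gives $\ell_1=t\ell_2$.

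\emph{From $\ell_F$ to $\rho_F$.} Suppose $\ell_1=t\ell_2$.
1. Pass to $\chi_t\circ\rho_2$. Its length function is $t\ell_{\rho_2}=\ell_1$, and it realizes $F_2^t$.
2. Replace $\chi_t\circ\rho_2$ by a minimal representation. It need not be minimal, since its image lies in $\Hyp^{\kappa_2+\omega}$. Let $Y$ be the closed totally geodesic hull of the orbit $\chi_t\rho_2(\Gamma)\exoH_t(o_2)$. This is the minimal GNS realization of $F_2^t$, and we restrict the representation to $Y$.
3. Check that this restriction is minimal. A proper closed invariant totally geodesic subspace $Z\subsetneq Y$ gives, by nearest-point projection, invariant data in the orbit. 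By the minimality of $\rho_2$ and total image (equivalently, since $\exoH_t(\Hyp^{\kappa_2})$ has no proper invariant geodesic hull), the hull of the image of any $\rho_2$-invariant closed convex set is all of $Y$.
   Alternatively, we could invoke non-triviality of the length function: a non-elementary or non-neutral representation has a unique minimal closed invariant totally geodesic subspace, namely the hull of its limit set. One should use this characterization of minimality here.
4. Apply \Cref{prop:same-length-spectrum-means-conjugate}. The representations $\rho_1$ and the restriction of $\chi_t\circ\rho_2$ to $Y$ are now both minimal with equal non-trivial length functions $\ell_1$. Hence they are conjugate, and $\kappa_1=\dim Y$. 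We then need "conjugate to $\chi_t\circ\rho_2$" to be understood up to this minimal restriction, which is the natural reading.

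\emph{Main obstacle.} The hard step is the passage to the minimal subspace in step 3. We need that the minimal invariant totally geodesic subspace of $\chi_t\circ\rho_2$ is intrinsic, i.e. equal to the hull of the limit set. This should follow because the length function is nonzero, so there exist loxodromic elements. Their attracting fixed points are dense in the limit set, and any invariant totally geodesic subspace must contain their axes. This yields uniqueness of the minimal subspace, after which \Cref{prop:same-length-spectrum-means-conjugate} applies directly.
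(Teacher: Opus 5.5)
Your cycle $\rho_F\Rightarrow F\Rightarrow\ell_F\Rightarrow\rho_F$ is the paper's argument, with the same two ingredients: the limit formula of \Cref{lem:ellF-from-F} and \Cref{prop:same-length-spectrum-means-conjugate}. Your first two implications are fine.

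Steps 2--4 of the last implication, however, fail as written. The closed totally geodesic hull $Y$ of the orbit $\chi_t\rho_2(\Gamma)\cdot\exoH_t(o_2)$ is in general \emph{not} minimal, and step 3 contains no valid argument that it is. Here is a counterexample. Take $\Gamma=\Z$, let $\rho_2$ be a translation of length $\ell>0$ on $\Hyp^1$ (a minimal representation), and take $t\in(0,1)$.
\begin{itemize}
\item[(a)] The element $\chi_t\rho_2(1)$ is loxodromic of translation length $t\ell$, so every nonempty closed invariant totally geodesic subspace contains its axis.
\item[(b)] If $\exoH_t(o_2)$ lay on that axis, we would get $(\cosh\ell)^t=\cosh(t\ell)$. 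This is impossible by strict convexity of $\log\cosh$ together with $\log\cosh 0=0$.
\item[(c)] So the axis is a proper invariant subspace of $Y$, and $Y$ is not minimal.
\end{itemize}
Now let $\rho_1$ be the translation of length $t\ell$ on $\Hyp^1$, so that $\ell_1=t\ell_2$ and $\kappa_1=1$. Your conclusion $\kappa_1=\dim Y$ is then false.

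The repair is the alternative you mention yourself: restrict $\chi_t\circ\rho_2$ to $Y'=\Span(\Lambda(\chi_t\circ\rho_2))$.
\begin{itemize}
\item[(a)] Since $\ell_1\neq0$, the limit set has at least two points.
\item[(b)] Every nonempty closed invariant totally geodesic subspace has $\Lambda$ in its boundary, hence contains $Y'$ (the convex core lemma in \Cref{subsec:Isom(Hyp)}). So the restriction to $Y'$ is minimal.
\item[(c)] Its length function is still $\ell_1$: compute $\lim_n\frac1n\dH(y,\gamma^n y)$ at a point $y\in Y'$.
\item[(d)] By the triangle inequality, its functions of hyperbolic type based at points of $Y'$ are comparable to $F_2^t=F_{\chi_t\circ\rho_2,\exoH_t(o_2)}$.
\end{itemize}
Then \Cref{prop:same-length-spectrum-means-conjugate} conjugates this restriction to $\rho_1$, and your implication $\rho_F\Rightarrow F$ works with base points taken in $Y'$.

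The example above also shows that $\chi_t\circ\rho_2$ itself need not be minimal. So reading $\rho_F$ as ``conjugate to the minimal part of $\chi_t\circ\rho_2$'' is forced, not merely natural. The paper's proof simply asserts that $\chi_t\circ\rho_2$ is minimal; the corrected version of your step 3 is exactly what is needed to make that step rigorous.
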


\subsection*{Geometric convergence and comparison with previous results}
\addcontentsline{toc}{subsection}{Geometric convergence and comparison with previous results}

Fix a finitely generated torsion-free group  $\Gamma$. 
In our compact space $\Proj\mathcal{C}(\Gamma)$, we may consider the closure of various subsets of classes of representations to obtain compactifications.
For example, for each finite cardinal $n\in \N_{\ge 2}$, we may consider the subset $\Proj \mathcal{C}^{n}_{fd}(\Gamma)$ of homothety classes associated to faithful and discrete representations $\Gamma\to \Isom(\Hyp^n)$.

We should compare this compactification with various previously known compactifications obtained for instance in \cite{Morgan-Shalen_valuations-trees-degernations-hyperbolic_1984, Morgan_actions-trees-compactification-SOn1-representation_1986, Bestvina_degenerations-hyperbolic-space_1988,Paulin_1988} of certain character varieties by certain actions on real trees.
More precisely, Paulin introduced the equivariant Gromov-Hausdorff topology on spaces of actions to show the following.
Let $\mathcal{H}^n_{fd}(\Gamma)$ be the Gromov-Hausdorff space of faithful and discrete $\Gamma$-actions on $\Hyp^n$ and $\Proj\mathcal{TS}(\Gamma)$ the quotient by homothety of the Gromov-Hausdorff space of $\Gamma$-actions on real trees having small edge stabilizers (stabilizers of non-trivial geodesic segments do not contain free subgroups of rank $\ge 2$).
Note that for $n_1,n_2\in \N$, if $n_1<n_2$ then $\mathcal{H}^{n_1}_{fd} \subset \mathcal{H}^{n_2}_{fd}$.
Paulin proved \cite{Paulin_1988} that if $\Gamma$ is a finitely generated torsion-free group containing a free group of rank $2$, then $\mathcal{H}^n_{fd}(\Gamma)$ is compactified by a closed subspace of $\Proj\mathcal{TS}(\Gamma)$. When $\Gamma$ is the fundamental group of a closed surface of genus $\ge 2$, it follows from \cite[Theorem 3.2]{Skora_splittings_1996} that for all $n\ge 2$, the closure of $\mathcal{H}^n_{fd}(\Gamma)$ in this compactification is all of $\Proj\mathcal{TS}(\Gamma)$, and this coincides with Thurston's compactification of the Teichm\"uller space by measured laminations.

\Cref{eg:strict-inclusions-closure-PCn-fd}
exhibits many finitely generated groups $\Gamma$ for which the inclusion $\bigcup_n \overline{\Proj\mathcal{C}^n_{fd}(\Gamma)} \subset  \bigcup_n \Proj\mathcal{C}^n_{fd}(\Gamma) \sqcup \Proj\mathcal{TS}(\Gamma) $ is strict.

Our \Cref{thm:identification_compactification} confirms that our compactification recovers that of Paulin, namely that convergence for the equivariant Gromov-Hausdorff topology is equivalent to convergence of homothety classes of length functions.

\begin{introtheorem}[recovering compactification of faithful and discrete representations]
Let $\Gamma$ be a finitely generated torsion-free group containing a free group of rank $2$. 
The Gromov-Hausdorff space $\Proj\mathcal{H}_{fd}^n(\Gamma)\sqcup \Proj\mathcal{TS}(\Gamma)$ is homeomorphic to its image $\Proj\mathcal{C}_{fd}^n(\Gamma)\sqcup \Proj\mathcal{TS}(\Gamma)$ in $\Proj\mathcal{C}(\Gamma)$.
\end{introtheorem}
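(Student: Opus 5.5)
The plan is to show that the natural map $\Phi$, sending a point of $\Proj\mathcal{H}_{fd}^n(\Gamma)\sqcup \Proj\mathcal{TS}(\Gamma)$ to its homothety class in $\Proj\mathcal{C}(\Gamma)$, is a continuous injection. We then upgrade it to a homeomorphism onto its image by compactness. The first task is to define $\Phi$ on trees. For a real tree $T$ with a $\Gamma$-action and base point $o$, the kernel $\lambda^{d}$ is of hyperbolic type for every $\lambda>1$, by the exotic embeddings of trees \cite{Burger-Iozzi-Monod_embedding-trees-hyperbolic-spaces_2005}. Hence $\gamma\mapsto \lambda^{d(\gamma o,o)}$ is a function of hyperbolic type. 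Changing $\lambda$ or $o$ changes it only by a homothety or a comparability, so its class is well defined. Its length function is $(\log\lambda)\,\ell_T$.

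Next comes injectivity. Points of $\Proj\mathcal{H}^n_{fd}(\Gamma)$ are faithful discrete, hence non-elementary, since $\Gamma$ contains a free group of rank $2$. Trees in $\Proj\mathcal{TS}(\Gamma)$ arising as limits are non-trivial actions, so they have non-zero length functions. All these images therefore lie in $\Proj\mathcal{C}_{nn}(\Gamma)$. By the first main theorem, $\Proj\ell$ is injective there, so it suffices to know that the projective length function determines the point of the domain. For $\Hyp^n$ this is the rigidity of marked length spectrum, after restricting to the minimal invariant subspace. For minimal irreducible tree actions it is Culler--Morgan. A hyperbolic action and a tree action cannot share a projective length spectrum: for $\Hyp^n$ faithful discrete, a rank-$2$ free subgroup gives length functions that are not those of a tree, as they violate the tree four-point/axis-overlap conditions of Culler--Morgan. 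I would invoke this known fact.

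For continuity, I use Paulin's theorem: equivariant Gromov--Hausdorff convergence of based actions, after rescaling by $\epsilon_k\to0$, implies convergence of translation lengths. The rescaled sequence $\epsilon_k d_k$ converges to the tree metric $d_T$. I then need convergence of suitable representatives in $\mathcal{F}(\Gamma)$ pointwise. I would pick $F_k=\cosh(d_k(\rho_k\gamma o_k,o_k))^{\epsilon_k}$, which is the class of $\chi_{\epsilon_k}\circ\rho_k$. Since $\cosh(x)^{\epsilon}=\exp(\epsilon x+O(\epsilon))$, this converges pointwise to $e^{d_T(\gamma o,o)}$, the tree representative with $\lambda=e$. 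Continuity inside $\Proj\mathcal{H}^n_{fd}$ and inside $\Proj\mathcal{TS}$ is similar and easier.

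Finally, both the domain (Paulin's compactification) and the relevant image are compact Hausdorff-ish: the image lies in a compact metrizable $\mathcal{A}_{nn}(S;2,\varepsilon)$ of the second main theorem. A continuous injection from a compact space into a Hausdorff space is a homeomorphism onto its image. The main obstacle is this last point. I must check that all images, including limits, fit uniformly in one $\mathcal{A}_{nn}(S;2,\varepsilon)$, which requires a uniform lower bound on displacement over a generating set, and that the relevant topologies (projective rescaling versus homothety classes) match. Alternatively, I would argue continuity of the inverse directly via $\Proj\ell$ being a homeomorphism on these pieces, combined with Paulin's result that length-function convergence is equivalent to Gromov--Hausdorff convergence on his space.
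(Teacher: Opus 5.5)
Your outline is correct and has the same skeleton as the paper's proof. Both build a continuous injection from Paulin's compact space into $\Proj\mathcal{C}_{nn}(\Gamma)$, and both get injectivity from marked length spectrum rigidity on the hyperbolic and tree sides, which the paper, like you, simply cites from Paulin.

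The real difference is continuity at tree points.
\begin{itemize}
\item \emph{Your route.} You lift convergence of homothety classes to rescaled representatives; this is harmless since the quotient by homotheties is open. From the $P$-equivariant $\varepsilon$-approximations you then extract, by a diagonal argument over an exhaustion of $\Gamma$, base points $o_k$ with $\epsilon_k d_k(\rho_k(\gamma)o_k,o_k)\to d_T(\rho_T(\gamma)o,o)$. The estimate $\epsilon\log\cosh x=\epsilon x+O(\epsilon)$ then gives $F_{\rho_k,o_k}^{\epsilon_k}\to e^{d_T(\cdot\, o,o)}$ pointwise. This is the mechanism of \Cref{lem:continuity_hyp_rep} composed with the exotic rescaling.
\item \emph{The paper's route.} It applies \Cref{thm:geometric-convergence-tree} to produce, after extraction, its own limit tree $T_2$ with convergent functions of hyperbolic type. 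It must then identify $T_2$ with Paulin's limit $T_1$, using Paulin's continuity of length functions and \Cref{cor:homothetic_length_spectrum}.
\end{itemize}
Your route is shorter and needs no identification step. The paper's route additionally realizes the degeneration geometrically inside $\Hyp^\omega$.

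The ``main obstacle'' you raise at the end is not one. You do not need the image to lie inside a single $\mathcal{A}_{nn}(S;2,\varepsilon)$. A continuous injection from a compact space into a Hausdorff space is already a homeomorphism onto its image. $\Proj\mathcal{C}_{nn}(\Gamma)$ is Hausdorff by \Cref{thm:length-non-neutral-classes}, since it injects continuously into $\Proj\R^\Gamma\setminus\{0\}$; this is exactly how the paper concludes.

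The uniform containment you wanted can in fact fail for a given $S$. Take $\Gamma=\pi_1(\Sigma_2)$ with its standard generators and the Bass--Serre tree of the splitting along the separating curve. This action is small and non-elementary, but all the generators are elliptic. Hence $\ell(S)=0$ for every representative, and no representative lies in any $\mathcal{A}_{nn}(S;\mu,\varepsilon)$.

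Your alternative also works: invert through Paulin's homeomorphism onto projective length functions and use the continuity of $\Proj\ell$.

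One small caveat concerns your reason why hyperbolic and tree length functions cannot be projectively equal. Axis-overlap violations inside a rank-$2$ free subgroup are not automatic for $n\ge 3$, where axes need not cross. The fact itself is standard. A clean argument: a tree-type length function would force the Euclidean metric on $\Lambda\setminus\{\infty\}$ to be ultrametric, which is impossible for a perfect subset of $\R^{n-1}$.
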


Our compact space $\Proj\mathcal{C}(\Gamma)$ may contain more actions on real trees than $\Proj\mathcal{TS}(\Gamma)$, and those may be limits of actions on $\Hyp^{\kappa_m}$ for varying $\kappa_m\in \N\sqcup\{\omega\}$ as we now explain.

On a real tree $(T,d)$, the kernel $\lambda^d$ is of hyperbolic type, thus every $\Gamma$-action on a real tree yields a homothety class of functions of hyperbolic type: those define a subset $\Proj \mathcal{T}(\Gamma) \subset \Proj \mathcal{C}(\Gamma)$.
Note that $\Proj \mathcal{TS}(\Gamma)$ is closed in $\Proj \mathcal{T}(\Gamma)$, so points in $\Proj \mathcal{T}(\Gamma)\setminus \Proj \mathcal{TS}(\Gamma)$ correspond to new classes of representations, that are only encompassed by our compactification.
Examples of $\Gamma$ for which $\Proj \mathcal{T}(\Gamma)\setminus \Proj \mathcal{TS}(\Gamma)\ne \emptyset$ include free groups on $\ge 2$ generators and automorphism groups of real trees $(T,d)$ such that $\partial T$ is infinite.
The following \Cref{thm:geometric-convergence-tree} shows that the convergence of a rescaled hyperbolic space to a real tree can be obtained equivariantly inside $\Hyp^\omega$.

\begin{introtheorem}[geometric convergence to actions on trees]
\label{thm:geometric-convergence-tree-intro}

Consider a group $\Gamma$ finitely generated by $S$, and a sequence of representations $\rho_m\colon \Gamma\to\Isom(\Hyp^{\kappa_m})$ for cardinals $\kappa_m\in \N\cup\{\omega\}$.

If no $\rho_m$ has a fixed point at infinity, but the infimum displacement length over $S$ is unbounded as $m\to \infty$, then up to extracting a subsequence of $(\rho_m)$ there are:
\begin{enumerate}[noitemsep]
	\item a real tree $(T,d)$ and an isometric action $\rho_T\colon\Gamma\to\Isom(T)$
	\item an embedding $\exoT_\lambda\colon T\to\Hyp^\omega$ equivariant for $\pi_\lambda\colon \Isom(T)\to\Isom(\Hyp^\omega)$ with $\lambda=e$,
	\item an isometric action $\rho_\omega \colon \Gamma \to \Isom(\Hyp^\omega)$ preserving $\exoT_\lambda(T)$,
	\item a sequence of representations $\rho_m'\colon\Gamma\to\Isom(\Hyp^\omega)$ such that $F_{\rho_m'}\sim F_{\rho_m}$,
	\item a sequence of $(\rho_m,\rho_m')$-equivariant exotic embeddings $\exoH_{t_m}\colon\Hyp^{\kappa_m}\to\Hyp^\omega$.
\end{enumerate}
such that $(\rho'_m)_m \colon \Gamma \to \Isom(\Hyp^\omega)$ converges to $\rho_\omega \colon \Gamma \to \Isom(\Hyp^\omega)$ and $\rho_\omega=\pi_\lambda\circ\rho_T$.

Moreover, there exist points $o_m\in \operatorname{Core}(\rho_m)$ and $o\in T$ such that for every finite set $K\subset \Gamma$, we have Hausdorff convergence of $\exoH_{t_m}(\Conv(\rho_m(K) \cdot o_m))$ to $\exoT_\lambda(\Conv(\rho_\omega(K) \cdot o))$.
\end{introtheorem}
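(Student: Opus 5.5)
We will follow Paulin's rescaling argument, but carry out the limit inside a single space $\Hyp^\omega$ by means of exotic embeddings instead of ultralimits.

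\emph{Step 1: normalization.} For each $m$ we choose a point $o_m\in\operatorname{Core}(\rho_m)$ that almost minimizes the displacement $D_m(x)=\max_{s\in S}\dH(\rho_m(s)x,x)$. Set $D_m=\inf D_m\to\infty$. Since $\rho_m$ has no fixed point at infinity, the usual convexity argument (Paulin, Bestvina) shows that the infimum is almost attained in the convex core. We then define the exponent $t_m$ by requiring $\cosh(\cdot)^{t_m}\approx e^{t_m d}$ with $t_m D_m\approx 1$. Concretely we take $t_m=1/D_m\to0$, so that the exotic embedding $\exoH_{t_m}\colon\Hyp^{\kappa_m}\to\Hyp^{\kappa_m+\omega}=\Hyp^\omega$ rescales the metric in the sense that
\begin{equation*}
\cosh\dH(\exoH_{t_m}x,\exoH_{t_m}y)=(\cosh\dH(x,y))^{t_m}\approx \tfrac{1}{2}e^{t_m\dH(x,y)}.
\end{equation*}
This gives items (4) and (5), with $\rho'_m=\chi_{t_m}\circ\rho_m$ and $F_{\rho'_m}=F_{\rho_m}^{t_m}$. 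These are homothetic to $F_{\rho_m}$, and they are conjugate to it after composing with the homothety; we read item (4) as homothety.

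\emph{Step 2: the limit function and the tree.} The functions $F_m=F_{\rho'_m,\exoH_{t_m}(o_m)}$ satisfy $F_m(\gamma)\le \cosh(|\gamma|_S D_m)^{t_m}\le C^{|\gamma|_S}$. By Tychonoff we may therefore extract a subsequence converging pointwise to a function $F$, which is again of hyperbolic type because $\mathcal{F}(\Gamma)$ is closed. The scaled pseudo-distances $d_m(\alpha,\beta)=t_m\dH(\rho_m(\alpha)o_m,\rho_m(\beta)o_m)$ also converge, to some $d_\infty$, and $\log(2F_m)-d_m\to0$ wherever $d_m$ stays bounded away from $0$.

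As $t_m\to0$, the strong-$t_m^{-1}$-hyperbolicity of $\Hyp^{\kappa_m}$ rescales to strong-$\varepsilon$-hyperbolicity with $\varepsilon\to\infty$, that is, to $0$-hyperbolicity of the limit. Hence $d_\infty$ is a $0$-hyperbolic pseudometric on $\Gamma$. It embeds in a real tree $T$, obtained as the convex hull of the orbit, with an induced action $\rho_T$ and base point $o$. This gives item (1). The normalization $\max_S d_\infty(1,s)=1$ makes the action nontrivial.

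\emph{Step 3: identifying the limit as $\pi_e\circ\rho_T$.} On $T$ the kernel $e^{d}$ is of hyperbolic type. We use the corresponding embedding $\exoT_e\colon T\to\Hyp^\omega$, with $\cosh\dH(\exoT_e x,\exoT_e y)=e^{d(x,y)}$, and the associated representation $\pi_e$. This gives items (2) and (3).

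The key identity is $F(\gamma)=e^{d_\infty(o,\gamma o)}$. This is where the main obstacle lies: we need $(\cosh D)^{t}$ and $e^{tD}$ to agree in the limit, but the factor $2^{-t_m}$ tends to $1$ only when $t_m\to0$, and pairs at bounded $\dH$-distance give $F\to1=e^0$. So the identity holds everywhere, and we must check this uniformly.

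Given the identity, the GNS uniqueness for kernels of hyperbolic type provides an isometry. It identifies the representation realizing $F$ (restricted to the closed hull of its orbit) with $\pi_e\circ\rho_T$ on the hull of $\exoT_e(T)$. Convergence $\rho'_m\to\rho_\omega$ in $\Hyp^\omega$ is then obtained by realizing all the GNS spaces of the kernels $F_m$ inside one copy of $\Hyp^\omega$. To do this we choose Gram–Schmidt-type bases adapted to the enumerated orbit points, so that convergence of the kernels gives convergence of the orbit points in the hyperboloid model; we then extend to $\rho'_m(\gamma)$ on the closed span. This compatible realization, together with the required separability (which holds since $\Gamma$ is countable), is the most delicate part.

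\emph{Step 4: Hausdorff convergence of hulls.} For a finite set $K\subset\Gamma$, the orbit points $\rho'_m(K)\exoH_{t_m}(o_m)$ converge to $\rho_\omega(K)\exoT_e(o)$ by Step 3. What remains is to show that $\exoH_{t_m}(\Conv(\rho_m(K)o_m))$ is Hausdorff-close to the convex hull of the image of a finite subtree.

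We first approximate $\Conv(\rho_m(K)o_m)$ by a union of geodesics, within $O(\log 2)$ in $\dH$ by hyperbolicity. Geodesics are mapped by $\exoH_{t_m}$ to curves whose Hausdorff distance to the $\exoT_e$-image of the limiting tree segments is controlled via the cosine identity. The tripod structure passes to the limit because Gromov products converge after scaling. Finally, continuity of $\exoT_e$ and compactness of the finite hulls yield Hausdorff convergence.
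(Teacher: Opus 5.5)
Your overall strategy matches the paper's: rescale by $\exoH_{t_m}$ with $t_m=1/D_m$, pass to a $0$-hyperbolic limit with $\cosh\dH=e^{d}$, and identify the result with $\exoT_e$. The identity $F=e^{d_\infty}$, which you single out as the main obstacle, is immediate from $\lvert t\log\cosh D-tD\rvert\le t\log 2$. Note that it is $\log F_m-d_m$, not $\log(2F_m)-d_m$, that tends to $0$, and it does so uniformly.

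\emph{The gap.} Step 3 only realizes the orbit points $\rho'_m(\gamma)\exoH_{t_m}(o_m)$, that is, the GNS data of the kernels $F_m$. GNS uniqueness for the limit $F$ identifies the limiting action only on the closed span of the orbit of $\exoT_e(o)$. That span need not contain $\exoT_e(T)$. For instance, if the orbit of $o$ consists of vertices of a simplicial tree, the span of their images misses the images of edge midpoints; this is the paper's remark on why $\pi_\lambda$ is often not irreducible. Consequently three claims are unsupported:
\begin{itemize}
\item your identification ``on the hull of $\exoT_e(T)$'';
\item the claim that $\rho_\omega$, as a limit of the $\rho'_m$, preserves $\exoT_e(T)$;
\item the Hausdorff convergence in Step 4.
\end{itemize}
The points $\exoH_{t_m}(x)$, for $x$ on the segments $[\rho_m(\gamma_0)o_m,\rho_m(\gamma_1)o_m]$, in general have components orthogonal to the orbit span. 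The cosine identity controls only their mutual distances and their distances to orbit points, so it does not locate them in $\Hyp^\omega$ relative to $\exoT_e$ of the tree segments.

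Separately, Gram--Schmidt is not continuous at degenerate Gram matrices. For example, when $d_\infty(o,\gamma o)=0$ for some $\gamma\neq 1$, the basis direction created by a nearly coincident pair can carry components of later points that oscillate with $m$. So ``convergence of the kernels gives convergence of the points'' needs an extraction.

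\emph{The missing ingredient} is the paper's Steps 3--4.
\begin{itemize}
\item Put a countable dense family of segment points into the configuration from the start: the dyadic points $x_m(\gamma_0,\gamma_1;\delta)$ with $\gamma_i\in S^k$ and $\delta\in[0,1]\cap 2^{-i}\Z$.
\item Each finite batch of their images lies in $B(o,k+\log 2)$, so you can conjugate it into a fixed flag of finite-dimensional totally geodesic subspaces through $o$.
\item Extract using compactness of balls in finite dimension, diagonalize, and define $T$ as the closure of the limit points, with $d=\log\cosh\dH$.
\end{itemize}
Then $\rho_\omega$ is defined on these limit points by equivariance, and $\rho'_m\to\rho_\omega$ holds on all of $\exoT_e(T)$. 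Step 4 then reduces to two ingredients. The first is the equicontinuity estimate $\cosh\dH(y_m(\delta),y_m(\delta'))\le\exp(t_m\dH(x_m(\delta),x_m(\delta')))$. The second is an $m$-uniform bound on the distance from $\Conv(\rho_m(K)o_m)$ to its $1$-skeleton.

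\emph{On that last bound.} ``$O(\log 2)$ by hyperbolicity'' is not justified. Induction on faces only gives a bound that grows with $\lvert K\rvert$ (the hull lies in a copy of $\Hyp^{\lvert K\rvert-1}$), and the paper instead invokes its dimension-free bound $\sinh^{-1}(1)$. For fixed $K$, however, a $\lvert K\rvert$-dependent constant $c$ suffices, because $\exoH_{t_m}$ shrinks it to $\cosh^{-1}((\cosh c)^{t_m})\to 0$.

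Finally, the Hausdorff limit is $\exoT_e$ of the finite subtree itself. It is not the hyperbolic convex hull of that image, which is strictly larger since $\exoT_e$ maps segments to non-rectifiable curves.
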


\begin{introremark}[the use of Gromov-Hausdorff convergence]
A similar result has been obtained by Courtois--Guilloux \cite{Courtois-Guilloux_Hausdorff-infinite-dim-hyp_2024}, also using the rescaling method by exotic deformations representations $\chi_t$, in order to obtain continuity results for the Hausdorff dimension of limit sets.
\end{introremark}

In our statement above the sequence of dimensions can be arbitrary (of various unbounded cardinals), so to obtain our Gromov-Hausdorff convergence we need a metric estimate for the convex hull of finitely many points which is independent on the dimension, which we proved in  \cite{Duchesne-Simon_simplices_2025} and recall here. 

\begin{introproposition}[Hausdorff distance from convex hull to $1$-skeleton]
For a set of points $X \subset \Hyp^\kappa\cup \partial \Hyp^\kappa$, every point of its convex hull $\Conv(X)$ is at distance at most $\sinh^{-1}(1)=\log(1+\sqrt{2})$ from its $1$-skeleton $X^{(1)}=\bigcup_{x_1,x_2 \in Y}[x_1, x_2]$.
\end{introproposition}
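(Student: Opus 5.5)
We need to prove: for any $X\subset \Hyp^\kappa\cup\partial\Hyp^\kappa$, every point $p\in\Conv(X)$ lies within $\sinh^{-1}(1)$ of the $1$-skeleton $X^{(1)}$.

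\textbf{Reduction to finitely many points.} The convex hull of $X$ is the closure of the union of convex hulls of finite subsets of $X$. Ideal points can be approximated along geodesic rays by interior points, and the distance to the $1$-skeleton varies continuously under such approximations. So it suffices to treat a finite set $X=\{x_0,\dots,x_n\}$ of points in $\Hyp^\kappa\cup\partial\Hyp^\kappa$, with a constant independent of $n$ and of $\kappa$. These finitely many points span a totally geodesic copy of $\Hyp^m$ with $m\le n$, so we may work in finite dimension.

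\textbf{Hyperboloid model and choice of an edge.} Take the hyperboloid model $\{v\colon B(v,v)=-1,\ v_0>0\}$, with $\cosh \dH(u,v)=-B(u,v)$. Ideal points are represented by isotropic vectors. The convex hull of $X$ is the projectivization of the positive cone spanned by the vectors $v_i$ representing the $x_i$. Hence any $p\in\Conv(X)$ can be written as
\[
p=\sum_i \lambda_i v_i \Big/ \sqrt{-B\big(\textstyle\sum_i \lambda_i v_i,\sum_i\lambda_i v_i\big)}, \qquad \lambda_i\ge 0 .
\]
By Carathéodory we may also assume at most $m+1$ of the $\lambda_i$ are nonzero. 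Normalize so that $w=\sum\lambda_i v_i$ satisfies $B(w,w)=-1$; then
\[
1=\sum_{i,j}\lambda_i\lambda_j\,(-B(v_i,v_j)) .
\]
Every term is nonnegative, since $-B(v_i,v_j)\ge 0$ for vectors in the closed future cone. So some pair $(i,j)$ carries a large share of this sum. The difficulty is that the number of pairs grows with $n$, so a naive pigeonhole gives only a share of order $1/n^2$.

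\textbf{Dimension-free estimate.} The plan is to bound the distance to the segment $[x_i,x_j]$ via
\[
\sinh^2 d(p,[x_i,x_j]) = \text{an explicit quadratic expression in } B(p,v_i),\ B(p,v_j),\ B(v_i,v_j),
\]
which comes from projecting $p$ orthogonally onto the plane $\Span(v_i,v_j)$. The key inequality to aim for is the existence of a pair $(i,j)$ such that the $B$-orthogonal projection of $p$ onto $\Span(v_i,v_j)$ lies in the cone and has Lorentzian norm squared at most $-1/2$. This yields $\cosh d\le\sqrt2$, that is $\sinh d\le 1$. To get the pair, I would use an iterative ``merge'' argument: repeatedly replace two vertices by the point of their edge lying on the ray of $\lambda_iv_i+\lambda_jv_j$. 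Alternatively, choose the pair maximizing $\lambda_i\lambda_j(-B(v_i,v_j))$ and compare with $\big(\sum_k\lambda_k(-B(p,v_k))\big)=1$ using the hyperbolic Cauchy--Schwarz inequality \eqref{introeq:kernel-hyperbolic-CS}.

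\textbf{Main obstacle.} The hard step is obtaining a constant independent of $n$; the sharp value $\sinh^{-1}(1)$ suggests the extremal configuration is an ideal regular simplex as $n\to\infty$. I would first compute that extremal case, namely the distance from the barycenter of an ideal regular $n$-simplex to its edges, and check that it tends to $\sinh^{-1}(1)$. I would then try to prove extremality by a convexity or symmetrization argument on the weights $\lambda_i$ and the Gram matrix $(-B(v_i,v_j))$.
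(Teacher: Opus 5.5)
\textbf{There is a genuine gap: the dimension-free estimate, which is the whole content of the statement, is announced but never proved.} Your reduction to a finite set and your hyperboloid set-up (isotropic vectors for ideal points, $p=\sum_i\lambda_iv_i$ with $B(p,p)=-1$) are fine. Beyond that, the routes you sketch would not deliver the estimate.

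First, the target is misstated. Let $q$ be the $B$-orthogonal projection of $p$ on $\Span(v_i,v_j)$ and $r=p-q$, which is spacelike. Then $B(q,q)=-1-B(r,r)\le -1$ always, while $\cosh^2\dH(p,\text{line})=-B(q,q)$ and $\sinh^2\dH(p,\text{line})=B(r,r)$. So ``norm squared at most $-1/2$'' is vacuous. What you need is $-B(q,q)\le 2$, together with the foot of the perpendicular lying in the segment when an endpoint is interior.

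Second, the merge argument is the standard proof that geodesic $n$-gons are $O(\delta\log n)$-thin. Each merge puts a new vertex on an edge, so later ``edges'' lie in higher-dimensional faces rather than the $1$-skeleton. A loss is paid at each of $\gtrsim\log_2 n$ levels, so this cannot give a constant independent of $n$.

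Third, your pigeonhole looks hopeless only because you compare the pair term $b_{ij}:=\lambda_i\lambda_j(-B(v_i,v_j))$ with the total mass $1$. The right comparison is with $c_ic_j$, where $c_i:=-B(p,\lambda_iv_i)$.

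\textbf{Here is an averaging argument that closes the gap.} Discard zero weights and set $u_i=\lambda_iv_i$. Then $c_i=\sum_jb_{ij}>0$ and $\sum_ic_i=-B(p,p)=1$, so $c_i\le 1$. Also $b_{ii}\le c_i^2$ by the reversed Cauchy--Schwarz inequality (that is, $\cosh\ge1$).

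For $i\ne j$, test the point $y_{ij}\in[x_i,x_j]$ represented by $c_ju_i+c_iu_j$; this avoids any issue with projections leaving the segment. Then $\cosh^2\dH(p,y_{ij})=4c_i^2c_j^2/(c_j^2b_{ii}+c_i^2b_{jj}+2c_ic_jb_{ij})$. This is $\le 2$ if and only if $E_{ij}:=\tfrac{c_j}{c_i}b_{ii}+\tfrac{c_i}{c_j}b_{jj}+2b_{ij}-2c_ic_j\ge 0$.

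Sum over ordered pairs, using $\sum_{j\ne i}c_j=1-c_i$, $\sum_{i\ne j}b_{ij}=1-\sum_ib_{ii}$ and $\sum_{i\ne j}c_ic_j=1-\sum_ic_i^2$. This gives $\sum_{i\ne j}E_{ij}=2\sum_i\big(c_i^2+b_{ii}/c_i-2b_{ii}\big)$. Each summand is $\ge b_{ii}+b_{ii}-2b_{ii}=0$. Hence some $E_{ij}\ge 0$, that is, $\dH(p,X^{(1)})\le\cosh^{-1}\sqrt2=\sinh^{-1}(1)$, with no extremality or symmetrization step needed.

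For ideal vertices this reads $\cosh^2\dH(p,(x_i,x_j))=2c_ic_j/b_{ij}$ together with $\sum_{i\ne j}b_{ij}=1>\sum_{i\ne j}c_ic_j$. With $n+1$ ideal vertices it even gives $\cosh^2\le 2n/(n+1)$, which is attained at the barycentre of the regular ideal $n$-simplex, matching the extremal case you guessed.

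The paper itself does not prove the statement. It imports it from the authors' separate study of hyperbolic simplices, where it comes with optimal bounds between skeleta and a characterization of the extremal simplices. That is closer to your extremal-configuration plan, whereas the averaging gives the uniform constant directly.
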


Our \Cref{thm:geometric-convergence-tree} constructs the limiting representation, the action on the limiting tree, and also describes the geometric convergence towards this limit. The \Cref{sec:ultraproducts} will show that this action on the tree realized as a subspace of $\Hyp^\omega$ can be constructed immediately from ultraproducts of hyperbolic spaces $\Hyp^{\kappa_m}$. 

\Cref{eg:bendings} shows that for many groups (which split as an amalgam over $\Z$), the space $\Proj \mathcal{C}(\Gamma)$ may be much larger than $\bigcup_n \Proj \mathcal{C}^n(\Gamma) \sqcup \Proj \mathcal{T}(\Gamma)$.

\subsection*{Groups with a unique homothety class of actions on \texorpdfstring{$\Hyp^\kappa$}{Hkappa}}
\addcontentsline{toc}{subsection}{Groups with a unique homothety class of actions on \texorpdfstring{$\Hyp^\kappa$}{Hkappa}}

In the final part of this paper, we prove that some groups of special interests have exactly one non-elementary action on some $\Hyp^\kappa$ up to the equivalence introduced above.

\begin{introtheorem}[\Cref{cor:point_character_variety}]

If $G$ is one the following topological groups:
\begin{enumerate}
	\item the isometry group $\Isom(\Hyp^\kappa)$ of the real hyperbolic space $\Hyp^\kappa$ of countable dimension $\kappa$,       
	\item the isometry group $\Aut(T_\kappa)$ of the $\kappa$-regular simplicial tree $T_\kappa$ where $\kappa\ge 3$ is countable,
	\item the group $\PGL_2(\K)$ where $\K$ is a complete non-Archimedean valued field
\end{enumerate}
then the space $\Proj\mathcal{C}_{ne}(G)$ has exactly one point. 
\end{introtheorem}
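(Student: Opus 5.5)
Two things must be shown: $\Proj\mathcal{C}_{ne}(G)$ is non-empty, and any two of its points coincide. The plan is to derive both from \Cref{introprop:abstract_cross_ratio} and \Cref{introthm:algebraic-cross-ratio}. The key input is that each of the three groups acts $3$-transitively on the boundary of a strongly hyperbolic (indeed $\operatorname{CAT}(-1)$) space $X$: on $\partial\Hyp^\kappa$ for $\Isom(\Hyp^\kappa)$, on the ends $\partial T_\kappa$ for $\Aut(T_\kappa)$, and on $\Proj^1(\K)=\partial T_{\K}$ for $\PGL_2(\K)$, where $T_\K$ is the Bruhat--Tits tree.

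\textbf{Existence.} We need a continuous non-elementary action on some $\Hyp^\kappa$.
\begin{itemize}[noitemsep]
	\item For $\Isom(\Hyp^\kappa)$, the tautological action works.
	\item For the two tree cases, we use the kernel $\lambda^d$ with $\lambda=e$, which is of hyperbolic type on a tree. Its GNS construction gives the embedding $\exoT_\lambda\colon T\to\Hyp^\omega$ and the representation $\pi_\lambda$. Countability of $\kappa$, or separability of $T_\K$ when $\K$ is separable, keeps the dimension at most $\omega$; in general it is some cardinal.
\end{itemize}
Continuity holds because the function of hyperbolic type $g\mapsto \lambda^{d(go,o)}$ is continuous for the permutation/valuation topology. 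Non-elementarity holds because hyperbolic elements with distinct axes exist and fixed points are excluded by $3$-transitivity.

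\textbf{Uniqueness.} Let $\rho\colon G\to\Isom(\Hyp^{\kappa'})$ be continuous, non-elementary, and, after passing to the minimal invariant subspace, minimal. We aim to build a $\rho$-equivariant boundary map $f\colon\partial X\to\partial\Hyp^{\kappa'}$ that is a homeomorphism onto its image. The route goes through point stabilizers:
\begin{itemize}[noitemsep]
	\item For $\xi\in\partial X$, the stabilizer $G_\xi$ contains a hyperbolic element $h$ with attracting point $\xi$, together with the contracting "horospherical" subgroup $U_\xi$.
	\item We show that $\rho(h)$ is hyperbolic. If instead $\rho(h)$ were elliptic or parabolic, conjugating $U_\xi$ by powers of $h$ toward the identity, combined with continuity, would force $\rho(U_\xi)$ to be bounded. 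Then $\rho(G_\xi)$ would be elementary, and $3$-transitivity (generation of $G$ by two such stabilizers) would make $\rho$ elementary, a contradiction.
	\item Define $f(\xi)$ as the attracting fixed point of $\rho(h)$. It is well defined because $\rho(U_\xi)$ fixes exactly that point, and it is equivariant by construction.
\end{itemize}
The pull-back $B=f^*\Cr{\cdot}$ is a $G$-invariant abstract cross-ratio on $\partial X$. The separation properties \eqref{eq:Cr=0} and \eqref{eq:Cr=infty} follow from injectivity of $f$, which we get because distinct points have distinct stabilizers and $\rho$ is non-elementary. That $B$ generates the topology needs continuity of $f$. We get it from a dynamical argument: points converging to $\xi$ are images under $h^n u$ of a compact-ish set, and the same holds for the images.

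\Cref{introprop:abstract_cross_ratio} then gives $B=\Cr{\cdot}^t$ for some $t>0$. The case of two tautological actions is handled the same way, so any two realizations $\rho_1,\rho_2$ induce cross-ratios $\Cr{\cdot}^{t_1}$ and $\Cr{\cdot}^{t_2}$ on $\partial X$. By minimality the images are total, so the uniqueness part of \Cref{introthm:algebraic-cross-ratio} identifies $\rho_2$ with the $t_2/t_1$-exotic deformation of $\rho_1$, after swapping so the exponent is at most $1$. \Cref{cor:homothetic_length_spectrum} then shows that the functions of hyperbolic type are homothetic, hence define the same point of $\Proj\mathcal{C}_{ne}(G)$.

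\textbf{Main obstacle.} The hard step is constructing $f$ and proving it continuous and injective for an arbitrary continuous non-elementary $\rho$. This is the "boundedness controlled by $X$" hypothesis alluded to in the abstract. For $\Aut(T_\kappa)$ and $\PGL_2(\K)$, I would first try to show that compact open subgroups (vertex stabilizers) have bounded, hence fixed-point, images. That should produce an equivariant coarse map $T\to\Hyp^{\kappa'}$ that is a quasi-isometric embedding after controlling the growth via the function of hyperbolic type. Its boundary extension then gives $f$ directly.
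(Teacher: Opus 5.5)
Your outer structure matches the paper's: an equivariant boundary map, pulling back $\Cr{\cdot}$, unicity up to powers, then an exotic deformation and GNS/Poincaré extension. The existence part is also fine. But the step you flag as the main obstacle is where the whole proof lives, and neither of your two suggestions for it works as stated.

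\textbf{The boundary map is not well defined by your argument.} The contraction argument has several holes.
\begin{itemize}
\item If $\rho(h)$ is parabolic with fixed point $\zeta$, then $\rho(h^{-n}uh^n)\to 1$ only shows that $\rho(u)$ moves the points $\rho(h)^n x\to\zeta$ by amounts tending to $0$. This gives that $\rho(U_\xi)$ fixes $\zeta$, not that it is bounded.
\item $G_\xi$ is not generated by $U_\xi$ and a single $h$; it contains for instance $\operatorname{O}(\kappa-1)$, or all diagonal $A^\lambda$ with $\lambda\in\K^\times$.
\item "$\rho(G_\xi)$ is elementary" is automatic, since $G_\xi$ is amenable, so it yields no contradiction.
\item You never show that $\rho(U_\xi)$ has a \emph{unique} fixed point at infinity, so $f(\xi)$ may depend on the choice of $h$.
\end{itemize}
The paper instead uses amenability of $G_\xi$ and the trichotomy for amenable subgroups of $\Isom(\Hyp^\kappa)$: a fixed point, an invariant line, or a unique fixed point at infinity. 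It rules out the first two using $G=G_{\xi_1}G_{\xi_2}\cup G_{\xi_3}G_{\xi_2}$ and the coarse Cartan decomposition $G=BAB$. It then excludes a parabolic $\rho(a)$ by propagating its unique fixed point through the transitivity of $G_\xi$ on $\partial X\setminus\{\xi\}$ (\Cref{lem:boundary_map}).

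\textbf{The boundedness transfer is missing, and this is the more serious gap.} For $B$ to be a continuous abstract cross-ratio you need $f$ continuous. For $B$ to generate the topology you need the converse, $f(w_n)\to f(x)\Rightarrow w_n\to x$. Both require transferring boundedness in both directions between the action on $X$ and $\rho$. In the paper this is the hypothesis that $\rho$ is $\EE_d$-bornologous; $\EE_d$-coarse properness then follows from $G=BAB$, and both are fed into \Cref{lem:proper}.

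Your proposed source for this is bounded images of compact open vertex stabilizers. It fails exactly in the new cases: point stabilizers in $\Aut(T_\omega)$, in $\Isom(\Hyp^\omega)$, and $\PGL_2(\OO)$ for non-local $\K$ are not compact. The paper replaces compactness by Rosendal's coarse boundedness, which is available for $\Isom(\Hyp^\omega)$ and $\Aut(T_\omega)$. For $\PGL_2(\K)$ it is not even known whether $\EE_{d_\infty}$-bounded subsets are coarsely bounded, so the paper proves bornology by hand for each non-elementary $\rho$ (\Cref{lem:bounded_PSL2K}, \Cref{prop:PGL2K_bornlogous_coarsely-proper}). There, $\lambda\mapsto\pm\ell(\rho(A^\lambda))$ is shown to be a homomorphism vanishing exactly on $\OO^\times$, hence equal to $c\log\lvert\lambda\rvert$, and one concludes with $\PGL_2(\K)=KAK$.

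Once that input is in hand, your quasi-isometric-embedding idea is a viable alternative to \Cref{lem:proper}:
\begin{itemize}
\item Fixed points of stabilizers give an equivariant coarsely Lipschitz map.
\item The lower bound comes from $G=BAB$ with $a$ chosen so that $\rho(a)$ is loxodromic. Such $a$ exists and is loxodromic on $X$, because a non-elementary $\rho(G)$ contains a loxodromic element, and elements with bounded orbits on $X$ have bounded images.
\item The boundary extension of a quasi-isometric embedding is a homeomorphism onto its image.
\end{itemize}
Without the boundedness transfer, however, the argument does not start.
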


Some cases of this statement were already known, for example the first item \cite{Monod-Py_self-representations-Mobius_2019} or the two other items when the group is locally compact \cite{Burger-Iozzi-Monod_embedding-trees-hyperbolic-spaces_2005}. We group them in one result since we derive (the unicity) from a more general result: the main point is that these groups admit an action on a $\operatorname{CAT}(-1)$ space $(X,d)$ (a hyperbolic space $\Hyp^\kappa$ or a real tree), that is sufficiently rich to reflect the structure of the group. More precisely, we will prove the following \Cref{{thm:point_character_variety}}, where boundedness properties and $3$-full actions are defined in \Cref{sec:rigidity}.

\begin{introtheorem}[rigidity of groups acting fully on $\operatorname{CAT}(-1)$ spaces]
\label{thm:point_character_variety_intro}
Consider a topological group $G$ with a $3$-full action on a $\operatorname{CAT}(-1)$-space $(X,d)$.
If every non-elementary continuous action  of $G$ on $\Hyp^\kappa$ is $\EE_d$-bornologous, then $\Proj\mathcal{C}_{ne}(G)$ has at most one point.  
\end{introtheorem}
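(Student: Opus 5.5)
Given two non-elementary continuous actions $\rho_i\colon G\to\Isom(\Hyp^{\kappa_i})$ ($i=1,2$), I aim to show that their functions of hyperbolic type are homothetic. Pass first to minimal representations; this changes nothing in $\Proj\mathcal{C}_{ne}(G)$, since restricting to the closed invariant totally geodesic core preserves the comparability class. The plan is to build, for each $i$, a $G$-equivariant boundary map $\phi_i\colon \partial X\to\partial\Hyp^{\kappa_i}$ that is a homeomorphism onto its image. Then \Cref{introprop:abstract_cross_ratio} applies to the pulled-back cross-ratio $B_i=\Cr{\phi_i(\cdot),\phi_i(\cdot);\phi_i(\cdot),\phi_i(\cdot)}$, which is a $G$-invariant abstract cross-ratio on $\partial X$ generating its topology. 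Using that a $3$-full action is in particular $3$-transitive on $\partial X$, the proposition gives $B_i=\Cr{\cdot}_{\partial X}^{t_i}$. Hence $\phi_2\circ\phi_1^{-1}$ scales cross-ratios by the exponent $t_2/t_1$.

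\textbf{Step 1: constructing $\phi_i$.} The bornologous hypothesis says that $\rho_i$-orbit distances are controlled by $d$-distances in $X$ via the bornology $\EE_d$. I would first use it to get a coarse equivariant map $X\to\Hyp^{\kappa_i}$: send a point of $X$ to a point of $\Hyp^{\kappa_i}$ fixed by the image of its stabilizer. Such a fixed point should exist because the stabilizer is $\EE_d$-bounded, so its $\rho_i$-orbits are bounded. The map would then extend to the boundary by sending a geodesic ray to the limit of its images. The $3$-fullness should guarantee that stabilizers of points, pairs and triples in $\partial X$ are large enough for this to be canonical. In particular, the stabilizer of $\xi\in\partial X$ should fix a unique point of $\partial\Hyp^{\kappa_i}$, since $\rho_i$ is non-elementary and minimal. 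Injectivity and continuity should come from comparing the stabilizers of triples. This step is where I expect the main obstacle, above all continuity and the homeomorphism-onto-image property, which need the boundedness control in both directions.

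\textbf{Step 2: concluding.} Put $t=t_2/t_1$, and suppose, swapping indices if needed, that $t\le 1$. The image $\phi_1(\partial X)$ is $\rho_1(G)$-invariant, and by minimality its closed convex hull is all of $\Hyp^{\kappa_1}$. The exotic deformation $\chi_t\circ\rho_1$ has a boundary map whose cross-ratios are the $t$-th power of those of $\rho_1$. Therefore $\chi_t\circ\rho_1$ and $\rho_2$ induce the same algebraic cross-ratio on $\partial X$. By the uniqueness part of \Cref{introthm:algebraic-cross-ratio}, after restricting to minimal hulls, they are conjugate by an isometry. Consequently $F_{\rho_2}$ is comparable to $F_{\rho_1}^t$, so the two classes coincide in $\Proj\mathcal{C}_{ne}(G)$.
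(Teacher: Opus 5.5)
Your architecture (equivariant boundary maps that are homeomorphisms onto their images, pulled-back cross-ratios, uniqueness up to powers, then an exotic deformation and the uniqueness part of \Cref{introthm:algebraic-cross-ratio}) matches the paper's. Step 2 is fine, with one correction: minimality gives that $\phi_1(\partial X)$ is \emph{total}, which is what the uniqueness needs. It does not give that its closed convex hull is everything.

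The gap is Step 1, which is the real content of the theorem, and you only sketch it. Two of its claims do not follow from what you invoke.
\begin{itemize}
\item Non-elementarity and minimality of $\rho_i$ give no reason for $\rho_i(G_\xi)$ to fix a point of $\partial\Hyp^{\kappa_i}$. The paper gets this from the \emph{amenability} of $G_\xi$, a $3$-fullness axiom you never use. Amenability forces $\rho_i(G_\xi)$ to fix a point of $\Hyp^{\kappa_i}$, preserve a geodesic line, or fix a unique point at infinity. The first case is excluded by $G=G_{\xi_1}G_{\xi_2}\cup G_{\xi_3}G_{\xi_2}$, the second by the coarse Cartan decomposition.
\item More seriously, $\EE_d$-bornologous is only an \emph{upper} control. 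Your map $X\to\Hyp^{\kappa_i}$ (a point goes to a fixed point of $\rho_i(G_x)$) is at best coarsely Lipschitz. Nothing makes images of rays converge at infinity, or makes $\phi_i$ or its inverse continuous. Without the homeomorphism-onto-image property, the pulled-back $B_i$ need not generate the topology, so \Cref{introprop:abstract_cross_ratio} does not apply. Injectivity, by contrast, is cheap once an equivariant $\phi_i$ exists: by $2$-transitivity a non-injective one is constant, which gives a global fixed point.
\end{itemize}

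The missing ingredient is a lower bound, i.e.\ $\EE_d$-coarse properness of $\rho_i$. It comes from the coarse Cartan decomposition $G=BAB$ for a loxodromic $a$, which your proposal never uses. Set $M=\sup_{b\in B}\dH(\rho(b)y,y)<\infty$ (finite by bornology) and $M'=\sup_{b\in B}d(bx,x)$. Then every $g=b_1a^nb_2$ satisfies
\[
|\dH(\rho(g)y,y)-\dH(\rho(a^n)y,y)|\le 2M, \qquad |d(gx,x)-d(a^nx,x)|\le 2M'.
\]
These inequalities force $\rho(a)$ to be loxodromic: they give $\ell(\rho(g))\le \ell(\rho(a))\,d(gx,x)/\ell(a)$ for all $g$, so $\ell(\rho(a))=0$ would make $\rho$ neutral. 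They then give $\dH(\rho(g)y,y)\ge |n|\ell(\rho(a))-2M$. The paper combines this coarse properness with \Cref{lem:proper}, which characterizes $\xi_n\to a^+$ by (un)boundedness of the sets $A_n(\xi)$. That property is transported by a representation that is both bornologous and coarsely proper, which yields the homeomorphism onto the image.

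Note also a shortcut. Take $y$ on the axis of $\rho(a)$, use $d(a^nx,x)=|n|\ell(a)+O(1)$, and set $\lambda=\ell(\rho(a))/\ell(a)$. The same inequalities then give $|\dH(\rho(g)y,y)-\lambda\,d(gx,x)|\le C$ uniformly in $g$. So your coarse map is a rough homothety, $\log F_{\rho_i}=\lambda_i\,d(\cdot\,x,x)+O(1)$, and $F_{\rho_1}^{\lambda_2/\lambda_1}$ is comparable to $F_{\rho_2}$ when $\lambda_2\le\lambda_1$. This would let you skip boundary maps and cross-ratios entirely.
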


\begin{introremark}[no local compactness, no Gelfand pairs]
The main novelty in this statement is the fact that $G$ is not necessarily locally compact. In particular, the notion of Gelfand pairs, on which previous proofs rely, is not available in this context. Thus we use a different strategy relying on our unicity of cross-ratios up to powers (\Cref{introprop:abstract_cross_ratio}) and length spectrum rigidity (\Cref{introprop:rigidity_length_spectrum}). 
\end{introremark}

\begin{proof}[Ideas in the proof of \Cref{thm:point_character_variety_intro}]

We first construct a $G$-equivariant boundary map $\partial X\to\partial \Hyp^\kappa$ that is a homeomorphism on its image. Using this map, we can pullback the cross-ratio from $\partial \Hyp^\kappa$ to $\partial X$. We then show bi-continuity of this map (which would be automatic in a locally compact setting) using boundedness properties. We conclude the proof by using uniqueness of the cross-ratio up to powers on $\partial X$.
\end{proof}

\subsection*{Further results, Some questions, Relations to other works}
\addcontentsline{toc}{subsection}{Further results, Some questions, Relations to other works}

\subsubsection*{Geometry of $\Proj \mathcal{C}(\Gamma)$: dimension grading, connectivity, metric}

\begin{introremark}[critical exponent and dimension graduation of $\Proj \mathcal{C}(\Gamma)$]
For a function of hyperbolic type $F\in \mathcal{F}(\Gamma)$, we define its critical exponent as the supremum of $t>0$ such that $F^t$ is of hyperbolic type (\Cref{def:critical-exponent}).
We show that the critical exponent is upper-semi-continuous on $\mathcal{F}(\Gamma)$ (\Cref{prop:semicontinuity_critical_exponent}).
We characterize actions on real trees as those with infinite critical exponent (\Cref{prop:characterisation_kernel_hyperbolic_type_tree}).

For homothety classes of functions of hyperbolic type with finite critical exponent, we obtain a preferred lift $\Proj \mathcal{C}(\Gamma)\to \mathcal{C}(\Gamma)$ by choosing those with critical exponent $1$: this yields a critical dimension $\kappa$ and critical representation $\rho \colon \Gamma \to \Isom(\Hyp^\kappa)$.
We show that where the critical exponent is finite, the critical dimension is upper-semi-continuous on $\Proj \mathcal{C}(\Gamma)$ (\Cref{prop:semicontinuity_critical_dimension}).

Thus, the critical dimension yields a graduation on $\Proj\mathcal{C}(\Gamma)$, whose infinite locus is the boundary at infinity consisting of actions on trees.
\end{introremark}

\begin{introquestion}[connectedness and bendings]
For which groups $\Gamma$ is $\Proj\mathcal{C}_{ne}(\Gamma)$ or $\Proj\mathcal{C}_{fd}(\Gamma)$ connected? When so, does there exist $n\in \N$ such that the corresponding space $\mathcal{C}^n_{ne}(\Gamma)$ or $\mathcal{C}^n_{fd}(\Gamma)$ is connected? 
Can we find obstructions to such connectedness in terms of the (bounded) homology invariants or the homotopy type of the group $\Gamma$?

On a topological space, the kernels of positive type form a convex $\R_{\ge 0}$ cone stable by product, and kernels of conditionally negative type form a convex $\R_{\ge 0}$-cone).
In a similar vein, we wonder whether kernels of hyperbolic type are stable under certain weighted averages (which only depend on the kernels and not on the choice of points in the space).

For $\Gamma=\pi_1(\Sigma_{g})$ the fundamental group of a closed orientable surface of genus $g\ge 2$, \Cref{eg:bendings} shows how the bending procedure enables to move between distinct classes of representations, yielding a family of distinct points in $\Proj \mathcal{C}(\Gamma)$ parametrized by an infinite dimensional Lie group.
This raises the question:
If two points in $\Proj \mathcal{C}_{ne}(\Gamma)$ or $\Proj \mathcal{C}_{fd}(\Gamma)$ are connected, can they be connected through bendings and limits?
Let us note that $\Proj \mathcal{H}^2_{fd}(\Gamma)$ is connected through bendings, since any two points in Teichm\"uller space are connected through a sequence of Goldman twists \cite{Goldman_Invariant-functions-Lie-groups_1986}, which one may write explicitly using their Fenchel-Nielsen coordinates, hence the compactification $\Proj \mathcal{H}^2_{fd}(\Gamma) \sqcup \Proj\mathcal{TS}(\Gamma)$ is connectd through bendings and limits.
This question could be generalized to $\Gamma$ the fundamental groups of a graph-of-groups (in the sense of Bass-Serre) such that all edges with a non-trivial group separate the graph. 
\end{introquestion}

\begin{introquestion}[type structures and metrics on strata]
On a group $\Gamma$, define a \emph{type structure} as a partition of its conjugacy classes into $3$ types (elliptic, parabolic, loxodromic) coming from an action on a Gromov-hyperbolic metric space.
Many groups have natural type structures: hyperbolic groups acting on their Cayley-Graphs, mapping class groups of surfaces acting on curve complexes (see \Cref{introquest:Mod(S)}), or the Cremona group acting on $\Hyp^\omega$ (see \Cref{eg:Cremona_cross-ratio}). 

Each type structure $\iota$ on $\Gamma$ yields a \emph{stratum} $\Proj\mathcal{C}_{\iota}(\Gamma)\subset \Proj\mathcal{C}(\Gamma)$ of type-preserving classes, and this yields a partition into all possible types $\Proj\mathcal{C}(\Gamma) = \sqcup_{\iota} \Proj\mathcal{C}(\Gamma)$.
One may wonder for a group $\Gamma$, how many strata, how do they wrap around one another, what is the topology of a special stratum?
Such questions on the size and topology of the partition into types $\Proj \mathcal{C}(\Gamma)=\sqcup_\iota \Proj \mathcal{C}_{\iota}(\Gamma)$ would shed brighter light onto rigidity properties.

Finally, one may define a metric on each stratum $D\colon \Proj\mathcal{C}_{\iota}(\Gamma) \to [0,+\infty]$, inspired by \cite{Thurston_minimal-stretch-maps_1986} and \cite{Reyes_space-metric-structures-hyperbolic-groups_2023}, as follows.
For $\class{F_1},\class{F_2} \in \Proj\mathcal{C}_{\iota}(\Gamma)$, consider the infimum over representatives $F_1, F_2 \in \mathcal{F}_{\iota}(\Gamma)$ of
\begin{equation*}
	D(F_1,F_2)= \inf \left\{ \log \lvert \ell_{F_1}(\gamma)/\ell_{F_2}(\gamma) \rvert \colon \gamma\in \Gamma \; \text{is $\iota$-loxodromic}\right\}
\end{equation*}
One may study the properties of these metrics on strata, together with the geometry and dynamics of the isometric actions by $\operatorname{Out}(\Gamma)$.
\end{introquestion}

\subsubsection*{Geometric actions on strongly hyperbolic and $\operatorname{CAT}(-\varepsilon)$ spaces}

\begin{introquestion}[Strongly hyperbolic \& $\operatorname{CAT}(0)\implies \operatorname{CAT}(-1)$]
Gromov's Jungentraum (\cite[p. 193]{Gromov_Asymptotic-invariants-infinite-groups_1993} is to show that every (finitely generated) hyperbolic group $\Gamma$ admits geometric action (proper cocompact) on a $\operatorname{CAT}(-1)$ space.
While this is wide open, it is expected to fail, but no counterexamples are known.
A weaker but equally open request is to ask whether it admits a $\operatorname{CAT}(0)$ model.

The introduction of strongly hyperbolic spaces, a nice class of Gromov-hyperbolic spaces containing $\operatorname{CAT}(-1)$ spaces, is partly motivated by this question \cite{Nica_applications-strong-hyperbolicity_2019}.
Indeed \cite{Nica-Spakula_strong-hyperbolicity_2016} shows that on a hyperbolic group $\Gamma$, the Green metric arising from random walks are strongly hyperbolic, thus $\Gamma$ acts properly cocompactly on a strongly hyperbolic space.

While working in the general context of strongly hyperbolic spaces, we were led to obtain a couple of new properties relating the cross-ratio to the metric, such as \Cref{lem:Buseman-length-function} and \Cref{lem:translation-length-from-cross-ratio}. 
Such properties would have been much easier to prove under an additional $\operatorname{CAT}(0)$ assumption (see \Cref{rem:infimum-translation-length}).

This leads us to the following \cref{quest:stron-hyp+CAT0->CAT(-1)}.
Let $(X,d)$ be strongly-$1$-hyperbolic and $\operatorname{CAT(0)}$: when is it $\operatorname{CAT}(-1)$ ? (a converse to \cite[Theorem 4.2]{Nica-Spakula_strong-hyperbolicity_2016}).
\end{introquestion}

\begin{introremark}[convex cocompact]
Let $\Gamma$ be a hyperbolic group.
Following ideas of Furman \cite{Furman_coarse-geometry_2002}, Reyes \cite{Reyes_space-metric-structures-hyperbolic-groups_2023} considers the space $\mathcal{D}(\Gamma)$ of classes of pseudo-metrics on $\Gamma$ that are quasi-isometric to a length metric modulo rough similarity.

If $F\in \mathcal{F}(\Gamma)$ is a function of hyperbolic type on a group $\Gamma$ then $d(\gamma_1,\gamma_2)=\cosh^{-1}(F(\gamma^{-1}_1\gamma_2))$ is a pseudo-metric on $\Gamma$ (since there is a representation $\rho\colon\Gamma\to\Isom(\Hyp^\kappa)$ and $o\in\Hyp^\kappa$ such that $\cosh^{-1}(F(\gamma^{-1}_1\gamma_2))=\dH(\rho(\gamma_1)o,\rho(\gamma_2)o)$. Moreover, two functions of hyperbolic type are homothetic if and only if their associated pseudo-metrics are roughly similar in the sense of \cite{Reyes_space-metric-structures-hyperbolic-groups_2023}. 
Finally, Xu \cite{Xu_convex-cocmpact-rep-infinite-dim-hyp_2024} shows that for a finitely generated group $\Gamma$ acting on $\Hyp^\kappa$, an orbit map $\Gamma \to \Hyp^\kappa$ is a quasi-isometry if and only if the action is convex compact (this is an open condition in $\Proj\mathcal{C}(\Gamma)$).

This yields an injective map from the space $\Proj\mathcal{C}_{cc}(\Gamma)$ of such classes of convex cocompact actions to the space $\mathcal{D}(\Gamma)$.
Reyes conjectures that for a finitely generated free group $\Gamma$, the set of classes corresponding to finite dimensional convex-cocompact actions $\bigcup_n \Proj\mathcal{C}^n_{cc}(\Gamma)$ has dense image in $\mathcal{D}(\Gamma)$.
\end{introremark}

\subsubsection*{Which groups act faithfully on $\Hyp^\kappa$ ?}

\begin{introquestion}[automorphism groups of simple graphs]
\label{introquest:graphs-kernel-hyperbolic}
A simple graph is given by an adjacency matrix $A$ on a set of vertices $X$, that is an $X\times X$ matrix with values in the boolean semi-field $\{0,1\}$ having diagonal $0$. 
This yields a metric space $(X,d)$, where the distance between the vertices $u,v\in X$ is given by the path metric: $d(u,v)=\min\{n\in \N_{>0} \colon A^n_{uv}>0\}$.
The automorphism group of the graph $\Aut(A)$ coincides with the isometry group $\Isom(X,d)$.

A general question is: which graphs admit a function of the distance $d$ (such as a polynomial in $d$ and its exponential $\exp(\varepsilon d)$) that is a kernel of hyperbolic type $C\colon X\times X\to \R$, which is separating (in the sense of \Cref{rem:separation=injectivity}).
This would yield an embedding $X\to \Hyp^\kappa$ that is equivariant under some representation of their automorphism group $\Isom(X,d) \to \Isom(\Hyp^\kappa)$.


In particular, we may define the critical exponent of $(X,d)$ as the supremum of $\varepsilon \in \R_{\ge 0}$ for which the kernel $C=\exp(\varepsilon d)$ is of hyperbolic type (note that $\varepsilon=0$ always yields the trivial function of hyperbolic type).
The critical exponent is $+\infty$ if and only if $(X,d)$ embeds isometrically in a real tree (see \Cref{prop:embedding_0_hyperbolic_space_tree} and the surrounding remarks), and such graphs admit a combinatorial characterization (known as block graphs, see \cite[Theorem 4.3]{Howorka_metric-properties-clique-graphs_1979}).

Is there a combinatorial characterization of infinite simplicial graphs with positive critical exponent, namely for which there is $\varepsilon>0$ such that $C=\exp(\varepsilon d)$ is a kernel of hyperbolic type?
\end{introquestion}

\begin{introquestion}[Mapping class groups]
\label{introquest:Mod(S)}
Consider a surface $\Sigma_{g,n}$ with genus $g$ and $n$ punctures, of negative Euler characteristic. 
We wonder which mapping class groups $\Mod(\Sigma_{g,n})$ admit faithful representation into $\Isom(\Hyp^\kappa)$ for some $\kappa$.
If such a mapping class group is non-linear over $\R$ then $\kappa$ must be infinite.
If there exists such a representation without fixed points then $\Mod(\Sigma_{g,n})$ would not have Kazhdan's property (T), and if there exists such a representation that is metrically proper then $\Mod(\Sigma_{g,n})$ would have the Haagerup's property.

The Teichm\"uller spaces of complex dimension $3g-3+n\ge 2$ are not Gromov-hyperbolic for all the classical metrics (Teichm\"uller \cite{Mazur-Wolf_Teichmuller-not-delta-hyp_1995}, Weil-Petersson \cite{Brock-Farb_Teichmuller-not-delta-hyp_2006}, Thurston \cite{Huang-Papadopoulos_Teichmuller-not-delta-hyp_2021}).
However Bonahon \cite{Bonahon_Teichmuller-via-geodesic-currents_1988} showed (for $n=0, g\ge 2$) that the space of geodesic currents on $\mathcal{GC}(\Sigma_{g,n})$ on $\pi_1(\Sigma_{g,n})$ with its intersection form $\It\colon \mathcal{GC}(\Sigma_{g,n}) \times \mathcal{GC}(\Sigma_{g,n}) \to \R_{\ge 0}$, contains a copy of Teichm\"uller space in a positive-level set $\{x \in \mathcal{GC}(\Sigma_{g,n}) \colon \It(x,x )=\pi \lvert \chi\rvert\}$, while the space of measured laminations identifies with the isotropic cone $\mathcal{ML}(\Sigma_{g,n})=\{ \xi \in \mathcal{GC}(\Sigma_{g,n}) \colon \It(\xi,\xi)=0\}$, which enables him to recover Thurston's compactification.
This is reminiscent of the hyperboloid model in hyperbolic geometry, except that the intersection form has signature $\ge 3g-3+n$ (since equality holds in restriction to $\mathcal{ML}(\Sigma_{g,n})$); in particular the intersection form does not define a separated pairing on $\mathcal{ML}(\Sigma_{g,n})$ in the sense of \Cref{eg:abstract-Cr-from-It}.

However if we restrict to the subset of projective measured laminations that are filling $\Proj\mathcal{MLF}(\Sigma_{g,n})\subset \Proj\mathcal{ML}(\Sigma_{g,n})$, the intersection function is separating and generates the topology in the sense of \Cref{eg:abstract-Cr-from-It}, which implies that we obtain an abstract cross-ratio on $\Proj\mathcal{MLF}(\Sigma_{g,n})$ that generates the topology by the formula:
\begin{equation*}
	\forall (u,v,x,y)\in \Proj \mathcal{MLF}(\Sigma_g)^{(4)} 
	\quad \colon \quad  
	B(u,v,x,y)=\tfrac{\It(u,v)\It(x,y)}{\It(u,y)\It(x,v)}.
\end{equation*}
We may further restrict this to the space of projective measured laminations that are fixed by pseudo-Anosov elements $\Proj \mathcal{MLA}(\Sigma_{g,n})$.
Is this abstract cross-ratio an algebraic cross-ratio in the sense of \Cref{introdef:algebraic-cross-ratio}?  
A positive answer would yield (by \Cref{introthm:algebraic-cross-ratio}) an embedding $\mathcal{MLA}(\Sigma_g) \to \partial \Hyp^\kappa$ which is equivariant under a faithful representation $\Mod(\Sigma_g)\to \Isom(\Hyp^\kappa)$. 

A weaker question is whether $\Mod(\Sigma_{g,n})$ admits a faithful action (maybe fixed point-free or metrically proper) on some strongly hyperbolic space.
When $3g-3+n\ge 2$ and $(g,n)\ne (1,2)$, the extended mapping class group $\Mod^\pm(\Sigma_{g,n})$ acts faithfully by automorphisms of the curve graph $\mathcal{SC}(\Sigma_{g,n})$ (it fact $\Mod^\pm(\Sigma_{g,n})$ coincides with $\Aut \mathcal{SC}(\Sigma_{g,n})$ by \cite{Ivanov_curve-complex_1997, Luo_automorphisms-curve-complex_2000}).
By \cite{Masur-Minsky_curve-graph-hyperbolic_1999}, the curve graph is Gromov-hyperbolic and its boundary identifies with the space of ending laminations $\partial \mathcal{SC}(\Sigma_{g,n}) = \mathcal{MLE}(\Sigma_{g,n})$, which are in particular filling.
Moreover, the loxodromic elements for the action of $\Mod(\Sigma_{g,n})$ on $\mathcal{SC}(\Sigma_{g,n})$ are precisely the pseudo-Anosov elements, whose fixed points identify with $\mathcal{MLA}(\Sigma_{g,n}) \subset \partial \mathcal{SC}(\Sigma_{g,n})$.
Thus we wonder which curves graphs $\mathcal{SC}_{g,n}$ are strongly hyperbolic. And for those which are, does their metric cross-ratio on the boundary coincide with the intersection cross-ratio defined above?

In relation to the previous \Cref{introquest:graphs-kernel-hyperbolic}, we also wonder whether curve graphs admit a function of the distance yielding a separating kernel of hyperbolic type.
\end{introquestion}

\subsection*{Conventions and notations}

The expression ``for any'' is systematically used in the precise sense, to mean ``for some or equivalently for all''.
The set of natural integers $\N$ contains $0$, whereas $\N^*=\N\setminus\{0\}$. For a set of numbers, we denote in index some restrictions on certain sets of numbers, such as $\N_{\ge 2}$, $\R_{\ge 1}$.

\newpage
\section{Geometry of strongly hyperbolic spaces}

We will recall from \cite{Nica-Spakula_strong-hyperbolicity_2016, Das-Simmons-Urbanski_gromov-hyperbolic-spaces_2017} the notion and properties of strongly hyperbolic spaces, which are metric spaces (maybe not proper or complete) whose distance satisfies a strengthening of Gromov's hyperbolicity condition, ensuring that their boundary is endowed with 
a continuous cross-ratio satisfying appropriate separation properties.
The class of strongly hyperbolic spaces contains the class of $\operatorname{CAT}(-1)$-spaces, hence of trees and algebraic hyperbolic spaces of any dimension, which will be at the centre of our interest.

Let us briefly recall from \cite[Chapter II.1]{Bridson-Haefliger_metric-non-positive-curvature_1999} that for $\varepsilon\ge 0$ a metric space is $\operatorname{CAT}(-\varepsilon)$ when it is geodesic (any two points are connected by some geodesic) and the geodesic triangles have lengths satisfying the $\operatorname{CAT}(-\varepsilon)$ inequality (saying that they are thinner than the comparison triangles with the same edge lengths in the unique complete Hadamard space of curvature $-\varepsilon$).

\subsection{Strongly hyperbolic space and their boundary}\label{sec:strong_hyperbolicity}

Fix a metric space $(X,d)$.
At a base point $o\in X$, the \emph{Gromov product} $(\mid)_o\colon X^2 \to [0,+\infty)$ and the \emph{visual metametric} $d_o\colon X^2 \to (0,+\infty)$ are defined for $x,y\in X$ by:
\begin{equation*}
(x \mid y)_o = \tfrac{1}{2}\left(d(o,x)+d(o,y)-d(x,y)\right)
\qquad 
d_o(x, y)=\exp\left(-(x\mid y)_o\right).
\end{equation*}

For $\delta>0$, the space $(X,d)$ is \emph{Gromov-$\delta$-hyperbolic} when: 
\begin{equation*}
\forall o,x,y,z\in X \colon \quad
(x \mid z)_o \ge \min\{(x \mid y)_o, (y \mid z)_o\} - \delta
\end{equation*}
or equivalently for every tetrahedron $(x,y,x',y') \in X^4$:
\begin{equation*}
d(x,x')+d(y, y')\le \max\{d(x,y')+d(y, x'), d(x,y)+d(x', y')\}+2\delta.
\end{equation*}

For $\varepsilon>0$, the space $(X,d)$ is \emph{strongly-$\varepsilon$-hyperbolic} when: 
\begin{equation}
\forall o,x,y,z \in X \colon \quad
d_o^\varepsilon(x , z) \le d_o^\varepsilon(x , y)+ d_o^\varepsilon(y , z)
\end{equation}
or equivalently every tetrahedron $(x,x',y,y') \in X^4$ satisfies Ptoleme's inequality for $d_o^{\varepsilon}$:
\begin{equation}
\label{eq:Ptolemy-visual_strong-hyperbolic}
d_o^{\varepsilon}(x,x') d_o^{\varepsilon}(y, y')
\le 
d_o^{\varepsilon}(x,y') d_o^{\varepsilon}(y, x')
+ 
d_o^{\varepsilon}(x,y) d_o^{\varepsilon}(x', y')
\end{equation}
which can also be rewritten (independently of $o$) as a Ptoleme inequality for $\exp(\tfrac{1}{2}\varepsilon d)$:
\begin{equation}
\label{eq:Ptolemy_strong-hyperbolic}
\exp\tfrac{\varepsilon}{2}\left(d(x,x')+d(y,y')\right) \leq
\exp\tfrac{\varepsilon}{2}\left(d(x,y')+d(y,x')\right)+
\exp\tfrac{\varepsilon}{2}\left(d(x,y)+d(x',y')\right)
\end{equation}

From now on we assume that $(X,d)$ is strongly-$\varepsilon$-hyperbolic for some parameter $\varepsilon>0$. This implies (\cite[Theorem 4.2]{Nica-Spakula_strong-hyperbolicity_2016}) that $(X,d)$ is Gromov-$\delta$-hyperbolic for $\delta=\ln(2)/\varepsilon$.
\begin{remark}[strong hyperbolicity is an exponential strengthening of the Gromov-hyperbolicity]
\label{rem:NS-strongly-hyperbolic-characterisation}
By \cite[Lemma 6.2]{Nica-Spakula_strong-hyperbolicity_2016}, a Gromov-hyperbolic space $(X,d)$ is strongly hyperbolic if and only if there exist $C,\varepsilon_2, R_0 \in \R_{>0}$ such that for every tetrahedron $(x,y,x',y') \in X^4$ and $R\ge R_0$ with $d(x,x')+d(y, y')\ge R$ we have $\lvert d(x,x')+d(y, y')-d(x,y')-d(y, x')\rvert \le C \exp(-\varepsilon_2 R)$.
\end{remark}

We say that a sequence $(x_n)\in X^\N$ \emph{converges to infinity} when $(x_n \mid x_m)_o \to \infty$ as $m,n\to \infty$, and that two sequences $(x_n), (y_n)\in X^\N$ converging to infinity are \emph{equivalent} when $(x_n \mid y_m)_o\to \infty$ as $m,n\to \infty$.
These notions do not depend on $o\in X$.
The Gromov boundary $\partial X$ is the set of equivalence classes of sequences converging to infinity.

The \emph{bordification} of $(X,d)$ is the disjoint union $X\sqcup \partial X$ endowed with the following topology: a subset $U\subset X\sqcup \partial X$ is open when $U\cap X$ is open in $X$ and for all $u\in U\cap \partial X$ there exists $t>0$ such that $V_o^t(u)=\{v \in X\sqcup \partial X \colon (u\mid v)_o>t \}$ is contained in $U$; which is independent of $o$ and completely metrizable (see \cite[§3.4.2]{Das-Simmons-Urbanski_gromov-hyperbolic-spaces_2017}).

The strong hyperbolicity ensures a continuous extension to the bordification $X\sqcup \partial X$ of the Gromov product $( \mid )_o \colon (X\sqcup \partial X)^2 \to [0,+\infty]$ (by \cite[Lemma 3.4.22]{Das-Simmons-Urbanski_gromov-hyperbolic-spaces_2017}) hence of the Ptolemaic metametric $d_o^\varepsilon \colon (X\sqcup \partial X)^2 \to [0,+\infty)$ where (by \cite[Observation 3.6.7]{Das-Simmons-Urbanski_gromov-hyperbolic-spaces_2017}) it becomes a complete metametric that is compatible with the topology on $X\sqcup \partial X$ and which restricts to a metric precisely on the boundary, and this \emph{boundary visual metric} from $o$ for the parameter $\varepsilon$ denoted $d_o^\varepsilon \colon (\partial X)^2 \to [0,+\infty)
$ thus recovers topology of $\partial X$.

\begin{remark}[Hamenst\"adt visual metric]
Let us recall \cite[Proposition 3.6.19]{Das-Simmons-Urbanski_gromov-hyperbolic-spaces_2017}. For every pair $(o,\infty)\in X\times \partial X$, there is complete metametric $d_{o,\infty}^\varepsilon$ on $(X\sqcup\partial X)\setminus \{\infty\}$ defined by:
\begin{equation}
	\label{eq:Hamenstadt-visual-metric}
	d_{o,\infty}^\varepsilon(x,y) = \exp \left[-\varepsilon \left( (x\mid y)_o-(x\mid \infty)_o-(y\mid \infty)_o \right) \right] = \frac{d_o^\varepsilon(x,y)}{d_o^\varepsilon(x,\infty) d_o^\varepsilon(y,\infty)}
\end{equation}
which is compatible with the topology on $(X\cup\partial X)\setminus \{\infty\}$ induced by that on $(X\sqcup\partial X)$, such that $B\subset (X\sqcup\partial X)\setminus \{\infty\}$ is bounded if and only if $\infty \notin \overline{B}$ 
; and, moreover, the restriction of $d_{o,\infty}^\varepsilon$ to $\partial X\setminus \{\infty\}$ is a metric called the Hamenst\"adt visual metric.

Note that for all $x\in (X\sqcup \partial X)\setminus \{\infty\}$ we have $d_{o,\infty}^\varepsilon(x,o) = 1/d_o^\varepsilon(x,\infty)$.
\end{remark}

\begin{remark}[Buseman function]
\label{rem:Buseman-cocycle}
For $\xi \in X$, the \emph{Buseman function} $\Bus_\xi \colon X\times X \to \R$ is defined for $y,z\in X$ by $\Bus_\xi(y,z)=d(y,\xi)-d(z,\xi)=(z\mid \xi)_y-(y\mid \xi)_z$.
The latter formula extends continuously to $\xi \in X\sqcup \partial X$ by \cite[Lemma 3.4.10]{Das-Simmons-Urbanski_gromov-hyperbolic-spaces_2017}, where it remains invariant under the diagonal action of $\Gamma$ on $(X\cup \partial X)\times X^2$.
It satisfies for all $\xi\in(X\cup \partial X)$ and $x,y,z\in X$ the triangle inequality $\lvert \Bus_\xi(y,z)\rvert \le d(y,z)$ and the additivity $\Bus_\xi (y,z) = \Bus_\xi(y,x)+\Bus_\xi(x,z)$.
\end{remark}

The following class of strongly hyperbolic spaces plays an important role in this work.

\begin{definition}[real tree]
\label{def:real-tree}
A metric space $(T,d)$ is called a \emph{real tree} when any two points are joined by a unique injective segment and such segments are isometries.


\end{definition}

\begin{remark}[connected Gromov-$0$-hyperbolic]
\label{rem:real-trees=connected_Gromov-0-hyperbolic}
A metric space $(T,d)$ is a real tree if and only if it is connected and Gromov-$0$-hyperbolic (see \cite[Chapter 3]{Evans_probability-real-trees_2008}).
The Gromov-$0$-hyperbolicity means that every quadruple $o,x,y,z\in T$ satisfies $(x\mid z)_o = \max\{(x\mid y)_o, (y\mid z)_o\}$, namely for every $o\in T$ the visual metametric $d_o\colon T\times T \to \R_{\ge 0}$ is an ultrametric.    
\end{remark}

The following lemma shows that one can interpolate the points of a $0$-hyperbolic metric space minimally so as to obtain a real tree which is uniquely defined up to equivariant isometry.

\begin{proposition}[embedding $0$-hyperbolic spaces in real trees]
\label{prop:embedding_0_hyperbolic_space_tree}
Let $(X,d)$ be a $0$-hyperbolic metric space.
There is a real tree $T$ and an isometric embedding $\varphi\colon X\to T$ such that $T=\bigcup_{x,y\in X}[\varphi(x),\varphi(y)]$ and a representation $\Isom(X)\to\Isom(T)$ such that $\varphi$ is equivariant.
\end{proposition}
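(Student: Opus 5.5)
We construct $T$ explicitly from $X$ by gluing segments, then obtain the representation from the canonicity of the construction. If $X$ is empty there is nothing to prove, so fix a base point $o\in X$.

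\textbf{Step 1: the set underlying $T$.} Consider the set of pairs $(x,t)$ with $x\in X$ and $t\in[0,d(o,x)]$. The pair $(x,t)$ stands for the point at distance $t$ from $o$ on the would-be segment $[o,x]$. We identify $(x,t)\sim(y,s)$ when $t=s\le (x\mid y)_o$. This is an equivalence relation. Transitivity comes from $0$-hyperbolicity: $(x\mid z)_o\ge\min\{(x\mid y)_o,(y\mid z)_o\}$.

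\textbf{Step 2: the metric.} On classes we set
\begin{equation*}
d((x,t),(y,s))=t+s-2\min\{t,s,(x\mid y)_o\}.
\end{equation*}
We check that this is well defined using the ultrametric inequality for $d_o$ from \Cref{rem:real-trees=connected_Gromov-0-hyperbolic}. We then check that it is a metric and that it is $0$-hyperbolic: the Gromov product at $o$ of $(x,t)$ and $(y,s)$ equals $\min\{t,s,(x\mid y)_o\}$, and the ultrametric property is inherited from that of $X$. The space is connected, since each $t\mapsto (x,t)$ is an isometric path from $o$. By \Cref{rem:real-trees=connected_Gromov-0-hyperbolic}, $T$ is therefore a real tree. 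The map $\varphi(x)=(x,d(o,x))$ is isometric because $d(o,x)+d(o,y)-2(x\mid y)_o=d(x,y)$; here the minimum is attained by $(x\mid y)_o\le\min\{d(o,x),d(o,y)\}$. By construction $T=\bigcup_x[\varphi(o),\varphi(x)]$, which proves $T=\bigcup_{x,y}[\varphi(x),\varphi(y)]$. This step, verifying well-definedness and the triangle inequality through the case analysis of minima, is the most tedious part, but it is routine.

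\textbf{Step 3: equivariance.} The base point breaks symmetry, so instead of checking base-point independence directly we argue by uniqueness. Let $T,T'$ be real trees spanned by isometric copies $\varphi(X)$ and $\varphi'(X)$, with an isometry $\varphi(X)\to\varphi'(X)$ given by $\varphi'\circ\varphi^{-1}$. We claim this isometry extends uniquely to an isometry $T\to T'$.

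On each segment $[\varphi(x),\varphi(y)]$ the extension is forced: send the point at distance $t$ from $\varphi(x)$ to the point at distance $t$ from $\varphi'(x)$ on $[\varphi'(x),\varphi'(y)]$. To see this is consistent, note that a point $p\in T$ is determined by its distances to finitely many points of $\varphi(X)$. In a tree, the point on $[a,b]$ at distance $t$ from $a$ coincides with the point on $[c,e]$ at distance $s$ from $c$ precisely when certain equalities between Gromov products and distances hold. These equalities involve only the distances among $a,b,c,e$, so they transfer to $T'$.

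Distances are preserved by the same argument. For $p\in[a,b]$ and $q\in[c,e]$, the distance $d(p,q)$ is computed inside the subtree spanned by the four points $a,b,c,e$. That subtree is determined up to isometry by the six distances among them, via the four-point tree reconstruction. The extension is surjective because $T'$ is also spanned by $\varphi'(X)$. Uniqueness holds since isometries of trees preserve geodesic segments.

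Applying this with $T'=T$ and $\varphi'=\varphi\circ g$ for $g\in\Isom(X)$ gives a unique $\hat g\in\Isom(T)$ with $\hat g\circ\varphi=\varphi\circ g$. Uniqueness forces $\widehat{gh}=\hat g\hat h$, so $g\mapsto \hat g$ is a homomorphism $\Isom(X)\to\Isom(T)$, and $\varphi$ is equivariant.

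The main obstacle is the well-definedness in Step 3, namely the claim that configurations of finitely many points in a real tree are determined by their mutual distances. We reduce it to the finite case: the convex hull of finitely many points in a real tree is a finite metric tree, and such a tree is determined by its leaf-to-leaf distances.
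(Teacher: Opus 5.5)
Your proof is correct. Steps 1--2 are the base-pointed construction of \cite[Theorem 3.38]{Evans_probability-real-trees_2008}, which is the proof the paper defers to; where you differ is in how you get the representation.

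The paper only asks the reader to check that this construction does not depend on the base point $o$. The idea is that $g\in\Isom(X)$ induces the obvious isometry $T_o\to T_{go}$, $(x,t)\mapsto(gx,t)$, which one composes with an identification $T_{go}\cong T_o$. The homomorphism property then still requires these identifications to be mutually compatible.

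You instead prove a rigidity statement: any real tree spanned by an isometric copy of $X$ is identified with $T$ by a unique isometry extending the identification on $X$. This has two advantages. The relation $\widehat{gh}=\hat g\hat h$ follows immediately from uniqueness. It also proves the ``uniquely defined up to equivariant isometry'' claim that the paper makes just before the proposition without proof.

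You can avoid the four-point reconstruction altogether. Its phrasing via ``leaf-to-leaf distances'' is slightly off anyway, since the labelled points need not be leaves of their convex hull. Suppose $T'$ is a real tree spanned by $\varphi'(X)$. The tripod property gives $T'=\bigcup_x[\varphi'(o),\varphi'(x)]$. Take $p\in[\varphi'(o),\varphi'(x)]$ and $q\in[\varphi'(o),\varphi'(y)]$ at distances $t,s$ from $\varphi'(o)$. In any real tree one has $d(p,q)=t+s-2\min\{t,s,(x\mid y)_o\}$. Hence $(x,t)\mapsto p$ is an isometry from your $T$ onto $T'$ extending $\varphi'\circ\varphi^{-1}$. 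This gives base-point independence and uniqueness at once.

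One small point in Step 2: you check the ultrametric inequality only for Gromov products based at $\varphi(o)$. To use the characterisation of real trees as connected $0$-hyperbolic spaces, you implicitly need the standard fact that $\delta$-hyperbolicity at one base point gives $2\delta$-hyperbolicity at every base point. This is harmless here since $\delta=0$.
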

\begin{proof}
Part of this well-known fact are proposed in \cite[Exercise 8]{Ghys-Harpe_groupes-hyperboliques_1990}.
A proof of the full proposition follows from that of the very similar \cite[Theorem 3.38]{Evans_probability-real-trees_2008} (one should only check that their construction does not depend on their initial choice of base point).
\end{proof}

\begin{remark}[connected strongly-$(+\infty)$-hyperbolic]
\label{rem:real-trees=connected_strongly-infty-hyperbolic}
A real tree is $\operatorname{CAT}(-1)$ hence strongly hyperbolic with parameter $1$ \cite[Theorem 5.1]{Nica-Spakula_strong-hyperbolicity_2016}.
Moreover, since the definition of real trees is invariant under rescaling the metric by a positive constant, a real tree is strongly-$\varepsilon$-hyperbolic for all $\varepsilon\in (0,+\infty)$.
Conversely, a connected metric space that is strongly-$\varepsilon$-hyperbolic for all $\varepsilon \in (0,+\infty)$ is Gromov-$0$-hyperbolic hence a real tree.
\end{remark}

\subsection{Cross-ratio for strongly hyperbolic spaces}
\label{subsec:strong-hyp_cross-ratio}

Let $(X,d)$ be a strongly hyperbolic metric space.
The \emph{cross-ratio} of $(x,x'),(y,y')\in X^2\times X^2$ is the positive real number:
\begin{equation}
\label{eq:def-cross-ratio-X}
\Cr{x,x';y,y'}
= \exp \tfrac{1}{2}\left(d(x,x')+d(y,y')-d(x,y')-d(y,x')\right).
\end{equation}
The name cross-ratio is justified by the fact that for all $o\in X$ we have:
\begin{equation}
\label{eq:cross-ratio_visual-d_o}
\Cr{x,x';y,y'} 
= \frac{d_o(x,x')\times d_o(y,y')}{d_o(x,y') \times d_o(y,x')}.
\end{equation}

The cross-ratio is invariant under the $(\Z/2)^2$-action which exchanges the pair or inverts both of their orientations, in formulae: 
\begin{equation}
\label{eq:Cr-Invariance}
\tag{Invariance}
\Cr{x, x'; y, y'} 
= \Cr{y, y'; x, x'}
= \Cr{x', x; y', y}.
\end{equation}
Moreover, it satisfies the inversion rule given by swapping either $(x, y)$ or $(x', y')$:
\begin{equation}
\label{eq:Cr-Inversion}
\tag{Inversion}
\Cr{x, x'; y, y'} 
= \Cr{y, x'; x, y'}^{-1}
= \Cr{x, y'; y, x'}^{-1}.
\end{equation}
More generally it satisfies the cocycle relation, as for every $x''\in X$ we have:
\begin{equation}
\label{eq:Cr-Cocycle}
\tag{Cocycle}
\Cr{x, y; x', y'} = \Cr{x, y; x'', y'} \times 
\Cr{x'', y; x', y'}.
\end{equation}

(Note that this notion of cross-ratio can be defined for any metric space $(X,d)$, and according to Equation \eqref{eq:Ptolemy_strong-hyperbolic} it can be used to characterize the strong-$\varepsilon$-hyperbolicity by saying that all tetrahedra $(x,x',y,y')\in X^4$ satisfy $1\le \Cr{x,x';y,y'}^{-\varepsilon}+ \Cr{x,x';y',y}^{-\varepsilon}$.)

The strong hyperbolicity ensures that the cross-ratio extends continuously to quadruples of distinct points in the bordification $(X\sqcup \partial X)$ with values in $(0,\infty)$ on which it defines a continuous function that is invariant under the diagonal action of $\Isom(X,d)$ on $(\partial X)^{(4)}$, satisfying the \eqref{eq:Cr-Invariance}, \eqref{eq:Cr-Inversion} and \eqref{eq:Cr-Cocycle} relations.

Moreover, the cross-ratio further extends by continuity to the subspace $\partial^4 X\subset \left(\partial X\right)^4$ of quadruples of which at least three are distinct with values in $[0,\infty]$, and by the separation property of the boundary visual metric, it satisfies:
\begin{align}
\label{eq:Cr=0}\tag{$\Cr{\cdot}=0$}
\Cr{a^-,a^+;b^-,b^+} = 0 & \iff (a^-=a^+) \; \mathrm{or} \; (b^-=b^+) 
\\  
\label{eq:Cr=infty}\tag{$\Cr{\cdot}=\infty$}
\Cr{a^-,a^+;b^-,b^+} = \infty & \iff (a^-=b^+) \; \mathrm{or} \; (b^-=a^+).
\end{align}
Note that if $a^-=b^-$ or $a^+=b^+$, we have $[a^-,a^+;b^-,b^+]=1$, but the converse may often fail.

In formula, recalling that $\partial X$ is endowed for each $o\in X$ with the visual distance $d_o^\varepsilon(x,y)=\exp\left(-\varepsilon (x\mid y)_o\right)$, we have for  $(a^-,a^+, b^-,b^+)\in \partial^4 X$:
\begin{equation} \label{eq:Cr-boundary-do}
\Cr{a^-,a^+;b^-,b^+}^\varepsilon=\frac{d_o^\varepsilon(a^-,a^+)\times d_o^\varepsilon(b^-,b^+)}{d_o^\varepsilon(a^-,b^+) \times d_o^\varepsilon(b^-,a^+)}
\end{equation}

\begin{remark}[cross-ratio on $\partial^4 X$]
We introduced the cross-ratio on $X^4$ only to define it conveniently on $\partial^4 X$ and clarify its properties.
We use the same denomination and notation for both, but will mostly work with its restriction to $\partial^4 X$.
\end{remark}

\begin{remark}[Conventions] 
Several other conventions for the cross-ratio are used, both regarding the order of the variables in the brackets and the actual formula.

Our formula and order are compatible with those of Labourie in \cite{Labourie_Cross-ratios-Anosov-rep-energy_2008, Labourie-McShane_cross-ratios-identities-Teichmuller_2009}. Our formula for $\Cr{a^-,a^+; b^-, b^+}$ matches the one defined by Bourdon in \cite{Bourdon_cross-ratio-CAT(-1)_1996} for $[\xi\xi'\eta\eta']$ with the variables in the order $(\xi, \xi', \eta, \eta')=(a^-,b^-,a^+, b^+)$.

The formula introduced by Otal in \cite{Otal_geometrie-symplectique-geodesiques_1992} is the logarithm of that in Bourdon, hence the logarithm of ours after swapping the variables $(a^+ b^-)$ in the bracket.

In \cite{Kim_marked-length-rigidity-symmetric-space_2001}, Kim uses the square of the definition of Bourdon in \cite{Bourdon_cross-ratio-CAT(-1)_1996}, hence the square of ours after swapping the variables $(a^- b^+)$ in the bracket.
\end{remark}

\subsection{Isometries of strongly hyperbolic spaces}

The isometry group $\Isom(X)$ is endowed with the pointwise-convergence topology induced by the product topology on the set of functions $X^X$.

An isometry between strongly hyperbolic spaces preserves the finite cross-ratio, and it extends to a homeomorphism between their boundaries which preserves the boundary cross-ratio.
In particular, the elements and subgroups of $\Isom(X,d)$ act on $X\sqcup \partial X$ by homeomorphisms, and can thus be classified according to their dynamical behaviour. 

For $\gamma\in \Isom(X,d)$ its \emph{stable translation length} is defined for any base point $o\in X$ by:
\begin{equation}
\label{eq:translation-length_limit}
\ell(\gamma) 
= \lim_{n\to \infty} \tfrac{1}{n}d(o, \gamma^n o)
= \inf \{\tfrac{1}{n}d(o, \gamma^n o)\colon n\in \N_{\ge 1}\}
\end{equation}
Note that for all $\alpha \in \Isom(X,d)$ and $n\in \Z$ we have $\ell(\alpha\gamma\alpha^{-1})=\ell(\gamma)$ and $\ell(\gamma^n)=\lvert n \rvert \ell(\gamma)$.

Let us recall \cite[Theorem 1.9]{Reyes_isometries-Gromov-hyperbolic_2018}.

\begin{theorem}[continuity of stable translation-length]
When $\Isom(X)$ is endowed with the pointwise-convergence topology, the function $\ell \colon \Isom(X)\to \R_{\ge0}$ is continuous.
\end{theorem}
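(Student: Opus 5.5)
The plan is to show that $\ell$ is both upper and lower semicontinuous. For each of the two, I would write $\ell$ as an infimum, respectively a supremum, of functions that are continuous on $\Isom(X)$ for the pointwise-convergence topology. Fix a base point $o\in X$.

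\textbf{Continuity of orbit data.} For every $n\in\N$, the map $\gamma\mapsto d(o,\gamma^n o)$ is continuous. Indeed, if $\gamma_i\to\gamma$ pointwise, telescoping and using that the $\gamma_i$ are isometries give
\begin{equation*}
d(\gamma_i^n o,\gamma^n o)\le \sum_{k=0}^{n-1} d\bigl(\gamma_i^{n-k}\gamma^{k}o,\gamma_i^{n-k-1}\gamma^{k+1}o\bigr)=\sum_{k=0}^{n-1} d(\gamma_i\gamma^k o,\gamma\gamma^k o),
\end{equation*}
and each term tends to $0$. By the infimum formula in \eqref{eq:translation-length_limit}, $\ell=\inf_n \tfrac1n d(o,\gamma^n o)$ is an infimum of continuous functions, hence upper semicontinuous.

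\textbf{Lower semicontinuity.} This is where hyperbolicity enters. We know that $(X,d)$ is Gromov-$\delta$-hyperbolic with $\delta=\ln 2/\varepsilon$, in the four-point sense, which needs no geodesics. I would prove a lemma: there is a constant $C=C(\delta)$, say $C=4\delta$, such that every $g\in\Isom(X)$ and $x\in X$ satisfy
\begin{equation*}
\ell(g)\ \ge\ d(x,g^2x)-d(x,gx)-C.
\end{equation*}
Granting the lemma, apply it to $g=\gamma^N$ and divide by $N$. This gives $\ell(\gamma)\ge h_N(\gamma)$, where
\begin{equation*}
h_N(\gamma)=\tfrac1N\bigl(d(o,\gamma^{2N}o)-d(o,\gamma^N o)-C\bigr).
\end{equation*}
Each $h_N$ is continuous by the first step. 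Moreover $h_N(\gamma)\to 2\ell(\gamma)-\ell(\gamma)=\ell(\gamma)$ as $N\to\infty$, by the limit formula in \eqref{eq:translation-length_limit}. So $\ell=\sup_N \max(h_N,0)$ is lower semicontinuous, and together with the first step $\ell$ is continuous.

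\textbf{The lemma, which I expect to be the main obstacle.} Set $a=d(x,gx)$ and $b=d(x,g^2x)$. We may assume $b-a>C$, since otherwise there is nothing to prove. The goal is to show by induction that $d(x,g^nx)\ge n(b-a)-C'$ for some fixed constant $C'$; dividing by $n$ and letting $n\to\infty$ then gives $\ell(g)\ge b-a$, which is even better than the stated inequality.

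The induction rests on the fact that the Gromov product $(x\mid g^2x)_{gx}=a-\tfrac b2$ is small compared with $a$. By $g$-invariance, each consecutive triple $g^kx, g^{k+1}x, g^{k+2}x$ has the same small Gromov product at its middle point. This is the familiar situation of a chain of points whose successive Gromov products are small relative to the step lengths, so that the chain behaves like a quasi-geodesic. I would apply the four-point condition repeatedly to quadruples of the form $(x,g^kx,g^{k+1}x,g^{k+2}x)$, obtaining the lower bound
\begin{equation*}
d(x,g^{k+2}x)\ \ge\ d(x,g^{k+1}x)+(b-a)-2\delta,
\end{equation*}
valid as long as the previous steps keep the relevant Gromov products controlled. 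The condition $b-a>C$ is exactly what keeps those products under control, so that the induction propagates.

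The delicate part is tracking the constants so that the $\delta$-losses do not accumulate linearly in $n$. If they did, the argument would only yield $\ell(g)\ge b-a-2\delta$. That weaker bound is still of the form required by the lemma, with $C=2\delta$, and it suffices for the continuity argument above. So the fallback is simply to accept a $\delta$-loss per step, which is exactly what the lemma tolerates.
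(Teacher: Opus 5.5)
Your proof is correct and follows the paper's route. The statement itself is only quoted from Reyes, but the paper's proof of \Cref{lem:F-to-ellF-continuity} uses exactly your decomposition: $\ell$ is an infimum of continuous functions (upper semicontinuity) and a supremum of the continuous functions $h_N$ (lower semicontinuity). The latter rests on Reyes' inequality $d(\gamma^2x,x)\le d(\gamma x,x)+\ell(\gamma)+2\delta$, which is your lemma with $C=2\delta$.

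One correction to your sketch of the lemma: the ``even better'' bound $\ell(g)\ge b-a$ is false, even when $b-a$ is large. In $\Hyp^2$, take a translation of length $\ell$ and let the distance from $x$ to its axis tend to infinity. Then $b-a\to 2\log(2\cosh\tfrac{\ell}{2})=\ell+2\log(1+e^{-\ell})>\ell$. So the $2\delta$ loss per step genuinely accumulates, and your ``fallback'' is the actual proof.

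That fallback closes without any further bookkeeping of Gromov products. Write $D_j=d(x,g^jx)$. The three pair sums for $(x,g^kx,g^{k+1}x,g^{k+2}x)$ are $D_{k+2}+a$, $D_{k+1}+b$ and $D_k+a$. Suppose $D_{k+1}-D_k\ge b-a-2\delta$ and $b-a>2\delta$. Then $D_{k+1}+b>D_k+a+2\delta$, so the four-point condition forces $D_{k+2}-D_{k+1}\ge b-a-2\delta$. The base case $D_1-D_0=a\ge b-a$ holds because $b\le 2a$.
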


The following is an equivalent reformulation of \cite[Theorem 6.1.4]{Das-Simmons-Urbanski_gromov-hyperbolic-spaces_2017}.

\begin{proposition}[classification of isometries]
\label{prop:classification-g-Isom(X)}
A non-trivial $\gamma \in \Isom(X)$ is either:
\begin{enumerate}[noitemsep, align=left]
	\item[\emph{elliptic}:] the $\gamma$-orbit of any point of $X$ has $0$ accumulation points in $\partial X$;
	\item[\emph{parabolic}:] the $\gamma$-orbit of any point of $X$ has $1$ accumulation point in $\partial X$; 
	\item[\emph{loxodromic}:] the $\gamma$-orbit of any point of $X$ has $2$ accumulation points in $\partial X$.
\end{enumerate}
\end{proposition}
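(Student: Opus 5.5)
The paper presents this as a reformulation of \cite[Theorem 6.1.4]{Das-Simmons-Urbanski_gromov-hyperbolic-spaces_2017}, so the plan is to translate that classification into the present language. Das--Simmons--Urba\'nski classify an isometry $\gamma$ of a (possibly non-proper) Gromov-hyperbolic space by two quantities. The first is whether orbits $(\gamma^n o)_{n\in\Z}$ are bounded. The second, in the unbounded case, is the fixed set of $\gamma$ in $\partial X$ together with the value of $\ell(\gamma)$. This gives three cases:
\begin{itemize}
\item elliptic: bounded orbits;
\item parabolic: unbounded orbits, a unique fixed point $\xi\in\partial X$, and $\gamma^{\pm n}o\to\xi$;
\item loxodromic: $\ell(\gamma)>0$ and exactly two fixed points $\xi^\pm$, with $\gamma^{\pm n}o\to\xi^\pm$.
\end{itemize}
Since $(X,d)$ is Gromov-$\delta$-hyperbolic with $\delta=\ln 2/\varepsilon$, that theorem applies. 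The work is to check that, in each case, the number of accumulation points in $\partial X$ of the orbit $\{\gamma^n o\colon n\in\Z\}$ is $0$, $1$ or $2$ respectively.

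\textbf{Elliptic and loxodromic cases.} If the orbit is bounded, then $(\gamma^n o\mid\gamma^m o)_o$ stays bounded, so no sequence from the orbit converges to infinity. Hence there are $0$ accumulation points in $\partial X$.

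In the loxodromic case, $d(o,\gamma^n o)\ge |n|\ell(\gamma)$. Using the quasi-geodesic behaviour of $n\mapsto\gamma^n o$ (the Das--Simmons--Urba\'nski statement that $\gamma^n o\to\xi^+$ and $\gamma^{-n}o\to\xi^-$), every subsequence escaping to infinity has indices tending to $+\infty$ or to $-\infty$. It therefore converges to $\xi^+$ or to $\xi^-$, and $\xi^+\neq\xi^-$. So there are exactly $2$ accumulation points.

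\textbf{Parabolic case.} Here the orbit is unbounded, so some subsequence leaves every bounded set. The key point, which I take from the cited theorem, is that any sequence $\gamma^{n_k}o$ with $d(o,\gamma^{n_k}o)\to\infty$ converges to the unique fixed point $\xi$, for $n_k$ of either sign. The one extra thing to check is that an accumulation point in $\partial X$ really is the limit of a subsequence tending to infinity. This holds because the bordification topology is given by the neighbourhoods $V_o^t$, and points of $X$ in $V_o^t(u)$ are at distance $>t$ from $o$. So there is exactly $1$ accumulation point.

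\textbf{Independence and exhaustiveness.} The count does not depend on the base point. Indeed, $d(\gamma^n o,\gamma^n o')=d(o,o')$, so the two orbits are at bounded distance, and sequences at bounded distance define the same boundary limits. The trichotomy is exhaustive and exclusive because the counts $0$, $1$, $2$ are distinct.

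\textbf{Main obstacle.} The delicate step is the parabolic case without properness: one must rule out an orbit that is unbounded yet has no boundary accumulation point, or has several. This is exactly the content of the Das--Simmons--Urba\'nski theorem, which relies on the orbit being unbounded and $\ell(\gamma)=0$ forcing horoball-like behaviour around a unique fixed point. I would invoke that theorem rather than reprove it.
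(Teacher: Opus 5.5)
Your proposal is correct and follows the paper's route: the paper gives no argument beyond calling the statement a reformulation of Theorem~6.1.4 of Das--Simmons--Urba\'nski, and your check that their elliptic, parabolic and loxodromic cases give $0$, $1$ and $2$ accumulation points in $\partial X$ is exactly the implicit translation. One caution: your summary claim that a parabolic $\gamma$ satisfies $\gamma^{\pm n}o\to\xi$ is false without properness (an Edelstein-type parabolic isometry of $\Hyp^\omega$, acting on horospheres by a Hilbert-space isometry with unbounded but recurrent orbits, has $\gamma^{n_k}o\to o$ along a subsequence); however, your parabolic paragraph only uses the weaker and correct fact that escaping subsequences converge to $\xi$, which follows from $\gamma$ preserving horospheres centred at $\xi$, so the argument stands.
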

It follows that a non-trivial element $\gamma \in \Isom(X)$ is elliptic if and only if the $\gamma$-orbit of any point of $X$ is bounded.
A loxodromic element $\gamma\in \Isom(X)$ has a pair of (attractive, repulsive) fixed points $(\gamma^-,\gamma^+)\in (\partial X) \times (\partial X)$.

The following classification is \cite[Theorem 6.2.3]{Das-Simmons-Urbanski_gromov-hyperbolic-spaces_2017}.

\begin{theorem}[classification of group actions]
\label{thm:classification-G-Isom(X)}
A sub(semi)group $\Gamma \subset \Isom(X)$ is either:
\begin{enumerate}[noitemsep, align=left]
	\item[\emph{elliptic}:] when the $\Gamma$-orbit of some any point of $X$ is bounded; 
	\item[\emph{loxodromic}:] when $\Gamma$ contains a loxodromic element;
	\item[\emph{parabolic}:] when it is neither elliptic nor hyperbolic; 
\end{enumerate}
and parabolic subgroups must have a unique $\Gamma$-fixed point in $\partial X$.
\end{theorem}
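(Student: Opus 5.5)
The plan is to deduce the trichotomy from the classification of individual isometries (\Cref{prop:classification-g-Isom(X)}) together with a ping-pong/horofunction argument for subgroups without loxodromics. The three cases are mutually exclusive by definition: ``parabolic'' is defined as the complement of the other two, and a loxodromic element has unbounded orbits. So the real content is the final claim, that a parabolic $\Gamma$ (unbounded orbits, no loxodromic element) has a unique fixed point in $\partial X$.

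First I will establish existence of the fixed point. Fix $o\in X$. Since $\Gamma\cdot o$ is unbounded, there is a sequence $\gamma_n\in\Gamma$ with $d(o,\gamma_n o)\to\infty$. I then show that $\Gamma\cdot o$ has exactly one accumulation point $\xi\in\partial X$.

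The key step is: if $\Gamma$ has no loxodromic element, then $(\alpha o\mid\beta o)_o$ is large whenever $d(o,\alpha o)$ and $d(o,\beta o)$ are both large. Suppose instead that $\alpha o,\beta o$ are far from $o$ but $(\alpha o\mid\beta o)_o$ is bounded by some $C$. The standard Gromov lemma then applies: if $d(o,\gamma o)>2(\gamma^{-1}o\mid\gamma o)_o+6\delta$, then $\gamma$ is loxodromic, with $\delta=\log 2/\varepsilon$ the Gromov constant. I apply it to $\gamma=\alpha^{-1}\beta$ or $\gamma=\beta\alpha^{-1}$. Using the $\delta$-hyperbolic four-point inequality, one of these products has large displacement with small Gromov product of its forward and backward images, hence is loxodromic, a contradiction.

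This gives a Cauchy-type property, so all unbounded sequences in $\Gamma\cdot o$ converge to a single $\xi\in\partial X$. The point $\xi$ is independent of $o$ because orbits of different points stay at bounded distance, and it is $\Gamma$-invariant because $\gamma$ maps the accumulation set of $\Gamma\cdot o$ to that of $\Gamma\cdot\gamma o$.

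Next I will establish uniqueness. Suppose $\eta\neq\xi$ is also fixed. Consider the Busemann cocycle $\Bus_\xi(o,\gamma o)$ from \Cref{rem:Buseman-cocycle}. The map $\gamma\mapsto\Bus_\xi(\gamma o,o)$ is a quasi-morphism; when $\Gamma$ fixes both $\xi$ and $\eta$, it is close to a homomorphism, via the geodesic-like line between $\xi$ and $\eta$ built from strong hyperbolicity. Any $\gamma$ with large $\lvert\Bus_\xi\rvert$ then translates along that quasi-line and has positive stable translation length, since $\ell(\gamma)=\lim\frac1n d(o,\gamma^n o)\ge\lim\frac1n\lvert\Bus_\xi(\gamma^n o,o)\rvert$. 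Such a $\gamma$ is loxodromic, which is excluded. So $\Bus_\xi$ is bounded on $\Gamma$. Fixing two boundary points with bounded Busemann drift forces bounded orbits near the quasi-line, which contradicts non-ellipticity. Hence $\xi$ is unique.

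The main obstacle is the key step: the quantitative ping-pong/Gromov lemma in possibly non-proper, non-geodesic spaces. I would try first to import it directly, as in Das--Simmons--Urbański (the source of the theorem), rather than reprove it.
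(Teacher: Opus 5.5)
Your proposal is correct. The paper gives no argument of its own here: it imports \cite[Theorem 6.2.3]{Das-Simmons-Urbanski_gromov-hyperbolic-spaces_2017}, which covers semigroups, works in general (not necessarily strongly) hyperbolic spaces, and refines the loxodromic case further. Your route is a self-contained proof of exactly what is stated for subgroups. Gromov's loxodromicity criterion gives the estimate $(\alpha o\mid\beta o)_o\ge\tfrac12\min\{d(o,\alpha o),d(o,\beta o)\}-O(\delta)$. This shows that \emph{every} orbit sequence going to infinity converges to one point $\xi$, which in a non-proper space says more than $\Lambda(\Gamma)=\{\xi\}$. Strong hyperbolicity then makes uniqueness clean.

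Two steps should be made explicit.

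\emph{The key step.} The bound $(\alpha o\mid\beta o)_o\le C$ alone does not control $(\gamma o\mid\gamma^{-1}o)_o$. You must also use that $\alpha$ and $\beta$ are themselves non-loxodromic. Write $a=d(o,\alpha o)$, $b=d(o,\beta o)$, $c=(\alpha o\mid\beta o)_o$. The contrapositive of the criterion gives $(\alpha o\mid\alpha^{-1}o)_o\ge a/2-3\delta$ and $(\beta o\mid\beta^{-1}o)_o\ge b/2-3\delta$. Hence, when $a,b>2c+10\delta$, the four-point inequality gives $(\alpha^{-1}o\mid\beta^{-1}o)_o\le c+2\delta$. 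With this, $\gamma=\alpha^{-1}\beta$ always works, and there is no need for $\beta\alpha^{-1}$. One has exactly $d(o,\gamma o)=a+b-2c$, $(\gamma o\mid\alpha^{-1}o)_o=a-c$ and $(\gamma^{-1}o\mid\beta^{-1}o)_o=b-c$. So the four-point inequality yields $(\gamma o\mid\gamma^{-1}o)_o\le c+4\delta$, hence $d(o,\gamma o)>2(\gamma o\mid\gamma^{-1}o)_o+6\delta$, and $\gamma$ is loxodromic. For semigroups, where $\alpha^{-1}\beta\notin\Gamma$, a similar computation works with $\alpha\beta$.

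\emph{Uniqueness.} Drop the quasi-line, which is awkward since $X$ need not be geodesic. By \Cref{rem:Buseman-cocycle}, once $\Gamma$ fixes $\xi$ the map $\gamma\mapsto\Bus_\xi(\gamma o,o)$ is an honest homomorphism on $\Gamma$. It satisfies $\lvert\Bus_\xi(\gamma o,o)\rvert\le\ell(\gamma)$, so it vanishes identically. If $\Gamma$ also fixed some $\eta\neq\xi$, the same would hold at $\eta$. Then $(\gamma o\mid\xi)_o=(\gamma o\mid\eta)_o=\tfrac12 d(o,\gamma o)$. The four-point inequality then gives $d(o,\gamma o)\le 2(\xi\mid\eta)_o+2\delta$, contradicting unbounded orbits.
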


Hence the translation length vanishes on $\Gamma$ if and only if it is elliptic or parabolic.
However, beware that some parabolic subgroups contain only elliptic elements \cite[Example 11.2.18]{Das-Simmons-Urbanski_gromov-hyperbolic-spaces_2017}.

\begin{definition}[limit set]
A subgroup $\Gamma \subset \Isom(X)$ has \emph{limit set} defined for any $o\in X$ by:
\begin{equation*}
	\Lambda(\Gamma)=\{c \in \partial X \colon \ \exists (\gamma_n)\in G^{\mathbb{N}},\, \gamma_n o \to c\}.
\end{equation*}
\end{definition}

Thus $\Gamma$ is of non-elliptic type if and only if $\Lambda(\Gamma)\ne \emptyset$ in which case $\Lambda(\Gamma)$ is the minimal non-empty closed $\Gamma$-invariant subset of $\partial X$ (\cite[Proposition 7.4.1]{Das-Simmons-Urbanski_gromov-hyperbolic-spaces_2017}).
Note that $\Card \Lambda(\Gamma)\ge 2$ if and only if $\Gamma$ is loxodromic.

By \cite[Proposition 7.3.1]{Das-Simmons-Urbanski_gromov-hyperbolic-spaces_2017}, a subgroup $\Gamma\subset \Isom(X)$ has limit set satisfying either $\Card \Lambda(\Gamma) \le 2$ or $\Card \Lambda(\Gamma) \ge \Card(\R)$.
The following is \cite[Propositions 7.4.7]{Das-Simmons-Urbanski_gromov-hyperbolic-spaces_2017}.

\begin{proposition}[loxodromic axes are dense]
\label{prop:loxodromic-axes-dense}
For a loxodromic subgroup $\Gamma\subset \Isom(X)$, 
the subset of pairs $(\gamma^-,\gamma^+)\in (\partial X)\times(\partial X)$ of (repulsive, attractive) fixed points of loxodromic $\gamma\in \Gamma$ is contained and dense in $\Lambda(\Gamma)\times \Lambda(\Gamma)$.
\end{proposition}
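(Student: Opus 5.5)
The statement is cited as \cite[Proposition 7.4.7]{Das-Simmons-Urbanski_gromov-hyperbolic-spaces_2017}, so I will only sketch how one would establish it directly. There are two claims to prove, containment and density.

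\textbf{Containment.} Take a loxodromic $\gamma\in\Gamma$ and a point $o\in X$. By \Cref{prop:classification-g-Isom(X)}, the sequences $\gamma^n o$ and $\gamma^{-n}o$ converge to $\gamma^+$ and $\gamma^-$ respectively. These are limits of $\Gamma$-orbit points, so both lie in $\Lambda(\Gamma)$.

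\textbf{Density: strategy.} Fix $(a,b)\in\Lambda(\Gamma)^2$ and neighbourhoods $U\ni a$, $V\ni b$. We seek a loxodromic $g\in\Gamma$ with $g^-\in U$ and $g^+\in V$. I will separate the generic case $a\neq b$ from the case $a=b$.

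\textbf{Density, generic case $a\ne b$.} Choose sequences with $\alpha_n o\to a$ and $\beta_n o\to b$. The natural candidates are $g_n=\beta_n\alpha_n^{-1}$, or products of the form $\beta_n h\alpha_n^{-1}$ for a fixed loxodromic $h$. To show such elements are loxodromic with fixed points near $(a,b)$, I will use a ping-pong / north–south dynamics criterion. The criterion is as follows: if $g$ maps the complement of a small neighbourhood $U'$ of $a$ into a small neighbourhood $V'$ of $b$, with $\overline{U'}\cap\overline{V'}=\emptyset$, and $g^{-1}$ does the reverse, then $g$ is loxodromic with $g^+\in V'$ and $g^-\in U'$. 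In Gromov-hyperbolic terms, this can be made quantitative. It suffices that $d(o,go)$ be large compared to $2(go\mid g^{-1}o)_o+O(\delta)$, which forces $\ell(g)>0$, and that $go$ lie near $b$ while $g^{-1}o$ lies near $a$. The attracting point $g^+$ is then within visual distance $O(e^{-\varepsilon(\ldots)})$ of $go$.

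\textbf{Arranging the criterion.} This is where the fixed loxodromic $h$ is used. Since $\Card\Lambda(\Gamma)\ge2$, I can conjugate $h$ so that its fixed points avoid $\{a,b\}$. Then $g_n=\beta_n h^{k}\alpha_n^{-1}$ with $k$ large has $g_n o$ close to $b$, because $\beta_n$ pushes $h^k\alpha_n^{-1}o$ toward $b$. Similarly $g_n^{-1}o$ is close to $a$. The role of $h^k$ is to ensure that the Gromov product $(g_n o\mid g_n^{-1}o)_o$ stays bounded while $d(o,g_n o)\to\infty$.

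\textbf{Density, case $a=b$.} Approximate $(a,a)$ by pairs $(a,b')$ with $b'\ne a$ close to $a$. This is possible because $\Lambda(\Gamma)$ has no isolated points, since $\Card\Lambda(\Gamma)\ge\Card(\R)$ or $\Lambda(\Gamma)$ is a pair. In the elementary case $\Card\Lambda=2$, every loxodromic element has fixed points $\{\xi,\eta\}=\Lambda$. The pairs $(\xi,\eta)$ and $(\eta,\xi)$ are realised by some loxodromic $\gamma$ and its inverse $\gamma^{-1}$. However, the diagonal pair $(\xi,\xi)$ is not the fixed-point pair of any loxodromic element. So in this case I will read the statement as density in $\Lambda^2$ minus the diagonal, or as intended for non-elementary $\Gamma$.

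\textbf{Main obstacle.} The main difficulty is the quantitative ping-pong estimate in a possibly non-proper space. Specifically, one must show that $\beta_n h^k\alpha_n^{-1}$ is loxodromic with fixed points controlled uniformly. I would prove this with the four-point Gromov inequality, using the standard lemma that $d(o,go)>2(go\mid g^{-1}o)_o+2\delta$ implies $g$ is loxodromic.
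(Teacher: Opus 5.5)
Your containment argument is fine. The density argument, however, has a genuine gap, and it is not confined to the lineal case $\Card\Lambda(\Gamma)=2$. For focal groups density fails even though $\Lambda(\Gamma)$ is infinite. Take $\Gamma=\langle z\mapsto 2z,\ z\mapsto z+1\rangle$ acting on $\Hyp^2$: then $\Lambda(\Gamma)=\R\cup\{\infty\}$, but every loxodromic element $z\mapsto 2^kz+c$ ($k\neq 0$) fixes $\infty$, so no fixed-point pair comes near $(0,1)$. A correct proof therefore needs non-elementarity in the paper's sense (two loxodromics with disjoint fixed-point pairs, \Cref{lem:Tits-alternative}; ``general type'' for Das--Simmons--Urba\'nski, whom the paper only cites). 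Your generic-case argument never uses it.

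Concretely, the argument breaks at ``conjugate $h$ so that its fixed points avoid $\{a,b\}$''. In a focal group every conjugate of $h$ still fixes $\infty$. More fundamentally, $\{a,b\}$ is not the relevant set. From $(\beta_n y\mid\beta_n o)_o=d(o,\beta_n o)-(y\mid\beta_n^{-1}o)_o$, the requirement $g_no\to b$ forces $(h^k\alpha_n^{-1}o\mid\beta_n^{-1}o)_o$ to stay bounded, and symmetrically for $g_n^{-1}o$. So what matters is where the backward orbit points $\alpha_n^{-1}o$ and $\beta_n^{-1}o$ sit relative to $h^{-}$ and $h^{+}$. In the example, take $\alpha_n(z)=2^{-n}z$ and $\beta_n(z)=2^{-n}z+1$. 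Whatever $h$ and $k$ you choose, $h^k(z)=2^{K}z+C$ with $K\neq0$, and $g_n(z)=2^{K}z+1+2^{-n}C$ always fixes $\infty$, because both backward sequences tend to $\infty$. Moreover, in a non-proper $X$ the sequences $\alpha_n^{-1}o$ need not subconverge in $X\sqcup\partial X$, so you cannot just take $h$ generic with respect to their limits. The real issue is this choice of $h$, not the quantitative ping-pong estimate you single out.

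To repair your route, assume $\Gamma$ non-elementary and take three loxodromics $h_1,h_2,h_3$ with six distinct fixed points. For each $n$, use an $h_i$ such that $(\alpha_n^{-1}o\mid h_i^-)_o$ and $(\beta_n^{-1}o\mid h_i^+)_o$ are both bounded. Such an $h_i$ exists because, by $\delta$-hyperbolicity, a point of $X$ can have Gromov product exceeding $\max_{i\neq j}(\xi_i\mid\xi_j)_o+\delta$ with at most one of finitely many distinct boundary points $\xi_i$.

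The more standard route has two steps. (i) Attracting fixed points are dense in $\Lambda(\Gamma)$: their closure is a non-empty closed invariant set, and $\Lambda(\Gamma)$ is minimal for non-elementary $\Gamma$. (ii) For $a\neq b$, pick loxodromics $\alpha,\beta$ with $\alpha^+$ near $b$, $\beta^+$ near $a$ and $\{\alpha^\pm\}\cap\{\beta^\pm\}=\emptyset$, which (i) and non-elementarity allow. Then $(\alpha^m\beta^{-m})^+\to\alpha^+$ and $(\alpha^m\beta^{-m})^-\to\beta^+$; this is the same product trick the paper uses in \Cref{lem:equivariant_cross_ratio_preserving_map}.

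Finally, your diagonal case is also unjustified: having cardinality at least $\Card(\R)$ does not exclude isolated points. Perfectness follows instead from (i). An isolated $p\in\Lambda(\Gamma)$ would have to equal $h^+$ for some loxodromic $h$. Then $h^nq\to p$ with $h^nq\neq p$ for any $q\in\Lambda(\Gamma)\setminus\{h^\pm\}$, so $p$ is not isolated.
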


A representation $\rho\colon \Gamma \to \Isom(X)$ inherits all objects and attributes of its image $\rho(\Gamma)\subset \Isom(X)$, in particular its translation length function denoted $\ell_\rho$, its limit set denoted $\Lambda(\rho)$, its type (elliptic, parabolic, loxodromic), et caetera.

\begin{remark}[infimum displacement]
\label{rem:infimum-translation-length}
When $(X,d)$ is also $\operatorname{CAT}(0)$, the \cite[Exercise II.6.(1)]{Bridson-Haefliger_metric-non-positive-curvature_1999} leads to showing that the translation length of $\gamma\in \Isom(X,d)$ equals its \emph{infimum displacement}:
\begin{equation*}
	\ell(\gamma) = \inf\{d(o, \gamma(o)) \colon o\in X \}.
\end{equation*}
In that case, for a loxodromic $\gamma\in \Isom(X)$, there exists a unique geodesic connecting its fixed points $(\gamma^-,\gamma^+)$, and that is $\{o\in X\colon d(o,\gamma o) = \ell(\gamma)\}$.
This follows from the description of loxodromic-axes in $\operatorname{CAT}(0)$ spaces by \cite[Theorem 6.8]{Bridson-Haefliger_metric-non-positive-curvature_1999}, and the closeness of long geodesic segments with close endpoints in strongly hyperbolic spaces as measured by \Cref{rem:NS-strongly-hyperbolic-characterisation}. 

In particular, denoting the limit set of $\Isom(X)$ by $\Lambda$, this yields a dense subset of $\Lambda \times \Lambda$ that are joined by a unique geodesic. 
\end{remark}

\begin{question}[when strongly-$1$-hyperbolic implies $\operatorname{CAT}(-1)$]
\label{quest:stron-hyp+CAT0->CAT(-1)}
Let $(X,d)$ be a strongly-$1$-hyperbolic space that is $\operatorname{CAT(0)}$: when is it $\operatorname{CAT}(-1)$ ? (a converse to \cite[Theorem 4.2]{Nica-Spakula_strong-hyperbolicity_2016}).
\end{question}

\subsection{Characterizing subgroups by length functions and cross-ratios}

This subsection shows that the translation length function or cross-ratio contains enough information to recognize which subgroups of $\Isom(\Gamma)$ have limit set of cardinal $\ge 3$ (hence uncountably infinite), and when so that they determine one another.

Recall from \Cref{rem:Buseman-cocycle} the definition and properties of Buseman functions for strongly hyperbolic spaces.
We will first recover from Buseman functions the translation lengths (never mentioned in \cite{Das-Simmons-Urbanski_gromov-hyperbolic-spaces_2017}, where only dynamical derivatives are considered \cite[§4.2.3]{Das-Simmons-Urbanski_gromov-hyperbolic-spaces_2017}).

\begin{lemma}[translation length from Buseman]
\label{lem:Buseman-length-function}
Consider a loxodromic $\gamma \in\Isom(X)$ with (attractive, repulsive) fixed points $(\gamma^+, \gamma^-)$.
For every $o\in X$ we have
\[\Bus_{\gamma^+}(o,\gamma o)= \ell(\gamma) =\Bus_{\gamma^-}(\gamma o, o).\]
In particular, the dynamical derivative defined in \cite[§4.2.3]{Das-Simmons-Urbanski_gromov-hyperbolic-spaces_2017} for the parameter $b=e$, of the loxodromic element $\gamma\in \Isom(X)$ at the points $\gamma^\pm \in \partial X$ is $\exp(\pm \ell(\gamma))$.
\end{lemma}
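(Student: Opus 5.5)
The plan rests on two facts: the Busemann cocycle at a fixed point of $\gamma$ is a homomorphism along the powers of $\gamma$, and its value can be compared with $d(o,\gamma^n o)$ up to a bounded error coming from Gromov products.

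\textbf{Step 1: additivity and invariance.} Set $b=\Bus_{\gamma^+}(o,\gamma o)$. Since $\gamma$ fixes $\gamma^+$, the invariance of $\Bus$ under the diagonal action (\Cref{rem:Buseman-cocycle}) gives $\Bus_{\gamma^+}(\gamma^k o,\gamma^{k+1}o)=b$ for every $k$. Additivity then gives
\begin{equation*}
\Bus_{\gamma^+}(o,\gamma^n o)=nb .
\end{equation*}
Both sides are independent of $o$: a change of basepoint alters the left side by $\Bus_{\gamma^+}(o',o)+\Bus_{\gamma^+}(\gamma o,\gamma o')=0$. Hence it suffices to show $\tfrac1n\Bus_{\gamma^+}(o,\gamma^n o)\to\ell(\gamma)$.

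\textbf{Step 2: upper bound.} The triangle inequality for $\Bus$ gives $nb\le d(o,\gamma^n o)$. Dividing by $n$ and using \eqref{eq:translation-length_limit} yields $b\le \ell(\gamma)$.

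\textbf{Step 3: lower bound.} Write
\begin{equation*}
\Bus_{\gamma^+}(o,\gamma^n o)=(\gamma^n o\mid\gamma^+)_o-(o\mid\gamma^+)_{\gamma^n o}.
\end{equation*}
By \Cref{prop:classification-g-Isom(X)}, $\gamma^n o\to\gamma^+$, so the first term diverges. The task is to show that it is $d(o,\gamma^n o)-O(1)$, and that the second term is $O(1)$.

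For the second term, apply $\gamma^{-n}$: it equals $(\gamma^{-n}o\mid\gamma^+)_o$. This stays bounded because $\gamma^{-n}o\to\gamma^-\neq\gamma^+$, and the Gromov product extends continuously with $(\gamma^-\mid\gamma^+)_o<\infty$.

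For the first term, note that $d(o,\gamma^n o)-(\gamma^n o\mid\gamma^+)_o=(o\mid\gamma^+)_{\gamma^n o}$, which is exactly the quantity just bounded. So the two terms are controlled by one estimate, giving
\begin{equation*}
\Bus_{\gamma^+}(o,\gamma^n o)=d(o,\gamma^n o)-2(\gamma^{-n}o\mid\gamma^+)_o .
\end{equation*}
The correction is bounded by continuity of the Gromov product on the bordification at the pair $(\gamma^-,\gamma^+)$, which lies off the diagonal. Dividing by $n$ gives $b=\ell(\gamma)$.

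\textbf{Step 4: the repulsive point.} Since $\gamma^{-1}$ is loxodromic with attracting point $\gamma^-$ and $\ell(\gamma^{-1})=\ell(\gamma)$, Step 3 gives $\Bus_{\gamma^-}(o,\gamma^{-1}o)=\ell(\gamma)$. Applying invariance under $\gamma$ turns this into $\Bus_{\gamma^-}(\gamma o,o)=\ell(\gamma)$.

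\textbf{Step 5: dynamical derivative.} The statement about dynamical derivatives follows from their definition in \cite[§4.2.3]{Das-Simmons-Urbanski_gromov-hyperbolic-spaces_2017} as $b^{\pm\Bus}$ evaluated at the fixed point, with $b=e$.

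The main obstacle is the boundedness in Step 3. It relies on the continuous extension of Gromov products to $X\sqcup\partial X$ provided by strong hyperbolicity, and on the fact that $\gamma^+\neq\gamma^-$.
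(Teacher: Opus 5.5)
Your proof is correct and takes essentially the same route as the paper. Additivity and $\gamma$-invariance give $\Bus_{\gamma^+}(o,\gamma^n o)=n\Bus_{\gamma^+}(o,\gamma o)$, and the Busemann value is then written as $\pm d(o,\gamma^{\pm n}o)$ plus twice a Gromov product that stays bounded by continuity of the extended Gromov product at $(\gamma^-,\gamma^+)$; the paper runs this with $\gamma^{-n}$ and antisymmetry where you use $\gamma^{n}$ directly, and both treat the repulsive point by passing to $\gamma^{-1}$. Your bounded correction term $(\gamma^{-n}o\mid\gamma^+)_o$, based at $o$, is the right one: the paper's display bases it at $\gamma^{-n}o$, and that quantity equals $(\gamma^{n}o\mid\gamma^+)_o$, which diverges.
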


\begin{proof}
By antisymmetry and $\Gamma$-invariance of the Buseman function we have $\Bus_{\gamma^+}(o, \gamma^{-1} o) = -\Bus_{\gamma^+}(\gamma^{-1} o, o) = -\Bus_{\gamma^+}(o, \gamma o)$.
Moreover the second equality follows by applying the first to $\gamma^{-1}$ and $\gamma o$, using $\ell(\gamma^{-1})=\ell(\gamma)$ and $(\gamma^{-1})^+=\gamma^-$.

For all $o\in X$ and $n\in \N$ we have, using additivity and equivariance of the Buseman function $\Bus_{\gamma^+}\colon X\times X\to \R$ that $\Bus_{\gamma^+}(o,\gamma^n o) = \sum_1^n \Bus_{\gamma^+}(\gamma^{k-1} o, \gamma^{k}o)= n \Bus_{\gamma^+}(o, \gamma o)$.

From the \cite[Corollary 3.4.12]{Das-Simmons-Urbanski_gromov-hyperbolic-spaces_2017} applied to the identity \cite[Proposition 3.3.3 (h)]{Das-Simmons-Urbanski_gromov-hyperbolic-spaces_2017} we have
\(\Bus_{\gamma^+}(o,\gamma^{-n} o) = - d(o,\gamma^{-n}o) + 2 (o\mid \gamma^+)_{\gamma^{-n} o} \)
but $(o\mid \gamma^+)_{\gamma^{-n}o}\rightarrow (o\mid \gamma^+)_{\gamma^{-}} \in \R$.

By equivariance and antisymmetry $\Bus_{\gamma^+}(o,\gamma^n o) = -\Bus_{\gamma^+}(\gamma^n o, o) = -\Bus_{\gamma^+}(o,\gamma^{-n}o)$
so
$\Bus_{\gamma^+}(o,\gamma o) = \lim_n \tfrac{1}{n}\Bus_{\gamma^+}(o,\gamma^{n} o) = -\lim_n \tfrac{1}{n}\Bus_{\gamma^+}(o,\gamma^{-n} o) = -\lim_n \frac{-d(o,\gamma^{-n} o)}{n} = \ell(\gamma)$.
\end{proof}

\begin{lemma}[translation-length from cross-ratio]
\label{lem:translation-length-from-cross-ratio}
An element $\gamma \in \Isom(X)$ is loxodromic if and only if $\ell(\gamma)>0$.
In that case, it has (repulsive, attractive) fixed points $(\gamma^-, \gamma^+) \in \partial^2 X$ and:
\begin{equation}
	\label{eq:translation-length_Cr-boundary_log}
	\forall \xi \in X\sqcup \partial X\setminus \{\gamma^-,\gamma^+\} \colon
	\quad
	\ell(\gamma) 
	= \log  \Cr{\gamma \xi, \gamma^-; \xi, \gamma^+}
\end{equation}
\end{lemma}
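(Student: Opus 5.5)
The plan has two parts: first the characterization of loxodromic elements, then the cross-ratio formula, which will be reduced to Busemann functions and \Cref{lem:Buseman-length-function}.

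\emph{Loxodromic iff $\ell(\gamma)>0$.} If $\gamma$ is loxodromic, \Cref{lem:Buseman-length-function} gives $\ell(\gamma)=\Bus_{\gamma^+}(o,\gamma o)$. To see this is positive, I would write $\Bus_{\gamma^+}(o,\gamma^{n}o)=n\,\Bus_{\gamma^+}(o,\gamma o)$ and use the Gromov-product expression from the proof of that lemma to get $\Bus_{\gamma^+}(o,\gamma^n o)= d(o,\gamma^n o)-2(o\mid\gamma^+)_{\gamma^n o}$ up to a bounded error. Since $\gamma^n o\to\gamma^+$ and $\gamma^{-n}o\to\gamma^-\neq\gamma^+$, the orbit is unbounded, and by $\Gamma$-invariance $(o\mid\gamma^+)_{\gamma^n o}=(\gamma^{-n}o\mid\gamma^+)_o$ stays bounded because $\gamma^{-n}o$ tends to $\gamma^-$, away from $\gamma^+$. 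Hence $n\,\ell(\gamma)=d(o,\gamma^n o)+O(1)\to\infty$, which forces $\ell(\gamma)>0$. Conversely, if $\gamma$ is elliptic its orbits are bounded, so $\ell=0$. If $\gamma$ is parabolic, it fixes a unique point $\xi$ of $\partial X$. Then $\Bus_\xi(o,\gamma^n o)=n\,\Bus_\xi(o,\gamma o)$, and if this were nonzero one would get two distinct accumulation points (using $\gamma^{-1}$ as well), a contradiction. The remaining obstacle is to show $d(o,\gamma^n o)=o(n)$ directly. I expect this to be the main difficulty, and would cite the Das–Simmons–Urbański statement that parabolic isometries have trivial dynamical derivative, equivalently that only loxodromics have positive translation length (their Proposition 6.1.?).

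\emph{The cross-ratio formula.} First take $\xi=x\in X$. I would write the cross-ratio with base point $o=x$ via \eqref{eq:Cr-boundary-do}, or more simply pass to the limit in \eqref{eq:def-cross-ratio-X}: take $y_n\to\gamma^-$ and $z_n\to\gamma^+$ in $X$. Then
\[
2\log\Cr{\gamma x,y_n;x,z_n}=d(\gamma x,y_n)-d(y_n,x)+d(x,z_n)-d(\gamma x,z_n).
\]
In the limit this becomes $\Bus_{\gamma^-}(\gamma x,x)+\Bus_{\gamma^+}(x,\gamma x)=2\ell(\gamma)$ by \Cref{lem:Buseman-length-function}. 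The convergence is justified by the continuous extension of the cross-ratio together with the continuous extension of Busemann functions from \Cref{rem:Buseman-cocycle}.

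For general $\xi\in X\sqcup\partial X\setminus\{\gamma^\pm\}$, I would show that $\xi\mapsto\Cr{\gamma\xi,\gamma^-;\xi,\gamma^+}$ is continuous on this domain, since the quadruple has at least three distinct entries, so the cross-ratio is continuous there with finite values. Then I would approximate a boundary point $\xi$ by points of $X$. Alternatively, the \eqref{eq:Cr-Cocycle} identity gives
\[
\Cr{\gamma\xi,\gamma^-;\xi,\gamma^+}=\Cr{\gamma\xi,\gamma^-;\gamma x,\gamma^+}\,\Cr{\gamma x,\gamma^-;x,\gamma^+}\,\Cr{x,\gamma^-;\xi,\gamma^+},
\]
and the first and last factors are inverse to each other by $\gamma$-invariance and \eqref{eq:Cr-Inversion}.
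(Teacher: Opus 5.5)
Your proposal is correct and follows essentially the paper's route. The paper cites Coornaert--Delzant--Papadopoulos for the whole equivalence between being loxodromic and $\ell(\gamma)>0$; you give a valid direct Busemann argument for the loxodromic direction and cite only the parabolic case, so you should pin that citation to a precise statement. The formula is then obtained exactly as in the paper, via $2\log\Cr{\gamma x,\gamma^-;x,\gamma^+}=\Bus_{\gamma^-}(\gamma x,x)+\Bus_{\gamma^+}(x,\gamma x)$, \Cref{lem:Buseman-length-function}, and continuity up to $\xi\in\partial X$; your cocycle/inversion shortcut is an equally valid alternative for that last step, since all the quadruples involved consist of distinct points.
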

\begin{proof}
The criterion on $\ell(\gamma)>0$ is \cite[Chapter 10, Proposition 6.3]{Coornaert-Delzant-Papadopoulos_geometrie-groupes-hyperboliques_1990}.
Now assume $\ell(\gamma)>0$.

For all $o\in X$ we compute that $\log  \Cr{\gamma o, \gamma^{-}, o, \gamma^{+}} = \tfrac{1}{2}(\Bus_{\gamma^-}(\gamma o,  o)+\Bus_{\gamma^+}( o,\gamma o))$ and by \cite[Proposition 4.2.16]{Das-Simmons-Urbanski_gromov-hyperbolic-spaces_2017} both terms are equal to the translation length $\ell(\gamma)$ by \Cref{lem:Buseman-length-function}.
(One may also work in the polar coordinates of $(\gamma^-,o,\gamma^+)$ and apply \cite[Lemma 4.6.3]{Das-Simmons-Urbanski_gromov-hyperbolic-spaces_2017}.)
The continuous extension of the cross-ratio to quadruple of distinct points in the bordification shows that the identity holds in the limit where $o\in X$ converges to $\xi \in X \sqcup \partial X \setminus \{\gamma^-,\gamma^+\}$.
\end{proof}

\begin{lemma}[cross-ratio from translation-length]
\label{lem:cross-ratio-from-translation-length}
For loxodromic $\alpha,\beta \in \Isom(X)$ with distinct (attractive, repulsive) fixed points $(\alpha^+,\alpha^-), (\beta^+,\beta^-)$:
\begin{equation}
	\label{eq:translation-length_Cr-boundary_exp_1}
	\Cr{\alpha^-, \alpha^+; \beta^-,\beta^+} 
	= \lim_{m,n\to+\infty}\exp \tfrac{1}{2}\left(\ell(\alpha^m)+\ell(\beta^n)-\ell(\alpha^m\beta^{n})\right)
\end{equation}
\end{lemma}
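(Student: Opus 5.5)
The plan is to express each translation length in the limit through the displacement of a suitable base point, and then to read the resulting combination of distances as the cross-ratio of four points converging to $\alpha^\pm,\beta^\pm$.

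Fix $o\in X$. For loxodromic $\gamma$ we have $\ell(\gamma)=\lim_k \tfrac1k d(o,\gamma^k o)$, but we need something sharper: an additive estimate $\ell(\gamma)= d(o,\gamma o)-2(\gamma^{-1}o\mid \gamma o)_o+O(\delta)$ is typical for Gromov hyperbolic spaces, and the error must vanish in the limit. So the first step is to reduce to an exact limiting identity using Buseman functions and \Cref{lem:Buseman-length-function}.

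\textbf{Step 1.} For $\alpha^m$ with $m$ large, I will show that
\[
\ell(\alpha^m)=\lim \bigl(d(\alpha^{-m}o, o)+\text{correction}\bigr).
\]
It is cleaner to work directly with four points: $x_m=\alpha^{-m}o\to\alpha^-$, $x'_m=\alpha^{m}o\to\alpha^+$, $y_n=\beta^{-n}o\to\beta^-$ and $y'_n=\beta^n o\to\beta^+$. By the continuous extension of the cross-ratio to the bordification (the four endpoints are distinct), we get $\Cr{x_m,x'_m;y_n,y'_n}\to\Cr{\alpha^-,\alpha^+;\beta^-,\beta^+}$ as $m,n\to\infty$. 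Writing out \eqref{eq:def-cross-ratio-X} and using invariance of $d$, the exponent is
\[
\tfrac12\bigl(d(o,\alpha^{2m}o)+d(o,\beta^{2n}o)-d(o,\alpha^m\beta^n o)-d(o,\beta^{-n}\alpha^{-m}o)\bigr),
\]
and the last two terms are both $d(o,\alpha^m\beta^n o)$, since $\beta^{-n}\alpha^{-m}=(\alpha^m\beta^n)^{-1}$.

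\textbf{Step 2.} We need $d(o,\alpha^{2m}o)-2\ell(\alpha^m)\to$ a finite constant $c_\alpha$, and likewise for $\beta$ and for $\alpha^m\beta^n$. By \Cref{lem:Buseman-length-function} and the identity $\Bus_{\gamma^+}(o,\gamma^{-n}o)=-d(o,\gamma^{-n}o)+2(o\mid\gamma^+)_{\gamma^{-n}o}$ from its proof, we get $d(o,\alpha^{k}o)=k\ell(\alpha)+2(o\mid\alpha^-)_{\alpha^{k}o}+o(1)$, with $(o\mid\alpha^-)_{\alpha^k o}\to(o\mid\alpha^-)_{\alpha^+}$. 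So the constants converge. To avoid constant terms altogether, it is better to replace $o$ by points moving so that these Gromov products cancel. Alternatively, use the symmetric form $\Cr{\alpha^{-m}o,\alpha^m o;\beta^{-n}o,\beta^n o}$: then $d(\alpha^{-m}o,\alpha^mo)=2m\ell(\alpha)+2(\alpha^{-m}o\mid\ldots)$, which is again $2\ell(\alpha^m)$ plus a convergent correction.

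\textbf{Step 3 (main obstacle).} The key point is to control $d(o,\alpha^m\beta^n o)$ against $\ell(\alpha^m\beta^n)$ up to $o(1)$, which amounts to showing that $g=\alpha^m\beta^n$ is loxodromic, that its fixed points converge to $g^+\to\alpha^+$ and $g^-\to\beta^-$ (ping-pong), and that
\[
\ell(g)=\log\Cr{g\xi,g^-;\xi,g^+}
\]
by \Cref{lem:translation-length-from-cross-ratio}. I would take $\xi=\beta^{-n}o$, hence $g\xi=\alpha^m o$, and then expand the cross-ratio using the cocycle relation, so that the limit splits into $\Cr{\alpha^-,\alpha^+;\beta^-,\beta^+}$ times factors computing $\ell(\alpha^m)$ and $\ell(\beta^n)$ via \Cref{lem:translation-length-from-cross-ratio} applied at the moving points, which tend to $1$ by continuity of the cross-ratio and \eqref{eq:Cr=0}/\eqref{eq:Cr=infty}. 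The delicate part is the joint continuity, since $g^\pm$ depend on $(m,n)$ and the points $\alpha^m o$ and $\beta^{-n}o$ go to the boundary simultaneously. I would handle this by uniform north–south dynamics with strong hyperbolicity, i.e. the exponential estimate of \Cref{rem:NS-strongly-hyperbolic-characterisation}.
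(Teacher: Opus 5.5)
\paragraph{Verdict.} Your proposal has a genuine gap. Steps 1--2 are circular, and Step 3 leaves the only real difficulty unresolved.

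\paragraph{Steps 1--2.} There is first a slip in Step 1: $d(\beta^{-n}o,\alpha^m o)=d(o,\beta^n\alpha^m o)$, not $d(o,\alpha^m\beta^n o)$. So the two terms are displacements under the conjugate elements $\alpha^m\beta^n$ and $\beta^n\alpha^m$.

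More importantly, $d(\alpha^{-m}o,\alpha^m o)=2\ell(\alpha^m)+O(1)$. Hence the exponent of $\Cr{\alpha^{-m}o,\alpha^m o;\beta^{-n}o,\beta^n o}$ equals $\ell(\alpha^m)+\ell(\beta^n)-\ell(\alpha^m\beta^n)$, which is \emph{twice} the quantity in the statement, plus your bounded corrections. Comparing limits, the corrections are forced to converge to $-\log\Cr{\alpha^-,\alpha^+;\beta^-,\beta^+}$ whatever base points you choose, so they can never be made to cancel. For instance, take a tree whose axes are at distance $L$, with $o$ the foot on the axis of $\alpha$ of the common perpendicular: the corrections total $L$, while the cross-ratio is $e^{-L}$. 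Computing the limit of these corrections is equivalent to the lemma itself.

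Likewise, the opening claim of Step 3 is false in general: $d(o,\alpha^m\beta^n o)-\ell(\alpha^m\beta^n)$ does not tend to $0$. In a tree it tends to twice the distance from $o$ to the geodesic $(\beta^-,\alpha^+)$.

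\paragraph{Step 3 and the missing idea.} Your formula $\ell(g)=\log\Cr{g\xi,g^-;\xi,g^+}$ with $g=\alpha^m\beta^n$ and $\xi=\beta^{-n}o$, i.e.\ $\ell(g)=\Bus_{g^+}(\beta^{-n}o,\alpha^m o)$, is the right starting point. However, $g^+$ and $\alpha^m o$ both tend to $\alpha^+$, so continuity says nothing there. Naively replacing $g^+$ by $\alpha^+$ gives a wrong value, and an exponential estimate alone will not identify the limit.

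The missing idea is equivariance. Split at $o$ and use
\[
\Bus_{g^+}(o,\alpha^m o)=\Bus_{\alpha^{-m}g^+}(\alpha^{-m}o,o),
\]
where $\alpha^{-m}g^+=(\beta^n\alpha^m)^+\to\beta^+$ while $\alpha^{-m}o\to\alpha^-$. The other piece, $\Bus_{g^+}(\beta^{-n}o,o)$, has $g^+\to\alpha^+$ and $\beta^{-n}o\to\beta^-$. Both configurations are non-degenerate, so joint continuity of Gromov products at distinct boundary points gives
\[
\ell(\alpha^m\beta^n)=\Bus_{\alpha^+}(\beta^{-n}o,o)+\Bus_{\beta^+}(\alpha^{-m}o,o)+o(1).
\]
Combine this with $\ell(\alpha^m)=\Bus_{\alpha^+}(\alpha^{-m}o,o)$ and $\ell(\beta^n)=\Bus_{\beta^+}(\beta^{-n}o,o)$ from \Cref{lem:Buseman-length-function}. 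You obtain
\[
\ell(\alpha^m)+\ell(\beta^n)-\ell(\alpha^m\beta^n)=\Bus_{\alpha^+}(\alpha^{-m}o,\beta^{-n}o)+\Bus_{\beta^+}(\beta^{-n}o,\alpha^{-m}o)+o(1)=2\log\Cr{\alpha^{-m}o,\alpha^+;\beta^{-n}o,\beta^+}+o(1),
\]
which tends to $2\log\Cr{\alpha^-,\alpha^+;\beta^-,\beta^+}$ by continuity on the bordification. This is essentially the paper's argument.

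Note the factor $2$ here: $e^{\ell(\alpha^m\beta^n)}$ is asymptotic to $e^{\ell(\alpha^m)+\ell(\beta^n)}$ times the inverse \emph{square} of the cross-ratio. A cocycle splitting producing a single factor $\Cr{\alpha^-,\alpha^+;\beta^-,\beta^+}$, as you describe, would reproduce the factor-two slip of Step 2.
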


\begin{remark}[another proof for $\operatorname{CAT}(-1)$]
The \Cref{eq:translation-length_Cr-boundary_exp_1} is shown for rank one symmetric spaces in \cite[Theorem 1]{Kim_marked-length-rigidity-symmetric-space_2001}.
One may adapt its proof whenever a loxodromic $\gamma\in \Isom(X)$ has a unique geodesic translation axis connecting its fixed points given by $\{o\in X\colon d(o,\gamma o) = \ell(\gamma)\}$, for instance when $X$ is strongly hyperbolic and $\operatorname{CAT}(0)$ by \Cref{rem:infimum-translation-length}.

This proof does not generalize so easily to the general setting of strongly hyperbolic spaces.
Our proof exploits the continuity and precise asymptotics for Buseman functions and Gromov products over triples of distinct points in the bordification (\cite[3.4.10 \& 3.4.12]{Das-Simmons-Urbanski_gromov-hyperbolic-spaces_2017})
\end{remark}

\begin{proof}
Observe that for $m,n\in \N_{>0}$ we have $\alpha^m(\beta^n\alpha^m)^\pm = (\alpha^m\beta^n)^\pm$ and $\beta^n(\alpha^m\beta^n)^\pm = (\beta^n\alpha^m)^\pm$.
Let $o\in X$. 
By \Cref{lem:Buseman-length-function} we have $\ell(\alpha^m)+\ell(\beta^n)=\Bus_{\alpha^+}(\alpha^{-m}o,o)+\Bus_{\beta^+}(\beta^{-n}o,o)$ and using the antisymmetry and additivity of the Buseman function we have:
\[-\ell(\alpha^m\beta^n) 
= \Bus_{(\alpha^m\beta^n)^+}(\alpha^m o, \beta^{-n}o) 
= \Bus_{(\alpha^m\beta^n)^+}(\alpha^m o, o) + \Bus_{(\alpha^m\beta^n)^+}(o, \beta^{-n}o).\]
We may rewrite the term $\Bus_{(\alpha^m\beta^n)^+}(\alpha^m o, o) = \Bus_{(\beta^n\alpha^m)^+}(o, \alpha^{-m} o)$ using $\alpha^m$-invariance of the Buseman function and $\alpha^{-n}(\alpha^m\beta^n)^+=(\beta^n\alpha^m)^+$.

We say that two real sequences $s_k,s_k' \in \R$ indexed by $k\in \N$ are asymptotic and denote $s_k \asymp s_k'$ when $\lim_{k\to +\infty}(s_k-s_k') = 0$.
For sequences depending on several variables, we put the variable(s) tending to infinity in index.

As $m \to +\infty$ we have $(\alpha^m\beta^n)^+ \to \alpha^+$ hence
$\Bus_{(\alpha^m\beta^n)^+}(o, \beta^{-n} o) \asymp_m \Bus_{\alpha^+}(o, \beta^{-n}o)$,
and as $n \to +\infty$ we have $(\beta^n\alpha^m)^+ \to \beta^+$ hence
$\Bus_{(\beta^n\alpha^m)^+}(o,\alpha^{-m}o) \asymp_n \Bus_{\beta^+}(o, \alpha^{-m}o)$.
Hence $-\ell(\alpha^m\beta^n)\asymp_{m,n} \Bus_{\alpha^+}(o,\beta^{-n}o)+\Bus_{\beta^+}(o,\alpha^{-m}o)$
so using once again antisymmetry and additivity of the Buseman function we have
\begin{equation*}
	\ell(\alpha^m)+\ell(\beta^n)-\ell(\alpha^m\beta^n) 
	\asymp_{m,n}
	\Bus_{\alpha^+}(\alpha^{-m}o, \beta^{-n}o)+\Bus_{\beta^+}(\beta^{-n}o, \alpha^{-m}o) 
\end{equation*}
and expanding in terms of Gromov-products, using its continuity over triples in the bordification (\cite[Corollary ]{Das-Simmons-Urbanski_gromov-hyperbolic-spaces_2017}), and expressing everything in terms of distances, we have that $\ell(\alpha^m)+\ell(\beta^n)-\ell(\alpha^m\beta^n)$ is asymptotic to:
\begin{align*}
	&\asymp 
	(\alpha^+,\beta^{-n}o)_{\alpha^{-m}o}-(\alpha^+,\alpha^{-m}o)_{\beta^{-n}o}
	+
	(\beta^+,\alpha^{-m}o)_{\beta^{-n}o}-
	(\beta^+,\beta^{-n}o)_{\alpha^{-m}o}
	\\
	& \asymp 
	(\alpha^{m}o,\beta^{-n}o)_{\alpha^{-m}o}-
	(\alpha^{m}o,\alpha^{-m}o)_{\beta^{-n}o}+
	(\beta^{n}o,\alpha^{-m}o)_{\beta^{-n}o}-
	(\beta^{n}o,\beta^{-n}o)_{\alpha^{-m}o}
	\\
	& = d(\alpha^{m}o,\alpha^{-m}o)+
	d(\beta^{n}o,\beta^{-n}o)-
	d(\alpha^{m}o,\beta^{-n}o)-
	d(\beta^{n}o,\alpha^{-m}o)
	\\
	& \asymp 2 \log\Cr{\alpha^+,\alpha^-;\beta^+,\beta^-}
\end{align*}
as desired.
\end{proof}

\begin{remark}
Consider loxodromic $\alpha,\beta\in \Isom(\Hyp^n)$ with distinct fixed points. 
Applying identity \eqref{eq:translation-length_Cr-boundary_exp_1} to $(\alpha,\beta)$ and $(\alpha,\beta^{-1})$, and taking the ratio yields:
\begin{equation}
	\label{eq:translation-length_Cr-boundary_exp_2}
	\tfrac{\Cr{\alpha^-,\alpha^+;\beta^+,\beta^-}}{\Cr{\alpha^-,\alpha^+;\beta^-,\beta^+}}
	= \lim_{m,n\to+\infty}\exp \tfrac{1}{2} \left(\ell(\alpha^m\beta^{n})-\ell(\alpha^m\beta^{-n})\right)
\end{equation}
which measures the \emph{complex distance} between the axes $(\alpha^+,\alpha^-)$ and $(\beta^+,\beta^-)$ (see \Cref{rem:screw-axes-H3}).
\end{remark}

\begin{proposition}[translation length and limit set cross-ratio]
\label{prop:ell(Gamma)_Cr-Lambda(Gamma)}
For a subgroup $\Gamma\subset \Isom(X)$ with infinite limit set $\Lambda(\Gamma)\subset \partial X$, its translation length function $\ell \colon \Gamma \to \R_{\ge 0}$ determines and is determined by the cross-ratio on $\Lambda(\Gamma)^{4} \cap \partial^4 X$.
\end{proposition}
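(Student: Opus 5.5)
The proof has two directions, and I will treat each by combining \Cref{lem:translation-length-from-cross-ratio}, \Cref{lem:cross-ratio-from-translation-length} and the density of loxodromic fixed-point pairs from \Cref{prop:loxodromic-axes-dense}. Since $\Lambda(\Gamma)$ is infinite, $\Gamma$ is loxodromic. By \Cref{prop:loxodromic-axes-dense}, the set $P=\{(\gamma^-,\gamma^+)\colon \gamma\in\Gamma \text{ loxodromic}\}$ is contained and dense in $\Lambda(\Gamma)\times\Lambda(\Gamma)$. Moreover every loxodromic $\gamma\in\Gamma$ has $\gamma^\pm\in\Lambda(\Gamma)$, since $\gamma^{\pm n}o\to\gamma^\pm$.

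\emph{Cross-ratio determines length.} Take $\gamma\in\Gamma$. If $\gamma$ is not loxodromic, then $\ell(\gamma)=0$ by \Cref{lem:translation-length-from-cross-ratio}. The first question is whether the cross-ratio on $\Lambda(\Gamma)$ can detect that $\gamma$ is loxodromic. I would argue that $\gamma$ is loxodromic if and only if it has two distinct fixed points $\gamma^\pm\in\Lambda(\Gamma)$ and some $\xi\in\Lambda(\Gamma)\setminus\{\gamma^\pm\}$ with $\Cr{\gamma\xi,\gamma^-;\xi,\gamma^+}\neq1$. Here the data are understood as the cross-ratio together with the action of $\Gamma$ on $\Lambda(\Gamma)$, which is implicit in the statement. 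Such a $\xi$ exists because $\Lambda(\Gamma)$ is infinite. For the converse, an elliptic or parabolic element fixing two boundary points should be forced to preserve the cross-ratio in the way needed to give $1$: by the cocycle relation the quantity is a homomorphism-like function on the stabilizer of $(\gamma^-,\gamma^+)$, and I would check via Buseman functions that it equals $\exp$ of a Buseman displacement, which vanishes for non-loxodromic elements. Once $\gamma$ is known to be loxodromic, \Cref{lem:translation-length-from-cross-ratio} with $\xi\in\Lambda(\Gamma)\setminus\{\gamma^\pm\}$ gives $\ell(\gamma)=\log\Cr{\gamma\xi,\gamma^-;\xi,\gamma^+}$, which is cross-ratio data on $\Lambda(\Gamma)^4\cap\partial^4X$. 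Identifying $\gamma^\pm$ intrinsically in $\Lambda(\Gamma)$, as attracting and repelling points of the action, is the delicate point of this direction.

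\emph{Length determines cross-ratio.} First, $\ell$ determines which elements are loxodromic, namely those with $\ell>0$. Second, for loxodromic $\alpha,\beta$ with distinct fixed points, \Cref{lem:cross-ratio-from-translation-length} expresses $\Cr{\alpha^-,\alpha^+;\beta^-,\beta^+}$ purely through $\ell$. Distinctness of the fixed points must itself be read off from $\ell$. I would use that $\ell(\alpha^m\beta^n)-\ell(\alpha^m)-\ell(\beta^n)$ stays bounded exactly when the fixed-point sets are disjoint, and tends to $-\infty$ otherwise. Applying \Cref{eq:translation-length_Cr-boundary_exp_1} to $(\alpha,\beta^{-1})$ and to inverses gives all orderings of the four endpoints. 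The resulting values then need a consistent labelling: I would define the function on pairs of loxodromic elements and check that it depends only on their fixed-point pairs. This holds because the right-hand side equals the cross-ratio, which depends only on endpoints.

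So $\ell$ determines the cross-ratio on $P\times P$ (with at least three distinct points), and by continuity of the cross-ratio on $\partial^4X$ together with density of $P$ in $\Lambda(\Gamma)^2$, on all of $\Lambda(\Gamma)^4\cap\partial^4X$. The main obstacle is this density and continuity step at quadruples with coincidences: a point of $\partial^4X$ where, say, $a^-=b^+$ must be approximated by pairs in $P\times P$ with all four points distinct so that the lemma applies. The cross-ratio takes values in $[0,\infty]$ there, so continuity into $[0,\infty]$ is what I need. Since $\Lambda(\Gamma)$ is perfect and $P$ is dense, such approximations exist, and continuity of the extended cross-ratio on $\partial^4X$ concludes the argument.
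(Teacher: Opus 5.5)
Your argument is essentially the paper's. One direction uses \eqref{eq:translation-length_Cr-boundary_log} with some $\xi\in\Lambda(\Gamma)\setminus\{\gamma^\pm\}$. The other uses \eqref{eq:translation-length_Cr-boundary_exp_1} on the dense set of quadruples $(\alpha^-,\alpha^+;\beta^-,\beta^+)$ of four distinct points, then concludes by continuity on $\partial^4X$. Your perfectness argument for approximating degenerate quadruples just spells out the paper's one-line density claim.

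The paper takes the loxodromic elements and their fixed points as given, so your extra intrinsic-detection steps are not needed. Your Busemann check that $\Cr{\gamma\xi,a;\xi,b}=1$ for a non-loxodromic $\gamma$ fixing $a,b$ does work.

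Your disjointness test, however, is false as stated. If $\alpha^+=\beta^+$ or $\alpha^-=\beta^-$, then $\ell$ is additive on $\Mon(\alpha,\beta)$ by \Cref{lem:equal-fixed-points-from-length}. So $\ell(\alpha^m\beta^n)-\ell(\alpha^m)-\ell(\beta^n)=0$ for all $m,n$, even though the fixed-point pairs meet. Divergence to $-\infty$ happens only when $\alpha^+=\beta^-$ or $\alpha^-=\beta^+$.

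A correct test is to require eventual boundedness of this quantity for both $(\alpha,\beta)$ and $(\alpha,\beta^{-1})$. Equivalently, use the last item of \Cref{lem:equal-fixed-points-from-length}. Or simply drop the test, as the paper does.
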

\begin{proof}
The cross-ratio determines the translation length since by assumption, for any loxodromic $\gamma\in \Gamma$ we may choose $\xi \in \Lambda(\Gamma)\setminus \{\gamma^\pm\}$, hence use equation \eqref{eq:translation-length_Cr-boundary_log}.
The translation length determines the cross-ratio on $\Lambda^4\cap \partial^4X$ because by \Cref{prop:loxodromic-axes-dense} the set of quadruples of distinct points of the form $(\alpha^-,\alpha^+;\beta^-, \beta^+)$ for loxodromic $\alpha,\beta\in \Gamma$ is dense in $\Lambda(\Gamma)^4$, so we conclude using equation \eqref{eq:translation-length_Cr-boundary_exp_1} and continuity of the cross-ratio on $\partial^4 X$.
\end{proof}

We will need the following lemma to characterize when a subgroup contains two elements with disjoint fixed points. A function between two topological spaces is called \emph{proper} when the preimage of a compact subset is compact.

\begin{lemma}[equal fixed points from length function]
\label{lem:equal-fixed-points-from-length}
%
Consider loxodromic $\alpha,\beta \in \Isom(X)$, and denote $\Mon(\alpha, \beta)$ and $\Grp(\alpha, \beta)$ the monoid and group that they generate.

We have $\{\alpha^-,\alpha^+\}\cap \{\beta^-,\beta^+\}=\emptyset \iff \exists p\in \N^* 
\colon (m,n)\mapsto \ell(\alpha^{pm}\beta^{pn})$ is proper on $\Z^2$.

We have $(\alpha^+=\beta^+)\vee(\alpha^-=\beta^-) \iff \ell \colon \Mon(\alpha, \beta) \to \R_{\ge 0}$ is a morphism of monoids.

We have $(\alpha^+=\beta^-)\vee(\alpha^-=\beta^+) \iff 
\ell \colon \Mon(\alpha, \beta^{-1}) \to \R_{\ge 0}$ is a morphism of monoids.

In particular $\alpha, \beta$ have disjoint fixed point pairs if and only if $\ell \colon \Grp(\alpha, \beta) \to \R_{\ge 0}$ is not the absolute value of a homomorphism to $\R$.

\end{lemma}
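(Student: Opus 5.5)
The plan is to attach to each configuration of fixed points a Busemann character, and to compare ``additive'' behaviour of $\ell$ with the asymptotic defect given by \Cref{lem:cross-ratio-from-translation-length}. For a point $\xi\in\partial X$ fixed by $\gamma$, the quantity $h_\xi(\gamma)=\Bus_\xi(o,\gamma o)$ does not depend on $o$, by additivity and invariance in \Cref{rem:Buseman-cocycle}. It defines a homomorphism on $\Stab(\xi)$, and we will show $\ell(\gamma)=\lvert h_\xi(\gamma)\rvert$ for every $\gamma\in\Stab(\xi)$. The inequality $\ge$ comes from $\lvert\Bus_\xi(o,\gamma^n o)\rvert\le d(o,\gamma^n o)$. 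For the reverse, when $\gamma$ is loxodromic, $\xi$ is one of $\gamma^\pm$ and \Cref{lem:Buseman-length-function} gives equality. When $\gamma$ is not loxodromic, $\ell(\gamma)=0$ by \Cref{lem:translation-length-from-cross-ratio} and $h_\xi(\gamma)=0$ by the inequality. By \Cref{lem:Buseman-length-function} we also have $h_{\alpha^+}(\alpha)=\ell(\alpha)>0$ and $h_{\alpha^-}(\alpha)=-\ell(\alpha)<0$.

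\emph{Shared fixed points.} If $\alpha^+=\beta^+=\xi$, then $h_\xi$ is positive on both $\alpha$ and $\beta$, hence on all of $\Mon(\alpha,\beta)$, so $\ell=h_\xi$ is additive there. The case $\alpha^-=\beta^-$ is symmetric with $h$ negative. The third item follows from the second applied to $(\alpha,\beta^{-1})$, since $(\beta^{-1})^\pm=\beta^\mp$. For the first item, if the fixed pairs intersect in some $\xi$, then $\ell(\alpha^{pm}\beta^{pn})=p\lvert m a+n b\rvert$ with $a=h_\xi(\alpha)$ and $b=h_\xi(\beta)$ both nonzero. The strip $\{(m,n)\in\Z^2\colon\lvert ma+nb\rvert\le C\}$ contains infinitely many lattice points: by Dirichlet if $a/b$ is irrational, and along a line if it is rational. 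So the map is not proper for any $p$.

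\emph{Disjoint fixed points imply properness.} Choose $p$ large enough that $\alpha^{\pm p},\beta^{\pm p}$ play ping-pong on disjoint neighbourhoods of $\alpha^\pm$ and $\beta^\pm$. Then for $mn\ne0$ the word $\alpha^{pm}\beta^{pn}$ is cyclically reduced, and the broken path $o,\alpha^{pm}o,\alpha^{pm}\beta^{pn}o,\dots$ has Gromov products at the junctions bounded by a constant $c$. This constant depends only on the visual separation between $\{\alpha^\pm\}$ and $\{\beta^\pm\}$. The standard local-to-global estimate for $\delta$-hyperbolic spaces then gives
\[\ell(\alpha^{pm}\beta^{pn})\ \ge\ d(o,\alpha^{pm}o)+d(o,\beta^{pn}o)-2c-O(\delta)\ \ge\ p(\lvert m\rvert\ell(\alpha)+\lvert n\rvert\ell(\beta))-C,\]
which is proper. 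The lines $m=0$ and $n=0$ are handled directly by $\ell(\gamma^k)=\lvert k\rvert\ell(\gamma)$. \Cref{lem:cross-ratio-from-translation-length}, applied to $(\alpha,\beta)$ and to $(\alpha,\beta^{-1})$, confirms that the additive constant is asymptotically $2\log\Cr{\cdot}$.

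\emph{Additivity on the monoid forces a shared fixed point.} Assume $\ell$ is additive on $\Mon(\alpha,\beta)$. If the fixed pairs meet only in the crossed way, say $\alpha^+=\beta^-$, then $h_{\alpha^+}$ has opposite signs on $\alpha$ and $\beta$. This makes $\ell(\alpha^m\beta^n)=\lvert m\ell(\alpha)-n\ell(\beta)\rvert$ non-additive, so it remains to rule out disjoint fixed pairs. In that case, additivity and \Cref{lem:cross-ratio-from-translation-length} applied to any two loxodromics $g,h\in\Mon(\alpha,\beta)$ with disjoint fixed pairs (power maps are also monoid elements) give $\Cr{g^-,g^+;h^-,h^+}=1$. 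This is the step I expect to be the main obstacle: one must produce monoid elements whose fixed points degenerate to a configuration where the cross-ratio is $0$ or $\infty$ by \eqref{eq:Cr=0} and \eqref{eq:Cr=infty}, which contradicts continuity. I would first try $g_k=\alpha^k\beta$ and $h_k=\beta^k\alpha$. Their attracting points tend to $\alpha^+$ and $\beta^+$, and their repelling points tend to $\beta^{-1}\alpha^-$ and $\alpha^{-1}\beta^-$. If these limits do not directly produce a degenerate cross-ratio, I would mix in longer words such as $\alpha^k\beta^j\alpha^k$ so that an attracting point of one element approaches a repelling point of another.

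\emph{Final statement.} The fixed pairs are disjoint exactly when neither the second nor the third condition holds. If $\ell$ were $\lvert\phi\rvert$ for a homomorphism $\phi\colon\Grp(\alpha,\beta)\to\R$, it would be additive on $\Mon(\alpha,\beta)$ or on $\Mon(\alpha,\beta^{-1})$, according to the relative signs of $\phi(\alpha)$ and $\phi(\beta)$, so the fixed pairs would not be disjoint. Conversely, if the pairs share $\xi$, then $\Grp(\alpha,\beta)\subset\Stab(\xi)$ and $\ell=\lvert h_\xi\rvert$, so $\ell$ is the absolute value of a homomorphism.
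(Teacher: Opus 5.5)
Your Busemann character $h_\xi$ handles the forward halves of the second and third items and the converse of the first item cleanly, and more directly than the paper's appeal to dynamical derivatives. Your ping-pong/local-to-global bound is the paper's Schottky argument in different clothing.

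The genuine gap is the step you flag yourself: that additivity of $\ell$ on $\Mon(\alpha,\beta)$ excludes disjoint fixed pairs. No degeneration argument can supply it, because it is false for real trees, which are $\operatorname{CAT}(-1)$ and hence strongly hyperbolic. Let $F_2=\langle a,b\rangle$ act on its Cayley tree and take $\alpha=a$, $\beta=ab$. The four points $a^{\pm\infty}$, $(ab)^{\infty}$, $(b^{-1}a^{-1})^{\infty}$ are pairwise distinct. Yet every non-trivial element of $\Mon(a,ab)$ is a positive word in $a,b$, hence cyclically reduced, so its translation length equals its word length and $\ell$ is additive on $\Mon(a,ab)$.

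In this example every axis of a monoid element contains the edge $[1,a]$ traversed from $1$ to $a$. So every cross-ratio $\Cr{g^-,g^+;h^-,h^+}$ you could produce equals $1$. Moreover, attracting (resp. repelling) points stay in the closed set of infinite positive (resp. negative) words, so no limit configuration can make the cross-ratio $0$ or $\infty$. Replacing $\beta$ by $(ab)^{-1}$ breaks the backward half of the third item in the same way.

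The paper's proof has the same hole. It claims that additivity on $\Mon(\alpha,\beta)$ makes $(m,n)\mapsto\ell(\alpha^{pm}\beta^{pn})$ non-proper on $\Z^2$. But additivity on the monoid constrains nothing when $mn<0$, and in the example this map is in fact proper, as the first item predicts.

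Your proof of the last item also passes through this false converse, but it is easy to reroute through the first item, which you do prove in both directions. If $\ell=\lvert\phi\rvert$ for a homomorphism $\phi\colon\Grp(\alpha,\beta)\to\R$, then $\ell(\alpha^{pm}\beta^{pn})=p\lvert m\phi(\alpha)+n\phi(\beta)\rvert$, with $\phi(\alpha),\phi(\beta)\neq 0$. This is non-proper for every $p$ by your Dirichlet/rational-line argument, so the fixed pairs must meet. Conversely, a common fixed point $\xi$ gives $\Grp(\alpha,\beta)\subset\Stab(\xi)$ and $\ell=\lvert h_\xi\rvert$. This is also how the paper argues in \Cref{cor:elem-Hyp_length-homomorphism}.

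With this change your proposal establishes the first and last items and the forward halves of the second and third. The backward halves of the second and third cannot be established from strong hyperbolicity alone.
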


\begin{proof} 
We will first show all the implications $\implies$.

Suppose that $\{\alpha^-,\alpha^+\}\cap \{\beta^-,\beta^+\}=\emptyset$.
Fix a base point $o\in X$ and for $\delta>0$ and consider the open neighbourhood of $\xi\in \partial X$ defined by $V_o^\delta(\eta)=\{\xi \in \partial X \colon (\eta \mid \xi)_o<\delta\}$.
Choose $\delta>0$ so that $V_o^\delta(\alpha^\pm), V_o^\delta(\beta^\pm)$ are all disjoint at positive distance from one another for the visual metric $d^\varepsilon_o$.
There exist $p\in \N^*$ such that $\alpha^{\pm p}$ sends the complement of $V_o^\delta(\alpha^{\mp})$ into $U_o^\delta(\alpha^{\pm})$ and similarly $\beta^{\pm  k}$ sends the complement of $V_o^\delta(\beta^{\mp})$ into $U_o^\delta(\beta^{\pm})$.
In such a Schottky position, we may play ping-pong to deduce that the elements $a=\alpha^{p}$ and $a=\beta^{p}$ freely generate a rank-$2$ free group.
Denoting the $\{a^{\pm 1} ,b^{\pm1}\}$-word length by $\lvert \cdot \rvert \colon \Grp(a,b)\to \N$, there is by \cite[Equation (10.3.6)]{Das-Simmons-Urbanski_gromov-hyperbolic-spaces_2017} a $C\in \R_{>0}$ such that for all reduced $\gamma =a^{m_1}b^{n_1}\cdots a^{m_l}b^{n_l} \in \Grp(a,b)$, we have, denoting $C'=C\min\{\ell(a),\ell(b)\} \in \R_{>0}$, that
\begin{equation*}
	d(\gamma o,o) \geq C\sum_{k=1}^l d(a^{n_k}o,o)+d(b^{m_k}o,o)
	\geq C \sum_{i=1}^k\lvert m_k\rvert \ell(a)+\lvert n_k\rvert \ell(b) \geq C' \lvert \gamma \rvert
\end{equation*}
In particular, for all $m,n\in\Z$ we have that $\gamma=a^m b^n$ satisfies $\lvert \gamma ^l\rvert =l\lvert \gamma \rvert$ hence $\ell(\gamma)\geq C'\lvert \gamma \rvert$ proving properness of the length function on the subset $\{a^nb^m \colon n,m\in\Z\}$.

Suppose that $\alpha^+=\beta^+$.
Every non-trivial element $\gamma\in \Mon(\alpha, \beta)$ is loxodromic with attractive fixed point $\gamma^+=\alpha^+=\beta^+$, and using \cite[Proposition 4.2.16]{Das-Simmons-Urbanski_gromov-hyperbolic-spaces_2017} at that point yields the additivity of translation lengths (the cocycle rule f or $\ell(\gamma)=\log \lvert \Cr{\gamma \xi, \gamma^-, \xi, \gamma^+} \rvert$). 
The implications of $\alpha^-=\beta^-$ and of $(\alpha^+=\beta^-)\vee(\alpha^-=\beta^+)$ follow by applying the previous one to $(\alpha^{-1},\beta^{-1})$ and to $(\alpha^{+1},\beta^{-1})$.

Now we may easily prove the converse implications $\impliedby$. 

If $\alpha, \beta$ do not have disjoint fixed point pairs then any of the previous morphism properties implies that for all $p\in \N^*$ the function $(m,n) \mapsto \ell(\alpha^{pm}, \beta^{pn})$ is not proper on $\Z^2$.

If $\ell\colon \Mon(\alpha,\beta) \to \R_{\ge 0}$ is a morphism, then $\forall p\in \N^*$ the function $(m,n) \mapsto \ell(\alpha^{pm}, \beta^{pn})$ is not proper on $\Z^2$, so the contrapositive of the first implication implies $\{\alpha^\pm\}\cap\{\beta^\pm\}\neq\emptyset$. If $\alpha^+\neq\beta^+$ and $\alpha^-\neq\beta^-$, then $\alpha^+=\beta^-$ or $\alpha^-=\beta^+$. 
By the previous discussion, we deduce that $\ell \colon \Mon(\alpha,\beta^{-1})\to \R_{\ge 0}$ is a morphism of monoids. 
Consequently $\ell \colon \Grp(\alpha, \beta)\to \R_{\ge 0}$ is a group homomorphism, hence it is trivial, namely $\ell(\beta)=0$, a contradiction.
\end{proof}

\section{Geometry of the algebraic hyperbolic space}

For a cardinal $\kappa$, let $\Hyp^\kappa$ be the real algebraic hyperbolic space of dimension $\kappa$. 
Denote by $\infty$ the infinite countable cardinal, thus $\Hyp^\omega$ is the separable infinite dimensional hyperbolic space. 
The metric space $\Hyp^\kappa$ is $\operatorname{CAT}(-1)$, thus strongly hyperbolic for $\varepsilon = 1$ by \cite[Theorem 5.1]{Nica-Spakula_strong-hyperbolicity_2016}.
Consequently all the notions and results in the previous section apply.
In this section will recall the refinements of these results and the additional structures that will be needed in the sequel.

\subsection{Models in other geometries: Cartan and Iwasawa decompositions}
\label{subsec:Hn-models}


\paragraph{Minkowski-Lorentz hyperboloid model}

Denote the Hilbert space of dimension $\kappa$ by $\R^\kappa$.
The Minkowski space of dimension $1+\kappa$ is the oriented vector space $\R^{1+\kappa}$ with the symmetric bilinear form of signature $(1,\kappa)$ defined for $x,y\in\R^{1+\kappa}$ by \(\langle x,y\rangle =x_0y_0-\sum_{i>0}x_iy_i\).
Its isotropic cone $\mathbf{X}_{0} =\{x\in\R^{1+\kappa} \colon \langle x,x\rangle=0\}$ is asymptotic to the $2$-sheet hyperboloid $\mathbf{X}_{+1} =\{x\in\R^{1+\kappa} \colon \langle x,x\rangle=+1\}$ and the $1$-sheet hyperboloid $\mathbf{X}_{-1} =\{x\in\R^{1+\kappa} \colon \langle x,x\rangle=-1\}$.

The \emph{hyperboloid model} of $\Hyp^\kappa$ is obtained by endowing the upper-half component of the $2$-sheeted hyperboloid $\{x\in\mathbf{X}_{+1} \colon x_0>0\}$ with the distance $\dH$ given by
\(\cosh(\dH(x,y))=\langle x \mid y\rangle\).

\paragraph{Cayley-Klein projective model}

The \emph{projective model} for $\Hyp^\kappa$ is the subset $\Proj\mathbf{X}_{+1}$ of lines inside the cone, with the distance $\dH([x],[y])$ given in terms of the hyperbolic angle by:
\begin{equation}
\label{eq:hyperbolic-distance}
\cosh(\dH([x],[y]))=\tfrac{\langle x,y\rangle}{\sqrt{\langle x,x\rangle\langle y,y\rangle}}
\end{equation}
Here, the boundary $\partial \Hyp^\kappa$ identifies with the projectivized isotropic cone $\Proj \mathbf{X}_{0}$.
Note that the upper component $\{x\in\mathbf{X}_{+1} \colon x_0>0\}$ yields a section to the projectivization $\mathbf{X}_{+1}\to \Proj \mathbf{X}_{+1}$.

Hence, the isometry group $\Isom(\Hyp^\kappa)$ identifies with the projective orthogonal group $\PO(1,\kappa)$ (one can mimic the proof in finite dimension \cite[Theorem I.2.24]{Bridson-Haefliger_metric-non-positive-curvature_1999}).
It acts transitively on $\Hyp^\kappa=\Proj \mathbf{X}_{+1}$, and the stabilizer of $o=\Proj (1,0^\kappa)$ is the projective orthogonal group $\operatorname{PO}(\kappa)$, which acts freely transitively on orthonormal frames at $o\in \Hyp^\kappa$.

\paragraph{Euclidean ball model and Cartan decomposition}

The \emph{Euclidean ball model} is obtained from the projective space $\Proj\R^{1+\kappa}$ by choosing the Euclidean chart $\{x\in \R^{1+\kappa} \colon x_0=1\}$ with origin $o=(1,0^\kappa)$ so that the quadric pair $(\Proj\mathbf{X}_{0}, \Proj\mathbf{X}_{+1})$ is identified with the unit sphere and unit open ball $(\mathbf{S}^{\kappa-1}, \mathbf{B}^{\kappa})$.
At the centre $o$, the tangent space of $\Hyp^\kappa$ with its induced inner product is isometric to that of $\R^\kappa$ with its Euclidean inner product.
The geodesic map from the unit tangent sphere at $o\in \Hyp^\kappa$ to the Gromov boundary $\partial \Hyp^\kappa$ is a homeomorphism, and under the identification with $\Sph^{\kappa-1}$ the visual distance is given by $d_o(x,y)=\sin \tfrac{1}{2} (\widehat{xoy})$ (see \cite[§1.2]{Bourdon_cross-ratio-CAT(-1)_1996} or \cite[Lemma 3.5.1]{Das-Simmons-Urbanski_gromov-hyperbolic-spaces_2017}).
This yields an equivariant identification between isometry group of $\Hyp^\kappa$ and the conformal group of $\Sph^{\kappa-1}$.

Let us recall the Cartan decomposition of $\Isom(\Hyp^\kappa)$.
Consider polar coordinates for $\Hyp^\kappa$ given by the bi-infinite geodesic $(0,\infty) \in (\partial \Hyp^\kappa)^{(2)}$ passing through our fixed point $o\in \Hyp^\kappa$.
The stabilizer of such a geodesic contains a one parameter loxodromic group $A \colon \R \to \Isom(\Hyp^\kappa)$ with $A(t)$ acting on this axis by translation of $\lvert \ell(A(t))\rvert=t$.
Consider an orthonormal frame at a point $x\in \Hyp^\kappa$: on may act by $\operatorname{O}(\kappa)$ to send $x$ to the intersection of the geodesic arc $[o,\infty)$ with the sphere $\Sph_o(\dH(o,x))$; then by $A(-d(o,x))$ to bring this new point back to $o$; and finally by $\operatorname{PO}(\kappa)$ to adjust the frame.
Thus, every $\gamma\in \Isom(\Hyp^\kappa)$ has a unique factorization $\gamma=\gamma_1A(t)\gamma_2$ with $\gamma_i \in \operatorname{PO}(\kappa)$.

\paragraph{Affine model and special dimension $\le 3$.}

The \emph{affine model} for the boundary of the hyperbolic space $\partial\Hyp^\kappa$ is obtained in the projective space $\Proj (\R^{1+\kappa})$ by projecting $\Proj\mathbf{X}_0$ from a point $\infty\in \Proj\mathbf{X}_0$ onto the tangent space at another point $0\in \Proj\mathbf{X}_0$, yielding an identification $\partial \Hyp^\kappa=\R^{\kappa-1}\sqcup \{\infty\}$ hence an identification $\Hyp^\kappa = \R^{\kappa-1}\times (0,\infty)$ where the metric is $(dx+dy)/y^2$.
The group $\Isom(\Hyp^\kappa)=\PO(1,\kappa)$ acts transitively on $\partial \Hyp^\kappa = \R^\kappa$ and the stabilizer of $\infty\in \R^\kappa$ is the group of affine similitudes $\Isom(\R^\kappa) \rtimes \R_{>0} = (\R^\kappa \rtimes \operatorname{O}(\R^\kappa)) \rtimes \R_{>0}$. 

For $\kappa = 2, 3$ we find the upper-half-space models $\Hyp^2 \simeq (0,\infty) \times \R$ and $\Hyp^3 \simeq (0,\infty) \times \C$ identifying $\partial \Hyp^2 \simeq \R\Proj^1\simeq \{\infty\}\sqcup \R$ and $\partial \Hyp^3 \simeq \C\Proj^1\simeq \{\infty\}\sqcup  \C$.
The action of the pair $\Isom(\Hyp^2) \subset \Isom(\Hyp^3)$ on the pair of boundaries $\partial \Hyp^2\subset \partial \Hyp^3$ identifies with the action of the pair of $\PGL_2(\R)\subset \PGL_2(\C)$ on the pair of projective lines $\R\Proj^1\subset \C\Proj^1$.
Recall that the action of $\PGL_2(\mathbb{K})$ is simply transitive on triples of distinct points.
From this it follows that $\Isom(\Hyp^\kappa)$ acts triply transitively on $\partial \Hyp^\kappa$, since one may act on a triple of distinct points by moving one point at a time (and reordering the triple) while remaining each time in a copy of $\Hyp^3$, until reaching any other triple of distinct points.
After reordering the triples the initial triple $(x_0,y_0,z_0)$ and target triple $(x_1,y_1,z_1)$ so that their common points appear at the beginning in the same order, we then successively map $(x_0,y_0,z_0) \mapsto (x_0,y_0,z_1) \mapsto (x_0,y_1,z_1) \mapsto (x_1,y_1,z_1)$.

\subsection{Convex hull and Hausdorff distance to the \texorpdfstring{$1$-skeleton}{1-skeleton}}\label{sec:convex-hull}

For a subset $X\subset \Hyp^\kappa \sqcup \partial \Hyp^\kappa$, its \emph{span} $\Span(X)$ is the smallest closed totally geodesic and complete subspace $\Hyp^{\kappa'}$ containing all geodesics between pairs of points in $X$: in the linear and projective linear models, it corresponds to the linear and projective linear spans, intersected with $\mathbf{X}_{+1}$ and $\Proj \mathbf{X}_{+1}$.
Such a subset $X$ is called \emph{total} when it spans the whole space and is called \emph{independent} when any subset of $n\in \N$ points in $X$ span a totally geodesic copy of $\Hyp^{n-1}$.
In particular, two distinct geodesics in $\Hyp^\kappa$ span a unique totally geodesic copy of $\Hyp^2$ or $\Hyp^3$.

For a subset $X\subset \Hyp^\kappa \sqcup \partial \Hyp^\kappa$, its \emph{convex hull} $\Conv(X)\subset \Span(X)\cup \partial \Span(X)$ is the smallest closed geodesically convex subset that contains it: 
in the linear and projective linear models, it corresponds to the linear and projective linear closed convex hulls, intersected with the closed cone $\mathbf{X}_{\ge 0}$ and filled quadric $\Proj \mathbf{X}_{\ge 0}$.
The convex hull of $X$ can be constructed inductively, by considering its skeleta of increasing dimensions: the \emph{$k$-skeleton} $X^{(k)}$ is the union of all geodesic simplices constructed on $1+k$ points of $X$.
For instance, $X^{(1)}=\bigcup_{x_1,x_2 \in X}[x_1, x_2]$.


We obtained the following in \cite[Theorem 0.4]{Duchesne-Simon_simplices_2025} as a corollary of a more precise result concerning hyperbolic simplices, providing optimal bounds for the Hausdorff distances between their skeleta of various dimensions, and characterizing the simplices achieving that bound. 

\begin{proposition}[Hausdorff distance from convex hull to $1$-skeleton]
\label{prop:Hausdist-polytope-1-skeleton}
For a set of points $X \subset \Hyp^\kappa\cup \partial \Hyp^\kappa$, every point of its convex hull $\Conv(X)$ is at distance at most $\sinh^{-1}(1)=\log(1+\sqrt{2})$ from its $1$-skeleton $X^{(1)}=\bigcup_{x_1,x_2 \in Y}[x_1, x_2]$.
\end{proposition}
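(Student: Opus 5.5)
The plan is to reduce to finite configurations and then to a single simplex, working inside a finite-dimensional totally geodesic subspace where the dimension no longer matters.

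\emph{Reduction to finite sets.} A point $p\in\Conv(X)$ is a limit of points $p_k$ in convex hulls $\Conv(X_k)$ of finite subsets $X_k\subset X$: the union of such hulls is convex, and its closure contains $X$. It therefore suffices to prove the bound for finite $X$; the general case follows by a limiting argument, since each $X_k^{(1)}\subset X^{(1)}$. Ideal points can be approximated by points of $\Hyp^\kappa$ along geodesic rays. A finite $X$ spans a copy of $\Hyp^n$ with $n<\Card X$, so we may work in finite dimension. By Carathéodory, every $p\in\Conv(X)$ lies in a geodesic simplex with at most $n+1$ vertices in $X$. The claim thus reduces to the following: \emph{every point of a (possibly ideal) hyperbolic $n$-simplex is within $\sinh^{-1}(1)$ of its $1$-skeleton, uniformly in $n$.}

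\emph{Simplex estimate.} Fix $p$ in a simplex $\Delta=\Conv(v_0,\dots,v_n)$ and use the hyperboloid model. Write $p=\sum_i \lambda_i v_i/\lVert\cdot\rVert$ with $\lambda_i\ge0$, where the $v_i$ are future-directed vectors (isotropic for ideal vertices). Pushing vertices outward toward the boundary only enlarges distances from the $1$-skeleton. I therefore expect the worst case to be the regular ideal simplex with $p$ its barycentre, and I would try to prove this extremal claim by a symmetrization or variational argument.

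For isotropic $v_i$ with $\langle v_i,v_j\rangle=1$ for $i\ne j$, the barycentre is $p=\sum v_i/\sqrt{n(n+1)}$. The nearest point of an edge $[v_i,v_j]$ to $p$ is its midpoint $m=(v_i+v_j)/\sqrt2$. By formula \eqref{eq:hyperbolic-distance},
\[
\cosh \dH(p,m)=\frac{\langle p,v_i+v_j\rangle}{\sqrt2}=\frac{2n}{\sqrt{2n(n+1)}}=\sqrt{\tfrac{2n}{n+1}}.
\]
This is increasing in $n$ with limit $\sqrt2$, and $\cosh^{-1}\sqrt2=\sinh^{-1}(1)$. This matches the constant in the statement and confirms the conjectured extremal configuration.

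\emph{General simplex and main obstacle.} For a general simplex and point, I would bound $\dH(p,X^{(1)})$ by choosing the two indices $i,j$ with the largest weights $\lambda_i\langle\cdot\rangle$ contributions and comparing $p$ with the projection onto $[v_i,v_j]$. This gives an estimate of $\cosh\dH(p,[v_i,v_j])$ through the Lorentzian inner products, $\cosh \dH(p,q)=\langle p,q\rangle$ for unit $p,q$. The main obstacle is proving that this quantity is maximized in the regular ideal configuration, i.e.\ an optimization over weights and Gram matrices $(\langle v_i,v_j\rangle)$. I would first reduce to ideal vertices by monotonicity, then normalize the Gram matrix by scaling the isotropic $v_i$. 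I would then use a convexity/averaging argument over pairs $(i,j)$: bound $\max_{i<j}$ by the $\lambda$-weighted average over pairs, which yields exactly the bound $\sqrt{2n/(n+1)}<\sqrt2$. This is the step where I expect the detailed work, carried out in \cite{Duchesne-Simon_simplices_2025}.
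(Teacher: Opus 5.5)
You reduce correctly to finitely many points. The paper gives no argument of its own here: it imports the estimate, as a corollary of a sharp result on skeleta of simplices, from \cite[Theorem 0.4]{Duchesne-Simon_simplices_2025}. As a proof, however, your proposal stops exactly where the content lies, so it has a genuine gap.

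\begin{itemize}
\item Evaluating the regular ideal simplex at its barycentre only shows that $\sinh^{-1}(1)$ cannot be lowered. It gives no upper bound for other configurations, so it does not ``confirm'' extremality.
\item The reduction to ideal vertices by pushing vertices outward is unjustified. In $\Hyp^2$ a geodesic from $p$ to a new edge must cross an old edge. In higher dimension it can leave the old simplex through the interior of a facet, so the old $1$-skeleton no longer separates $p$ from the new edges.
\item Choosing the two indices with the largest contributions $a_i=\lambda_i\langle p,v_i\rangle$ fails outright. If those two vertices are ideal and very close on $\partial\Hyp^n$, the geodesic joining them is far from $p$.
\item ``Bounding $\max_{i<j}$ by a weighted average'' only helps once you say which quantity is maximised and with which weights.
\end{itemize}

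The averaging idea does close the gap, and it makes Carath\'eodory, the monotonicity claim and extremality all unnecessary. Normalise $p=\sum_i\lambda_iv_i$ with $\langle p,p\rangle=1$, and discard indices with $\lambda_i=0$ (if only one remains, $p$ is a vertex). Set $a_i=\lambda_i\langle p,v_i\rangle>0$ and $b_{ij}=\lambda_i\lambda_j\langle v_i,v_j\rangle\ge0$, so that $\sum_j b_{ij}=a_i$ and $\sum_i a_i=1$.

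For $i\neq j$ with $b_{ij}>0$, the vector $q=a_j\lambda_iv_i+a_i\lambda_jv_j$ represents a point of $[v_i,v_j]$. It satisfies $\langle p,q\rangle=2a_ia_j$ and $\langle q,q\rangle\ge2a_ia_jb_{ij}$, so \eqref{eq:hyperbolic-distance} gives $\cosh^2\dH(p,[v_i,v_j])\le 2a_ia_j/b_{ij}$.

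Now average the ratios $b_{ij}/(a_ia_j)$ with weights $a_ia_j$ over ordered pairs:
\[
\sum_{i\neq j}\bigl(b_{ij}-a_ia_j\bigr)=\Bigl(1-\sum_i b_{ii}\Bigr)-\Bigl(1-\sum_i a_i^2\Bigr)=\sum_i\bigl(a_i^2-b_{ii}\bigr)\ge0 .
\]
The last inequality holds because $b_{ii}=0$ for ideal vertices and $\langle p,v_i\rangle=\cosh\dH(p,v_i)\ge1$ for the others. Hence some pair has $b_{ij}\ge a_ia_j$, which gives $\cosh\dH(p,X^{(1)})\le\sqrt2$ in every dimension.

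If the $N$ vertices with positive weight are all ideal, then $\sum_{i\neq j}b_{ij}=1$ and $\sum_{i\neq j}a_ia_j=1-\sum_i a_i^2\le1-1/N$. This gives $\cosh^2\dH(p,X^{(1)})\le2(N-1)/N$, which is exactly your regular-ideal value. So that configuration is indeed extremal among ideal simplices.
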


\begin{remark}[uniform bound]
The fact that in finite dimensions $\kappa$, the distance to the $1$-skeleton is uniformly bounded (independently of $Y$) was used in \cite[Theorem 3]{Bestvina_degenerations-hyperbolic-space_1988}, which refers to \cite{Bonahon-bouts-des-varietes-hyperboliques-de-dimension-3}.
We were motivated to obtain the uniformity of this bound on the dimension, as it will enable the statement about Gromov-Hausdorff convergence of our Theorem \ref{thm:tree-embedding-Hyp-infty} (proved in Step 7) to hold for infinite dimensions $\kappa_m$.

We refer to \cite{Duchesne-Simon_simplices_2025} for the optimal value of the bound and other interesting results concerning the characterization of hyperbolic simplices with maximal inradius.  
\end{remark}

\subsection{Cross-ratio for algebraic hyperbolic spaces}
\label{subsec:Hn-cross-ratio}

Let us express the cross-ratio on $\partial \Hyp^\kappa$ in terms of the Hilbert-Minkowski model $(\R^{1+\kappa}, \langle , \rangle)$. 

\begin{lemma}
For quadruples of distinct points $a^-,a^+,b^-,b^+$ in $\partial\Hyp^\kappa$, and for any of their representatives in the isotropic cone $\mathbf{X}_0$ of the Hilbert-Minkowski space $(\R^{1+\kappa}, \langle\ ,\ \rangle)$, we have
\begin{equation}
	\label{eq:cross-ratio_scalar-product}[a^-,a^+;b^-,b^+]^2=\frac{\langle a^-,a^+\rangle\langle b^-,b^+\rangle}{\langle a^-,b^+\rangle \langle b^-,a^+\rangle}
\end{equation}
\end{lemma}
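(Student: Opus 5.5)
We prove the formula by approximating each boundary point by points of the hyperboloid and using the definition \eqref{eq:def-cross-ratio-X} together with the continuity of the cross-ratio on quadruples of distinct points of the bordification, established in \Cref{subsec:strong-hyp_cross-ratio}.

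First we check that the right-hand side is well defined. It is homogeneous of degree zero in each of the four representatives, since each vector appears once in the numerator and once in the denominator. It is positive, because for non-proportional isotropic vectors in the future cone we have $\langle u,v\rangle>0$. We may therefore fix representatives $a^\pm,b^\pm$ in the future isotropic cone.

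Next we approximate. For each isotropic $u$, choose points $u_t\in\mathbf{X}_{+1}$ with $u_t/\lambda_t \to u$ for suitable scalars $\lambda_t\to\infty$. A concrete choice is to fix a timelike unit vector $e$ with $\langle e,u\rangle>0$ and set $u_t = \lambda_t u + \mu_t e$, where $\mu_t$ is determined by $\langle u_t,u_t\rangle = 1$. This gives $2\lambda_t\mu_t\langle u,e\rangle+\mu_t^2=1$, so $\mu_t\sim 1/(2\lambda_t\langle u,e\rangle)\to 0$. The points $[u_t]$ then converge to $[u]$ in $\partial\Hyp^\kappa$, because they lie on the geodesic ray from $[e]$ towards $[u]$ and leave every bounded set.

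We then estimate distances. For two such approximating families $u_t,v_t$ with distinct limits, we have
\[
\cosh \dH(u_t,v_t)=\langle u_t,v_t\rangle=\lambda_t\lambda'_t\langle u,v\rangle+O(1)\to\infty,
\]
so that $e^{\dH(u_t,v_t)}= 2\lambda_t\lambda'_t\langle u,v\rangle\,(1+o(1))$.

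Finally we plug these into \eqref{eq:def-cross-ratio-X}:
\[
\Cr{a^-_t,a^+_t;b^-_t,b^+_t}^2=\frac{e^{\dH(a^-_t,a^+_t)}e^{\dH(b^-_t,b^+_t)}}{e^{\dH(a^-_t,b^+_t)}e^{\dH(b^-_t,a^+_t)}}.
\]
The factors of $2$ cancel, and each $\lambda$ appears exactly once in the numerator and once in the denominator, so these cancel too. The right-hand side therefore converges to $\frac{\langle a^-,a^+\rangle\langle b^-,b^+\rangle}{\langle a^-,b^+\rangle\langle b^-,a^+\rangle}$. By continuity of the extension of the cross-ratio to the boundary, the left-hand side converges to $\Cr{a^-,a^+;b^-,b^+}^2$, which proves the identity.

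The main obstacle is justifying convergence in the bordification and the precise asymptotic of $\dH$. We handle it using the hyperboloid formula $\cosh\dH=\langle\cdot,\cdot\rangle$ together with $\cosh^{-1}(s)=\log(2s)+o(1)$ as $s\to\infty$. The positivity $\langle u,v\rangle>0$ for distinct future isotropic lines, which guarantees that every quantity above is finite and nonzero, follows from the reverse Cauchy–Schwarz inequality in Minkowski space.
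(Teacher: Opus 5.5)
Your proof is correct and follows the paper's argument. Both approximate the four boundary points by points of the hyperboloid, use $\cosh\dH=\langle\cdot,\cdot\rangle$ together with $e^{r}\sim 2\cosh r$, and pass to the limit using the continuity of the cross-ratio on quadruples of distinct points of the bordification. Your explicit approximants $u_t=\lambda_t u+\mu_t e$ along geodesic rays make the convergence in the bordification more transparent. One small point: the error term is $O(1)$ only if you use a common $\lambda_t$ for all four points, but the relative bound $o(\lambda_t\lambda'_t)$ always holds and is all you need.
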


\begin{proof}
Consider sequences $x_n,x'_n,y_n,y'_n\in  \Proj\mathbf{X}_{+1}$ respectively converging to $a^-,a^+,b^-,b^+\in \Proj\mathbf{X}_{0}$. 
By Equation \eqref{eq:hyperbolic-distance}, we have:
\begin{equation*}
	\frac{\langle x_n,x'_n\rangle\langle y_n,y'_n\rangle}{\langle x_n,y'_n\rangle \langle y_n,x'_n\rangle} 
	= \frac{\cosh(\dH(x_n,x'_n))\cosh(\dH(y_n,y'_n))}{\cosh(\dH(x_n,y'_n))\cosh(\dH(y_n,x'_n))}
\end{equation*}
The left-hand side converges to $\frac{\langle a^-,a^+\rangle\langle b^-,b^+\rangle}{\langle a^-,b^+\rangle \langle b^-,b^+\rangle}$.
By the defining Equation \eqref{eq:def-cross-ratio-X} for the cross-ratio in $\Hyp^\kappa$, and the limit $\cosh(r)/e^r\to 1/2$ as $r\to+\infty$, the right-hand side is asymptotic to $\Cr{x_n,x'_n,y_n,y'_n}^2$ as $n\to \infty$, which converges to $\Cr{a^-, a^+, b^-, b^+}^2$ by the continuous extension to quadruples of distinct points in the boundary $\partial \Hyp^\kappa$.
\end{proof}

\begin{remark}[projective cross-ratio from Veronese embedding]
\label{rem:Euclid-Veronese}
For $\Hyp^2$, the formula \eqref{eq:cross-ratio_scalar-product} recovers the absolute value of the projective cross-ratio of four points in $\R\Proj^1\subset \R\Proj^2$.

On the one had, given four lines $a^-, a^+, b^-, b^+ \in \Proj(\R^2)$, the absolute value of their projective cross-ratio can be obtained (as known to Euclid \cite[Book VI]{Euclide_Elements-VolII_1974}) by lifting them to vectors in $\R^2$ and considering the square root of the cross-ratio of areas: \[\sqrt{\left| \tfrac{\det(a^-,a^+)\det(b^-,b^+)}{\det(a^-,b^+)\det(b^-,a^+)}\right|}.\]

On the other hand, the space of real binary quadratic forms with the discriminant is a model for the Minkowski space $(\R^{1+2}, \langle,\rangle)$.
The Veronese map sending a vector $v \in \R^2$ to the degenerate form $q_v\colon u \mapsto \det(u,v)^2$ yields a quadratic function $\R^2 \to \mathbf{X}_0 \subset \R^{1+2}$ sending the determinant $\det(v,v')$ to the square of the scalar product $\langle q_v,q_{v'}\rangle^2$.
This projectifies to an isomorphism of projective lines $\Proj(\R^2)\to \Proj(\mathbf{X}_0)\subset \Proj(\R^{1+2})$ (see~\cite[Lemma 3.1]{CLS_Conj-PSL2K_2022}).

\end{remark}

\begin{lemma}[unicity of the cross-ratio]
\label{lem:unique_cross-ratio_invariant}
If a continuous function $(\partial\Hyp^\kappa)^{(4)} \to (0, +\infty)$ is invariant under the diagonal action of $\Isom(\Hyp^\kappa)$ and satisfies the \eqref{eq:Cr-Invariance} \& \eqref{eq:Cr-Cocycle} identities, then it is equal to a power of the cross-ratio, namely $\Cr{\cdot}^t$ for some $t\in [0,\infty)$.
\end{lemma}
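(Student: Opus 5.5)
Let $B\colon(\partial\Hyp^\kappa)^{(4)}\to(0,+\infty)$ be such a function; here $(\partial\Hyp^\kappa)^{(4)}$ means quadruples of distinct points, where both $B$ and the cross-ratio are finite and positive. The plan is to use the cocycle identity to reduce $B$ to a function of a single real parameter, and then use the triple transitivity of $\Isom(\Hyp^\kappa)$ together with the one-parameter loxodromic subgroup $A$ to show that this parameter function is multiplicative in the cross-ratio.

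\textbf{Step 1 (reduction to three fixed points).} Fix three distinct points $0,1,\infty\in\partial\Hyp^\kappa$. Applying \eqref{eq:Cr-Cocycle} and \eqref{eq:Cr-Invariance} twice (inserting $x''=0$, then another fixed point), any value $B(x,y;x',y')$ becomes a product of values of $B$ on quadruples containing at least two of the fixed points. Since $\Isom(\Hyp^\kappa)$ is triply transitive on $\partial\Hyp^\kappa$, which was shown in \Cref{subsec:Hn-models}, we can move any three distinct points to $(w,0,\infty)$. Invariance under the diagonal action then shows that $B$ is determined by the function
\[
\phi(w)=B(w,1;0,\infty),\qquad w\in\partial\Hyp^\kappa\setminus\{0,1,\infty\}.
\]
The stabilizer of the ordered pair $(0,\infty)$ is the similitude group of $\R^{\kappa-1}=\partial\Hyp^\kappa\setminus\{\infty\}$ in the affine model. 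It acts transitively on each sphere $\{w\colon \|w\|=r\}$, including $\|w\|=1$ (when $\kappa\ge2$, which we may assume). Hence $\phi$ and the analogous function built from the cross-ratio depend only on $r=\|w\|$. Using \eqref{eq:cross-ratio_scalar-product}, I will check that $\Cr{w,1;0,\infty}$ is a fixed monotone function of $r$, namely $r^{\pm1}$ up to convention.

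\textbf{Step 2 (functional equation).} Consider the quantity $B(w,\cdot\,;0,\infty)$ as a two-variable function $g(w,w')=B(w,w';0,\infty)$. The cocycle identity with $x''=w''$ gives the multiplicativity
\[
g(w,w')=g(w,w'')\,g(w'',w').
\]
Invariance under the similitudes $A(t)$, which scale by $e^{t}$ and fix $0,\infty$, combined with rotations, shows that $g(w,w')=h(\|w\|/\|w'\|)$ for a continuous function $h\colon(0,\infty)\to(0,\infty)$. I will take care to avoid degenerate configurations by perturbing $w''$. The relation above then yields $h(ab)=h(a)h(b)$. A continuous multiplicative function on $(0,\infty)$ is $h(a)=a^{s}$, which gives $B=\Cr{\cdot}^t$ on the generating quadruples. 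Step 1 then extends this to all quadruples.

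\textbf{Expected obstacle.} The main difficulty is establishing $t\ge0$, and it is also the reason the ratio formulation is needed. I will argue using the continuous extension: as $w\to0$ the cross-ratio tends to $0$, but $B$ is only assumed positive and finite, so the sign of $t$ is a priori free. Presumably the statement intends $t\in\R$, or the sign is fixed by the convention in the later application. I would first try to derive $t\ge0$ from \eqref{eq:Cr-Invariance}, and otherwise note that $t<0$ is excluded only by the separation properties \eqref{eq:Cr=0}, which are used when the lemma is applied.
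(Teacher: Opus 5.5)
Your overall strategy (freeze two boundary points at $0,\infty$, use their similitude stabilizer and \eqref{eq:Cr-Cocycle} to reach a continuous homomorphism of $(\R_{>0},\times)$) is sound. However, the slot bookkeeping makes both key claims false as written.

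\eqref{eq:Cr-Cocycle} relates the first and third entries while the second and fourth stay fixed. For $g(w,w')=B(w,w';0,\infty)$ it therefore says nothing. In fact the relation $g(w,w')=g(w,w'')g(w'',w')$ already fails for $B=\Cr{\cdot}$: by \Cref{eg:cross_ratio_algebraic_cross_ratio}, $\Cr{w,w';0,\infty}=\|w-w'\|/\|w'\|$. This is neither a cocycle nor a function of $\|w\|/\|w'\|$, because invariance under the stabilizer of $(0,\infty)$ leaves the angle between $w$ and $w'$ as a second invariant.

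The same mistake occurs in Step~1. The reduction to $\phi(w)=B(w,1;0,\infty)$ is fine, but $\phi$ is only invariant under the stabilizer of the triple $(0,1,\infty)$, not of the pair $(0,\infty)$. Indeed $\Cr{w,1;0,\infty}=\|w-1\|$ is not radial.

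The repair is to put the free variables in the slots that \eqref{eq:Cr-Cocycle} links. Set $g(w,w')=B(w,0;w',\infty)$ for distinct $w,w'\in\R^{\kappa-1}\setminus\{0\}$.
\begin{itemize}
\item Now \eqref{eq:Cr-Cocycle} does give $g(w,w')=g(w,w'')g(w'',w')$.
\item The symmetric function $p(a,b)=g(a,b)g(b,a)$ then satisfies $p(a,c)=p(a,b)p(b,c)$ and $p(a,b)=p(a,c)p(c,b)$. Hence $p\equiv1$, and $g(w,w')=f(w)/f(w')$.
\item A hyperplane reflection exchanging two points of equal norm fixes $0$ and $\infty$. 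So $g(w,w')=g(w',w)=1$ whenever $\|w\|=\|w'\|$, i.e.\ $f=F(\|\cdot\|)$.
\item Invariance under the dilations $A(s)$ makes $F/F(1)$ a continuous homomorphism. Therefore $g(w,w')=(\|w\|/\|w'\|)^t=\Cr{w,0;w',\infty}^t$.
\item Double transitivity (send the second and fourth entries to $0,\infty$) gives $B=\Cr{\cdot}^t$ on all quadruples.
\end{itemize}

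Repaired this way, your argument is a genuinely different route from the paper's. The paper reduces to a copy of $\partial\Hyp^3$ and runs Hamenst\"adt's argument on each circle $\partial\Hyp^2$, using simple transitivity of $\PGL_2(\R)$ on triples. Your version is uniform in $\kappa$ and treats non-concyclic quadruples directly.

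Finally, drop the attempt to prove $t\ge0$; it cannot be done. $\Cr{\cdot}^{-1}$ satisfies every hypothesis, including \eqref{eq:Cr-Invariance}, so the correct conclusion is $t\in\R$. The sign is fixed only by extra input in applications: agreement on $\R\Proj^1$ forces $t=1$ in \Cref{cor:cross-ratio-H3}, and the separation properties do it in \Cref{prop:power_cross_ratio}.
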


\begin{proof}
Any four points are contained in a copy of $\partial \Hyp^3$.
In restriction to each copy of $\partial \Hyp^2\subset \partial \Hyp^3$ one can mimic the proof of \cite[Lemma 3.2]{Hamenstadt_Bounded-cohomology-cross-ratios_2009}, using the simple transitivity for the action of $\PGL_2(\R)$ on triples of distinct points in $\R\Proj^1$ and the \eqref{eq:Cr-Invariance} \& \eqref{eq:Cr-Cocycle}.
The $\Isom(\Hyp^3)$-invariance implies that the exponent $t$ does not depend on the copy of $\partial \Hyp^2\subset \partial \Hyp^3$, since $\Isom(\Hyp^3)$ acts transitively on the triples of distinct points hence on circles $\partial \Hyp^2\subset \partial \Hyp^3$.
\end{proof}

\begin{remark}[generalisation to abstract cross-ratios]
\Cref{prop:power_cross_ratio} will extend \Cref{lem:unique_cross-ratio_invariant} to abstract cross-ratios on a more general class of spaces.
\end{remark}

We thus deduce Corollary \ref{cor:cross-ratio-H3}, confirming that our normalization for the cross-ratio coincides with the modulus of the projective cross-ratio.

\begin{corollary}[projective cross-ratio]
\label{cor:cross-ratio-H3}
In the upper-half-space models $\Hyp^2  \simeq \mathbb{R}\times (0,+\infty)$ and $\Hyp^3 \simeq \mathbb{C}\times (0,+\infty)$ identifying $\partial \Hyp^2 \simeq \mathbb{RP}^1\simeq \mathbb{R}\cup\{\infty\}$ and $\partial \Hyp^3 \simeq \mathbb{CP}^1\simeq \mathbb{C}\cup\{\infty\}$, we find that the cross-ratio of $(a^+, a^-, b^+, b^-)$ is the modulus of their projective cross-ratio:
\begin{equation}    
	\Cr{a^-,a^+,b^-,b^+}=\left| \tfrac{(a^+-a^-)(b^+-b^-)}{(a^+-b^-)(b^+-a^-)} \right|.
\end{equation}

\end{corollary}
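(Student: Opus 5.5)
We use \Cref{lem:unique_cross-ratio_invariant}. The function $Q(a^-,a^+,b^-,b^+)=\left|\tfrac{(a^+-a^-)(b^+-b^-)}{(a^+-b^-)(b^+-a^-)}\right|$ on quadruples of distinct points of $\C\Proj^1$ (or $\R\Proj^1$) is the modulus of the projective cross-ratio. It is continuous with values in $(0,+\infty)$. It is invariant under $\PGL_2(\C)$, since each factor $(z-w)$ transforms under $z\mapsto \tfrac{az+b}{cz+d}$ by $\tfrac{(ad-bc)(z-w)}{(cz+d)(cw+d)}$, and the denominators and determinants cancel in the ratio. It is also invariant under complex conjugation. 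These maps generate $\Isom(\Hyp^3)$ acting on $\partial\Hyp^3$ (and similarly $\PGL_2(\R)$ for $\Hyp^2$). A direct substitution shows that $Q$ satisfies \eqref{eq:Cr-Invariance} and \eqref{eq:Cr-Cocycle}. For the cocycle relation the factors involving $x''$ cancel; this is the usual multiplicativity of cross-ratios in our variable ordering. Hence \Cref{lem:unique_cross-ratio_invariant} gives $Q=\Cr{\cdot}^s$ for some $s\ge 0$, and it remains to show $s=1$.

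To fix the exponent we evaluate both sides on a single quadruple where $\Cr{\cdot}$ can be computed directly and is $\neq 1$. The cleanest tool is \Cref{lem:translation-length-from-cross-ratio}. Take $\gamma\colon z\mapsto \lambda z$ with $\lambda>1$. It is loxodromic, with $\gamma^-=0$ and $\gamma^+=\infty$. On the vertical geodesic $\{0\}\times(0,\infty)$ it acts as translation by $\log\lambda$, with metric $dy/y$, so $\ell(\gamma)=\log\lambda$. The lemma then gives $\Cr{\gamma\xi,0;\xi,\infty}=\lambda$ for $\xi=1$. On the other side, $Q(\lambda,0,1,\infty)$ is computed by letting the $\infty$ entry go to infinity, so that the two factors containing it cancel. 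The remaining factors give $\left|\tfrac{0-\lambda}{0-1}\right|=\lambda$. Thus $\lambda^s=\lambda$, so $s=1$.

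Alternatively, one could verify the normalization through \eqref{eq:cross-ratio_scalar-product} using the Veronese identification of \Cref{rem:Euclid-Veronese}. There $\langle q_v,q_{v'}\rangle$ is proportional to $\det(v,v')^2$, so $\Cr{\cdot}^2$ equals the square of the modulus of the determinantal cross-ratio, which is $Q^2$. The only delicate point is bookkeeping: one must check that the variable ordering in $Q$ matches our convention for $\Cr{a^-,a^+;b^-,b^+}$ (pairing $a^\pm$ with each other in the numerator). This is exactly what the single explicit evaluation above tests, so we expect it to be the main, though mild, obstacle. The $\Hyp^2$ case follows by restricting to $\R\Proj^1\subset\C\Proj^1$, which is the boundary of a totally geodesic $\Hyp^2\subset\Hyp^3$.
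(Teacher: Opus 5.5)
Your proof is correct and follows the paper's skeleton: apply \Cref{lem:unique_cross-ratio_invariant} to the modulus of the projective cross-ratio, then show the exponent is $1$. Where you differ is the tool used to fix the exponent.

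The paper first uses \Cref{rem:Euclid-Veronese} together with \eqref{eq:cross-ratio_scalar-product} to get the identity on every circle $\R\Proj^1\subset\C\Proj^1$. This settles the $\Hyp^2$ case directly, and then forces the exponent to be $1$ because the two functions already agree on a circle. You instead make one explicit evaluation at $(\lambda,0,1,\infty)$ via \Cref{lem:translation-length-from-cross-ratio} for the dilation $z\mapsto\lambda z$, and recover $\Hyp^2$ by restricting to a totally geodesic copy.

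Your route needs only one concrete isometry of the upper-half-space model, and it genuinely tests the variable ordering: your computation confirms that $\Cr{\gamma\xi,\gamma^-;\xi,\gamma^+}=e^{\ell(\gamma)}$ is consistent with the stated formula. The price is that you rely on the Busemann-function input behind \Cref{lem:translation-length-from-cross-ratio}.

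The paper's route stays within the Minkowski model and obtains the $\R\Proj^1$ case independently of the uniqueness lemma. It does, however, tacitly use that the Veronese identification of $\Proj(\R^2)$ with $\Proj\mathbf{X}_0$ matches the boundary coordinate of the upper half-plane.

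Your explicit check of invariance under complex conjugation is also worthwhile. $\Isom(\Hyp^3)$ contains orientation-reversing isometries and is not just $\PGL_2(\C)$, so this check is needed to apply the uniqueness lemma as stated.
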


\begin{proof}
Remark \ref{rem:Euclid-Veronese} implies that the result holds for quadruples of distinct points contained in any conformal copy $\R\Proj^1$ inside $\C\Proj^1$.
Moreover the modulus of the projective cross-ratio on $\C\Proj^1$ satisfies the conditions in \Cref{lem:unique_cross-ratio_invariant}, so it is a power of the metric cross-ratio, which must be $1$ as they agree on $\R\Proj^1$.
\end{proof}

\begin{corollary}[recovering projective cross-ratio]
\label{rem:cross-ratio-argument}
For quadruple of points $(a^-,a^+;b^+,b^-)$ in $\partial \Hyp^3=\C\Proj^1$, the projective cross-ratio $\tfrac{(a^+-a^-)(b^+-b^-)}{(a^+-b^-)(b^+-a^-)}$ has argument $\pm\vartheta\bmod{2\pi}$ given in terms of $X^+=\Cr{a^-,a^+;b^-,b^+}$ and $X^-=\Cr{a^-,a^+;b^+,b^-}$ by
\(2\cos(\vartheta)= X^+(1+1/(X^+)^{2}-1/(X^-)^2)\).

Consequently, a quadruple of distinct points $(a^-,a^+;b^+,b^-)$ in $\partial\Hyp^3$ is uniquely determined up to $\Isom(\Hyp^3)$ by the cross-ratios $X^+=\Cr{a^-,a^+;b^-,b^+}$ and $X^-=\Cr{a^-,a^+;b^+,b^-}$.
\end{corollary}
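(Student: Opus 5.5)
Since the statement concerns only four points of $\partial\Hyp^3=\C\Proj^1$ and all quantities are $\Isom(\Hyp^3)$-invariant, I will normalize by triple transitivity (recalled in the affine model paragraph). I send $a^-\mapsto 0$, $b^-\mapsto \infty$ and $a^+\mapsto 1$, so that only $b^+=z\in\C$ is free; rather than fixing $b^+$, I will let the fourth point be the free variable. By \Cref{cor:cross-ratio-H3}, with this normalization both $X^+$ and $X^-$ become explicit moduli of rational expressions in $z$. The plan is to compute them, identify the projective cross-ratio $w=\frac{(a^+-a^-)(b^+-b^-)}{(a^+-b^-)(b^+-a^-)}$ as a simple function of $z$, and then eliminate.

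I will first apply the formula of \Cref{cor:cross-ratio-H3} to the ordered quadruples $(a^-,a^+;b^-,b^+)$ and $(a^-,a^+;b^+,b^-)$ and simplify using the limits as a point goes to $\infty$. I expect this to give $X^+=|w|$, and $X^-=|w'|$ with $w'$ the projective cross-ratio after swapping $b^\pm$. The classical identities for the six values of a cross-ratio under permutations express $w'$ as $w/(w-1)$ or $1-w$ (depending on the convention), so that $|w'|$ involves $|w-1|$. The law of cosines $|w-1|^2=|w|^2+1-2|w|\cos\vartheta$, where $\vartheta=\arg w$, then yields $2\cos\vartheta$ as a rational expression in $X^+$ and $|w-1|$. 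Substituting $|w-1|=X^+/X^-$ (the case $w'=w/(w-1)$) gives
\[
2\cos\vartheta=\frac{(X^+)^2+1-(X^+/X^-)^2}{X^+}=X^+\bigl(1+1/(X^+)^2-1/(X^-)^2\bigr),
\]
which is the claimed formula; the other possibility would produce a different expression. The main obstacle is therefore purely bookkeeping: correctly matching the paper's ordering convention $\Cr{a^-,a^+;b^-,b^+}\leftrightarrow\bigl|\tfrac{(a^+-a^-)(b^+-b^-)}{(a^+-b^-)(b^+-a^-)}\bigr|$ when the pair $b^\pm$ is swapped. I will check it directly on the normalized points rather than rely on the permutation table.

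For the consequence, the pair $(X^+,X^-)$ determines $|w|$ and $\cos\vartheta$, hence $w$ up to complex conjugation. Under the normalization $a^-=0$, $a^+=1$, $b^-=\infty$, the fourth point is determined by $w$ through a Möbius relation, so the quadruple is determined up to $z\mapsto\bar z$. Complex conjugation fixes $0,1,\infty$ and is an orientation-reversing isometry of $\Hyp^3$, lying in $\Isom(\Hyp^3)=\PO(1,3)$. Hence two quadruples with the same $(X^+,X^-)$ are related by an element of $\Isom(\Hyp^3)$. Conversely, these cross-ratios are invariant under $\Isom(\Hyp^3)$, which gives uniqueness.
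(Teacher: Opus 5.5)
Your plan is correct and is essentially the paper's proof. The paper uses the algebraic identity $1/Z^+ + 1/Z^- = 1$, i.e.\ $Z^- = Z^+/(Z^+-1)$ and hence $|Z^+-1| = X^+/X^-$, followed by the law of cosines; for uniqueness it uses that $\Isom(\Hyp^3)$ acts transitively on quadruples with a given projective cross-ratio up to complex conjugation, which is what your normalization argument spells out. The bookkeeping you deferred does land in the case you singled out: with $a^-=0$, $a^+=1$, $b^-=\infty$, $b^+=z$ one gets $w=1/z$ and $w'=1/(1-z)=w/(w-1)$, so $X^-=1/|1-z|$ and $|w-1|=X^+/X^-$, giving the stated formula.
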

\begin{proof}
Let $Z^+ = \tfrac{(a^+-a^-)(b^+-b^-)}{(a^+-b^-)(b^+-a^-)}$ and $Z^-=\tfrac{(a^+-b^-)(b^+-a^-)}{(a^+-a^-)(b^+-b^-)}$ so that $X^+=\lvert Z^+\rvert$ and $X^-=\lvert Z^-\rvert$.
The inverses of these complex projective cross-ratios sum up to one:
\begin{equation*}
	\tfrac{1}{Z^+}+\tfrac{1}{Z^-} =
	\tfrac{(a^+-b^-)(b^+-a^-)}{(a^+-a^-)(b^+-b^-)}+\tfrac{(a^+-b^+)(b^--a^-)}{(a^+-a^-)(b^--b^+)}=1.
\end{equation*}
Moreover $\vartheta = \arg(Z^+)$ satisfies $\lvert Z^+ -1\rvert^2 = \lvert Z^+\rvert^2 - 2\cos(\vartheta)\lvert Z^+\rvert + 1$, so after dividing by $\lvert Z^+\rvert$ and rewriting $\lvert Z^+ -1\rvert^2/\lvert Z^+\rvert = \lvert Z^+\rvert/\lvert Z^-\rvert^2$ we have the result.

Recall that $\Isom^+(\Hyp^3)=\PGL_2(\C)\times \Z/2$ acts uniquely transitively on quadruple of distinct points in $\partial\Hyp^3=\C\Proj^1$ with a given cross-ratio up to complex conjugacy, that is with a given modulus and argument $\bmod{\pi}$.
\end{proof}

\begin{corollary}[cocyclic points]
\label{cor:cocyclic-cross-ratio}
A quadruple of points $(a^+,a^-;b^+,b^-)$ in (a copy of) $\partial \Hyp^3=\C\Proj^1$ is cocyclic (belong to a copy of $\partial \Hyp^2=\R\Proj^1$) in this order if and only if we have: 
\begin{equation*}
	\Cr{a^+,a^-;b^+,b^-}^{-1}+\Cr{a^+,a^-;b^-,b^+}^{-1} = 1.
\end{equation*}
This identity characterizes when loxodromic elements in $\Isom(\Hyp^\kappa)$ have coplanar axes.
\end{corollary}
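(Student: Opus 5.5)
The plan is to reduce everything to the complex projective cross-ratio on a copy of $\partial\Hyp^3=\C\Proj^1$, using \Cref{cor:cross-ratio-H3}. Any four points of $\partial\Hyp^\kappa$ span a totally geodesic copy of $\Hyp^{k}$ with $k\le 3$, so they lie in some $\partial\Hyp^3$. In the upper-half-space model the projective cross-ratios
\begin{equation*}
Z^+=\tfrac{(a^--a^+)(b^--b^+)}{(a^--b^+)(b^--a^+)},
\qquad
Z^-=\tfrac{(a^--a^+)(b^+-b^-)}{(a^--b^-)(b^+-a^+)}
\end{equation*}
then satisfy $\Cr{a^+,a^-;b^+,b^-}=\lvert Z^+\rvert$ and $\Cr{a^+,a^-;b^-,b^+}=\lvert Z^-\rvert$, where the first equality uses \eqref{eq:Cr-Invariance} to reorder the pairs.

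As in the proof of \Cref{rem:cross-ratio-argument}, I would first check the algebraic identity $1/Z^+ + 1/Z^- = 1$. Writing $w=1/Z^+$, this identity gives $1/Z^-=1-w$. The condition to be proved therefore becomes $\lvert w\rvert+\lvert 1-w\rvert=1$, which is the equality case of the triangle inequality $\lvert w\rvert+\lvert 1-w\rvert\ge \lvert 1\rvert$. Equality holds exactly when $w\in[0,1]\subset\R$; since the points are distinct, in fact $w\in(0,1)$.

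The real content is then to show that $w\in(0,1)$ is equivalent to cocyclicity in the stated order. The quadruple is cocyclic if and only if its projective cross-ratio is real, by the standard fact that a Möbius map sending three of the points to $0,1,\infty$ sends circles through them to $\R\Proj^1$. Among real values, the sign and position of $w$ record the cyclic order of the four points. I would settle the order by a direct computation with a normal form: move $a^-,a^+,b^-$ to $0,\infty,1$ (legitimate by triple transitivity, up to relabelling), so that $w$ becomes an explicit Möbius function of $b^+$. Then $w\in(0,1)$ should correspond exactly to $b^+$ lying on the arc of the circle that makes $(a^+,a^-;b^+,b^-)$ cyclically ordered with the pairs $\{a^\pm\}$ and $\{b^\pm\}$ interlinked.

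Pinning down which arc corresponds to $(0,1)$, i.e.\ matching the words "in this order" with the convention of the statement, is the main bookkeeping obstacle. I would settle it by evaluating at one explicit configuration, such as $a^\pm=\pm1$ and $b^\pm=\pm i$ after a Cayley transform to the circle.

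For the final claim about loxodromic elements: $\alpha,\beta$ have coplanar axes exactly when their four fixed points $\alpha^\pm,\beta^\pm$ lie in a common $\partial\Hyp^2$. That holds, in the interlinked or the non-interlinked order, exactly when the identity holds for the appropriate ordering. So the statement applies directly with $(a^\pm,b^\pm)=(\alpha^\pm,\beta^\pm)$, where for non-crossing axes one also uses the identity with $b^+$ and $b^-$ swapped.
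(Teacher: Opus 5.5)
Your argument for the main equivalence is the paper's own: the identity $1/Z^++1/Z^-=1$, followed by the equality case of $\lvert w\rvert+\lvert 1-w\rvert\ge 1$. Your reading of ``in this order'' as the interlinked configuration is also correct. Sending $a^-,a^+,b^-$ to $0,\infty,1$ gives $w=b^+/(b^+-1)$, which lies in $(0,1)$ exactly when $b^+<0$. That is, it lies there exactly when $\{a^\pm\}$ separates $b^+$ from $b^-$, so the actual cyclic order is $a^+,b^\pm,a^-,b^\mp$.

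Your final paragraph, however, contains a step that fails.
\begin{itemize}
\item Swapping $b^+$ and $b^-$ only exchanges the two summands $\Cr{a^+,a^-;b^+,b^-}^{-1}$ and $\Cr{a^+,a^-;b^-,b^+}^{-1}$. By \eqref{eq:Cr-Invariance}, swapping $a^\pm$ does the same. So you get literally the same equation: the identity depends only on the pairing $\{\{a^\pm\},\{b^\pm\}\}$.
\item For coplanar but disjoint axes, your own normal form gives $b^+\in(0,1)\cup(1,\infty)$, hence $w\in\R\setminus[0,1]$. Then $\lvert w\rvert+\lvert 1-w\rvert>1$ for either labelling of $b^\pm$.
\item Hence the identity characterizes exactly the loxodromic pairs with \emph{crossing} coplanar axes. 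This is how the last sentence of the statement has to be read; the paper's proof does not address that sentence.
\end{itemize}

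To capture disjoint coplanar axes you must change the pairing, applying the identity to $(\alpha^+,\beta^+;\alpha^-,\beta^-)$ or to $(\alpha^+,\beta^-;\alpha^-,\beta^+)$. The three pairings of four distinct cocyclic points correspond to their three possible cyclic orders. So the axes are coplanar if and only if the identity holds for one of the three pairings.

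Equivalently, disjoint coplanar axes are characterized by $\lvert \Cr{\alpha^+,\alpha^-;\beta^+,\beta^-}^{-1}-\Cr{\alpha^+,\alpha^-;\beta^-,\beta^+}^{-1}\rvert=1$. This is the equality case of $\lvert\, \lvert w\rvert-\lvert 1-w\rvert\,\rvert\le 1$, which occurs if and only if $w\in(-\infty,0]\cup[1,\infty)$. Compare with $\tfrac{1}{X^+}-\tfrac{1}{X^-}=\cos\theta$ in \Cref{rem:screw-axes-H3}.
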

\begin{proof}
For $a^\pm,b^\pm \in \C\Proj^1$, the inverses of the complex projective cross-ratios sum up to one:
\begin{equation*}
	\tfrac{(a^+-b^-)(b^+-a^-)}{(a^+-a^-)(b^+-b^-)}+\tfrac{(a^+-b^+)(b^--a^-)}{(a^+-a^-)(b^--b^+)}=1
\end{equation*}
so the sum of the moduli is greater than one with equality if and only if both complex projective cross ratios were real numbers in $[0,1]$.

The same computation (expressing cross ratios in terms of visual distances) that the four points on a Euclidean sphere satisfy Ptoleme's inequality, with quality if and only if the four points are cocyclic.
(Note that any four points in Euclidean space lie on a common sphere.)
\end{proof}

\begin{remark}[complex distance]
\label{rem:screw-axes-H3}
For distinct $a^\pm,b^\pm \in \partial \Hyp^3$, the geodesics $(a^-,a^+),(b^-,b^+)\subset \Hyp^3$ are joined by a unique orthogeodesic arc of length $\lambda\in [0,\infty)$, along which they are screwed by an angle $\theta$: the complex number $\lambda+i\theta$ is called the \emph{complex distance} between them.
The isometry $C\in \Isom(\Hyp^3)$ sending $a^\pm$ to $b^\pm$ which is a translation-rotation along their common orthogeodesic has $\tfrac{1}{2}\Tr(C)=\cosh \tfrac{1}{2}(\lambda+i\theta)$.

Up to the action of $\Isom^+(\Hyp^3)=\PSL_2(\C)$ on $\partial \Hyp^3=\partial \C\Proj^1$, we may fix $a^\pm=\pm 1$ and find $b^\pm=\pm \exp(\lambda+i\theta)$ so that $C\colon z \mapsto \exp(\lambda+i\theta)z$.

We may recover the complex distance from the cross-ratios since with the previous notations we have the projective cross-ratios $Z^\pm = \pm4e^{\lambda+i\theta}/(1\pm e^{\lambda+i\theta})^2$ so their absolute values $X^\pm$ satisfy:
\begin{equation*}
	\tfrac{1}{\Cr{a^+,a^-;b^+, b^-}} - \tfrac{1}{\Cr{a^+,a^-;b^-, b^+}} 
	= \tfrac{1}{X^+}-\tfrac{1}{X^-}
	= \tfrac{\lvert 1+\exp(\lambda+i\theta)\rvert^2 - \lvert 1-\exp(\lambda+i\theta)\rvert ^2}{4 \exp(\lambda)} 
	= \cos(\theta)
\end{equation*}
\begin{equation*}
	\tfrac{\Cr{a^+,a^-;b^+, b^-}}{\Cr{a^+,a^-;b^-, b^+}}  
	= \tfrac{X^+}{X^-}
	= \tfrac{\lvert 1+\exp(\lambda+i\theta)\rvert^2}{\lvert 1-\exp(\lambda+i\theta)\rvert^2}
	=\left| \tanh \tfrac{1}{2}(\lambda+i\theta)\right|^2.
\end{equation*}

Note that for loxodromic $\alpha,\beta\in \Isom(\Hyp^3)$ with distinct fixed points, \Cref{eq:translation-length_Cr-boundary_exp_2} gives:
\begin{equation*}
	\tfrac{X^+}{X^-}=\tfrac{\Cr{\alpha^-,\alpha^+;\beta^+,\beta^-}}{\Cr{\alpha^-,\alpha^+;\beta^-,\beta^+}}
	= \lim_{m,n\to+\infty}\exp \tfrac{1}{2}\left(\ell(\alpha^m\beta^{n})-\ell(\alpha^m\beta^{-n})\right).
\end{equation*}
\end{remark}

\begin{example}[Cremona group action]
\label{eg:Cremona_cross-ratio}
Let $\Gamma=\operatorname{Bir}(\C\Proj^2)$ be the group of birational functions of the complex projective plane (also known as the Cremona group), and consider the representation $\rho\colon\Gamma\to\Isom(\Hyp^\omega)$ constructed in \cite{Cantat_group-birational-surfaces_2011}. 

An element $g\in \Gamma$ has a dynamical degree denoted $\lambda(g)$ and by \cite[Remark 4.5]{Cantat-Lamy-2013} it satisfies $\ell \rho(g)=\log\lambda(g)$. 
One may compare this with our \Cref{lem:Buseman-length-function} expressing translation lengths in terms of Buseman functions, and the relation between Buseman functions and dynamical derivatives \cite[§4.2.3]{Das-Simmons-Urbanski_gromov-hyperbolic-spaces_2017}.
By \Cref{eq:translation-length_Cr-boundary_exp_1}, for $\alpha,\beta\in\Gamma$ with positive dynamical degree and distinct fixed points, we have:
\begin{equation*}
	\log \Cr{\rho(\alpha)^-,\rho(\alpha)^+;\rho(\beta)^-,\rho(\beta)^+}=\lim_{m,n\to\infty}
	\tfrac{1}{2}\left(\lambda(\alpha^m)+\lambda(\beta^{n})-\lambda(\alpha^m\beta^{n}) \right).
\end{equation*}
\end{example}

\subsection{Isometry groups of algebraic hyperbolic spaces}
\label{subsec:Isom(Hyp)}

For algebraic hyperbolic spaces, the classification of non-trivial isometries \ref{prop:classification-g-Isom(X)} can be made more explicit (see~\cite[Proposition 6.1.11]{Das-Simmons-Urbanski_gromov-hyperbolic-spaces_2017}).

\begin{proposition}[classification of isometries]
If a non-trivial $\gamma \in \Isom(\Hyp^\kappa)$ is:
\begin{enumerate}[noitemsep, align=left]
	\item[elliptic] then it is conjugate to an element in the group $\operatorname{O}(\kappa-1)$ of isometries of $\Sph^{\kappa-1}$. 
	\item[parabolic] then it is conjugate to an element in the group $\Isom(\R^{\kappa-1})$ of affine isometries of $\R^{\kappa-1}$ which has no fixed points.
	\item[loxodromic] then it is conjugate to an element in the group $\operatorname{O}(\kappa-2)\times \R_+^*$ of linear similarities of $\R^{\kappa-1}$ with non-trivial dilatation factor.
\end{enumerate}
\end{proposition}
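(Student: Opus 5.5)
The plan is to work in the hyperboloid model. There $\Isom(\Hyp^\kappa)=\PO(1,\kappa)$, and each non-trivial $\gamma$ lifts to a linear map $g\in \operatorname{O}(1,\kappa)$ preserving the upper sheet. The proof splits according to the trichotomy of \Cref{prop:classification-g-Isom(X)}. In each case I will locate a canonical geometric object fixed by $\gamma$ and use transitivity of $\Isom(\Hyp^\kappa)$ to move it to a standard position. These transitivity facts were recalled in \Cref{subsec:Hn-models}: transitivity on $\Hyp^\kappa$, on $\partial\Hyp^\kappa$, and on pairs of distinct boundary points, the latter coming from triple transitivity.

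\textbf{Elliptic case.} The orbit $\langle\gamma\rangle\cdot o$ is bounded. Since $\Hyp^\kappa$ is complete and $\operatorname{CAT}(-1)$, the circumcentre of this bounded orbit is a point fixed by $\gamma$. Conjugating, I may assume $\gamma$ fixes $o=\Proj(1,0^\kappa)$. Its linear lift then preserves the line $\R(1,0^\kappa)$ and hence its Minkowski-orthogonal complement $\R^\kappa$, on which the form is negative definite. So $\gamma$ lies in the stabilizer $\operatorname{PO}(\kappa)$, which acts as the isometry group of the unit tangent sphere, identified with $\partial\Hyp^\kappa$ in the ball model.

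\textbf{Loxodromic case.} By \Cref{rem:infimum-translation-length}, $\gamma$ has a unique axis joining $\gamma^-$ and $\gamma^+$, and it fixes both endpoints. Conjugating, I take $(\gamma^-,\gamma^+)=(0,\infty)$ in the affine model. The stabilizer of $\infty$ is the similarity group $(\R^{\kappa-1}\rtimes \operatorname{O}(\kappa-1))\rtimes\R_{>0}$, and fixing $0$ as well kills the translation part. This leaves linear similarities $x\mapsto \lambda A x$ with $A$ orthogonal. The dilatation factor satisfies $\lambda=e^{\pm\ell(\gamma)}\neq 1$, for instance by \Cref{lem:translation-length-from-cross-ratio} or by the Buseman computation of \Cref{lem:Buseman-length-function}. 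I will also state the splitting into an orthogonal part and a dilation along the axis, since the dilation commutes with the orthogonal part; this is how to read the group written in the statement as $\operatorname{O}(\kappa-2)\times\R_+^*$, up to the indexing convention on the dimension of the orthogonal factor.

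\textbf{Parabolic case.} \Cref{thm:classification-G-Isom(X)} applied to $\langle\gamma\rangle$ gives a unique fixed point in $\partial\Hyp^\kappa$. Conjugating, I send it to $\infty$, so $\gamma$ acts on $\R^{\kappa-1}$ as a similarity $x\mapsto\lambda Ax+b$. I then need two facts.
\begin{enumerate}
\item $\lambda=1$. If $\lambda\neq 1$, the affine map $x\mapsto \lambda Ax+b$ has a fixed point, because $\lambda A-\mathrm{id}$ is invertible by a Neumann series: $\|\lambda A\|=\lambda$, so either $\lambda<1$ or $\lambda^{-1}<1$ works. That gives a second fixed point on the boundary, contradicting uniqueness. (Alternatively, a vertical geodesic would then be translated, making $\gamma$ loxodromic.)
\item The affine isometry $x\mapsto Ax+b$ has no fixed point in $\R^{\kappa-1}$. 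A fixed point there would be a second fixed point at infinity, again contradicting uniqueness.
\end{enumerate}

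I expect the main obstacle to be the infinite-dimensional care. In the elliptic case the key point is the existence of the circumcentre, which needs completeness and the $\operatorname{CAT}(0)$ property but not local compactness. In the parabolic case the key point is the invertibility of $\lambda A-\mathrm{id}$, where the Neumann-series argument avoids any use of eigenvalues.
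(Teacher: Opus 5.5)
Your proof is correct. The paper does not prove this proposition itself; it only refers to \cite[Proposition 6.1.11]{Das-Simmons-Urbanski_gromov-hyperbolic-spaces_2017}.

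Your argument is the natural self-contained route. You produce a fixed point: the circumcentre of a bounded orbit in the complete $\operatorname{CAT}(-1)$ space $\Hyp^\kappa$, or the boundary fixed point(s) supplied by the dynamical classification. You move it to a standard position using the transitivity recalled in \Cref{subsec:Hn-models}, and you read off the stabilizer. This needs no local compactness. Your Neumann-series step is a contraction-mapping argument for $x\mapsto\lambda Ax+b$ or its inverse, and it is exactly what replaces eigenvalue considerations in infinite dimension.

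Two points deserve to be made explicit.

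First, to apply \Cref{thm:classification-G-Isom(X)} to $\langle\gamma\rangle$ you must know that this cyclic group is parabolic, i.e.\ neither elliptic nor loxodromic. Non-ellipticity is clear from the unbounded orbit. For the absence of loxodromic elements: a parabolic $\gamma$ is not loxodromic, so $\ell(\gamma)=0$ by \Cref{lem:translation-length-from-cross-ratio}. Hence $\ell(\gamma^n)=\lvert n\rvert\ell(\gamma)=0$ for every $n$, and no power of $\gamma$ is loxodromic.

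Second, check what you actually establish. An elliptic $\gamma$ is conjugate into the stabilizer of $o$, namely $\operatorname{O}(\kappa)$ acting on $\Sph^{\kappa-1}$. A loxodromic one is conjugate into $\operatorname{O}(\kappa-1)\times\R_{>0}$ acting linearly on $\R^{\kappa-1}$. These are the correct groups. The indices $\kappa-1$ and $\kappa-2$ written in the statement are off by one when $\kappa$ is finite; the discrepancy disappears for infinite $\kappa$. You flagged this in the loxodromic case, and you should flag it in the elliptic case as well.
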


Consider a loxodromic $\gamma \in \Isom(\Hyp^\kappa)$.
Since $\Hyp^\kappa$ is $\operatorname{CAT}(-1)$, it follows from \Cref{rem:infimum-translation-length} that its stable translation length defined by formulae \eqref{eq:translation-length_limit} is equal to its infimum translation length:
\begin{equation*}
\ell(\gamma) = \inf\{\dH(o, \gamma(o)) \colon o\in \Hyp^\kappa \}.
\end{equation*}
The \emph{translation axis} of $\gamma$ is the set of points $o\in \Hyp^\kappa$ such that $\dH(o,\gamma o)=\ell(\gamma)$: it is a hyperbolic geodesic joining its attractive and repulsive fixed points $\gamma^+,\gamma^-\in \partial \Hyp^\kappa$, on which $\gamma$ acts by translation of $\ell(\gamma)$.
Moreover, for all $\xi \in \partial \Hyp^\kappa\setminus\{\gamma^-,\gamma^+\}$ we have $\ell(\gamma)=|\log  \Cr{\gamma \xi , \gamma^-, \xi, \gamma^+}| $.

\begin{definition}[elementary]
A subgroup $\Gamma\subset \Isom(\Hyp^\kappa)$ is \emph{elementary} when it has invariant subset of cardinal $1$ in $X\sqcup \partial X$ or of cardinal $2$ in $\partial X$. 
\end{definition}

\begin{remark}[on terminology]\label{rem:elementary}
Beware that conflicting terminologies appear in the literature. 

For example, \cite{Das-Simmons-Urbanski_gromov-hyperbolic-spaces_2017} and \cite{Kim_marked-length-rigidity-symmetric-space_2001} are not compatible, and we use a mix of them.
Their notions of elementarity agree, but ours is less restrictive (when there is a global fixed point at infinity but the limit set is infinite, these authors do not consider the action to be elementary, whereas we do).
Their notion of parabolicity is different, and we use the same as in \cite{Das-Simmons-Urbanski_gromov-hyperbolic-spaces_2017}.
\end{remark}

The elementary representations partition into the following classes:
\begin{enumerate}[noitemsep, align=left]
\item[\emph{elliptic}:] there is a global fixed point $o\in \Hyp^\kappa$, so $\Gamma$ can be conjugated in $\Isom(\Sph_o^{\kappa-1})$.
\item[\emph{parabolic}:] there is a global fixed point $\infty\in \partial \Hyp$ and its horospheres are all invariant, so $\Gamma$ can be conjugated in $\Isom(\R^{\kappa-1})$.
\item[\emph{focal}:] there is a global fixed point $\infty\in \partial \Hyp$ but horospheres are not invariant, so $\Gamma$ can be conjugated in $\Isom(\R^{\kappa-1})\rtimes \R_{>0}$ and is projection in $\R_{>0}$ is non-trivial.

\item[\emph{lineal}:] there is an invariant pair of points at infinity but no fixed point in $\Hyp^\kappa$, so $\Gamma$ can be conjugated in $ \Isom(\Sph_o^{\kappa-2})\rtimes \R^*$ and its projection in $\R^*$ is infinite.
\end{enumerate} 

\begin{remark}[elementary and length function]
An elementary representation may have zero length function (in which case it is elliptic or parabolic) or non-zero length function (in which case it is focal or lineal).
\end{remark}

\begin{lemma}[Tits alternative]
\label{lem:Tits-alternative}
A subgroup $\Gamma \subset \Isom(\Hyp^\kappa)$ is non-elementary if and only if it contains loxodromic elements $\alpha,\beta\in \Gamma$ whose fixed points $\alpha^\pm, \beta^\pm$ are all distinct.

In that case, there exists $p\in \N_{\ge 1}$ such that $a=\alpha^p$ and $b=\beta^p$ freely generate a rank-$2$ free group, and the restricted length-function $\Z^2 \ni (m,n)\mapsto \ell(a^mb^n) \in \R_{\ge 0}$ is proper.
\end{lemma}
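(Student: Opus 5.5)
The plan is to prove the two directions of the equivalence separately, and then obtain the second paragraph from \Cref{lem:equal-fixed-points-from-length}.

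The implication $\impliedby$ is immediate. If $\alpha,\beta\in\Gamma$ are loxodromic with four distinct fixed points, then a finite orbit in $\Hyp^\kappa$ would give a fixed point, which is impossible because $\alpha$ is loxodromic. The only finite subsets of $\partial\Hyp^\kappa$ invariant under $\alpha$ lie in $\{\alpha^\pm\}$, since every other point has an infinite $\alpha$-orbit accumulating on $\alpha^\pm$. Likewise the only finite subsets invariant under $\beta$ lie in $\{\beta^\pm\}$. As these two pairs are disjoint, no non-empty subset of cardinal $\le 2$ is invariant under both, so $\Gamma$ is non-elementary.

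For $\implies$, assume $\Gamma$ is non-elementary and apply \Cref{thm:classification-G-Isom(X)}. If $\Gamma$ were elliptic it would have a bounded orbit, hence a fixed point: the circumcentre exists in the complete $\operatorname{CAT}(-1)$ space $\Hyp^\kappa$. If $\Gamma$ were parabolic it would have a unique fixed point in $\partial\Hyp^\kappa$. Both are excluded, so $\Gamma$ contains a loxodromic element and $\Card\Lambda(\Gamma)\ge 2$.

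If $\Card\Lambda(\Gamma)=2$, then $\Lambda(\Gamma)$ would be an invariant pair, which is excluded. Hence $\Lambda(\Gamma)$ is uncountable by \cite[Proposition 7.3.1]{Das-Simmons-Urbanski_gromov-hyperbolic-spaces_2017}. By \Cref{prop:loxodromic-axes-dense}, pairs of fixed points of loxodromic elements are dense in $\Lambda(\Gamma)\times\Lambda(\Gamma)$. Pick four distinct points $\xi_1,\dots,\xi_4\in\Lambda(\Gamma)$ with disjoint neighbourhoods $U_i$. Density gives a loxodromic $\alpha$ with $(\alpha^-,\alpha^+)\in U_1\times U_2$ and a loxodromic $\beta$ with $(\beta^-,\beta^+)\in U_3\times U_4$. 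Then all four fixed points are distinct.

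For the second paragraph, the ping-pong argument in the proof of \Cref{lem:equal-fixed-points-from-length} applies verbatim, since $\Hyp^\kappa$ is strongly hyperbolic. It produces $p\ge1$ such that $a=\alpha^p$ and $b=\beta^p$ are in Schottky position and therefore freely generate a free group of rank $2$. The same argument gives the uniform bound $\ell(a^mb^n)\ge C'(|m|+|n|)$, which is exactly properness on $\Z^2$.

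The step needing the most care is the word-length estimate for $\ell$ and not only for $d(\gamma o,o)$. The point is that $a^mb^n$ is cyclically reduced for $m,n\neq0$, so $|(a^mb^n)^l|=l(|m|+|n|)$. Feeding this into the displacement lower bound \cite[Equation (10.3.6)]{Das-Simmons-Urbanski_gromov-hyperbolic-spaces_2017} and dividing by $l$ gives $\ell(a^mb^n)\ge C'(|m|+|n|)$. When $m=0$ or $n=0$ the bound follows directly from $\ell(a^m)=|m|\ell(a)$ and $\ell(b^n)=|n|\ell(b)$.
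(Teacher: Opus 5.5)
Your proof is correct. For the free-group and properness part it coincides with the paper's: both defer to the ping-pong argument of \Cref{lem:equal-fixed-points-from-length}, including the observation that $a^mb^n$ is cyclically reduced, so $\lvert (a^mb^n)^l\rvert = l(\lvert m\rvert+\lvert n\rvert)$.

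For the equivalence you take a different route. The paper argues directly with fixed points. Non-elementarity gives a non-zero length function, hence a loxodromic element. If every two loxodromic elements shared a fixed point, $\Gamma$ would be lineal or focal. The paper only asserts this last claim; it needs, for instance, conjugating one loxodromic by powers of another to produce infinitely many distinct fixed points. The paper also leaves the converse implication implicit.

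You instead pass through the limit set. The classification excludes the elliptic and parabolic cases. The dichotomy $\Card\Lambda(\Gamma)\le 2$ or $\Card\Lambda(\Gamma)\ge\Card(\R)$ then forces $\Lambda(\Gamma)$ to be uncountable. Finally, density of loxodromic fixed-point pairs in $\Lambda(\Gamma)\times\Lambda(\Gamma)$ lets you place $\alpha^\pm,\beta^\pm$ in four disjoint neighbourhoods. This imports heavier results (the limit-set dichotomy and the density theorem). In exchange it is shorter and more flexible, since the fixed points can be chosen near any four prescribed limit points. You also give the converse explicitly via north--south dynamics.

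One precision: the density step really needs $\Gamma$ to have no global fixed point at infinity. In a focal group every loxodromic fixes that point, so the pairs cannot be dense. It is therefore the full non-elementarity hypothesis, not merely $\Card\Lambda(\Gamma)\ge 3$, that licenses this step. You do have that hypothesis.
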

\begin{proof}
Since $\Gamma$ is non-elementary then it has non-zero length function so it admits loxodromic elements.
If any two loxodromic elements in $\Gamma$ have a common fixed point, then it would be lineal or focal, so we may find $\alpha,\beta$ with distinct fixed points.
The second statement follows from (the proof) of \Cref{lem:equal-fixed-points-from-length}.
\end{proof}

The following \Cref{cor:elem-Hyp_length-homomorphism} will serve in \Cref{thm:length-non-neutral-classes} to separate the length functions associated to elementary and non-elementary representations of groups.

\begin{corollary}[elementary from homomorphisms]
\label{cor:elem-Hyp_length-homomorphism}
A subgroup $\Gamma\subset \Isom(\Hyp^\kappa)$ is elementary if and only if its length function $\ell\colon \Gamma\to \R_+$ is the absolute value of a homomorphism $\Xi \colon \Gamma \to \R$. 
\end{corollary}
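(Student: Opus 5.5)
We prove both directions, relying on the Tits alternative (\Cref{lem:Tits-alternative}) and the length-function criterion of \Cref{lem:equal-fixed-points-from-length}.

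\emph{Elementary implies homomorphism.} Go through the classes of elementary subgroups.
\begin{itemize}
\item If $\Gamma$ is elliptic or parabolic, its length function vanishes identically, so $\ell=\lvert 0\rvert$.
\item If $\Gamma$ is focal, it fixes a point $\infty\in\partial\Hyp^\kappa$. Fix $o\in\Hyp^\kappa$ and set $\Xi(\gamma)=\Bus_\infty(o,\gamma o)$. This is a homomorphism: the cocycle relation and $\Gamma$-invariance of $\Bus_\infty$ (\Cref{rem:Buseman-cocycle}) give $\Bus_\infty(o,\alpha\beta o)=\Bus_\infty(o,\alpha o)+\Bus_\infty(\alpha o,\alpha\beta o)$, and the second term equals $\Bus_\infty(o,\beta o)$. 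Equivalently, $\Xi$ is the logarithm of the dilatation factor in $\Isom(\R^{\kappa-1})\rtimes\R_{>0}$.
\item For a loxodromic $\gamma\in\Gamma$, $\infty$ is one of $\gamma^\pm$, so \Cref{lem:Buseman-length-function} gives $\Xi(\gamma)=\pm\ell(\gamma)$. For a non-loxodromic $\gamma$, both $\ell(\gamma)$ and $\Xi(\gamma)$ vanish: $\Xi(\gamma^n)=n\Xi(\gamma)$ and $\lvert\Xi(\gamma^n)\rvert\le \dH(o,\gamma^n o)$, so $\lvert\Xi(\gamma)\rvert\le\ell(\gamma)=0$.
\item If $\Gamma$ is lineal, it preserves a pair $\{\xi,\eta\}$. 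Pass to the index $\le 2$ subgroup $\Gamma_0$ fixing both points, and take $\Xi_0=\Bus_\xi(o,\cdot\, o)$ on $\Gamma_0$. An element $\gamma$ swapping $\xi,\eta$ has a fixed point on the geodesic $(\xi,\eta)$, since it acts on that line by an isometry reversing orientation. Hence $\ell(\gamma)=0$ and $\gamma^2$ is elliptic. Extend $\Xi$ by $0$ on $\Gamma\setminus\Gamma_0$. This extension fails to be a homomorphism, but it can be repaired: identify $\Gamma$ with a subgroup of $\Isom(\Sph^{\kappa-2})\rtimes\R^*$ and take for $\Xi$ the logarithm of the absolute value of the $\R^*$-factor. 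This is a homomorphism, and $\lvert\Xi\rvert=\ell$ elementwise by the same Buseman argument.
\end{itemize}

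\emph{Homomorphism implies elementary.} Argue by contraposition. Suppose $\Gamma$ is non-elementary. By \Cref{lem:Tits-alternative} it contains loxodromic elements $\alpha,\beta$ with four distinct fixed points. By the last statement of \Cref{lem:equal-fixed-points-from-length}, $\ell$ restricted to $\Grp(\alpha,\beta)$ is not the absolute value of a homomorphism. A fortiori $\ell$ on $\Gamma$ is not of the form $\lvert\Xi\rvert$ for any homomorphism $\Xi\colon\Gamma\to\R$.

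The argument behind that lemma is the properness of $(m,n)\mapsto\ell(a^m b^n)$ for $a=\alpha^p$, $b=\beta^p$, given by \Cref{lem:Tits-alternative}. If $\ell=\lvert\Xi\rvert$, the integers $m,n$ can be chosen with $\lvert m\rvert+\lvert n\rvert$ arbitrarily large while $\lvert m\,\Xi(a)+n\,\Xi(b)\rvert$ stays bounded, which contradicts properness. The same construction in the case $\Xi(a)=0$ gives the contradiction directly.

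\emph{Main obstacle.} The delicate point is the lineal case when $\Gamma$ does not fix $\xi,\eta$ pointwise. One must check that the $\R^*$-projection yields a genuine homomorphism to $\R$ whose absolute value is $\ell$, including on the elements swapping the two points. The plan is to use the conjugation into $\Isom(\Sph_o^{\kappa-2})\rtimes\R^*$ and to verify directly that the translation length of $(A,\lambda)$ equals $\lvert\log\lvert\lambda\rvert\rvert$.
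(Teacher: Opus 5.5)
You handle the elliptic, parabolic and focal cases, the lineal case where $\Gamma$ fixes both endpoints, and the converse correctly, and along the same route as the paper. The first group of cases uses the Busemann homomorphism, with $\lvert\Xi\rvert\le\ell$ and equality on loxodromics by \Cref{lem:Buseman-length-function}; the converse uses \Cref{lem:Tits-alternative} and the properness of $(m,n)\mapsto\ell(a^mb^n)$.

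The gap is the step you flag yourself. For a lineal $\Gamma$ containing an element $s$ that exchanges $\xi$ and $\eta$, your ``repair'' cannot work, because then no homomorphism $\Xi\colon\Gamma\to\R$ with $\lvert\Xi\rvert=\ell$ exists. Such a $\Gamma$ contains a loxodromic $t$, and $t$ fixes $\xi$ and $\eta$, since these are the fixed points of $t^2$. The element $sts^{-1}$ translates the same axis $(\xi,\eta)$ by the same length in the opposite direction. Hence $t\,sts^{-1}$ fixes this axis pointwise and $\ell(t\,sts^{-1})=0$. But any homomorphism to the abelian group $\R$ gives $\Xi(t\,sts^{-1})=2\,\Xi(t)$, which forces $\ell(t)=\lvert\Xi(t)\rvert=0$.

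The simplest instance is the infinite dihedral group generated by two elements of a copy of $\Hyp^2\subset\Hyp^\kappa$: a translation $t$ of length $1$ along a geodesic, and a half-turn $s$ about a point of that geodesic. It preserves $\{t^-,t^+\}$, so it is elementary, and $\ell(t)=1$. Yet it is generated by the two involutions $s$ and $ts$, so $\Hom(\Gamma,\R)=0$.

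In your upper half-space picture with $\{\xi,\eta\}=\{0,\infty\}$, the exchanging elements are $x\mapsto\lambda Ax/\lVert x\rVert^2$. They are elliptic for every $\lambda>0$, since they fix the point at height $\sqrt{\lambda}$ above $0$. Conjugation by them inverts the dilation factor. So neither ``$\log\lvert\lambda\rvert$ is a homomorphism'' nor ``the translation length of $(A,\lambda)$ is $\lvert\log\lvert\lambda\rvert\rvert$'' holds.

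So the statement is false as written in this case, and the paper's own proof has the same hole: the ``translation parameter along this geodesic'' is a homomorphism to $\Isom(\R)=\R\rtimes\Z/2$, not to $\R$. Your argument, together with the computation above, proves the corrected statement: $\ell=\lvert\Xi\rvert$ for some homomorphism $\Xi\colon\Gamma\to\R$ if and only if $\Gamma$ fixes a point of $\Hyp^\kappa\sqcup\partial\Hyp^\kappa$.

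For the ``only if'' direction, your converse makes $\Gamma$ elementary. If $\Gamma$ fixes no point, it preserves a pair $\{\xi,\eta\}$ and some element exchanges it. It must then contain a loxodromic element: otherwise its image in $\Isom(\R)$ has no non-trivial translations, so $\Gamma$ fixes a point of the axis. The computation above excludes this case.

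For a general elementary $\Gamma$ you only get $\ell=\lvert\Xi_0\rvert$ on the subgroup $\Gamma_0$ of index at most $2$ that fixes the invariant pair pointwise. This corrected version is all that is used later, in \Cref{thm:length-non-neutral-classes} and \Cref{lem:PCne_union-of-compact}. There the elements $\alpha,\beta$ are loxodromic, and therefore fix any invariant pair pointwise.
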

\begin{proof}
This follows from \Cref{lem:equal-fixed-points-from-length}, but let us spell out the proof for clarity.

Assume $\Gamma$ is elementary. 
If $\Gamma$ is elliptic, then we set $\Xi$ to be the trivial homomorphism. 
If $\Gamma$ has a fixed point $\xi\in \partial X$, then we set $\Xi$ to be the Busemann homomorphism $g\mapsto \Bus_\xi(gx,x)$ for any $x\in\Hyp^\kappa$ associated to $\xi$ (see \cite[Chapter II.8]{Bridson-Haefliger_metric-non-positive-curvature_1999} for details about Busemann functions).
If $\Gamma$ is lineal then it leaves invariant some geodesic line $(\xi, \xi')\subset X$ and the translation parameter along this geodesic yields the desired homomorphism $\Xi$.
In all cases, we have $\ell(\gamma)=\lvert \Xi(\gamma)\rvert$.

Conversely, assume that $\Gamma$ is non-elementary, then $\Gamma$ contains two loxodromic elements $\alpha,\beta$ with disjoint fixed point sets at infinity by \Cref{lem:Tits-alternative} so by \Cref{lem:equal-fixed-points-from-length} there exists $p\in \N_{\ge 1}$ such that the restricted length-function $\Z^2 \ni (m,n)\mapsto \ell(a^{pm}b^{pn}) \in \R_{\ge 0}$ is proper, so there cannot be a homomorphism $\Xi \colon\Gamma\to\R$ such that $\ell(\gamma)=\lvert\Xi(\gamma)\rvert$.   
\end{proof}

When a subgroup is non-elementary, its limit set $\Lambda(\Gamma)$ is infinite and is the smallest closed $\Gamma$-invariant subset of $\partial\Hyp^\kappa$ (see \cite[§7.2-§7.3]{Das-Simmons-Urbanski_gromov-hyperbolic-spaces_2017}). 

\begin{lemma}[convex core]
Consider a subgroup $\Gamma \subset \Isom(\Hyp^\kappa)$ which is non-neutral.

There is a smallest non-empty (closed) invariant convex subspace of $\Hyp^\kappa$: that is the convex hull of $\Lambda(\Gamma)$.
We call it the \emph{convex core}, and denote it $\operatorname{Core}(\Gamma)$.

There is a smallest non-empty (closed) totally geodesic invariant subspace: that is the span of its limit set $\Span(\Lambda(\Gamma))\subset \Hyp^\kappa$.
\end{lemma}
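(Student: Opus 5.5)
Throughout, non-neutral means that $\Gamma$ contains a loxodromic element (by \Cref{thm:classification-G-Isom(X)}). Hence $\Lambda(\Gamma)$ is non-empty, contains at least the two fixed points $\gamma^\pm$, and by \cite[Proposition 7.4.1]{Das-Simmons-Urbanski_gromov-hyperbolic-spaces_2017} it is the smallest non-empty closed $\Gamma$-invariant subset of $\partial\Hyp^\kappa$. The plan is to compare any non-empty closed invariant convex (resp. totally geodesic) subset $C\subset\Hyp^\kappa$ with $\Conv(\Lambda(\Gamma))$ (resp. $\Span(\Lambda(\Gamma))$) through its boundary at infinity $\partial C\subset \partial\Hyp^\kappa$.

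\textbf{Step 1: invariance and non-emptiness.} Since $\Lambda(\Gamma)$ is $\Gamma$-invariant and the constructions $\Conv$ and $\Span$ are natural under isometries, both $\Conv(\Lambda(\Gamma))\cap\Hyp^\kappa$ and $\Span(\Lambda(\Gamma))$ are $\Gamma$-invariant. They are closed by definition. They are non-empty because they contain the geodesic axis $(\gamma^-,\gamma^+)$ of any loxodromic $\gamma\in\Gamma$. Moreover $\Conv(\Lambda(\Gamma))$ is convex and $\Span(\Lambda(\Gamma))$ is totally geodesic.

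\textbf{Step 2: minimality.} Let $C$ be a non-empty closed convex $\Gamma$-invariant subset. I will show that $\Lambda(\Gamma)\subset\partial C$, where $\partial C$ is the set of limits in $\partial\Hyp^\kappa$ of sequences in $C$. Pick $o\in C$. By the definition of the limit set with base point $o$, every $c\in\Lambda(\Gamma)$ is a limit of points $\gamma_n o\in C$, so $c\in\partial C$. Next, for $a\ne b$ in $\partial C$, the geodesic $(a,b)$ lies in $C$. To see this, take $x_n\to a$ and $y_n\to b$ in $C$. The segments $[x_n,y_n]$ lie in $C$ by convexity, and they converge on compact sets to $(a,b)$; in the hyperboloid or projective model this is convergence of projective segments. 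Since $C$ is closed, $(a,b)\subset C$. This gives $\Lambda(\Gamma)^{(1)}\subset C$. Finally, $\Conv(\Lambda(\Gamma))$ is the closure of the increasing union of skeleta built inductively from this $1$-skeleton, and each stage stays in $C$ by convexity and closedness. Hence $\Conv(\Lambda(\Gamma))\subset C$.

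For the totally geodesic statement, apply the same argument to an invariant closed totally geodesic $C$. It is in particular convex, and its boundary is closed under taking spans: in the linear model, $C$ is $\Proj(W)\cap\Proj\mathbf{X}_{+1}$ for a closed subspace $W$, and $\partial C=\Proj(W)\cap\Proj\mathbf{X}_0$. Since $\Lambda(\Gamma)\subset\Proj(W)$, the closed linear span of $\Lambda(\Gamma)$ lies in $W$, so $\Span(\Lambda(\Gamma))\subset C$.

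\textbf{Main obstacle.} The delicate point is Step 2 in the non-proper, infinite-dimensional setting. One has to justify that geodesics between boundary points of a closed convex set lie in it, and that $\Conv(X)$ for $X\subset\partial\Hyp^\kappa$ is obtained as the closure of the skeleta. I would settle both directly in the Minkowski model. There, $\Conv(X)$ is the closed cone hull of the isotropic representatives intersected with $\Proj\mathbf{X}_{+1}$. A point of the open convex cone hull of finitely many isotropic vectors $a_i$ can be written as a limit of convex combinations of timelike vectors $a_i+\epsilon o$, and these combinations lie in $C$ for $\epsilon>0$ small. This uses only convexity of $C$ in the projective model (geodesic convexity equals projective convexity) together with closedness.
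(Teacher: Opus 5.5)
Your proof is correct and follows the same route as the paper. You get $\Lambda(\Gamma)\subset\partial C$ by computing the limit set from a base point in $C$, and you get geodesics between boundary points of $C$ inside $C$ from continuity of geodesics in their endpoints plus closedness. Hence $\Conv(\Lambda(\Gamma))$ (resp.\ $\Span(\Lambda(\Gamma))$) lies in every non-empty closed invariant convex (resp.\ totally geodesic) subset, and your Minkowski-model argument spells out the identification of the minimal set that the paper leaves implicit.

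One inaccuracy: your opening aside that $\Lambda(\Gamma)$ is the smallest non-empty closed $\Gamma$-invariant subset of $\partial\Hyp^\kappa$ fails for elementary non-neutral groups, such as a cyclic loxodromic group or a focal group, where a single fixed point at infinity is a smaller one. Your argument never uses this claim, so it does no harm.
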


\begin{proof}
If the representation is not elementary, this is \cite[§7.2-7.3]{Das-Simmons-Urbanski_gromov-hyperbolic-spaces_2017}).

If the representation is not neutral, then there is an loxodromic element and thus the limit has at least two points. Any convex subset $C$ that is invariant contains $\Lambda(\Gamma)$ in its boundary because it does not depend on the chosen base point. 
By the continuity of geodesics with respect to their endpoints (\cite[Proposition 4.4.4]{Das-Simmons-Urbanski_gromov-hyperbolic-spaces_2017}), the convex hull $C$ contains all geodesics between points in $\Lambda(\Gamma)$. 
In particular, the intersection of all invariant closed convex subsets is non-empty and thus this is the unique minimal invariant closed convex subset.

The same argument gives the same result for invariant totally geodesic subspaces.
\end{proof}

Recall that $\Isom(\Hyp^\kappa)=\PO(1,\kappa)$, and an isometric action of a group $\Gamma$ on $\Hyp^\kappa$ may be lifted to a linear action on $\R^{1+\kappa}$ preserving $\R_{>0
}\times\R^\kappa$ (or the upper half of $\mathbf{X}_{+1}$).

\begin{definition}[minimal and irreducible]
A representation $\Gamma\to\Isom(\Hyp^\kappa)$ is \emph{minimal} when its action on $\Hyp^\kappa$ admits no proper closed invariant totally geodesic subspace. 

A representation $\Gamma \to \PO(1,\kappa)$ is \emph{irreducible} when its corresponding linear action on $\R^{1+\kappa}$ does not preserve a non-trivial closed linear subspace.
\end{definition}

\begin{remark}[irreducible = minimal and non-elementary]
\label{rem:irred=>eleminimal}
It follows from the definitions that a representation is irreducible if and only if it is minimal without fixed point at infinity. 
Indeed in $\R^{1+\kappa}$, a non-trivial closed linear subspace admits a non-trivial closed orthogonal complement: at least one of these intersects the isotropic cone or the unit hyperboloid $\mathbf{X}_{0} \cup \mathbf{X}_{+1}$, which projectifies to $\Hyp^\kappa \cup \partial \Hyp^\kappa$.
\end{remark}

\section{From kernels to isometric embeddings}

In this section we will consider topological spaces $X,Y$ and topological groups $\Gamma, G$ acting by homeomorphisms on these spaces, but we will mostly focus on sets and groups with the discrete topology.

\begin{definition}[kernel]
\label{def:algebraic-kernel}
On a topological space $X$, a \emph{kernel} is a continuous function $K\colon X^2 \to \R$ that is symmetric ($\forall x,y\in X \colon K(x,y)=K(y,x)$).
We say that it has diagonal $s\in \R$ when $\forall z\in X \colon K(z,z)=s$. We say that it is non-negative when $\forall x,y\in X \colon K(x,y)\ge 0$.
\end{definition}

We recall here some topological background concerning the topologies that one may endow the spaces of functions over $X$ and the space of representations $\Gamma \to \Isom(X)$.

\begin{definition}[pointwise topology]
For a topological space $X$ and $p\in \N$, the set of continuous functions $X^p\to \R$ is endowed with the pointwise convergence topology inherited by the product topology when viewed as a subset of $\prod_{X^p}\R$. 

It is a locally convex topological vector space.
Its compact convex sets correspond to the products of compact intervals, of the form $\prod_{x\in X^p} [min_x, max_x]$.
\end{definition}

\begin{lemma}[closed subspaces]
\label{lem:pointwise-closed}
On a space $X$, the subset of continuous functions $K\colon X^p\to \R$ cut out by any collection of non-strict equalities between continuous functions of the values of $K$ at finite number of points at a time is closed. 

More precisely, if for all $n\in \N$ we have a set $L_n$ (which may have any cardinal) indexing continuous functions $D^n_l\colon \R^{n^p}\to \R$, then the following formula defines a closed subspace: \[\forall n\in \N,\; \forall l\in L_n,\; \forall x\in X^{n^p} \colon D^n_l(K(x_{i_1},\dots,x_{i_p}))\ge0.\]
\end{lemma}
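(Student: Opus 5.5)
The plan is to reduce the statement to two standard facts about the product topology on $\prod_{X^p}\R$: evaluation maps are continuous, and preimages of closed sets under continuous maps are closed. Everything will be stated inside the ambient space $\prod_{X^p}\R$ first. The set of continuous functions then inherits the subspace topology, so a subset cut out by closed conditions there is closed in the space of continuous functions.

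Fix $n\in\N$, an index $l\in L_n$ and a tuple $x=(x_1,\dots,x_n)\in X^n$. I read the indexing $x\in X^{n^p}$ of the statement as choosing $n$ points and feeding the $n^p$ values $K(x_{i_1},\dots,x_{i_p})$, for multi-indices $(i_1,\dots,i_p)\in\{1,\dots,n\}^p$, into $D^n_l$. Consider the map
\[
\Phi_{n,l,x}\colon K\longmapsto D^n_l\bigl((K(x_{i_1},\dots,x_{i_p}))_{(i_1,\dots,i_p)\in\{1,\dots,n\}^p}\bigr).
\]
It factors as the projection $\prod_{X^p}\R\to\R^{n^p}$ onto the finitely many coordinates indexed by the tuples $(x_{i_1},\dots,x_{i_p})$, followed by $D^n_l$. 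The projection is continuous because it is a product of coordinate projections, which are continuous by definition of the product topology. The function $D^n_l$ is continuous by hypothesis, so $\Phi_{n,l,x}$ is continuous.

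It follows that each set $\{K\colon \Phi_{n,l,x}(K)\ge 0\}=\Phi_{n,l,x}^{-1}([0,\infty))$ is closed. The subset described in the lemma is the intersection of these sets over all $n$, $l\in L_n$ and $x$. An arbitrary intersection of closed sets is closed, regardless of the cardinality of the $L_n$. Intersecting with the subspace of continuous functions preserves closedness in the induced topology.

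Conditions written as non-strict equalities are covered as well. An equality $E=0$ is the pair of inequalities $E\ge0$ and $-E\ge0$, so such conditions fit the same form.

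There is no real obstacle here. The only point needing care is the bookkeeping of indices: checking that only finitely many coordinates of $K$ enter each condition, which is exactly what makes the evaluation map continuous for the pointwise topology.
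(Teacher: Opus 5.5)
Your proof is correct, and it is the argument the paper relies on: the paper states this lemma without proof, and the implicit reasoning is yours. Each condition is the preimage of $[0,\infty)$ under $D^n_l$ composed with an evaluation on finitely many coordinates, which is continuous for the pointwise topology, and an arbitrary intersection of closed sets is closed. You were also right to state closedness relative to the subspace of continuous functions: for non-discrete $X$, pointwise limits of continuous functions need not be continuous, so closedness in all of $\prod_{X^p}\R$ would fail in general.
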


\begin{example}[group as set]
On a topological group $\Gamma$, the subset of functions $K\colon \Gamma^{1+p}\to \R$ which are left invariant is closed and identifies with the space of all functions $F\colon \Gamma^p\to \R$ by $K(\gamma_0, \dots,\gamma_n)=F(\gamma_{0}^{-1}\gamma_{1}, \gamma_1^{-1}\gamma_2,\dots, \gamma_{n-1}^{-1}\gamma_n)$ and $F(\gamma_1, \dots,\gamma_n)=K(1, \gamma_1, \gamma_1\gamma_2,\ldots, \gamma_1\dots\gamma_n)$. 

\end{example}

\subsection{Kernels of positive type and of conditionally negative type}
\label{subsec:kernel-positive-conditionegative}

We recall some facts about kernels of positive type and of conditionally negative type (see for example \cite[Chapter 3]{Berg-Christensen-Ressel_Harmonic-analysis-semigroups_1984} with slightly different terminology) , since we wish to emphasize some results about linear independence that we will need in the next sections.

\begin{definition}[positive and conditionally negative type]
\label{def:kernel-positive-cond-negative-type}
A kernel $K\colon X\times X\to \R$
\begin{itemize}[align=left]
	\item[has \emph{positive type}] when for all $x \in X^n$ we have:
	\begin{equation}
		\textstyle
		\label{eq:kernel-strictly-positive-type}
		\forall \lambda \in \R^n\setminus\{0\}
		\colon \sum_{i,j}\lambda_i\lambda_jK(x_i,x_j)\ge 0
	\end{equation}
	and \emph{strictly positive type} when such inequalities are strict for distinct $x_k$.
	
	\item[has \emph{conditionally negative type}] when it has diagonal $0$, and for all $x \in X^n$ we have:
	\begin{equation}
		\textstyle
		\label{eq:kernel-conditionally-negative-type}
		\forall \lambda \in \R^n\setminus\{0\}
		\colon 
		\sum_{k}\lambda_k=0 \implies \sum_{i,j}\lambda_i\lambda_jK(x_i,x_j)\le 0.
	\end{equation}
	and \emph{strictly conditionally negative type} when such inequalities are strict for distinct $x_k$.
\end{itemize}
\end{definition}

For a cardinal $\kappa$ we denote the Hilbert space of that dimension by $\R^\kappa$. 
Recall that a subset $Y$ of a Hilbert is \emph{total} when there is no closed linear subspace containing $Y$ and it is \emph{affinely total} when there is no closed affine subspace containing $Y$.

\begin{theorem}[linear and affine GNS constructions]
\label{thm:Hilbert-GNS}
Consider a kernel $K\colon X\times X \to \R$.

If $K$ has positive type then there is a cardinal $\kappa$ and a continuous $f\colon X\to\R^\kappa$ with total image such that $\forall x,y\in X\colon K(x,y)=\langle f(x), f(y)\rangle$. 
Moreover it is unique: for any two such $f_i\colon X\to\R^{\kappa_i}$, there is a linear isometry $\varphi\colon \R^{\kappa_1}\to\R^{\kappa_2}$ such that $f_2=\varphi\circ f_1$.

If $K$ has conditionally negative type then there is a cardinal $\kappa$ and a continuous $f\colon X\to\R^\kappa$ with affinely total image such that $\forall x,y\in X \colon K(x,y)=\|f(x)-f(y)\|^2$.
Moreover it is unique: for any two such $f_i\colon X\to\R^{\kappa_i}$, there is 
an affine isometry $\varphi\colon \R^{\kappa_1}\to\R^{\kappa_2}$ such that $f_2=\varphi\circ f_1$.
\end{theorem}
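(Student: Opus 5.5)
For the positive type case, the plan is the classical GNS construction. I would form the real vector space $\R^{(X)}$ of finitely supported functions on $X$, with basis $\{\delta_x\}_{x\in X}$, and equip it with the symmetric bilinear form $\langle \delta_x,\delta_y\rangle_K = K(x,y)$ extended bilinearly. Positive type says exactly that this form is positive semi-definite. I would then quotient by its null space $N=\{v\colon \langle v,v\rangle_K=0\}$; this is a linear subspace by the Cauchy--Schwarz inequality for semi-definite forms. Completing the quotient gives a Hilbert space $\mathcal{H}$, whose Hilbert dimension we call $\kappa$. The map $f(x)=[\delta_x]$ then satisfies $\langle f(x),f(y)\rangle=K(x,y)$, and its image is total because the classes of the $\delta_x$ span a dense subspace.

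Continuity of $f$ follows from
\[
\|f(x)-f(y)\|^2 = K(x,x)-2K(x,y)+K(y,y),
\]
which tends to $0$ as $y\to x$ by continuity of $K$ on $X^2$.

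For uniqueness, given $f_1,f_2$, I would define $\varphi$ on the dense span of $f_1(X)$ by $\sum\lambda_i f_1(x_i)\mapsto\sum\lambda_i f_2(x_i)$. This is well defined and isometric, because the norm of $\sum\lambda_i f_j(x_i)$ equals $\sum\lambda_i\lambda_k K(x_i,x_k)$ for both $j=1,2$. By totality it extends to a linear isometry of the completions onto $\R^{\kappa_2}$, and in particular $\kappa_1=\kappa_2$.

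For the conditionally negative case, I would reduce to the positive type case by fixing a base point $x_0\in X$ and setting
\[
P(x,y)=\tfrac12\bigl(K(x,x_0)+K(y,x_0)-K(x,y)\bigr).
\]
To see that $P$ has positive type, take any $\lambda\in\R^n$ and points $x_1,\dots,x_n$, and append $x_0$ with coefficient $-\sum\lambda_i$ so that the coefficients sum to zero. Expanding with diagonal $0$ gives
\[
\sum\lambda_i\lambda_jP(x_i,x_j)=-\tfrac12\sum\mu_a\mu_bK(y_a,y_b)\ge0,
\]
where the $\mu$ and $y$ are the augmented coefficients and points. The first case then provides $f$ with $\langle f(x),f(y)\rangle=P(x,y)$. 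Since $P(x,x)=K(x,x_0)$, we get $\|f(x)-f(y)\|^2=K(x,y)$, and $f(x_0)=0$. Linear totality of $f(X)$ containing $0$ is equivalent to affine totality.

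Uniqueness in this case: after translating so that $f_i(x_0)=0$, the polarization identity shows both inner products equal $P$, so the previous uniqueness yields a linear isometry. Composing with the translations gives the affine isometry.

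The main obstacle is the bookkeeping in the augmentation identity, and, in the non-separable setting, checking that the completion produces a Hilbert space whose dimension is a well-defined cardinal $\kappa$. The latter is standard: every Hilbert space has an orthonormal basis, and all such bases have the same cardinality.
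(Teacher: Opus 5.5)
Your proposal is correct and takes essentially the same route as the paper, whose proof defers to the constructive GNS argument of \cite[Chapter 3]{Berg-Christensen-Ressel_Harmonic-analysis-semigroups_1984}: completing $\R^{(X)}$ modulo the null space, and using the base-point kernel $P$ to reduce conditionally negative type to positive type (the paper mentions the Cayley--Menger/Blumenthal approach only as an alternative). One cosmetic slip: in the uniqueness step it is the \emph{squared} norm of $\sum\lambda_i f_j(x_i)$ that equals $\sum\lambda_i\lambda_k K(x_i,x_k)$.
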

\begin{proof}
For both (positive and conditionally negative) types, the existence and unicity of $f$ are explained constructively in \cite[Chapter 3]{Berg-Christensen-Ressel_Harmonic-analysis-semigroups_1984}.
They also follow from the Cayley-Merger theorem and the work of Blumenthal \cite[§42]{Blumenthal_distance_geometry_1950}, as we will explain in more detail for kernels of hyperbolic type in the proof of \Cref{prop:hyperbolic-GNS}.
\end{proof}

\begin{remark}[separation means injectivity]
\label{rem:separation=injectivity}
We say that the kernel $K\colon X\times X \to \R$ of positive or conditionally negative type \emph{separates points} (or is \emph{separating}) when, for all distinct $x,y\in X$ we have:
\begin{itemize}[noitemsep, align=left]
	\item[$\langle\ ,\ \rangle$:] when $K$ is of positive type that $K(x,y)<\max\{K(x,x),K(y,y)\}$ 
	\item[$\|\ \|^2$:] when $K$ is of conditionally negative type that $K(x,y)\neq0$. 
\end{itemize}
The fact that $K$ separates points is equivalent to the injectivity of the functions $f$ in \Cref{thm:Hilbert-GNS}. 
The strictness of $K$ implies that $K$ separates points.
\end{remark}

The strict inequalities respectively characterize linear and affine independence of the images under the associated map $f\colon X\to\R^\kappa$ to a Hilbert space. 

\begin{lemma}[strict means independence]
\label{lem:strct-means-indep_positive-conditionegative}
Let $K\colon X\times X\to \R$ be a kernel.

If $K$ has strictly positive type, then for any continuous $f\colon X\to\R^\kappa$ such that $K(x,y)=\langle \varphi(x),\varphi(y)\rangle$, the images of  points in $X$ are linearly independent in $\R^\kappa$, in particular $f$ is an embedding.

If $K$ has strictly conditionally negative type, then for any continuous $f\colon X\to\R^\kappa$ such that $K(x,y)=\| f(x)-f(y)\|^2$, the images of points in $X$ are affinely independent in $\R^\kappa$, in particular $f$ is an embedding.    
\end{lemma}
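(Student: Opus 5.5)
Both parts reduce to one idea: a vanishing quadratic form forces a vanishing linear combination in the Hilbert space. We test the strict inequality of the kernel on the coefficients of any linear (respectively affine) dependence among the image points.

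\emph{Positive type.} Suppose $x_1,\dots,x_n\in X$ are distinct and $\sum_k\lambda_k f(x_k)=0$ for some $\lambda\in\R^n\setminus\{0\}$. Expanding the squared norm with $K(x,y)=\langle f(x),f(y)\rangle$ gives
\begin{equation*}
\textstyle 0=\bigl\|\sum_k\lambda_k f(x_k)\bigr\|^2=\sum_{i,j}\lambda_i\lambda_jK(x_i,x_j).
\end{equation*}
This contradicts the strict inequality in \eqref{eq:kernel-strictly-positive-type}. Hence the images of distinct points are linearly independent. In particular $f(x)\neq f(y)$ for $x\neq y$: take $n=2$ with $\lambda=(1,-1)$. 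So $f$ is injective.

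\emph{Conditionally negative type.} Suppose instead $\sum_k\lambda_k f(x_k)=0$ with $\sum_k\lambda_k=0$ and $\lambda\neq0$, which is exactly an affine dependence. The polarization identity
\begin{equation*}
\textstyle \|u-v\|^2=\|u\|^2+\|v\|^2-2\langle u,v\rangle
\end{equation*}
is the key computation. Using $\sum_k\lambda_k=0$, the terms $\|u\|^2$ and $\|v\|^2$ drop out of the double sum, so that
\begin{equation*}
\textstyle \sum_{i,j}\lambda_i\lambda_jK(x_i,x_j)=-2\bigl\|\sum_k\lambda_k f(x_k)\bigr\|^2=0.
\end{equation*}
This contradicts the strict inequality in \eqref{eq:kernel-conditionally-negative-type}. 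Hence the images are affinely independent, and injectivity again follows with $n=2$.

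\emph{Embedding.} The word "embedding" should also mean a homeomorphism onto the image. The algebra gives injectivity only, and I expect this to be the one real obstacle. My reading is that the paper means injectivity here (compare \Cref{rem:separation=injectivity}), and that is the claim I will prove; continuity of $f$ is part of the hypotheses. If a homeomorphism onto the image is intended, the argument above does not supply it. In that case I would say explicitly that it holds automatically when $X$ is discrete, which is the main case of interest, and otherwise needs an extra assumption on $X$ or $K$.
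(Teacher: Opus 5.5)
Your proof is correct and is the paper's own argument: write $\bigl\|\sum_k\lambda_k f(x_k)\bigr\|^2$ as $\sum_{i,j}\lambda_i\lambda_jK(x_i,x_j)$ (up to the factor $-\tfrac12$ in the affine case, using $\sum_k\lambda_k=0$) and invoke strictness; the paper likewise proves only injectivity under the word ``embedding''. One correction to your aside: a homeomorphism onto the image is \emph{not} automatic for discrete $X$. For instance, take $X=\N$ with $f(0)=e_0$ and $f(n)=e_0+e_n/n$ for an orthonormal family $(e_n)$; the images are linearly independent, so $K$ is strictly of positive type, yet $f(n)\to f(0)$.
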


\begin{proof}
Consider a map $f\colon X\to\R^\kappa$ such that for all $x,y\in X$ we have either $K(x,y)=\langle f(x),f(y)\rangle$ or $K(x,y)=\| f(x)-f(y)\|^2$ depending on the type. 

For $n\in \N$, consider distinct points $x_1,\dots,x_n \in X$ and $\lambda\in \R^n\setminus\{0\}$.

We have $\left\|\sum_i\lambda_i f(x_i)\right\|^2=\sum_{i,j}\lambda_i\lambda_j\langle f(x_i), f(x_j) \rangle$, which must be $>0$ when $K$ is strictly of positive type, hence $f(X)$ is linearly independent and $f$ is injective.

Assuming $\sum_{k}\lambda_k=0$ we have $\|\sum_i\lambda_i f(x_i)\|^2=-\tfrac{1}{2}\sum_{i,j}\lambda_i\lambda_j \|f(x_i)-f(x_j)\|^2$, which must be $<0$ when $K$ is strictly of conditionally negative type, hence $f(X)$ is affinely independent and $f$ is injective.
\end{proof}

The following will serve to prove \Cref{prop:strictly_of_hyperbolic_type}.

\begin{lemma}[power series of kernels of positive type are strict]
\label{lem:powerseries-of-positive-type}
Consider a separating kernel $K\colon X\times X \to \R$ of positive type such that $\forall x\in X\colon K(x,x) < 1$ and injective associated map $f\colon X\to\R^\kappa$.

For a power series $Q(z)=\sum_{m\geq1}q_mz^m$ with coefficients $q_m\ge 0$ and convergence radius $1$, the kernel $Q\circ K$ is strictly of positive type.
\end{lemma}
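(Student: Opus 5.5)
Write $K(x,y)=\langle f(x),f(y)\rangle$ with $f\colon X\to\R^\kappa$ injective, as given by \Cref{thm:Hilbert-GNS}. Since $\lvert K(x,y)\rvert\le \sqrt{K(x,x)K(y,y)}<1$, the series $Q\circ K=\sum_{m\ge1}q_mK^m$ converges pointwise, and each power $K^m$ is of positive type by the Schur product theorem. We have $K^m(x,y)=\langle f(x)^{\otimes m},f(y)^{\otimes m}\rangle$ in the symmetric tensor power $(\R^\kappa)^{\otimes m}$. Because the $q_m$ are nonnegative, the series $Q\circ K$ is then realised in the Hilbert direct sum $\mathcal{H}=\bigoplus_m (\R^\kappa)^{\otimes m}$ by the map
\[
F(x)=\bigl(\sqrt{q_m}\,f(x)^{\otimes m}\bigr)_{m\ge 1}.
\]
This is convergent since $\sum_m q_m\|f(x)\|^{2m}=Q(K(x,x))<\infty$. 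So $Q\circ K$ is of positive type, and by the computation in \Cref{lem:strct-means-indep_positive-conditionegative}, strictness amounts to showing that for distinct $x_1,\dots,x_n$ the vectors $F(x_i)$ are linearly independent.

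Suppose $\sum_i\lambda_iF(x_i)=0$ with $\lambda\neq 0$. Then for every $m$ with $q_m>0$ we get $\sum_i\lambda_i f(x_i)^{\otimes m}=0$. Pairing with $v^{\otimes m}$ for arbitrary $v\in\R^\kappa$ gives
\[
\sum_i\lambda_i\langle f(x_i),v\rangle^m=0 .
\]
Since the radius of convergence is $1$ and not $\infty$, $Q$ is not a polynomial, so infinitely many $q_m$ are positive. Choose $v$ such that the real numbers $t_i=\langle f(x_i),v\rangle$ are pairwise distinct and nonzero. Such $v$ exists because the $f(x_i)$ are distinct (injectivity of $f$, i.e.\ separation), so the finitely many hyperplanes $\{v\colon \langle f(x_i)-f(x_j),v\rangle=0\}$ and $\{v\colon\langle f(x_i),v\rangle=0\}$ are proper and cannot cover $\R^\kappa$. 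One needs $f(x_i)\ne0$ here; this holds since separation gives $K(x,y)<\max\{K(x,x),K(y,y)\}$, and at most one point can map to $0$. That degenerate case needs a small separate treatment: handle it by the $m$-independence argument restricted to the nonzero vectors.

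The main obstacle is concluding that all $\lambda_i=0$ from the vanishing of $\sum_i\lambda_it_i^m$ for infinitely many $m$. I would order the points by $\lvert t_i\rvert$ and let $m\to\infty$ along the indices with $q_m>0$. The terms of maximal modulus dominate, and a pair $\pm t$ is handled by passing to a subsequence of fixed parity, or by perturbing $v$ so that all $\lvert t_i\rvert$ are distinct, which is again a finite union of proper hyperplanes. The dominant coefficient must therefore vanish, and by induction all $\lambda_i=0$. This establishes linear independence, hence $\sum_{i,j}\lambda_i\lambda_jQ(K(x_i,x_j))=\|\sum_i\lambda_iF(x_i)\|^2>0$, which is strict positivity.
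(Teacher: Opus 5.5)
Your main argument is correct, and it takes a different route from the paper.

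The paper never builds a feature map. From $\sum_m q_m\sum_{i,j}\lambda_i\lambda_jK_{ij}^m=0$ and Schur it gets $\sum_{i,j}\lambda_i\lambda_jK_{ij}^m=0$ for the infinitely many $m$ with $q_m>0$. It then divides by $\mu^m$ with $\mu=\max_iK_{ii}$, uses Cauchy--Schwarz and separation to assert $\lvert K_{ij}\rvert<\mu$ for $i\neq j$, and lets $m\to\infty$ to obtain $\sum_{K_{ii}=\mu}\lambda_i^2=0$. Finally it peels off the successive values of $K_{ii}$.

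You instead realise $Q\circ K$ as a Gram kernel in $\bigoplus_m(\R^\kappa)^{\otimes m}$ and reduce strictness to linear independence of the $F(x_i)$. Testing against $v^{\otimes m}$ for a generic $v$ in the finite-dimensional span of the $f(x_i)$ reduces everything to a scalar fact: $\sum_i\lambda_it_i^m=0$ for infinitely many $m$ forces $\lambda=0$ when the $\lvert t_i\rvert$ are distinct and nonzero. Both proofs finish with the same dominant-term limit. The paper's version is shorter and stays with Gram matrices. Yours makes visible exactly which non-degeneracy is used, namely $f(x_i)\neq0$ and $f(x_i)\neq\pm f(x_j)$ for $i\neq j$.

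Your patches for the remaining configurations cannot work, because the statement itself fails there.

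\emph{Zero vector.} If $f(x)=0$ then $F(x)=0$, since $\|F(x)\|^2=Q(K(x,x))=Q(0)=0$. So already on the one-point set $\{x\}$ one has $\lambda^2Q(K(x,x))=0$, although separation and injectivity hold. No ``separate treatment'' exists.

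\emph{Antipodal pair.} If $f(y)=-f(x)$, then $\lvert t_x\rvert=\lvert t_y\rvert$ for every $v$, because the hyperplane $\langle f(x)+f(y),v\rangle=0$ is the whole space, so no perturbation helps. A fixed-parity subsequence only gives $\lambda_x+\lambda_y=0$ or $\lambda_x-\lambda_y=0$, not both. Concretely, take $Q(z)=z/(1-z^2)$, which has $q_m>0$ only for odd $m$, and $f(x)=\tfrac12=-f(y)$ in $\R$. The matrix $\bigl(Q(K(x_i,x_j))\bigr)$ is then $\tfrac{4}{15}\left(\begin{smallmatrix}1&-1\\-1&1\end{smallmatrix}\right)$, which is singular.

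The paper's proof has the same blind spot. The condition $K(x,y)<\max\{K(x,x),K(y,y)\}$ does not rule out $K_{ij}=-\mu$, and the peeling cannot divide at the level $\mu=0$.

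So the fix belongs in the hypotheses: require $f(x)\neq0$ for all $x$, and either no antipodal pairs or infinitely many $m$ of each parity with $q_m>0$ (e.g.\ all $q_m>0$). Under these assumptions your proof goes through. Choose $v$ so that the $\lvert t_i\rvert$ are nonzero and distinct except within antipodal pairs. At a level $\pm t$, use an even and an odd subsequence to get $\lambda_x+\lambda_y=0=\lambda_x-\lambda_y$.
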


\begin{proof}
Let $x\in X^n$ and denote $K_{i,j}=K(x_i,x_j)$.
Let $\lambda\in \R^n$ such that $\sum_{i,j}\lambda_i\lambda_jQ(K_{i,j})=0$, namely $\sum_m q_m\left(\sum_{ij}\lambda_i\lambda_j K^m_{i,j}\right)=0$.

By Schur theorem \cite[Chapter 3, Theorem 1.12]{Berg-Christensen-Ressel_Harmonic-analysis-semigroups_1984}, for all $m\geq1$ the kernel $K^m$ is of positive type, implying that $\left(\sum_{ij}\lambda_i\lambda_j K^m_{i,j}\right)\ge 0$.
Since the convergence radius of $Q$ is $1$ the inequality $q_m\ge 0$ is strict for infinitely many $m$. 
Thus for such $m\geq1$ we have 
$\sum_{i,j}\lambda_i\lambda_jK^m_{i,j}=0$.

Since the kernel $K$ is of positive type, it satisfies the Cauchy-Schwartz inequalities, in particular $\forall i,j\colon \lvert K_{i,j}\rvert^2\leq K_{i,i}K_{j,j}$. Hence for $1\le i\neq j\le n$, by \Cref{rem:separation=injectivity}, we have $\lvert K_{i,j}\rvert<\mu:=\max\left\{K_{i,i}\right\}$.

We deduce from all the above that for infinitely many $m\in \N$ we have $\sum_{i,j}\lambda_i\lambda_j\left(K_{i,j}/\mu\right)^m=0$ and taking the limit on that diverging subsequence of $m$ to $+\infty$ reveals
$\sum_{K_{i,i}=\mu} \lambda_i^2=0$,
namely $K_{i,i}=\mu \implies \lambda_i=0$. 
Repeating the argument with $\max\{K_{j,j} \colon K_{j,j}<\mu\}$ eventually leads to showing that all $\lambda_i=0$.
This proves that $Q\circ K$ is strictly of positive type.
\end{proof}

The following will serve to prove \Cref{prop:independence-exotic-embeddings}
\begin{lemma}[powers of kernels of conditionally negative type are strict]
\label{lem:power_kernel_negative_type}
Consider a kernel $K\colon X\times X\to \R$ of conditionally negative type vanishing only on the diagonal.

For all $t\in (0,1)$ the kernel $K^t$ has strictly conditionally negative type.
\end{lemma}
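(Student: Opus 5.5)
We represent $K^t$ through the Lévy–Khintchine type integral formula
\[
s^t=c_t\int_0^\infty \bigl(1-e^{-us}\bigr)\,u^{-1-t}\,du,\qquad c_t=\Bigl(\int_0^\infty (1-e^{-v})v^{-1-t}dv\Bigr)^{-1}>0,
\]
valid for $s\ge 0$ and $t\in(0,1)$. This formula is obtained from the substitution $v=us$. It reduces the strictness of $K^t$ to the positivity of the kernels $e^{-uK}$.

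Fix distinct $x_1,\dots,x_n\in X$ and $\lambda\in\R^n\setminus\{0\}$ with $\sum_k\lambda_k=0$. The constant terms then cancel, and
\[
\sum_{i,j}\lambda_i\lambda_jK(x_i,x_j)^t=-c_t\int_0^\infty \Bigl(\sum_{i,j}\lambda_i\lambda_j e^{-uK(x_i,x_j)}\Bigr)u^{-1-t}\,du .
\]
By Schoenberg's theorem (see \cite[Chapter 3]{Berg-Christensen-Ressel_Harmonic-analysis-semigroups_1984}), for every $u>0$ the kernel $e^{-uK}$ has positive type, since $K$ has conditionally negative type. Hence the integrand is non-negative, and we recover the conditional negativity of $K^t$. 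Note that $K^t$ has diagonal $0$.

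The real work is strictness: we need the integrand to be positive for a set of $u$ of positive measure. We will show that $e^{-uK}$ has strictly positive type on the distinct points $x_1,\dots,x_n$ for every $u>0$.

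First, by \Cref{thm:Hilbert-GNS} we write $K(x,y)=\|f(x)-f(y)\|^2$. Because $K$ vanishes only on the diagonal, $f$ is injective. Next we set $p_i=f(x_i)$, which are distinct points of a Hilbert space, and we must show that the Gaussian matrix $\bigl(e^{-u\|p_i-p_j\|^2}\bigr)_{i,j}$ is positive definite. Expanding
\[
e^{-u\|p_i-p_j\|^2}=e^{-u\|p_i\|^2}e^{-u\|p_j\|^2}e^{2u\langle p_i,p_j\rangle},
\]
we see that the diagonal rescaling by the factors $e^{-u\|p_i\|^2}$ preserves strict positivity. It therefore suffices to treat $(e^{2u\langle p_i,p_j\rangle})$.

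For that matrix, we fix $v$ in the finite-dimensional span of the $p_i$ such that the numbers $a_i=\langle p_i,v\rangle$ are pairwise distinct. Such a $v$ exists because we only have to avoid finitely many hyperplanes $(p_i-p_j)^\perp$. We then write
\[
e^{2u\langle p_i,p_j\rangle}=e^{2u\langle p_i,p_j\rangle}\cdot 1 .
\]
Applying Schur's theorem to the positive-type kernel $e^{2u\langle p,q\rangle-2u\epsilon a_pa_q}$ is awkward, so instead we argue directly. The kernel $e^{2u\langle p,q\rangle}=\sum_m \frac{(2u)^m}{m!}\langle p,q\rangle^m$ is a sum of positive-type kernels. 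Suppose $\sum_{i,j}\lambda_i\lambda_je^{2u\langle p_i,p_j\rangle}=0$. Then $\sum_i\lambda_i p_i^{\otimes m}=0$ for all $m\ge0$, and pairing with $v^{\otimes m}$ gives $\sum_i\lambda_i a_i^m=0$ for all $m$. The Vandermonde determinant of the distinct $a_i$ is nonzero, so $\lambda=0$.

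Hence, for $\lambda\ne0$, the integrand is strictly positive for every $u>0$, and the integral is strictly positive. This yields $\sum\lambda_i\lambda_jK(x_i,x_j)^t<0$, which is the claimed strict conditional negativity.

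The main obstacle is this strict positive definiteness of the Gaussian kernel in possibly infinite dimension. The tensor-power and Vandermonde argument above handles it, and it parallels \Cref{lem:powerseries-of-positive-type}. The integral formula itself is routine.
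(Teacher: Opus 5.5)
Your proof is correct. It shares the paper's skeleton: the integral formula $s^t=c_t\int_0^\infty(1-e^{-us})u^{-1-t}\,du$, the cancellation of the constant terms via $\sum_k\lambda_k=0$, and Schoenberg's theorem for $e^{-uK}$. You handle the strictness step by a different route.

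The paper argues by contradiction. If $\sum_{i,j}\lambda_i\lambda_jK(x_i,x_j)^t=0$, then the non-negative continuous integrand $u\mapsto\sum_{i,j}\lambda_i\lambda_je^{-uK(x_i,x_j)}$ must vanish for every $u>0$. Letting $u\to+\infty$ kills every off-diagonal term, since $K>0$ off the diagonal, and leaves $\sum_k\lambda_k^2=0$.

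You instead prove the stronger pointwise fact that each Gaussian kernel $e^{-uK}$ is strictly of positive type on distinct points. Your argument runs through:
\begin{itemize}
\item the GNS realisation of $K$;
\item the factorisation $e^{-u\|p_i-p_j\|^2}=e^{-u\|p_i\|^2}e^{-u\|p_j\|^2}e^{2u\langle p_i,p_j\rangle}$;
\item the tensor-power expansion of $e^{2u\langle p,q\rangle}$;
\item a Vandermonde argument along a direction $v$ separating the $p_i$.
\end{itemize}
All of these steps check out.

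The paper's route is shorter and needs neither the Hilbert-space realisation of $K$ nor any linear algebra, only that $K$ is positive off the diagonal. Yours costs more, but it gives a reusable statement: Gaussian kernels are strictly positive definite in arbitrary Hilbert spaces, in the spirit of \Cref{lem:powerseries-of-positive-type}. It also shows the integrand is strictly positive for every $u>0$, not merely not identically zero.

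The sentence writing $e^{2u\langle p_i,p_j\rangle}$ as itself times $1$, and the aside about Schur's theorem being awkward, are vestigial and can be deleted.
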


\begin{proof}
Fix $t\in(0,1)$.
Let $x_1,\dots,x_n$ be distinct points in $X$ and $\lambda\in \R^n\setminus\{0\}$ with $\sum_i \lambda_i=0$. 
Assume by contradiction that $\sum_{i,j}\lambda_i\lambda_jK(x_i,x_j)^t=0$.
By the inverse Laplace transform, we have for any $k\geq0$ that
\begin{equation}
	\label{eq:laplace_transform}
	k^t=\frac{t}{\Upgamma(1-t)}\int_{0}^{+\infty}(1-e^{-uk})\frac{\mathrm{d}u}{u^{t+1}}
\end{equation} 
Applying this integral representation to the values of $K^t$ in the finite sum $0=\sum_{i,j}\lambda_i\lambda_jK^t(x_i,x_j)$ and using $\sum_{ij} \lambda_i\lambda_j=(\sum_k \lambda_k)^2=0$ we find $0=-\int_0^{+\infty} \left[\sum_{ij}\lambda_i\lambda_j \exp(-uK(x_i,x_j)) \right] \tfrac{du}{u^{1+t}}$.
By Shoenberg's theorem \cite[Chapter 3, Theorem 2.2]{Berg-Christensen-Ressel_Harmonic-analysis-semigroups_1984}, $\forall u\in (0,+\infty)$ the kernel $\exp(-uK)$ is of positive type, so the integrand $\sum_{i,j}\lambda_i\lambda_j \exp(-uK(x_i,x_j))$ is $\ge 0$, hence for almost all $u\in (0,\infty)$ it is $=0$. By continuity it vanishes for all $u\in (0,\infty)$. 
Taking the limit as $u\to+\infty$ reveals $\sum_{K(x_i,x_j)\ne 0}\lambda_i\lambda_j=0$.
Since $K$ only vanishes on the diagonal, this says $\sum_{i\ne j}\lambda_i\lambda_j=0$.
Substracting this from $0=(\sum_k \lambda_k)^2$ yields $\sum_k \lambda_k^2=0$ hence $\lambda=0$ which is a contradiction.
Thus $\sum_{i,j}\lambda_i\lambda_jK^t(x_i,x_j)\ne 0$ proving that $K^t$ is strictly of conditionally negative type.
\end{proof}


\subsection{Kernels of hyperbolic type and embeddings in \texorpdfstring{$\Hyp^\kappa$}{Hkappa}}\label{subsec:kernel_hyperbolic_type}

We recall the definition of kernels and functions of hyperbolic type and the main results about them, most of which can be found in \cite{Monod-Py_self-representations-Mobius_2019}.

\begin{definition}[kernel of hyperbolic type]
\label{def:kernel-hyperbolic-type}
A kernel $K\colon X\times X\to \R$ is of \emph{hyperbolic type} when it is nonnegative with diagonal $1$, and for any $x_0\in X$ the \emph{visual kernel} $K_0 \colon X\times X \to \R_{\ge0}$ defined by $K_0\colon (x_i,x_j)\mapsto K(x_0, x_i)K(x_0,x_j)-K(x_i,x_j)$ is positive type, namely we have the following \emph{hyperbolic Cauchy-Schwartz inequalities}:
\begin{equation}
	\label{eq:kernel-hyperbolic-CS}
	\tag{HCS}
	\textstyle 
	\forall n\in \N,\;
	\forall x\in X^n,\;
	\forall \lambda \in \R^n \colon 
	\quad
	\sum_{i,j=1}^n\lambda_i\lambda_jK(x_i,x_j)\leq \left( \sum_{k=1}^n \lambda_k K(x_0,x_k)\right)^2.
\end{equation}
(In particular with $n=1$, a kernel of hyperbolic type must take values in $[1,\infty)$.)
\end{definition}
\begin{proof}
The condition \eqref{eq:kernel-hyperbolic-CS} rewrites as $\forall \lambda_1,\dots,\lambda_n\in \R \colon \sum_{i,j} \lambda_i\lambda_j K_0(x_i,x_j)\ge 0$, meaning that $K_0$ yields a nonnegative quadratic form on the $\R$-vector space with basis $\{x_1,\dots,x_n\}$.
\end{proof}

Let us relate this to the equivalent characterization in terms of Gram-matrix determinants (see \cite[7.2.4]{Ratcliffe_Foundations-Hyperbolic-Manifolds_2019}) also known as Cayley-Merger determinants introduced by Blumenthal (see \cite[§42, §46]{Blumenthal_distance_geometry_1950}).

\begin{proposition}[Cayley-Menger determinant]
\label{def:hyperbolic_Cayley_Menger}

A kernel $K\colon X\times X\to \R$ with diagonal $1$ is of hyperbolic type if and only if all its \emph{hyperbolic Cayley Menger determinants} are non-positive:
\begin{equation}
	\label{eq:kernel-hyperbolic-CM}
	\tag{HCM}
	\forall n\in \N,\;
	\forall x\in X^{1+n} 
	\colon \quad 
	\det \left(-K(x_i,x_j)\right)_{0\le i,j\le n} \le 0
\end{equation}
\end{proposition}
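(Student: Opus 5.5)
The plan is to relate the Gram matrix $M=(-K(x_i,x_j))_{0\le i,j\le n}$ to the visual kernel $K_0$ at the base point $x_0$ by a change of basis (a Schur complement), and then use Sylvester-type criteria.

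\emph{Block reduction.} Fix $x_0,\dots,x_n$ and write $k_i=K(x_0,x_i)$, so that $k_0=1$ by the diagonal assumption. Let $P$ be the unipotent matrix performing the column operations $C_j\leftarrow C_j-k_jC_0$ for $j\ge1$. A direct computation gives
\begin{equation*}
	P^{T}(-K(x_i,x_j))_{i,j}P=\begin{pmatrix}-1&0\\0&\big(K_0(x_i,x_j)\big)_{1\le i,j\le n}\end{pmatrix}.
\end{equation*}
Indeed, the $(i,j)$ entry for $i,j\ge1$ is $-K_{ij}+k_ik_j+k_ik_j-k_ik_j=K_0(x_i,x_j)$, and the cross terms vanish. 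Since $\det P=1$, this yields
\begin{equation*}
	\det(-K(x_i,x_j))_{0\le i,j\le n}=-\det\big(K_0(x_i,x_j)\big)_{1\le i,j\le n}.
\end{equation*}
So \eqref{eq:kernel-hyperbolic-CM} is equivalent to asking that every Gram determinant of $K_0$, for every base point $x_0$ and every finite family $x_1,\dots,x_n$, be nonnegative.

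\emph{Forward direction.} If $K$ is of hyperbolic type, each $K_0$ is of positive type. Hence its finite Gram matrices are positive semidefinite and have nonnegative determinants, which gives \eqref{eq:kernel-hyperbolic-CM}.

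\emph{Converse.} Assume all determinants in \eqref{eq:kernel-hyperbolic-CM} are nonpositive. For a fixed $x_0$ and a finite family $x_1,\dots,x_n$, every principal minor of the matrix $(K_0(x_i,x_j))$ is itself a Gram determinant of $K_0$ on a subfamily, together with the same base point $x_0$. By the reduction above, each such minor is $\ge0$. A symmetric matrix whose principal minors are all nonnegative is positive semidefinite; note that the leading minors alone would not suffice in the degenerate case. Therefore $K_0$ is of positive type, which is exactly \eqref{eq:kernel-hyperbolic-CS}.

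\emph{Nonnegativity of $K$.} Definition \ref{def:kernel-hyperbolic-type} also requires $K\ge0$, so this must be recovered from \eqref{eq:kernel-hyperbolic-CM}. The case $n=1$ gives $\det\begin{pmatrix}-1&-K\\-K&-1\end{pmatrix}=1-K(x_0,x_1)^2\le0$, hence $|K(x_0,x_1)|\ge1$, but not yet the sign. I expect this sign issue to be the main obstacle.

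To settle it, I would use the case $n=2$ with the triple $x_0,x_0,x_1$, where a point is repeated. The $3\times3$ matrix then has two equal rows, so its determinant vanishes and gives no information. Repeated points are therefore useless, and I would instead argue by continuity and connectedness. On each connected component of $X$, the continuous map $K(x_0,\cdot)$ never takes values in $(-1,1)$ and equals $1$ at $x_0$, so it is $\ge1$ on the whole component of $x_0$.

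Where $X$ is disconnected, a kernel such as $K=\pm\cosh d$ with a sign flip between components could satisfy \eqref{eq:kernel-hyperbolic-CM} while taking negative values. So in general the nonnegativity hypothesis should be kept in the statement, or one notes that with $K\ge0$ assumed, the argument above is complete. In the write-up I would flag that the equivalence is meant for nonnegative kernels with diagonal $1$, which is the standing context of Definition \ref{def:kernel-hyperbolic-type}.
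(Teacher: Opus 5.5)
Your proposal is correct and follows the same route as the paper. The paper also reduces $\det(-K(x_i,x_j))_{0\le i,j\le n}$ to $-\det(K_0(x_i,x_j))_{1\le i,j\le n}$ by subtracting $K(x_0,x_j)$ times column $0$ and expanding along row $0$; your congruence $P^TMP$ is the symmetric form of that step. It then applies Sylvester's criterion, and all principal minors are available because the tuples range over all of $X^{1+n}$.

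Your remark on the sign is also well taken. The paper's argument only proves \eqref{eq:kernel-hyperbolic-CS}$\iff$\eqref{eq:kernel-hyperbolic-CM} and never addresses nonnegativity. Your sign-flip example shows the statement genuinely needs $K\ge 0$ as a hypothesis; a two-point discrete $X$ with $K(a,b)=-c$, $c\ge 1$, already suffices. Alternatively one can assume $X$ connected, in which case your continuity argument gives $K\ge 1$.
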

\begin{proof}
Recall Sylvester's law expressing the signature of a real quadratic form in terms of the signs of its diagonal-minors: for any $x_0\in X$, the visual kernel $K_0$ yields a nonnegative quadratic form on the $\R$-vector space with basis $X$ if and only if $\forall x \in X^n \colon \det \left(K_0(x_i, x_j)\right)_{1\le i,j\le n} \ge 0$.

Besides $\forall x \in X^{1+n}$ the determinant of $\left( - K(x_i,x_j) \right)_{0\le i,j \le k}$ can be rewritten (by subtracting to each column $j\ge 0$ the column $0$ times $K(x_0,x_j)$ and expanding with respect to the line $0$) to equal minus the determinant of $\left(K_0(x_i,x_j)\right)_{1\le i,j\le n}$.
Hence \eqref{eq:kernel-hyperbolic-CS} and \eqref{eq:kernel-hyperbolic-CM} are equivalent.
\end{proof}

The notion of kernel of hyperbolic type is for algebraic Minkowski spaces analogous to the notion of kernel of positive type for Hilbert spaces. Its interest lies in \cite[Proposition 3.3]{Monod-Py_self-representations-Mobius_2019}.

\begin{proposition}[hyperbolic GNS]
\label{prop:hyperbolic-GNS}
For a subset $X\subset \Hyp^\kappa$, the function $K(x,y)=\cosh \dH(x,y)$ is a kernel of hyperbolic type. 

Conversely, for every kernel of hyperbolic type $K\colon X\times X\to \R$, there is a cardinal $\kappa$ and a continuous function $f \colon X\to \Hyp^\kappa$ such that $K(x,y)= \cosh \dH(f(x),f(y))$.
Moreover for any two such $f_i\colon X\to\Hyp^{\kappa_i}$ with total images, there is an isometry $\varphi\colon \Hyp^{\kappa_1}\to\Hyp^{\kappa_2}$ such that $f_2=\varphi\circ f_1$.

Additionally when $\kappa$ is countable, it is the supremum of $n\in \N$ such that there exists a non-vanishing hyperbolic Cayley-Menger determinant of size $1+n$. 
\end{proposition}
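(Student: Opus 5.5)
The plan is to argue in the hyperboloid model and, for the converse, to build the Minkowski space directly from $K$ by a GNS-type construction on the free vector space.

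\emph{Direct part.} For $X\subset\Hyp^\kappa$ I take the representatives $x$ on the upper sheet of $\mathbf{X}_{+1}$, so that $K(x,y)=\langle x,y\rangle$. Fix $x_0\in X$. The quantity $K(x_0,x_i)K(x_0,x_j)-K(x_i,x_j)$ equals $-\langle \pi(x_i),\pi(x_j)\rangle$, where $\pi(v)=v-\langle v,x_0\rangle x_0$ is the projection onto $x_0^\perp$. Since $\langle x_0,x_0\rangle=1$ and the form has signature $(1,\kappa)$, the form is negative definite on $x_0^\perp$. Hence $K_0$ is of positive type, which is \eqref{eq:kernel-hyperbolic-CS}. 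Nonnegativity and diagonal $1$ are immediate.

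\emph{Converse (existence).} Let $V$ be the free real vector space on $X$, equipped with the symmetric bilinear form $B(\delta_x,\delta_y)=K(x,y)$.
\begin{enumerate}
\item Fix $x_0$ and split $V=\R\delta_{x_0}\oplus W$, where $W$ is the $B$-orthogonal of $\delta_{x_0}$. By the computation above, $-B|_W$ is exactly the positive type kernel $K_0$ transported to $W$.
\item Apply the linear GNS of \Cref{thm:Hilbert-GNS} to $K_0$. This yields a Hilbert space $\R^\kappa$, the completion of $W$ modulo the null vectors of $B|_W$, together with a map $g\colon X\to\R^\kappa$ with total image and $K_0(x,y)=\langle g(x),g(y)\rangle$.
\item Set $f(x)=(K(x_0,x),g(x))\in\R^{1+\kappa}$. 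A direct computation gives $\langle f(x),f(y)\rangle = K(x_0,x)K(x_0,y)-K_0(x,y)=K(x,y)$.
\item Since $\langle f(x),f(x)\rangle=1$ and the first coordinate $K(x_0,x)\ge1>0$, the point $f(x)$ lies on the upper sheet, i.e.\ in $\Hyp^\kappa$.
\end{enumerate}
Continuity of $f$ follows from $\cosh\dH(f(x),f(y))=K(x,y)$ and the continuity of $K$ at the diagonal.

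\emph{Uniqueness.} Let $f_1,f_2$ have total images. Define $\varphi$ on finite linear combinations by $\sum\lambda_i f_1(x_i)\mapsto\sum\lambda_i f_2(x_i)$. This is well defined because the Minkowski Gram matrices of the two families coincide. Totality forces the form to be nondegenerate on these spans, since a null vector would be orthogonal to everything. The map extends continuously because, after fixing $x_0$, it is an isometry between the positive definite parts $f_i(x_0)^\perp$, which are Hilbert spaces, plus the identity on the time direction. It preserves the upper sheet, so it induces an isometry $\Hyp^{\kappa_1}\to\Hyp^{\kappa_2}$.

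\emph{Dimension count.} When $\kappa$ is countable, $\kappa+1$ is the rank of the Gram form on span $f(X)$. That rank is the supremum of the sizes of nonsingular principal Gram minors $\det(K(x_i,x_j))_{0\le i,j\le n}$ over points in $X$. Equivalently it is the supremum of $n$ for which some $(1+n)$-point Cayley–Menger determinant is nonzero, since $n+1$ points spanning $\Hyp^n$ are linearly independent.

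The main technical obstacle is the uniqueness step in infinite dimension. The delicate point is to show that the densely defined $\varphi$ is bounded and extends to all of $\R^{1+\kappa_1}$; an indefinite form gives no norm control directly. The remedy is to pass through the Hilbert structure on $x_0^\perp$ provided by \Cref{thm:Hilbert-GNS}. There, $\varphi$ is norm preserving, and the uniqueness clause of \Cref{thm:Hilbert-GNS} applies directly.
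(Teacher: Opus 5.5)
Your proposal is correct, and for the converse it takes a different route from the paper. The direct part matches the paper's argument: both project the lifts onto $x_0^\perp$, where the Minkowski form is negative definite.

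For existence, the paper constructs nothing. It only sketches, deferring to Blumenthal's distance geometry: non-positive hyperbolic Cayley--Menger determinants characterize embeddability, vanishing determinants detect degenerate configurations, and embeddability into $\Hyp^n$ is checked on $(n+3)$-tuples. It also leaves uniqueness to the references.

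You instead do the following:
\begin{itemize}
\item split off the timelike direction $\delta_{x_0}$ and apply the linear GNS of \Cref{thm:Hilbert-GNS} to the visual kernel $K_0$;
\item set $f(x)=(K(x_0,x),g(x))$;
\item reduce uniqueness to the uniqueness clause of the Hilbert GNS on the spacelike complements $f_i(x_0)^\perp$, which settles the extension issue you flagged;
\item reduce the dimension count to Gram determinants.
\end{itemize}

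What your approach buys:
\begin{itemize}
\item It is self-contained and works uniformly for all cardinals. Blumenthal's theory concerns finite configurations, so passing to infinite $X$ and infinite $\kappa$ needs a limiting argument the paper leaves implicit.
\item It shows the image is automatically total, since $\operatorname{span} f(X)$ contains $f(x_0)=(1,0)$ and $\{0\}\times\operatorname{span} g(X)$.
\item It makes transparent why \eqref{eq:kernel-hyperbolic-CS} is exactly the right condition.
\end{itemize}
The paper's route buys the link with the Cayley--Menger formulation \eqref{eq:kernel-hyperbolic-CM} and Blumenthal's finite-dimensional criteria, at the cost of resting on citations.

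One step in your dimension count should be made explicit. A nonzero Gram determinant is equivalent to linear independence here only because each $f(x_i)$ is timelike. The form is therefore Lorentzian, hence nondegenerate, on the span of any such family. In a general indefinite space, independent vectors can have vanishing Gram determinant.
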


\begin{proof}
We sketch proofs to explain some ideas and relate them to past works on the topic.

Let us show that the kernel $(\cosh \circ \dH)$ on $\Hyp^\kappa$ satisfies \Cref{def:kernel-hyperbolic-type}.
It is symmetric and normalized to $1$ on the diagonal, and we now show the Hyperbolic Cauchy-Schwartz inequalities.
It lifts in $\R^{1+\kappa}$ to the Minkowski form $\langle , \rangle$ on the upper-half sheet of the unit hyperboloid $\mathbf{X}_{+1}$. The Minkowski form also restricts to a non-degenerate symmetric bilinear form that is negative definite on the tangent space to $\mathbf{X}_{+1}$ at $x_0$.
The orthogonal projections of $x_i\in \mathbf{X}_{+1}$ into that tangent space are $v_i = x_i-\langle x_0, x_i,\rangle x_0$ so the sum $v=\sum \lambda_i v_i$ has Minkowski norm $\langle v, v \rangle\ge 0$. 
This implies that the visual kernel $K_0(x_i, x_j)=\langle x_0, x_i \rangle \langle x_0, x_j \rangle-\langle x_i,x_j\rangle$ is non-negative.

Conversely, if a space $X$ with a nonnegative kernel $d$ normalized to $0$ on the diagonal is such that the kernel $K= \cosh \circ d$ has non-positive Hyperbolic Cayley-Menger determinants, then it embeds isometrically in $\Hyp^\kappa$.
One may refer to \cite[§42]{Blumenthal_distance_geometry_1950} for a thorough analysis of the Euclidean analogue, and to \cite[§106]{Blumenthal_distance_geometry_1950} for its extension to the hyperbolic case. 
Moreover we may characterize when the image lies on some finite dimensional subspace of $\Hyp^\kappa$ and its dimension is the minimal $n$ such that all hyperbolic Cayley-Menger determinants with more than $n+1$ points vanish. Indeed for any $x_0,\dots,x_n\in\Hyp^n$, the determinant of $\left(-\cosh \dH(x_i,x_j)\right)$ vanishes if and only if $x_0,\dots,x_n$ are included in a proper totally geodesic subspace \cite[Theorem 106.1] {Blumenthal_distance_geometry_1950} and $(X,d)$ embeds in $\Hyp^n$ if and only if it is true for any of its $(n+3)$-tuple \cite[Theorem 111.2]{Blumenthal_distance_geometry_1950}.
\end{proof}

\begin{remark}[from scalar product to chord-length-square]
\label{rem:chord-length-kernel}
Consider a kernel $C \colon X \times X \to \R$ with diagonal $1$.
One may define a related kernel given by the \emph{chord length-squared}, defined by $S = (\sinh \tfrac{1}{2}\dH)^2 = (\sinh \tfrac{1}{2}\cosh^{-1}(C))^2$.
It is shown in \cite[Theorem 3]{Gomez-Memoli_Ptoleme-tropical_2024} that $\sign \det \left(S(x_i,x_j)\right)_{0\le i,j\le n} = -\sign \det\left(C(x_i,x_j)\right)_{1\le i,j\le n}$. 
Thus, one may also rephrase the condition \Cref{eq:kernel-hyperbolic-CM} characterizing the fact that $X$ embeds into some $\Hyp^\kappa$ by saying that $\forall x\in X^{1+n}$ $\det \left(-S(x_i,x_j)\right)\ge 0$.

By \cite[Corollary 4]{Gomez-Memoli_Ptoleme-tropical_2024}, these sign conditions on the $4\times 4$ determinants on $S$ recover the Ptolemaic inequality for hyperbolic spaces, which we
will 
use in \Cref{sec:hyperbolic_kernels_to_strong_hyperbolicity}:
\begin{equation}
	\left(\sinh \tfrac{1}{2}\delta_{13}\right) \left(\sinh \tfrac{1}{2}\delta_{24}\right)
	\le 
	\left(\sinh \tfrac{1}{2}\delta_{12}\right)
	\left(\sinh \tfrac{1}{2}\delta_{34}\right)
	+
	\left(\sinh \tfrac{1}{2}\delta_{14}\right)
	\left(\sinh \tfrac{1}{2}\delta_{23}\right)
\end{equation}
Hence the sign conditions for the determinants of the chord lengths squared matrices of larger sizes can correspond to generalizations of this Ptolemaic inequality.
\end{remark}

\begin{definition}[strictly of hyperbolic type]
We say that a kernel of hyperbolic type $K\colon X\times X\to \R_{\ge 1}$ is \emph{strictly of hyperbolic type} when for any $x_0\in X$, the visual kernel $K_0\colon (x,y)\mapsto K(x,x_0)K(x_0,y)-K(x,y)$ is strictly of positive type. 
\end{definition}

This strictness condition is again an independence property as explained in the following \Cref{prop:strct-means-indep_hyperbolic}.

This independence property appears for the embeddings associated to real trees in \cite[Theorem 13.1.1(iv)]{Das-Simmons-Urbanski_gromov-hyperbolic-spaces_2017} and is stated in \cite[Remark 4.3]{Courtois-Guilloux_Hausdorff-infinite-dim-hyp_2024} for the exotic deformations of hyperbolic spaces $\Hyp^n$ that we will see in \Cref{subsec:exotic-deformation}.

\begin{proposition}[strict means independence]
\label{prop:strct-means-indep_hyperbolic}
A kernel of hyperbolic type $K\colon X\times X\to \R$ is strictly of hyperbolic type if and only if every function $f\colon X\to\Hyp^\kappa$ satisfying $K(x,y)=\cosh(\dH(f(x),f(y))$ has the property that for every finite $F\subset X\colon \dim \Span f(F)=\Card(F)$.
\end{proposition}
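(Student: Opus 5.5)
The plan is to work in the hyperboloid model and translate the visual kernel into a Gram matrix of tangent vectors. Fix $f\colon X\to\Hyp^\kappa$ with $K(x,y)=\cosh\dH(f(x),f(y))$, and lift each $f(x)$ to the upper sheet of $\mathbf{X}_{+1}\subset\R^{1+\kappa}$, so that $K(x,y)=\langle f(x),f(y)\rangle$. For a base point $x_0\in X$, set $v_x=f(x)-\langle f(x_0),f(x)\rangle f(x_0)$. These are the projections onto the tangent space $f(x_0)^\perp$, on which $-\langle\ ,\ \rangle$ is a positive definite Hilbert inner product. A direct expansion, as in the proof of \Cref{prop:hyperbolic-GNS}, gives
\[K_0(x,y)=K(x_0,x)K(x_0,y)-K(x,y)=-\langle v_x,v_y\rangle .\]
So $K_0$ is the Gram kernel of the vectors $v_x$ in the Hilbert space $f(x_0)^\perp$.

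By \Cref{lem:strct-means-indep_positive-conditionegative}, and its converse which follows from the same identity $\|\sum\lambda_iv_{x_i}\|^2=\sum\lambda_i\lambda_jK_0(x_i,x_j)$, $K_0$ is strictly of positive type on $X\setminus\{x_0\}$ if and only if the vectors $\{v_x\}_{x\ne x_0}$ are linearly independent for distinct points. Note that $v_{x_0}=0$, so strictness has to be read for tuples avoiding $x_0$; equivalently, $K_0$ is considered on $X\setminus\{x_0\}$. I will make this convention explicit.

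Next I relate this to dimensions. Take a finite set $F=\{x_0,x_1,\dots,x_n\}$. In the linear model, $\Span f(F)$ is the projectivisation of the linear span $W$ of $f(x_0),\dots,f(x_n)$ intersected with the cone, so its dimension is $\dim W-1$. Since $f(x_0)$ is not isotropic, $W=\R f(x_0)\oplus\Span\{v_{x_1},\dots,v_{x_n}\}$. Hence the lifts $f(x_0),\dots,f(x_n)$ are linearly independent if and only if $v_{x_1},\dots,v_{x_n}$ are, and this is the case $\dim\Span f(F)=\Card F-1=n$.

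At this point I must fix the convention in the statement: it reads $\dim\Span f(F)=\Card(F)$, but with the paper's earlier definition of independence ($n$ points span $\Hyp^{n-1}$), the intended meaning should be that the lifts are linearly independent, i.e. $\dim\Span f(F)=\Card(F)-1$. I will state this correction and prove that version.

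Both directions then follow. If $K$ is strictly of hyperbolic type, then for any finite $F$ I choose $x_0\in F$ and get independence of the lifts. Conversely, suppose every finite $F$ has linearly independent lifts. Given $x_0$ and distinct $x_1,\dots,x_n\ne x_0$, I apply this to $F=\{x_0,\dots,x_n\}$, obtain independence of the $v_{x_i}$, and hence strict positivity of $K_0$.

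Quantification over all $f$ is harmless for two reasons. Strictness depends only on $K$. And any realising $f$ differs from the total-image GNS map only by an isometric embedding of spans, by the uniqueness in \Cref{prop:hyperbolic-GNS} applied to $\Span f(X)$, which preserves dimensions of spans. The main (mild) obstacle is exactly this bookkeeping: the degenerate vector $v_{x_0}=0$ and the $\pm1$ dimension convention. The linear algebra itself is routine.
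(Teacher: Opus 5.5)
Your proof is correct and follows the same route as the paper: you identify the visual kernel $K_0$ with the Gram kernel of the tangent projections $v_x$ at $f(x_0)$, then use \Cref{lem:strct-means-indep_positive-conditionegative} to get linear independence of the lifts. Your version is more complete than the paper's one-line argument. The paper proves only the forward direction, skips the step from independence of the $v_x$ to independence of the $f(x)$ (your decomposition $W=\R f(x_0)\oplus\Span\{v_{x_i}\}$), and does not address the degenerate vector $v_{x_0}=0$ or the off-by-one convention $\dim\Span f(F)=\Card(F)-1$; you correctly flag both.
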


\begin{proof}
Let $F$ be a finite subset and fix $x_0\in F$. The kernel $K_0\colon (x,y)\mapsto K(x,x_0)K(x_0,y)-K(x,y)$ is strictly of positive type. So, by the first part of \Cref{lem:strct-means-indep_positive-conditionegative}, the image of $F$ in $\R^{1+\kappa}$ is linearly independent which proves the result.
\end{proof}

\begin{definition}[spaces of kernels of hyperbolic type]
\label{def:space-of-kernels}
On a space $X$, we denote by the $\mathcal{K}(X)$ set of kernels of hyperbolic type endowed with the pointwise convergence topology.

It is a closed subspace of $\prod_{X^2} \R_{\ge 1}$ by \Cref{lem:pointwise-closed}.
\end{definition}

For $K\in \mathcal{K}(X)$, we denote by $\dim(K)\in\N\cup\{\infty\}$ the dimension $\kappa$ of the hyperbolic space given by \Cref{prop:hyperbolic-GNS}.
When $X$ is finite any kernel $K$ has $\dim(K)\leq\Card(X)-1$.

\begin{proposition}[dimension is lower semi-continuous]
\label{prop:semicontinuity_kernel-dimension}
The function $K\mapsto \dim(K)$ is lower semi-continuous.
\end{proposition}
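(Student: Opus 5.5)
The plan is to characterize $\dim(K)$ by a condition on finitely many points at a time that is open in the pointwise topology. By \Cref{prop:hyperbolic-GNS} and \Cref{def:hyperbolic_Cayley_Menger}, together with the linear-algebra description of the GNS construction, I expect
\[
\dim(K) = \sup\{ n\in\N \colon \exists x\in X^{1+n},\ \det(-K(x_i,x_j))_{0\le i,j\le n} \neq 0\}.
\]
For an infinite cardinal we only distinguish $\infty$, since $\dim(K)\in\N\cup\{\infty\}$. To justify this formula, realize $K$ through $f\colon X\to\Hyp^\kappa$ with total image, lifted to the upper sheet of $\mathbf{X}_{+1}\subset\R^{1+\kappa}$. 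The matrix $(K(x_i,x_j))$ is then the Minkowski Gram matrix of the vectors $f(x_0),\dots,f(x_n)$. This Gram matrix is nondegenerate if and only if these vectors are linearly independent: their span is a Lorentzian subspace, since it contains a timelike vector, and the Minkowski form is nondegenerate on Lorentzian subspaces. Consequently a non-vanishing determinant of size $1+n$ exists if and only if the linear span of $f(X)$ has dimension at least $1+n$. By totality, that span is dense in $\R^{1+\kappa}$, so its dimension is $1+\kappa$ when $\kappa$ is finite and infinite otherwise. This gives the displayed formula, and it covers both the finite and the infinite case, consistent with the last sentence of \Cref{prop:hyperbolic-GNS}.

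Lower semi-continuity then follows directly. Fix $K\in\mathcal{K}(X)$ and any $n\le\dim(K)$ with $n$ finite. The formula provides $x\in X^{1+n}$ with $\det(-K(x_i,x_j))\neq 0$. The map $K'\mapsto\det(-K'(x_i,x_j))_{0\le i,j\le n}$ is a polynomial in finitely many coordinates of the product $\prod_{X^2}\R$, so it is continuous for the pointwise topology. The set of $K'\in\mathcal{K}(X)$ on which it is nonzero is therefore an open neighbourhood of $K$, and on it $\dim(K')\ge n$. Hence $\{K'\colon \dim(K')\ge n\}$ is open for every $n$, which is exactly lower semi-continuity, including at $\dim(K)=\infty$.

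The only step that needs care is the first paragraph's identification of the dimension with the maximal size of a nonvanishing Gram determinant. One must check that linear independence of the lifts in $\R^{1+\kappa}$ is equivalent to nondegeneracy of their Minkowski Gram matrix. Here the Lorentzian-signature argument is used in place of the usual Euclidean positivity, or one can invoke Blumenthal \cite[Theorem 106.1]{Blumenthal_distance_geometry_1950} as recalled in the proof of \Cref{prop:hyperbolic-GNS}. One must also check that totality of $f(X)$ forces the span of the lifts to exhaust the dimension. Everything else is formal.
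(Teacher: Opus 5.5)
Your proposal is correct and follows the same route as the paper: for each finite $n$, the set $\{\dim \ge n\}$ is written as the union, over $(1+n)$-tuples of points, of the sets where a hyperbolic Cayley--Menger determinant is nonzero, and each of these sets is open for the pointwise topology. The paper only asserts the underlying characterization of $\dim(K)$, relying on the Blumenthal discussion. You prove it, via nondegeneracy of the Minkowski form on subspaces containing a timelike vector and density of the span of a total set. You also index the tuples correctly in $X^{1+n}$, where the paper writes $X^{l}$.
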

\begin{proof}
For $l\in \N$, a kernel of hyperbolic type $K_0$ has $\dim(K_0)\geq l$ if and only if there exists $x_0,\dots,x_l$ such that the Cayley-Menger determinant $\det(-K_{0}(x_i,x_j))_{0\leq i,j\leq l}\neq0$.
Hence the set of kernels of dimension $\ge l$ is $\cup_{x\in X^l}\{K\in \mathcal{K}(X) \colon \det(-K(x_i,x_j))_{0\leq i,j\leq l}\neq0\}$, which is a union of open sets. This gives the lower semi-continuity of the dimension.
\end{proof}

\subsection{Functions of hyperbolic type and actions on \texorpdfstring{$\Hyp^\kappa$}{Hkappa}}

\begin{definition}[function of hyperbolic type]
On a topological group $\Gamma$, a function $F\colon \Gamma\to\R$ is of \emph{hyperbolic type} when the kernel $\Gamma\times\Gamma \ni (\alpha,\beta) \mapsto F(\alpha^{-1}\beta) \in \R$ is of hyperbolic type.

We denote by $\mathcal{F}(\Gamma)$ the space of functions of hyperbolic type on $\Gamma$ endowed with the pointwise convergence topology.
It is a closed subspace of $(\R_{\ge 1})^\Gamma$ by \Cref{lem:pointwise-closed}.
\end{definition}

\begin{example}
Given a continuous representation $\rho\colon \Gamma \to \Isom(\Hyp^\kappa)$, the choice of a point $o\in\Hyp^\kappa$ yields a function of hyperbolic type via the formula $F(g)=\cosh \dH(o,\rho(\gamma)o)$.

Another choice $o'\in\Hyp^\kappa$ yields another function of hyperbolic type $F'$ and several applications of the triangle inequality for $d$ show that \(\lvert \cosh^{-1} F-\cosh^{-1} F'\rvert \leq 2 \dH(o,o')\).
\end{example}

The following summarizes \cite[Theorem 3.4 and Corollary 3.5]{Monod-Py_self-representations-Mobius_2019}.

\begin{proposition}[representing functions of hyperbolic type]
\label{prop:functions-of-hyperbolic-type-to-representations}
On a group $\Gamma$, for every function of hyperbolic type $F\colon \Gamma \times \Gamma \to \R$, there is a hyperbolic space $\Hyp^\kappa$, a point $o\in\Hyp^\kappa$ and a representation $\rho\colon \Gamma\to \Isom(\Hyp^\kappa)$ such that $F(\gamma)=\cosh \dH(o,\rho(\gamma)o)$ and the orbit of $o$ is total.

If $\rho\colon \Gamma \to \Isom(\Hyp^\kappa)$ and $\rho'\colon \Gamma\to \Isom(\Hyp^{\kappa'})$ are two such representations, then there is a unique isometry $\varphi \colon \Hyp^\kappa\to\Hyp^{\kappa'}$ satisfying $\varphi(\rho(\gamma)x)=\rho'(\gamma)\varphi(x)$ for all $x\in \Hyp^\kappa$ and $\gamma\in \Gamma$. 
\end{proposition}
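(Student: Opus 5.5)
The plan is to reduce to the hyperbolic GNS construction of \Cref{prop:hyperbolic-GNS}, applied to the kernel $K(\alpha,\beta)=F(\alpha^{-1}\beta)$ on $X=\Gamma$. By definition of a function of hyperbolic type, $K$ is a kernel of hyperbolic type. We therefore obtain a cardinal $\kappa$ and a map $f\colon\Gamma\to\Hyp^\kappa$ with $K(\alpha,\beta)=\cosh\dH(f(\alpha),f(\beta))$. Replacing $\Hyp^\kappa$ by the span of $f(\Gamma)$, which is again an algebraic hyperbolic space $\Hyp^{\kappa'}$, we may assume $f(\Gamma)$ is total. Set $o=f(1)$. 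Then $\cosh\dH(o,f(\gamma))=F(\gamma)$.

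Next I construct the representation. For $\gamma\in\Gamma$ I consider the two maps $f$ and $f_\gamma\colon\alpha\mapsto f(\gamma\alpha)$. Both realize $K$, by left-invariance $K(\gamma\alpha,\gamma\beta)=K(\alpha,\beta)$, and both have total image, since $\gamma\Gamma=\Gamma$. The uniqueness part of \Cref{prop:hyperbolic-GNS} then gives an isometry $\rho(\gamma)$ with $\rho(\gamma)\circ f=f_\gamma$.

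I want this isometry to be unique, and I will argue as follows. An isometry of $\Hyp^\kappa$ is a linear map in $\PO(1,\kappa)$, as recalled in \Cref{subsec:Hn-models}. Such a map is determined by its values on a set whose linear span is dense in $\R^{1+\kappa}$, and totality of $f(\Gamma)$ in the hyperboloid model gives exactly this. Uniqueness yields $\rho(\gamma\gamma')=\rho(\gamma)\rho(\gamma')$ and $\rho(1)=\mathrm{id}$, so $\rho$ is a homomorphism. Moreover $\rho(\gamma)o=f(\gamma)$, which gives $F(\gamma)=\cosh\dH(o,\rho(\gamma)o)$, and the orbit of $o$ is $f(\Gamma)$, which is total.

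For the second assertion, take two such data $(\rho,o)$ and $(\rho',o')$. The maps $\gamma\mapsto\rho(\gamma)o$ and $\gamma\mapsto\rho'(\gamma)o'$ both realize $K$ and both have total image. \Cref{prop:hyperbolic-GNS} gives an isometry $\varphi$ with $\varphi(\rho(\gamma)o)=\rho'(\gamma)o'$. Equivariance is checked on the total orbit: both $\varphi\circ\rho(\gamma)$ and $\rho'(\gamma)\circ\varphi$ send $\rho(\alpha)o$ to $\rho'(\gamma\alpha)o'$. Since they agree on a total set, they coincide by the linear-determination argument above. Uniqueness of $\varphi$ follows in the same way, since any equivariant $\varphi$ must send $o\mapsto o'$. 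I read the statement as implicitly requiring this, as in Monod--Py; otherwise a centralizing isometry could move the base point.

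The main obstacle is continuity of $\rho$ when $\Gamma$ is a topological group. The paper only guarantees that $f$ is continuous, so I need to show that $\gamma\mapsto\rho(\gamma)x$ is continuous for every $x$. For $x$ in the orbit this is continuity of $f$. For general $x$, I would approximate $x$ by points of the projective span of finitely many orbit points, using the linear model. I would then use that isometries are $1$-Lipschitz, together with a $3\epsilon$ argument, to pass from the orbit to the whole space in the pointwise topology. I expect that a point in the span of finitely many $f(\alpha_i)$ depends continuously on the images of those points under an isometry.
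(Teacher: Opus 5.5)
Your proposal is correct and follows essentially the route behind the statement: the paper gives no proof of its own and defers to Monod--Py (Theorem~3.4 and Corollary~3.5), whose argument is the same. Namely, apply the hyperbolic GNS construction of \Cref{prop:hyperbolic-GNS} to the left-invariant kernel $(\alpha,\beta)\mapsto F(\alpha^{-1}\beta)$, obtain each $\rho(\gamma)$ from uniqueness of the realization (made rigid by linearity on a total set in the hyperboloid model), and get continuity of $\rho$ from continuity of $F$ on the orbit. For the continuity step you left as an expectation, combine density of $\bigcup_{E}\Span f(E)$ over finite $E\subset\Gamma$ with the identity $\rho(\gamma)\sum_i c_i f(\alpha_i)=\sum_i c_i f(\gamma\alpha_i)$; this closes it.

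Your caveat on the second assertion is also right: $\varphi$ is unique only once one imposes $\varphi(o)=o'$ (e.g.\ $\Z$ acting on $\Hyp^1$ by unit translations commutes with every translation), and the statement should be read with that normalization.
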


The second statement in \Cref{prop:functions-of-hyperbolic-type-to-representations} implies that one may attach $F\in \mathcal{F}(\Gamma)$ any object or attribute associated to the corresponding class of actions of $\Gamma$ on $\Hyp^\kappa$.

\begin{definition}[length function]
To a function of hyperbolic type $F\colon \Gamma\to \R$ on a group, we associate a translation \emph{length function} $\ell_F \colon \Gamma\to \R_+$ by choosing any associated irreducible representation $\rho \colon \Gamma \to \Isom(\Hyp^\kappa)$ and defining $\ell_F(\gamma)=\ell(\rho(\gamma))$.

We say that $F$ is \emph{neutral} when $\ell_F=0$ and we denote $\mathcal{F}_{nn}(\Gamma)\subset\mathcal{F}(\Gamma)$ the open subspace of non-neutral functions of hyperbolic type.
\end{definition}

The following formula is from \cite[Proposition 4.4]{Monod-Py_self-representations-Mobius_2019}.

\begin{lemma}[$F$ recovers $\ell$]
\label{lem:ellF-from-F}
For $F\in \mathcal{F}(\Gamma)$, its translation length function $\ell_F \colon \Gamma\to \R_{\ge0}$ is given for all $\gamma\in \Gamma$ by:
\[\ell_F(\gamma)=\lim_{n\to \infty} \tfrac{1}{n}\cosh^{-1} \left(F(\gamma^n)\right).\]


Thus, if functions of hyperbolic type $F_i\colon \Gamma \to \R_{\ge 1}$ satisfy $\|\cosh^{-1}(F_1)-\cosh^{-1}(F_2)\| < \infty$ or equivalently $\|\log(F_1/F_2)\| < \infty$, then their length functions $\ell_{F_i}\colon \Gamma \to \R_{\ge 0}$ are equal $\ell_{F_1}=\ell_{F_2}$.
\end{lemma}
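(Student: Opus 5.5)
Fix a representation $\rho\colon\Gamma\to\Isom(\Hyp^\kappa)$ and a point $o\in\Hyp^\kappa$ realizing $F$, as given by \Cref{prop:functions-of-hyperbolic-type-to-representations}, so that $F(\gamma)=\cosh\dH(o,\rho(\gamma)o)$ for all $\gamma\in\Gamma$. Since the length function is attached to the class of actions and does not depend on the choice of representation or base point, we have $\ell_F(\gamma)=\ell(\rho(\gamma))$. Formula \eqref{eq:translation-length_limit} then gives
\begin{equation*}
\ell(\rho(\gamma))=\lim_{n\to\infty}\tfrac{1}{n}\dH(o,\rho(\gamma)^n o),
\end{equation*}
where the limit exists by subadditivity and is independent of $o$. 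Since $\rho(\gamma)^n=\rho(\gamma^n)$ and $\dH(o,\rho(\gamma^n)o)=\cosh^{-1}F(\gamma^n)$, this is exactly the claimed formula.

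For the second statement, first note that the two boundedness conditions are equivalent. On $[1,\infty)$ we have $\cosh^{-1}(u)=\log(u+\sqrt{u^2-1})$, and $\log u\le \cosh^{-1}(u)\le \log(2u)$. Therefore $\lvert\cosh^{-1}(F_1)-\cosh^{-1}(F_2)\rvert$ and $\lvert\log F_1-\log F_2\rvert$ differ by at most $\log 2$ pointwise, so one is bounded if and only if the other is.

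Now suppose $\lvert\cosh^{-1}F_1-\cosh^{-1}F_2\rvert\le C$ on all of $\Gamma$. Evaluating at $\gamma^n$ and dividing by $n$ gives
\begin{equation*}
\left\lvert\tfrac{1}{n}\cosh^{-1}F_1(\gamma^n)-\tfrac{1}{n}\cosh^{-1}F_2(\gamma^n)\right\rvert\le \tfrac{C}{n}\longrightarrow 0.
\end{equation*}
By the first part, both limits exist and equal $\ell_{F_1}(\gamma)$ and $\ell_{F_2}(\gamma)$ respectively, so $\ell_{F_1}=\ell_{F_2}$.

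The only non-routine point is that $\ell_F$ is well defined, that is, independent of the realizing representation. This follows from the uniqueness up to equivariant isometry in \Cref{prop:functions-of-hyperbolic-type-to-representations}, since translation lengths are invariant under conjugation by an isometry. No step is a genuine obstacle.
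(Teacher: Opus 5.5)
Your proof is correct and matches what the paper relies on: the paper gives no separate argument, citing \cite[Proposition 4.4]{Monod-Py_self-representations-Mobius_2019}. The content of that citation is exactly your unwinding of $F=F_{\rho,o}$ through the limit formula \eqref{eq:translation-length_limit}, and the second claim then follows from $0\le\cosh^{-1}(u)-\log u\le\log 2$ and dividing by $n$, as you do. One small simplification: your formula holds for every pair $(\rho,o)$ realizing $F$, so it already shows that $\ell(\rho(\gamma))$ depends only on $F$, and the appeal to uniqueness up to equivariant isometry is not needed.
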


\begin{remark}[converse to Proposition \ref{prop:functions-of-hyperbolic-type-to-representations}]
\label{rem:same-length-spectrum-means-conjugate}
Proposition \ref{prop:same-length-spectrum-means-conjugate} will imply that $F_1,F_2\in \mathcal{F}_{nn}(\Gamma)$ satisfying $\|\log(F_1/F_2)\|<\infty$ (equivalently $\|\cosh^{-1}(F_1)-\cosh^{-1}(F_2)\|<\infty$) 
must arise from a same representation $\rho\colon \Gamma \to \Isom(\Hyp^\kappa)$.
\end{remark}

\begin{corollary}[neutral and elementary]
\label{cor:neutral-elementary}
For two functions of hyperbolic type $F_i\colon \Gamma\to \R_{\ge 1}$ such that satisfy $\|\cosh^{-1}(F_1)-\cosh^{-1}(F_2)\| < \infty$ or equivalently  $\|\log(F_1/F_2)\| < \infty$, we have:
\begin{itemize}[noitemsep, align=left]
	\item[Neutral:] $F_1$ is neutral if and only if $F_2$ is neutral.
	\item[Elementary:] $F_1$ is elementary if and only if $F_2$ is elementary.
\end{itemize}
\end{corollary}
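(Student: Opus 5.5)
The plan is to reduce both items to the translation length function, using \Cref{lem:ellF-from-F}, \Cref{cor:elem-Hyp_length-homomorphism} and the classification of elementary actions.

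\textbf{Neutral.} By \Cref{lem:ellF-from-F}, the hypothesis $\|\cosh^{-1}(F_1)-\cosh^{-1}(F_2)\|_\infty<\infty$ gives $\ell_{F_1}=\ell_{F_2}$. Indeed, for every $\gamma$ the quantities $\tfrac1n\cosh^{-1}F_i(\gamma^n)$ differ by at most $C/n$, where $C$ is the sup norm. The two bounds in the statement are equivalent because $\cosh^{-1}(u)-\log u\to\log 2$ as $u\to\infty$ and both functions are bounded on compact subsets of $[1,\infty)$; hence $\cosh^{-1}F_1-\cosh^{-1}F_2$ is bounded if and only if $\log F_1-\log F_2$ is. Since $F_i$ is neutral exactly when $\ell_{F_i}=0$, neutrality of $F_1$ and $F_2$ are equivalent.

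\textbf{Elementary.} Let $\rho_i$ be representations realizing $F_i$ as in \Cref{prop:functions-of-hyperbolic-type-to-representations}, and apply \Cref{cor:elem-Hyp_length-homomorphism} to the images $\rho_i(\Gamma)$. This shows that $\rho_i$ is elementary if and only if $\ell_{F_i}$ is the absolute value of a homomorphism $\Gamma\to\R$. Since $\ell_{F_1}=\ell_{F_2}$, the two conditions coincide.

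The main obstacle is that \Cref{cor:elem-Hyp_length-homomorphism} is stated for subgroups, while "elementary" for $F$ is read through a representation. If the representations in question are not irreducible, there may be subtleties about which representation defines the attribute. To handle this I would rely on the uniqueness in \Cref{prop:functions-of-hyperbolic-type-to-representations}: any attribute of $F$ is well defined from the total-orbit representation. Elementarity is also unchanged under restriction to the span of an orbit, since an invariant closed totally geodesic subspace carries the relevant fixed points or pairs.

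A second point is the converse direction of \Cref{cor:elem-Hyp_length-homomorphism} when the length function vanishes. If the action has no loxodromic element, it is elliptic or parabolic by \Cref{thm:classification-G-Isom(X)}, hence elementary, and $\ell=0=|0|$. So both sides agree in the neutral case as well.

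Alternatively, one could argue directly with orbits. A bounded difference $\|\cosh^{-1}F_1-\cosh^{-1}F_2\|<\infty$ means the orbit maps $\gamma\mapsto\rho_i(\gamma)o_i$ are at bounded distance as pseudo-metrics on $\Gamma$. Limit sets are then related, and the cardinality conditions ($\le 2$ or not) would transfer. The length-function route above avoids this, so it is the one I would use.
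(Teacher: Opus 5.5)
Your reduction to $\ell_{F_1}=\ell_{F_2}$ through \Cref{lem:ellF-from-F}, your check that the two boundedness conditions are equivalent, and your treatment of the neutral case all match the paper. The gap is in the elementary case. You take from \Cref{cor:elem-Hyp_length-homomorphism} the criterion that an action is elementary if and only if $\ell$ is the absolute value of a homomorphism to $\R$. This is false for lineal actions containing an element that swaps the two invariant boundary points. Concretely, in the upper half-plane model of $\Hyp^2$ take $t(z)=ez$ and $s(z)=-1/z$. Then $sts^{-1}=t^{-1}$, and the group $\langle s,t\rangle\cong\Z\rtimes\Z/2$ preserves $\{0,\infty\}$, so it is elementary. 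But any homomorphism $\Xi$ to $\R$ vanishes on the involutions $s$ and $st$, hence on $t$, whereas $\ell(t)=1$. The lineal case in the proof of that corollary uses ``the translation parameter along the geodesic'', which is not a homomorphism once some element reverses the axis. So the first link of your chain ``$\rho_1$ elementary $\iff$ $\ell=\lvert\Xi\rvert$ $\iff$ $\rho_2$ elementary'' fails.

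The paper's proof avoids this by citing \Cref{lem:Tits-alternative} and \Cref{lem:equal-fixed-points-from-length} directly. Together they give a criterion depending only on $\ell$: the action is non-elementary if and only if there are $\alpha,\beta\in\Gamma$ with $\ell(\alpha),\ell(\beta)>0$ and $p\ge 1$ such that $(m,n)\mapsto\ell(\alpha^{pm}\beta^{pn})$ is proper on $\Z^2$. For elementary actions this fails because any two loxodromic elements share a fixed point: a loxodromic element preserving a pair $\{\xi,\xi'\}$ cannot swap it, so it fixes both. Substituting this criterion for yours repairs the argument. Alternatively, you may weaken yours to ``$\ell$ is the absolute value of a homomorphism on some subgroup of index at most $2$''.

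Once the criterion is intrinsic to $\ell$, your concern about which realizing representation defines elementarity disappears, since every realization of $F_i$ has length function $\ell_{F_i}$. As for your orbit-based alternative, note that elementarity is not the condition $\Card\Lambda\le 2$: focal actions are elementary with uncountable limit set.
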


\begin{proof}
By Lemma \ref{lem:ellF-from-F}, we know that $\ell_{F_1}=\ell_{F_2}$. 
The result follows since $\ell_F$ determines the neutrality property (by definition), and the elementary property (by Lemmata \ref{lem:Tits-alternative} \& \ref{lem:equal-fixed-points-from-length}).
\end{proof}

Here is another consequence of Lemma~\ref{lem:ellF-from-F}, which is very close to \cite[Theorem 1.9]{Reyes_isometries-Gromov-hyperbolic_2018}.

\begin{lemma}[continuity of the length spectrum]
\label{lem:F-to-ellF-continuity}
The function $F\mapsto\ell_F$ is continuous for the pointwise convergence topology on the real vector space of functions $\Gamma\to\R$.
\end{lemma}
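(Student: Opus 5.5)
We must show that $F\mapsto \ell_F$ is continuous from $\mathcal{F}(\Gamma)$ with the pointwise topology to $\R^\Gamma$ with the pointwise topology. Since the target carries the product topology, it is enough to fix $\gamma\in\Gamma$ and prove that $F\mapsto \ell_F(\gamma)$ is continuous on $\mathcal{F}(\Gamma)$. By \Cref{lem:ellF-from-F},
\[
\ell_F(\gamma)=\lim_n \tfrac1n \cosh^{-1}F(\gamma^n).
\]
Each map $F\mapsto \tfrac1n\cosh^{-1}F(\gamma^n)$ depends on a single coordinate of $F$, so it is continuous. Upper semicontinuity then comes from subadditivity. Choose a representation $\rho$ and a point $o$ realizing $F$ via \Cref{prop:functions-of-hyperbolic-type-to-representations}, and set $a_n=\cosh^{-1}F(\gamma^n)=\dH(o,\rho(\gamma)^n o)$. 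This sequence is subadditive, so by Fekete's lemma $\ell_F(\gamma)=\inf_n a_n/n$, which is equation \eqref{eq:translation-length_limit}. An infimum of continuous functions is upper semicontinuous.

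Lower semicontinuity is the real content, and I expect it to be the main step. I will use a coarse superadditivity estimate that is uniform over all hyperbolic spaces, in the spirit of \cite[Theorem 1.9]{Reyes_isometries-Gromov-hyperbolic_2018}. Every $\Hyp^\kappa$ is Gromov-$\delta$-hyperbolic with the same $\delta$ (for instance $\delta=\log 2$, since all of them are strongly $1$-hyperbolic). For an isometry $g$ of a $\delta$-hyperbolic geodesic space and any point $o$, there is a universal constant $C=C(\delta)$ such that
\[
\ell(g)\ \ge\ \dH(o,g^2o)-\dH(o,go)-C .
\]
This holds because, once $\dH(o,g^2o)-\dH(o,go)$ is large, the broken geodesic $o,go,g^2o,\dots$ is a local quasi-geodesic, and the standard argument gives linear growth of $\dH(o,g^no)$. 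In terms of $F$, this gives
\[
\ell_F(\gamma)\ \ge\ \cosh^{-1}F(\gamma^{2m})-\cosh^{-1}F(\gamma^m)-C
\]
for all $m$, after applying it to $g=\rho(\gamma)^m$ and dividing by $m$ via $\ell(g^m)=m\,\ell(g)$. This yields
\[
\ell_F(\gamma)\ \ge\ \sup_m \tfrac1m\bigl(\cosh^{-1}F(\gamma^{2m})-\cosh^{-1}F(\gamma^m)-C\bigr).
\]

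The next task is to check that this supremum actually equals $\ell_F(\gamma)$. For each $m$,
\[
\tfrac1m\bigl(a_{2m}-a_m-C\bigr)=2\tfrac{a_{2m}}{2m}-\tfrac{a_m}{m}-\tfrac Cm,
\]
and this tends to $2\ell-\ell=\ell$ as $m\to\infty$. So $\ell_F(\gamma)$ is a supremum of functions that are continuous in $F$, each depending on only two coordinates, and is therefore lower semicontinuous. Combined with the upper semicontinuity above, $F\mapsto\ell_F(\gamma)$ is continuous.

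The main obstacle is making the displacement inequality rigorous with a constant $C$ that does not depend on $\kappa$. I would try first to avoid quasi-geodesic arguments and use the $\operatorname{CAT}(-1)$ structure of $\Hyp^\kappa$. Let $p$ be the projection of $o$ onto the axis of $g$ when $g$ is loxodromic, or use horoball estimates otherwise. Hyperbolic trigonometry in the totally geodesic $\Hyp^3$ spanned by $o$, $go$, $g^2o$ and the axis then gives $\dH(o,g^ko)=k\ell(g)+2\dH(o,\text{axis})+O(1)$ for $k=1,2$, with the error bounded by a dimension-free constant. When $\ell(g)=0$, the right-hand side above must itself be bounded by $C$. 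This follows from $\dH(o,g^2o)\le \dH(o,go)+2\delta$ type estimates for elliptic and parabolic isometries, or it can be sidestepped by invoking the continuity theorem of Reyes for the isometry group $\Isom(\Hyp^\omega)$.
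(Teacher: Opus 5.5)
Your proof is correct and follows the paper's argument: upper semicontinuity comes from writing $\ell_F(\gamma)=\inf_n \tfrac1n\cosh^{-1}F(\gamma^n)$, and lower semicontinuity from the dimension-free displacement inequality $d(o,g^2o)\le d(o,go)+\ell(g)+2\delta$, which the paper takes directly from \cite[Theorem 1.1]{Reyes_isometries-Gromov-hyperbolic_2018} rather than reproving it. One small slip: the display right after you apply the inequality to $g=\rho(\gamma)^m$ is missing the factor $\tfrac1m$ on its right-hand side, though you correct this in the next line.
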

\begin{proof}
For fixed $n\in \N$ and $\gamma\in \Gamma$, the function $F\mapsto \tfrac{1}{n}\cosh^{-1} \left(F(\gamma^n)\right)$ is continuous.
We show that the limit is both upper and lower semi-continuous.

On the one hand, by sub-additivity of $n\mapsto\cosh^{-1} \left(F(\gamma^n)\right)$, the limit is actually an infimum, hence $F\mapsto \ell_F(\gamma)$ is upper semi-continuous. 
On the other hand, applying \cite[Theorem 1.1]{Reyes_isometries-Gromov-hyperbolic_2018} to the action of $\Gamma$ on the Gromov-$\delta$-hyperbolic space $X$ shows that for every $x\in X$ and $\gamma\in \Isom(X)$ we have 
\(d(\gamma^2x,x)\leq d(\gamma x,x)+\ell(\gamma)+2\delta\).
Thus
\[\ell_F(\gamma)\geq\sup\{\tfrac{1}{n}\left(\cosh^{-1} (F(\gamma^{2n}))-\cosh^{-1} \left(F(\gamma^{n})\right)-2\delta\right) \colon n\in\N\}\]
and the reverse inequality follows from the fact that $\ell(\gamma)$ is actually the limit of the right-hand side, which proves lower semi-continuity. 
\end{proof}

\begin{lemma}[$F$ recovers classification]
\label{lem:F-recovers-type-eph}
Consider a function of hyperbolic type $F\colon \Gamma \to \R$ with translation length function $\ell_F \colon \Gamma\to \R_{\ge0}$. 

For a representation $\rho\colon \Gamma \to \Isom(\Hyp^\kappa)$ realizing $F$, a subgroup $\Gamma_0 \subset \Gamma$ is such that:
\begin{itemize}[noitemsep, align=left]
	\item[$\rho(\Gamma_0)$ is elliptic] if and only if $F(\Gamma_0)$ is bounded
	\item[$\rho(\Gamma_0)$ is parabolic] if and only if $F(\Gamma_0)$ is unbounded but $\ell_F(\Gamma_0)$ is bounded
	\item[$\rho(\Gamma_0)$ is hyperbolic] if and only if $\ell_F(\Gamma_0)$ is unbounded
\end{itemize}
This applies in particular to the full group $\Gamma_0=\Gamma$ or a cyclic subgroup $\Gamma_0=\{\gamma^n\colon n\in \Z\}$.
\end{lemma}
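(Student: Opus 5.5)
The plan is to fix a realizing representation $\rho$ and a base point $o$ with $F(\gamma)=\cosh \dH(o,\rho(\gamma)o)$, as provided by \Cref{prop:functions-of-hyperbolic-type-to-representations}. By \Cref{lem:ellF-from-F}, $\ell_F(\gamma)=\ell(\rho(\gamma))$, and changing $o$ only alters $\cosh^{-1}F$ by a bounded amount, so neither boundedness condition depends on the choice of $o$. The three alternatives for $\rho(\Gamma_0)$ are exhaustive and mutually exclusive by \Cref{thm:classification-G-Isom(X)}. The three right-hand conditions are also exclusive: if $\ell_F(\Gamma_0)$ is unbounded, then $F(\Gamma_0)$ is unbounded, because $\ell(\rho(\gamma))\le \dH(o,\rho(\gamma)o)$ (infimum displacement, \Cref{rem:infimum-translation-length}). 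It therefore suffices to prove the first and third equivalences; the second follows by elimination.

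\emph{Elliptic.} By \Cref{thm:classification-G-Isom(X)}, $\rho(\Gamma_0)$ is elliptic exactly when the orbit $\rho(\Gamma_0)o$ is bounded, and this is the same as $\cosh \dH(o,\rho(\gamma)o)=F(\gamma)$ being bounded on $\Gamma_0$.

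\emph{Hyperbolic.} If $\ell_F(\Gamma_0)$ is unbounded, some $\rho(\gamma)$ has $\ell>0$, so it is loxodromic by \Cref{lem:translation-length-from-cross-ratio}, and $\rho(\Gamma_0)$ is hyperbolic. Conversely, if $\rho(\Gamma_0)$ contains a loxodromic $\rho(\gamma)$, then $\ell_F(\gamma^n)=\lvert n\rvert\ell_F(\gamma)$ is unbounded over the powers $\gamma^n\in\Gamma_0$.

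\emph{Parabolic.} By elimination, $\rho(\Gamma_0)$ is parabolic exactly when $F(\Gamma_0)$ is unbounded and $\ell_F(\Gamma_0)$ is bounded. Since a non-hyperbolic group has no loxodromic element, in that case $\ell_F$ actually vanishes on $\Gamma_0$.

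The main subtlety is the sign convention. Here "hyperbolic" must be read as loxodromic type in \Cref{thm:classification-G-Isom(X)}, and the hyperbolic case needs the homogeneity $\ell(\gamma^n)=\lvert n\rvert\ell(\gamma)$ to turn a single loxodromic element into unboundedness of $\ell_F$. For a cyclic subgroup this homogeneity also shows that "$\ell_F$ bounded" is equivalent to "$\ell_F=0$", which recovers \Cref{prop:classification-g-Isom(X)}.
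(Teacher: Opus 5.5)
Your proof is correct and follows essentially the paper's route: the elliptic case is boundedness of the orbit of $o$, and the hyperbolic case uses the homogeneity $\ell(\gamma^n)=\lvert n\rvert\ell(\gamma)$, which is the same as the paper's observation that $\ell_F(\Gamma_0)$ is bounded exactly when $\ell_F(\Gamma_0)=\{0\}$. The only difference is in the parabolic case. You obtain it by elimination from the trichotomy of \Cref{thm:classification-G-Isom(X)}, whereas the paper appeals to the partition of elementary actions around \Cref{rem:elementary}; your elimination is self-contained and fully rigorous.
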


\begin{proof}
The first and third points are clear having in mind that $\ell_F(\Gamma_0)$ is unbounded is equivalent to $\ell_F(\Gamma_0)=\{0\}$. For the second point, this follows from the partitioning of elementary representations described in \Cref{rem:elementary}.
\end{proof}

\subsection{The exotic deformation from powers of kernels}\label{subsec:exotic-deformation}

We first recall \cite[Theorem I\textsuperscript{bis}]{Monod-Py_self-representations-Mobius_2019}.

\begin{theorem}[exotic deformation]
\label{thm:exotic-deformation-Hyp}
For $t\in (0,1]$, the function $(\cosh \circ \dH)^t\colon \Hyp^\kappa \times \Hyp^\kappa \to \R$ is a kernel of hyperbolic type.
This defines the continuous \emph{exotic deformation} $\exoH_t \colon \Hyp^\kappa \to \Hyp^{\omega+\kappa}$, hence the associated (conjugacy class of) \emph{exotic representation}(s) $\chi_t \colon \Isom(\Hyp^\kappa)\to \Isom(\Hyp^{\omega+\kappa})$. 

We still denote $\exoH_t\colon\Hyp^\kappa\sqcup \partial \Hyp^\kappa\to \Hyp^{\infty+\kappa}\sqcup \partial \Hyp^{\infty+\kappa}$ its extension to the boundary.
\end{theorem}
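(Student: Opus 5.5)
Since the statement recalls \cite[Theorem I\textsuperscript{bis}]{Monod-Py_self-representations-Mobius_2019}, the plan has three steps. First I prove that $(\cosh\circ\dH)^t$ is a kernel of hyperbolic type, i.e. that it satisfies \eqref{eq:kernel-hyperbolic-CS}. Second I apply the hyperbolic GNS construction together with equivariance. Third I extend the resulting map to the boundary.

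For the kernel, I work in the hyperboloid model, where $K(x,y)=\langle x,y\rangle$. Fix a base point $x_0$ and write each $x\in\Hyp^\kappa$ in polar coordinates around it: $x=\cosh(r)x_0+\sinh(r)u$, with $u$ a unit vector in the (negative definite) tangent space $T=x_0^\perp$. Then $\cosh\dH(x,y)=\cosh r\cosh s-\sinh r\sinh s\,(u\cdot v)$, where $u\cdot v$ denotes the Euclidean inner product on $T$. The visual kernel for $K^t$ is
\[
K^t_0(x,y)=(\cosh r\cosh s)^t-(\cosh r\cosh s-\sinh r\sinh s\,u\cdot v)^t
=(\cosh r\cosh s)^t\bigl[1-(1-\tau_r\tau_s\,u\cdot v)^t\bigr],
\]
where $\tau_r=\tanh r\in[0,1)$. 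Expanding $1-(1-z)^t=\sum_{m\ge1}q_mz^m$, all coefficients $q_m$ are $\ge0$ when $t\in(0,1]$, and the series converges for $|z|<1$. The kernel $(x,y)\mapsto\tau_r\tau_s\,u\cdot v$ is of positive type (it is the Gram kernel of the vectors $\tau_r u$) and satisfies $|\tau_r\tau_s\,u\cdot v|<1$. By Schur's theorem each power of it is of positive type, so the bracket is a nonnegative combination of kernels of positive type, hence of positive type; this is the same reasoning as in \Cref{lem:powerseries-of-positive-type}. Multiplying by the rank-one positive kernel $(\cosh r)^t(\cosh s)^t$ keeps positive type. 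The kernel is clearly symmetric, continuous, nonnegative, with diagonal $1$, so it is of hyperbolic type.

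For the second step, \Cref{prop:hyperbolic-GNS} yields $\exoH_t\colon\Hyp^\kappa\to\Hyp^{\kappa'}$ with total image and $\cosh\dH(\exoH_t x,\exoH_t y)=(\cosh\dH(x,y))^t$. Because $K^t$ is $\Isom(\Hyp^\kappa)$-invariant, the uniqueness part of \Cref{prop:hyperbolic-GNS}, applied to $\exoH_t\circ g$ for each isometry $g$, gives a unique isometry $\chi_t(g)$ with $\exoH_t\circ g=\chi_t(g)\circ\exoH_t$. Uniqueness makes $\chi_t$ a homomorphism. Continuity of $\chi_t$ follows from continuity on the total orbit $\exoH_t(\Hyp^\kappa)$, and $\exoH_t$ is continuous because the distances it induces are continuous in those of $\Hyp^\kappa$. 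For the dimension count $\kappa'=\omega+\kappa$, the bound $\le$ comes from density: $\Hyp^\kappa$ has a dense subset of cardinality $\kappa+\omega$, and its image spans. For $\ge$, I would use strictness when $t<1$: the bracket above then involves infinitely many $q_m>0$, so \Cref{lem:powerseries-of-positive-type} and \Cref{prop:strct-means-indep_hyperbolic} show that distinct points have independent images. Any infinite set of points therefore spans an infinite-dimensional subspace, which gives $\ge\omega$, and $\ge\kappa$ follows similarly. For $t=1$ the claim reduces to $\kappa'=\kappa$, with $\kappa+\omega$ read loosely.

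The boundary extension is the step I expect to be the main obstacle. Since $\dH(\exoH_t x,\exoH_t y)=t\,\dH(x,y)+O(1)$, the map $\exoH_t$ is a quasi-isometric embedding. Gromov products then satisfy $(\exoH_tx\mid\exoH_ty)_{\exoH_t o}=t(x\mid y)_o+O(1)$, so sequences converging to infinity, and equivalences between them, are preserved. This yields a well-defined extension $\partial\Hyp^\kappa\to\partial\Hyp^{\kappa'}$, continuous for the bordification topology described in \Cref{sec:strong_hyperbolicity}. The delicate part is keeping the $O(1)$ uniform in both points; the explicit formula $\log\cosh r=r-\log2+o(1)$ should supply that uniformity.
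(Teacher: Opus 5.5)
Your proposal is correct and follows the route the paper relies on. The paper only cites Monod--Py here, but your power-series argument ($1-(1-z)^t=\sum_{m\ge1}q_mz^m$ with $q_m\ge0$, Schur's theorem, and the rank-one factor $(\cosh r\cosh s)^t$) is the direct argument from Monod's notes that the paper itself invokes in \Cref{prop:strictly_of_hyperbolic_type}. Your boundary extension is the quasi-isometry argument used in the proof of the corollary right after the theorem.

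The uniformity you were worried about is immediate: $\tfrac12e^{d}\le\cosh d\le e^{d}$ and $\log u\le\cosh^{-1}u\le\log(2u)$ give $\lvert\dH(\exoH_t x,\exoH_t y)-t\,\dH(x,y)\rvert\le\log 2$ for all $x,y$.

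The one inaccuracy is your aside that $\kappa'\ge\kappa$ ``follows similarly'' from linear independence of the images. This fails for uncountable $\kappa$, since a separable Hilbert space already contains continuum many linearly independent vectors. If you want that bound, use instead that a $1$-separated family of cardinality $\kappa$ in $\Hyp^\kappa$ is sent to a $\cosh^{-1}\bigl((\cosh 1)^t\bigr)$-separated family; this bounds the density character of $\Hyp^{\kappa'}$ from below. In any case only the upper bound $\kappa'\le\omega+\kappa$ is needed to land in $\Hyp^{\omega+\kappa}$.
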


This leads to the following \cite[Theorem 3.10]{Monod-Py_self-representations-Mobius_2019}.

\begin{corollary}[powers of kernels and functions of hyperbolic type]
If $K\colon X\times X\to \R$ is a kernel of hyperbolic type, then for all $0\leq t\leq1$, $K^t$ is a kernel of hyperbolic type. 

Similarly, if $F\colon G\to \R$ is a function of hyperbolic type on a group, then for all $0\leq t\leq1$, $F^t$ is a kernel of hyperbolic type. 
Moreover, the following properties hold: 
\begin{enumerate}[noitemsep]
	\item When $t\neq 0$ we have that $F^t$ is non-elementary if and only if $F$ is so.
	\item The translation length functions associated with the $F^t$ are homothetic: $\ell_t=t\ell_1$.
\end{enumerate}
\end{corollary}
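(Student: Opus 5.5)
The plan is to reduce everything to \Cref{thm:exotic-deformation-Hyp} via the realization of kernels by hyperbolic embeddings. Let $K$ be a kernel of hyperbolic type on $X$. By \Cref{prop:hyperbolic-GNS} there is $f\colon X\to\Hyp^\kappa$ with $K(x,y)=\cosh\dH(f(x),f(y))$. For $t\in(0,1]$, \Cref{thm:exotic-deformation-Hyp} says that $(\cosh\circ\dH)^t$ is of hyperbolic type on $\Hyp^\kappa$. Restricting to the subset $f(X)$ and pulling back by $f$ shows that $K^t$ is of hyperbolic type. Indeed, \eqref{eq:kernel-hyperbolic-CS} only involves finitely many points at a time, so it passes to pullbacks; continuity and symmetry are clear, and the diagonal stays $1$. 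The case $t=0$ gives the constant kernel $1$, whose visual kernel $1\cdot1-1=0$ is trivially of positive type.

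For a function $F$ of hyperbolic type on $G$, apply the above to the kernel $(\alpha,\beta)\mapsto F(\alpha^{-1}\beta)$; its $t$-th power is the kernel of $F^t$. Concretely, if $\rho$ and $o$ realize $F$ (\Cref{prop:functions-of-hyperbolic-type-to-representations}), then $\chi_t\circ\rho$ and $\exoH_t(o)$ realize $F^t$, because $\exoH_t$ is $\chi_t$-equivariant and $\cosh\dH(\exoH_t o,\exoH_t\rho(\gamma)o)=F(\gamma)^t$.

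For item 2, use \Cref{lem:ellF-from-F}. Since $\cosh^{-1}(s)=\log s+\log 2+o(1)$ as $s\to\infty$, we have $\ell_F(\gamma)=\lim\frac1n\log F(\gamma^n)$ whenever $F(\gamma^n)$ is unbounded. Then $\frac1n\log F^t(\gamma^n)=\frac tn\log F(\gamma^n)$, which gives $\ell_{F^t}=t\ell_F$. If $F(\gamma^n)$ is bounded, both length functions vanish. The two regimes must be separated carefully, since $\cosh^{-1}$ and $\log$ differ by a bounded amount only on $[1,\infty)$; but a bounded additive error disappears after dividing by $n$ in either case.

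For item 1, use \Cref{cor:elem-Hyp_length-homomorphism}, which says non-elementarity is detected by the length function: a group is elementary if and only if $\ell$ is the absolute value of a homomorphism to $\R$. Since $t\ne0$, $\ell_{F^t}=t\ell_F$ is of the form $\lvert\Xi\rvert$ exactly when $\ell_F=\lvert t^{-1}\Xi\rvert$ is. This corollary is stated for subgroups of $\Isom(\Hyp^\kappa)$, so apply it to $\rho(G)$ and to $\chi_t\rho(G)$. The length function on the image factors through $\rho$, and a homomorphism on $G$ whose absolute value equals $\ell\circ\rho$ vanishes on $\ker\rho$, since elements of $\ker\rho$ have zero length. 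Hence it descends to $\rho(G)$; this descent is the only delicate bookkeeping step I expect in the argument.
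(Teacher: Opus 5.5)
\textbf{Kernel statement and item 2.} Your derivation of the kernel statement (realize $K$ by \Cref{prop:hyperbolic-GNS}, pull back $(\cosh\circ\dH)^t$ from \Cref{thm:exotic-deformation-Hyp}) is how the paper gets it. Your proof of item 2 is a self-contained replacement for the paper's citation of Monod--Py. Since $\log s\le\cosh^{-1}s\le\log(2s)$ for all $s\ge1$, \Cref{lem:ellF-from-F} gives $\ell_F(\gamma)=\lim_n\frac1n\log F(\gamma^n)$ for every $\gamma$, hence $\ell_{F^t}=t\ell_F$. Your split into bounded and unbounded $F(\gamma^n)$ is unnecessary, because this bound is uniform on $[1,\infty)$.

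\textbf{Gap in item 1.} The criterion you invoke, \Cref{cor:elem-Hyp_length-homomorphism}, is false in the direction ``elementary $\Rightarrow$ $\ell=\lvert\Xi\rvert$''. It fails for lineal groups containing elements that swap the two invariant boundary points. Let $s_1,s_2$ be the half-turns of $\Hyp^2$ about distinct points $p_1,p_2$ of a geodesic $L$. Then $\langle s_1,s_2\rangle$ preserves the endpoints of $L$, so it is elementary. Yet $\ell(s_1)=\ell(s_2)=0<2\dH(p_1,p_2)=\ell(s_1s_2)$, while every homomorphism to $\R$ kills the involutions $s_i$ and hence $s_1s_2$. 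The ``translation parameter'' in the paper's proof is not a homomorphism once the invariant line can be reversed. Your chain of equivalences uses this false direction in both implications. For example, in ``$F$ non-elementary $\Rightarrow$ $\ell_{F^t}$ not of the form $\lvert\Xi\rvert$ $\Rightarrow$ $F^t$ non-elementary'', it is needed at the last arrow. So item 1 is not proved as written. Your descent through $\ker\rho$ is correct and is not the issue.

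\textbf{Repair and comparison.} You can keep your strategy by using a scale-invariant criterion that is actually true. By \Cref{lem:Tits-alternative} and \Cref{lem:equal-fixed-points-from-length}, $F$ is non-elementary if and only if there exist $\alpha,\beta\in G$ with $\ell_F(\alpha),\ell_F(\beta)>0$ and $p\ge1$ such that $(m,n)\mapsto\ell_F(\alpha^{pm}\beta^{pn})$ is proper on $\Z^2$. Indeed:
\begin{itemize}
\item positive length makes $\rho(\alpha),\rho(\beta)$ loxodromic;
\item properness is equivalent to disjoint fixed-point pairs;
\item two loxodromics with disjoint fixed-point pairs generate a non-elementary group.
\end{itemize}
This condition is visibly invariant under $\ell_F\mapsto t\ell_F$, and since it is stated on $G$ itself, it also removes the descent step. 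With this fix, your route makes item 1 a formal consequence of item 2.

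The paper argues geometrically instead: $\exoH_t$ is a $\chi_t$-equivariant quasi-isometric embedding, so its boundary extension transports fixed points and invariant pairs. For the converse direction it also needs that a fixed point or invariant pair of $\chi_t\rho(G)$ at infinity lies in the image of that boundary map, for example via limit sets.
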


\begin{proof}
The statement about elementarity follows from the fact that the equivariant map $\exoH_t\colon \Hyp^\omega\to\Hyp^\omega$ is a $(t,C)$-quasi-isometry, hence induces an $\Isom(\Hyp^\omega)$-equivariant map $\exoH_t\colon \partial\Hyp^\kappa\to\partial\Hyp^\kappa$. 
The statement about proportionality of length functions is proved in \cite[§ 4.2]{Monod-Py_self-representations-Mobius_2019}.
\end{proof}

The following proposition is a strengthening of the previous relying on  the proof of \cite[Theorem 1.4]{Monod_notes-functions-hyperbolic-groups_2020} where powers of kernels of hyperbolic type are proved directly to be of hyperbolic type.

\begin{proposition}[strict powers are strict]
\label{prop:strictly_of_hyperbolic_type}
Let $K$ be a kernel of hyperbolic type on $X$ that separates points. For every $t\in(0,1)$, the power $K^t$ is strictly of hyperbolic type. 
\end{proposition}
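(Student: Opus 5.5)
We work directly with the visual kernel of $K^t$ and reduce to \Cref{lem:powerseries-of-positive-type}, following the computation in the proof of \cite[Theorem 1.4]{Monod_notes-functions-hyperbolic-groups_2020}. Fix $t\in(0,1)$ and a base point $x_0\in X$. Write $K_0(x,y)=K(x_0,x)K(x_0,y)-K(x,y)$ for the visual kernel of $K$, which is of positive type. Introduce the normalized kernel
\begin{equation*}
L(x,y)=\frac{K_0(x,y)}{K(x_0,x)K(x_0,y)} = 1-\frac{K(x,y)}{K(x_0,x)K(x_0,y)}.
\end{equation*}
It is again of positive type, because it is obtained from $K_0$ by the diagonal rescaling $\lambda_i\mapsto\lambda_i/K(x_0,x_i)$. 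Then the visual kernel of $K^t$ at $x_0$ factors as
\begin{equation*}
(K^t)_0(x,y) = \bigl(K(x_0,x)K(x_0,y)\bigr)^t\,\bigl(1-(1-L(x,y))^t\bigr).
\end{equation*}
The same diagonal rescaling, now by the positive weights $K(x_0,x_i)^t$, shows that $(K^t)_0$ is strictly of positive type if and only if $Q\circ L$ is, where
\begin{equation*}
Q(z)=1-(1-z)^t=\sum_{m\ge1}q_m z^m, \qquad q_m=(-1)^{m+1}\tbinom{t}{m}.
\end{equation*}
For $t\in(0,1)$ all $q_m$ are strictly positive and the radius of convergence is $1$.

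Next we check the hypotheses of \Cref{lem:powerseries-of-positive-type} for $L$. First, the diagonal satisfies $L(x,x)=1-K(x_0,x)^{-2}<1$. For separation we use the geometric model given by \Cref{prop:hyperbolic-GNS}. Take $f\colon X\to\Hyp^\kappa$ realizing $K$ in the hyperboloid model. Since $K$ separates points, $f$ is injective. Put $u_x=\bigl(f(x)-\langle f(x_0),f(x)\rangle f(x_0)\bigr)/\langle f(x_0),f(x)\rangle$, a vector in the tangent space at $f(x_0)$, on which minus the Minkowski form is a Hilbert inner product. Then $L(x,y)=-\langle u_x,u_y\rangle$, so $u_x$ is exactly the point $f(x)$ in the Klein ball model centred at $f(x_0)$. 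In particular $x\mapsto u_x$ is injective. By Cauchy--Schwarz, $L(x,y)\le\|u_x\|\|u_y\|\le\max\{\|u_x\|^2,\|u_y\|^2\}$, with equality forcing $u_x=u_y$. This gives the separation condition of \Cref{rem:separation=injectivity}. \Cref{lem:powerseries-of-positive-type} then yields that $Q\circ L$, hence $(K^t)_0$, is strictly of positive type.

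The main obstacle I expect is the base point itself. Since $u_{x_0}=0$, the kernel $L$ vanishes identically on the row and column of $x_0$, so strictness can only hold on finite families of distinct points different from $x_0$. This is exactly what the independence reformulation in \Cref{prop:strct-means-indep_hyperbolic} requires: linear independence of $x_0$ together with the others. Adding $x_0$ to the family does not hurt, because $f(x_0)$ is never in the span of the tangential components. So I would state strictness for $(K^t)_0$ on families avoiding $x_0$ and then check that this matches the definition used. A secondary point is to confirm that the proof of \Cref{lem:powerseries-of-positive-type} tolerates a zero diagonal entry. If it does not, we simply discard $x_0$ before applying the lemma, as above.
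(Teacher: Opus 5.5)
Your proposal is correct and follows the paper's proof. The paper likewise reduces, via Monod's factorization of the visual kernel of $K^t$, to strict positivity of $M''=q(M')$ with $q(z)=1-(1-z)^t$ and $M'$ playing the role of your $L$, then invokes \Cref{lem:powerseries-of-positive-type} because $M'$ is a separating kernel of positive type; you have written out the computation the paper delegates to Monod. Your caveat about $x_0$ is well founded (the visual kernel always vanishes on the row and column of $x_0$, so strictness can only mean strictness on families avoiding $x_0$, and the lemma must be applied after discarding $x_0$), and the positivity of every $q_m$ is also genuinely needed: the lemma's proof asserts $\lvert K_{i,j}\rvert<\mu$, which fails for antipodal pairs $u_x=-u_y$ of maximal norm, and these are handled by letting $m\to\infty$ separately along even and odd integers.
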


\begin{proof}
We use the notations in the proof of \cite[Theorem 1.12]{Monod_notes-functions-hyperbolic-groups_2020} for $K=\beta$ and $Q=q$. 
Monod proves that $\beta^t$ is of hyperbolic type by proving an inequality. 
We do not repeat the proof and only explain how to get a strict inequality using his notations (with kernels $N,M',M''$). 
The goal is to prove that $N$ is strictly of positive type which is equivalent to $M''=q(M')$ is strictly of positive type. This is the case since $M'$ is a separating kernel of positive type and $Q$ satisfies the hypotheses of \Cref{lem:powerseries-of-positive-type}.
\end{proof}

Taking powers of functions of hyperbolic type affects the algebraic cross-ratio accordingly.

\begin{lemma}[powers of cross-ratio]\label{lem:powers-cross-ratio}
For $t\in (0,1]$, and for any $x^-,x^+,y^-,y^+\in \partial^4{\Hyp^\omega}$, 
\[\Cr{\exoH_t(x^-), \exoH_t(x^+), \exoH_t(y^-), \exoH_t(y^+)}=\Cr{x^-, x^+, y^-, y^+}^t.\]    
\end{lemma}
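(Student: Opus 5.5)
The natural route is to compute the cross-ratio from distances, using the defining metric relation $\cosh \dH(\exoH_t(x),\exoH_t(y)) = (\cosh \dH(x,y))^t$ for $x,y\in\Hyp^\kappa$. Then pass to the boundary by the continuity of the cross-ratio on quadruples in the bordification.

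The first step works at finite points. For $x,x',y,y'\in\Hyp^\kappa$, write $\dH_t$ for the distance between the images under $\exoH_t$. From $\cosh r = \tfrac12 e^{r}(1+e^{-2r})$ we get
\[
\dH_t(x,y) = \cosh^{-1}\big((\cosh \dH(x,y))^t\big) = t\,\dH(x,y) + (1-t)\log 2 + o(1)
\]
as $\dH(x,y)\to\infty$. More precisely, $\cosh^{-1}(u)=\log(2u)+o(1)$ as $u\to\infty$, and
\[
\log\big(2(\cosh \dH)^t\big) = \log 2 + t\log\cosh \dH = \log 2 + t\dH - t\log 2 + o(1).
\]
In the alternating sum defining $\log\Cr{\cdot}$ in \eqref{eq:def-cross-ratio-X}, the constants $(1-t)\log 2$ appear twice with sign $+$ and twice with sign $-$, so they cancel. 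Hence, whenever all four relevant distances tend to infinity,
\[
\log \Cr{\exoH_t(x),\exoH_t(x');\exoH_t(y),\exoH_t(y')} = t\log\Cr{x,x';y,y'} + o(1).
\]

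Next, fix distinct $x^-,x^+,y^-,y^+\in\partial\Hyp^\kappa$. Choose sequences $x_n\to x^-$, $x'_n\to x^+$, $y_n\to y^-$, $y'_n\to y^+$ in $\Hyp^\kappa$, for instance along geodesic rays from a base point. The four mixed distances $\dH(x_n,x'_n)$, $\dH(y_n,y'_n)$, $\dH(x_n,y'_n)$, $\dH(y_n,x'_n)$ then tend to infinity, since the corresponding limit points are distinct. By the continuous extension of the cross-ratio in the strongly hyperbolic space $\Hyp^\kappa$, the right-hand side converges to $t\log\Cr{x^-,x^+;y^-,y^+}$.

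For the left-hand side, I need the images $\exoH_t(x_n)$ to converge to $\exoH_t(x^-)$ in the bordification of $\Hyp^{\omega+\kappa}$, which is how the boundary extension in \Cref{thm:exotic-deformation-Hyp} is defined. I also need the four image limit points to be distinct, so that continuity of the cross-ratio in $\Hyp^{\omega+\kappa}$ applies. This is the main obstacle. I would handle it using that $\exoH_t$ is a $(t,C)$-quasi-isometric embedding, which follows from the first step since $\dH_t = t\dH + O(1)$ uniformly. It therefore preserves sequences converging to infinity and their equivalence, because Gromov products are transformed as $(a\mid b)_o \mapsto t(a\mid b)_o + O(1)$. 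This gives both convergence of the images and distinctness of the limit points.

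Finally, exponentiating gives the identity for quadruples of distinct points. For quadruples in $\partial^4$ with exactly three distinct points, the values $0$ and $\infty$ are characterized by \eqref{eq:Cr=0} and \eqref{eq:Cr=infty}, and the remaining degenerate cases give $1$; all of these are preserved since $\exoH_t$ is injective on the boundary. Alternatively, these cases follow by continuity from the distinct case.
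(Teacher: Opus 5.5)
Your proof is correct, but it takes a genuinely different route from the paper's. The paper argues through the group action. After dispatching the values $0$ and $\infty$ by \eqref{eq:Cr=0} and \eqref{eq:Cr=infty}, it chooses an isometry $\gamma$ fixing $x^+,y^+$ with $\gamma(y^-)=x^-$. It then reads the cross-ratio as the exponential of the translation length of $\gamma$ via \eqref{eq:translation-length_Cr-boundary_log}, transports this configuration by the $\chi_t$-equivariance of $\exoH_t$, and concludes with the Monod--Py scaling $\ell(\chi_t(\gamma))=t\,\ell(\gamma)$.

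You instead use only the defining relation $\cosh\dH(\exoH_t(x),\exoH_t(y))=(\cosh\dH(x,y))^t$. The asymptotic $\dH_t=t\dH+(1-t)\log 2+o(1)$ makes the constants cancel in the alternating sum. The uniform estimate $\dH_t=t\dH+O(1)$ controls Gromov products, which gives both the convergence $\exoH_t(x_n)\to\exoH_t(x^-)$ and injectivity on the boundary. This is essentially the argument the paper itself uses for Lemma~\ref{lem:exotic-exp(td)}.

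Your route needs no equivariance, no transitivity of two-point stabilizers, and no external length-scaling theorem, so it applies to any map realizing a power of $\cosh\dH$. It also treats all configurations uniformly. This includes quadruples of distinct points with cross-ratio $1$, where the paper's $\gamma$ is elliptic rather than loxodromic, so \eqref{eq:translation-length_Cr-boundary_log} does not apply as stated. You also never have to track which fixed point of $\gamma$ is attractive.

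The paper's route is shorter given the cited results, and fits its theme that cross-ratios are encoded by length spectra.
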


\begin{proof}
Recall Equations \eqref{eq:Cr=0} and \eqref{eq:Cr=infty} characterizing when the cross-ratio equals $0$ and $\infty$.
If $\Cr{x^-,x^+,y^-,y^+}=0$ then $x^-=x^+$ or $y^-=y^+$ so $\exoH_t(x^-)=\exoH_t(x^+)$ or $\exoH_t(y^-)=\exoH_t(y^+)$ hence $\Cr{\exoH_t(x^-), \exoH_t(x^+), \exoH_t(y^-), \exoH_t(y^+)}=0$ as desired.
If $\Cr{x^-,x^+,y^-,y^+}=\infty$ then $x^-=y^+$ or $y^-=x^+$ so $\exoH_t(x^-)=\exoH_t(y^+)$ or $\exoH_t(y^-)=\exoH_t(x^+)$ hence $\Cr{\exoH_t(x^-), \exoH_t(x^+), \exoH_t(y^-), \exoH_t(y^+)}=\infty$ as desired.

Now, assume that $\Cr{x^-,x^+,y^-,y^+}\in (0,+\infty)$. There exists a hyperbolic isometry $\gamma$ with repulsive point $x^+$, attractive point $y^+$ and such that $\gamma(y^-)=y^+$.
Since $\exoH_t$ is $\chi_t$-equivariant, the isometry $\chi_t(\gamma)$ is a hyperbolic translation with axis $(\chi_t(\gamma)^-,\chi_t(\gamma)^+)=(\exoH_t(x^+),\exoH_t(y^+))$ such that $\chi_t(\gamma)(\exoH_t(y^-))=\exoH_t(x^-)$. 
By construction, $\chi_t$ has translation length $\ell(\chi_t(\gamma))=t\ell(\gamma)$.
Thus, by Equation \eqref{eq:translation-length_Cr-boundary_log}: $\Cr{\exoH_t(x^-), \exoH_t(x^+), \exoH_t(y^-), \exoH_t(y^+)}=\Cr{\chi_t(\gamma)\exoH_t(y^-), \chi_t(\gamma)^-, \exoH_t(y^-), \chi_t(\gamma)^+}$ is equal to $\exp\left(-\ell(\chi_t(\gamma))\right)=\exp\left(-t\ell(\gamma)\right)$ hence to $\Cr{x^-, x^+, y^-, y^+}^t$.
\end{proof}

\begin{lemma}
\label{lem:exotic-exp(td)}
Let $(X,d)$ be a strongly hyperbolic space and $t\in \R$ such that $(x,y)\mapsto\exp(td(x,y))$ is a kernel of hyperbolic type.
Let $f_t \colon X\to\Hyp^\kappa$ be a map such that  $\cosh(\dH(f_t(x),f_t(y))=\exp(td(x,y))$ and $\chi_t\colon\Isom(X)\to\Isom(\Hyp^\kappa)$ be the associated representation. 

For all $\gamma\in\Isom(X)$
we have \(\ell(\chi_t(\gamma))=t\ell(\gamma)\).

For all $x^-, x^+, y^-, y^+ \in (\partial X)^{(4)}$  we have
\(\Cr{f_t(x^-), f_t(x^+), f_t(y^-), f_t(y^+)}=\Cr{x^-, x^+, y^-, y^+}^t\).
\end{lemma}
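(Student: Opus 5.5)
We first establish the length identity and then obtain the cross-ratio identity from the length identity, mirroring the proof of \Cref{lem:powers-cross-ratio}.

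\emph{Step 1: length functions.} Fix $o\in X$ and $\gamma\in\Isom(X)$. Equivariance of $f_t$ gives, for all $n\in\N$,
\[\cosh \dH\bigl(f_t(o),\chi_t(\gamma)^n f_t(o)\bigr)=\exp\bigl(t\,d(o,\gamma^n o)\bigr).\]
We apply \Cref{lem:ellF-from-F} to the function of hyperbolic type $F(\gamma)=\exp(t\,d(o,\gamma o))$ on $\Isom(X)$. Since $\cosh^{-1}(e^{s})=s+\log 2+o(1)$ as $s\to\infty$, and this quantity stays bounded when $s$ stays bounded, we get
\[\ell(\chi_t(\gamma))=\lim_n \tfrac1n \cosh^{-1}\bigl(e^{t d(o,\gamma^n o)}\bigr)=t\lim_n \tfrac1n d(o,\gamma^n o)=t\,\ell(\gamma)\]
by \eqref{eq:translation-length_limit}. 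The equivariance of $f_t$ is provided by the hyperbolic GNS construction \Cref{prop:functions-of-hyperbolic-type-to-representations}, if necessary after restricting to the span of $f_t(X)$.

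\emph{Step 2: boundary extension.} Next we show that $f_t$ extends continuously to $\partial X\to\partial\Hyp^\kappa$, equivariantly and injectively. The relation $\dH(f_t x,f_t y)=t\,d(x,y)+O(1)$ holds uniformly, with error at most $\log 2$, because $\cosh^{-1}(e^s)\in[s,s+\log 2]$ for $s\ge0$. Gromov products are therefore multiplied by $t$ up to a bounded additive error. Consequently sequences converging to infinity, and their equivalence classes, are preserved, and distinct boundary points map to distinct boundary points. Since $t>0$ is forced whenever $X$ is unbounded, the map is a rough homothety. This already handles the degenerate quadruples: as in the proof of \Cref{lem:powers-cross-ratio}, the fibres above $0$ and $\infty$ are characterized by coincidences of points via \eqref{eq:Cr=0} and \eqref{eq:Cr=infty}, and injectivity on $\partial X$ preserves these coincidences.

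\emph{Step 3: cross-ratio for non-degenerate quadruples.} Here the argument must differ from that of \Cref{lem:powers-cross-ratio}, because $\Isom(X)$ need not contain a loxodromic element with prescribed fixed points. The plan is to compute directly with sequences. Take $x_n^\pm,y_n^\pm\in X$ converging to $x^\pm,y^\pm$. Then
\[\log\Cr{f_t x_n^-,f_t x_n^+;f_t y_n^-,f_t y_n^+}=\tfrac12\bigl(\dH(f_t x_n^-,f_t x_n^+)+\dH(f_t y_n^-,f_t y_n^+)-\dH(f_t x_n^-,f_t y_n^+)-\dH(f_t y_n^-,f_t x_n^+)\bigr).\]
Each distance tends to infinity, since distinct boundary points have bounded Gromov product. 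We therefore use the sharper expansion $\dH=\cosh^{-1}(e^{td})=t\,d+\log 2+o(1)$ as $d\to\infty$. The additive constants $\log 2$ cancel because two distances enter with sign $+$ and two with sign $-$, leaving
\[\tfrac t2\bigl(d(x_n^-,x_n^+)+d(y_n^-,y_n^+)-d(x_n^-,y_n^+)-d(y_n^-,x_n^+)\bigr)+o(1)=t\log\Cr{x_n^-,x_n^+;y_n^-,y_n^+}+o(1).\]
Passing to the limit, and using continuity of the cross-ratio on the bordifications of both $X$ and $\Hyp^\kappa$ together with Step 2 to identify the limit points, yields the identity.

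The main obstacle is Step 2 in the non-proper setting: we must check that the boundary map is well defined and continuous, so that $f_t(x_n^\pm)\to f_t(x^\pm)$. The plan is to verify this using the rough-homothety estimate on Gromov products together with the description of the bordification topology through the neighbourhoods $V_o^t$.
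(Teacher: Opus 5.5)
Your proposal is correct and is essentially the paper's argument. The paper also rests on the uniform estimate $\lvert \dH(f_t(x),f_t(y))-t\,d(x,y)\rvert\le\log 2$, together with $\dH(f_t(x),f_t(y))-t\,d(x,y)\to\log 2$ as $d(x,y)\to\infty$, and concludes from the definition of the stable translation length and the continuity of the cross-ratio, where the constants $\log 2$ cancel; your Steps~2 and~3 make explicit what the paper leaves implicit, namely the continuous injective boundary extension of $f_t$ (via the rough-homothety bound on Gromov products) and that cancellation.
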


\begin{proof}
By taking the logarithm of the relation $\cosh(\dH(f_t(x),f_t(y))=\exp(t d(x,y))$, we find that 
\(\lvert \dH(f_t(x),f_t(y))-td(x,y)\rvert\leq\ln(2)\) so as $d(x,y)\to\infty$ we have \(\dH(f_t(x),f_t(y))-t d(x,y)\to\ln(2)\) so the results follow by definition of the translation length and continuity of the cross-ratio.
\end{proof}

\begin{example}
On the algebraic hyperbolic space $(\Hyp^\kappa, \dH)$, if $t\in(0,1]$ then the function $\exp(t\dH)$ is a kernel of hyperbolic type (\cite[Theorem 8.5]{Monod_notes-functions-hyperbolic-groups_2020}).

On a real tree $(T,d)$ as in \Cref{def:real-tree}, for every $t\in \R$ the function $\exp(td)$ is a kernel of hyperbolic type (\cite[Proposition 1.5]{Monod_notes-functions-hyperbolic-groups_2020}).
\end{example}

\section{Abstract and algebraic cross-ratios: rigidity properties}

In this section we consider a topological space $Y$ (often a set with the discrete topology).
Let us recall that by $Y^{(4)}\subset Y^4$ we mean the space of quadruples of points of which at least three are distinct.

\subsection{Strongly hyperbolic spaces with unique cross-ratio up to powers}
\label{subsec:unicity_cross_ratio}

The goal of this subsection is to show that for a strongly hyperbolic space $(X,d)$, the cross-ratio defined by $d$ from \Cref{eq:def-cross-ratio-X} and its powers are the unique continuous functions $(\partial X)^{(4)}\to \R$ satisfying the usual identities (\eqref{eq:Cr-Invariance}, \eqref{eq:Cr-Inversion}, \eqref{eq:Cr-Cocycle}, \eqref{eq:Cr=0}, \eqref{eq:Cr=infty}) and that is invariant under any subgroup $\Gamma\leq\Isom(X)$ acting sufficiently transitively on $\partial X$.

This generalizes the similar statement for algebraic hyperbolic spaces in  \Cref{lem:unique_cross-ratio_invariant}.

\begin{definition}[abstract cross-ratio]
\label{def:abstract-cross-ratio}
On a topological space $Y$, an \emph{abstract cross-ratio} is a continuous function $B\colon Y^{(4)}\to [0,\infty]$ satisfying the \eqref{eq:Cr-Invariance}, \eqref{eq:Cr-Inversion} and \eqref{eq:Cr-Cocycle} relations as well as the characterizations \eqref{eq:Cr=0} and \eqref{eq:Cr=infty} of the fibers above $0$ and $\infty$.

We say that an abstract cross-ratio to \emph{generate the topology} when for very triple of distinct points $x,y,z \in Y$, the collection of subsets $\{w\in Y\colon B(w,x,y,z)<\varepsilon\}$ for $\varepsilon>0$ is a basis of neighborhoods of $x$.
\end{definition}

\begin{remark}[$B(y,x,y,z)=1=B(x,y,z,y)$]
\label{rem:Cr=1}
For an abstract cross-ratio $B$ on $Y$, if $(x,x',y,y')\in Y^{(4)}$ satisfies $x=y$ or $x'=y'$ then by \Cref{eq:Cr-Inversion} we have $B(x,x'y,y')=1$.
However the converse may fail drastically, as it is the case for the usual cross-ratio on $\Hyp^3$.
\end{remark}

\begin{remark}[compact metrizable]
If $Y$ is a compact metrizable space, then any abstract cross-ratio necessarily generates the topology. Indeed, if for fixed distinct $x,y,z\in Y$ and a sequence $(w_n)\in Y^\N$ we have  $B(w_n,x,y,z)\to0$, then $(w_n)$ has an extraction converging to some $x'$ which satisfies $B(x',x,y,z)=0$ and as $y\neq z$, we deduce from \Cref{eq:Cr=0} that $x=x'$. 
\end{remark}

\begin{example}[abstract cross-ratios from pairings] \label{eg:abstract-Cr-from-It}
If a continuous function $\It \colon Y\times Y \to \R$ is symmetric ($\It(x,y)=\It(y,x)$) and separates points ($\It(x,y)=0\implies x=y$), then it yields an abstract cross-ratio by the formula \(B(u,v,x,y)=\tfrac{\It(u,v)\It(x,y)}{\It(u,y)\It(x,v)}\).
Indeed, the \eqref{eq:Cr-Invariance}, \eqref{eq:Cr-Invariance} and \eqref{eq:Cr-Cocycle} relations follow from the symmetry of $\It$ and the formula for $B$; while the equations \eqref{eq:Cr=0} and \eqref{eq:Cr=infty} follow from the separation of $\It$.
Moreover, if $\It$ generates the topology of $Y$ (in the sense that for all $x\in Y$ the collection of subsets $\{x\in Y \colon \It(x,y)<\varepsilon\}$ for $\varepsilon>0$ is a basis of neighborhoods of $x$), then $B$ generates the topology of $Y$.
\end{example}

\begin{example}[strongly hyperbolic]
If a metric space $(X,d)$ is strongly hyperbolic, then the cross-ratio introduced in \Cref{subsec:Hn-cross-ratio} defines an abstract cross-ratio on $\partial X$ that generates the topology induced by any visual distance $d_o$.
\end{example}

Recall that a group $\Gamma$ acts $3$-transitively on a space $X$ when for any two triples of distinct points $(x_i)$ and $(y_i)$, there is $\gamma\in\Gamma$ such that $\gamma(x_i)=y_i$ for all $i\in \{1,2,3\}$.

\begin{proposition}[abstract cross-ratios on $3$-full strongly hyperbolic spaces]
\label{prop:power_cross_ratio}
Consider a strongly hyperbolic space $(X,d)$ with $\Card(\partial X)\geq3$ and a subgroup $\Gamma\subset \Isom(X)$ acting $3$-transitively on $\partial X$.
If an abstract cross-ratio $B$ on $\partial X$ generating the topology is $\Gamma$-invariant, then there exists $t>0$ such that for all $(w,x,y,z) \in (\partial X)^{(4)}$ we have  $B(w,x,y,z)=\Cr{w,x;y,z}^t$.
\end{proposition}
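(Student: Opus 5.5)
Following the strategy of \Cref{lem:unique_cross-ratio_invariant} (after \cite[Lemma 3.2]{Hamenstadt_Bounded-cohomology-cross-ratios_2009}), we reduce everything to a comparison along translation lengths of loxodromic elements. Since $\Card(\partial X)\ge 3$ and $\Gamma$ is $3$-transitive, fix three distinct points $0,1,\infty\in\partial X$. By \eqref{eq:Cr-Cocycle} and \eqref{eq:Cr-Inversion}, both $B$ and $\Cr{\cdot}$ on $(\partial X)^{(4)}$ are determined by their values on quadruples of the form $(w,0,1,\infty)$. Indeed, cocycle moves let us insert the auxiliary points $0,1,\infty$ one at a time, and degenerate configurations are handled by \eqref{eq:Cr=0}, \eqref{eq:Cr=infty} and \Cref{rem:Cr=1}. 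Hence it suffices to find $t>0$ with $B(w,0,1,\infty)=\Cr{w,0;1,\infty}^t$ for all $w\in\partial X\setminus\{0,1,\infty\}$, and then propagate this equality to all of $(\partial X)^{(4)}$ by the relations and continuity.

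\textbf{Step 1: the stabilizer of $(0,\infty)$ acts through a homomorphism.} Let $\Gamma_{0,\infty}$ be the stabilizer of $0$ and $\infty$ in $\Gamma$. By $3$-transitivity it acts transitively on $\partial X\setminus\{0,\infty\}$. For $\gamma\in\Gamma_{0,\infty}$, the quantity $\phi_B(\gamma)=\log B(\gamma\xi,0;\xi,\infty)$ is independent of $\xi$: the cocycle relation combined with $\Gamma$-invariance gives $B(\gamma\xi,0;\xi,\infty)=B(\gamma\xi,0;\eta,\infty)B(\eta,0;\xi,\infty)$, and then $B(\gamma\xi,0;\gamma\eta,\infty)=B(\xi,0;\eta,\infty)$ cancels the dependence. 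The same computation shows $\phi_B\colon\Gamma_{0,\infty}\to\R$ is a homomorphism, and likewise for $\phi$ defined from $\Cr{\cdot}$. By \Cref{lem:translation-length-from-cross-ratio}, $\phi(\gamma)=\pm\ell(\gamma)$, with $\gamma$ loxodromic exactly when $\phi(\gamma)\neq 0$. Moreover, $B(w,0;1,\infty)=\exp\phi_B(\gamma_w)$ where $\gamma_w\in\Gamma_{0,\infty}$ sends $1$ to $w$, and similarly for $\Cr{\cdot}$. The problem thus reduces to showing $\phi_B=t\,\phi$ on $\Gamma_{0,\infty}$.

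\textbf{Step 2: $\ker\phi=\ker\phi_B$.} If $\phi(\gamma)=0$, then $\gamma$ is not loxodromic, yet it fixes the two points $0$ and $\infty$, so it is elliptic. Then the iterates $\gamma^n 1$ stay away from $0$ and $\infty$. If instead $\phi_B(\gamma)>0$, then $B(\gamma^n 1,0;1,\infty)=e^{n\phi_B(\gamma)}\to\infty$, hence $B(\gamma^{-n}1,0;1,\infty)\to 0$. Because $B$ generates the topology, this forces $\gamma^{-n}1\to 0$, a contradiction. The reverse inclusion is easier: if $\phi(\gamma)\ne0$, the element is loxodromic and $\gamma^n1\to 0$ or $\infty$, which makes $B$ tend to $0$ or $\infty$ by continuity together with \eqref{eq:Cr=0} and \eqref{eq:Cr=infty}.

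\textbf{Step 3: proportionality, the main obstacle.} Two homomorphisms to $\R$ with the same kernel need not be proportional, unless the image is cyclic or some monotonicity holds. The plan is to prove order-compatibility: $\phi(\gamma)>0\Rightarrow\phi_B(\gamma)>0$. This follows because $\phi(\gamma)>0$ means $\gamma^{-n}1\to0$, so $B(\gamma^{-n}1,0;1,\infty)\to0$, which forces $\phi_B(\gamma)>0$. Two homomorphisms $\Gamma_{0,\infty}\to\R$ such that each preserves the sign of the other are proportional: on the image subgroup $\phi(\Gamma_{0,\infty})\subset\R$, the map $\phi_B\circ\phi^{-1}$ is a well-defined additive, order-preserving map (well defined by Step 2), hence multiplication by some $t>0$. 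The delicate point is the sign bookkeeping, namely whether $\gamma^{-n}1$ goes to $0$ or to $\infty$. This follows from \eqref{eq:translation-length_Cr-boundary_log} and the attracting/repelling convention, and is symmetric for $B$ by \eqref{eq:Cr-Inversion}.

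Finally, transporting the identity $B(w,0;1,\infty)=\Cr{w,0;1,\infty}^t$ via Step 1 and the reduction above yields $B=\Cr{\cdot}^t$ on all of $(\partial X)^{(4)}$.
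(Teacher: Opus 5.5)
Your Steps 1--3 follow the paper's argument: the two-point stabilizer, the two cocycle homomorphisms, equality of kernels via continuity and the fact that $B$ generates the topology, then proportionality. In Step 3 you replace the paper's sequence $\alpha^{m_k}\beta^{-n_k}$ by the fact that an additive, strictly order-preserving map on a subgroup of $\R$ is multiplication by some $t>0$. That fact is correct: if $\psi(a)/a>\psi(b)/b$ with $a,b>0$, a rational $m/n$ strictly between $\psi(b)/\psi(a)$ and $b/a$ gives $nb-ma>0$ but $\psi(nb-ma)<0$. It is also slightly cleaner, since it only uses $\phi>0\Rightarrow\phi_B>0$ together with $\ker\phi\subset\ker\phi_B$.

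The problem is your opening reduction. \eqref{eq:Cr-Cocycle} and \eqref{eq:Cr-Inversion} alone do not determine an abstract cross-ratio from its values on the quadruples $(w,0,1,\infty)$. Once $0$ and $\infty$ are fixed, these relations only reduce it to the two-variable function $(x,y)\mapsto B(x,y;0,\infty)$, as in the proof of \Cref{thm:algebraic-cross-ratio}. For a concrete counterexample, take $\partial\Hyp^3=\C\cup\{\infty\}$ and pull back $\Cr{\cdot}$ by the homeomorphism $w\mapsto w\,e^{i\log\lvert w\rvert}$, which fixes $0,1,\infty$ and preserves $\lvert w\rvert$. The result is an abstract cross-ratio generating the topology that agrees with $\Cr{\cdot}$ on every $(w,0,1,\infty)$, both being $\lvert w\rvert$. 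Yet its value at $(e,1,0,\infty)$ is $\lvert 1-e^{1+i}\rvert\neq e-1$. So your final ``transport'' sentence does not follow as written.

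The fix uses hypotheses you already have, and it is how the paper concludes. For distinct $(w,x,y,z)$, use $3$-transitivity to pick $\gamma\in\Gamma$ with $\gamma(x,y,z)=(0,1,\infty)$. By $\Gamma$-invariance of both $B$ and $\Cr{\cdot}$, you get $B(w,x,y,z)=B(\gamma w,0,1,\infty)=\Cr{\gamma w,0;1,\infty}^t=\Cr{w,x;y,z}^t$. The degenerate quadruples are then handled by \eqref{eq:Cr=0}, \eqref{eq:Cr=infty} and \Cref{rem:Cr=1}, as you say.
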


\begin{proof}
For distinct points $0,\infty \in \partial X$, consider the subgroup $H \subset \Gamma$ of elements fixing both of them.
Choose $1 \in\partial X\setminus\{0,\infty\}$, and define continuous functions $\Vec{\ell}_B, \Vec{\ell}_C \colon H \to (\R,+)$ by $\Vec{\ell}_B \colon \mapsto \log B(\gamma 1,0,1,\infty)$ and $\Vec{\ell}_C \colon \gamma \mapsto \log \Cr{\gamma 1,0;1,\infty}$.
They are group homomorphisms by the \eqref{eq:Cr-Cocycle} relation.
Moreover every $\gamma \in H$ has translation length $\ell(\gamma)=\lvert \Vec{\ell}_C(\gamma)\rvert$ by \Cref{eq:translation-length_Cr-boundary_log}. 
Observe that for any abstract $\Gamma$-invariant cross-ratio $B$, for any $\xi \in \partial X\setminus\{0,\infty\}$ and $\gamma\in H$, applying the \eqref{eq:Cr-Cocycle} relation, the $H$-invariance of $B$, and the \eqref{eq:Cr-Inversion} relation:
\begin{align*}
	B(\gamma \xi,0,\xi,\infty)&=B(\gamma \xi,0,\gamma 1,\infty)B(\gamma1,0,1,\infty)B(1,0,\xi,\infty)\\
	&=B(\gamma 1,0,1,\infty)
\end{align*}
This implies that the homomorphisms $\Vec{\ell}_B,\Vec{\ell}_C$ do not depend on the choice of $1\in \partial X\setminus\{0,\infty\}$. 
We aim to prove that these homomorphisms are positively proportional.

If $\gamma \in H$ satisfies $\Vec{\ell}_C(\gamma)>0$ then $\gamma^n1 \to\infty$ hence by continuity of $B$ we also have $\log B(\gamma^n1,0,1,\infty)\to+\infty$ thus $\log B(\gamma 1, 0,1,\infty)>0$: this proves that $\ker \Vec{\ell}_B \subset \ker \Vec{\ell}_C$.
A similar argument using the fact that $B$ generates the topology implies the converse inclusion $\Vec{\ell}_B \supset \ker \Vec{\ell}_C$. 
Hence $\Vec{\ell}_B = \ker \Vec{\ell}_C$ consist of all non-loxodromic elements in $H$, and for all $\gamma \in H$ there exists $t_\gamma>0$ such that $\Vec{\ell}_B(\gamma)=t_\gamma \Vec{\ell}_C(\gamma)$, which is unique when $\gamma$ is loxodromic. 

Consider loxodromic $\alpha,\beta \in H$.
We may find $m_k,n_k\to\infty$ such $\Vec{\ell}_C(\alpha^{m_k})-\Vec{\ell}_C(\beta^{n_k})\to0$, hence $\alpha^{m_k}\beta^{-n_k}1$ does not converge to $0$ or $\infty$.
However if $t_\alpha>t_\beta$, then $\Vec{\ell}_B(\alpha^{m_k}\beta^{-n_k})\to \infty$ thus $\alpha^{m_k}\beta^{-n_k}1$ diverges to the attractive fixed point of $\alpha$ in $\{0,\infty\}$ which is a contradiction.
By symmetry we deduce $t_\alpha=t_\beta$.
This proves that there is $t>0$ such that $\Vec{\ell}_B=t\Vec{\ell}_C$.

To conclude the proof, it remains to observe that for any distinct $w,x,y,z\in\partial X$ , there is $\gamma\in \Gamma$ such that $\gamma(x,y,z)=(0,1,\infty)$, so $B(w,x,y,z)=B(\xi,0,1,\infty)$ for some $\xi \in\partial X$ and by triple transitivity, there is $\gamma\in \Gamma$ fixing $0,\infty$ such that $\xi=\gamma(1)$, thus $B(\xi,0,1,\infty)=\Cr{\xi,0,1,\infty}^t$ and finally $B(w,x,y,z)=\Cr{w,x,y,z}^t$. 
The cases where two points are equal follow from \Cref{eq:Cr=0} or \Cref{eq:Cr=infty}, or from \eqref{eq:Cr-Inversion} that for distinct $x,y,z\in \partial X$ we have $B(y,x,y,z)=B(x,y,z,y)=1$.
\end{proof}

\begin{corollary}[cross-ratios preserved by $3$-equivariant boundary maps]
\label{cor:power_cross_ratio}
Consider for $i\in \{1,2\}$ strongly hyperbolic spaces $(X_i,d_i)$ and an embedding $f\colon \partial X_1\to \partial X_2$ that is a homeomorphism on its image.
Assume that there is a subgroup $\Gamma\subset \Isom(X_1)$ acting $3$-transitively on $\partial X_1$ and that $f$ is $\Gamma$-equivariant.

There is $t>0$ such that for all $(w,x,y,z)\in (\partial X_1)^{(4)}$  we have
\begin{equation*}
	\Cr{f(w),f(x);f(y),f(z)}_{\partial{X_2}}=\Cr{w,x;y,z}_{\partial X_1}^t
\end{equation*}
\end{corollary}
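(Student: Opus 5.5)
The plan is to pull back the cross-ratio of $\partial X_2$ along $f$ and apply \Cref{prop:power_cross_ratio}. Define $B\colon (\partial X_1)^{(4)}\to[0,\infty]$ by $B(w,x,y,z)=\Cr{f(w),f(x);f(y),f(z)}_{\partial X_2}$. Since $f$ is injective, a quadruple with at least three distinct entries in $\partial X_1$ is sent to a quadruple with at least three distinct entries in $\partial X_2$. Hence $B$ is well defined on $(\partial X_1)^{(4)}$, and it is continuous as the composition of $f^{4}$ with the continuous cross-ratio on $\partial^4 X_2$.

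First I will check that $B$ is an abstract cross-ratio in the sense of \Cref{def:abstract-cross-ratio}.

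\begin{itemize}
\item The \eqref{eq:Cr-Invariance}, \eqref{eq:Cr-Inversion} and \eqref{eq:Cr-Cocycle} relations hold because they hold for $\Cr{\cdot}_{\partial X_2}$ pointwise on images.
\item The characterizations \eqref{eq:Cr=0} and \eqref{eq:Cr=infty} transfer because equalities among $f(w),f(x),f(y),f(z)$ are equivalent to the same equalities among $w,x,y,z$, by injectivity of $f$.
\end{itemize}

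Next comes $\Gamma$-invariance. For $\gamma\in\Gamma$, equivariance gives $f\circ\gamma=\rho(\gamma)\circ f$ for the corresponding isometry $\rho(\gamma)$ of $X_2$. Isometries of $X_2$ preserve the boundary cross-ratio, so $B(\gamma w,\gamma x,\gamma y,\gamma z)=B(w,x,y,z)$. (If equivariance is only meant set-theoretically, I would instead want a boundary homeomorphism of $\partial X_2$ that preserves cross-ratios. I will read the hypothesis as equivariance with respect to some action $\Gamma\to\Isom(X_2)$, which is the setting of the applications.)

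The step I expect to need the most care is showing that $B$ generates the topology of $\partial X_1$. Fix distinct $x,y,z\in\partial X_1$. The cross-ratio on $\partial X_2$ generates the topology there, so the sets $\{u\in\partial X_2\colon \Cr{u,f(x);f(y),f(z)}<\varepsilon\}$ form a neighbourhood basis of $f(x)$ in $\partial X_2$. Intersecting with $f(\partial X_1)$, they form a neighbourhood basis of $f(x)$ in the subspace $f(\partial X_1)$. Since $f$ is a homeomorphism onto its image, their preimages $\{w\colon B(w,x,y,z)<\varepsilon\}$ form a neighbourhood basis of $x$ in $\partial X_1$. This is exactly where the hypothesis that $f$ is an embedding (rather than merely continuous and injective) is used.

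Finally, $\Gamma$ acts $3$-transitively on $\partial X_1$, so $\partial X_1$ has at least three points and \Cref{prop:power_cross_ratio} applies to $B$. It yields $t>0$ with $B=\Cr{\cdot}_{\partial X_1}^t$ on $(\partial X_1)^{(4)}$, which is the claimed identity.
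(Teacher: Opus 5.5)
Your proof is correct and is the paper's argument: pull back the cross-ratio of $\partial X_2$ along $f$ and apply \Cref{prop:power_cross_ratio}. Your checks (the abstract cross-ratio axioms via injectivity of $f$, generation of the topology via $f$ being an embedding, and $\Gamma$-invariance by reading equivariance as relative to an action $\Gamma\to\Isom(X_2)$) simply spell out what the paper's one-line proof leaves implicit. One small nit: $3$-transitivity holds vacuously when $\Card(\partial X_1)<3$, so it does not by itself give $\Card(\partial X_1)\ge 3$, but in that case $(\partial X_1)^{(4)}=\emptyset$ and the claim is vacuous.
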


\begin{proof}
Define $B\colon (\partial X_1)^{(4)}\to [0,\infty]$ by $B(w,x,y,z)=\Cr{f(w),f(x);f(y),f(z)}_{\partial X_2}$. Since $f$ is a homeomorphism onto its image, the abstract cross-ratio $B$ is covered by \Cref{prop:power_cross_ratio}.
\end{proof}


\subsection{From kernels of hyperbolic type to strong hyperbolicity}\label{sec:hyperbolic_kernels_to_strong_hyperbolicity}

For trees and exotic deformations of hyperbolic spaces, we have metric spaces $(X,d)$ admitting $\varepsilon>0$ such that kernel $\exp(\varepsilon d)$ of hyperbolic type.
This raises the following.
\begin{question}
For which metric spaces $(X,d)$ is there $\varepsilon>0$ such that the kernel $\exp(\varepsilon d)$ is of hyperbolic type?
\end{question}

Recall that \eqref{eq:Ptolemy_strong-hyperbolic} characterizes strong-$\varepsilon$-hyperbolicty by the Ptolemy inequality for $\exp(\tfrac{1}{2}\varepsilon d)$.
The following is a coarse asymptotic weakening of this notion.

\begin{definition}[coarse strong hyperbolicity]
\label{def:coarsely-strongly-hyperbolic} 
Let $\varepsilon>0$ and $C\in \R$.
A metric space $(X,d)$ is called \emph{$C$-coarsely strongly-$\varepsilon$-hyperbolic} when for all $x_1,x_2,x_3,x_4\in X$, their distances $d_{ij}=d(x_i,x_j)$ and their maximum $M=\max\{d_{ij}\colon 1\le i,j\le 4\}$ satisfy: 
\begin{equation}
	\label{eq:coarsely-strongly-hyperbolic}
	\exp\tfrac{\varepsilon}{2}(d_{13}+d_{24})\leq \exp\tfrac{\varepsilon}{2}(d_{12}+d_{34})+\exp\tfrac{\varepsilon}{2}(d_{14}+d_{23})+C \exp(\tfrac{\varepsilon}{2}M)
\end{equation}
\end{definition}

\begin{remark}[terminology]
The spaces defined in \ref{def:coarsely-strongly-hyperbolic} were introduced by
Miao--Schroeder in \cite[Definition 1]{Miao-Schroeder_Hyperbolic-Ptolemey-Mobius_2012} under the name of \emph{asymptotically $\operatorname{PT}_{-1}$ spaces}. 
Their preprint posted on arXiv in 2012 seems to remain unpublished.
Since then, the notion of strong hyperbolicity has been defined by Nica--Spakula in \cite{Nica-Spakula_strong-hyperbolicity_2016} and widely used since its introduction. 
This is why we prefer to rename $\operatorname{PT}_{-1}$ spaces as asymptotically strongly hyperbolic spaces. 
\end{remark}

The following \cite[Theorem 1.2]{Miao-Schroeder_Hyperbolic-Ptolemey-Mobius_2012} shows that coarse strong hyperbolicity retains many of the properties of strong hyperbolicity.
\begin{theorem}[coarsely strongly-hyperbolic implies regularly Gromov-hyperbolic]
\label{thm:Miao-Shroeder-coarse-strong-hyper}
If a metric space $(X,d)$ is coarsely strongly hyperbolic, then $X$ is Gromov-hyperbolic, the Gromov product extends continuously to the boundary, and all visual metametrics are metrics on $\partial X$ and they are all M\"obius equivalent (in the sense that they induce the same metric cross-ratio).
\end{theorem}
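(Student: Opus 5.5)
The plan is to prove the four assertions in order: Gromov hyperbolicity, continuous extension of the Gromov product, the metric property of visual metametrics on $\partial X$, and their M\"obius equivalence. Each step passes from \eqref{eq:coarsely-strongly-hyperbolic} to an estimate on the pair sums $S_{13}=d_{13}+d_{24}$, $S_{12}=d_{12}+d_{34}$, $S_{14}=d_{14}+d_{23}$.

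\emph{Gromov hyperbolicity.} Suppose $M=d_{ij}$. Every distance occurs in exactly one pair sum and distances are non-negative, so $M\le \max\{S_{12},S_{13},S_{14}\}$. The right-hand side of \eqref{eq:coarsely-strongly-hyperbolic} is therefore at most $(2+\max\{C,0\})\exp\tfrac{\varepsilon}{2}\max\{S_{12},S_{14}\}$ whenever $S_{13}$ is the largest pair sum; if it is not the largest, there is nothing to prove. Taking logarithms gives $S_{13}\le\max\{S_{12},S_{14}\}+\tfrac{2}{\varepsilon}\log(2+|C|)$. This is the four-point condition with $\delta=\log(2+|C|)/\varepsilon$, and relabelling the points covers the other cases.

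\emph{Continuous extension of the Gromov product.} This is the key step and the one I expect to be the main obstacle. I would aim for an analogue of the exponential estimate in \Cref{rem:NS-strongly-hyperbolic-characterisation}. Divide \eqref{eq:coarsely-strongly-hyperbolic} by $\exp\tfrac{\varepsilon}{2}S_{13}$. Since $M$ is one of the two distances in some pair sum $S$, the error term becomes $C\exp(-\tfrac{\varepsilon}{2}(S-M))$ times a bounded factor, and $S-M$ is the complementary distance in that pair. So the error is exponentially small exactly when the complementary distance is large. The difficulty is that this complementary distance can be small, so the estimate has to be applied to well-chosen quadruples.

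For a base point $o$ and sequences $x_n\to\xi$, $y_n\to\eta$ with $\xi\neq\eta$, I would apply the estimate to the quadruples $(o,x_n,x_m,y)$. In these, every distance other than $d(x_n,x_m)$ is large, and $d(x_n,x_m)$ sits opposite $d(o,y)$, which is also large. The aim is a bound of the form
\[
\lvert (x_n\mid y)_o-(x_m\mid y)_o\rvert\lesssim \exp\bigl(-\varepsilon'\min\{(x_n\mid x_m)_o,(x_n\mid y)_o\}\bigr).
\]
This makes $(x_n\mid y_m)_o$ Cauchy. The same bound gives independence from the chosen sequences and joint continuity on $(X\sqcup\partial X)^2$ away from the diagonal of $\partial X$; on that diagonal the value $+\infty$ is handled by hyperbolicity. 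I expect the case analysis deciding which pair sum is the largest to be the fiddly part.

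\emph{Visual metrics and M\"obius equivalence.} Once the Gromov product extends continuously, the error term in \eqref{eq:coarsely-strongly-hyperbolic}, rewritten through \eqref{eq:Ptolemy-visual_strong-hyperbolic} with $d_o^\varepsilon$, tends to $0$ as the four points tend to distinct boundary points. Hence $d_o^\varepsilon$ satisfies the exact Ptolemy inequality on $\partial X$. Letting one point tend to $o$ (where $d_o$ equals $1$) turns Ptolemy into the triangle inequality. Separation of points holds because $(\xi\mid\eta)_o<\infty$ for $\xi\ne\eta$, by the definition of equivalent sequences, so $d_o^\varepsilon$ is a metric on $\partial X$. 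Finally, the cross-ratio \eqref{eq:def-cross-ratio-X} is defined from distances alone, independently of $o$. Its continuous limit on boundary quadruples equals the expression \eqref{eq:Cr-boundary-do} in terms of $d_o^\varepsilon$, for every $o$. So all visual metrics induce the same metric cross-ratio, which is the claimed M\"obius equivalence.
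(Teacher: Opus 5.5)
\paragraph{Verdict.} The paper gives no proof of this statement; it quotes \cite[Theorem 1.2]{Miao-Schroeder_Hyperbolic-Ptolemey-Mobius_2012}. Your self-contained route is reasonable, and its last part is sound. Dividing \eqref{eq:coarsely-strongly-hyperbolic} by $\exp\tfrac{\varepsilon}{2}\sum_i d(o,x_i)$ leaves an error at most $C\exp\bigl(-\tfrac{\varepsilon}{2}(d(o,x_k)+d(o,x_l))\bigr)$, where $x_k,x_l$ are the two points not involved in $M$. This error vanishes even if you set $x_4=o$, which is what you should do rather than let a point tend to $o$. However, Steps 1 and 2 do not go through as written.

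\paragraph{Step 1.} The bound $M\le\max\{S_{12},S_{13},S_{14}\}$ only gives $M\le S_{13}$ when $S_{13}$ is the largest pair sum. That does not bound $C\exp\tfrac{\varepsilon}{2}M$ by $\exp\tfrac{\varepsilon}{2}\max\{S_{12},S_{14}\}$. What you need is $M\le\max\{S_{12},S_{14}\}$, which follows from the triangle inequality: $2d_{13}\le(d_{12}+d_{23})+(d_{14}+d_{34})=S_{12}+S_{14}$, and likewise for $d_{24}$.

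\paragraph{Step 2: the real gap.} This is the step you yourself flag as the main one, and your target estimate is the wrong one.
\begin{itemize}
\item For $x_n,x_m\to\xi$ and $y$ near $\eta\neq\xi$, the product $(x_n\mid y)_o$ stays near $(\xi\mid\eta)_o<\infty$. So $\exp(-\varepsilon'\min\{(x_n\mid x_m)_o,(x_n\mid y)_o\})$ does not tend to $0$, and it gives no Cauchy property.
\item It is also false in general. In a tree, let $x_n$ and $y$ share a segment of length $b$ from $o$, and let $x_m$ leave it at distance $a\ll b$. The left side is then $b-a$, while the right side is about $e^{-\varepsilon' a}$.
\end{itemize}

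\paragraph{The estimate the quadruple actually gives.} The quadruple $(o,x_n,x_m,y)$ controls a \emph{gap} of Gromov products. Put $a=(x_n\mid x_m)_o$, $b=(x_n\mid y)_o$, $c=(x_m\mid y)_o$ and $P=d(o,x_n)+d(o,x_m)+d(o,y)$. The three pair sums are then $P-2a$, $P-2b$, $P-2c$. Assume $C\ge 0$, $b\ge c$ and $a>b$.

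Put $P-2c$ on the left of \eqref{eq:coarsely-strongly-hyperbolic} and divide by $\exp\tfrac{\varepsilon}{2}(P-2b)$. This gives
\[
e^{\varepsilon(b-c)}\le 1+e^{-\varepsilon(a-b)}+C\exp\tfrac{\varepsilon}{2}(M-P+2b).
\]
Bound the last term by cases on which distance is $M$:
\begin{itemize}
\item If $M\in\{d(o,y),d(x_n,x_m)\}$, bound $M\le P-2a$ directly rather than through its complementary distance. Then $d(o,y)$ need not be large, and a small $d(x_n,x_m)$ does no harm.
\item Otherwise $M$ lies in one of the two largest pair sums. These differ by at most $2\delta$ by Step 1.
\end{itemize}
With $D=\min\{d(o,x_n),d(o,x_m),d(x_n,y),d(x_m,y)\}$ and $\log(1+u)\le u$, this yields
\[
\lvert (x_n\mid y)_o-(x_m\mid y)_o\rvert\le\tfrac{1}{\varepsilon}\bigl((1+C)e^{-\varepsilon(a-\max\{b,c\})}+Ce^{\varepsilon\delta}e^{-\frac{\varepsilon}{2}D}\bigr).
\]

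\paragraph{Why this suffices.} The bound tends to $0$ as $a\to\infty$ with $b,c$ bounded and $d(o,x_n),d(o,x_m)\to\infty$; note that $d(x_n,y)\ge d(o,x_n)-2b$. It never requires $d(o,y)$ to be large. So it also covers a fixed $y\in X$, which you need for the extension to $X\times\partial X$. A triangle inequality in each variable then gives the Cauchy property of $(x_n\mid y_k)_o$, independence from the chosen sequences, and the continuity claims.
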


In algebraic hyperbolic spaces $(\Hyp^\kappa,\dH)$ we have the following Ptolemy inequality for $\sinh(\tfrac{1}{2}\dH)$, which is \cite[Corollary 4]{Gomez-Memoli_Ptoleme-tropical_2024} recovering \cite[Corollary of Theorem 3.2]{Valentine_Ptoleme-hyperbolic_1970}.
\begin{lemma}[Ptolemy inequality in $\Hyp^\kappa$]
\label{lem:Ptoleme_Hyp}
For all $x_1,x_2,x_3,x_4 \in \Hyp^\kappa$ with mutual distances denoted $\dH(x_i,x_j)=\delta_{ij}$, we have the Ptolemaic inequality:
\begin{equation}
	\label{eq:Ptolemy-sinh-Hyp}
	\left(\sinh \tfrac{1}{2}\delta_{13}\right) \left(\sinh \tfrac{1}{2}\delta_{24}\right)
	\le 
	\left(\sinh \tfrac{1}{2}\delta_{12}\right)
	\left(\sinh \tfrac{1}{2}\delta_{34}\right)
	+
	\left(\sinh \tfrac{1}{2}\delta_{14}\right)
	\left(\sinh \tfrac{1}{2}\delta_{23}\right)
\end{equation}
which may be written in terms of the kernel of hyperbolic type using $\sqrt{2}\sinh(\tfrac{1}{2}\delta)=\sqrt{\cosh(\delta)-1}$.
\end{lemma}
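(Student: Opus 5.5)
We want to prove the Ptolemaic inequality \eqref{eq:Ptolemy-sinh-Hyp} for four points of $\Hyp^\kappa$. The plan is to reduce to finite dimension and then invert through one of the points, which turns the hyperbolic Ptolemy inequality into the Euclidean one.

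\textbf{Step 1: reduction to $\Hyp^3$.} The four points $x_1,\dots,x_4$ span a totally geodesic copy of $\Hyp^n$ with $n\le 3$ (\Cref{subsec:Hn-models}, and the discussion of spans in \Cref{sec:convex-hull}). Distances are intrinsic to this copy, so it suffices to prove the inequality in $\Hyp^3$.

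\textbf{Step 2: chord lengths in the hyperboloid model.} I work in the hyperboloid model $\mathbf{X}_{+1}\subset\R^{1+3}$. For $x,y$ on the upper sheet, the Minkowski quadratic form gives
\[\langle x-y,x-y\rangle = 2-2\langle x,y\rangle = -2(\cosh\delta-1) = -4\sinh^2\tfrac{\delta}{2}.\]
Hence $2\sinh\tfrac{\delta}{2}=\sqrt{-\langle x-y,x-y\rangle}$, and the quantity $2\sinh\tfrac{1}{2}\dH$ is a "spacelike chord length". I then pass to the Poincaré ball model. There, for $u,v$ in the unit ball, the standard formula reads
\[\sinh\tfrac{1}{2}\dH(u,v)=\frac{|u-v|}{\sqrt{(1-|u|^2)(1-|v|^2)}}.\]

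\textbf{Step 3: conclusion.} Substituting the formula from Step 2 into \eqref{eq:Ptolemy-sinh-Hyp}, each of the three products $\sinh\tfrac{1}{2}\delta_{ij}\,\sinh\tfrac{1}{2}\delta_{kl}$ with $\{i,j,k,l\}=\{1,2,3,4\}$ carries the same conformal factor $\prod_{m=1}^4(1-|x_m|^2)^{-1/2}$. This is because each index appears exactly once in each product. Cancelling this common positive factor reduces the inequality to
\[|x_1-x_3|\,|x_2-x_4|\le |x_1-x_2|\,|x_3-x_4|+|x_1-x_4|\,|x_2-x_3|,\]
which is the classical Euclidean Ptolemy inequality for four points of $\R^3$. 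That inequality follows from the triangle inequality after an inversion centred at $x_1$. This matches the comparison with Euclidean spheres already made in the proof of \Cref{cor:cocyclic-cross-ratio}.

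\textbf{Main obstacle.} The only real work is confirming the ball-model formula for $\sinh\tfrac{1}{2}\dH$. I would derive it from $\cosh\dH(u,v)=1+\frac{2|u-v|^2}{(1-|u|^2)(1-|v|^2)}$ together with $\cosh\delta-1=2\sinh^2\tfrac{\delta}{2}$. The final remark in the statement then follows from the identity $\sqrt2\sinh\tfrac{\delta}{2}=\sqrt{\cosh\delta-1}$.
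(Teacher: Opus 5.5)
Your proof is correct, but it does not follow the paper's route. The paper gives no proof of its own. It quotes Gomez--M\'emoli (Corollary 4, recovering Valentine), and its remark on chord-length-squared kernels indicates their mechanism: determinant sign conditions. Take the kernel $S=\sinh^2(\tfrac{1}{2}\dH)$ on the four points, and write $A,B,C$ for the three Ptolemy products. Then $\det S=-(A+B+C)(-A+B+C)(A-B+C)(A+B-C)$, so $\det S\le 0$ is equivalent to the Ptolemy inequalities. Gomez--M\'emoli relate the sign of $\det S$ to the hyperbolic Cayley--Menger determinants of $\cosh\dH$, that is, to $\cosh\dH$ being a kernel of hyperbolic type.

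You instead use the conformal (Poincar\'e) ball. There $\sinh\tfrac{1}{2}\dH(u,v)$ is the Euclidean chord $\lvert u-v\rvert$ times one-point conformal factors. Since each index appears exactly once in each product, these factors cancel and the Euclidean Ptolemy inequality remains.

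What your route buys: it is elementary and self-contained. It gives the equality case immediately: the points lie, in this cyclic order, on a Euclidean circle or line of the ball. It also does not need the reduction to $\Hyp^3$, since the ball model and the Euclidean Ptolemy inequality hold verbatim in Hilbert space.

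What the determinant route buys: it works directly at the level of the kernel, exhibiting Ptolemy as the four-point instance of the hyperbolic Cayley--Menger condition (HCM). It also suggests generalized Ptolemy inequalities from larger determinants, which is the viewpoint the paper's remark emphasizes.

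Two minor points: your hyperboloid computation $-\langle x-y,x-y\rangle=4\sinh^2\tfrac{\delta}{2}$ is correct but unused, and the inversion announced in your plan only enters through the proof of the Euclidean inequality.
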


\begin{proposition}[kernel $\exp(d)$ of hyperbolic type implies coarse strong-hyperbolicity]
\label{prop:coarsely-strongly-hyperbolic}
If a metric space $(X,d)$ admits an $\varepsilon>0$ such that $\exp(\varepsilon d)$ is a kernel of hyperbolic type then $(X,d)$ is $3$-coarsely strongly-$\varepsilon$-hyperbolic.
In particular it satisfies the conclusions of \cref{thm:Miao-Shroeder-coarse-strong-hyper}.
\end{proposition}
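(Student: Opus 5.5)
We will realize $K=\exp(\varepsilon d)$ geometrically and transfer the Ptolemy inequality of \Cref{lem:Ptoleme_Hyp} back to $X$. By \Cref{prop:hyperbolic-GNS} there is a map $f\colon X\to\Hyp^\kappa$ with $\cosh\dH(f(x),f(y))=\exp(\varepsilon d(x,y))$. Given $x_1,\dots,x_4\in X$, we write $d_{ij}=d(x_i,x_j)$ and $\delta_{ij}=\dH(f(x_i),f(x_j))$, so that $\cosh\delta_{ij}=e^{\varepsilon d_{ij}}$. Using $2\sinh^2(\tfrac12\delta)=\cosh\delta-1$, we set $s_{ij}=\sqrt{\cosh\delta_{ij}-1}=\sqrt{e^{\varepsilon d_{ij}}-1}$. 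Applying \Cref{lem:Ptoleme_Hyp} to $f(x_1),\dots,f(x_4)$ and multiplying by $2$ then gives
\[
s_{13}s_{24}\le s_{12}s_{34}+s_{14}s_{23}.
\]

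Next we compare $s_{ij}s_{kl}$ with $\exp\tfrac{\varepsilon}{2}(d_{ij}+d_{kl})$. Write $a=e^{\varepsilon d_{ij}}\ge1$ and $b=e^{\varepsilon d_{kl}}\ge1$. For the upper bound, $s_{ij}s_{kl}=\sqrt{(a-1)(b-1)}\le\sqrt{ab}$, which handles the two terms on the right-hand side. For the lower bound, $(a-1)(b-1)\ge ab-a-b$, so
\[
\sqrt{ab}-\sqrt{(a-1)(b-1)}\le\frac{a+b}{\sqrt{ab}+\sqrt{(a-1)(b-1)}}\le\frac{a+b}{\sqrt{ab}}.
\]
Since $a,b\le e^{\varepsilon M}$ and $\sqrt{ab}\ge1$, it suffices to bound $\frac{a+b}{\sqrt{ab}}=\sqrt{a/b}+\sqrt{b/a}$ by $c\,e^{\varepsilon M/2}$ for some constant $c$. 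This holds because $\sqrt{a/b}\le\sqrt a\le e^{\varepsilon M/2}$, and symmetrically for $\sqrt{b/a}$, giving $c=2$. Therefore
\[
\exp\tfrac{\varepsilon}{2}(d_{13}+d_{24})\le s_{13}s_{24}+2e^{\varepsilon M/2}.
\]

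Chaining the three estimates yields \eqref{eq:coarsely-strongly-hyperbolic} with $C=2\le3$, which proves $3$-coarse strong $\varepsilon$-hyperbolicity. This is harmless, since the extra slack in the constant allows a cruder bound. The final assertion then follows directly from \Cref{thm:Miao-Shroeder-coarse-strong-hyper}.

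The only delicate step is the lower bound $\sqrt{ab}-\sqrt{(a-1)(b-1)}\lesssim e^{\varepsilon M/2}$. A naive bound would be of order $\max(a,b)=e^{\varepsilon M}$, which is too large. The key is to rationalize the difference and use $\sqrt{a/b}\le\sqrt{a}$, which relies on $b\ge1$.
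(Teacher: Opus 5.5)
Your proof is correct and follows the paper's route: realize $\exp(\varepsilon d)$ through the hyperbolic GNS map into $\Hyp^\kappa$, apply the Ptolemy inequality to $\sqrt{\cosh\delta_{ij}-1}=\sqrt{e^{\varepsilon d_{ij}}-1}$, and compare $\sqrt{(a-1)(b-1)}$ with $\sqrt{ab}$. The only difference is in that last comparison: the paper uses $\sqrt{x}-1\le\sqrt{x-1}\le\sqrt{x}$ where you rationalize, and both give an additive error of at most $2e^{\varepsilon M/2}$, so your $C=2$ is within the stated constant $3$.
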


\begin{proof}
After replacing $(X,d)$ by $(X,\varepsilon d)$, it suffices to prove it for $\varepsilon=1$.

Suppose that $\exp(d)$ is a kernel of hyperbolic type.
There is a cardinal $\kappa$ and a continuous function $f\colon X\to\Hyp^\kappa$ such that $\exp d(x,y)=\cosh \dH(f(x),f(y))$.
Let $x_1,\dots,x_4\in X$. Denote $d_{ij}=d(x_i,x_j)$ and $\delta_{ij}=\dH(f(x_i),f(x_j))$ so that $\exp d_{ij} =\cosh \delta_{ij}$, and $M=\max_{ij} d_{ij}$. 
Developing the Ptolemy inequality for $\sqrt{2}\sinh(\delta_{ij}/2)=\sqrt{\cosh(\delta_{ij})-1} = \sqrt{\exp(d_{ij})-1}$ and using $\sqrt{x}-1\le \sqrt{x-1}\le \sqrt{x}$ yields:
\begin{equation*}
	\exp \tfrac{1}{2}(d_{13}+d_{24}) \le
	\exp \tfrac{1}{2}(d_{12}+d_{34})
	+
	\exp \tfrac{1}{2}(d_{14}+d_{23})
	+3\exp(M/2)
\end{equation*}
proving the desired coarse strong-$1$-hyperbolicity with $C=3$ .
\end{proof}

\begin{remark}[not sufficient]
The necessary condition of \Cref{prop:coarsely-strongly-hyperbolic} is not sufficient.
Indeed, a quaternionic hyperbolic space $\Hyp_{\mathbb{H}}^n$ is $\operatorname{CAT}(-1)$ thus (coarsely) strongly hyperbolic; however for $\varepsilon>0$ the kernel $\exp(\varepsilon d)$ cannot be of hyperbolic type, since all kernels of hyperbolic type on $\Hyp_{\mathbb{H}}^n$ that are $\Isom(\Hyp_{\mathbb{H}}^n)$-invariant must be bounded as we now explain. 
On a real hyperbolic space $\Hyp_{\mathbb{R}}^\kappa$ the kernel $\log \cosh d$ is of conditionally negative type by~\cite[Corollary 8.2]{Faraut-Harzallah_distance-hilbertiennes-invariantes_1974}.
However a kernel on $\Hyp_{\mathbb{H}}^n$ that is $\Isom(\Hyp_{\mathbb{H}}^n)$ invariant and of conditionally negative type must be bounded, since $\Isom(\Hyp_{\mathbb{H}}^n)$ has Property $\operatorname{FH}$ by~\cite[\S 3.3]{Bekka-Valette-Harpe_Kazhdan-property_2008}.
\end{remark}

\subsection{Algebraic cross-ratios}\label{sec:algebraic_cross_ratio}

In this section we propose a GNS construction from infinity for real hyperbolic spaces. 

\begin{definition}[algebraic cross-ratio]
\label{def:algebraic-cross-ratio}
Let $Y$ be a topological space. We define an \emph{algebraic cross-ratio} on $Y$ to be an abstract cross-ratio
such that the function on $(Y\setminus \{\infty\})^2 \to \R_{\ge 0}$ defined on the diagonal by $(y,y)\mapsto 0$ and outside the diagonal by 
\begin{equation} 
	\label{eq:algebraic-cross-ratio}
	(x,y) \mapsto \left(R(x,y,0,\infty)R(y,0,1,\infty)\right)^2
\end{equation}
is a kernel conditionally of negative type on $Y\setminus \{\infty\}$.
\end{definition}

\begin{remark}[indeterminate value]
Note that when $y=0\ne x$, we can determine the value of $R(x,y,0,\infty)R(y,0,1,\infty)=\infty \times 0$ using the cocycle relation to find $R(x,0,1,\infty)\in(0,\infty)$.
\end{remark}

\begin{example}[algebraic model]\label{eg:cross_ratio_algebraic_cross_ratio}
Let us explain why, for an algebraic hyperbolic space $\Hyp^\kappa$, the cross-ratio on $\partial\Hyp^\kappa$ is an algebraic cross-ratio. 
In the upper half space $\Hyp^\kappa=\R^{\kappa-1} \times(0,+\infty)$, the boundary at infinity is identified with  $\R^{\kappa-1} \cup\{\infty\}$ and for any $x,y,z \in \R^{\kappa-1}$, we have
\begin{equation}
	\Cr{x,y;z, \infty}=\tfrac{\|y-x\|}{\|y-z\|}
\end{equation}
because any quadruple is contained in an isometric copy of $\Hyp^3$ so we may use Corollary \ref{cor:cross-ratio-H3}.

Thus for any fixed distinct $0,1\in\R^{\kappa-1}$, \[\Cr{x,y;0,\infty}\Cr{y,0;1,\infty}=\tfrac{\|y-x\|}{\|y-0\|}\cdot \tfrac{\|0-y\|}{\|0-1\|}=\tfrac{\|y-x\|}{\|0-1\|}\] which is the square root of a kernel of conditionally negative type. 
\end{example}

\begin{theorem}[GNS for algebraic cross-ratios]
\label{thm:algebraic-cross-ratio}
Consider a space $Y$ having at least four points with an algebraic cross-ratio $R\colon Y^{(4)} \to [0,+\infty)$.

There is a cardinal $\kappa$ and a continuous map $f\colon Y \to \partial \Hyp^{\kappa}$ with total image such that for all $x^-,x^+,y^-, y^+ \in Y^{(4)}$ we have $\Cr{f(x^-),f(x^+),f(y^-),f(y^+)}=R(x^-, x^+, y^-, y^+)$.

Moreover, for any two such maps $f_1\colon Y \to \partial \Hyp^{\kappa_1}$ and $f_2\colon Y \to\partial \Hyp^{\kappa_2}$, there exists a unique isometry $\varphi\colon\Hyp^{\kappa_1}\to\Hyp^{\kappa_2}$ such that $f_2=\varphi \circ f_1$, in particular $\kappa_1 = \kappa_2$.

Consequently, if a group $\Gamma$ acts on $Y$ preserving $R$ then there is a representation $\rho\colon G\to \Isom(\Hyp^\kappa)$ such that $f$ is $\rho$-equivariant.
\end{theorem}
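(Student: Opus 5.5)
Fix three distinct points of $Y$, named $0,1,\infty$, and mimic the upper half-space model of \Cref{eg:cross_ratio_algebraic_cross_ratio}. By \Cref{def:algebraic-cross-ratio}, the kernel $N(x,y)=\left(R(x,y,0,\infty)R(y,0,1,\infty)\right)^2$ on $Y\setminus\{\infty\}$ has conditionally negative type. We apply the affine GNS construction (\Cref{thm:Hilbert-GNS}) to it. This gives a cardinal $\lambda$ and a continuous map $g\colon Y\setminus\{\infty\}\to\R^\lambda$ with affinely total image such that $\|g(x)-g(y)\|^2=N(x,y)$. Translating, we may assume $g(0)=0$. Since $R(1,0,1,\infty)=1$ by \Cref{rem:Cr=1}, the cocycle relation gives $\|g(1)\|=1$. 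Set $\kappa=\lambda+1$ and identify $\partial\Hyp^\kappa=\R^\lambda\sqcup\{\infty\}$ through the upper half-space model. Define $f=g$ on $Y\setminus\{\infty\}$ and $f(\infty)=\infty$. Continuity at $\infty$ should follow from the hypothesis that $R$ generates the topology, or else from continuity of $R$ together with the fact that $\|g(x)\|\to\infty$ exactly when $R(x,0,1,\infty)\to\infty$. The image is total because $g$ has affinely total image and the boundary point $\infty$ is added.

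Next we check that the cross-ratio is preserved. Using \Cref{eg:cross_ratio_algebraic_cross_ratio}, for $x,y\ne\infty$ we get $\Cr{f(x),f(y);f(0),\infty}\,\Cr{f(y),f(0);f(1),\infty}=\|g(y)-g(x)\|/\|g(1)\|=\sqrt{N(x,y)}=R(x,y,0,\infty)R(y,0,1,\infty)$. Specialising $y\mapsto 1$, or applying the cocycle relation, should peel off the second factor, since $R(y,0,1,\infty)$ equals $\|g(y)\|$ by the case $x=0$. This yields $\Cr{f(x),f(y);f(0),\infty}=R(x,y,0,\infty)$. The same identity with $0$ replaced by any $z\ne\infty$ comes from the cocycle relation, because $R(x,y,z,\infty)=R(x,y,0,\infty)R(0,y,z,\infty)$ and the right-hand side equals the corresponding quotient of $\|\cdot\|$'s. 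Every quadruple in $Y^{(4)}$ can be rewritten by the cocycle relation and \eqref{eq:Cr-Inversion} as a product of cross-ratios of the form $R(\cdot,\cdot,\cdot,\infty)$. Both $R$ and $\Cr{\cdot}$ satisfy the same relations, so they agree everywhere. Degenerate quadruples (values $0,1,\infty$) are handled by \eqref{eq:Cr=0}, \eqref{eq:Cr=infty} and \Cref{rem:Cr=1}. These relations also force $f$ to be injective: $f(x)=f(y)$ would make $R(x,y,0,\infty)=0$, hence $x=y$. This bookkeeping, where one must be careful with indeterminate products such as $\infty\times0$, is the most delicate part.

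For uniqueness, take two such maps $f_1,f_2$. Composing with isometries, we may assume both send $\infty$ to $\infty$ and $0$ to $0$. Evaluating preserved cross-ratios then shows that, on $Y\setminus\{\infty\}$, the maps $f_i$ realise Hilbert distances proportional to $\sqrt{N}$, with constants $\|f_i(1)\|$. After rescaling by a dilation fixing $0$ and $\infty$, which is an isometry of $\Hyp^{\kappa_i}$, the two maps realise the same kernel. The uniqueness part of \Cref{thm:Hilbert-GNS} then gives an affine isometry between the closed affine spans. Totality of the images forces this isometry to be defined on all of $\R^{\kappa_1-1}\to\R^{\kappa_2-1}$, so it extends to an isometry $\varphi$ of hyperbolic spaces; in particular $\kappa_1=\kappa_2$. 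Uniqueness of $\varphi$ holds because an isometry of $\Hyp^\kappa$ fixing a total subset of the boundary pointwise is trivial: it is a linear map fixing a spanning set of isotropic lines, with scalars forced to be equal. The final statement follows. For $\gamma\in\Gamma$, both $f$ and $f\circ\gamma$ satisfy the hypotheses, so there is a unique isometry $\rho(\gamma)$ with $f\circ\gamma=\rho(\gamma)\circ f$. Uniqueness makes $\rho$ a homomorphism.
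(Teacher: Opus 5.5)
Your proposal is correct and follows essentially the same route as the paper. Both fix $0,1,\infty$, apply the affine GNS construction to the kernel $\bigl(R(x,y,0,\infty)R(y,0,1,\infty)\bigr)^2$, and extend the resulting map to $\partial\Hyp^\kappa$ in the upper half-space model by $\infty\mapsto\infty$. Both check cross-ratio preservation first on quadruples ending in $\infty$ and then in general via the cocycle, invariance and inversion relations, and both obtain uniqueness from the affine GNS uniqueness after a dilation.

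Your extra verifications fill in points the paper leaves implicit: injectivity, uniqueness of $\varphi$, the homomorphism property of $\rho$, and continuity at $\infty$. For continuity at $\infty$, keep your second justification, $\|g(x)\|=R(x,0,1,\infty)\to R(\infty,0,1,\infty)=\infty$ by continuity of $R$. The first one does not apply, since generating the topology is not among the hypotheses on an algebraic cross-ratio.
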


\begin{proof}
Choose a point $\infty \in \partial X$. 
Fix two distinct points $0,1\in Y \setminus\{\infty\}$, and perform the GNS construction \ref{thm:Hilbert-GNS} to obtain a Hilbert space $\R^{\kappa-1}$ and a map $f \colon Y \setminus\{\infty\}\to\R^{\kappa-1}$ with total image such that \[\|f(y)-f(x)\|=R(x,y,0,\infty)R(y,0,1,\infty).\] 

Consider the upper half-space model $\Hyp^\kappa\simeq\R^{\kappa-1}\times(0,+\infty)$ and extend $f \colon Y \to \partial \Hyp^\kappa$ by sending $\infty$ to $\infty$.
For all
$x,y,z\in Y\setminus\{\infty\}$ we have, using \eqref{eq:Cr-Inversion} and \eqref{eq:Cr-Cocycle} for $R$, that:
\begin{equation*}
	\Cr{f(x),f(y);f(z),f(\infty)}=\tfrac{\|f(y)-f(x)\|}{\|f(y)-f(z)\|}
	=\tfrac{R(x,y,0,\infty)}{R(z,y,0,\infty)}
	=R(x,y,z,\infty)
\end{equation*}
Using again \eqref{eq:Cr-Cocycle}, \eqref{eq:Cr-Invariance} and \eqref{eq:Cr-Inversion} we get for all $(x^-, x^+, y^-, y^+)\in Y^{(4)}$ that:
\begin{align*}
	[f(x^-),f(x^+);f(y^-),f(y^+)]&=[f(x^-),f(x^+);f(\infty),f(y^+)][f(\infty),f(x^+);f(y^-),f(y^+)]\\
	&=[f(x^-),f(x^+);f(y^+),f(\infty)]^{-1}[f(y^-),f(y^+);f(x^+),f(\infty)]^{-1}\\
	&=R(x^-,x^+,y^+,\infty)^{-1}R(y^-,y^+,x^+,\infty)^{-1}
	=R(x^-,x^+,y^-, y^+).
\end{align*}
Thus $f \colon Y \to \partial \Hyp^{\kappa}$ preserves the cross-ratio.

Now, consider two such functions $f_1$ and $f_2$. Choose $\infty \in A$ and let $\infty_i=f_i(\infty)$. Consider the upper half-space model of $\Hyp^{\kappa_i}\simeq\R^{\kappa_i-1}\times(0,+\infty)$ so that $\infty_i$ is the point at infinity in $\partial\Hyp^{\kappa_i}$ and $||\ ||_i$ is the Hilbert norm on $\R^{\kappa_i-1}$.
By construction, there exists a unique $\lambda>0$ such that all $x,y\in Y\setminus \{\infty\}$ satisfy \[\|f_2(x)-f_2(y)\|_2=\lambda\|f_1(x)-f_1(y)\|_1.\] 
Since the images of $f_i$ in $\R^{\kappa_i-1}$ are total, there is a unique isometry $\varphi\colon\R^{\kappa_1-1}\to\R^{\kappa_2-1}$ such that $f_2= \lambda.\varphi \circ f_1$ on $Y\setminus\{\infty\}$.
The function $f'\colon \Hyp^{\kappa_1}\to \Hyp^{\kappa_2}$ defined by $f'(x,t)=(\lambda f(x),\lambda t)$ yields an isometry with the desired properties, and its unicity follows from that of $f$.
\end{proof}

\begin{proposition}[independence of exotic embeddings]
\label{prop:independence-exotic-embeddings}
For $t\in (0,1)$, the exotic deformation $\exoH_t \colon \partial \Hyp^\kappa \to \partial\Hyp^{\kappa+\infty}$ has the property that if $F$ is a finite subset of $ \partial \mathbf{\Hyp^\kappa}$ has $\Card(F)\ge 2$, then the minimal totally geodesic subspace of $\Hyp^{\kappa+\infty}$ whose boundary contains $\exoH_t(F)$ has dimension $\Card(F)-1$.
\end{proposition}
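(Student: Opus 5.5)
The plan is to transport the question to the affine (upper half-space) boundary model and reduce it to affine independence in a Hilbert space. There we will use \Cref{lem:power_kernel_negative_type}.

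\textbf{Reduction.} Let $F\subset\partial\Hyp^\kappa$ be finite with $\Card(F)=n\ge 2$; the case $n=2$ is trivial, since the image is a geodesic. Pick $\infty\in F$ and send it to the point at infinity, so that $\partial\Hyp^\kappa\setminus\{\infty\}\simeq\R^{\kappa-1}$ with the Hilbert norm. Likewise send $\exoH_t(\infty)$ to the point at infinity of $\partial\Hyp^{\kappa+\omega}$, so that $\partial\Hyp^{\kappa+\omega}\setminus\{\exoH_t(\infty)\}\simeq\R^{\kappa+\omega-1}$. In the upper half-space model, a totally geodesic $\Hyp^m$ whose boundary contains $\infty$ has boundary equal to an affine $(m-1)$-dimensional subspace of $\R^{\kappa+\omega-1}$, together with $\infty$. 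Conversely, the span of a finite boundary set containing $\infty$ has boundary the affine span of the remaining points, plus $\infty$. Hence the claim is equivalent to the following: the $n-1$ points $\exoH_t(x)$, for $x\in F\setminus\{\infty\}$, are affinely independent in $\R^{\kappa+\omega-1}$. This gives an affine span of dimension $n-2$, hence a totally geodesic subspace of dimension $n-1$.

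\textbf{Computing distances.} Fix $0\in F\setminus\{\infty\}$ (possible as $n\ge2$) and any auxiliary $1\ne0,\infty$. As in \Cref{eg:cross_ratio_algebraic_cross_ratio}, on both sides we have
\[
\|y-x\|=\|0-1\|\,\Cr{x,y;0,\infty}\,\Cr{y,0;1,\infty}.
\]
By \Cref{lem:powers-cross-ratio} the exotic embedding raises cross-ratios to the power $t$. Hence for $x,y\ne\infty$,
\[
\|\exoH_t(y)-\exoH_t(x)\|^2=c\,\|y-x\|^{2t},
\]
where $c=\|\exoH_t(0)-\exoH_t(1)\|^2/\|0-1\|^{2t}>0$ is a constant. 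The case $y=0$ or $x=0$ is handled by the cocycle relation, as in the remark after \Cref{def:algebraic-cross-ratio}.

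\textbf{Conclusion.} The kernel $K(x,y)=\|x-y\|^2$ on $\R^{\kappa-1}$ is of conditionally negative type and vanishes only on the diagonal. By \Cref{lem:power_kernel_negative_type}, $K^t$ is strictly of conditionally negative type for $t\in(0,1)$, and so is $cK^t$. By \Cref{lem:strct-means-indep_positive-conditionegative}, any map realizing $cK^t$ as squared distances sends distinct points to affinely independent points. The restriction of $\exoH_t$ to $\R^{\kappa-1}$ is such a map, which gives the required affine independence.

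The main obstacle is the geometric reduction step: making rigorous that the minimal totally geodesic subspace whose boundary contains a finite set including $\infty$ corresponds exactly to the affine hull in the half-space model. I would prove this in the hyperboloid model. There, boundary points are isotropic lines, and the minimal subspace is the projectivized linear span intersected with $\Proj\mathbf X_{\ge0}$. Its dimension is the linear dimension of the span minus one, and linear independence of $n$ isotropic vectors $(\infty,\xi_i)$ corresponds to affine independence of the $\xi_i$ in the horospherical chart.
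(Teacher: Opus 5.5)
Your proposal is correct and takes essentially the same route as the paper's proof. You pass to the upper half-space chart at a point $\infty$, use \Cref{lem:powers-cross-ratio} to get $\|\exoH_t(x)-\exoH_t(y)\|^2=c\,\|x-y\|^{2t}$, and apply \Cref{lem:power_kernel_negative_type} with \Cref{lem:strct-means-indep_positive-conditionegative} to obtain affine independence. You then translate affine dimension in the chart into the dimension of the totally geodesic span through $\exoH_t(\infty)$.

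Your choice $\infty\in F$, the explicit constant $c$, and the hyperboloid-model justification of the affine/totally-geodesic correspondence only make explicit details the paper leaves implicit. Your displayed formula correctly uses a product of cross-ratios, whereas the paper's proof writes a quotient.
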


\begin{proof}
By \Cref{lem:powers-cross-ratio} on powers of cross-ratios, for any quadruple of distinct $w,x,y,z\in\partial \Hyp^\kappa$, we have $\Cr{\exoH_t(w),\exoH_t(x),\exoH_t(y),\exoH_t(z)}=\Cr{w,x,y,z}^t$.

Fix a point $\infty\in\partial\Hyp^\kappa$. 
Using the upper half-space model described in \Cref{eg:cross_ratio_algebraic_cross_ratio}, the set $\partial \Hyp^\kappa\setminus\{\infty\}$ has a structure of Hilbert space, and we have a kernel $K=R^2$ of conditionally negative type on $\partial \Hyp^\kappa\setminus\{\infty\}$ defined by $R(x,y)=\|x-y\| =\frac{\Cr{x,y,0,\infty}}{\Cr{y,0,1,\infty}}$. 
The same is true for $\partial \Hyp^{\kappa+\infty}\setminus\{\exoH_t(\infty)\}$ and we have $R^t(x,y)=\frac{\Cr{\exoH_t(x),\exoH_t(y),\exoH_t(0),\exoH_t(\infty)}}{\Cr{\exoH_t(y),\exoH_t(0),\exoH_t(1),\exoH_t(\infty)}}=\frac{\|\exoH_t(x)-\exoH_t(y)\|}{\|\exoH_t(0)-\exoH_t(1)\|}$ . 

The affine subspaces of dimension $n$ in the Hilbert space $\partial \Hyp^{\kappa+\infty}\setminus\{\exoH_t(\infty)\}$ correspond to the boundaries of totally geodesic subspaces of $\Hyp^{\kappa+\infty}$ of dimension $n+1$ containing $\exoH_t(\infty)$ in their boundary.
Since $K=R^2$ is conditionally of negative type, by \Cref{lem:power_kernel_negative_type}, $K^{t}=R^{2t}$ is strictly of conditionally negative type, thus $\exoH_t$ has affinely independent image on finite subsets of $\partial\Hyp^\kappa\setminus\{\infty\}$.
\end{proof}

\subsection{Rigidity results from algebraic cross-ratios}

We now apply \Cref{thm:algebraic-cross-ratio} to obtain rigidity results for actions on algebraic hyperbolic spaces.

\begin{corollary}[Poincaré extension]
\label{cor:Poincare-extension} 
Consider, for $i=1,2$, total subsets $L_i\subset \partial\Hyp^{\kappa_i}$.

For every bijection $f \colon L_1\to L_2$ preserving the cross-ratio, there exists a unique isometry $\varphi \colon \Hyp^{\kappa_1}\to\Hyp^{\kappa_2}$ such that $\varphi\colon \partial\Hyp^{\kappa_1}\to\partial\Hyp^{\kappa_2}$ coincides with $f$ on $L_1$.

The unicity implies in particular that if $f$ is equivariant for some group action then so is $\varphi$.
\end{corollary}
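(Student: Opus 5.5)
The plan is to reduce the statement to the uniqueness part of \Cref{thm:algebraic-cross-ratio}, applied to the single topological space $Y=L_1$.

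\textbf{Step 1: the cross-ratio on $L_1$ is algebraic.} Endow $L_1$ with the restriction $R$ of the boundary cross-ratio of $\partial\Hyp^{\kappa_1}$. Restricting the kernel of \Cref{eg:cross_ratio_algebraic_cross_ratio} (conditionally of negative type on all of $\partial\Hyp^{\kappa_1}\setminus\{\infty\}$) to any subset keeps it conditionally of negative type. Hence $R$ is an algebraic cross-ratio on $L_1$ for every choice of $0,1,\infty\in L_1$. Here we need $L_1$ to have at least four points; if $L_1$ is smaller, then totality forces $\kappa_1\le 1$, and that degenerate case is handled directly using triple transitivity.

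\textbf{Step 2: two admissible maps from $L_1$.} The inclusion $f_1\colon L_1\hookrightarrow\partial\Hyp^{\kappa_1}$ has total image and preserves $R$ tautologically. The map $f_2=f\colon L_1\to\partial\Hyp^{\kappa_2}$ has total image $L_2$ and preserves $R$ by hypothesis.

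There is one technical point. \Cref{thm:algebraic-cross-ratio} uses totality in the affine chart at $\infty$: in its proof, ``total'' means that $f_i(L_1\setminus\{\infty\})$ is affinely total in the Hilbert space $\partial\Hyp^{\kappa_i}\setminus\{f_i(\infty)\}$. I would check this using the dictionary from \Cref{prop:independence-exotic-embeddings}. A closed affine subspace of the chart corresponds to the boundary of a totally geodesic subspace whose boundary contains $\infty$. Therefore, if $L_i\setminus\{\infty\}$ lay in a proper closed affine subspace, then $L_i$ would lie in the boundary of a proper totally geodesic subspace, contradicting totality.

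Continuity is not an issue, since we may give $L_1$ the discrete topology; alternatively, cross-ratio preservation together with the fact that the cross-ratio generates the topology makes $f$ a homeomorphism onto its image.

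\textbf{Step 3: apply uniqueness.} The uniqueness part of \Cref{thm:algebraic-cross-ratio} now yields a unique isometry $\varphi\colon\Hyp^{\kappa_1}\to\Hyp^{\kappa_2}$ with $f=\varphi\circ f_1$, that is, $\varphi|_{L_1}=f$. In particular $\kappa_1=\kappa_2$.

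The uniqueness in the corollary also needs care. Any isometry agreeing with $f$ on $L_1$ is another witness for the same pair $(f_1,f_2)$, so the theorem's uniqueness clause applies. More concretely, two such isometries $\varphi,\varphi'$ give $\varphi^{-1}\varphi'$, an isometry fixing the total set $L_1$ pointwise. Fixing a point $\infty\in L_1$, it acts on the chart as a similarity fixing an affinely total set, hence as the identity.

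\textbf{Step 4: equivariance.} Suppose $f$ is equivariant for actions of $\Gamma$ by isometries on $L_1$ and $L_2$ (restricted from $\Isom(\Hyp^{\kappa_i})$). For $\gamma\in\Gamma$, the isometry $\gamma\varphi\gamma^{-1}$ also extends $f$ on $L_1$. By uniqueness it equals $\varphi$, so $\varphi$ is equivariant.

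\textbf{Main obstacle.} The delicate step is Step 2: correctly matching ``total in $\partial\Hyp^{\kappa}$'' with ``affinely total in the upper half-space chart'', which is the notion the uniqueness argument of \Cref{thm:algebraic-cross-ratio} actually uses. This is where I would be careful.
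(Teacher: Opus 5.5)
Your proposal is correct and follows the paper's proof: both apply the uniqueness clause of \Cref{thm:algebraic-cross-ratio} to $Y=L_1$, with the inclusion and $f$ as the two maps, and both treat $\Card L_1<4$ by transitivity on tuples of boundary points. Your extra checks (the restriction stays algebraic, hyperbolic totality gives affine totality in the chart, the direct uniqueness and equivariance arguments) make explicit what the paper leaves implicit; the one slip is in the degenerate case, where three total boundary points force $\kappa_1=2$ rather than $\kappa_1\le 1$, and for $\Card L_1=2$ uniqueness actually fails because translations of $\Hyp^1$ fix both ends, a caveat the paper's statement also omits.
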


\begin{proof}
When $n=\Card L_i<4$, this follows from the fact $\Isom(\Hyp^{1+n})$ acts transitively on $n$-tuples of distinct points in $\partial \Hyp^{1+n}$.
When $\Card L_i\ge 4$, we apply \Cref{thm:algebraic-cross-ratio} with $Y=L_1$ to the identity $\mathbf{1} \colon L_1 \hookrightarrow \Hyp^{\kappa_1}$ and $f\colon L_1 \to L_2\subset  \Hyp^{\kappa_2}$.
There exists a unique isometry $\varphi \colon \Hyp^{\kappa_1}\to\Hyp^{\kappa_2}$ whose restriction to $L_1$ is $f$.
\end{proof}

Let us illustrate the usefulness of algebraic cross-ratios to simplify one step in the proof of \cite[Theorem A]{Kim_marked-length-rigidity-symmetric-space_2001} about the rigidity of marked length spectrum, at least in the real case.

\begin{theorem}[rigidity of marked-length spectrum]
Let $\Gamma$ be a group and $\rho_i\colon \Gamma\to\Isom(\Hyp^{\kappa_i})$ be two Zariski dense hyperbolic representations with finite dimensions $\kappa_i\in \N$ for $i=1,2$.
If the marked length spectrum of the two representations are proportional then the representations are conjugated. 
In particular, $\kappa_1=\kappa_2$ and the marked length spectrum are equal.
\end{theorem}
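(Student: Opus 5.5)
The plan is to pass from length functions to cross-ratios on the limit sets, and then from cross-ratios to isometries via \Cref{cor:Poincare-extension}.

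\textbf{Setup.} Write $\ell_{\rho_1}=\lambda\,\ell_{\rho_2}$ with $\lambda>0$. Zariski density in $\PO(1,\kappa_i)$ with $\kappa_i$ finite implies two things:
\begin{itemize}
\item $\rho_i(\Gamma)$ is non-elementary, so $\Lambda_i=\Lambda(\rho_i(\Gamma))$ is infinite;
\item $\rho_i$ is minimal, so $\Lambda_i$ is total in $\partial\Hyp^{\kappa_i}$, since $\Span(\Lambda_i)$ is a proper invariant totally geodesic subspace otherwise.
\end{itemize}

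\textbf{Boundary map.} By \Cref{lem:Tits-alternative} and \Cref{lem:equal-fixed-points-from-length}, the incidence relations between fixed point pairs of loxodromic elements are read off from $\ell$. Hence these relations are the same for $\rho_1$ and $\rho_2$: an element is loxodromic for one iff for the other, and two loxodromic elements share an attracting point for one iff for the other. This gives a well-defined $\Gamma$-equivariant bijection $f$ from the set of fixed points of loxodromics in $\Lambda_1$ to the corresponding set in $\Lambda_2$, defined by $\rho_1(\gamma)^\pm\mapsto\rho_2(\gamma)^\pm$.

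\textbf{Cross-ratio power.} By \Cref{lem:cross-ratio-from-translation-length}, for loxodromic $\alpha,\beta$ with distinct fixed points,
\[\Cr{f(\alpha^-),f(\alpha^+);f(\beta^-),f(\beta^+)} = \Cr{\alpha^-,\alpha^+;\beta^-,\beta^+}^{1/\lambda}.\]
Via the cocycle relation, this extends to all quadruples of distinct fixed points, as in \Cref{prop:ell(Gamma)_Cr-Lambda(Gamma)}. It follows that $R=\Cr{\cdot}^{1/\lambda}$ restricted to this dense subset of $\Lambda_1$ is an \emph{algebraic} cross-ratio, because it is realized by $f$ into $\partial\Hyp^{\kappa_2}$. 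Equivalently, on $\Lambda_1\setminus\{\infty\}\subset\R^{\kappa_1-1}$ the kernel $\|x-y\|^{2/\lambda}$ is conditionally of negative type.

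\textbf{Forcing $\lambda=1$ (main step).}
\begin{itemize}
\item If $\lambda<1$, the exponent $2/\lambda>2$ is impossible, by symmetry apply the same argument with the roles of $\rho_1,\rho_2$ swapped, replacing $\lambda$ by $1/\lambda$.
\item So assume $\lambda\ge 1$. If $\lambda>1$, then \Cref{lem:power_kernel_negative_type} with $t=1/\lambda\in(0,1)$ shows that the kernel is strictly conditionally negative on distinct points. Hence $f$ maps any finite set of $n$ points of $\Lambda_1\setminus\{\infty\}$ to affinely independent points of $\R^{\kappa_2-1}$, as in the proof of \Cref{prop:independence-exotic-embeddings}.
\item $\Lambda_1$ is infinite, so choosing $\kappa_2+1$ such points contradicts the finiteness of $\kappa_2$.
\end{itemize}
This is exactly where finite dimension is used; in infinite dimension the exotic $\chi_t$ shows the step must fail. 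The obstacle is to make sure the restriction to a dense subset suffices; this is fine since only finitely many points are needed.

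\textbf{Extension and conclusion.} With $\lambda=1$, $f$ preserves cross-ratios on a total subset. \Cref{cor:Poincare-extension} then gives a unique isometry $\varphi\colon\Hyp^{\kappa_1}\to\Hyp^{\kappa_2}$ extending $f$. Its image of a total set is total, so $\varphi$ is onto, and $\kappa_1=\kappa_2$. Uniqueness in \Cref{cor:Poincare-extension} gives equivariance: $\varphi\rho_1(\gamma)\varphi^{-1}$ and $\rho_2(\gamma)$ agree on the total set $f(L_1)$, hence coincide. So the representations are conjugate and the length spectra are equal.
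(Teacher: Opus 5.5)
Your argument is correct, but it takes a different route from the paper at the key step.

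\textbf{How the routes differ.} The paper first \emph{equalizes} the length functions. Writing $\ell_1=t\ell_2$ with $t\in(0,1]$, it replaces $\rho_2$ by the exotic deformation $\chi_t\circ\rho_2$, which lands in $\Hyp^\infty$ when $t<1$ (\Cref{thm:exotic-deformation-Hyp}). It then invokes Kim's Lemma~3 to get a cross-ratio preserving bijection $L_1\to L_2$, and gets an equivariant isometry $\Hyp^{\kappa_1}\to\Hyp^{\kappa}$ from \Cref{cor:Poincare-extension}. It concludes $t=1$ because $\kappa_1$ is finite.

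You stay in $\Hyp^{\kappa_1}$ and $\Hyp^{\kappa_2}$ throughout and carry the power $\Cr{f(\cdot)}=\Cr{\cdot}^{1/\lambda}$. You then rule out $\lambda\neq 1$ directly via \Cref{lem:power_kernel_negative_type} and affine independence in $\R^{\kappa_2-1}$. In effect, you run the proof of \Cref{prop:independence-exotic-embeddings} on the pulled-back cross-ratio.

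\textbf{What each route buys.} The paper's final step ``$\kappa=\kappa_1$, thus $t=1$'' tacitly needs that the exotic image of $L_2$ spans an infinite-dimensional subspace, so that \Cref{cor:Poincare-extension}, applied to that span, gives a contradiction. That is exactly the strictness fact you make explicit. You also avoid both the construction of $\chi_t$ and the infinite-dimensional version of Kim's lemma. The paper's route, in exchange, is shorter. It is also the one that survives in arbitrary dimension, where homothetic spectra genuinely come from exotic deformations (\Cref{cor:homothetic_length_spectrum}) and no dimension contradiction is available.

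\textbf{Two steps to tighten.} Your boundary-map paragraph, which replaces the paper's appeal to Kim, needs two sharper arguments than the lemmas you cite.

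First, \Cref{lem:equal-fixed-points-from-length} only detects the disjunction $(\alpha^+=\beta^+)\vee(\alpha^-=\beta^-)$. It therefore does not by itself make $f$ well defined on attracting points. One way to separate the cases:
\begin{itemize}
\item Take a loxodromic $\gamma$ with $\gamma^\pm\notin\{\alpha^\pm\}$.
\item When the disjunction holds, $\Cr{\gamma^-,\gamma^+;(\alpha^{-n}\beta\alpha^{n})^-,(\alpha^{-n}\beta\alpha^{n})^+}$ tends to $0$ exactly when $\alpha^+\neq\beta^+$.
\item This cross-ratio is computable from lengths by \Cref{lem:cross-ratio-from-translation-length}, and tending to $0$ is insensitive to raising cross-ratios to a power.
\end{itemize}

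Second, the power relation for arbitrary quadruples of attracting points does not follow from the cocycle relation. Cocycle manipulations do not reduce a general quadruple to quadruples made of two fixed-point pairs. Nor does it follow from plain density, since $f$ is not yet known to be continuous. Use instead the simultaneous convergence $\rho_i(\alpha^n\beta^{-n})^+\to\rho_i(\alpha)^+$ and $\rho_i(\alpha^n\beta^{-n})^-\to\rho_i(\beta)^+$ for $i=1,2$, as in the proof of \Cref{lem:equivariant_cross_ratio_preserving_map}. That argument passes to the limit verbatim with the exponent $1/\lambda$.
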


\begin{proof}
Let $\ell_i$ be the length function associated with $\rho_i$. Up to permuting the two representations, there is $0<t\leq1$ such that $\ell_1=t\ell_2$. 
Let $\rho_2'=\chi_t\circ\rho_2\colon\Gamma\to\Isom(\Hyp^{\kappa})$ where $\kappa=\kappa_2$ if $t=1$ and $\kappa=\infty$ if $t<1$. The length function $\ell_2'$ associated with $\rho_2'$ satisfies $\ell_2'=\ell_1$. 
Hence denoting by $L_i$ the set of attractive points at infinity of loxodromic elements of $\rho_1$ and $\rho_2'$, it follows from \cite[Lemma 3]{Kim_marked-length-rigidity-symmetric-space_2001} that there is a bijection $f\colon L_1\to L_2$ that preserves the cross-ratio. 
By Corollary~\ref{cor:Poincare-extension}, there is an equivariant isometry $\varphi\colon\Hyp^{\kappa_1}\to\Hyp^{\kappa}$. In particular, $\kappa=\kappa_1$ and thus $t=1$ which implies that $\kappa_1=\kappa_2$.
\end{proof}

The following generalizes \cite[Lemma 3]{Kim_marked-length-rigidity-symmetric-space_2001} to infinite dimensions.

\begin{lemma}[translation length recovers cross-ratio on limit set]
\label{lem:equivariant_cross_ratio_preserving_map}
Consider for $i\in\{1,2\}$, representations $\rho_i\colon \Gamma \to \Isom(\Hyp^{\kappa_i})$ 
whose translation length functions are non-zero and equal.
There exists an equivariant map between their limit sets $\varphi \colon \Lambda_1\to \Lambda_2$ preserving the cross-ratio such that every $\rho_i$-loxodromic $\gamma\in \Gamma$ has its fixed points $\rho_i(\gamma)^\pm$ satisfying $\varphi(\rho_1(\gamma)^\pm)=\rho_2(\gamma)^\pm$.
\end{lemma}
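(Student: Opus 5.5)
The plan is to define $\varphi$ first on the dense set of fixed points of loxodromic elements, check that it is well defined and preserves cross-ratios using the length-function formulas of Section 1, and then extend it to $\Lambda_1$ by continuity. Equal length functions mean that $\gamma$ is $\rho_1$-loxodromic if and only if it is $\rho_2$-loxodromic, by \Cref{lem:translation-length-from-cross-ratio}. Let $L_i\subset\Lambda_i$ be the set of attractive fixed points $\rho_i(\gamma)^+$ of loxodromic $\gamma$ (together with the repulsive points, which are attractive points of $\gamma^{-1}$). By \Cref{prop:loxodromic-axes-dense}, $L_i$ is dense in $\Lambda_i$. The candidate map is $\varphi(\rho_1(\gamma)^\pm)=\rho_2(\gamma)^\pm$. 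It is equivariant by construction, since $\rho_i(\eta\gamma\eta^{-1})^\pm=\rho_i(\eta)\rho_i(\gamma)^\pm$.

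\emph{Well-definedness and injectivity.} Suppose $\rho_1(\alpha)^+=\rho_1(\beta)^+$. By \Cref{lem:equal-fixed-points-from-length}, equality of attractive points forces the monoid $\Mon(\alpha,\beta)$ to have additive length function, or at least to fail the properness criterion. I would rather use the sharper characterisation from its proof. The case distinction there is made purely in terms of $\ell$ on $\Mon(\alpha,\beta)$ and $\Mon(\alpha,\beta^{-1})$, via properness of $(m,n)\mapsto\ell(\alpha^{pm}\beta^{pn})$ and the morphism properties. Distinguishing $\alpha^+=\beta^+$ from $\alpha^-=\beta^-$ is delicate. For that I would use \Cref{lem:cross-ratio-from-translation-length} with a third loxodromic $\gamma$ in general position: $\alpha^+=\beta^+$ holds if and only if the cross-ratio limits for $(\alpha,\gamma)$ and $(\beta,\gamma)$ agree for all such $\gamma$. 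Indeed, $\log\Cr{\cdot}$ separates points by \eqref{eq:Cr=0}, \eqref{eq:Cr=infty} and the generation of the topology. Since those limits are expressed through $\ell$ only, the relation ``same attractive point'' is the same for $\rho_1$ and $\rho_2$. This gives both well-definedness and injectivity of $\varphi$ on $L_1$.

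\emph{Cross-ratio preservation and extension.} For loxodromic $\alpha,\beta$ with pairwise distinct fixed points, \eqref{eq:translation-length_Cr-boundary_exp_1} expresses $\Cr{\alpha^-,\alpha^+;\beta^-,\beta^+}$ purely in terms of $\ell$. Hence $\varphi$ preserves these cross-ratios. A general quadruple of distinct points of $L_1$ has the form $(\alpha^{\pm},\beta^{\pm},\gamma^\pm,\delta^\pm)$. I would reduce it to pairs using the \eqref{eq:Cr-Cocycle} relation. Alternatively, I would approximate a point $\xi=\gamma^+$ by the pairs $(\gamma^{-},\gamma^+)$ of conjugates, using density of axis pairs in $\Lambda^2$ (\Cref{prop:loxodromic-axes-dense}) and continuity of the cross-ratio on $\partial^4$. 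Then $\varphi$ preserves all cross-ratios of quadruples in $L_1$.

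\emph{Main obstacle.} The hardest step is to extend $\varphi$ continuously from $L_1$ to $\Lambda_1$, without any local compactness. Here I would use that cross-ratios generate the topology. Fix three distinct points $x,y,z\in L_1$. A sequence $w_n\to w$ in $\Lambda_1$ is Cauchy for the Hamenst\"adt metric based at $z$, and that metric is expressed through cross-ratios with $x,y,z$ (formula \eqref{eq:Hamenstadt-visual-metric}). Hence $\varphi(w_n)$ is Cauchy for the corresponding Hamenst\"adt metric on $\partial\Hyp^{\kappa_2}$. That metric is complete, by the completeness of the metametric quoted from \cite{Das-Simmons-Urbanski_gromov-hyperbolic-spaces_2017}. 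So $\varphi$ is uniformly continuous in these metrics and extends to $\Lambda_1$, with image in the closed set $\Lambda_2$.

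Cross-ratio preservation and equivariance then pass to the limit by continuity. Injectivity also passes to the limit, because the extension preserves cross-ratios and \eqref{eq:Cr=0} holds. Finally, the identity $\varphi(\rho_1(\gamma)^\pm)=\rho_2(\gamma)^\pm$ holds by construction.
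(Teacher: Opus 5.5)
Your architecture is the paper's: define $\varphi$ on fixed points of loxodromic elements, transport cross-ratios through \Cref{lem:cross-ratio-from-translation-length}, then extend by continuity. Your extension step is sound, and more explicit than the paper's. Indeed $\Cr{u,v;x,z}\Cr{v,x;y,z}$ is, up to a constant, the Euclidean distance on $\partial\Hyp^{\kappa}\setminus\{z\}$ (\Cref{eg:cross_ratio_algebraic_cross_ratio}), so a cross-ratio preserving map is an isometry between complete metrics and extends.

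\textbf{The well-definedness criterion is false.} Suppose $\Cr{\alpha^-,\alpha^+;\gamma^-,\gamma^+}=\Cr{\beta^-,\beta^+;\gamma^-,\gamma^+}$ for all generic $\gamma$. By density and continuity, the functions $(u,v)\mapsto\Cr{\alpha^-,\alpha^+;u,v}$ and $(u,v)\mapsto\Cr{\beta^-,\beta^+;u,v}$ then coincide. By \eqref{eq:Cr=infty} the first is infinite exactly on $\{u=\alpha^+\}\cup\{v=\alpha^-\}$, so this forces $\alpha^+=\beta^+$ \emph{and} $\alpha^-=\beta^-$. Conversely, if $\alpha^+=\beta^+=\xi$ but $\alpha^-\neq\beta^-$, the quotient of the two cross-ratios is $\Cr{\alpha^-,\xi;\beta^-,\gamma^+}$, which is not identically $1$. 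So you only get a well-defined map on oriented axes, not on endpoints.

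\textbf{This point is not a technicality.} You were right to flag it: the paper's proof just cites \Cref{lem:equal-fixed-points-from-length}, which only yields the disjunction $\alpha^+=\beta^+$ or $\alpha^-=\beta^-$. In both configurations, $\ell$ restricted to $\Grp(\alpha,\beta)$ equals $\lvert\Xi\rvert$ for the Busemann homomorphism $\Xi$ at the common point, with $\Xi(\alpha),\Xi(\beta)$ of the same sign. In fact the statement fails for focal representations. Take $\Gamma=\langle a,b\rangle$ free, $\rho_1(a)\colon z\mapsto 2z$ and $\rho_1(b)\colon z\mapsto 2z-1$ on the upper half-plane, and $\rho_2=\rho_1\circ\theta$ with $\theta(a)=a^{-1}$, $\theta(b)=b^{-1}$. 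Both length functions equal $\log 2$ times the absolute exponent sum. Yet $\rho_1(a),\rho_1(b)$ both attract to $\infty$, while $\rho_2(a),\rho_2(b)$ attract to $0\neq1$, so no such $\varphi$ exists.

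\textbf{What a correct argument needs.} Some non-elementarity is required, together with a length-computable cross-ratio in which $\alpha^+$ and $\beta^+$ sit in the same quadruple. For example, take loxodromic $\gamma,\delta$ in general position. The axes of $\alpha^n\gamma^{-n}$ and $\delta^n\beta^{-n}$ converge to $(\gamma^+,\alpha^+)$ and $(\beta^+,\delta^+)$ in both representations. By \eqref{eq:Cr=infty}, their cross-ratio, computed via \eqref{eq:translation-length_Cr-boundary_exp_1}, tends to $\infty$ if and only if $\alpha^+=\beta^+$.

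\textbf{The step for general quadruples also fails as written.} \Cref{lem:cross-ratio-from-translation-length}, applied to $\alpha^{\pm1},\beta^{\pm1}$, only produces cross-ratios of two axes. In these the mixed visual distances come in complementary pairs: $d_o(\alpha^+,\beta^+)$ always appears with $d_o(\alpha^-,\beta^-)$. So no product of them obtained through \eqref{eq:Cr-Cocycle} isolates a cross-ratio of four attracting points such as $\Cr{\alpha^+,\beta^+;\gamma^+,\delta^+}$.

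Your fallback, density of axis pairs (\Cref{prop:loxodromic-axes-dense}), is not enough either. Before $\varphi$ is known to be continuous, approximating $(\alpha^+,\beta^+)$ by axes in $\Lambda_1^2$ says nothing about the images. What is needed, and what the paper uses, is an approximation defined group-theoretically and valid in both representations at once. When $\alpha,\beta$ have disjoint fixed-point pairs, the axis of $\alpha^n\beta^{-n}$ tends to $(\beta^+,\alpha^+)$ for both $\rho_1$ and $\rho_2$ by ping-pong. This is legitimate because disjointness of fixed-point pairs is read off $\ell$ by the last assertion of \Cref{lem:equal-fixed-points-from-length}.
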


\begin{proof}
Since $\ell_i$ is non-zero, the subset of fixed points of its loxodromic elements $L_i\subset \partial \Hyp^{\kappa_i}$ has $\Card L_i\in \{2,\infty\}$, and it is dense by \cite[Propositions 7.4.6]{Das-Simmons-Urbanski_gromov-hyperbolic-spaces_2017} in its limit set $\Lambda_{i}\subset \Hyp^{\kappa_i}$.

Since the translation length functions coincide, the subsets of elements in $\Gamma$ that act loxodromically with respect to $\rho_1$ and $\rho_2$ coincide. 
For two such loxodromic elements $\alpha,\beta \in \Gamma$, it follows from \Cref{lem:equal-fixed-points-from-length} that $\rho_1(\alpha)^+=\rho_1(\beta)^+ \iff \rho_2(\alpha)^+=\rho_2(\beta)^+$.
Thus we have a bijection $\varphi \colon L_1\to L_2$ defined by the formula $\rho_1(\gamma)^+\mapsto \rho_2(\gamma)^+$. 
Let us show that $\varphi$ preserves the cross-ratio. Actually, for any elements $\alpha,\beta\in\Gamma$ that are hyperbolic for $\rho_1,\rho_2$, by \Cref{eq:translation-length_Cr-boundary_exp_1} we have \[\Cr{\rho_1(\alpha)^+,\rho_1(\alpha)^-,\rho_1(\beta)^+,\rho_1(\beta)^-}=\Cr{\rho_1(\alpha)^+,\rho_1(\alpha)^-,\rho_1(\beta)^+,\rho_1(\beta)^-}.\]

Moreover for any group elements $\alpha,\beta,\gamma,\delta$ that are hyperbolic for $\rho_i$, with $\rho_i(\alpha)^+\neq\rho_i(\beta)^+$ and $\rho_i(\gamma)^+\neq\rho_i(\delta)^+$, we have $\rho_i(\alpha^n\beta^{-n})^+\to\rho_i(\alpha)^+$, $\rho_i(\alpha^n\beta^{-n})^-\to\rho_i(\beta)^+$, $\rho_i(\gamma^n\delta^{-n})^+\to\rho_i(\gamma)^+$ and $\rho_i(\gamma^n\delta^{-n})^-\to\rho_i(\delta)^+$. So

\begin{align*}
	\Cr{\rho_1(\alpha)^+,\rho_1(\beta)^+;\rho_1(\gamma)^+,\rho_1(\delta)^+}
	=&\lim_{n}\Cr{\rho_i(\alpha^n\beta^{-n})^+,\rho_i(\alpha^n\beta^{-n})^-;\rho_i(\gamma^n\delta^{-n})^+,\rho_i(\gamma^n\delta^{-n})^-}\\
	=&\Cr{\rho_2(\alpha)^+,\rho_2(\beta)^+;\rho_2(\gamma)^+,\rho_2(\delta)^+}
\end{align*}

which proves that $\varphi\colon L_1\to L_2$ preserves the cross-ratio. It extends by continuity to a cross-ratio preserving map between the limit sets $\Lambda_1\to \Lambda_2$.

Observe that if $\Card L_i=2$ then the condition about the cross-ratio is empty.
\end{proof}

Recall from \Cref{rem:irred=>eleminimal} that an irreducible representation is a minimal representation without fixed point at infinity.
The following proposition shows that functions of hyperbolic type  at uniformly bounded distance from one another correspond to irreducible  representations up to conjugacy.

\begin{proposition}[rigidity of marked length spectrum]
\label{prop:same-length-spectrum-means-conjugate}
Consider a group $\Gamma$ with minimal representations $\rho_i\colon G\to \Isom(\Hyp^{\kappa_i})$ for $i=1,2$. 
If $\rho_1,\rho_2$ have non-trivial and equal length functions $\ell_{\rho_1}=\ell_{\rho_2}$, then $\kappa_1=\kappa_2$ and $\rho_1,\rho_2$ are conjugated by an isometry $\Hyp^{\kappa_1}\to \Hyp^{\kappa_2}$.
\end{proposition}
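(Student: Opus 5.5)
The plan is to combine \Cref{lem:equivariant_cross_ratio_preserving_map} with the Poincaré extension \Cref{cor:Poincare-extension}. Since $\ell_{\rho_1}=\ell_{\rho_2}$ is non-zero, both representations contain loxodromic elements, so the limit sets $\Lambda_i\subset\partial\Hyp^{\kappa_i}$ have at least two points. \Cref{lem:equivariant_cross_ratio_preserving_map} then provides a $\Gamma$-equivariant map $\varphi\colon\Lambda_1\to\Lambda_2$ preserving the cross-ratio and sending $\rho_1(\gamma)^\pm$ to $\rho_2(\gamma)^\pm$. Applying the same lemma with the roles exchanged gives a map $\psi\colon\Lambda_2\to\Lambda_1$. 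The composites $\psi\circ\varphi$ and $\varphi\circ\psi$ are the identity on the dense sets of loxodromic fixed points, and they are continuous because they preserve the cross-ratio, which generates the topology. Hence $\varphi$ is a cross-ratio preserving bijection $\Lambda_1\to\Lambda_2$.

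Next I will use minimality to show that each $\Lambda_i$ is total in $\partial\Hyp^{\kappa_i}$. By the convex core lemma, $\Span(\Lambda(\rho_i))$ is a non-empty closed totally geodesic $\rho_i$-invariant subspace, so minimality forces it to equal $\Hyp^{\kappa_i}$.

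Two cases then arise.
\begin{itemize}
\item If $\Card\Lambda_i=2$, the representation is lineal and its span is a geodesic, so $\kappa_1=\kappa_2=1$. Conjugacy follows because on $\Hyp^1$ both actions are determined by the translation homomorphism $\Xi$, with $\ell=\lvert\Xi\rvert$ as in \Cref{cor:elem-Hyp_length-homomorphism}. The sign of $\Xi$ on a given element is recorded by which endpoint is attracting, and $\varphi$ matches attracting fixed points.
\item Otherwise $\Lambda_i$ is infinite. \Cref{cor:Poincare-extension} applied to $L_i=\Lambda_i$ yields a unique isometry $\Phi\colon\Hyp^{\kappa_1}\to\Hyp^{\kappa_2}$ extending $\varphi$, so $\kappa_1=\kappa_2$. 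For $\gamma\in\Gamma$, both $\rho_2(\gamma)\circ\Phi$ and $\Phi\circ\rho_1(\gamma)$ are isometries whose boundary maps agree with $\varphi\circ\rho_1(\gamma)=\rho_2(\gamma)\circ\varphi$ on $\Lambda_1$. By the uniqueness in \Cref{cor:Poincare-extension} they coincide, so $\Phi$ conjugates $\rho_1$ to $\rho_2$.
\end{itemize}

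The main obstacle I expect is checking that $\varphi$ is a bijection onto $\Lambda_2$ rather than merely into it. The extension from $L_1$ to $\Lambda_1$ by continuity needs uniform continuity, which the cross-ratio normalized at three fixed points should supply, playing the role of a Hamenstädt-type metric. Surjectivity is then handled by the symmetric construction of $\psi$. A secondary delicate point is the elementary case, where a fixed point at infinity together with a minimal span must be ruled out or treated separately. For focal actions, however, the limit set is still infinite, so Poincaré extension applies there as well.
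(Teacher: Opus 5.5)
Your proposal is correct and follows essentially the same route as the paper. It uses the equivariant cross-ratio preserving map of \Cref{lem:equivariant_cross_ratio_preserving_map}, totality of $\Lambda_i$ from minimality, \Cref{cor:Poincare-extension} with its uniqueness clause giving equivariance, and a separate treatment of the case $\Card\Lambda_i=2$ (which the paper leaves as an exercise), and it supplies details the paper skips, such as surjectivity of $\varphi$ via the reverse map $\psi$.

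The one thing to tighten is the two-point case, because elements may act on $\Hyp^1$ by reflections swapping the two ends, and then your claim that each action is determined by a translation homomorphism fails. First note that the length function forces the two orientation characters to agree: if $\rho_1(\gamma)$ reverses orientation but $\rho_2(\gamma)$ does not, then $\ell(\gamma)=0$ forces $\rho_2(\gamma)=\mathrm{id}$, and for $\ell(\alpha)>0$ the element $\alpha\gamma\alpha\gamma^{-1}$ has length $0$ under $\rho_1$ but $2\ell(\alpha)$ under $\rho_2$. Then compare translation homomorphisms on the orientation-preserving subgroup as you do, and finally conjugate by a translation to align the centre of one reflection.
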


\begin{proof}
The Lemma \ref{lem:equivariant_cross_ratio_preserving_map} provides a $\rho_i$-equivariant cross-ratio preserving map $\varphi \colon \Lambda_1 \to \Lambda_2$, and we have $\Card \Lambda_i$ is $2$ or uncountable.
If $\Card L_i=\infty$ is uncountable, by minimality of $\rho_i$, the set $\Lambda_i$ is total in $\partial \Hyp^{\kappa_i}$, so the Corollary \ref{cor:Poincare-extension} yields a $
\rho_i$-equivariant isometry $\Hyp^{\kappa_1}\to\Hyp^{\kappa_2}$.
If $\Card L_i=2$, then we have two isometric actions of $\Gamma$ on the real line, and it is an exercise to show that two such actions with the same (non-zero) marked length spectrum are conjugated.
\end{proof}


\begin{corollary}[length equivalent]
\label{cor:equal_length_spectrum}
For $i\in\{1,2\}$, consider non-neutral $F_i\in \mathcal{F}_{nn}(\Gamma)$ yielding non-zero length functions $\ell_i \colon \Gamma\to \R_{\ge 0}$.
For any minimal representations $\rho_i\colon \Gamma \to \Isom(\Hyp^{\kappa_i})$ realizing $F_i$, the following are equivalent:
\begin{itemize}[noitemsep]
	\item[$F$:] The functions $\log(F_1/F_2)$ or equivalently $\cosh^{-1} F_1 - \cosh^{-1} F_2$ are bounded.
	\item[$\ell_F$:] The length functions $\ell_1$ and $\ell_2$ are equal.
	\item[$\rho_F$:] The representations $\rho_1$ and $\rho_2$ are conjugate.
\end{itemize}
\end{corollary}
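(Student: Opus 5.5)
The plan is to prove the cycle of implications $F\Rightarrow\ell_F\Rightarrow\rho_F\Rightarrow F$, using the results already established in this section.

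\emph{$F\Rightarrow\ell_F$.} This is exactly the second statement of \Cref{lem:ellF-from-F}. Since $\ell_{F}(\gamma)=\lim_n\tfrac1n\cosh^{-1}F(\gamma^n)$ and $\cosh^{-1}F_1-\cosh^{-1}F_2$ is bounded, the difference of the two limits vanishes. The equivalence between boundedness of $\log(F_1/F_2)$ and boundedness of $\cosh^{-1}F_1-\cosh^{-1}F_2$ follows from $\lvert \cosh^{-1}(x)-\log(2x)\rvert\le \log 2$ for $x\ge1$. This is a uniform additive comparison, so the two bounds hold simultaneously.

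\emph{$\ell_F\Rightarrow\rho_F$.} The representations $\rho_i$ realizing $F_i$ are minimal by hypothesis. Their length functions are $\ell_{\rho_i}=\ell_i$, since the length function attached to $F_i$ does not depend on the realizing representation (\Cref{prop:functions-of-hyperbolic-type-to-representations}). These are nonzero and equal, so \Cref{prop:same-length-spectrum-means-conjugate} applies directly: $\kappa_1=\kappa_2$ and there is an isometry $\varphi$ with $\varphi\circ\rho_1(\gamma)=\rho_2(\gamma)\circ\varphi$.

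\emph{$\rho_F\Rightarrow F$.} Suppose $\rho_1,\rho_2$ realize $F_i$ at base points $o_i$, so $F_i(\gamma)=\cosh\dH(o_i,\rho_i(\gamma)o_i)$, and let $\varphi$ be a conjugating isometry. Then $F_1(\gamma)=\cosh\dH(\varphi o_1,\rho_2(\gamma)\varphi o_1)$. By the triangle-inequality estimate in the Example preceding \Cref{prop:functions-of-hyperbolic-type-to-representations}, $\lvert\cosh^{-1}F_1-\cosh^{-1}F_2\rvert\le 2\dH(\varphi o_1,o_2)$, which gives boundedness.

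The main obstacle is a subtlety in the last step: a function of hyperbolic type is realized with \emph{total} orbit, and such a representation need not be minimal. A minimal realization must therefore be understood as the restriction to the minimal invariant totally geodesic subspace $\Span(\Lambda)$. I would handle this by noting that $F_i$ can be realized by projecting $o_i$ to $\Span(\Lambda_i)$. The nearest-point projection is equivariant and $1$-Lipschitz, so the projected point $o_i'$ satisfies $\dH(\rho(\gamma)o_i,\rho(\gamma)o_i')=\dH(o_i,o_i')$. Hence the function obtained from $o_i'$ is at bounded $\cosh^{-1}$-distance from $F_i$, by the same triangle-inequality estimate. Since boundedness is transitive, both directions of the equivalence survive this replacement, and the argument above goes through with the minimal realizations.
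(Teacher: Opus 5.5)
Your proposal is correct and follows the paper's proof essentially verbatim: the same cycle $F\Rightarrow\ell_F\Rightarrow\rho_F\Rightarrow F$, using \Cref{lem:ellF-from-F}, \Cref{prop:same-length-spectrum-means-conjugate}, and the base-point triangle-inequality estimate. Your extra remark makes explicit a point the paper leaves implicit, namely that a genuinely minimal realization is obtained only up to comparability by projecting the base point onto $\Span(\Lambda_i)$, and you handle it correctly.
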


\begin{proof}
If $\log(F_1/F_2)$ is bounded then the Lemma \ref{lem:ellF-from-F} recovering $\ell_{F}$ from $F$ implies $\ell_1 = \ell_2$.

In that case, the representations $\rho_1, \rho_2$ realizing $F_1$ and $F_2$ are non-neutral and minimal so by Proposition \ref{prop:same-length-spectrum-means-conjugate} they are conjugated by an isometry $\varphi\colon \Hyp^{\kappa_1} \to \Hyp^{\kappa_2}$.

In that case, choosing $o_i\in \Hyp^{\kappa_i}$ with $o_2=\varphi(o_1)$, we have $\dH(o_1,\rho_1(\gamma) o_1)=\dH(o_2, \rho_2(\gamma) o_2)$ so $\| \cosh^{-1}F_1(\gamma) - \cosh^{-1}F_{2}(\gamma)\|$ is bounded, or equivalently $\log(F_1/F_2)$ is bounded.
\end{proof}

\begin{remark}[neutral exceptions]
\Cref{cor:equal_length_spectrum} does not generalize to the neutral case.
For example, recall from  \Cref{subsec:Isom(Hyp)} that elliptic or parabolic representations have zero length functions, but by Lemma \ref{lem:F-recovers-type-eph} their associated functions of hyperbolic type are respectively bounded and unbounded.
\end{remark}

\begin{corollary}[homothetic marked length spectra]
\label{cor:homothetic_length_spectrum}
Consider functions of hyperbolic type $F_i\colon \Gamma\to \R_{\ge 1}$ yielding non-trivial length functions $\ell_i \colon \Gamma\to \R_{\ge 0}$, and any minimal representations $\rho_i\colon \Gamma \to \Isom(\Hyp^{\kappa_i})$ realizing $F_i$. 
For every $t\in (0,1]$ the following are equivalent:
\begin{itemize}[noitemsep]
	\item[$F$:] The $F_i$ are \emph{$t$-equivalent}, namely $\log(F_1/F_2^t)$ is bounded.
	\item[$\ell_F$:] The $\ell_i$ are \emph{$t$-homothetic}, namely $\ell_1=t\ell_2$.
	\item[$\rho_F$:] The representation $\rho_1$ is conjugate to the $t$-deformed representation $\chi_t \circ \rho_2$.
\end{itemize}
In that case the $F_i$ share the same objects and attributes associated to the homothety class of $\ell_i$, such as elementarity and neutrality (yielding a $t$-variant of \Cref{cor:neutral-elementary}).
\end{corollary}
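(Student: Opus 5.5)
The plan is to reduce everything to \Cref{cor:equal_length_spectrum} by replacing $F_2$ with $F_2^t$ and $\rho_2$ with its exotic deformation $\chi_t\circ\rho_2$. First I check that $F_2^t$ is a legitimate non-neutral function of hyperbolic type realized by $\chi_t\circ\rho_2$. By the corollary on powers of functions of hyperbolic type, $F_2^t\in\mathcal{F}(\Gamma)$ and $\ell_{F_2^t}=t\ell_2\neq 0$. Concretely, with $o_2$ the base point realizing $F_2=F_{\rho_2,o_2}$, the identity $\cosh\dH(\exoH_t(x),\exoH_t(y))=(\cosh\dH(x,y))^t$ together with the $\chi_t$-equivariance of $\exoH_t$ gives $F_{\chi_t\circ\rho_2,\exoH_t(o_2)}=F_2^t$. (For $t=1$ the deformation is trivial, and the statement is exactly \Cref{cor:equal_length_spectrum}.)

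Next I need a minimal representation realizing $F_2^t$, since \Cref{cor:equal_length_spectrum} is stated for minimal ones. I would restrict $\chi_t\circ\rho_2$ to $\Span(\Lambda(\chi_t\circ\rho_2))$, the smallest invariant totally geodesic subspace given by the convex core lemma (the representation is non-neutral). Projecting the base point onto this subspace moves it a bounded distance, so the resulting function of hyperbolic type is comparable to $F_2^t$. Call this minimal representation $\rho_2^{(t)}$. Since $\rho_2$ is minimal, I expect $\exoH_t(\Hyp^{\kappa_2})$ to span its target, hence $\rho_2^{(t)}=\chi_t\circ\rho_2$. I would justify this by noting that $\exoH_t(\Lambda(\rho_2))$ is a closed invariant set, so it equals the limit set of the deformed action. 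The only genuine subtlety is this identification of ``$\chi_t\circ\rho_2$'' with a minimal model; that is exactly how the third condition must be read.

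With this in place, the equivalences follow from \Cref{cor:equal_length_spectrum} applied to the pair $(F_1,F_2^t)$, with minimal realizations $\rho_1$ and $\rho_2^{(t)}$:
\begin{itemize}[noitemsep]
\item \emph{$F$-condition:} $\log(F_1/F_2^t)$ is bounded, which is condition $F$ of that corollary.
\item \emph{$\ell_F$-condition:} $\ell_1=\ell_{F_2^t}=t\ell_2$, which is condition $\ell_F$ of that corollary.
\item \emph{$\rho_F$-condition:} $\rho_1$ is conjugate to $\chi_t\circ\rho_2$, which is condition $\rho_F$ of that corollary.
\end{itemize}

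For the final sentence, neutrality and elementarity are read off from $\ell_F$. Elementarity is detected by $\ell$ being the absolute value of a homomorphism (\Cref{cor:elem-Hyp_length-homomorphism}), and this property is invariant under multiplying $\ell$ by $t>0$. So $t$-equivalent functions share these attributes, giving the $t$-variant of \Cref{cor:neutral-elementary}.
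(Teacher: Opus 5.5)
Your reduction is the same as the paper's: its proof also reruns the argument of \Cref{cor:equal_length_spectrum} for the pair $(F_1,F_2^t)$, using $F_{\chi_t\circ\rho_2,\exoH_t(o_2)}=F_2^t$, \Cref{lem:ellF-from-F} and \Cref{prop:same-length-spectrum-means-conjugate}. The problem is exactly the step you flagged. The paper simply asserts that $\chi_t\circ\rho_2$ is minimal, and your justification does not establish it. Totality of $\exoH_t(\Hyp^{\kappa_2})$ in the target is automatic from the GNS construction, but it is irrelevant here. The smallest invariant totally geodesic subspace is $\Span(\Lambda(\chi_t\circ\rho_2))=\Span(\exoH_t(\Lambda(\rho_2)))$. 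Knowing what the limit set is says nothing about whether its image spans the target. (Also, ``closed and invariant'' would only give one inclusion. Equality comes from $\exoH_t(\rho_2(\gamma)o_2)=\chi_t\rho_2(\gamma)\exoH_t(o_2)$ together with the boundary extension of $\exoH_t$.)

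In fact $\rho_2^{(t)}=\chi_t\circ\rho_2$ is false in general. Take a Schottky group $\rho_2\colon\Gamma\to\Isom(\Hyp^2)$ with Cantor limit set $\Lambda\ni\infty$; it is minimal since $\Lambda$ spans $\Hyp^2$. Take $t=1/2$. As in the proof of \Cref{prop:independence-exotic-embeddings}, $\partial\Hyp^{\omega}\setminus\{\exoH_{1/2}(\infty)\}$ is a Hilbert space in which $\|\exoH_{1/2}(x)-\exoH_{1/2}(y)\|^2=c\,\lvert x-y\rvert$ for $x,y\in\R$. By the uniqueness in \Cref{thm:Hilbert-GNS}, these points therefore have the affine geometry of a Brownian motion $(W_x)_{x\in\R}$ in $L^2$. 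Choose $y$ in a bounded gap $(a,b)$ of $\Lambda$ with $a,b\in\Lambda$. The bridge residual $W_y-\tfrac{b-y}{b-a}W_a-\tfrac{y-a}{b-a}W_b$ is non-zero and orthogonal to every $W_x-W_a$ with $x\in\Lambda\setminus\{\infty\}$. Hence $\exoH_{1/2}(y)$ lies outside the closed affine hull of $\exoH_{1/2}(\Lambda\setminus\{\infty\})$, that is, outside $\partial\Span(\exoH_{1/2}(\Lambda))$. So $\Span(\exoH_{1/2}(\Lambda))$ is a proper closed invariant totally geodesic subspace, and $\chi_{1/2}\circ\rho_2$ is not minimal. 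Now let $\rho_1$ be its restriction to that subspace, with the projected base point. Then conditions $F$ and $\ell_F$ hold, but $\rho_1$ is not conjugate to $\chi_{1/2}\circ\rho_2$ itself, since conjugacy preserves minimality.

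So drop the sentence claiming $\rho_2^{(t)}=\chi_t\circ\rho_2$, and state condition $\rho_F$ with the restriction of $\chi_t\circ\rho_2$ to $\Span(\exoH_t(\Lambda(\rho_2)))$, as you already suggested. With that reading your argument is complete, and more careful than the paper's. This includes the bounded shift of base point, which keeps you in the comparability class of $F_2^t$, and your final paragraph on elementarity and neutrality.
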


\begin{proof}
If $\log(F_1/F_2^t)$ is bounded then the Lemma \ref{lem:ellF-from-F} recovering $\ell_{F}$ from $F$ implies $\ell_1 = t\ell_2$.

In that case, the representations $\rho_1, \chi_t\circ \rho_2$ realizing $F_1$ and $F_2^t$ are non-neutral and minimal so by Proposition \ref{prop:same-length-spectrum-means-conjugate} they are conjugated by $\varphi\colon \Hyp^{\kappa_1} \to \Hyp^{\kappa_2}$.

In that case, choosing $o_i\in \Hyp^{\kappa_i}$ with $o_2=\varphi(o_1)$, we have $\dH(o_1,\rho_1(\gamma) o_1)=\dH(o_2, (\rho_t\circ \rho_2)(\gamma) o_2)$ so $\| \cosh^{-1}F_1(\gamma) - \cosh^{-1}F_{2}^t(\gamma)\|$ is bounded, or equivalently $\log(F_1/F_2^t)$ is bounded.
\end{proof}

\section{Deformations and degenerations to actions on trees}
\label{sec:degenerations-actions-trees}

\subsection{Critical exponent, dimension, representative}
\label{subsec:critical-exponent}
\begin{definition}[critical exponent of a kernel of hyperbolic type]
\label{def:critical-exponent}
For a kernel of hyperbolic type $K\colon X\times X\to \R_{\ge 1}$ we define its \emph{critical exponent} $t_K\in [1,\infty]$ as the supremum of $t\in [1,\infty)$ such that $K^t$ is a kernel of hyperbolic type.

When $t_K<\infty$ we call $K^{t_K}$ the \emph{critical representative} of $K$, and thus \Cref{prop:hyperbolic-GNS} defines the associated \emph{critical dimension} $\kappa_K$ for which there exists an essentially unique \emph{critical embedding} $f_{K^{t_K}}\colon X\to \Hyp^{\kappa_K}$ with total image satisfying $K^{t_K}=\cosh \circ \dH$.
\end{definition}

\begin{definition}[arboreal]
\label{def:arboreal}
A kernel of hyperbolic type $K\in \mathcal{K}(X)$ is \emph{arboreal} when there is a real tree $(T,d)$, a function $f\colon X\to T$ such that $K(x,y)=\exp d(f(x),f(y))$.
\end{definition}

\begin{proposition}[arboreal = infinite critical exponent]
\label{prop:characterisation_kernel_hyperbolic_type_tree}
A kernel of hyperbolic type $K\in \mathcal{K}(X)$ is arboreal if and only if it has critical exponent $t_K=\infty$.
\end{proposition}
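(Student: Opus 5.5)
Both directions rest on translating the hyperbolic type of $K^t$ into a metric statement about $d_t=\log(K^t)$ and then letting $t\to\infty$.

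\emph{Arboreal $\Rightarrow t_K=\infty$.} Suppose $K=\exp\circ d\circ(f\times f)$ with $f\colon X\to T$ and $(T,d)$ a real tree. Then $K^t=\exp(t\,d)\circ(f\times f)$. By the Example after \Cref{lem:exotic-exp(td)}, citing \cite[Proposition 1.5]{Monod_notes-functions-hyperbolic-groups_2020}, $\exp(td)$ is a kernel of hyperbolic type on $T$ for every $t\ge 1$. Pulling back along $f$ preserves the conditions \eqref{eq:kernel-hyperbolic-CS}, since they only involve finitely many points at a time. Hence $K^t$ is of hyperbolic type for all $t$, and $t_K=\infty$.

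\emph{$t_K=\infty\Rightarrow$ arboreal.} Set $D=\log K\ge 0$. This is symmetric and vanishes on the diagonal.
\begin{enumerate}
\item For every $t\ge1$ the kernel $K^t=\exp(tD)$ is of hyperbolic type. The Ptolemy inequality of \Cref{lem:Ptoleme_Hyp}, developed as in the proof of \Cref{prop:coarsely-strongly-hyperbolic}, then gives for every quadruple
\[
\exp\tfrac t2(D_{13}+D_{24})\le \exp\tfrac t2(D_{12}+D_{34})+\exp\tfrac t2(D_{14}+D_{23})+3\exp(\tfrac t2 M).
\]
\item Take $\tfrac{2}{t}\log$ of both sides and let $t\to\infty$. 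The right-hand side is at most $3\cdot 5$ times its largest term, so in the limit
\[
D_{13}+D_{24}\le\max\{D_{12}+D_{34},\,D_{14}+D_{23},\,M\}.
\]
The extra term $M$ is the main obstacle: it must be removed to recover the exact four-point condition.
\item Two routes suggest themselves for removing $M$. The first is to redo the expansion without the crude bound $\sqrt{x}-1\le\sqrt{x-1}$. Writing $2\sinh^2(\delta/2)=\cosh\delta-1=e^{tD}-1$, the Ptolemy inequality becomes $\sqrt{(e^{tD_{13}}-1)(e^{tD_{24}}-1)}\le\cdots$, and the $(2/t)\log$ limit of each product is exactly $D_{ij}+D_{kl}$, because each factor $e^{tD}-1$ behaves like $e^{tD}$ when $D>0$. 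Where some $D_{ij}=0$ the corresponding product is $0$, and the inequality is checked directly. This route gives the Gromov $0$-hyperbolic four-point condition with no $M$ term.
\item The second route applies the same limit to the $3$-point hyperbolic Cauchy--Schwarz inequality, which should give the triangle inequality for $D$. (The four-point condition with $D_{ij}=0$ cases handles degenerate points.)
\end{enumerate}
Together these show that $D$ is a $0$-hyperbolic pseudometric. Identify the points at $D$-distance $0$; they have identical $K$-rows by the triangle inequality, so this is harmless.

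\emph{Conclusion.} The quotient of $X$ is a $0$-hyperbolic metric space. \Cref{prop:embedding_0_hyperbolic_space_tree} embeds it isometrically into a real tree $T$. Composing with the quotient map gives $f\colon X\to T$ with $K=\exp d(f,f)$, so $K$ is arboreal.
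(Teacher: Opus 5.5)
Your argument is correct, but it reaches $0$-hyperbolicity by a different mechanism than the paper.

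The paper takes the GNS maps $f_t\colon X\to\Hyp^{\kappa_t}$ with $\cosh\dH(f_t(x),f_t(y))=e^{tD(x,y)}$. Since $tD\le\cosh^{-1}(e^{tD})\le tD+\log 2$, the rescaled distances $\tfrac1t\dH(f_t(x),f_t(y))$ converge uniformly to $D=\log K$. The triangle inequality and the Gromov-$\log 2$-hyperbolicity of $\Hyp^{\kappa_t}$ (which rescales to $\log(2)/t\to 0$) then pass to the limit together. That is a single uniform argument, and pairs at $D$-distance $0$ need no special treatment.

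You instead extract the two properties from two separate finite inequalities.
\begin{itemize}
\item For the triangle inequality you use the $3$-point condition. Your ``should give'' does go through. Positivity of the $2\times2$ visual Gram matrix reads $(ab-c)^2\le(a^2-1)(b^2-1)$, so $c\le ab+\sqrt{(a^2-1)(b^2-1)}\le 2ab$. This means $e^{tD_{12}}\le 2e^{t(D_{01}+D_{02})}$, hence $D_{12}\le D_{01}+D_{02}+\log(2)/t$.
\item For the four-point condition you use the exact Ptolemy inequality for $\sinh(\tfrac12\dH)$. This is a strong-hyperbolicity statement, sharper than what is needed. However, it degenerates exactly when a diagonal pair has $D=0$. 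With a repeated point it becomes trivial, so it can never produce the triangle inequality.
\end{itemize}
This is why you must establish the triangle inequality first and then split into cases.

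Consequently your ``two routes for removing $M$'' are not alternatives: route 2 is indispensable, and route 1 depends on it. The first attempt, with the $M$ term and the crude bound $\sqrt{x}-1\le\sqrt{x-1}$, is a dead end you can simply delete.

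Like the paper, you tacitly pass from $t_K=\infty$ to ``$K^t$ is of hyperbolic type for all $t\ge1$''. Either invoke the Monod--Py result on powers, or note that a sequence $t_n\to\infty$ suffices for all your limits.

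What your route buys is transparency: it shows which finite determinant conditions (the $3$-point one and the $4$-point Ptolemy one) force the tree structure. What the paper's route buys is brevity and uniformity.
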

\begin{proof}
Assume that $X$ is arboreal, and consider a function to a real tree $f\colon X\to T$ such that $K(x,y)=\exp(d(f(x),f(y)))$.
For all $t>0$, we have $\exp(td)\in \mathcal{K}(T)$ hence composing with $f$ shows that $K^t\in \mathcal{K}(X)$.

Conversely assume that $t_K=+\infty$, namely for all $t>0$ the power $K^t\colon X\times X\to \R_{> 0}$ is a kernel of hyperbolic type.
By \Cref{prop:hyperbolic-GNS}, there is an embedding $f_t\colon X\to\Hyp^{\kappa_t}$ with total image such that $K^t(x,y)=\cosh(\dH(f_t(x),f_t(y)))$. 
Thus for all $t>1$, we have:
\begin{equation*}
	\log K(x,y) = \tfrac{1}{t} \log \cosh \dH(f_t(x),f_t(y)).
\end{equation*}
This implies that $\dH(f_t(x),f_t(y))\to +\infty$ as $t\to \infty$, and since $\log \cosh(u)\sim u$ as $u\to+\infty$ we have:
\begin{equation*}
	\log K(x,y)\sim\tfrac{1}{t}\dH(f_t(x),f_t(y)).
\end{equation*}
By taking the limit we find that $\log K$ satisfies the triangle inequality, and as it is symmetric it defines a semi-metric on $X$.
Hence on $X$ the relation $\log K(x,x')=0$ is an equivalence relation and the quotient $X\bmod{K=1}$ is endowed with metric $\log K$.
Moreover we deduce from the asymptotic for $\log K$ and the fact $(\Hyp^{\kappa_t}, \dH)$ is Gromov-$\log(2)$-hyperbolic, that the metric space $(X\bmod{K=1}, \log K)$ is $0$-hyperbolic.
The result now follows from \Cref{prop:embedding_0_hyperbolic_space_tree}.
\end{proof}

\begin{remark}[infinitely divisible]
There is a similar notion to being arboreal for kernels of positive type: to be infinitely divisible. 
A kernel of positive type $K$ that takes nonnegative values is \emph{infinitely divisible} when for all $t>0$ the kernel $K^t$ is of positive type (see \cite[Definition 2.6 and Proposition 2.7]{Berg-Christensen-Ressel_Harmonic-analysis-semigroups_1984}).

This is more than an analogy: if $K$ is a kernel hyperbolic type that is arboreal then $K^{-1}$ is a kernel of positive type that is infinitely divisible. Indeed if $K$ is arboreal, then there is $f \colon X\to T$ and $d$ as in \Cref{def:arboreal}, such that $K=\exp(D_T)$ where $D_T \colon (x,y) \mapsto d(f(x),f(y))$.
The kernel $D_T$ is conditionally of negative type (for trees, this can be found in \cite{Julg-Valette_K-theory-amenablity_1984}; for real trees, it follows by approximating the convex hull of finitely many points by a finite tree with arbitrarily small edges). 
Hence the kernel $K^{-1}=\exp(-D_T)$ takes nonnegative values and satisfies condition (ii) in \cite[Proposition 2.7]{Berg-Christensen-Ressel_Harmonic-analysis-semigroups_1984}, hence it is infinitely divisible. 

The converse does not hold (namely $K^{-1}$ could be infinitely divisible and $K$ not arboreal), see for example \cite[Exercise 2.14]{Berg-Christensen-Ressel_Harmonic-analysis-semigroups_1984}.
\end{remark}

\begin{proposition}[critical exponent is upper-semi-continuous]
\label{prop:semicontinuity_critical_exponent}
The function sending a kernel of hyperbolic type $K\in \mathcal{K}(X)$ to its critical exponent $t_K\in \R_{\ge 1}\sqcup\{\infty\}$ is upper semi-continuous.
Moreover if $t_K<\infty$ then $K^{t_K} \in \mathcal{K}(X)$.
\end{proposition}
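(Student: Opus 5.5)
The plan rests on two ingredients: the set $T_K=\{t\in[1,\infty)\colon K^t\in\mathcal{K}(X)\}$ is an interval containing $1$, and membership in $\mathcal{K}(X)$ is a closed condition in the pointwise topology (\Cref{def:space-of-kernels}, via \Cref{lem:pointwise-closed}).

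First, $T_K$ is an interval. If $K^s\in\mathcal{K}(X)$ and $1\le t\le s$, then $K^t=(K^s)^{t/s}$ with $t/s\in(0,1]$. By the corollary on powers of kernels of hyperbolic type (following \Cref{thm:exotic-deformation-Hyp}), $K^t\in\mathcal{K}(X)$. Hence $T_K$ is either $[1,t_K)$ or $[1,t_K]$.

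Second, when $t_K<\infty$ the interval is closed. Take $t_n\uparrow t_K$ with $K^{t_n}\in\mathcal{K}(X)$. For each pair $(x,y)$, $K^{t_n}(x,y)\to K^{t_K}(x,y)$, since $K\ge1$ is finite valued and $t\mapsto K(x,y)^t$ is continuous. So $K^{t_n}\to K^{t_K}$ pointwise. The conditions (nonnegativity, diagonal $1$, and the inequalities \eqref{eq:kernel-hyperbolic-CS}) each involve finitely many values with non-strict inequalities, so they pass to pointwise limits. Therefore $K^{t_K}\in\mathcal{K}(X)$, which is the ``moreover'' part. Continuity of $K^{t_K}$ itself is automatic as a power of a continuous kernel.

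Third, upper semicontinuity: for every $s<\infty$, the set $\{K\colon t_K\ge s\}$ should be closed in $\mathcal{K}(X)$. By the first two steps, $t_K\ge s$ is equivalent to $K^s\in\mathcal{K}(X)$. For $s=\infty$ one uses $\bigcap_{s}\{t_K\ge s\}$, and for $s$ finite the step-two argument applies directly to $K^s$. The map $K\mapsto K^s$ is continuous for the pointwise topology, and $\mathcal{K}(X)$ is closed, so the preimage of $\mathcal{K}(X)$ under this map is closed. Hence each superlevel set is closed and $K\mapsto t_K$ is upper semicontinuous.

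The main obstacle is the endpoint argument in step two. One must make sure the pointwise limit of the $K^{t_n}$ really satisfies every defining condition, and not just the positivity of finitely many Gram-type forms. Since every condition is a closed finite-point inequality, this should go through cleanly. The only other point to check is that the power corollary applies to arbitrary kernels on $X$ and not only to functions on groups, which the corollary states explicitly.
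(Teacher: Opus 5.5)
Your proof is correct and follows essentially the same route as the paper. The paper shows that the sublevel sets $\{t_K<t\}=\{K\colon K^t\notin\mathcal{K}(X)\}$ are open, because strictly violating a hyperbolic Cayley--Menger inequality at finitely many points is an open condition, and it obtains $K^{t_K}\in\mathcal{K}(X)$ from the continuity of $t\mapsto\det(-K^t(x_i,x_j))$; these are exactly the complements of your two closedness arguments. Your step showing via the power corollary that $\{t\colon K^t\in\mathcal{K}(X)\}$ is an interval makes explicit something the paper uses only implicitly when it identifies $\{t_K<t\}$ with $\{K^t\notin\mathcal{K}(X)\}$.
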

\begin{proof}
For the upper semi-continuity, we will show that for $t\in \R_{\ge 1}\sqcup\{\infty\}$ the preimage of $[1,t)$, namely the subset $\mathcal{V}^t$ of $K\in\mathcal{K}(X)$ such that $K^t\notin\mathcal{K}(X)$, is open.
For fixed $x=(x_0,\dots,x_n)\in X^{1+n}$, the subset $\mathcal{V}^t_{x}$ of $K\in \mathcal{K}(X)$ such that $K^t$ violates the hyperbolic Cayler-Menger determinant inequality \eqref{eq:kernel-hyperbolic-CM} at $x$, namely such that $\det \left(-K^t(x_i,x_j)\right)_{0\le i,j\le n} > 0$, is open for the pointwise convergence.
By definition $\mathcal{V}^t$ is the union of $\mathcal{V}^t_x$ over all $n\in \N$ and $x\in X^{1+n}$, so it is open.

Now assuming that $t_K<\infty$, suppose by contradiction that $K^{t_K}\notin \mathcal{K}$. 
By definition, there exists $n\in \N$ and $x\in X^{1+n}$ such that $\det \left(-K^{t_K}(x_i,x_j)\right)_{0\le i,j\le n}>0$.
Since for every $x$ the function $t \mapsto \det \left(-K^t(x_i,x_j)\right)_{0\le i,j\le n}$ is continuous, there exists $t_x<t_K$ such that $K\in \mathcal{V}^t_x$, which would imply $t_K \le t_x$, a contradiction.
Hence $K^{t_K}\in \mathcal{K}(X)$ as desired.
\end{proof}

Applying upper-semi-continuity of the critical exponent at $t_K=\infty$ yields the following.

\begin{corollary}[arboreal are closed]
\label{cor:arobreal-closed}
In the space of kernels of hyperbolic type, those which are arboreal form a closed set.
\end{corollary}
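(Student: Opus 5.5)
The plan is to deduce the corollary directly from \Cref{prop:semicontinuity_critical_exponent} together with the characterization \Cref{prop:characterisation_kernel_hyperbolic_type_tree}. By the latter, the set of arboreal kernels in $\mathcal{K}(X)$ coincides with the set $\{K\in\mathcal{K}(X)\colon t_K=\infty\}$. So it suffices to show that this superlevel set is closed for the pointwise convergence topology.

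Upper semi-continuity of $K\mapsto t_K$ with values in $\R_{\ge1}\sqcup\{\infty\}$ means exactly that for every $t$ the set $\{K\colon t_K<t\}$ is open. Its complement $\{K\colon t_K\ge t\}$ is therefore closed for every $t$. Then I would write
\[
\{K\in\mathcal{K}(X)\colon t_K=\infty\}=\bigcap_{n\in\N}\{K\in\mathcal{K}(X)\colon t_K\ge n\},
\]
an intersection of closed sets, hence closed.

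As a sanity check, the openness proved in \Cref{prop:semicontinuity_critical_exponent} was for the set $\mathcal{V}^t$ of $K$ with $K^t\notin\mathcal{K}(X)$. One should note that $t_K<t$ need not be equivalent to $K^t\notin\mathcal{K}(X)$ a priori. However, the set of admissible exponents is an interval $[1,t_K]$ (closed at $t_K$ by the second part of that proposition), since $K^s$ of hyperbolic type implies $(K^s)^{u}=K^{su}$ of hyperbolic type for $u\in(0,1]$ by the powers corollary after \Cref{thm:exotic-deformation-Hyp}. Hence $K^t\in\mathcal{K}(X)\iff t\le t_K$. It follows that $\{t_K\ge n\}=\mathcal{K}(X)\setminus\mathcal{V}^n$ is closed.

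Verifying this interval structure is the only point requiring care; everything else is formal.
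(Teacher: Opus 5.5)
Your argument is correct and is essentially the paper's own proof: the corollary is obtained by combining \Cref{prop:characterisation_kernel_hyperbolic_type_tree} (arboreal $\iff t_K=\infty$) with the upper semi-continuity of $K\mapsto t_K$ from \Cref{prop:semicontinuity_critical_exponent}. Your check that $\{K\colon t_K<t\}=\mathcal{V}^t$, using the power property and the fact that $K^{t_K}\in\mathcal{K}(X)$, makes explicit a step the paper uses implicitly when it identifies $\mathcal{V}^t$ with the preimage of $[1,t)$.
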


\begin{definition}[critical dimension and representative]
\label{def:critical_representative}
Let $K\in \mathcal{K}(X)$. 

When $t_K<\infty$, we define the \emph{critical representative} of $K$ as $K^{t_K} \in \mathcal{K}(X)$, and \Cref{prop:hyperbolic-GNS} defines the associated \emph{critical dimension} $\kappa_K$ for which there exists an essentially unique \emph{critical map} $f_{K^{t_K}}\colon X\to \Hyp^{\kappa_K}$ with total image satisfying $K^{t_K}=\cosh\circ \dH$. 

When $K$ is arboreal, we define the \emph{critical dimension} $\kappa$ such that for all $t>0$, we have a map $f_{K^t} \colon X\to \Hyp^\kappa$ with total image satisfying $K^{t}=\cosh \circ\dH$ as in \Cref{prop:hyperbolic-GNS}.
\end{definition}

\begin{proposition}
\label{prop:semicontinuity_critical_dimension}
For $K_0\in \mathcal{K}(X)$ with $\kappa_{K_0}<\infty$, if $t_{K_0}<\infty$ then the critical dimension function $K\mapsto \kappa_K$ is lower semi-continuous at $K_0$.
\end{proposition}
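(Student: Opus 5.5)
The plan mirrors the proofs of \Cref{prop:semicontinuity_kernel-dimension} and \Cref{prop:semicontinuity_critical_exponent}: we express ``$\kappa_K\ge l$'' as a finite, open condition on $K$, using non-vanishing of a hyperbolic Cayley--Menger determinant of the critical representative. Fix $l\in\N$ with $l\le\kappa_{K_0}$. By \Cref{prop:hyperbolic-GNS}, since $\kappa_{K_0}$ is countable (indeed finite), there are points $x_0,\dots,x_l\in X$ such that
\[
D(K_0,t_{K_0}) := \det\left(-K_0^{t_{K_0}}(x_i,x_j)\right)_{0\le i,j\le l}\neq 0 .
\]
Here $D(K,t)$ denotes the same determinant built from $K^t$. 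The goal is to find a neighbourhood $\mathcal{U}$ of $K_0$ on which $\kappa_K\ge l$.

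The key observation is that non-vanishing of a Cayley--Menger determinant for $K^t$ is a robust property in $t$, once $K^t$ is of hyperbolic type. Take $K$ close to $K_0$. By \Cref{prop:semicontinuity_critical_exponent}, $t_K\le t_{K_0}+\eta$ for $K$ near $K_0$, so $t_K<\infty$ and $K^{t_K}\in\mathcal{K}(X)$. Conversely, $t_K$ need not be close to $t_{K_0}$ from below, since the critical exponent may drop. Two cases arise.

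In the first case, $t_K$ is close to $t_{K_0}$. The function $(K,t)\mapsto D(K,t)$ is jointly continuous, because it is a polynomial in finitely many values $K(x_i,x_j)^t$. Hence $D(K,t_K)\neq0$, and therefore $\dim K^{t_K}\ge l$, that is, $\kappa_K\ge l$.

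In the second case, $t_K$ is bounded away from $t_{K_0}$. This is the main obstacle. Here I would use that non-vanishing of $\det(-K^t(x_i,x_j))$ for some $t$ in the hyperbolic range implies non-vanishing for all smaller $t\in(0,t_K]$, when the points are distinct. This follows from \Cref{prop:strictly_of_hyperbolic_type} and \Cref{prop:strct-means-indep_hyperbolic}: for $s<t_K$, the power $K^{s}=(K^{t_K})^{s/t_K}$ is strictly of hyperbolic type as soon as $K^{t_K}$ separates the points $x_i$. So its Cayley--Menger determinants on distinct points are nonzero, strictly negative in fact.

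Separation of the $x_i$ by $K^{t_K}$ follows from $K(x_i,x_j)>1$. This holds for $K$ near $K_0$ because $K_0(x_i,x_j)>1$, which in turn follows from the non-vanishing of $D(K_0,t_{K_0})$: the determinant would vanish if two of the points coincided in the embedding. Note, however, that strictness gives independence only for exponents $s<t_K$, not at $t_K$ itself. So I would instead argue at the exponent $t_K$ directly, via a limit. The determinants $D(K,s)$ are nonpositive on $[0,t_K]$, and real-analytic in $s$ as finite sums of exponentials. By strictness they are strictly negative on $(0,t_K)$, and $D(K,t_K)$ vanishes only if the critical embedding is degenerate on $\{x_0,\dots,x_l\}$.

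To rule out this degeneracy, I would compare with $K_0$. Take a sequence $K_n\to K_0$ with $\kappa_{K_n}<l$. After extraction, $t_{K_n}\to t_\ast\le t_{K_0}$ and $D(K_n,t_{K_n})=0$, so by continuity $D(K_0,t_\ast)=0$. Strictness of $K_0^{s}$ for $s<t_{K_0}$ then forces $t_\ast=t_{K_0}$. Joint continuity gives $D(K_0,t_{K_0})=0$, which contradicts the choice of $x_0,\dots,x_l$. This sequential argument is the step I expect to need care. Pointwise convergence is not first countable in general, so I would phrase it with nets, which changes nothing in the argument.
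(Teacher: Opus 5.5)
Your final net-based argument is correct and is essentially the paper's proof. The paper takes an ultrafilter $\mathcal{F}$ converging to $K_0$ and uses upper semi-continuity of the critical exponent to get $t_{\mathcal{F}}=\lim_{\mathcal{F}} t_K\le t_{K_0}$. It then passes to $K_0^{t_{\mathcal{F}}}$ via lower semi-continuity of $\dim$, which is your joint continuity of the Cayley--Menger determinant, and uses \Cref{prop:strictly_of_hyperbolic_type} to get $\dim(K_0^{t_{\mathcal{F}}})\ge\kappa_{K_0}$, exactly as your strictness step rules out $t_*<t_{K_0}$. Your preliminary two-case discussion and the real-analyticity remark are superfluous, since the net argument alone covers both cases.
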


\begin{remark}[variant assumption]
\label{rem:semicontinuity_critical_dimension}
When $X$ is infinite, if $\kappa_{K_0}<\infty$ and $K_0$ separates points (namely $K_0(x,y)=1\implies x=y$) then $t_{K_0}<\infty$ (since otherwise we would have $\dim(K_0^{t_{K_0}})=\Card(X)-1=\infty$ by \Cref{prop:strictly_of_hyperbolic_type} contradicting $\kappa_0<\infty$), so \Cref{prop:semicontinuity_critical_dimension} applies.
\end{remark}

\begin{proof}
Choose any ultrafilter $\mathcal{F}$ on $\mathcal{K}(X)$ that converges to $K_0$. 

By \Cref{prop:semicontinuity_critical_exponent}, the limit $t_\mathcal{F}=\lim_{\mathcal{F}}t_K$ satisfies $t_\mathcal{F}\le t_{K_0} <\infty$. 
Moreover, we have $\lim_{\mathcal{F}}K^{t_K}=K_0^{t_{\mathcal{F}}}$ so by \Cref{prop:semicontinuity_kernel-dimension} on lower semi-continuity of the dimension we have
\(\lim_{\mathcal{F}}\kappa_K\geq \dim(K_0^{t_{\mathcal{F}}})\).

Finally, the linear independence proved in \Cref{prop:strictly_of_hyperbolic_type} shows that for any $K\in \mathcal{K}(X)$ and all $t\le 1$, we have $\dim(K^t)\geq\dim(K)$, so $\dim(K_0^{t_{\mathcal{F}}})\geq \kappa_{K_0}$.

Chaining up the inequalities yields
\(\lim_{\mathcal{F}}\kappa_K\geq \dim(K_0^{t_{\mathcal{F}}})\ge \kappa_{K_0}\), proving the desired lower semi-continuity of $K\mapsto\kappa_K$ at $K_0$.

Let us note that when $X$ is countable, the space of kernels of hyperbolic type is metrizable, so one may simplify this proof replacing the ultrafilter by a converging sequence up to extracting two successive subsequences.
\end{proof}

\begin{remark}[\Cref{prop:semicontinuity_critical_dimension} may fail at arboreal]
When $X$ has at least four elements $o,x,y,z$, if $K_0$ is arboreal then \Cref{prop:semicontinuity_critical_dimension} can fail as we now show.

In $\Hyp^2$ consider for every $n\in \N_{\ge1}$ an equilateral triangle with vertices $x_n,y_n,z_n$ and incenter $o_n$ such that $d(o_n,x_n)=n$. 
The map $f_n\colon X\to \Hyp^2$ sending $(x,y,z)\mapsto (x_n,y_n,z_n)$ and sending all other points to $o_n$ yields a kernel of hyperbolic type on $X$ defined by $K_n(u,v) = \cosh \dH(f_n(u),f_n(v))$; satisfying in particular $K_n(o,x)=K(o,y)=K(o,z)=\cosh(n)$.
The sequence of kernels $K_n^{1/n}$ converges to the arboreal kernel of hyperbolic type $K_0$ associated to the star with three edges of length $e$.
For all $n$, $\kappa_{K_n^{1/n}}=2$ but $\kappa_{K_0}=3$, so this contradicts lower semi-continuity. The issue is that $t_{K_n^{1/n}}=n\to\infty$. 

Such counter examples cannot occur when $X$ is infinite and $K_0$ with $\kappa_0<\infty$ separates points (as mentioned in \Cref{rem:semicontinuity_critical_dimension}).
\end{remark}

\subsection{Equivariant embeddings of real trees into \texorpdfstring{$\Hyp^\omega$}{Homega}}

After \cite{Morgan-Shalen_valuations-trees-degernations-hyperbolic_1984, Bestvina_degenerations-hyperbolic-space_1988, Paulin_1988}, the spaces of actions of groups containing non-abelian free group on a finite dimensional hyperbolic space have been compactified by spaces of actions on real trees.

In fact, the action of a group $\Gamma$ on a real tree $T$ also yields an action on $\Hyp^\omega$ via the following embedding theorem, proved for simplicial trees in \cite{Burger-Iozzi-Monod_embedding-trees-hyperbolic-spaces_2005}, and extended to real trees in \cite[Theorem 13.1.1]{Das-Simmons-Urbanski_gromov-hyperbolic-spaces_2017} and \cite[Proposition 1.5]{Monod_notes-functions-hyperbolic-groups_2020}.

\begin{theorem}[equivariant isometric embedding of trees into hyperbolic space]
\label{thm:tree-embedding-Hyp-infty} 
Let $T$ be a real tree. For any $\lambda\in \R_{>1}$ there exists a cardinal $\kappa$ and a representation $\pi_\lambda\colon \Isom(T)\to\Isom(\Hyp^\kappa)$ and an $\Isom(T)$-equivariant continuous injective function $\exoT_\lambda\colon T\to\Hyp^\kappa$ such that 
\begin{equation}
	\label{eq:tree_distance}
	\lambda^{d(x,y)}=\cosh \dH(\exoT_\lambda(x),\exoT_\lambda(y)).
\end{equation}
Moreover, the embedding $\exoT_\lambda$ extends to an $\Isom(T)$-equivariant embedding $\partial \exoT_\lambda\colon\partial T\to\partial\Hyp^\kappa$ which is a homeomorphism on its image.
\end{theorem}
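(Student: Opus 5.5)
The plan is to build the embedding from a kernel of hyperbolic type via \Cref{prop:hyperbolic-GNS}. Everything then reduces to two facts: the kernel $K_\lambda(x,y)=\lambda^{d(x,y)}$ on $T$ is of hyperbolic type, and the resulting map behaves well at infinity. Writing $\lambda=e^{t}$ with $t=\log\lambda>0$, we have $K_\lambda=\exp(td)$.

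\textbf{Step 1: $K_\lambda$ is of hyperbolic type.} By \Cref{def:kernel-hyperbolic-type}, we must check that for a base point $x_0$ the visual kernel
\[
K_0(x,y)=\lambda^{d(x_0,x)+d(x_0,y)}-\lambda^{d(x,y)}=\lambda^{2(x\mid y)_{x_0}}\lambda^{d(x,y)}\bigl(1-\lambda^{-2(x\mid y)_{x_0}}\bigr)\cdot\lambda^{0}
\]
is of positive type. It is cleaner to rewrite it as $\lambda^{d(x_0,x)}\lambda^{d(x_0,y)}\bigl(1-\lambda^{-2(x\mid y)_{x_0}}\bigr)$. Multiplying by $\lambda^{d(x_0,x)}\lambda^{d(x_0,y)}$ preserves positive type, so it suffices to show that $1-\lambda^{-2(x\mid y)_{x_0}}$ is of positive type.

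On a real tree, $(x\mid y)_{x_0}$ is the length of the common part of the segments $[x_0,x]$ and $[x_0,y]$. Hence $(x\mid y)_{x_0}=\int_0^\infty \mathbf{1}_{[x_0,x]}(s)\mathbf{1}_{[x_0,y]}(s)\,ds$, where $\mathbf{1}_{[x_0,x]}(s)$ indicates the point at distance $s$ from $x_0$ on the segment. Writing $1-\lambda^{-2g}=\int_0^{g}2\log\lambda\,\lambda^{-2s}ds$ expresses the kernel as
\[
\int_T \mathbf{1}_{[x_0,x]}(p)\mathbf{1}_{[x_0,y]}(p)\,2\log\lambda\,\lambda^{-2d(x_0,p)}\,d\mu(p),
\]
where $\mu$ is the length measure on $T$. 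This is a Gram kernel in $L^2(T,\mu)$, hence of positive type.

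To avoid measure theory in a possibly nonseparable tree, one can instead restrict to finitely many points. Their convex hull is a finite metric tree, and on it the integral is a finite sum over edges.

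\textbf{Step 2: the embedding and the representation.} \Cref{prop:hyperbolic-GNS} yields a cardinal $\kappa$ and $\exoT_\lambda\colon T\to\Hyp^\kappa$ with total image satisfying \eqref{eq:tree_distance}. The map is injective because $\lambda^{d}=1$ forces $d=0$. It is continuous because $K_\lambda$ is continuous in $d$, and $\cosh\dH(\exoT_\lambda x,\exoT_\lambda y)\to 1$ as $y\to x$.

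Each $g\in\Isom(T)$ preserves $K_\lambda$, so $\exoT_\lambda\circ g$ is another total realization of $K_\lambda$. The uniqueness part of \Cref{prop:hyperbolic-GNS} then gives a unique isometry $\pi_\lambda(g)$ with $\exoT_\lambda\circ g=\pi_\lambda(g)\circ\exoT_\lambda$. Uniqueness also makes $\pi_\lambda$ a homomorphism.

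\textbf{Step 3: the boundary map.} This is the main obstacle. Taking logarithms in \eqref{eq:tree_distance} gives $\lvert\dH(\exoT_\lambda x,\exoT_\lambda y)-(\log\lambda)\,d(x,y)\rvert\le\log 2$, so $\exoT_\lambda$ is a $(\log\lambda,\log 2)$-quasi-isometric embedding. Gromov products are therefore distorted by a bounded additive error:
\[
\bigl\lvert(\exoT_\lambda x\mid\exoT_\lambda y)_{\exoT_\lambda o}-(\log\lambda)(x\mid y)_o\bigr\rvert\le\tfrac{3}{2}\log 2.
\]
Consequently sequences converging to infinity, and equivalences between them, are carried over. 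This defines an injective map $\partial\exoT_\lambda\colon\partial T\to\partial\Hyp^\kappa$ that is equivariant by Step 2.

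Since both spaces are strongly hyperbolic, the Gromov products extend continuously to the boundaries. The additive comparison survives in the limit, giving
\[
d_{o'}(\partial\exoT_\lambda\xi,\partial\exoT_\lambda\eta)\asymp d_o(\xi,\eta)^{\log\lambda}
\]
up to multiplicative constants. This bi-Hölder bound shows that $\partial\exoT_\lambda$ is a homeomorphism onto its image. The delicate point is exactly this passage to the boundary with a uniform additive error. For it I would rely on the continuous extension of Gromov products in strongly hyperbolic spaces \cite[Lemma 3.4.22]{Das-Simmons-Urbanski_gromov-hyperbolic-spaces_2017}, or alternatively apply \Cref{lem:exotic-exp(td)} to get the exact relation $\Cr{\cdot}\mapsto\Cr{\cdot}^{\log\lambda}$ on quadruples.
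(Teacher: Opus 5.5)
Your proof is correct and follows essentially the route behind the paper's citations; the paper gives no argument of its own and refers to Burger--Iozzi--Monod, Das--Simmons--Urba\'nski and Monod's Proposition 1.5. The steps match that route: $\lambda^{d}$ is a kernel of hyperbolic type on $T$, with your Gram representation of $1-\lambda^{-2(x\mid y)_{x_0}}$ through the weighted length measure on $[x_0,x]\cap[x_0,y]$ valid and the reduction to a finite subtree avoiding measure-theoretic issues; \Cref{prop:hyperbolic-GNS} with its uniqueness clause then yields $\exoT_\lambda$ and $\pi_\lambda$; and the estimate $0\le \dH(\exoT_\lambda x,\exoT_\lambda y)-(\log\lambda)\,d(x,y)\le\log 2$ handles the boundary as in \Cref{lem:exotic-exp(td)}, with the homomorphism property of $\pi_\lambda$ resting on totality of $\exoT_\lambda(T)$, which makes each $\pi_\lambda(g)$ unique.
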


\begin{remark}[unicity]
\label{rmk:unicity_pt} It follows from the unicity statement for kernels of hyperbolic type (\Cref{prop:hyperbolic-GNS}) if any function $f\colon T\to\Hyp^{\kappa'}$ with total image satisfies \Cref{eq:tree_distance}, then $\kappa=\kappa'$ and there exists $\varphi \in\Isom(\Hyp^\kappa)$ such that $f=\varphi \circ\exoT_\lambda$.
\end{remark}

\begin{remark}[maybe not irreducible]
The representation constructed in \Cref{thm:tree-embedding-Hyp-infty} is often not irreducible, even when the action on $T$ is minimal (namely has no invariant convex subspace. For example, it $T$ is a simplicial tree, the closed span of the images of the vertices will be an invariant subspace which does not contain any midpoint of an edge. Actually, to see that, one can consider the barycentric subdivision of the tree and apply the explicit construction in \cite[\S8]{Burger-Iozzi-Monod_embedding-trees-hyperbolic-spaces_2005} to see that the image of the vertices lie in the Hilbert space spanned by the Dirac functions $\delta_v$ where $v$ is a vertex and for the midpoint of an edge $m$, its image has non zero component with respect to $\delta_m$ which is a unit vector orthogonal to all $\delta_v$'s.
\end{remark}

\begin{remark}[homothetic]
\label{rem:homotheties}
It follows that for a tree $T$ with a fixed base point $o\in T$, for $\lambda_0,\lambda_1>1$ the functions of hyperbolic type $P_{i}\colon \gamma \in \Isom(T) \mapsto \cosh \dH(\exoT_{\lambda_i}(o),\pi_{\lambda_i}(\gamma)\cdot \exoT_{\lambda_i}(o))\in \R_{\ge 1}$ on $\Isom(T)$ associated with $\exoT_{\lambda_i}(o),\pi_{\lambda_i}$ satisfy $P_{\lambda_1}=P_{\lambda_0}^t$ where $t=\tfrac{\log(\lambda_1)}{\log(\lambda_0)}$, hence the $P_i$ are homothetic.
\end{remark}

\begin{remark}[homothety at large scale but non-rectifiable at small scale]
The embedding $\exoT_\lambda$ in Theorem \ref{thm:tree-embedding-Hyp-infty} scales the metric according to different regimes.

At large scale $\exoT_\lambda$ behaves like a homothety of ratio $\log(\lambda)$ since $\dH(\exoT_\lambda(x),\exoT_\lambda(y))$ is asymptotic to $\log(\lambda)\cdot d(x,y)$ when $d(x,y)\to \infty$ (see Lemma~\ref{tree-length-cross-ratio}). 
At infinitesimal scale, the embedding is much more complicated. For example, the image of a geodesic in $T$ is not a rectifiable curve since by Equation~\eqref{eq:tree_distance}, we have $\dH(\exoT_\lambda(x),\exoT_\lambda(y))=H(d(x,y))$ where $H\colon \R_{\ge 0} \to \R_{\ge 0}$ is a continuous function that is differentiable on $\R_{>0}$ satisfying \(\sinh(H(d))H'(d)=\log(\lambda)\lambda^d\).
Thus $\lim_{d\to0}H'(d)=+\infty$ which implies that the image of any geodesic has infinite length.
\end{remark}

We may now characterise when a representation $\rho\colon \Gamma\to\Isom(\Hyp^\kappa)$ comes from an action on a real tree as in \Cref{thm:tree-embedding-Hyp-infty}.

\begin{corollary}[arboreal]
\label{cor:arboreal-functions-hyperbolic-type}
For $F\in \mathcal{F}(\Gamma)$, the following are equivalent:
\begin{itemize}[noitemsep]
	\item The kernel $K\in \mathcal{K}(\Gamma)$ defined by $K(\alpha,\beta)=F(\alpha^{-1}\beta)$ is arboreal (\Cref{def:arboreal}).
	\item For all $t\in \R_{>0}$, the function $F^t\colon \Gamma \to \R_{\ge 1}$ is of hyperbolic type.
	\item For any corresponding representation $\rho\colon\Gamma\to\Isom(\Hyp^\kappa)$, there is a real tree $(T,d)$ and a representation $\rho_T \colon \Gamma\to\Isom(T)$ such that $\rho=\pi_e \circ\rho_T$ where $\pi_e$ is given by \Cref{thm:tree-embedding-Hyp-infty}.
\end{itemize}
\end{corollary}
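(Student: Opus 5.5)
I will prove the three equivalences in a cycle, using that $F$ and the kernel $K(\alpha,\beta)=F(\alpha^{-1}\beta)$ carry the same information. In particular, $F^t$ is of hyperbolic type exactly when $K^t$ is.

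\emph{First $\iff$ second.} By \Cref{prop:characterisation_kernel_hyperbolic_type_tree}, $K$ is arboreal if and only if $t_K=\infty$. That is, $K^t\in\mathcal{K}(\Gamma)$ for all $t\ge 1$. Together with the powers corollary after \Cref{thm:exotic-deformation-Hyp} (which covers $t\in(0,1]$), this is equivalent to $F^t$ being of hyperbolic type for every $t>0$.

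\emph{Third $\implies$ first.} Let $\rho=\pi_e\circ\rho_T$ realize $F=F_{\rho,o}$ for some $o\in\Hyp^\kappa$. If $o=\exoT_e(p)$ for some $p\in T$, then by equivariance and \Cref{eq:tree_distance},
\[
F(\gamma)=\cosh\dH\bigl(\exoT_e(p),\exoT_e(\rho_T(\gamma)p)\bigr)=\exp d\bigl(p,\rho_T(\gamma)p\bigr).
\]
Hence $K$ is arboreal via the map $\gamma\mapsto\rho_T(\gamma)p$. The subtle point is that $o$ is only given to us as some point realizing $F$, and need not lie in $\exoT_e(T)$. I would handle this by reading the statement as: for the GNS representation with total orbit of $o$ (\Cref{prop:functions-of-hyperbolic-type-to-representations}), $o$ can be taken in the image of the tree. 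By the uniqueness in \Cref{prop:functions-of-hyperbolic-type-to-representations} and \Cref{rmk:unicity_pt}, this identification is well defined up to equivariant isometry.

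\emph{First $\implies$ third.} Suppose $K(\alpha,\beta)=\exp d(f(\alpha),f(\beta))$ with $f\colon\Gamma\to T$. Left-invariance of $K$ gives
\[
d\bigl(f(\gamma\alpha),f(\gamma\beta)\bigr)=d\bigl(f(\alpha),f(\beta)\bigr),
\]
so $\Gamma$ acts isometrically on the image $f(\Gamma)$, which is a $0$-hyperbolic metric space. By \Cref{prop:embedding_0_hyperbolic_space_tree}, replace $T$ by the minimal real tree $T'$ spanned by $f(\Gamma)$; its isometric action extends to a representation $\rho_T\colon\Gamma\to\Isom(T')$ such that $f$ is equivariant. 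Now apply \Cref{thm:tree-embedding-Hyp-infty} with $\lambda=e$ to get $\pi_e$ and $\exoT_e$. Then $o'=\exoT_e(f(1))$ realizes $F$ for the representation $\pi_e\circ\rho_T$. Let $\Hyp'$ be the span of the orbit of $o'$ and $\Hyp$ the space of any representation $\rho$ realizing $F$ with total orbit. By the uniqueness in \Cref{prop:functions-of-hyperbolic-type-to-representations}, $\rho$ is conjugate to the restriction of $\pi_e\circ\rho_T$ to $\Hyp'$.

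The main obstacle is this last step. The image of the orbit may not be total in the $\Hyp^\kappa$ of \Cref{thm:tree-embedding-Hyp-infty}, and the restriction to $\Hyp'$ is not literally of the form $\pi_e\circ\rho_T$. I would first try to show that $\exoT_e(T')$ lies in the closed span of $\exoT_e(f(\Gamma))$. If that fails, I would read ``$\rho=\pi_e\circ\rho_T$'' up to restriction to an invariant totally geodesic subspace, using minimality of $T'$. Possibly the cleaner option is to define $\pi_e$ directly on the GNS space of the kernel $\exp d$ restricted to $f(\Gamma)$.
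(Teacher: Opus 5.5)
Your proof is the paper's proof unpacked. The paper proves this corollary in one line: it cites \Cref{prop:characterisation_kernel_hyperbolic_type_tree} for the equivalence of the first two items and the equivariance of $\exoT_e,\pi_e$ from \Cref{thm:tree-embedding-Hyp-infty} for the third, and that second citation is exactly your construction of $\pi_e\circ\rho_T$ with base point $\exoT_e(f(1))$. The base-point and totality issues you flag in the third item are real, and the paper's one-liner passes over them silently. Of your options, only the reading ``$(\rho,o)$ is, up to conjugacy, the restriction of $(\pi_e\circ\rho_T,\exoT_e(p))$ to the closed span of the orbit of $\exoT_e(p)$'' works in general, for two reasons:
\begin{itemize}
\item Your first attempted fix, showing $\exoT_e(T')\subset\overline{\Span}\,\exoT_e(f(\Gamma))$, cannot succeed. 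By the paper's remark \emph{maybe not irreducible}, for vertex orbits in a simplicial tree this span misses the images of edge midpoints.
\item The base point genuinely has to lie on $\exoT_e(T)$. By the remark \emph{bounded from arboreal does not imply arboreal}, $F_{\pi_\lambda\circ\rho_T,o}$ is arboreal exactly when $o\in\exoT_\lambda(T)$.
\end{itemize}
So commit to that reading rather than leaving the alternatives open; under it, your argument is complete.
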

\begin{proof}
The proof follows from \Cref{prop:characterisation_kernel_hyperbolic_type_tree} using the equivariance of the kernels and embeddings in \Cref{thm:tree-embedding-Hyp-infty}.
\end{proof}

\begin{remark}[bounded from arboreal does not imply arboreal]
On a group $\Gamma$, there may exist functions of hyperbolic type at bounded distance from an arboreal one but that are not arboreal.

For instance with $\Gamma=\Z$, consider a representation $\rho \colon \Gamma \to \Hyp^2$ sending the generator to a hyperbolic translation, and choose any point $o$ off its axis to obtain such a function of hyperbolic type $F=F_{\rho,o}$.

More generally, consider any minimal action of a group $\Gamma$ on an infinite real tree $(T,d)$.
We obtain from the embedding $\exoT_\lambda \colon T \to \Hyp^\kappa$ an irreducible representation $\rho \colon \Gamma \to \Isom(\Hyp^\kappa)$.
Choosing a base point $o\in \Hyp^\kappa$ yields a function of hyperbolic type $F_{\rho,o}$: it is arboreal if and only if $o\in \exoT_\lambda(T)$.
Those are all at bounded distance from one another. 
\end{remark}

We may apply Lemma \ref{lem:exotic-exp(td)} in this context to find how the representation $\pi_\lambda$ modifies the length function on $\Isom(T)$ and the cross-ratio on $\partial T$. The statement about length function already appears in \cite[Remark 13.1.6]{Das-Simmons-Urbanski_gromov-hyperbolic-spaces_2017}.

\begin{lemma}
\label{tree-length-cross-ratio}
Let $T$ be a real tree and consider an $\Isom(T)$-equivariant embedding $\pi_\lambda\colon \Isom(T)\to\Isom(\Hyp^\omega)$ satisfying Equation~\eqref{eq:tree_distance} for some $\lambda\in \R_{\ge 1}$.

For $\gamma\in\Isom(T)$ we have \(\ell_{\Hyp^\omega}(\pi_\lambda(\gamma))=\log(\lambda)\ell_T(\gamma)\).

For $x^-,x^+,y^-,y^+\in\partial T$, we have
\(\Cr{\exoT_\lambda(x^-), \exoT_\lambda(x^+); \exoT_\lambda(y^-), \exoT_\lambda(y^+)}=\Cr{x^-,x^+,y^-,y^+}^{\log(\lambda)}\).
\end{lemma}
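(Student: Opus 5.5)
The plan is to deduce this as a direct instance of \Cref{lem:exotic-exp(td)}, applied with the strongly hyperbolic space $(X,d)=(T,d)$ and the exponent $t=\log(\lambda)$. A real tree is $\operatorname{CAT}(-1)$, hence strongly hyperbolic by \Cref{rem:real-trees=connected_strongly-infty-hyperbolic}, so the hypotheses of that lemma concerning $X$ hold. The kernel to use is $\exp(t d)=\lambda^{d}$, which is of hyperbolic type by \Cref{thm:tree-embedding-Hyp-infty}, or by \cite[Proposition 1.5]{Monod_notes-functions-hyperbolic-groups_2020}. Equation \eqref{eq:tree_distance} says exactly that $\cosh \dH(\exoT_\lambda(x),\exoT_\lambda(y))=\exp(\log(\lambda) d(x,y))$. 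So the map $f_t=\exoT_\lambda$ together with the equivariant representation $\chi_t=\pi_\lambda$ play the roles required in \Cref{lem:exotic-exp(td)}.

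The lemma then immediately gives $\ell(\pi_\lambda(\gamma))=\log(\lambda)\ell_T(\gamma)$ for all $\gamma\in\Isom(T)$. It also gives the cross-ratio identity for quadruples in $(\partial T)^{(4)}$, where $\exoT_\lambda$ is understood through its boundary extension $\partial\exoT_\lambda$ from \Cref{thm:tree-embedding-Hyp-infty}.

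For completeness I would spell out the mechanism, since that lemma's proof is terse. Taking logarithms gives $\lvert \dH(\exoT_\lambda(x),\exoT_\lambda(y))-\log(\lambda)d(x,y)\rvert\le\log 2$, and moreover the difference tends to $\log 2$ as $d(x,y)\to\infty$.

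For translation lengths, I would use \eqref{eq:translation-length_limit} with a base point $o\in T$ and its image $\exoT_\lambda(o)$, together with equivariance $\exoT_\lambda(\gamma^n o)=\pi_\lambda(\gamma)^n\exoT_\lambda(o)$. Dividing the bounded error by $n$, it disappears in the limit. This covers the case $\ell_T(\gamma)=0$ as well.

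For cross-ratios, I would take sequences in $T$ converging to four boundary points with at least three distinct. The exponent in \eqref{eq:def-cross-ratio-X} is a signed sum of four distances. The additive errors each tend to $\log 2$ and cancel, since two distances appear with a plus sign and two with a minus sign. This gives $\log\Cr{\exoT_\lambda(\cdot)}\to \log(\lambda)\log\Cr{\cdot}$.

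The main obstacle is passing to the limit on the $\Hyp^\omega$ side. This needs the sequences $\exoT_\lambda(x_n)$ to converge to $\partial\exoT_\lambda(x)$, which holds because $\exoT_\lambda$ is a quasi-isometric embedding up to scaling and $\partial\exoT_\lambda$ is its continuous extension. It also needs continuity of the cross-ratio on quadruples of distinct points in the bordification, and for degenerate quadruples I would instead use \eqref{eq:Cr=0} and \eqref{eq:Cr=infty} together with the injectivity of $\partial\exoT_\lambda$.

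One further point is $\lambda=1$. There both sides are trivial, since all translation lengths are $0$ and all cross-ratios of distinct points are $1$. I would treat this case separately, or restrict to $\lambda>1$ as in \Cref{thm:tree-embedding-Hyp-infty}.
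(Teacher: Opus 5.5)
Your proposal is correct and follows the paper's route: the paper obtains this lemma as a direct application of \Cref{lem:exotic-exp(td)} with $t=\log(\lambda)$. Your unpacking of the mechanism is the same as that lemma's proof, namely the additive error $\dH(\exoT_\lambda(x),\exoT_\lambda(y))-\log(\lambda)\,d(x,y)\to\log 2$, which divides out for translation lengths and cancels in the signed four-term sum for cross-ratios. Your caveat that $\lambda=1$ must be excluded is well taken (the map is then constant, so there is no boundary extension). The degenerate quadruples with $x^-=y^-$ or $x^+=y^+$, which you did not cover, are trivial since both sides equal $1$.
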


\begin{remark}[deforming $\exoH_t$ versus $\exoT_\lambda$]
For the equivariant embeddings of hyperbolic spaces $\exoH_t \colon \Hyp^n \to \Hyp^\omega$ given by the exotic deformation \Cref{thm:exotic-deformation-Hyp}, the length spectrum can only be compressed by $t'\leq1$.
For the equivariant embeddings of real trees $\exoT_\lambda \colon T\to \Hyp^\omega$ in \Cref{thm:tree-embedding-Hyp-infty} the length spectrum can be compressed or dilated depending on whether $\lambda$ is smaller or larger than $\exp(1)$.
\end{remark}

\begin{remark}[characterising $\exoT_\lambda$ from the cross-ratio on dense subsets of $\partial T$]
Consider a real tree $(T,d)$ and a subgroup $\Gamma\subset \Aut(T)$.
Assume that $\Gamma$ acts minimally on $T$ (so that it has no strict non-empty closed convex $\Gamma$-invariant subset). 
In that case $\exoT_\lambda$ can be characterized as the unique irreducible representation $\rho\colon\Gamma\to\Isom(\Hyp^\kappa)$ such that every $\gamma\in\Gamma$ satisfies $\ell_\rho(\gamma)=\lambda\ell_T(\gamma)$.
Equivalently, this is the unique irreducible representation $\rho\colon\Gamma\to\Isom(\Hyp^\kappa)$ such that the image of any loxodromic element is loxodromic, and for any loxodromic $\alpha,\beta \in\Gamma$ we have $[\rho(\alpha)^-,\rho(\alpha)^+;\rho(\beta)^-,\rho(\beta)^+]=[\alpha^-,\alpha^+;\beta^-,\beta^+]^\lambda$. This follows from \Cref{prop:same-length-spectrum-means-conjugate} and \Cref{cor:Poincare-extension}.
\end{remark}

\subsection{Geometric limits of actions on \texorpdfstring{$\Hyp^\kappa$}{Hkappa} to actions on a real tree \texorpdfstring{$T$}{T}}

We endow $\Isom(\Hyp^\omega)$ with its Polish topology (induced by the pseudo-metrics $(g,h)\mapsto d(gx,hx)$ for $x\in\Hyp^\omega$, see \cite{Duchesne_Polish-topology-isom-H-infinity_2023}).
For a countable group $\Gamma$, the product topology on $\Isom(\Hyp^\omega)^\Gamma$ is Polish as well, so this restricts to a Polish topology on the closed subset of representations $\Hom(\Gamma,\Isom(\Hyp^\omega))$.
More concretely, a sequence of representations $\rho_m$ converges to a representation $\rho$ if and only if for all $(\gamma,x)\in \Gamma \times \Hyp^\omega$ we have convergence $\rho_m(\gamma)(x)\to\rho(\gamma)(x)$ (it suffices to consider $x$ in a total subset of $\Hyp^\omega$).

\begin{definition}[infimum displacement length]
For a finite subset $S \subset \Isom(\Hyp^{\kappa})$, we define the infimum $S$-displacement length as \[\Delta(S)=\inf \{ \max\{\dH(\gamma o,o)\colon \gamma\in S\} \colon o\in\Hyp^\kappa\}.\]
\end{definition}

\begin{lemma}[infimum displacement achieved]
\label{lem:min_displacement}
If a finite set $S$ generates a subgroup $\Isom(\Hyp^\kappa)$ with no fixed points in $\partial\Hyp^\kappa$, then the infimum defining $\Delta(S)$ is achieved at some point $o\in \Hyp^\kappa$.
\end{lemma}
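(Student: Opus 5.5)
The plan is a minimizing-sequence argument for the convex function $D(o)=\max_{\gamma\in S}\dH(\gamma o,o)$. Since $\Hyp^\kappa$ need not be locally compact, compactness will be replaced by CAT($-1$) convexity together with the absence of fixed points at infinity.

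Take a minimizing sequence $(o_n)$ with $D(o_n)\to\Delta(S)$. The first step is to show that $(o_n)$ is bounded. Suppose instead that $\dH(o,o_n)\to\infty$ along a subsequence for a fixed base point $o$. Consider the geodesic segments $[o,o_n]$, parametrized by arc length. For each $\gamma\in S$ we have $\dH(\gamma o_n,o_n)\le D(o_n)\le \Delta(S)+1$, so $\gamma[o,o_n]$ and $[o,o_n]$ have endpoints at distance at most $C=\max(\max_{\gamma\in S}\dH(\gamma o,o),\Delta(S)+1)$. By convexity of the distance function in CAT($0$) spaces, the points at arc-length $r$ on $[o,o_n]$ and on $\gamma[o,o_n]$ are therefore within $C$ of each other, for every $r\le \dH(o,o_n)$. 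Next, extract a limit ray. The unit directions of $[o,o_n]$ at $o$ need not converge in infinite dimension, so the extraction would instead use weak compactness in the hyperboloid model, or an ultralimit argument. A cleaner choice is to pass to the boundary via Gromov products. Set $r_n=\dH(o,o_n)$ and consider the points $p_n(r)$ on $[o,o_n]$. Using the bound $\dH(\gamma p_n(r),p_n(r))\le C$ for $\gamma\in S$, the real work is to show that either some subsequence converges to a point $\xi\in\partial\Hyp^\kappa$ fixed by all $\gamma\in S$, or one can produce a bounded minimizing sequence.

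I expect the main obstacle to be this limit extraction in infinite dimension. What I would try first is the standard CAT($0$) fact, valid in complete CAT($0$) spaces even without local compactness: if a group acting by isometries has no fixed point at infinity, then the sublevel sets of a convex displacement function which are nonempty and unbounded yield a point at infinity. More precisely, take the filtering family of closed convex sublevel sets $C_\varepsilon=\{D\le \Delta(S)+\varepsilon\}$, which are nonempty. If some $C_\varepsilon$ is bounded, then by completeness and CAT($0$) (Monod's result that a decreasing family of bounded closed convex sets has nonempty intersection) we get $\bigcap_\varepsilon C_\varepsilon\neq\emptyset$, and any point in the intersection achieves the infimum. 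If instead all the $C_\varepsilon$ are unbounded, then the same theorem of Monod, for filtering families of closed convex sets with empty intersection, produces a canonical point $\xi$ in $\partial\Hyp^\kappa$, the "center at infinity" of the family. This $\xi$ is fixed by every isometry preserving each $C_\varepsilon$. Here I must check that the $C_\varepsilon$ are invariant under $\langle S\rangle$. They are not literally invariant, so I would instead use the function $D$ together with $\gamma$-translates of the $C_\varepsilon$. Since $D(\gamma o)\le D(o)+2\dH(\gamma o,o)$, this gives $\gamma C_\varepsilon\subset$ a bounded neighbourhood of $C_{3\Delta+3\varepsilon}$. Monod's center at infinity is invariant under isometries that move the family within bounded distance, so $\xi$ is fixed by $S$, hence by $\langle S\rangle$. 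This contradicts the hypothesis.

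Granting this, the conclusion follows from the first case. Let me order the write-up as follows. Step 1: convexity and lower semicontinuity (indeed, continuity) of $D$, so that each $C_\varepsilon$ is closed, convex and nonempty. Step 2: dichotomy using the cited property of filtering families of closed convex sets in complete CAT($0$) spaces. Step 3: the unbounded case gives a $\langle S\rangle$-fixed point at infinity, which is a contradiction. Step 4: in the bounded case, any point of $\bigcap_\varepsilon C_\varepsilon$ achieves $\Delta(S)$.
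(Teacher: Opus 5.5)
Your Steps 1--4 follow the paper's argument. The sublevel sets $C_\varepsilon=\{D\le \Delta(S)+\varepsilon\}$ are nested, closed and convex. Either $\bigcap_\varepsilon C_\varepsilon\neq\emptyset$ and you are done, or $\bigcap_\varepsilon\partial C_\varepsilon$ is a single point $\xi\in\partial\Hyp^\kappa$, which $S$ fixes, a contradiction. The paper takes this dichotomy from Proposition~4.4 of Duchesne's 2013 paper on infinite-dimensional symmetric spaces. Two steps in your justification need repair.

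First, the dichotomy is not a fact about arbitrary complete $\mathrm{CAT}(0)$ spaces, as you assert; only its bounded half is. In fact the lemma itself fails in Hilbert space. Take $L^2([0,2\pi];\C)$ viewed as a real Hilbert space, let $U$ be multiplication by $e^{i\theta}$, and let $b=1$. Then $b\in\overline{\mathrm{Im}(U-1)}\setminus\mathrm{Im}(U-1)$, so $x\mapsto Ux+b$ has displacement infimum $0$ that is not attained. Since $U$ has no nonzero fixed vector, this isometry also fixes no point at infinity. The unbounded case therefore has to use the rank-one geometry of $\Hyp^\kappa$. One option is the Caprace--Lytchak theorem for complete $\mathrm{CAT}(0)$ spaces of finite telescopic dimension. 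It says that a filtering family of closed convex sets with empty intersection has $\bigcap\partial C_\varepsilon$ nonempty of Tits radius at most $\pi/2$. In $\Hyp^\kappa$ this forces a single point, because distinct boundary points are at Tits distance $\pi$. Also, split according to whether $\bigcap_\varepsilon C_\varepsilon$ is empty, not whether some $C_\varepsilon$ is bounded: all the $C_\varepsilon$ may be unbounded with nonempty intersection, in which case you are already done.

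Second, your invariance step uses the wrong inclusion. The bound $D(\gamma o)\le D(o)+2\dH(\gamma o,o)$ only places $\gamma C_\varepsilon$ in the larger sublevel set $\{D\le 3\Delta(S)+3\varepsilon\}$. When $\Delta(S)>0$, the boundaries of these sets need not shrink to $\bigcap_\varepsilon\partial C_\varepsilon$ as $\varepsilon\to0$, so you cannot conclude $\gamma\xi=\xi$ from this. What you need is simpler: $\dH(\gamma x,x)\le D(x)\le \Delta(S)+\varepsilon$ for every $x\in C_\varepsilon$ and $\gamma\in S$. Hence $\gamma C_\varepsilon$ lies in the $(\Delta(S)+\varepsilon)$-neighbourhood of $C_\varepsilon$ itself, so $\gamma\,\partial C_\varepsilon\subset\partial C_\varepsilon$ for every $\varepsilon$, and therefore $\gamma\xi=\xi$. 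This is exactly the paper's remark that each element of $S$ displaces $C_l$ by at most $l$.
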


\begin{proof} 
The function $f\colon\Hyp^\kappa\to\R^+$ defined by $f(x)=\max\{\dH(\gamma x,x)\colon \gamma\in S\}$ is a maximum of continuous and convex functions, hence it is continuous and convex. For $l\in (\Delta(S), +\infty)$, the set $C_l=f^{-1}([\Delta(S),l])$ is a closed convex subset of $\Hyp^\kappa$, and we obtain decreasing intersections $C_{\cap}= \bigcap_l C_l$ and $\overline{C}_{\cap} = \bigcap_l (C_l\cup \partial C_l)$.
By \cite[Proposition 4.4]{Duchesne_inf-dim-sym-spaces_2013}, either $C_{\cap}\ne \emptyset$, in which case $f$ has a minimum in $\Hyp^\kappa$ as desired; or else $\overline{C}_{\cap}$ consists of single point $\xi\in \partial \Hyp^\kappa$, which must be fixed by $S$ (since every element in $S$ displaces $C_l$ by at most $l$), a contradiction.
\end{proof}

\begin{theorem}[geometric convergence to an action on a tree]
\label{thm:geometric-convergence-tree}
Consider a group $\Gamma$ with a finite generating set $S$, a sequence of cardinals $\kappa_m\in \N\cup\{\omega\}$ and a sequence of representations $\rho_m\colon \Gamma\to\Isom(\Hyp^{\kappa_m})$.

If no $\rho_m$ has a fixed point at infinity, but the infimum displacement length $\Delta (\rho_m(S))$ is unbounded as $m\to \infty$, then up to extracting a subsequence of $(\rho_m)$ there are:
\begin{enumerate}[noitemsep]
	\item a real tree $(T,d)$ and an isometric action $\rho_T\colon\Gamma\to\Isom(T)$
	\item an embedding $\exoT_\lambda\colon T\to\Hyp^\omega$ equivariant for $\pi_\lambda\colon \Isom(T)\to\Isom(\Hyp^\omega)$ with $\lambda=e$,
	\item an isometric action $\rho_\omega \colon \Gamma \to \Isom(\Hyp^\omega)$ preserving $\exoT_\lambda(T)$,
	\item a sequence of representations $\rho_m'\colon\Gamma\to\Isom(\Hyp^\omega)$ such that $F_{\rho_m'}\sim F_{\rho_m}$ and
	\item a sequence of $(\rho_m,\rho_m')$-equivariant exotic embeddings $\exoH_{t_m}\colon\Hyp^{\kappa_m}\to\Hyp^\omega$.
\end{enumerate}
such that $(\rho'_m)_m \colon \Gamma \to \Isom(\Hyp^\omega)$ converges to $\rho_\omega \colon \Gamma \to \Isom(\Hyp^\omega)$ and $\rho_\omega=\pi_\lambda\circ\rho_T$.

Moreover, there exist points $o_m\in \operatorname{Core}(\rho_m)$ and $o\in T$ such that for every finite set $K\subset \Gamma$, we have Hausdorff convergence of $\exoH_{t_m}(\Conv(\rho_m(K) \cdot o_m))$ to $\exoT_\lambda(\Conv(\rho_\omega(K) \cdot o))$.
\end{theorem}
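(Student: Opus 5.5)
The plan follows the Bestvina--Paulin rescaling argument, but the rescaling is performed inside a fixed space $\Hyp^\omega$ using exotic deformations instead of metric rescaling.

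\textbf{Step 1 (basepoints and rescaling).} By \Cref{lem:min_displacement}, since $\rho_m$ has no fixed point at infinity, there is a point $o_m\in\Hyp^{\kappa_m}$ realising $\Delta_m:=\Delta(\rho_m(S))$. After a bounded adjustment (using \Cref{prop:Hausdist-polytope-1-skeleton} and convexity of the displacement function) we may assume $o_m\in\operatorname{Core}(\rho_m)$. Extract a subsequence with $\Delta_m\to\infty$ and set $t_m=1/\Delta_m\in(0,1]$. Let $F_m(\gamma)=\cosh\dH(o_m,\rho_m(\gamma)o_m)$. Then $F_m^{t_m}$ is of hyperbolic type by \Cref{thm:exotic-deformation-Hyp}, realised by $\chi_{t_m}\circ\rho_m$ acting on $\Hyp^{\kappa_m+\omega}=\Hyp^\omega$ through the exotic embedding $\exoH_{t_m}$. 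Define $\rho_m'$ to be this representation, suitably conjugated. By construction $F_{\rho_m'}=F_m^{t_m}$ is homothetic to $F_{\rho_m}$, which is what ``$F_{\rho_m'}\sim F_{\rho_m}$'' means in item 4.

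\textbf{Step 2 (limit kernel is arboreal).} For a word $\gamma$ of length $k$ in $S$ we have $d(o_m,\rho_m(\gamma)o_m)\le k\Delta_m$. Hence $F_m^{t_m}(\gamma)\le (\cosh(k\Delta_m))^{1/\Delta_m}\le 2e^{k}$, so the functions are uniformly bounded pointwise. By compactness of products of intervals and closedness of $\mathcal{F}(\Gamma)$, a subsequence converges pointwise to some $F\in\mathcal{F}(\Gamma)$. Moreover $\tfrac1{\Delta_m}\log\cosh d$ differs from $\tfrac{d}{\Delta_m}$ by at most $\log 2/\Delta_m\to0$. Therefore $\log F$ is the limit of the rescaled pseudo-metrics $d_m/\Delta_m$ on $\Gamma$, which are $(\log2/\Delta_m)$-hyperbolic, so the limit is $0$-hyperbolic.

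For every $s>0$, the power $F_m^{st_m}$ is of hyperbolic type as soon as $st_m\le1$, which holds for large $m$. Since $\mathcal{F}(\Gamma)$ is closed, $F^s$ is of hyperbolic type for all $s$. By \Cref{cor:arboreal-functions-hyperbolic-type} and \Cref{prop:embedding_0_hyperbolic_space_tree}, $F$ is arboreal. This produces the tree $T$ spanned by the orbit of a point $o$, the action $\rho_T$, and $F(\gamma)=e^{d(o,\rho_T(\gamma)o)}$; the limit is nontrivial because $\max_{s\in S}\log F(s)=1$. Setting $\lambda=e$, \Cref{thm:tree-embedding-Hyp-infty} gives $\exoT_e$ and $\rho_\omega=\pi_e\circ\rho_T$, which realises $F$ on $\Hyp^\omega$.

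\textbf{Step 3 (convergence of representations).} This is the main obstacle: pointwise convergence of kernels must be promoted to convergence of actual representations in the Polish topology, which requires choosing the identifications of the GNS spaces coherently. The approach is to define the kernels on the disjoint union $\Gamma\times\{m\}\sqcup\Gamma$, or equivalently to work in an ultraproduct as in \Cref{sec:ultraproducts}. Concretely, realise each $\rho_m'$ on $\Hyp^\omega$ so that the finite-dimensional spans of $\{\rho_m'(\gamma)o'_m:\gamma\in B_k\}$ are placed by Gram--Schmidt-type normalisation in the hyperboloid model, where $B_k$ is the ball of radius $k$ in $\Gamma$. Convergence of all Gram matrices $F_m^{t_m}(\alpha^{-1}\beta)$ then gives $\rho_m'(\gamma)\rho_m'(\beta)o'_m\to\rho_\omega(\gamma)\rho_\omega(\beta)o$ for all $\gamma,\beta$. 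Testing on the total orbit suffices, which yields convergence $\rho'_m\to\rho_\omega$. Where the limit Gram matrix is degenerate, the kernel dimensions jump (they are only lower semicontinuous by \Cref{prop:semicontinuity_kernel-dimension}). I would handle this by splitting off orthogonal complements whose norm tends to $0$.

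\textbf{Step 4 (Hausdorff convergence).} Finally, fix a finite $K\subset\Gamma$. The points $\exoH_{t_m}(\rho_m(\gamma)o_m)$ converge to $\exoT_e(\rho_T(\gamma)o)$ for $\gamma\in K$. By \Cref{prop:Hausdist-polytope-1-skeleton}, applied uniformly in the dimension, $\Conv(\rho_m(K)o_m)$ lies within $\sinh^{-1}(1)$ of its $1$-skeleton. That distance shrinks after $\exoH_{t_m}$ because the exponent is $t_m\to0$. Each geodesic segment $[\rho_m(\alpha)o_m,\rho_m(\beta)o_m]$ is mapped by $\exoH_{t_m}$ close to $\exoT_e$ of the tree segment, by comparing the three-point kernels $\cosh^{t_m}$ with the tree kernel $e^{d}$ along the segment. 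Together these give the Hausdorff convergence.
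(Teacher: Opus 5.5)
Your Steps 1--2 are fine. Step 2 obtains arboreality of the limit from the closedness of $\mathcal{F}(\Gamma)$ under all powers $F^{s}$, which is a neat alternative to the paper's $0$-hyperbolicity-plus-approximate-midpoints construction.

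\textbf{The gap is in Steps 3--4.} You only position, and pass to the limit for, the orbit points $\exoH_{t_m}(\rho_m(\gamma)o_m)$, and then claim that ``testing on the total orbit suffices''. But $\rho_\omega=\pi_e\circ\rho_T$ acts on a space in which $\exoT_e(T)$ is total, and the orbit $\exoT_e(\rho_T(\Gamma)o)$ is in general \emph{not} total there.

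For instance, take a Schottky sequence in $F_2$ whose generators translate by $m$ along axes meeting orthogonally at $o$. It degenerates to the Cayley tree with $o$ at a vertex. As in the paper's remark that $\pi_\lambda$ need not be irreducible, the images of edge midpoints are not in the closed span of the images of vertices. The phenomenon is visible already on a line with its integer points. There the visual kernel at $0$ on $(0,\infty)$ is $e^{x+y}\bigl(1-e^{-2\min(x,y)}\bigr)$, a time-changed Brownian covariance. Consequently the Brownian-bridge component of the vector representing $\exoT_e(1/2)$ is Minkowski-orthogonal to every $\exoT_e(n)$, $n\in\Z$.

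So convergence on the orbit does not give $\rho'_m\to\rho_\omega$. Moreover, the exotic images of interior points of segments have components that your identifications leave completely free. This also breaks Step 4. Take $x_m$ on $[\rho_m(\alpha)o_m,\rho_m(\beta)o_m]$. The three-point comparison only fixes $\cosh\dH(\exoH_{t_m}(x_m),P)$ and $\cosh\dH(\exoH_{t_m}(x_m),Q)$ asymptotically, which places $\exoH_{t_m}(x_m)$ on a sphere around the geodesic through $P$ and $Q$. Since $h(a)=\cosh^{-1}(e^{a})$ is strictly subadditive, $\exoT_e$ of an interior tree point lies off that geodesic. Hence this sphere has radius bounded away from $0$ in $\Hyp^\omega$, and nothing forces $\exoH_{t_m}(x_m)$ to approach $\exoT_e$ of the tree segment.

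\textbf{The missing idea}, which is the paper's route, is to also position a countable dense subset of the $1$-skeleta. Concretely, use the dyadic points $x_m(\gamma_0,\gamma_1;\delta)$ at fraction $\delta\in[0,1]\cap\Z[1/2]$ along $[\rho_m(\gamma_0)o_m,\rho_m(\gamma_1)o_m]$.
\begin{itemize}
\item Their exotic images stay in $B(o,k+\log 2)$. So after placing each finite stage in a fixed flag of finite-dimensional subspaces, you can extract a convergent subsequence by compactness and a diagonal argument.
\item This also avoids the discontinuity of Gram--Schmidt at degenerate limits. Your splitting device only works if you orthonormalise along indices that remain independent in the limit.
\item The limits form a dense subset of a real tree on which $\cosh\dH=e^{d}$. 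This gives $\exoT_e$ directly, and uniqueness of the GNS construction matches it with your abstract tree.
\item You get convergence of $\rho'_m$ on a total set via $\rho'_m(\alpha)\exoH_{t_m}(x_m(\gamma_0,\gamma_1;\delta))=\exoH_{t_m}(x_m(\alpha\gamma_0,\alpha\gamma_1;\delta))$.
\item Hausdorff convergence follows from the equicontinuity bound $\cosh\dH\bigl(\exoH_{t_m}(x),\exoH_{t_m}(x')\bigr)\le\exp\bigl(t_m\dH(x,x')\bigr)$ along segments, together with your correct observation that $\exoH_{t_m}$ shrinks the $\sinh^{-1}(1)$ gap to the $1$-skeleton.
\end{itemize}
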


\begin{remark}[recovering previous results]
\label{rem:past-tree-compactifications}
Let us first emphasize that one of our new contributions lies in the freedom to consider actions $\rho_m$ on hyperbolic spaces whose dimensions $\kappa_m\in \N\cup\{\omega\}$ form a non constant sequence of countable cardinals, whereas all past works are in the setting where $\kappa_m$ is bounded by or equal to a fixed finite integer $n$.

If we specialize our Theorem \ref{thm:geometric-convergence-tree} to the case of a group $\Gamma$ which is not virtually Abelian and of faithful discrete representations $\rho_m\colon \Gamma\to \Isom(\Hyp^{n})$ with a constant sequence of finite dimensions $\kappa_m=n$, then the tree that we obtain is isometric to one of the trees obtained by Morgan \cite[Theorem 3.6]{Morgan_actions-trees-compactification-SOn1-representation_1986} (see also \cite{Otal_compact-repres_2015} for a survey of similar results by Culler, Morgan, Shalen and Otal) and later recovered independently by Bestvina \cite[Theorem A]{Bestvina_degenerations-hyperbolic-space_1988} and Paulin \cite{Paulin_1988}.
Under these additional assumptions, they deduce that the limiting action of $\Gamma$ on $T$ has small edge stabilizers. 
Our proof does not use their results and reconstructs the tree by different methods, but still relies on the idea to renormalize the metric.

The actions on $\Hyp^{\kappa_m}$ once embedded inside $\Hyp^\omega$ converge in a precise geometric sense, as explained by the Hausdorff convergence of the convex hull of finite portion of orbits.
The proof in \emph{Step 7} is to be compared with those in \cite[§1.3 and Proposition 1.9]{Courtois-Guilloux_Hausdorff-infinite-dim-hyp_2024} which deals with the case where the sequence of dimensions $\kappa_m$ is a constant integer $\kappa_m=n\in \N$.
\end{remark}

\begin{proof}
Assume that the finite generating set $S$ is symmetric and contains $1$, so that the ball of radius $k\in \N$ is $S^k \subset \Gamma$.

\emph{\underline{Step 1:} Position the convex cores $\operatorname{Core}(\rho_m)$ around a fixed point $o$.}

Since $\rho_m$ has no fixed points at infinity, we may find by \Cref{lem:min_displacement} a base point $o_m\in \Hyp^{\kappa_m}$ minimizing the $\rho_m(S)$-displacement function $o\in \Hyp^\omega \mapsto \max\{d(\rho_m(\gamma)(o),o)\colon \gamma\in S\}$. Up to embedding isometrically each $\Hyp^{\kappa_m}$ in $\Hyp^\omega$ extending trivially $\rho_m$ on the orthogonal of the span of the image of $\Hyp^{\kappa_m}$, we may and shall assume that $\kappa_m=\omega$ for all $m\in\N$.
After conjugating each $\rho_m$, we may assume that each $o_m$ coincide with a single base point $o\in\Hyp^\omega$.

Since $\rho_m$ has no fixed points at infinity there is a smallest non-empty closed convex invariant set $\operatorname{Core}(\rho_m)\subset \Hyp^{\omega}$.
Since projections on closed convex subsets contract distances, we must have $o\in\operatorname{Core}(\rho_m)$. 
Let $C_m^k\subset \Hyp^{\omega}$ be the closed convex hull of $\rho_m(S^k)(o)$. 
Observe that $\overline{\cup_{k\in\N}C_m^k}$ is a non-empty invariant closed convex subset of $\operatorname{Core}(\rho_m)$, hence these subsets coincide.

\emph{\underline{Step 2:} Rescale by exotic deformations $\exoH_{t_m}$ to focus on the convex cores.}

Let $d_m=\max\{\dH(\rho_m(\gamma)(o),o)\colon \gamma\in S\}$ and $t_m=d_m^{-1}$. 
Consider the equivariant exotic deformation $\exoH_{t_m} \colon \Hyp^{\omega}\to\Hyp^\omega$ from Theorem~\ref{thm:exotic-deformation-Hyp}, so that:
\begin{equation}
	\label{eq:equivariant}
	\forall x, y \in \Hyp^{\omega} \colon \quad
	\cosh \dH(\exoH_{t_m}(x),\exoH_{t_m}(y)) = \left(\cosh \dH(x,y) \right)^{t_m}.
\end{equation}
and assume (after composing $\exoH_{t_m}$ with an isometry) that $\exoH_{t_m}(o)=o$.
Let $\rho_m'=\chi_{t_m}\circ \rho_m \colon \Gamma \to \Isom(\Hyp^\omega)$.
Triangle inequalities show that every $\gamma\in S^k$ satisfies $\dH(\rho_m(\gamma)(o),o)\leq k d_m$, thus taking $\cosh^{-1}\log$ of \eqref{eq:equivariant} and using for $x\in \R_{\ge 0}$ the inequalities $\cosh^{-1}\exp(x)\le x+\log( 2)$ and $\log \cosh(x)\le x$ yields:
\begin{equation*}
	\dH(\rho_m'(\gamma)(o),o)\leq \cosh^{-1}\exp(t_m \log \cosh(kd_m)) \le (t_m \log \cosh(kd_m))+\log(2) \le k+\log(2)
\end{equation*}
hence $\exoH_{t_m}(C_m^k)\subset B(o, k+\log( 2))$.

\emph{\underline{Step 3:} Construct dyadic subdivisions $X_m^{k,i}$ of $C_m^k$ and
	reposition $Y_m^{k,i}=\exoH_{t_m}(X_m^{k,i})$ in $ \Hyp^\omega$.}

For $\gamma_0,\gamma_1\in \Gamma$ and $\delta\in [0,1]$, let $x_m(\gamma_0,\gamma_1;\delta)$ be the point on $[\rho_m(\gamma_0)(o),\rho_m(\gamma_1)(o)]$ at distance $\delta \times \dH(\rho_m(\gamma_0)(o),\rho_m(\gamma_0)(o))$ from $\rho_m(\gamma_0)(o)$, and let $y_m(\gamma_0,\gamma_1;\delta) = \exoH_{t_m}(x_m(\gamma_0,\gamma_1;\delta))$.

Denote, for $i\in \N$ the depth-$i$ dyadic cube $D_i=[0,1]\cap \tfrac{1}{2^i}\Z$, and $D_\infty = \bigcup_{i} D_i = [0,1]\cap \Z[1/2]$.
Define the depth-$i$ dyadic subdivision \(X_m^{k,i} = \{x_m(\gamma_0,\gamma_1;\delta) \colon \gamma_0,\gamma_1 \in S^k,\, \delta \in D_i\}\) of the $1$-skeleton of $C_m^k$, and its exotic rescale $Y_m^{k,i} = \exoH_{t_m}(X_m^{k,i}) \subset \exoH_{t_m}(C_m^{k})$.
Note that if $k\le k'$ and $i\le i'$ then $Y_m^{k,i}\subset Y_m^{k',i'}$.

The subset $Y_m^{k,i}\subset \Hyp^\omega$ has cardinal at most $1+\kappa_{k,i} = (1+2^i)\times \Card(S^k)^2$ so it lies in a totally geodesic subspace of dimension $\kappa_{k,i}$ containing $o$.
Since the stabilizer of $o$ in $\Isom(\Hyp^\omega)$ acts transitively on flags of totally geodesic subspaces containing $o$ of dimension sequence $(\kappa_{n,n})$, we may assume (up to conjugating $\rho'_m$) that there exists such a flag of subspaces $(\Hyp^{\kappa_{n}})$ of dimensions $\kappa_{n,n}$ in $\Hyp^\omega$ such that for all $m,k,i$ we have $Y_m^{k,i} \subset \Hyp^{\kappa_{n,n}}$ where $n=\max\{k,i\}$.

\emph{\underline{Step 4:} Extracting a convergent subsequence of $Y_m^{k,i}$ as $m,k,i \to \infty$ to find $Y_\infty^{\infty,\infty} \subset \Hyp^\omega$.}

For all $m$ we have $Y_m^{k,i}\subset B(o,k+\log(2))\cap \Hyp^{\kappa_{n,n}}$ where $n=\max\{k,i\}$. So by compactness of closed balls of finite dimensional hyperbolic spaces, we may assume (up to extracting a subsequence) that each sequence $y_m(\gamma_0,\gamma_1;\delta)\in Y_m^{k,i}$ converges to some point in $\Hyp^\omega$. 
Taking a diagonal extraction over $m$, we may assume that for all $\gamma_0,\gamma_1 \in \Gamma$ and $\delta \in D_\infty$ the sequence $y_m(\gamma_0,\gamma_1;\delta)$ converges to some $y(\gamma_0,\gamma_1;\delta)\in \Hyp^\omega$: let $Y_\infty^{\infty,\infty}=\{y(\gamma_0,\gamma_1;\delta) \colon \gamma_0, \gamma_1\in \Gamma,\, \delta \in D_\infty\}$. 
Denote $T\subset \Hyp^\omega$ the topological closure of $Y_\infty^{\infty,\infty}$ in $\Hyp^\omega$.

\emph{\underline{Step 5}: Defining a distance $d$ on $T$ which is $0$-hyperbolic.}

For $y,y'\in T$, define $d(y,y')=\log\cosh \dH(y,y')$. 
We compute for $y=\lim y_m(\gamma_0,\gamma_1;\delta)$ and $y'=\lim y_m(\gamma_0',\gamma_1';\delta')$, using \eqref{eq:equivariant} and the asymptotic $(\log \cosh x)/x \to 1$ as $x\to \infty$, that
\begin{align*}
	d(y,y')
	&=\lim_{m\to\infty}\log\left(\cosh\left(\dH(y_m(\gamma_0,\gamma_1;\delta),y_m(\gamma_0',\gamma_1';\delta'))\right)\right)\\
	&=\lim_{m\to\infty}t_m\log\left(\cosh\left(\dH(x_m(\gamma_0,\gamma_1;\delta),x_m(\gamma_0',\gamma_1';\delta'))\right)\right)\\
	&=\lim_{m\to\infty}t_m\dH(x_m(\gamma_0,\gamma_1;\delta),x_m(\gamma_0',\gamma_1';\delta')).     
\end{align*}
Since the hyperbolic distance on $\Hyp^{\omega}$ is Gromov-hyperbolic and $t_m\to0$, we have that $(Y_\infty^{\infty,\infty},d)$ is a $0$-hyperbolic metric space. Moreover a Cauchy sequence in $(Y_\infty^{\infty,\infty},d)$ must be a Cauchy sequence in $(\Hyp^\omega, \dH)$ which is complete, thus converges to a point in its closure $T$. 
Hence $(T,d)$ coincides with the completion of $(Y_\infty^{\infty,\infty},d)$. 
Moreover, since $Y^{\infty,\infty}_\infty$ admits approximate midpoints (for any $\varepsilon>0$ and $x,y\in Y^{\infty,\infty}_\infty$, there is $m\in Y^{\infty,\infty}_\infty$ such that $d(x,m),d(m,y)\leq d(x,y)/2+\varepsilon$), its completion $(T,d)$ is a real tree by \Cref{def:real-tree}.
By construction:
\begin{equation*}
	\forall y,y'\in T \colon \quad \cosh(\dH(y,y'))=e^{d(y,y')}
\end{equation*}
hence the inclusion $T\hookrightarrow\Hyp^\omega$ corresponds to the embedding $\exoT_\lambda$ of Theorem \ref{thm:tree-embedding-Hyp-infty} for $\lambda=e$.

\emph{\underline{Step 6}: The non-isometric embedding $T\hookrightarrow \Hyp^\omega$ is $\Gamma$-equivariant and $\Gamma$ acts isometrically.}

For $\gamma_0,\gamma_1\in \Gamma$ and $\delta\in D_\infty$ and $\alpha\in\Gamma$, we have $y_m(\alpha\gamma_0,\alpha\gamma_1;\delta)=\rho_m(\alpha)y_m(\gamma_0,\gamma_1;\delta)$.
Thus we obtain an action $\rho_\omega$ of $\Gamma$ on $Y_\infty^{\infty,\infty}$ defined by $\rho(\alpha)(y(\gamma_0,\gamma_1;\delta))=y(\alpha\gamma_0,\alpha\gamma_1;\delta)$, which is isometric since for all $\gamma_0,\gamma_1, \gamma_0',\gamma_1'\in\Gamma$ and $\delta,\delta'\in D_\infty$ and $\alpha\in \Gamma$, the limit as $m\to \infty$ of
\(\dH(\rho_m(\alpha)y_m(\gamma_0,\gamma_1;\delta),\rho_m(\alpha)y_m(\gamma_0',\gamma_1';\delta')) = 
\dH(y_m(\alpha\gamma_0,\alpha\gamma_1;\delta),y_m(\alpha\gamma_0',\alpha\gamma_1';\delta')) 
\)
yields 
\[\dH\rho(\alpha)y(\gamma_0,\gamma_1;\delta),\rho(\alpha)y(\gamma_0',\gamma_1';\delta'))= \dH(y(\gamma_0,\gamma_1;\delta),y(\gamma_0',\gamma_1';\delta')).\]

The isometric action of $\Gamma$ on $Y_\infty^{\infty,\infty}$ extends uniquely to an isometric action of $\Gamma$ on its topological closure $(T,d)$ and to the smallest closed totally geodesic subspace containing them which we may still denote $(\Hyp^\omega, \dH)$.
The embedding $\exoT_e \colon (T,d) \hookrightarrow \Hyp^\omega$ is $\Gamma$-equivariant.

\emph{\underline{Step 7:} Hausdorff convergence $\exoH_{t_m}(\Conv(\rho_m(S^k) \cdot o_m)) \xrightarrow[]{m\to \infty} \exoT_\lambda(\Conv(\pi_\lambda(S^k) \cdot o))$ in $\Hyp^\omega$.}

Let us begin by clarifying the objective and notations.
Fix $k\in \N$.
Recall that the convex hull $C_m^k$ of $X_m^{k,0}=\rho_m(S^k) \cdot o_m$ is also the convex hull of its $1$-skeleton, or of its depth-$i$ dyadic subdivision $X_m^{k,i}$.
Note that for finite $i$, the set $\exoH_{t_m}(X_m^{k,i})=Y_m^{k,i}$ pointwise converges as $m\to\infty$ to $Y_\infty^{k,i}$.
Moreover, in the real tree $T$ (the closure of $Y_\infty^{\infty,\infty}$), the convex hull of the finitely many points $\pi_\lambda(S^k) \cdot o =Y_\infty^{k,0}$ is the union of the geodesic segments between them, namely the closure of $Y_\infty^{k,\infty}=\{y(\gamma_0,\gamma_1;\delta),\ \gamma_0,\gamma_1\in S^k,\delta\in D_\infty\}$.
That being said, we aim to prove that $\exoH_{t_m}(C_m^k)\subset \Hyp^\omega$ Hausdorff converges to the closure of $Y_\infty^{k,\infty}$. 

Let us now outline the steps of the strategy.
By triangle inequalities, for all $m,i$ we have:
\begin{equation*}
	\dH_H(\overline{Y_\infty^{k,\infty}},\exoH_{t_m}(C^k_m))
	\leq\dH_H(\overline{Y_\infty^{k,\infty}},Y_\infty^{k,\infty})
	+\dH_H(Y_\infty^{k,\infty},Y_\infty^{k,i})
	+\dH_H(Y_\infty^{k,i},Y_m^{k,i})
	+\dH_H(Y_m^{k,i},\exoH_{t_m}(C^k_m))
\end{equation*}
The first term vanishes.
For all finite $i\in \N$, the finite set $Y_m^{k,i}$ converges (pointwise hence Hausdorff) to $Y_\infty^{k,i}$ as $m\to \infty$ so this deals with the third term.
We must therefore bound the second and fourth terms.

Fix $\varepsilon>0$.
Let $M^k=\sup \{t_m\operatorname{diam}(C_m^k) \colon m\in\N\}\leq t_m 2 k d_m=2k$.
Suppose that $i\in \N$ is large enough so that $\exp(M^k/2^{i}) < \cosh(\varepsilon)$.
For any $\gamma_0,\gamma_1,\gamma_0',\gamma'_1\in\Gamma$ and $\delta,\delta'\in[0,1]$ and any $m\in\N$, applying equation \eqref{eq:equivariant} and for $t\in \R_{\ge 1}$ the inequality $(\cosh d)^t\le \exp(td)$, we have:
\begin{equation*}
	\cosh \dH(y_m(\gamma_0,\gamma_1;\delta),y_m(\gamma_0',\gamma_1';\delta')) 
	\le \exp \left( t_m \dH(x_m(\gamma_0,\gamma_1;\delta),x_m(\gamma_0',\gamma_1';\delta'))\right).
\end{equation*}
For $\delta\in [0,1]$ we may chose $\delta'\in D_i$ such that $\lvert \delta-\delta'\rvert<1/2^i$, thus for all $\gamma_0,\gamma_1\in S^k$ we have: 
\begin{equation*}
	\exp(t_m\dH(x_m(\gamma_0,\gamma_1;\delta),x_m(\gamma_0,\gamma_1;\delta')))
	\leq \exp(M^k/2^i)
\end{equation*}
hence $\dH(y_m(\gamma_0,\gamma_1;\delta),y_m(\gamma_0,\gamma_1;\delta')) \le \cosh^{-1} \exp(M^k/2^i) < \varepsilon$.
Thus $\dH_H(Y_m^{k,\infty}, Y_m^{k,i})\le \varepsilon$.

Next, assume that $i$ is at least as large as above.
By Corollary \ref{prop:Hausdist-polytope-1-skeleton}, for any $m$ the Hausdorff distance between the polytope $C_m^{k} \subset \Hyp^{\omega}$ and its $1$-skeleton is at most $\sinh^{-1}(1)$. 
Thus for $i$ large enough we have $\dH_H(C_m^k,X_m^{k,i})<2\sinh^{-1}(1)$.
Now if $m$ is large enough so that $(\cosh2\sinh^{-1}(1))^{t_m}\leq\exp(M^k/2^i)$, their images by $\exoH_{t_m}$ satisfy $\dH_H(\exoH_{t_m}(C^k_m), Y_{m}^{k,i})<\varepsilon$.

Finally, observe that one may rewrite Step 7 replacing $S^k$ by any finite subset $K\subset \Gamma$ to prove the claim as anounced in the Theorem.
\end{proof}

\subsection{Ultraproducts of hyperbolic spaces and actions on real trees}\label{sec:ultraproducts}

The goal of this subsection is to give a slightly different point of view on the construction of the real $T$ inside $\Hyp^\omega$ obtained in \Cref{thm:geometric-convergence-tree} using ultraproducts.
For background on ultrafilters and ultraproducts (or ultralimits), we refer to \cite[Chapter I.5]{Bridson-Haefliger_metric-non-positive-curvature_1999}.
One key point is that an ultraproduct of algebraic hyperbolic spaces is still a algebraic hyperbolic space.
Let $\beta\N$ denote the Stone--\v{C}ech compactification of $\N$, that is the space of ultrafilters on $\N$.

\begin{lemma}[ultraproduct of hyperbolic spaces]
\label{lem:ultraproduct-Hyp}
For a sequence of cardinals $(\kappa_m)$ indexed by $\N$ and an ultrafilter $\ufi\in\beta\N$, there exists a cardinal $\kappa$ such that the ultraproduct $X=\prod_\ufi\Hyp^{\kappa_m}$ with distance $d_\ufi(x,y)=\lim_{\ufi}d(x_,y_m)$ is isometric to $\Hyp^\mathbf{\kappa}$.

When each $\kappa_m$ is countable and the sequence is not bounded by a finite cardinal, the cardinal $\kappa$ is equal to that $\mathfrak{c}=\Card(\R)$ of the continuum.
\end{lemma}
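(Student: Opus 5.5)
The plan is to show that the ultraproduct of the linear Minkowski models is again a Minkowski space, and then to take its hyperboloid. For the first statement, the natural route is through kernels of hyperbolic type and \Cref{prop:hyperbolic-GNS}. Let $d_\ufi$ be the ultralimit pseudo-distance, and identify points at distance $0$, so that $X$ is a metric space. Put $K=\cosh\circ d_\ufi$. Then $K(x,y)=\lim_\ufi \cosh\dH(x_m,y_m)$. Each \eqref{eq:kernel-hyperbolic-CS} inequality involves only finitely many points, so it passes to the ultralimit: for fixed $x_0,\dots,x_n\in X$ and $\lambda\in\R^n$, the inequality holds for every $m$ and hence holds $\ufi$-almost surely in the limit. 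Therefore $K$ is of hyperbolic type on $X$, and \Cref{prop:hyperbolic-GNS} gives an isometric embedding $f\colon X\to\Hyp^\kappa$ with total image, for some cardinal $\kappa$.

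It then remains to show surjectivity. To do this, I will check that $X$ is complete and totally geodesic in the right sense. Ultraproducts of metric spaces are complete, so $f(X)$ is closed. Geodesics and midpoints in $X$ come from ultralimits of geodesics in the $\Hyp^{\kappa_m}$, so $f(X)$ is geodesically convex. I would go further and show that $f(X)$ is closed under the hyperbolic ``linear combinations'': the points of the span of finitely many $f(x_i)$ are determined by $\cosh$-data that is expressed continuously in terms of the Gram matrix. Concretely, lift to $\R^{1+\kappa_m}$ and form the normalized combinations $\sum\lambda_i x_{i,m}$ inside the upper sheet. Their ultralimit is a point of $X$, and $f$ sends it to the corresponding combination of the $f(x_i)$. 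So $f(X)$ contains the span of every finite subset. Since $f(X)$ is also closed and total, $f(X)=\Hyp^\kappa$.

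For the cardinality statement, a dimension count is the natural approach. The Hilbert dimension $\kappa$ is at most $\Card(X)\le \Card(\prod \Hyp^{\kappa_m})=\mathfrak{c}^{\omega}=\mathfrak{c}$, because each $\Hyp^{\kappa_m}$ with countable $\kappa_m$ has cardinality $\mathfrak{c}$. For the lower bound, suppose $\kappa_m$ is not bounded by any finite cardinal; we may assume $\kappa_m\to\infty$ along $\ufi$. The main obstacle is showing that $\kappa\ge\mathfrak{c}$, and my first attempt goes as follows. Choose an almost-disjoint family $\{A_s\}_{s\in\R}$ of infinite subsets of $\N$ with $\Card=\mathfrak{c}$. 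For each $s$, define a point $x^s=(x^s_m)$ at distance $1$ from $o$, where $x^s_m$ points in a direction $e_{j(s,m)}$ of an orthonormal basis. The indices must be chosen, using the growth of $\kappa_m$, so that for $s\ne s'$ we have $j(s,m)\ne j(s',m)$ for $\ufi$-almost every $m$. An easy way to do this is to encode the first $N_m$ binary digits of $s$ as an index $j<2^{N_m}\le\kappa_m$, with $N_m\to\infty$. The tangent vectors at $o$ are then pairwise orthogonal in the ultralimit, so they span a tangent Hilbert space of dimension at least $\mathfrak{c}$. This gives $\kappa\ge\mathfrak{c}$, and hence $\kappa=\mathfrak{c}$.
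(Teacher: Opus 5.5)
Your proof is correct in substance, and its first step coincides with the paper's. The kernel $\cosh\circ d_\ufi=\lim_\ufi\cosh\circ\dH_m$ is of hyperbolic type because each (HCS) inequality involves only finitely many points, so the hyperbolic GNS construction gives an isometric embedding $f\colon X\to\Hyp^\kappa$ with total image.

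The two routes then diverge for surjectivity. The paper only observes that $X$ is complete, geodesic, and that every segment extends to a bi-infinite geodesic, and asserts from this that $X$ is totally geodesic. In the Klein model this amounts to checking that a closed set containing the full chord through any two of its points is the trace of a closed affine subspace.

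You instead take ultralimits of the normalized Minkowski combinations $\sum_i\lambda_i x_{i,m}/\sqrt{Q_m(\lambda)}$, where $Q_m(\lambda)=\sum_{i,j}\lambda_i\lambda_j\cosh\dH(x_{i,m},x_{j,m})$. You then check through the Gram data that $f$ sends the limit to the corresponding normalized combination of the $f(x_i)$, since their Minkowski product is $1$. So $f(X)$ visibly contains the span of every finite subset, and closedness plus totality conclude. This is more explicit than the paper, and it makes your separate convexity remark superfluous.

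Both arguments tacitly need the \emph{pointed} ultraproduct, i.e.\ sequences at bounded distance from a base sequence $(o_m)$, as the paper uses later. Otherwise $d_\ufi$ takes the value $+\infty$, $K$ is not a kernel, and the ultralimit of your combinations need not be a point of $X$.

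For the cardinality claim the paper gives no argument at all, so here your proposal adds something genuine. The upper bound $\kappa\le\Card(X)\le\mathfrak{c}^{\aleph_0}=\mathfrak{c}$ is correct. So is the family of $\mathfrak{c}$ points $x^s$ with pairwise orthogonal directions at $o$, indexed by $s\in\{0,1\}^{\N}$ through their first $N_m$ digits. The almost-disjoint family you introduce is never used and can be deleted.

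What is not correct is the phrase ``we may assume $\kappa_m\to\infty$ along $\ufi$''. This is not a normalization but an additional hypothesis, and without it the assertion is false. If $\ufi$ is principal at $m_0$, then $X\cong\Hyp^{\kappa_{m_0}}$ with $\kappa_{m_0}$ countable. If $\kappa_{2m}=2m$, $\kappa_{2m+1}=1$ and $2\N+1\in\ufi$, then $X\cong\Hyp^1$, although the sequence is unbounded.

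You should therefore state that you prove the second assertion under two hypotheses: $\ufi$ is non-principal, and $\{m\colon\kappa_m\ge n\}\in\ufi$ for every $n\in\N$. These are exactly what your encoding needs to get $\lim_\ufi N_m=\infty$, hence $j(s,m)\ne j(s',m)$ for $\ufi$-almost every $m$.
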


\begin{proof}
Let $\dH_m$ be the distance function on $\Hyp^{\kappa_m}$. The function $\cosh\circ \dH_m$ is a kernel of hyperbolic type. 
Hence the distance $d_\ufi$ on $X$ yields a kernel of hyperbolic type by $\cosh\circ d_\ufi = \lim \cosh \circ \dH_m$, so there exists a minimal cardinal $\kappa$ such that $X$ is isometric to a subset of $\Hyp^\kappa$.
Since each $\Hyp^{\kappa_m}$ is geodesic and each geodesic segment lies in a bi-infinite geodesic, $X$ enjoys the same property. 
Moreover $X$ is complete (this holds for all ultraproducts over non-principal filters $\ufi$).
Consequently, $X$ is complete and totally geodesic, so it is isometric to $\Hyp^\kappa$.
\end{proof}

Let us now recover \Cref{thm:geometric-convergence-tree} using ultralimits.

Consider a group $\Gamma$ generated by a finite $S$, and a sequence of cardinals $\kappa_m$ and of representations $\rho \colon \Gamma \to \Isom(\Hyp^{\kappa_m})$ with no fixed points at infinity.

By \Cref{lem:min_displacement} we may find, a base point $o_m\in \Hyp^{\kappa_m}$ realising the minimum of the $\rho_m(S)$-displacement function $o\in \Hyp^\omega \mapsto \max\{\dH(\rho_m(\gamma)(o),o)\colon \gamma\in S\}$, which we abbreviate as $\Delta_m = \Delta(\rho_m(S))$.

Assume that $\Delta_m$ is unbounded, and consider an ultrafilter $\ufi$ on $\N$ such that $\lim_\ufi \Delta_m =+\infty$.

The ultraproduct $\prod_\ufi\left(\Hyp^{\kappa_m}, \dH_m/\Delta_m,o_m\right)$ is a pointed real tree $(T_\ufi,d, o_\ufi)$ because it is a complete metric space which is path connected and $0$-hyperbolic (since $(\Hyp^{\kappa_m},d_m/\Delta_m)$ is Gromov $\log(2)/\Delta_m$-hyperbolic and $\lim_\ufi \Delta_m = +\infty$).
It is endowed with the isometric $\Gamma$-action given by $\rho_T=\prod_\ufi \rho_m$.

Moreover, applying \Cref{thm:exotic-deformation-Hyp} with $t_m=1/\Delta_m$ we have exotic embeddings $\exoH_{t_m} \colon \Hyp^{\kappa_m} \to \Hyp^{\kappa_m'}$ yielding exotically-rescaled representations $\rho_m'=\chi_{t_m} \circ \rho_m\colon\Gamma\to\Isom(\Hyp^{\kappa'_m})$.

The ultraproduct $\prod_\ufi\left(\Hyp^{\kappa_m'}, \dH_m', \exoH_{t_m}(o_m)\right)$ is a pointed algebraic hyperbolic space $(\Hyp^{\kappa_\ufi}, \dH_\ufi, \exoH_\ufi(o_\ufi))$ By \Cref{lem:ultraproduct-Hyp}.
It is endowed with the isometric diagonal $\Gamma$-action $\rho'_\ufi:=\prod_\ufi \rho_m'$.

The ultraproduct map $\exoH_\ufi$ induced by the product map $\prod_\ufi \exoH_{t_m}$ is an embedding of the pointed real tree $(T_\ufi, d, o_\ufi)$ into the pointed algebraic hyperbolic space $(\Hyp^{\kappa_\ufi}, \dH_\ufi, \exoH_\ufi(o_T))$ which is equivariant with respect to the isometric $\Gamma$-actions $\prod_\ufi \rho_m$ and $\prod_\ufi \rho_m'$. 

Moreover distances between points in $T_\ufi$ and their images in $\Hyp^{\kappa_\ufi}$ satisfy the following relation: If $x,y\in T_\ufi$ are given by $x=\lim_\ufi x_m$ and $y=\lim_\ufi y_m$ then $d(x,y)=\lim_\ufi t_m\dH_m(x_m,y_m)$ This implies that $d(x_m,y_m)\to\infty$. 
Since, for each $m\in\N$, we have the equality $\cosh(\dH(\exoH_{t_m}(x_m),\exoH_{t_m}(y_m))=\cosh(\dH_m(x_m,y_m))^{t_m}$ and because of $\lim_\ufi t_m = 0$ we have 
\begin{equation*}
\cosh\left(\dH_\ufi(\exoH_\ufi(x),\exoH_\ufi(y))\right)=e^{d(x,y)}.
\end{equation*}

\begin{theorem}[geometric and ultrafliter limits coincide] 
Consider a group $\Gamma$ generated by a finite $S$, and a sequence of cardinals $\kappa_m$ and of representations $\rho \colon \Gamma \to \Isom(\Hyp^{\kappa_i})$ with no fixed points at infinity such that $\Delta(\rho_m(S))\to +\infty$.
By \Cref{thm:geometric-convergence-tree}, one can extract from $(\rho_m)$ a limiting action of $\Gamma$ by isometries on a real tree $(T,d)$ with an equivariant embedding in $\Hyp^\omega$ which is $\Gamma$-equivariant for the isometric actions $\rho_T, \rho_\omega$.

There is an ultrafilter $\ufi\in\beta\N$ such that, up to passing to the unique minimal invariant real subtree, the real trees $T_\ufi$ and $T$ are isometric in a $\Gamma$-equivariant way and up to passing to the unique minimal totally geodesic invariant subspace that contain the embeddings of $T$ and $T_\ufi$, the actions $\rho'_\ufi$ and $\rho_\omega$ of $\Gamma$ on $\Hyp^\omega$ are conjugated. 
\end{theorem}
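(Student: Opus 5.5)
The plan is to choose the ultrafilter so that it refines the subsequence extracted in \Cref{thm:geometric-convergence-tree}, and then to compare the two constructions through their kernels of hyperbolic type. Since the limit points $y(\gamma_0,\gamma_1;\delta)$ were obtained by a diagonal extraction $(m_j)$, we take any non-principal $\ufi\in\beta\N$ containing the image of $j\mapsto m_j$. Then ordinary limits along the subsequence coincide with $\ufi$-limits.

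We use the same base points $o_m$ minimising the $S$-displacement in both constructions. The only discrepancy is the scaling, $t_m=1/d_m$ versus $1/\Delta_m$, but these coincide since $o_m$ realises $\Delta_m$ (\Cref{lem:min_displacement}).

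\emph{Step 1: the two trees.} Define a map $\Phi$ from $Y_\infty^{\infty,\infty}$ to $T_\ufi$ by $y(\gamma_0,\gamma_1;\delta)\mapsto[(x_m(\gamma_0,\gamma_1;\delta))_m]_\ufi$. Step 5 of \Cref{thm:geometric-convergence-tree} gives
\[
d(y,y')=\lim_m t_m\dH(x_m,x'_m)=\lim_\ufi t_m\dH(x_m,x_m'),
\]
so $\Phi$ is isometric. It is $\Gamma$-equivariant because $x_m(\alpha\gamma_0,\alpha\gamma_1;\delta)=\rho_m(\alpha)x_m(\gamma_0,\gamma_1;\delta)$. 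Its image is the union of the $\ufi$-limits of the geodesic segments $[\rho_m(\gamma_0)o_m,\rho_m(\gamma_1)o_m]$ evaluated at dyadic parameters. The image is therefore dense in the subtree of $T_\ufi$ spanned by the orbit $\rho_T(\Gamma)o_\ufi$, which is the union of geodesics between orbit points. Hence $\Phi$ extends by completeness to an equivariant isometry from $T$ onto that subtree. Both $T$ and this subtree contain the unique minimal invariant subtree, when one exists, since it is contained in the convex hull of any orbit. Restricting $\Phi$ to it gives the first claim.

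\emph{Step 2: the hyperbolic spaces.} On the hyperbolic side, the ultralimit embedding $\exoH_\ufi$ satisfies $\cosh\dH_\ufi(\exoH_\ufi x,\exoH_\ufi y)=e^{d(x,y)}$, by the computation just before the statement. Likewise, the inclusion $T\hookrightarrow\Hyp^\omega$ satisfies $\cosh\dH=e^{d}$. So the two maps $T\to\Hyp^\omega$ and $\exoH_\ufi\circ\Phi\colon T\to\Hyp^{\kappa_\ufi}$ realise the same kernel of hyperbolic type $e^{d}$ on $T$, which is $\Gamma$-invariant. By the uniqueness in \Cref{prop:hyperbolic-GNS} (see also \Cref{rmk:unicity_pt}), after restricting to the closed spans of the images there is a unique isometry between the spans intertwining the two maps. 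Uniqueness then forces this isometry to intertwine $\rho_\omega$ and $\rho'_\ufi$ restricted to these invariant totally geodesic subspaces. The same argument applies after first passing to the minimal subtree and its span.

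\emph{Main obstacle.} I expect the delicate point to be identifying exactly which subspaces are being compared. The span of $\exoH_\ufi(T_\ufi)$ in the ultraproduct $\Hyp^{\kappa_\ufi}$ (of dimension $\mathfrak c$ by \Cref{lem:ultraproduct-Hyp}) must be the separable span of the image of the minimal subtree, and $T_\ufi$ itself may be much larger than the orbit hull. This is why the statement passes to minimal invariant objects, and I would handle it via the minimality and uniqueness above.

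A secondary check is that the limits in Step 4 of \Cref{thm:geometric-convergence-tree} taken in $\Hyp^\omega$ agree with the ultraproduct points. This holds because the relevant distances converge, so the kernels agree, and only the kernels matter.
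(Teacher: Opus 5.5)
Your proposal is correct and takes essentially the same approach as the paper. You choose a non-principal ultrafilter containing the extracted subsequence and send $y(\gamma_0,\gamma_1;\delta)$ to the ultralimit point (the paper's $z_\ufi(\gamma_0,\gamma_1,\delta)$), using the Step~5 distance computation to obtain an equivariant isometry onto the orbit hull in $T_\ufi$; you then conjugate $\rho_\omega$ and $\rho'_\ufi$ by uniqueness of the hyperbolic GNS embedding for the kernel $e^{d}$ (\Cref{rmk:unicity_pt}), with slightly more care than the paper about the completion and about minimal subtrees.
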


\begin{proof}We explain the isometry between $T_\ufi$ and $T$.
For $\gamma_0, \gamma_1\in\Gamma$ and $\delta\in[0,1]$, we denote by $z_\ufi(\gamma_0,\gamma_1,\delta)\in T_\ufi$ the unique point on the geodesic segment $[\gamma_0o_\ufi,\gamma_1o_\ufi]$ at distance $\delta d_\ufi(\gamma_0o_\ufi,\gamma_1o_\ufi)$ from $\gamma_0o_\ufi$.

We first perform all the successive extractions done in Step 1 of the proof of \Cref{thm:geometric-convergence-tree} and choose an ultrafilter $\ufi$ on $\N$ that contains all the images of these extractions. 
Such $\ufi$ exists by compactness of $\beta\N$. In particular, with the notations of \Cref{thm:geometric-convergence-tree} and $\gamma_0,\gamma_1,\gamma_0',\gamma_1'\in\Gamma$ and $\delta,\delta'$ dyadic numbers in $[0,1]$, for $y=\lim y_m(\gamma_0,\gamma_1,\delta), y'=\lim y_m(\gamma_0',\gamma_1',\delta')$ and $z=z_\ufi(\gamma_0,\gamma_1,\delta), z'=z_\ufi(\gamma_0',\gamma_1',\delta')$ we have

\begin{align*}
	d_T(y,y')&
	=\lim_{m\to\infty}\log\cosh \dH(y_m(\gamma_0,\gamma_1,\delta),y_m(\gamma_0',\gamma_1',\delta'))\\
	&=\lim_{m\to\infty}t_m\dH(x_m(\gamma_0,\gamma_1,\delta),x_m(\gamma_0',\gamma_1',\delta'))\\
	&
	=\lim_{\ufi}t_m\dH(x_m(\gamma_0,\gamma_1,\delta),x_m(\gamma_0',\gamma_1',\delta'))\\
	&=d_{T_\ufi}(z,z').
\end{align*}

Thus the map $f_\ufi$ from $T$ to the closed convex hull of the orbit of $o_\ufi$ in $T_\ufi$ which sends $y$ to $z$ is an equivariant isometry. 
The real tree $T$ is minimal, since it is equal to the closure of the union of the axes of elements in $\Gamma$. Up to replacing $T_\ufi$ by the image of $f_\ufi$, $T$ and $T_\ufi$ are isometric in a $\Gamma$-equivariant way.

Moreover, both trees $T$ and $T_\ufi$ are embedded  via maps $\exoH\colon T\to\Hyp^\omega$ and $\exoH_\ufi\colon T_\ufi\to\Hyp^\omega$ in $\Hyp^\omega$ according to \Cref{thm:tree-embedding-Hyp-infty} for the same parameter $\lambda=e$.

Now, \Cref{rmk:unicity_pt} shows that the two representations  $\rho_\omega$ and $\rho'_\ufi$ are conjugated.
\end{proof}


\section{Equivalence classes of functions of hyperbolic type}

Given a topological group $\Gamma$ and a cardinal $\kappa\le \Card(\Gamma)$, we wish to consider the space of continuous representation $\Gamma \to \Isom(\Hyp^\kappa)$ up to the action of $\Isom(\Hyp^\kappa)$ at the target, namely inner automorphisms and exotic representations $\rho_t\colon \Isom(\Hyp^\kappa)\to \Isom(\Hyp^\kappa)$.

For this purpose, we will consider functions of hyperbolic type up to bounded pertubations (which arise by conjugacy) and powers (which arise from the exotic deformation).

Recall that on a (topological) group $\Gamma$, the space of functions of hyperbolic type $\mathcal{F}(\Gamma)$ is endowed with the pointwise convergence topology.

\begin{definition}[comparability and homothety]
On a group $\Gamma$, we say that $F_1,F_2\in \mathcal{F}(\Gamma)$ are 
\begin{itemize}[noitemsep, align=left]
	\item[\emph{comparable}] when $\log(F_1/F_2)$ or equivalently $\cosh^{-1}F_1-\cosh^{-1}F_2$ is bounded.
	\item[\emph{homothetic}] when there exists $t_1,t_2\in (0,1]$ such that $F_1^{t_1}$ and $F_2^{t_2}$ are comparable.
\end{itemize}
These define equivalence relations on $\mathcal{F}(\Gamma)$, the latter coarser than the former, and endow the quotient spaces with the quotient topologies so as to obtain successive continuous quotient maps denoted $\mathcal{F}(\Gamma) \to \mathcal{C}(\Gamma) \to \Proj\mathcal{C}(\Gamma)$ sending $F \mapsto \Bar{F} \mapsto \class{F}$.
\end{definition}

\begin{proof}
The comparability relation is of course reflexive and symmetric, and its transitivity follows from the triangular inequality on sup-norms.
The homothety relation coarsens comparability; its reflexivity follows by setting $t_1=1=t_2$, symmetry follows from that for the comparability relation by inversion $t\mapsto 1/t$, and transitivity follows from that for comparability and the existence quantifier on $t$.
\end{proof}

In $\mathcal{F}(\Gamma)$, the comparability class of the constant function equal to $1$ is the set of all bounded functions. 
The set of unbounded comparability classes is denoted $\mathcal{C}^*(\Gamma)=\mathcal{C}(\Gamma)\setminus \bar{\mathbf{1}}$ and the corresponding homothety classes is denoted $\Proj \mathcal{C}^*(\Gamma) = \Proj \mathcal{C}(\Gamma)\setminus \class{\mathbf{1}}$. 

The space $\Proj \mathcal{C}(\Gamma)$ is not Hausdorff because the class $\class{\mathbf{1}}$ is in the closure of any other class: if $F$ is a function of hyperbolic type then $F^t\to \mathbf{1}$ as $t\to0$.

\subsection{Topology of \texorpdfstring{$\Proj\mathcal{C}(\Gamma) \supset \Proj\mathcal{C}_{nn}(\Gamma) \supset \Proj\mathcal{C}_{ne}(\Gamma)$}{PC-PCnn-PCne} and length functions}\label{subsec:topology_PC}

The notions of elementarity and neutrality are invariant by comparability (\Cref{cor:neutral-elementary}) and homothety (\Cref{cor:homothetic_length_spectrum}). 
We denote the corresponding spaces of comparable classes by $\mathcal{C}_{ne}(\Gamma)\subset \mathcal{C}_{nn}(\Gamma) \subset \mathcal{C}^*(\Gamma)$ and of homothety classes by $\Proj\mathcal{C}_{ne}(\Gamma)\subset\Proj\mathcal{C}_{nn}(\Gamma)$ of $\Proj\mathcal{C}^*(\Gamma)$.
The aim of this subsection is to show that the topology on these subspaces is quite well behaved.

Recall that in restriction to the subset of non-neutral functions $\mathcal{F}_{nn}(\Gamma)$, the comparability and homothety relations are characterized by \Cref{cor:equal_length_spectrum} and \Cref{cor:homothetic_length_spectrum} in terms of the length spectrum and the associated minimal representations.

Let $\R^\Gamma$ be the product vector space with the product topology, and $\Proj\R^\Gamma$ be the associated projective space with the quotient topology.
After removing the trivial element $\{0\}$ which is its own class, we obtain a space $\Proj\R^\Gamma\setminus\{0\}$ that is metrizable when $\Gamma$ is countable.

\begin{theorem}[length function on $\Proj\mathcal{C}_{nn}(\Gamma)\subset \Proj\mathcal{C}_{ne}(\Gamma)$]
\label{thm:length-non-neutral-classes}
Let $\Gamma$ be a group. 

The length function $\ell \colon \mathcal{F}(\Gamma) \to \R^\Gamma$ defined by $F \mapsto \left(\ell_F(\gamma)\right)_{\gamma\in\Gamma}$ quotients to a function $\ell \colon \Proj\mathcal{C}(\Gamma)\to\Proj\R^\Gamma$ which is continuous and injective in restriction to $\Proj\mathcal{C}_{nn}(\Gamma)$; in particular the sets $\Proj\mathcal{C}_{nn}(\Gamma)$ and $\Proj\mathcal{C}_{ne}(\Gamma)$ are Hausdorff and open in $\Proj\mathcal{C}(\Gamma)$.
\end{theorem}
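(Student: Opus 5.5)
The plan is to prove the statement in four steps, in order: well-definedness of the quotient map, continuity, injectivity, and then the topological consequences.

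\emph{Well-definedness.} By \Cref{lem:ellF-from-F}, comparable functions have equal length functions. For powers, $\ell_{F^t}=t\ell_F$ for $t\in(0,1]$ (the corollary following \Cref{thm:exotic-deformation-Hyp}). Hence homothetic $F_1,F_2$ have length functions that are positively proportional. So $F\mapsto\ell_F$ descends to a map $\Proj\mathcal{C}(\Gamma)\to\Proj\R^\Gamma$; neutral classes go to the class $\{0\}$, which is its own class in $\Proj\R^\Gamma$.

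\emph{Continuity.} \Cref{lem:F-to-ellF-continuity} gives continuity of $F\mapsto\ell_F$ from $\mathcal{F}(\Gamma)$ to $\R^\Gamma$. Composing with the quotient $\R^\Gamma\to\Proj\R^\Gamma$ gives a continuous map that is constant on homothety classes. By the universal property of the quotient topology on $\Proj\mathcal{C}(\Gamma)$, the induced map is continuous.

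\emph{Injectivity on $\Proj\mathcal{C}_{nn}(\Gamma)$.} Suppose $\ell_{F_1}=\lambda\ell_{F_2}$ with $\lambda>0$ and both length functions nonzero. After swapping indices we may take $\lambda=t\in(0,1]$. \Cref{cor:homothetic_length_spectrum}, applied with minimal representations realizing $F_i$, then shows that $\log(F_1/F_2^t)$ is bounded, so $[F_1]=[F_2]$. Two points need checking here.
\begin{itemize}
\item Such minimal realizations exist: restrict a realizing representation to $\Span(\Lambda)$. Changing the base point to one inside this span changes $F$ only within its comparability class.
\item The corollary requires non-neutrality, which holds by assumption.
\end{itemize}

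\emph{Topological consequences.} The preimage in $\mathcal{F}(\Gamma)$ of $\Proj\mathcal{C}_{nn}(\Gamma)$ is $\mathcal{F}_{nn}(\Gamma)=\{F:\ell_F\neq0\}$. This set is open by continuity of $F\mapsto\ell_F(\gamma)$ for each $\gamma$, and it is saturated. Hence $\Proj\mathcal{C}_{nn}(\Gamma)$ is open in the quotient.

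For openness of the non-elementary set I would use \Cref{cor:elem-Hyp_length-homomorphism} and \Cref{lem:Tits-alternative}. A function is non-elementary iff there exist $\alpha,\beta\in\Gamma$ and $p$ such that $(m,n)\mapsto\ell(\alpha^{pm}\beta^{pn})$ is proper. A more robust open test is the following: there exist loxodromic $\alpha,\beta$ with $\ell(\alpha^N\beta^N)>N\ell(\alpha)+N\ell(\beta)-c$ failing the additive behaviour of a homomorphism. More concretely, \Cref{lem:cross-ratio-from-translation-length} shows that $\ell(\alpha^m)+\ell(\beta^n)-\ell(\alpha^m\beta^n)$ converges to $2\log$ of a finite cross-ratio, whereas for elementary actions this quantity is $0$ or grows linearly. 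I would try to show that the finitely many strict inequalities
\[
\ell(\alpha^m\beta^n)<\ell(\alpha^m)+\ell(\beta^n),\qquad \ell(\alpha^m\beta^{-n})<\ell(\alpha^m)+\ell(\beta^{n}),\qquad \ell(\alpha)>0,
\]
for suitable fixed large $m,n$, characterize non-elementarity near a given $F$. These are open conditions, and they exclude $\ell=|\Xi|$ for a homomorphism $\Xi$. This is the step I expect to be the main obstacle: turning the global characterization into a locally open one.

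Hausdorffness then follows. $\Proj\R^\Gamma\setminus\{0\}$ is Hausdorff: a projective space of a locally convex space with the origin removed, separated via continuous normalized coordinates near any two distinct lines. A continuous injection into a Hausdorff space forces the domain to be Hausdorff, by pulling back disjoint open sets.
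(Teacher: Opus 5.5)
Your treatment of well-definedness, continuity, injectivity on $\Proj\mathcal{C}_{nn}(\Gamma)$, openness of $\Proj\mathcal{C}_{nn}(\Gamma)$ and Hausdorffness matches the paper's proof, which uses \Cref{lem:F-to-ellF-continuity} and \Cref{cor:homothetic_length_spectrum}. The gap is in the step you flagged, openness of $\Proj\mathcal{C}_{ne}(\Gamma)$: your test does not cover $\Proj\mathcal{C}_{ne}(\Gamma)$.

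By \Cref{lem:cross-ratio-from-translation-length}, asking that $\ell(\alpha^m\beta^{\pm n})<\ell(\alpha^m)+\ell(\beta^n)$ for large $m,n$ amounts to asking that both $\Cr{\alpha^-,\alpha^+;\beta^-,\beta^+}>1$ and $\Cr{\alpha^-,\alpha^+;\beta^+,\beta^-}>1$. That is, $d_o(\alpha^-,\alpha^+)d_o(\beta^-,\beta^+)$ must be the strict maximum of the three Ptolemy products, a ``crossing axes'' configuration. This never happens for actions on trees. For hyperbolic $g,h$ on a real tree one has $\max\{\ell(gh),\ell(gh^{-1})\}\ge \ell(g)+\ell(h)$ (Culler--Morgan); equivalently, the two largest Ptolemy products of four ends coincide. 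Take the free group acting freely on its Cayley tree, composed with $\pi_e$. This is a non-elementary class, and all its non-trivial elements are loxodromic, yet no choice of $\alpha,\beta,m,n$ satisfies your inequalities.

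There is a second problem. With only $\ell(\alpha)>0$ imposed, your test is passed by an elementary action. Let the infinite dihedral group act on a geodesic of $\Hyp^2$, with $\alpha$ a translation and $\beta$ a half-turn about a point of it. Then $\ell(\alpha\beta^{\pm1})=0<\ell(\alpha)$. The implication ``elementary $\Rightarrow$ $\ell=\lvert\Xi\rvert$'' fails for lineal actions containing an element that swaps the two endpoints. It does hold on the subgroup generated by two loxodromic elements of an elementary action.

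The remedy is to test non-equality, as the paper does. Let $\mathcal{U}(\alpha,\beta)$ be the set of classes with $\ell(\alpha),\ell(\beta),\ell(\alpha\beta)>0$ and $\ell(\alpha\beta)\notin\{\ell(\alpha)+\ell(\beta),\lvert\ell(\alpha)-\ell(\beta)\rvert\}$. These conditions are open and invariant under positive rescaling.

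They exclude elementary classes. If $\alpha,\beta$ are loxodromic in an elementary action, they fix a common $\xi\in\partial\Hyp^\kappa$, so $\ell=\lvert\Xi\rvert$ on $\langle\alpha,\beta\rangle$ for the Busemann character at $\xi$.

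Conversely, every non-elementary class lies in some $\mathcal{U}(\alpha,\beta)$:
\begin{enumerate}
\item The limit set $\Lambda$ is perfect, so letting $w\to x$ gives distinct $w,x,y,z\in\Lambda$ with $\Cr{w,x;y,z}<1$.
\item By \Cref{prop:loxodromic-axes-dense} and continuity of the cross-ratio, there are loxodromic $\alpha,\beta$ with $\Cr{\alpha^-,\alpha^+;\beta^-,\beta^+}<1$.
\item Then \Cref{lem:cross-ratio-from-translation-length} gives $\ell(\alpha^m\beta^n)>\ell(\alpha^m)+\ell(\beta^n)$ for large $m,n$, so the class lies in $\mathcal{U}(\alpha^m,\beta^n)$.
\end{enumerate}
Your test with both inequalities reversed would also work, since $\Cr{w,x;z,y}<1$ as well when $w$ is close to $x$.
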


\begin{remark}[]
This can be compared with representations of discrete subgroups in Lie groups of rank $1$: the conjugacy classes of irreducible discrete faithful representations some  countable groups $\Gamma$ form a nice subspace of in their variety of representations into $\Isom(\Hyp^n)$, parametrized by length functions. See for example \cite{Otal_spectre-marque_1990} for an original result and  \cite{Kim_marked-length-rigidity-symmetric-space_2001} for a more comprehensive one.
\end{remark}

\begin{proof}
The function $\ell \colon \mathcal{F}(\Gamma) \to \R^\Gamma$ defined by $F \mapsto \left(\ell_F(\gamma)\right)_{\gamma\in\Gamma}$ is continuous by Lemma \ref{lem:F-to-ellF-continuity}.
By Corollary~\ref{cor:homothetic_length_spectrum}, this descends to a function $\Proj\ell \colon \Proj\mathcal{C}(\Gamma)\to \Proj\R^\Gamma$ defined by $\class{F}\mapsto [(\ell_F(\gamma))_\gamma]$, which is continuous and injective.
Since $\Proj\R^\Gamma\setminus\{0\}$ is open and Hausdorff, so is its preimage $\Proj\mathcal{C}_{nn}(\Gamma)$.
Let us show that $\Proj\mathcal{C}_{ne}(\Gamma)$ is open in $\Proj\mathcal{C}(\Gamma)$ by providing an open cover. 
For $\alpha,\beta\in \Gamma$, let $\mathcal{U}(\alpha,\beta)$ be the subset of $\class{F}\in \Proj\mathcal{C}(\Gamma)$ such that 
$\ell_{F}(\alpha),\ell_{F}(\beta),\ell_{F}(\alpha\beta)>0$ and $\ell_F(\alpha\beta)\ne \lvert \ell_F(\alpha) \pm \ell_F(\beta)\rvert$.
This well-defined subset of $\Proj\mathcal{C}(\Gamma)$ is open.
If $\class{F}\in \mathcal{U}(\alpha,\beta)$ then $\ell_F$ is not the absolute value of a homomorphism on the subgroup generated by $\alpha$ and $\beta$, so by \Cref{cor:elem-Hyp_length-homomorphism}, $\class{F}\in \Proj\mathcal{C}_{ne}(\Gamma)$.
Conversely if $\class{F}\in \Proj\mathcal{C}_{ne}(\Gamma)$ then $F$ it is not the absolute value of a homomorphism there exist $\alpha,\beta\in \Gamma$ with $\ell_F(\alpha),\ell_F(\beta), \ell_F(\alpha\beta)>0$ and $\ell_F(\alpha\beta)\ne \lvert \ell_F(\alpha)+\ell_F(\beta)\rvert$ hence $\class{F}\in \mathcal{U}(\alpha,\beta)$.
\end{proof}

For the rest of the section we assume that $\Gamma$ is finitely generated: the aim is to prove the next \Cref{thm:topo-PC-PCnn-PCne} giving more information about the topology of $\Proj\mathcal{C}(\Gamma) \supset \Proj\mathcal{C}_{nn}(\Gamma) \supset \Proj\mathcal{C}_{ne}(\Gamma)$.

\begin{theorem}[topology of $\Proj\mathcal{C}(\Gamma)$ for finitely generated $\Gamma$]
\label{thm:topo-PC-PCnn-PCne}
Let $\Gamma$ be a finitely generated group. 
\begin{enumerate}[noitemsep]
	\item \label{thm:compact-PC(Gamma)} 
	The space $\Proj\mathcal{C}(\Gamma)$ is compact.
	
	\item \label{thm:topo-PCnn(Gamma)} 
	The subspace $\Proj\mathcal{C}_{nn}(\Gamma)$ is a filtering union of compact metrizable subspaces $\mathcal{A}_{nn}(S;2,\varepsilon)$ so the restriction $\Proj\ell \colon \mathcal{A}_{nn}(S; 2,\varepsilon)\to\Proj\R^\Gamma\setminus\{0\}$ is a homeomorphism onto its image.
	
	\item \label{thm:topo-PCne(Gamma)}
	The subspace $\Proj\mathcal{C}_{ne}(\Gamma)$ is a filtering union of compact metrizable subspaces $\mathcal{A}_{ne}(S; 2,\varepsilon)$ so the restriction $\Proj\ell \colon \mathcal{A}_{ne}(S; 2,\varepsilon)\to\Proj\R^\Gamma\setminus\{0\}$ is a homeomorphism onto its image.
\end{enumerate}
\end{theorem}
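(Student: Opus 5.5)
We need to define $\mathcal{A}_{nn}(S;2,\varepsilon)$ ourselves, since the statement refers to it without a definition. Fix a finite symmetric generating set $S\ni 1$. The guiding idea is normalization. By \Cref{lem:min_displacement}, for a representation without fixed point at infinity we can choose the base point minimizing the $S$-displacement. By an exotic power $F^t$ we can then scale the displacement to a prescribed value.

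For the first item, consider the subset $\mathcal{F}_{\le 2}(S)\subset\mathcal{F}(\Gamma)$ of functions with $\max_{s\in S}\cosh^{-1}F(s)\le 2$ (any fixed constant would do). Any $F$ in it satisfies $\cosh^{-1}F(\gamma)\le 2\lvert\gamma\rvert_S$ by the triangle inequality. It is therefore a closed subset of the compact product $\prod_{\gamma}[1,\cosh(2\lvert\gamma\rvert_S)]$, and closedness follows from \Cref{lem:pointwise-closed}; so it is compact. Every class meets this set. If $F$ is bounded, its class contains $\mathbf{1}$. Otherwise, take a realizing representation $\rho$ and a base point $o$. If $\rho$ is non-elementary or has no fixed point at infinity, move $o$ to a displacement minimizer; this changes $F$ only within its comparability class. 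Then replace $F$ by $F^t$ with $t$ small enough that the $S$-displacement is at most $2$, which is possible because $\cosh^{-1}(\cosh(x)^t)\le tx+\log 2$. Since $\log 2<2$, this works once $tx\le 2-\log 2$. The fixed-point-at-infinity cases need only a base point of $S$-displacement at most some $D$, followed by the same power. Hence $\mathcal{F}_{\le2}(S)\to\Proj\mathcal{C}(\Gamma)$ is surjective and continuous, and $\Proj\mathcal{C}(\Gamma)$ is compact as the image of a compact set.

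For the second item, set
\begin{equation*}
\mathcal{A}_{nn}(S;2,\varepsilon)=\Bigl\{F\in\mathcal{F}_{\le 2}(S)\colon \max_{\gamma\in S^{N}}\ell_F(\gamma)\ge\varepsilon\Bigr\},
\end{equation*}
either with $N$ fixed (say $N=2$) or with $\varepsilon$ and the word length both varying so that the family is filtering. I would take the subspace of $\Proj\mathcal{C}$ to be its image. This is a closed subset of a compact set, because $F\mapsto\ell_F$ is continuous by \Cref{lem:F-to-ellF-continuity}, so its image is compact. On $\Proj\mathcal{C}_{nn}(\Gamma)$ the map $\Proj\ell$ is continuous and injective by \Cref{thm:length-non-neutral-classes}. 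A continuous injection from a compact space into the Hausdorff space $\Proj\R^\Gamma\setminus\{0\}$ is a homeomorphism onto its image. The image is metrizable because $\Gamma$ is countable.

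For the filtering union, every non-neutral class must be hit. A standard fact says that some $\gamma$ of word length at most $2$ is loxodromic whenever $\ell_F\ne0$; if that fails, enlarge $N$ instead. After rescaling to displacement $\le2$, the value $\ell_F(\gamma)$ is some positive number, so the class lies in $\mathcal{A}_{nn}(S;2,\varepsilon)$ for small $\varepsilon$. Taking unions over $\varepsilon\to 0$ (and $N\to\infty$) gives a filtering family.

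The third item is the same argument with $\mathcal{A}_{ne}=\mathcal{A}_{nn}\cap\{$non-elementary$\}$, provided this set is closed. I expect closedness to be the main obstacle. Non-elementarity is an open condition by the proof of \Cref{thm:length-non-neutral-classes}, not a closed one. I would therefore define $\mathcal{A}_{ne}(S;2,\varepsilon)$ by a quantitative closed condition on finitely many words. For instance, require $\alpha,\beta\in S^N$ with $\ell_F(\alpha),\ell_F(\beta)\ge\varepsilon$ and
\begin{equation*}
\bigl\lvert\ell_F(\alpha\beta)-\lvert\ell_F(\alpha)\pm\ell_F(\beta)\rvert\bigr\rvert\ge\varepsilon .
\end{equation*}
This set is closed and lies in the non-elementary locus by \Cref{cor:elem-Hyp_length-homomorphism}. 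It exhausts that locus by the Tits alternative, \Cref{lem:Tits-alternative}, together with \Cref{lem:equal-fixed-points-from-length}, as $\varepsilon\to0$ and $N\to\infty$.
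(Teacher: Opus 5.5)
Your items 1 and 2 follow the paper's proof. Your $\mathcal{F}_{\le 2}(S)$ is the paper's $\mathcal{B}(S;2)$. It is compact by a word-length growth bound, and your linear bound $\cosh^{-1}F(\gamma)\le 2\lvert\gamma\rvert_S$ is sharper than the one in \Cref{lem:compact-Bmu(Gamma)}. Every class meets it after passing to a power. $\mathcal{A}_{nn}$ is cut out by a closed condition on $\ell_F$, and the homeomorphism property comes from a continuous injection of a compact space into a Hausdorff one.

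Two side remarks on those items:
\begin{itemize}
\item Relocating the base point is superfluous. Since $S$ is finite and $F(s)^t\to 1$ as $t\to 0$, a small power of any representative already lies in $\mathcal{F}_{\le 2}(S)$.
\item The ``standard fact'' you invoke is false for actions on $\Hyp^2$. Take the $(2,3,7)$ triangle group $\langle a,b\mid a^2=b^3=(ab)^7=1\rangle$, acting as a cocompact Fuchsian group, with $S=\{1,a,b,b^{-1}\}$: every element of $S^2$ is elliptic. So letting $N$ grow (your analogue of the paper's enlarging $S$) is necessary, not a fallback.
\end{itemize}

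The real gap is the exhaustion claim in item 3. The Tits alternative and \Cref{lem:equal-fixed-points-from-length} only give loxodromic $a,b$ with disjoint fixed pairs and properness of $(m,n)\mapsto\ell(a^mb^n)$. This does not produce a pair violating both relations $\ell(\alpha\beta)=\ell(\alpha)+\ell(\beta)$ and $\ell(\alpha\beta)=\lvert\ell(\alpha)-\ell(\beta)\rvert$.

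For instance, let the free group $\langle a,b\rangle$ act on its Cayley tree, pushed into $\Hyp^\kappa$ by $\pi_\lambda$, which rescales lengths by $\log\lambda$. The axes of $a$ and $b$ meet in a single point, so $\ell(a^mb^n)=\lvert m\rvert\ell(a)+\lvert n\rvert\ell(b)$ for all $m,n$. Yet the action is non-elementary and this length function is proper, and your condition holds for no pair $(a^m,b^n)$.

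What is missing is the asymptotic formula of \Cref{lem:cross-ratio-from-translation-length}:
\[
\ell(\alpha^m\beta^n)-\ell(\alpha^m)-\ell(\beta^n)\to -2\log\Cr{\alpha^-,\alpha^+;\beta^-,\beta^+}.
\]
You must combine it with a choice of pair for which this cross-ratio differs from $1$. Such a pair exists by density of loxodromic fixed pairs (\Cref{prop:loxodromic-axes-dense}) and continuity of the cross-ratio, for example by taking $\beta^\pm$ close to each other and away from $\alpha^\pm$.

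Then for $m,n$ large, $(\alpha^m,\beta^n)$ satisfies your inequality against the sum. It also satisfies it against the difference, because $\ell(\alpha^m)+\ell(\beta^n)-\lvert\ell(\alpha^m)-\ell(\beta^n)\rvert\to\infty$.

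This is how the paper proves \Cref{lem:PCne_union-of-compact}. There the cross-ratio is chosen $<1$, so the one-sided condition $\ell_F(\alpha\beta)\ge\ell_F(\alpha)+\ell_F(\beta)+\varepsilon$ suffices. With this ingredient your closed sets do exhaust $\Proj\mathcal{C}_{ne}(\Gamma)$, since the conditions scale linearly under $F\mapsto F^t$.
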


\begin{proof}
\Cref{lem:compact-Bmu(Gamma)} proves \Cref{thm:compact-PC(Gamma)}.
\Cref{lem:PCnn_union-of-compact} and \Cref{lem:PCne_union-of-compact} prove that $\Proj\mathcal{C}_{nn}$ and $\Proj\mathcal{C}_{ne}$ are filtering unions of compact subsets.
Let us explain how to deduce the metrizability and the homeomorphism properties. 
The restriction of $\Proj\ell\colon \Proj\mathcal{C}_{nn}(\Gamma) \to \Proj\R^\Gamma\setminus \{0\}$ is continuous injective by \cref{thm:length-non-neutral-classes}, hence in restriction to any compact subset, it is a homeomorphism onto its image.
The space $\Proj\R^\Gamma\setminus\{0\}$ has a countable basis for its topology (since $\Gamma$ is countable), and it is Hausdorff separated, so its compact subsets are metrizable.
\end{proof}


\begin{lemma}[compactness of $\Proj\mathcal{C}(\Gamma)$]
\label{lem:compact-Bmu(Gamma)}

For a finite generating set $S$ of $\Gamma$ and $\mu \in \R_{\ge 0}$, we define the subset $\mathcal{B}(S;\mu)=\{F\in \mathcal{F}(\Gamma) \colon \max F(S) \le \cosh(\mu) \}$ of $\mathcal{F}(\Gamma)$.

For generating $S\subset \Gamma$ and $\mu>0$, the subset $\mathcal{B}(S;\mu)$ is compact, and the continuous quotient map $\mathcal{F}(\Gamma)\to \Proj\mathcal{C}(\Gamma)$ is surjective in restriction to $\mathcal{B}(S;\mu)$.
In particular $\Proj\mathcal{C}(\Gamma)$ is compact.
\end{lemma}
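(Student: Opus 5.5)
The plan is to prove compactness of $\mathcal{B}(S;\mu)$ by Tychonoff, then surjectivity by rescaling with powers.

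\emph{Compactness.} For $F\in\mathcal{B}(S;\mu)$, choose a representation $\rho$ and point $o$ with $F=F_{\rho,o}$ (\Cref{prop:functions-of-hyperbolic-type-to-representations}). Each $\gamma\in\Gamma$ is a word of length $|\gamma|_S$ in $S\cup S^{-1}$, and $F(s^{-1})=F(s)$. The triangle inequality for $\dH$ then gives
\[
\cosh^{-1}F(\gamma)\le |\gamma|_S\,\mu,\qquad\text{so}\qquad 1\le F(\gamma)\le\cosh(|\gamma|_S\mu).
\]
Thus $\mathcal{B}(S;\mu)$ lies in the compact product $\prod_{\gamma}[1,\cosh(|\gamma|_S\mu)]$. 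It is also closed: $\mathcal{F}(\Gamma)$ is closed in $\R_{\ge1}^\Gamma$ by \Cref{lem:pointwise-closed}, and the conditions $F(s)\le\cosh\mu$ are finitely many closed conditions. (Here $\Gamma$ carries the discrete topology for the pointwise topology, as in the finitely generated setting.) Hence $\mathcal{B}(S;\mu)$ is compact.

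\emph{Surjectivity.} Take any $F\in\mathcal{F}(\Gamma)$ and set $M=\max_{s\in S}\cosh^{-1}F(s)<\infty$, since $S$ is finite.
\begin{itemize}
\item If $M\le\mu$, then $F$ already lies in $\mathcal{B}(S;\mu)$.
\item Otherwise, choose $t\in(0,1)$ with $(\cosh M)^t\le\cosh\mu$; this is possible because $\mu>0$, so $\cosh\mu>1$ and $(\cosh M)^t\to1$ as $t\to0$. By the corollary on powers of functions of hyperbolic type, $F^t\in\mathcal{F}(\Gamma)$. Moreover $F^t(s)\le(\cosh M)^t\le\cosh\mu$ for every $s\in S$, so $F^t\in\mathcal{B}(S;\mu)$.
\end{itemize}
In the second case $F^t$ and $F$ are homothetic: take $t_1=t$ and $t_2=1$, so that $F_1^{t_1}=F^t$ and $F_2^{t_2}=F^t$ are trivially comparable. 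Hence $\class{F^t}=\class{F}$.

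\emph{Conclusion.} $\Proj\mathcal{C}(\Gamma)$ is the image of the compact set $\mathcal{B}(S;\mu)$ under the continuous quotient map, so it is compact. Hausdorffness is not needed here.

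There is no real obstacle in this argument. The only points to check are the word-length bound and that powers $F^t$ with $t\in(0,1]$ stay of hyperbolic type, which is already available from the earlier corollary.
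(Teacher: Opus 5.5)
Your proposal is correct and follows essentially the same route as the paper: bound $F$ on each $\gamma$ in terms of its word length, conclude that $\mathcal{B}(S;\mu)$ is closed inside a Tychonoff product of compact intervals, and get surjectivity by replacing $F$ with a small power $F^t$. Your triangle-inequality bound $F(\gamma)\le\cosh(\lvert\gamma\rvert_S\,\mu)$ is actually sharper and cleaner than the paper's recursion, which gives $\cosh(4^{\lvert\gamma\rvert_S}\mu_1)$, though either bound suffices for compactness.
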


\begin{proof}
Assume for notational simplicity that the generating set $S$ is symmetric $S=S^{\pm1}$ and contains the identity element $1$, so that if $\lvert \cdot \rvert_S \colon  \Gamma \to \N$ denotes its associated word-length function, for $n\in \N$ we have $S^n=\{\gamma \in \Gamma \colon \lvert \gamma\rvert_S \le n \}$.

We will show that the growth of a function of hyperbolic type is bounded above by some absolute function depending only on its values on a generating set.
Fix $F\in \mathcal{F}(\Gamma)$ and for $n\in \N$, let $\cosh \mu_n = \max F(S^n)$.
For $\gamma\in S^{n+1}$ written as $\gamma = \alpha\beta$ with $\alpha\in S^1$ and $\beta\in S^n$:
\begin{align*}
	\cosh^{-1}(F(\gamma))
	&\leq \dH(\alpha\beta o,\alpha o)+\dH(\alpha o,o)+\dH(o,\beta o)+\dH(\beta o,o)\\
	&\leq \dH(\beta o, o)+\dH(\alpha o,o)+\dH(o,\beta o)+\dH(\beta o,o)\\
	&\leq \mu_1+3 \mu_n.
\end{align*}
Hence $\mu_{n+1}\le \mu_1+3 \mu_n$, so $\mu_n\le 4^n\mu_1$ so every $\gamma\in\Gamma$ satisfies $1\le F(\gamma)\leq \cosh(4^{\lvert \gamma\rvert_S}\mu_1)$.

Thus for $\mu>0$, the subset of $F\in \mathcal{F}$ with $\max F(S)\le \cosh \mu$ is closed and contained in the compact subset $\prod_{\gamma\in \Gamma} [1, \cosh(4^{\lvert \gamma\rvert_S}\mu)]$, so it is a compact subset of $\mathcal{F}(\Gamma)$.

Let $\mu>0$.
For $\class{F}\in \Proj\mathcal{C}(\Gamma)$ one may choose any representative $F$ and find $t\in (0,1)$ such that $(\max F(S))^t\le \cosh(\mu)$, thus $F^t\in \mathcal{B}(S;\mu)$.
This proves surjectivity of $\mathcal{B}(S;\mu) \to \Proj \mathcal{C}(\Gamma)$.
\end{proof}

\begin{lemma}[filtering union  of compacta to $\Proj\mathcal{C}_{nn}$]
\label{lem:PCnn_union-of-compact}
For a finite generating $S\subset \Gamma$ and $\mu \in \R_{\ge 0}$ and $\varepsilon \in \R_{>0}$, we define the subset $\mathcal{A}_{nn}(S;\mu, \varepsilon)$ of $F\in \mathcal{B}(S; \mu)$ such that $\max \ell_F(S)\ge \varepsilon$.

It is compact and contained in $\mathcal{F}_{nn}(\Gamma)$, so it quotients to a compact $\Proj\Bar{\mathcal{A}}_{nn}(S;\mu,\varepsilon) \subset \Proj \mathcal{C}_{nn}(\Gamma)$.
Moreover for all $\mu>0$, the space $\Proj \mathcal{C}_{nn}(\Gamma)$ is the filtering union of the $\Proj\Bar{\mathcal{A}}_{nn}(S;\mu,\varepsilon)$ over increasing finite generating sets $S\subset \Gamma$ and decreasing $\varepsilon>0$.
\end{lemma}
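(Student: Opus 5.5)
The proof has three parts: compactness of $\mathcal{A}_{nn}(S;\mu,\varepsilon)$, its inclusion in $\mathcal{F}_{nn}(\Gamma)$, and the fact that these sets exhaust $\Proj\mathcal{C}_{nn}(\Gamma)$.

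\emph{Compactness.} By \Cref{lem:compact-Bmu(Gamma)}, $\mathcal{B}(S;\mu)$ is compact, so it suffices to show that $\mathcal{A}_{nn}(S;\mu,\varepsilon)$ is closed in it. By \Cref{lem:F-to-ellF-continuity}, for each $s\in S$ the map $F\mapsto \ell_F(s)$ is continuous for the pointwise topology. Since $S$ is finite, $F\mapsto\max\ell_F(S)$ is continuous, and the condition $\max \ell_F(S)\ge\varepsilon$ cuts out a closed set.

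\emph{Non-neutrality and the quotient.} The inclusion $\mathcal{A}_{nn}(S;\mu,\varepsilon)\subset\mathcal{F}_{nn}(\Gamma)$ is immediate, since $\ell_F\ne 0$. The image of a compact set under the continuous quotient map $\mathcal{F}(\Gamma)\to\Proj\mathcal{C}(\Gamma)$ is compact. It lands in $\Proj\mathcal{C}_{nn}(\Gamma)$ because neutrality is invariant under homothety (\Cref{cor:homothetic_length_spectrum}).

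\emph{Exhaustion.} Fix $\mu>0$ and $[F]\in\Proj\mathcal{C}_{nn}(\Gamma)$, and choose a representative $F$. Since $\ell_F\neq0$, there is $\gamma\in\Gamma$ with $\ell_F(\gamma)>0$. Enlarge the finite generating set to $S'=S\cup\{\gamma\}$, which is still finite and generating. Then choose $t\in(0,1]$ with $(\max F(S'))^t\le\cosh\mu$, as in the proof of \Cref{lem:compact-Bmu(Gamma)}. The function $F^t$ is of hyperbolic type and has length function $t\ell_F$. Hence $F^t\in\mathcal{A}_{nn}(S';\mu,\varepsilon)$ with $\varepsilon=t\ell_F(\gamma)>0$.

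\emph{Filtering.} If $S\subset S'$ and $\varepsilon'\le\varepsilon$, we must compare the two sets. The condition $\max\ell_F(S')\ge\max\ell_F(S)\ge\varepsilon\ge\varepsilon'$ is inherited. The bound $\max F(S)\le\cosh\mu$, however, does not pass to $S'$. To fix this, I will show monotonicity at the level of homothety classes rather than of functions. By the growth estimate $F(\gamma)\le\cosh(4^{|\gamma|_S}\mu)$, each element of $S'$ satisfies $F\le\cosh(C\mu)$ for some constant $C\ge1$ depending only on $(S,S')$. Raising to the power $t=1/C$ then places $F^t$ in $\mathcal{B}(S';\mu)$, with $\max\ell_{F^t}(S')\ge\varepsilon/C$. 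Consequently the class-level sets $\Proj\bar{\mathcal{A}}_{nn}(S;\mu,\varepsilon)$ increase when $S$ grows, provided $\varepsilon$ is rescaled accordingly, and any two of them lie in a common one; this is enough for a filtering union.

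I expect this filtering bookkeeping to be the main (mild) obstacle: tracking how the constraint on $S$ and the threshold $\varepsilon$ interact under the rescaling by $t$. Compactness and exhaustion follow directly from the earlier lemmas.
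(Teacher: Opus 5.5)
Your compactness argument, your inclusion in $\mathcal{F}_{nn}(\Gamma)$ and your exhaustion argument are exactly the paper's proof. Compactness is closedness of $\{\max\ell_F(S)\ge\varepsilon\}$ inside the compact $\mathcal{B}(S;\mu)$, using continuity of $F\mapsto\ell_F$. Exhaustion picks $\gamma$ with $\ell_F(\gamma)>0$, adds it to $S$, passes to $F^t\in\mathcal{B}(S;\mu)$ and takes $\varepsilon=t\ell_F(\gamma)$.

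The paper's proof only establishes exhaustion and never argues directedness, so your last paragraph goes beyond it. As written, however, it contains a step that fails. From $F\le\cosh(C\mu)$ on $S'$ you cannot conclude $F^{1/C}\le\cosh\mu$ on $S'$. The function $x\mapsto\log\cosh x$ is convex and vanishes at $0$, so $\log\cosh(C\mu)\ge C\log\cosh\mu$ for $C\ge 1$, that is $\cosh(C\mu)^{1/C}\ge\cosh\mu$. The inequality is strict when $C>1$ and $\mu>0$; for example $\cosh(2)^{1/2}\approx 1.94>\cosh(1)\approx 1.54$. In other words, $F\mapsto F^t$ shrinks the distance $\cosh^{-1}F$ by less than the factor $t$, so $t=1/C$ does not put $F^t$ in $\mathcal{B}(S';\mu)$.

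The repair is easy. Take $t=\log\cosh(\mu)/\log\cosh(C\mu)\in(0,1]$, which still depends only on $(\mu,S,S')$. Then $F^t\le\cosh\mu$ on $S'$ and $\max\ell_{F^t}(S')=t\max\ell_F(S')\ge t\varepsilon$. This gives $\Proj\Bar{\mathcal{A}}_{nn}(S;\mu,\varepsilon)\subset\Proj\Bar{\mathcal{A}}_{nn}(S';\mu,t\varepsilon)$. Given two pairs $(S_i,\varepsilon_i)$ with associated exponents $t_i$, take $S'=S_1\cup S_2$ and $\varepsilon'=\min_i t_i\varepsilon_i$. Since the sets grow as $\varepsilon$ decreases for fixed $S'$, both classes lie in $\Proj\Bar{\mathcal{A}}_{nn}(S';\mu,\varepsilon')$, which gives the filtering property.
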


\begin{proof}
The subset $\mathcal{A}_{nn}(S;\mu, \varepsilon)$ of $\mathcal{B}(S;\mu)$ is closed hence compact by \Cref{lem:compact-Bmu(Gamma)}.
Moreover by definition $\mathcal{A}_{nn}(S;\mu, \varepsilon)$ is contained in $\mathcal{F}_{nn}(\Gamma)$, hence it quotients by continuity to a compact $\Proj\Bar{\mathcal{A}}_{nn}(S;\mu,\varepsilon) \subset \Proj \mathcal{C}_{nn}(\Gamma)$.

Let $\mu\in \R_{\ge0}$.
Let $\class{F} \in \Proj\mathcal{C}_{nn}(\Gamma)$, and choose any representative $F\in \mathcal{F}_{nn}(\Gamma)$.
Since $F$ is non-neutral we may find $\alpha\in \Gamma$ such that $\ell_F(\alpha)>0$.
Choose any finite generating set $S$ containing $\alpha$ (and one may enlarge to so that it is symmetric and contains the identity). 
Let $t\in (0,1]$ such that $ \max F^t(S) \le \cosh(\mu)$ so that $F^t\in \mathcal{B}(S;\mu)$.
Choose $\varepsilon\in \R_{>0}$ such that $\varepsilon \le \max \ell_{F^t}(S)$.
We found $S,\varepsilon$ as desired such that $\class{F}$ admits a representative $F^t$ in $ \mathcal{A}_{nn}(S;\mu, \varepsilon)$.
\end{proof}

\begin{lemma}[filtering union of compacta to $\Proj\mathcal{C}_{ne}$]
\label{lem:PCne_union-of-compact}
For a finite generating $S\subset \Gamma$ and $\mu \in \R_{\ge 0}$ and $\varepsilon \in \R_{>0}$, we define the subset $\mathcal{A}_{ne}(S; \mu, \varepsilon)$ of $F\in \mathcal{B}(S; \mu)$ for which there exists $\alpha,\beta \in S$ satisfying $\ell_F(\alpha),\ell_F(\beta) \ge \varepsilon$ and
\(\ell_F(\alpha\beta)\ge \ell_F(\alpha) + \ell_F(\beta)+\varepsilon\).

It is compact and contained in $\mathcal{F}_{ne}(\Gamma)$, so quotients to a compact subset $\Proj \Bar{\mathcal{A}}_{ne}(S; \mu, \varepsilon)\subset \Proj \mathcal{C}_{ne}(\Gamma)$.
Moreover for all $\mu>0$, the space $\Proj \mathcal{C}_{ne}(\Gamma)$ is the filtering union of the $\Proj \Bar{\mathcal{A}}_{ne}(S; \mu, \varepsilon)$ over increasing finite generating sets $S\subset \Gamma$ and decreasing $\varepsilon>0$.
\end{lemma}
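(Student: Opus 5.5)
The proof follows the pattern of \Cref{lem:PCnn_union-of-compact}, in three parts: compactness, containment in $\mathcal{F}_{ne}(\Gamma)$, and exhaustion.

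\emph{Compactness.} I will show that $\mathcal{A}_{ne}(S;\mu,\varepsilon)$ is closed in the compact set $\mathcal{B}(S;\mu)$ of \Cref{lem:compact-Bmu(Gamma)}. Since $S$ is finite, $\mathcal{A}_{ne}(S;\mu,\varepsilon)$ is the finite union over pairs $(\alpha,\beta)\in S^2$ of the sets
\[\{F \colon \ell_F(\alpha)\ge\varepsilon,\ \ell_F(\beta)\ge\varepsilon,\ \ell_F(\alpha\beta)-\ell_F(\alpha)-\ell_F(\beta)\ge\varepsilon\}.\]
Each $F\mapsto \ell_F(\gamma)$ is continuous by \Cref{lem:F-to-ellF-continuity}, so each of these sets is cut out by non-strict inequalities between continuous functions and is closed. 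A finite union of closed sets is closed, hence $\mathcal{A}_{ne}(S;\mu,\varepsilon)$ is compact.

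\emph{Containment.} If $F\in\mathcal{A}_{ne}(S;\mu,\varepsilon)$, then $\ell_F(\alpha\beta)>\ell_F(\alpha)+\ell_F(\beta)\ge \lvert \ell_F(\alpha)\pm\ell_F(\beta)\rvert$. This is impossible if $\ell_F=\lvert\Xi\rvert$ for a homomorphism $\Xi$, because then $\ell_F(\alpha\beta)=\lvert\Xi(\alpha)+\Xi(\beta)\rvert$ lies in $\{\ell_F(\alpha)+\ell_F(\beta),\ \lvert\ell_F(\alpha)-\ell_F(\beta)\rvert\}$. So by \Cref{cor:elem-Hyp_length-homomorphism}, $F$ is non-elementary. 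Continuity of the quotient map then gives that $\Proj\Bar{\mathcal{A}}_{ne}(S;\mu,\varepsilon)$ is a compact subset of $\Proj\mathcal{C}_{ne}(\Gamma)$. The family is filtering because enlarging $S$ and shrinking $\varepsilon$ only weakens the defining conditions; enlarging $S$ keeps $\mathcal{B}(S;\mu)$ manageable after taking a further power.

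\emph{Exhaustion.} This is the main step. Take $\class{F}\in\Proj\mathcal{C}_{ne}(\Gamma)$ and a minimal realizing representation $\rho$. By \Cref{lem:Tits-alternative} there are loxodromic $a,b$ with four distinct fixed points, and for some $p$ the elements $a^p,b^p$ generate a free group on which $(m,n)\mapsto\ell(a^{pm}b^{pn})$ is proper. I want a pair $\alpha,\beta$ with the strict inequality $\ell(\alpha\beta)>\ell(\alpha)+\ell(\beta)$, and I will get it from \Cref{lem:cross-ratio-from-translation-length}. Applied to $(\alpha,\beta)=(a^{m},b^{n})$, that lemma gives $\ell(a^m)+\ell(b^n)-\ell(a^mb^n)\to 2\log\Cr{a^-,a^+;b^-,b^+}$ as $m,n\to\infty$. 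Replacing $b$ by $b^{-1}$ swaps $b^\pm$, so the same lemma gives $\ell(a^m)+\ell(b^{-n})-\ell(a^mb^{-n})\to 2\log\Cr{a^-,a^+;b^+,b^-}$. Using \eqref{eq:Cr-Inversion} and \Cref{cor:cocyclic-cross-ratio}, the two quantities $\Cr{a^-,a^+;b^-,b^+}^{-1}$ and $\Cr{a^-,a^+;b^+,b^-}^{-1}$ satisfy a Ptolemy-type inequality: their sum is at least $1$. Hence at least one of the two cross-ratios is $<1$, provided I first replace $b$ by a conjugate $\gamma b\gamma^{-1}$ to push its fixed points into general position if needed. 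For the corresponding sign, the limit $2\log\Cr{\cdot}$ is negative, so $\ell(a^mb^{\pm n})-\ell(a^m)-\ell(b^{\pm n})$ converges to a positive constant. Fixing large $m,n$ and setting $\alpha=a^m$, $\beta=b^{\pm n}$ yields some $\varepsilon_0>0$ with $\ell_F(\alpha),\ell_F(\beta)\ge\varepsilon_0$ and $\ell_F(\alpha\beta)\ge\ell_F(\alpha)+\ell_F(\beta)+\varepsilon_0$.

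To finish, choose a finite generating set $S$ containing $\alpha$ and $\beta$. Pick $t\in(0,1]$ with $F^t\in\mathcal{B}(S;\mu)$; passing to $F^t$ multiplies all lengths by $t$, so the conditions hold for $F^t$ with $\varepsilon=t\varepsilon_0$. Thus $\class{F}$ has a representative in $\mathcal{A}_{ne}(S;\mu,\varepsilon)$.

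The main obstacle is making sure the sign of the cross-ratio works out, that is, that one of $\Cr{a^-,a^+;b^\pm,b^\mp}$ really is $<1$. If the Ptolemy route proves awkward, my fallback is the pure ping-pong estimate $\ell(a^mb^n)\ge \ell(a^m)+\ell(b^n)-C$ for a Schottky pair, where $C$ depends only on the pair, combined with choosing the orientation of the axes suitably.
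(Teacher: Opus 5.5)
The gap is in the exhaustion step, where you need loxodromic $\alpha,\beta$ with $\Cr{\alpha^-,\alpha^+;\beta^-,\beta^+}<1$. Your outline is otherwise the paper's own: closedness inside $\mathcal{B}(S;\mu)$ via continuity of $F\mapsto\ell_F$, containment via \Cref{cor:elem-Hyp_length-homomorphism}, and exhaustion via \Cref{lem:cross-ratio-from-translation-length} followed by passing to powers and to $F^t$. The compactness and containment parts are fine.

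\textbf{The false inference.} From $\Cr{a^-,a^+;b^-,b^+}^{-1}+\Cr{a^-,a^+;b^+,b^-}^{-1}\ge 1$ you can only conclude that one of the two cross-ratios is $\le 2$. You cannot conclude that one is $<1$, and that conclusion genuinely fails.

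\textbf{Counterexample.} In $\Hyp^2$ take perpendicular crossing axes: $a^-=0$, $a^+=\infty$, $b^\mp=\mp1$. By \Cref{cor:cross-ratio-H3}, both $\Cr{a^-,a^+;b^-,b^+}$ and $\Cr{a^-,a^+;b^+,b^-}$ equal $2$; this is the equality case of \Cref{cor:cocyclic-cross-ratio}. A trace computation confirms $\ell(a^mb^{\pm n})-\ell(a^m)-\ell(b^n)\to-2\log 2$ for both signs.

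\textbf{Why your escape routes fail.} The four fixed points are distinct, and both cross-ratios stay $>1$ under small perturbation. So neither choosing the orientation nor moving to ``general position'' helps. The pair supplied by \Cref{lem:Tits-alternative} may well look like this. Your ping-pong fallback does not rescue it either: a bound $\ell(a^mb^n)\ge\ell(a^m)+\ell(b^n)-C$ has the constant on the wrong side to produce $+\varepsilon$.

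\textbf{The fix.} You need a $\beta$ whose two fixed points are close to each other compared with $a^\pm$. Set $\beta_k=b^kab^{-k}$.
\begin{itemize}
\item Its fixed points $b^ka^\pm$ both tend to $b^+$, since $a^\pm\neq b^-$.
\item The points $a^-,a^+,b^+$ are distinct, so continuity of the cross-ratio on $\partial^4\Hyp^\kappa$ together with \eqref{eq:Cr=0} gives $\Cr{a^-,a^+;\beta_k^-,\beta_k^+}\to\Cr{a^-,a^+;b^+,b^+}=0$.
\item For $k$ large, this cross-ratio is $<1$ and $\beta_k^\pm\notin\{a^\pm\}$.
\item \Cref{lem:cross-ratio-from-translation-length} then gives $\ell(a^m\beta_k^n)-\ell(a^m)-\ell(\beta_k^n)\to -2\log\Cr{a^-,a^+;\beta_k^-,\beta_k^+}>0$.
\end{itemize}
The rest of your argument goes through unchanged: fix large powers, then rescale to $F^t$ with $\varepsilon=t\varepsilon_0$.

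The paper's proof simply asserts that such a pair exists. This is exactly the step you set out to justify, and the justification has to be an argument like the one above rather than the Ptolemy inequality.
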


\begin{proof}
The subset $\mathcal{A}_{ne}(S; \mu, \varepsilon)$ of $\mathcal{B}(S; \mu)$ is closed, hence compact by \Cref{lem:compact-Bmu(Gamma)}. 
Moreover $\mathcal{A}_{ne}(S; \mu, \varepsilon)$ is by definition  and \Cref{cor:elem-Hyp_length-homomorphism} contained in $\mathcal{F}_{ne}(\Gamma)$, hence it quotients by continuity to a compact subset $\Proj\Bar{\mathcal{A}}_{ne}(S;\mu,\varepsilon) \subset \Proj \mathcal{C}_{nn}(\Gamma)$.

Let $\mu \in \R_{\ge 0}$.
Fix $\class{F} \in \Proj\mathcal{C}_{nn}(\Gamma)$, and choose any representative $F\in \mathcal{F}_{ne}(\Gamma)$, with an associated representation $\rho\colon \Gamma \to \Isom(\Hyp^\kappa)$.

Since $\rho$ is non-elementary, we may find $\alpha, \beta\in \Gamma$ with $\ell_F(\alpha),\ell_F(\beta)\ge 1$ whose $\rho$-fixed points have $L:=2\log \Cr{\rho(\beta)^-, \rho(\alpha)^+; \rho(\alpha)^{-}, \rho(\beta)^{+}}>0$. 
Note that the quantity $L$ is invariant under replacing $\alpha, \beta$ by any of their positive powers, and by \Cref{lem:cross-ratio-from-translation-length} we have as $m,n\to \infty$ the limit $\ell_F(\alpha^{m}\beta^{n}) -\ell_F(\alpha^m)-\ell_F(\beta^n) \to L$.
Thus after replacing $\alpha,\beta$ by powers, we may assume that $\ell_F(\alpha),\ell_F(\beta)\ge 1$ and for all $m,n\in \N_{\ge 1}$ we have $\ell_F(\alpha^{m}\beta^{n}) -\ell_F(\alpha^m)-\ell_F(\beta^n) \ge  L/2$.


Choose any finite generating $S\subset \Gamma$ containing $\alpha,\beta$.
%
Let $t\in (0,1]$ such that $F^t\in \mathcal{B}(S;\mu)$. 
We have $\ell_{F^t}(\alpha), \ell_{F^t}(\beta)\ge t$ and
$\ell_{F^t}(\alpha\beta) -\ell_{F^t}(\alpha)-\ell_{F^t}(\beta) \ge tL/2$ so for $\varepsilon= t\min\{1, L/2\}\in \R_{>0}$ we have $F^t\in \mathcal{A}_{ne}(S;\mu, \varepsilon)$ as desired.
\end{proof}

\begin{remark}[not closed]
Let us compare the compactness of $\Proj\mathcal{C}(\Gamma)$ with classical results in finite dimension.

In finite dimension $n\in\N$, for a finitely generated non-virtually abelian group $\Gamma$, the space of discrete and faithful representations $\rho\colon \Gamma\to\Isom(\Hyp^n)$ is closed by \cite[Theorem 3]{Wielenberg_Discrete-Mobius-groups_1977}. One reason is the Margulis lemma which tells that there is $\varepsilon_n>0$ such for any such $\rho$, the subgroup generated by $\{\gamma\in\Gamma \colon d(\rho(\gamma)x,x)<
\varepsilon \}$ is virtually Abelian. 

In infinite dimension, the situation is different. There is no Margulis constant for representations $\Gamma\to\Isom(\Hyp^\kappa)$ when $\kappa$ is infinite. Indeed \cite[Observation 11.1.4]{Das-Simmons-Urbanski_gromov-hyperbolic-spaces_2017} shows that for all $\varepsilon>0$, one can find $\alpha,\beta \in \Isom(\Hyp^\kappa)$ with $\Delta(\{\alpha, \beta\})<\varepsilon$ that freely generate a rank-$2$ free subgroup $\Gamma \subset \Isom(\Hyp^\kappa)$ (with a fixed point at infinity) that is strongly discrete (for every bounded subset $B\subset\Hyp^\kappa$ the set $\{\gamma\in \Gamma \colon \gamma B\cap B\neq\emptyset\}$ is finite, see \cite[Definition 5.2.1]{Das-Simmons-Urbanski_gromov-hyperbolic-spaces_2017}).
\end{remark}

\subsection{Background on the equivariant Gromov-Hausdorff topology}

The goal for the rest of this section is to compare our compact space $\Proj\mathcal{C}(\Gamma)$ of classes of function of hyperbolic type on a finitely generated group $\Gamma$ with previous compactifications of character varieties with respect to the \emph{equivariant Gromov-Hausdorff topology} formalized by Paulin in \cite{Paulin_these_1987, Paulin_1988, Paulin_Gromov-topogy-R-trees_1989}, which we first recall.

Fix a group $\Gamma$, and consider a subset $\tilde{\mathcal{X}}$ of its isometric actions: an element of $\tilde{\mathcal{X}}$ consists of a metric space $(X,d)$ together with a homomorphism $\rho\colon \Gamma \to \Isom(X,d)$.

A basis of neighborhoods of $((X,d),\rho)\in \tilde{\mathcal{X}}$ is defined by the following subsets $V(P,K,\varepsilon)\subset \mathcal{X}$, indexed by finite $P\subset \Gamma$ and compact $K\subset X$ and $\varepsilon\in \R_{>0}$.
The subset $V(P,K,\varepsilon)$ consists of all $((X',d'),\rho')\in \tilde{\mathcal{X}}$ such that there exists a compact $K'\subset X'$ and a closed $P$-equivariant $\varepsilon$-approximation between $K$ and $K'$. 
An \emph{$\varepsilon$-approximation} between $K$ and $K'$ is a closed subset $R\subset K\times K'$ which projects surjectively onto $K$ and $K'$, such that for all $(x,x'), (y,y')\in R$, we have $\lvert d(x,y)-d'(x',y')\rvert<\varepsilon$.
It is $P$-equivariant when for all $\gamma\in P$ and $(z,z')\in R$ we have $\rho(\gamma) z\in K\implies (\rho(\gamma) z,\rho'(\gamma) z')\in R$.

This defines the \emph{equivariant Gromov-Hausdorff topology} on $\tilde{\mathcal{X}}$. 
For $i\in \{1,2\}$, the elements $((X_i, d_i),\rho_i)$ of $\tilde{\mathcal{X}}$ \emph{belong to the same class} when there is a $\Gamma$-equivariant isometry $X_1\to X_2$: we denote the topological quotient space $\mathcal{X}$; such a space is called a \emph{Gromov-Hausdorff space}.

\begin{remark}[Gromov-Hausdorff inseparability]
\label{rem:Gromov-Hausdorff inseparability}
In a topological space, two elements are called \emph{separable} when they admit disjoint open neighborhoods (called distinguishable in \cite{Paulin_1988}).

Let us give examples of non-isometric actions that are inseparable in any Gromov-Hausdorff space that contains them.

\emph{Fixed points:}
If a $\Gamma$-action by isometries has a fixed point then it is inseparable from the trivial action on a singleton.
In general, a $\Gamma$-isometric action $((X,d),\rho)$ is inseparable from the trivial action on a singleton if and only if for all finite $P\subset\Gamma$ and $\varepsilon>0$, there exists a compact $K \subset X$ of diameter $\max\{d(x,y)\colon x,y\in K\}<\varepsilon$ which is $P$-invariant. In particular, if an action of $\Gamma$ on $\Hyp^\kappa$ is parabolic, then it is inseparable from the trivial action on a singleton.

\emph{Invariant subspaces:}
A $\Gamma$-action by isometries is Gromov-Hausdorff inseparable from its restriction to any $\Gamma$-invariant subspace.
In particular, for two $\Gamma$-actions by isometries $((X_i, d_i),\rho_i)$, the Cartesian product $\Gamma$-action $\prod_i \rho_i$ of $\Gamma$ on $\prod_i X_i$ preserves any product metric (such as the $L^p$ combination of metrics with $p\in [1,\infty]$).
If $\rho_0$ has a fixed point then these product actions are all Gromov-Hausdorff inseparable from $\rho_1$.
\end{remark}

For a Gromov-Hausdorff space $\mathcal{X}(\Gamma)$, we denote $\Proj\mathcal{X}(\Gamma)$ its quotient under the following equivalence relation $((X,d_X),\rho_X)\sim ((Y,d_Y),\rho_Y)$ when there is a $\Gamma$-equivariant bijection $f\colon X\to Y$ and $\lambda>0$ such that for all $x,x'\in X$, $d_Y(f(x),f(x'))=\lambda d_X(x,x')$.

\subsection{Embedding Gromov-Hausdorff spaces in \texorpdfstring{$\Proj\mathcal{C}(\Gamma)$}{PC(gamma)}}

Consider the set $\mathcal{H}(\Gamma)$ of all conjugacy classes of $\Gamma$-isometric actions on some algebraic hyperbolic space $\Hyp^\kappa$ for some cardinal $\kappa$ with the equivariant Gromov-Hausdorff topology, and its subset $\mathcal{H}_{mnn}(\Gamma)$ of minimal non-neutral actions.
We may wish restrict to actions on an algebraic hyperbolic space of a fixed dimension $\kappa$, and we denote the corresponding space with $\kappa$ in exponent $\mathcal{H}^\kappa(\Gamma)$.

\begin{remark}[convex cores]
\label{rem:convex-cores-Hypn}
A representation $\rho \colon \Gamma \to \Isom(\Hyp^n)$ with no fixed points at infinity admits a unique minimal closed convex $\Gamma$-invariant subspace $X\subset\Hyp^n$ (called its convex core). In this case, any Gromov-Hausdorff neighborhood of the action of $\Gamma\to \Isom(X,d)$ contains the full action $\Gamma\to \Isom(\Hyp^n,d)$, hence these two actions are inseparable from one another.

When $\Gamma$ is countable, it follows from \cite[Chapitre 4, Théorème 2.6]{Paulin_these_1987} that for $i\in \{1,2\}$ two representations $\rho_i\in \mathcal{H}^{\kappa_i}(\Gamma)$ without fixed points at infinity are Gromov-Hausdorff inseparable if and only if the actions on their associated convex cores are conjugated.
In particular, if $\rho_i$ are irreducible and $\kappa_i\geq 2$, then these actions are Gromov-Hausdorff inseparable if and only if $\kappa_1=\kappa_2$ and the representations are conjugated. 
Indeed, in the linear models $\R^{1+\kappa_i}$, the linear span of the convex cores equal the whole space, and the actions are linear.
\end{remark}

\begin{proposition}
\label{lem:continuity_hyp_rep}
For a group $\Gamma$, the function 
$\mathcal{H}(\Gamma) \to \Proj\mathcal{C}(\Gamma)$ defined by $\class{\rho} \mapsto \class{F_\rho}$ is continuous and its restriction to $\mathcal{H}_{mnn}(\Gamma)$ is injective.
\end{proposition}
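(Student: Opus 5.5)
For continuity, I will work at the level of $\mathcal{F}(\Gamma)$ and then compose with the continuous quotient map $\mathcal{F}(\Gamma)\to\Proj\mathcal{C}(\Gamma)$. Since the classes in $\mathcal{H}(\Gamma)$ are only defined up to equivariant isometry, the first task is to produce, near a given action, representatives $F_{\rho',o'}$ depending continuously on the action.

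\textbf{Choice of representatives.} Fix $\class{\rho}\in\mathcal{H}(\Gamma)$, acting on $\Hyp^\kappa$, and a base point $o\in\Hyp^\kappa$. Let $U$ be a neighbourhood of $F_{\rho,o}$ in the pointwise topology. By definition of that topology, $U$ contains a basic set determined by a finite $P\subset\Gamma$ and an $\varepsilon>0$, and we may assume $P$ is symmetric and contains $1$. I consider the Gromov--Hausdorff neighbourhood $V(P,K,\varepsilon')$ with $K=\{o\}\cup\rho(P)o$, a finite hence compact set, and $\varepsilon'$ to be chosen.

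\textbf{Extracting a base point.} Let $\rho'\in V(P,K,\varepsilon')$. Then there is a compact $K'$ and a closed $P$-equivariant $\varepsilon'$-approximation $R\subset K\times K'$. I pick $o'$ with $(o,o')\in R$. For each $\gamma\in P$ we have $\rho(\gamma)o\in K$, so equivariance gives $(\rho(\gamma)o,\rho'(\gamma)o')\in R$. The approximation property then yields
\[\lvert \dH(\rho(\gamma)o,o)-\dH(\rho'(\gamma)o',o')\rvert<\varepsilon' \quad\text{for all } \gamma\in P.\]
Since $\cosh$ is uniformly continuous on bounded sets, a small enough $\varepsilon'$ ensures that $F_{\rho',o'}$ is $\varepsilon$-close to $F_{\rho,o}$ on $P$, so $F_{\rho',o'}\in U$.

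\textbf{Conclusion of continuity.} By the Example following the definition of functions of hyperbolic type, changing the base point only changes $F$ within its comparability class. Hence $\class{F_{\rho'}}=\class{F_{\rho',o'}}$ is well defined on $\mathcal{H}(\Gamma)$. The preimage of any open set of $\Proj\mathcal{C}(\Gamma)$ therefore contains a Gromov--Hausdorff neighbourhood of each of its points, which proves continuity.

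\textbf{Injectivity on $\mathcal{H}_{mnn}(\Gamma)$.} Suppose $\rho_1,\rho_2$ are minimal and non-neutral with $\class{F_{\rho_1}}=\class{F_{\rho_2}}$. After possibly swapping them, \Cref{cor:homothetic_length_spectrum} gives $t\in(0,1]$ with $\ell_{\rho_1}=t\,\ell_{\rho_2}$ and $\rho_1$ conjugate to $\chi_t\circ\rho_2$. If $t=1$, \Cref{prop:same-length-spectrum-means-conjugate} shows that $\rho_1,\rho_2$ are conjugate, so they define the same point of $\mathcal{H}(\Gamma)$.

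\textbf{Main obstacle.} The difficulty is to exclude $t<1$. The homothety relation on $\mathcal{F}(\Gamma)$ deliberately identifies $F$ with $F^t$, while $\rho$ and $\chi_t\circ\rho$ are not conjugate, and the paper itself notes that for $t<1$ their length functions differ. So for classes in $\mathcal{H}_{mnn}(\Gamma)$ taken strictly up to conjugacy, I do not expect a direct argument to force $t=1$.

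\textbf{What I would try first.} I would check whether minimality, together with the normalisation implicit in $\mathcal{H}_{mnn}(\Gamma)$, pins down the scale, for instance through the critical exponent of \Cref{def:critical-exponent} and \Cref{prop:strictly_of_hyperbolic_type}. An exotic image $\chi_t\circ\rho_2$ with $t<1$ has all finite orbit subsets in general position, which a finite-dimensional $\rho_1$ cannot have.

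\textbf{Fallback.} Where that fails, the same argument above gives the statement in the form that is genuinely available: injectivity after also identifying $\rho$ with its exotic deformations $\chi_t\circ\rho$, which is exactly the identification made by the homothety relation on $\Proj\mathcal{C}(\Gamma)$.
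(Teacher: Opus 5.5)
Your continuity argument is the paper's own. With $K=\rho(P)\cdot o$ (you sensibly put $o$ into $K$ explicitly), a base point $o'$ read off from the $P$-equivariant $\varepsilon'$-approximation gives $\lvert \dH(\rho(\gamma)o,o)-\dH(\rho'(\gamma)o',o')\rvert<\varepsilon'$ for all $\gamma\in P$. Uniform continuity of $\cosh$ on a bounded interval then concludes. Your remark that the choice of $o'$ only moves $F_{\rho',o'}$ within its comparability class makes explicit a point the paper leaves implicit. That half is fine.

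For injectivity you stop short, but the obstacle you isolate is real and cannot be overcome: the injectivity claim is false as stated. Here is a counterexample.
\begin{itemize}
\item Take $\Gamma=\pi_1(\Sigma_g)$ with $g\ge 2$, a Fuchsian $\rho\colon\Gamma\to\Isom(\Hyp^2)$ and some $t\in(0,1)$.
\item Let $\rho'$ be $\chi_t\circ\rho$ restricted to the span of its limit set.
\item Both actions are minimal and non-neutral.
\item $F_{\rho'}$ is comparable to $F_\rho^t$, so $\class{F_{\rho'}}=\class{F_\rho}$ in $\Proj\mathcal{C}(\Gamma)$.
\item Yet $\ell_{\rho'}=t\,\ell_\rho\neq\ell_\rho$, so no $\Gamma$-equivariant isometry relates them, and they are distinct points of $\mathcal{H}_{mnn}(\Gamma)$.
\end{itemize}
The paper's proof takes exactly your route through \Cref{cor:homothetic_length_spectrum}. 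It then simply asserts that ``$\rho_1$ conjugate to $\chi_t\circ\rho_2$'' means $\rho_1,\rho_2$ define the same point of $\mathcal{H}_{mnn}(\Gamma)$, and that $\kappa_1=\kappa_2$. Both assertions hold only for $t=1$.

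So your fallback, injectivity after identifying each action with its exotic deformations, is the correct general statement. The critical-exponent idea cannot do better: $\mathcal{H}_{mnn}(\Gamma)$ carries no normalisation, and $\rho'$ above is itself minimal.

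Your general-position remark does, however, give genuine injectivity on minimal actions on finite-dimensional spaces with infinite limit set. Suppose $\rho_1$ were conjugate to $\chi_t\circ\rho_2$ with $t<1$. By \Cref{prop:independence-exotic-embeddings}, the minimal invariant subspace of $\chi_t\circ\rho_2$, spanned by $\exoH_t(\Lambda(\rho_2))$, would be infinite-dimensional, contradicting finite dimensionality for $\rho_1$. Hence $t=1$, and \Cref{prop:same-length-spectrum-means-conjugate} gives conjugacy. This restricted form is what the later application to faithful discrete actions on $\Hyp^n$ actually requires.
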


\begin{proof}
Let us first prove continuity of $\mathcal{H}(\Gamma)\to \Proj\mathcal{C}(\Gamma)$ at $\rho\in \mathcal{H}^\kappa(\Gamma)$.
Fix $o\in \Hyp^\kappa$.
For all $\eta>0$ and $F\subset \Gamma$ finite, we must construct a finite $P\subset \Gamma$, compact $K\subset \Hyp^\kappa$ and real $\varepsilon>0$ such that if $\rho'\in V_{\rho}(P,K,\varepsilon)$ then there exists some cardinal $\kappa'$ and $o'\in \Hyp^{\kappa'}$ such that $\forall \gamma\in F \colon \lvert F_{\rho,o}(\gamma)-F_{\rho',o'}(\gamma)\rvert <\eta$.
Set $P=F$, $K=\rho(P) \cdot o$, and $\varepsilon = \eta / \sinh(M)$ where $M=\operatorname{diam}(K)$.
It follows from the definition of the Gromov-Hausdorff neighborhoods that if $\rho' \in V_\rho(P,K,\varepsilon)$ then there exists $o'\in \Hyp^{\kappa'}$ such that $\forall \gamma\in P \colon \lvert \dH(\rho(\gamma)o,o)-\dH(\rho'(\gamma)o',o')\rvert<\varepsilon$.
This implies, using the fact that an $\varepsilon$-modulus of continuity for $\cosh$ on $[0,M]$ is given by $\varepsilon \sinh(M)$, we have that $\forall \gamma\in F\colon \lvert F_{\rho,o}(\gamma)-F_{\rho',o'}(\gamma)\rvert < \varepsilon\times \sinh(M)=\eta$, as desired.

Now let us show that the restriction $\mathcal{H}_{mnn}(\Gamma)\to \Proj\mathcal{C}(\Gamma)$ is injective.
Consider for $i=\{1,2\}$ minimal non-neutral representations $\rho_i\colon\Gamma\to\Isom(\Hyp^{\kappa_i})$ whose functions of hyperbolic type are homothetic, by \Cref{cor:homothetic_length_spectrum} we have $\kappa_1=\kappa_2$, and there is a parameter $t\in (0,1]$ for the exotic deformation  $\chi_t$ from \Cref{thm:exotic-deformation-Hyp} such that $\rho_1$ conjugated to $\chi_t\circ \rho_2$ or $\rho_2$ conjugated to $\chi_t\circ \rho_1$, hence $\rho_1, \rho_2$ define the same point in $\mathcal{H}_{mnn}(\Gamma)$.
\end{proof}

Consider the set $\mathcal{T}(\Gamma)$ of all conjugacy classes of $\Gamma$-isometric actions on complete real trees. An isometric action on a real tree is called \emph{minimal} when there is no closed invariant real sub tree (that is a closed invariant convex subspace in this situation).
The set $\mathcal{T}(\Gamma)$ contains the subset $\mathcal{T}_{m}(\Gamma)$ of minimal actions, which contains the subsets $\mathcal{T}_{mnn}(\Gamma)$ of those which are, moreover, non-neutral and $\mathcal{T}_{mne}(\Gamma)$ of those which are minimal and non-elementary.
We endow all these spaces with the equivariant Gromov-Hausdorff topology.

\begin{remark}[minimal representative in Gromov-Hausdorff neighbourhood]
Any action on a real tree has a minimal invariant closed subtree, unless there is a unique fixed point at infinity and all elements are elliptic. 
In particular every non-neutral action on a real tree is Gromov-Hausdorff-inseparable from a minimal non-neutral action on a complete real tree.

All neutral actions on a real tree are Gromov-Hausdorff-inseparable from the trivial action on a point.
\end{remark}

Let $\Proj\mathcal{T}(\Gamma) \supset \Proj\mathcal{T}_{m}(\Gamma), \Proj\mathcal{T}_{mnn}(\Gamma),\Proj\mathcal{T}_{mne}(\Gamma)$ be their quotient under the action of $\R_{>0}$ by homotheties where $t\in \R_{>0}$ acts on the class of $((T,d),\rho)\in \Proj\mathcal{T}(\Gamma)$ by $\lambda\cdot(T,d)=(T, td)$.

\begin{proposition}[compact metrizable space of actions on trees]
\label{prop:compactness_PT_mnn}
If $\Gamma$ is finitely generated, the Gromov-Hausdorff space $\Proj\mathcal{T}_{mnn}(\Gamma)$ is compact metrizable. 
\end{proposition}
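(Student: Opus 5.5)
The plan is to realize $\Proj\mathcal{T}_{mnn}(\Gamma)$ inside the compact space $\Proj\mathcal{C}(\Gamma)$ and to show that the realization is a homeomorphism onto a closed subset of a compact metrizable piece $\Proj\Bar{\mathcal{A}}_{nn}(S;\mu,\varepsilon)$.

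\emph{Step 1: the map.} Given a minimal non-neutral action $\rho_T$ on a real tree $(T,d)$, compose with the embedding $\exoT_e$ of \Cref{thm:tree-embedding-Hyp-infty}. This gives $\pi_e\circ\rho_T$ and a function of hyperbolic type $F=e^{d(o,\rho_T(\cdot)o)}$. Rescaling $d$ by $t$ replaces $F$ by $F^t$ (\Cref{rem:homotheties}), so the class $\class{F}$ is well defined on $\Proj\mathcal{T}_{mnn}(\Gamma)$. Continuity follows verbatim from the argument of \Cref{lem:continuity_hyp_rep}, using $P$-equivariant $\varepsilon$-approximations of $K=\rho_T(P)o$ and the modulus of continuity of $\exp$ on $[0,\operatorname{diam}K]$.

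For injectivity, recall that by \Cref{tree-length-cross-ratio} the length function of $\pi_e\circ\rho_T$ is $\ell_T$. Two minimal non-neutral tree actions with homothetic length functions are equivariantly homothetic, by the classical Culler–Morgan theorem for minimal actions. Alternatively one can argue as in \Cref{lem:equivariant_cross_ratio_preserving_map}: recover the cross-ratio on the closure of the axis endpoints, then the tree as the union of axes. Hence $\Proj\ell$ is injective on the image.

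\emph{Step 2: compactness.} Fix a finite symmetric generating set $S$. Normalize each class so that $\max_{\gamma\in S}\ell_T(\gamma)$ (or $\max_{\gamma\in S^2}$, to handle non-neutral actions whose generators are elliptic) equals $1$. This is possible because a non-neutral action on a tree has a loxodromic element in $S\cup S^2$, by the Culler–Morgan lemma for products of elliptics. For a minimal action, the minimal $S$-displacement $\Delta$ is then bounded by a constant times this normalization: in trees, minimal displacement is controlled by translation lengths of $S\cup S^2$.

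Choose $o$ realizing near-minimal displacement. Then $F=e^{d(o,\cdot\, o)}$ lies in $\mathcal{B}(S;\mu)\cap\mathcal{A}_{nn}(S^2;\mu,\varepsilon)$ for uniform $\mu,\varepsilon$, which is compact by \Cref{lem:PCnn_union-of-compact}. The arboreal functions form a closed set by \Cref{cor:arobreal-closed}. So the normalized arboreal functions form a compact set. Its image in $\Proj\mathcal{C}_{nn}$ is compact and Hausdorff, and it is metrizable by \Cref{thm:topo-PC-PCnn-PCne}.

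\emph{Step 3: surjectivity and homeomorphism.} Each point of this compact set is realized by a minimal non-neutral tree action via \Cref{cor:arboreal-functions-hyperbolic-type}, after passing to the minimal subtree. Non-neutrality is preserved in the limit by continuity of $\ell$ (\Cref{lem:F-to-ellF-continuity}). So the image equals the full compact set. It remains to see that the inverse map is Gromov–Hausdorff continuous, which would make the space homeomorphic to a compact metrizable one.

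To prove continuity of the inverse, take arboreal $F_m\to F$. Then the distances $\log F_m$ between finitely many orbit points and geodesic subdivision points converge. The approximations between finite subtrees are spanned by the points $\rho(P)o$, since convex hulls of finite sets in trees are finite unions of segments with controlled branch points given by Gromov products. These give the required equivariant $\varepsilon$-approximations.

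\emph{Main obstacle.} I expect the main obstacle to be in Step 2: showing that the minimal displacement is uniformly bounded by the normalized translation lengths. Equivalently, the normalized basepoint must not escape to infinity, which is what keeps the functions in a fixed $\mathcal{B}(S;\mu)$. I would try Paulin's estimate that in a tree $\Delta(S)\le C\max_{S^2}\ell$, proved by comparing the characteristic sets of pairs of generators.
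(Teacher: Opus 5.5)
Your route is genuinely different from the paper's. The paper runs Paulin's sequential-compactness argument directly in the Gromov--Hausdorff space, splitting according to whether the actions fix $0$, $1$ or $2$ ends. Only afterwards, in \Cref{lem:continuity_trees}, does it use the result to embed $\Proj\mathcal{T}_{mnn}(\Gamma)$ into $\Proj\mathcal{C}(\Gamma)$. You reverse this, so everything rests on your injectivity and Hausdorffness claims, and these fail.

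Culler--Morgan's theorem (equal length functions imply equivariantly isometric minimal trees) requires the length function to be non-abelian, i.e.\ $\Gamma$ must fix no end of the tree. But $\mathcal{T}_{mnn}(\Gamma)$ also contains minimal actions with exactly one fixed end $\omega$. Their length function is $\lvert\Xi\rvert$ for the Busemann character $\Xi$ at $\omega$, and it does not determine the tree. Take $\Gamma=BS(1,2)=\langle a,t\mid tat^{-1}=a^2\rangle$. Its action on the trivalent Bass--Serre tree $T$ (minimal, fixing an end $\omega$) and its action on $\R$ by $s\mapsto s+\Xi(\gamma)$ are both minimal and non-neutral with the same length function. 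They are not equivariantly homothetic. The cross-ratio alternative of \Cref{lem:equivariant_cross_ratio_preserving_map} fails for the same reason: all axes in $T$ end at $\omega$, so $\rho_T(\gamma)^+\mapsto\rho_\R(\gamma)^+$ is not injective (compare $t$ and $ata^{-1}$).

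This cannot be patched, because the Hausdorffness you import from \Cref{thm:topo-PC-PCnn-PCne} rests on the same injectivity of $\Proj\ell$, and it genuinely fails at such classes.
\begin{itemize}
\item In $\Proj\mathcal{C}(\Gamma)$: let $v_0$ be the vertex with stabilizer $\langle a\rangle$ and $S=\{a^{\pm1},t^{\pm1}\}$. The ray from $v_0$ to $\omega$ along the axis of $t$ is fixed by $a$, so every point $o_m$ on it minimizes the $S$-displacement. Hence the functions $\gamma\mapsto e^{d(o_m,\gamma o_m)}$ all lie, after your rescaling, in your normalized compact set, and all represent $\class{F_T}$. As $o_m\to\omega$ they converge pointwise to $F_\R=e^{\lvert\Xi\rvert}$.
\item These classes differ, $\class{F_\R}\neq\class{F_T}$. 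The elements $\gamma=t^n$ force any homothety exponent to be $1$. Then $\gamma=t^{-k}at^{k}$ has $\Xi(\gamma)=0$ but $d(v_0,\gamma v_0)=2k$.
\item In the Gromov--Hausdorff space: $T$ lies in every neighbourhood $V(P,K,\varepsilon)$ of the line action. Embed $K$ isometrically into a far segment of a ray towards $\omega$, on which every $\gamma\in P$ acts as translation by $\Xi(\gamma)$.
\end{itemize}
So for such $\Gamma$ neither your image in $\Proj\mathcal{C}(\Gamma)$ nor $\Proj\mathcal{T}_{mnn}(\Gamma)$ itself is Hausdorff; points are not even closed. No argument can yield metrizability without discarding minimal actions with a single fixed end, or passing to a Hausdorff quotient. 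The paper's sequential-compactness proof does not address Hausdorffness either.

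By contrast, the step you flag as the main obstacle is routine. For disjoint characteristic sets one has $\ell(gh)=\ell(g)+\ell(h)+2d(C_g,C_h)$. Combined with the Helly property for subtrees, this gives $\Delta(S)\le\max_S\ell+\tfrac12\max_{S^2}\ell$.
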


\begin{proof}
Under the assumption that $\Gamma$ contains a free subgroup on $2$ generators, this result was proved for the subspace $\Proj\mathcal{TS}(\Gamma)$ of classes of isometric actions on real trees with small edge stabilizers by Paulin in \cite[Chapitre 2, Théorème 6.2]{Paulin_these_1987}.
Since $\Gamma$ is assumed to be finitely generated, the topology on $\Proj\mathcal{T}_{mnn}(\Gamma)$ is second countable, so it suffices to prove that it is sequentially compact, and we explain below how to adapt Paulin's proof to our situation.

Let $(T_n,\rho_n)$ be a sequence of minimal non-neutral $\Gamma$-actions on the complete real tree $T_n$.
Since there is no fixed point in $T_n$, there are at most $2$ fixed point at infinity. Up to extraction, we may assume that for all $n$, there is the same number $\in \{0,1,2\}$ of fixed points in $\partial T_n$, and we now distinguish those three cases. 

If every $\rho_n$ has no fixed points, then the actions are minimal and irreducible: Paulin's proof shows that one can extract a limit and that the limit representation has a loxodromic element (the assumption about small stabilizers is only used to ensure that the limiting representation has no fixed point at infinity and to prove that the limit representation has small stabilizers has well. Thus, it is unnecessary for us).

If every $\rho_n$ has a unique fixed point, then for a finite set of generators, the intersection of their minimal subsets (axis or fixed point set) is non-empty, so we may choose a base point $o_n$ in this intersection and apply Paulin's construction to obtain a limit representation with one generator that is hyperbolic, hence non-neutral.

If every $\rho_n$ has a $2$ fixed points, then the minimality assumption implies that the tree $T_n$ is the real line and the image of $\rho_n$ consists only of translations (possibly trivial), thus choosing any base point $o_n\in T_n\simeq\R$, rescaling by the maximum of translations lengths of the generators, and extracting a subsequence such that for any $n$ this maximum is realized by the same generator, we obtain a limit representation on $\R$ with a generator acting by translation by $\pm1$.
\end{proof}

\begin{remark}[minimal representatives]
\label{rmk:compactness_Hausdorff_quotient}
Consider a finitely generated group $\Gamma$.
The space $\mathcal{T}(\Gamma)$ of isometric actions on separable real trees has a maximal Hausdorff separated quotient $\overline{\mathcal{T}}(\Gamma)$: denote $\Proj\overline{\mathcal{T}}(\Gamma)$ its set of homothety classes with the quotient topology. \Cref{prop:compactness_PT_mnn} shows that $\Proj\overline{\mathcal{T}}(\Gamma)$ is compact metrizable homeomorphic to $\Proj\mathcal{T}_{mnn}(\Gamma)\sqcup\{\class{\Bar{1}}\}$ where the isolated point $\class{\Bar{1}}$ corresponds to the class of the trivial action.
This can be compared with \Cref{rem:convex-cores-Hypn}.
\end{remark}

\begin{proposition}[comparing topologies for actions on trees]
\label{lem:continuity_trees}
Let $\Gamma$ be a group.

The map $\Proj\mathcal{T}(\Gamma)\to\Proj\mathcal{C}(\Gamma)$ associating to the class of $((T,d),\rho)$ the class of functions of hyperbolic type $\gamma\mapsto \exp{d(o,\rho(\gamma) o)}$ for $o\in T$ is continuous.

Consequently, if $\Gamma$ is finitely generated the restriction $\Proj\mathcal{T}_{mnn}(\Gamma) \to \Proj\mathcal{C}_{nn}(\Gamma)$ is a homeomorphism on its image.
\end{proposition}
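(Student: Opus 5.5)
Continuity is checked directly on neighbourhood bases, following the pattern of \Cref{lem:continuity_hyp_rep}. The only change is that the function attached to a tree action is $F_{\rho,o}(\gamma)=\exp d(o,\rho(\gamma)o)$ rather than $\cosh$ of the distance. By \Cref{thm:tree-embedding-Hyp-infty} with $\lambda=e$, this is the function of hyperbolic type of $\pi_e\circ\rho$ at $\exoT_e(o)$, so it lies in $\mathcal{F}(\Gamma)$. Changing the base point $o$ changes $\log F$ by a bounded amount. Rescaling $d$ by $t>0$ replaces $F$ by $F^t$, and for $t>1$ one applies the homothety relation in the other direction. Hence the class $\class{F_{\rho,o}}\in\Proj\mathcal{C}(\Gamma)$ is well defined on $\Proj\mathcal{T}(\Gamma)$.

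\emph{Continuity.} Since $\mathcal{T}(\Gamma)\to\Proj\mathcal{T}(\Gamma)$ and $\mathcal{F}(\Gamma)\to\Proj\mathcal{C}(\Gamma)$ are quotient maps, it suffices to prove continuity of $\mathcal{T}(\Gamma)\to\mathcal{F}(\Gamma)$ at a pair $((T,d),\rho)$. Fix a finite set $P\subset\Gamma$ containing $1$, a point $o\in T$ and $\eta>0$. Let $K$ be the finite (hence compact) set $\rho(P)\cdot o$, let $M=\operatorname{diam}K$, and choose $\varepsilon$ with $e^{M+\varepsilon}\varepsilon<\eta$. Take $\rho'\in V(P,K,\varepsilon)$ with a $P$-equivariant $\varepsilon$-approximation $R\subset K\times K'$, and pick $o'$ with $(o,o')\in R$. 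Equivariance gives $(\rho(\gamma)o,\rho'(\gamma)o')\in R$ for $\gamma\in P$, so $\lvert d(o,\rho(\gamma)o)-d'(o',\rho'(\gamma)o')\rvert<\varepsilon$. Since $\exp$ is $e^{M+\varepsilon}$-Lipschitz on $[0,M+\varepsilon]$, we get $\lvert F_{\rho,o}(\gamma)-F_{\rho',o'}(\gamma)\rvert<\eta$ on $P$. This is the required pointwise neighbourhood.

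\emph{Homeomorphism onto the image.} Assume $\Gamma$ is finitely generated. By \Cref{prop:compactness_PT_mnn}, $\Proj\mathcal{T}_{mnn}(\Gamma)$ is compact. Non-neutral tree actions have positive translation length somewhere, and by \Cref{tree-length-cross-ratio} we have $\ell_{\pi_e\circ\rho}=\ell_T$. So the image lies in $\Proj\mathcal{C}_{nn}(\Gamma)$, which is Hausdorff by \Cref{thm:length-non-neutral-classes}. A continuous injection from a compact space into a Hausdorff space is a homeomorphism onto its image, so only injectivity remains.

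\emph{Injectivity.} This is the main obstacle. Suppose two minimal non-neutral tree actions $\rho_1,\rho_2$ have the same class. By \Cref{thm:length-non-neutral-classes}, their length functions are homothetic, so after rescaling one tree we have $\ell_{T_1}=\ell_{T_2}$. The classical result of Culler–Morgan (a minimal non-neutral action on a real tree is determined up to equivariant isometry by its translation length function, at least in the non-lineal case) would then give the conclusion. The lineal case, where $T_i=\R$ acts by translations, is handled by hand as in the proof of \Cref{prop:same-length-spectrum-means-conjugate}.

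Alternatively, and staying inside the paper, one can argue through hyperbolic space. The representations $\pi_e\circ\rho_i$ have equal length functions, so by \Cref{lem:equivariant_cross_ratio_preserving_map} there is an equivariant cross-ratio-preserving map between limit sets. The limit sets are $\partial\exoT_e(\partial T_i)$, with $\partial T_i$ the limit set of the minimal tree action. By \Cref{tree-length-cross-ratio}, this map transports the cross-ratio of $\partial T_1$ to that of $\partial T_2$. A minimal tree is the union of geodesics between ends, and such a tree is recovered isometrically from its boundary cross-ratio (Gromov products along those geodesics). This yields an equivariant isometry $T_1\to T_2$. I expect making this reconstruction precise, including the case of trees with few ends, to be the delicate point.
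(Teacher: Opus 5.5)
Your continuity argument is the paper's, done slightly more carefully: you put $1\in P$ so that some $o'$ with $(o,o')\in R$ exists, and you use the Lipschitz constant $e^{M+\varepsilon}$ rather than $e^{M}$. Your second injectivity route is also the paper's route: read the boundary cross-ratio off the hyperbolic data, then rebuild the tree from it. The paper does the rebuilding in \Cref{prop:reconstruction_tree}, via Davies' description of $T$ by triples of ends plus a real parameter, including the case of fewer than four ends.

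The gap is that both of your injectivity routes only use $\ell_F$, while $\mathcal{T}_{mnn}(\Gamma)$ contains \emph{focal} actions. These are minimal actions with hyperbolic elements and exactly one fixed end $\xi$, such as $\mathrm{BS}(1,n)$ on its Bass--Serre tree. Their length function is $\lvert\beta_\xi\rvert$, where $\beta_\xi$ is the Busemann character, and it does not determine the tree. Culler--Morgan's rigidity theorem is for irreducible actions (no invariant line and no fixed end), so ``non-lineal'' does not suffice. The paper's way of reading cross-ratios off translation lengths (\Cref{lem:cross-ratio-from-translation-length} plus density of axis pairs) needs loxodromic pairs with four distinct fixed points. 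There are none when every axis ends at $\xi$, so the cross-ratio step of \Cref{lem:equivariant_cross_ratio_preserving_map} is unavailable there.

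This case cannot be patched, because injectivity actually fails there. Let $\Gamma=\langle a,t^2\rangle\cong\mathrm{BS}(1,4)$, the index-two subgroup of $\mathrm{BS}(1,2)=\langle a,t\mid tat^{-1}=a^2\rangle$. It acts minimally and non-neutrally on two trees: the $3$-regular Bass--Serre tree $T_2$ of $\mathrm{BS}(1,2)$, and its own $5$-regular Bass--Serre tree $T_4$. The orbit $\Gamma v_0\subset T_2$ is the set of vertices of even height, measured by the Busemann function towards the common fixed end. Equivariantly, $T_4$ is the tree on this set in which each vertex is joined to its $T_2$-grandparent. If the rays from $v_0$ and $\gamma v_0$ to the fixed end meet at height $H$ in $T_2$, they meet at height $H$ or $H+1$ in $T_4$. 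Hence $0\le 2d_{T_4}(v_0,\gamma v_0)-d_{T_2}(v_0,\gamma v_0)\le 2$ for all $\gamma\in\Gamma$. So $e^{d_{T_2}(v_0,\cdot\,v_0)}$ and $e^{d_{T_4}(v_0,\cdot\,v_0)}$ are homothetic, and the two actions have the same image in $\Proj\mathcal{C}(\Gamma)$. Yet $T_2$ and $T_4$, with branching valences $3$ and $5$, are not equivariantly homothetic.

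So the statement must be restricted to minimal non-elementary actions $\Proj\mathcal{T}_{mne}(\Gamma)$, and so must the paper's own step ``the function of hyperbolic type determines the cross-ratio on $\partial T$'', which goes through the same translation-length formula. On $\Proj\mathcal{T}_{mne}(\Gamma)$ both of your routes give injectivity. For the homeomorphism part, do not rely on compact-to-Hausdorff. Compactness of $\Proj\mathcal{T}_{mne}(\Gamma)$ is not available. Also, $\Proj\mathcal{C}_{nn}(\Gamma)$ is not Hausdorff once focal actions exist: moving the base point of a focal action towards its fixed end gives functions in its class that converge pointwise to $e^{\lvert\beta_\xi\rvert}$, the function of the lineal action, which lies in a different class. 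Instead, note that $\Proj\ell$ composed with your map is the projectivised tree length function. By Paulin's theorem this is a homeomorphism onto its image on $\Proj\mathcal{T}_{mne}(\Gamma)$, so the inverse of your map is continuous.
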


\begin{proof}
The map which to an isometric $\Gamma$-action on a tree $((T,d),\rho)\in \tilde{\mathcal{T}}^\cdot(\Gamma)$ and a point $o\in T$ associates the class of the function of hyperbolic type $\gamma\mapsto F(\gamma)=\exp {d(o,\rho(\gamma) o)}$ is independent of $o$ and invariant by homothety.
Moreover it is constant across the Gromov-Hausdorff class of $((T,d),\rho)$. The proof of continuity is very similar to the one in the proof of \Cref{lem:continuity_hyp_rep}

Fix some $((T,d),\rho)$ and $o\in T$. For a finite set $P\subset \Gamma$, we set $K=\rho(P) \cdot o$, and $\varepsilon = \eta e^{-M}$ where $M=\operatorname{diam}(K)$.
It follows from the definition of the Gromov-Hausdorff neighborhoods that if $((T',d'),\rho') \in V_\rho(P,K,\varepsilon)$ then there exists $o'\in T'$ such that $\lvert d(\rho(\gamma)o,o)-d'(\rho'(\gamma)o',o')\rvert<\varepsilon$ for all $\gamma\in P$.
This implies, using the fact that an $\varepsilon$-modulus of continuity for the exponential map on $[0,M]$ is given by $\varepsilon e^{-M}$, that $\lvert F_{\rho,o}(\gamma)-F_{\rho',o'}(\gamma)\rvert < \varepsilon\times e^M=\eta$  for all $\gamma\in P$ as desired.

Hence the map $((T,d),\rho)\mapsto F_\rho$ descends to a map $\mathcal{T}(\Gamma)\to \Proj\mathcal{C}(\Gamma)$. 
This map is constant across the Gromov-Hausdorff class of $((T,d),\rho)$ and invariant by homothety, so it descends to a continuous map $\Proj\mathcal{T}\to \Proj\mathcal{C}(\Gamma)$.

Now consider its restriction $\Proj\mathcal{T}_{mnn}\to \Proj\mathcal{C}(\Gamma)$. 
To show that this map is injective, we show how a limiting isometric action on a tree $T$ can be recovered from the cross-ratio at infinity. 
By minimality, the tree $T$ must be the union of axes of loxodromic elements for $\rho(\Gamma)$ hence the function of hyperbolic type determines the cross-ratio on $\partial T$. Consequently, the reconstruction procedure from $\partial T$ to $T$ via the cross-ratio in \Cref{prop:reconstruction_tree} below gives the injectivity.

The map restricted to $\Proj\mathcal{T}_{mnn}(\Gamma)$, which is compact and metrizable by \Cref{prop:compactness_PT_mnn}, is continuous and injective so it is a homeomorphism on its image. 
\end{proof}

\begin{remark}
The subspace of $\Proj\mathcal{C}(\Gamma)$ corresponding to the space of functions of hyperbolic type which are arboreal actions is closed by \Cref{cor:arobreal-closed}.
For finitely generated $\Gamma$, the \Cref{lem:continuity_trees} and \Cref{rmk:compactness_Hausdorff_quotient} shows that it corresponds to the compact metrizable space $\Proj\mathcal{C}(\Gamma)$ with the class of the trivial function being isolated. 

Moreover, it is known by \cite[Théorèmes 5.1 \& 5.3]{Paulin_these_1987}, that the topology on $\Proj\mathcal{T}_{mne}(\Gamma)$ is the one given by the projectivized marked length spectrum. 
\end{remark}

\begin{proposition}[reconstring tree from cross-ratio]
\label{prop:reconstruction_tree}
Consider for $i\in\{1,2\}$ two (complete) real trees $(T_i,d_i)$ such that the convex hull of $\partial T_i$ is $T_i$. If $\varphi\colon \partial T_1\to\partial T_2$ is a bijection preserving the boundary cross-ratio then there is an isometry $f\colon T_1\to T_2$ whose boundary extension coincides with $\varphi \colon \partial T_1 \to T_2$.

Moreover when $\Gamma$ acts by isometries on the $T_i$, if $\varphi$ is $\Gamma$-equivariant then $f$ is $\Gamma$-equivariant.  
\end{proposition}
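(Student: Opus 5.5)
The plan is to rebuild each tree $T_i$ from its boundary, using the Gromov products written in terms of the boundary cross-ratio. Since $T_i$ is the convex hull of $\partial T_i$, every point $x\in T_i$ lies on a bi-infinite geodesic $(a,b)$ with $a,b\in\partial T_i$. If $x$ is a branch point, it is the centre (median) of a tripod spanned by three distinct boundary points $a,b,c$. I will therefore encode points of $T_i$ by data of the form ``a boundary triple $(a,b,c)$ together with a real parameter $s$ along the geodesic $(a,b)$''. The parameter is measured from the centre of $(a,b,c)$, signed towards $b$.

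The key identity is the following. For distinct $a,b,c,e\in\partial T$, the quantity $\log\Cr{a,b;c,e}$ is (up to the factor convention) the signed length of the overlap of the geodesics $(a,b)$ and $(c,e)$. Equivalently, the centre of $(a,b,e)$ lies at signed distance $\pm\log\Cr{\cdot}$ from the centre of $(a,b,c)$ along $(a,b)$. So cross-ratios determine all the distances between centres of tripods lying on a common geodesic.

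Step 1 is to define $f$ on the set of centres of tripods. I send the centre $m(a,b,c)$ to $m(\varphi a,\varphi b,\varphi c)$. To check this is well defined, I note that $m(a,b,c)=m(a',b',c')$ can be detected by cross-ratio identities. For instance $m(a,b,c)=m(a,b,e)$ exactly when $\Cr{\cdot}=1$ for the appropriate quadruple, and two tripods share a centre when a chain of such equalities holds. Cross-ratio preservation then transports these identities to $T_2$.

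Step 2 extends $f$ to all points on geodesics $(a,b)$ by the signed parameter $s$, giving $x=m(a,b,c)+s$ towards $b$. I must check that this is consistent when the same point is described with different data. For distinct $a,b,c,e\in\partial T_1$, the paths $(a,b)$ and $(c,e)$ overlap along a segment whose endpoints are tripod centres with a signed length given by the cross-ratio, so consistency reduces to Step 1 together with the fact that $\varphi$ preserves these lengths.

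Step 3 is to show that $f$ is isometric. For two points $x,y$ I choose boundary points $a,b,c,e$ so that $x$ and $y$ lie on the geodesics $(a,b)$ and $(c,e)$, or both on one geodesic. The distance $d_1(x,y)$ is then an explicit combination of the parameters and the logarithms of the cross-ratios among $a,b,c,e$, which $\varphi$ preserves. Surjectivity of $f$ follows by applying the same construction to $\varphi^{-1}$. Since $f$ maps $(a,b)$ onto $(\varphi a,\varphi b)$, its boundary extension is $\varphi$. Equivariance follows from the canonicity of the construction: $\gamma$ maps $m(a,b,c)$ to $m(\gamma a,\gamma b,\gamma c)$, and $\varphi$ commutes with $\gamma$. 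Alternatively, equivariance follows from uniqueness, since an isometry is determined by its boundary map when $T_i$ is the convex hull of its boundary.

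The main obstacle is the bookkeeping in Steps 1 and 2: expressing ``equal centre'' and the overlap lengths purely through cross-ratios, including the degenerate configurations where geodesics share an endpoint or do not overlap. My approach is to normalise at a boundary point $\infty$, where the Hamenstädt visual metric of the tree is an ultrametric determined by the cross-ratio. Points of $T$ then correspond to closed balls in $(\partial T\setminus\{\infty\}, d_{o,\infty})$, paired with the radius, i.e. to the horoball structure. I will show that $\varphi$ (with $\varphi(\infty)$ as basepoint) is a similarity of these ultrametrics, hence induces a bijection of balls that preserves radii up to a fixed scale. Cross-ratio preservation forces this scale to be $1$, and from this the isometry can be read off.
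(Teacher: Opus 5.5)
Your construction is the same as the paper's. There, a point is encoded by a boundary triple and a real parameter along $(a,b)$ measured from the centre of $(a,b,c)$, with identifications detected by cross-ratios. The paper invokes Davies' space of triples $((\xi_1,\xi_2,\xi_3),t)\in(\partial T)^{(3)}\times\R$ and observes that it works for any real tree that is a union of bi-infinite geodesics. Your Steps 1--2 supply the verification the paper leaves implicit, and they are sound when $\Card\partial T_i\ge 3$.

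Step 3 is easier than you make it. Take $x\neq y$ with $x\in(a,b)$ and $y\in(c,e)$. One of $a,b$ lies in a direction at $x$ not containing $y$, and one of $c,e$ lies in a direction at $y$ not containing $x$. So $x,y$ lie on a common bi-infinite geodesic, and $f$ is isometric simply because it is an isometry on each geodesic.

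The genuine hole is the degenerate case. Tripod centres require three boundary points, so when $T_i$ is a line your construction is empty. Existence is then trivial, but the equivariance claim is false, and so is your assertion that an isometry is determined by its boundary map. For a counterexample, let $\Gamma=\Z$ act on $T_1=T_2=\R$ by translations of length $1$ and $2$ respectively. Then $\varphi=\mathrm{id}$ on the two ends is equivariant and vacuously cross-ratio preserving, since $(\partial T_i)^{(4)}=\emptyset$. Yet no equivariant isometry exists, because the translation lengths differ. The "moreover" part therefore needs $\Card\partial T_i\ge 3$; the paper's one-line dismissal of the small cases overlooks this as well.

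There are two smaller inaccuracies.
\begin{itemize}
\item With the paper's normalisation, $\log\Cr{a,b;c,e}$ is not the signed overlap length. It is $0$ for a coherently oriented overlap, $+L$ for an oppositely oriented overlap of length $L$, and $-L$ for disjoint geodesics at distance $L$. Your reformulation $\lvert\log\Cr{a,c;b,e}\rvert=d(m(a,b,c),m(a,b,e))$ is correct, and it is all you actually use.
\item In the Hamenst\"adt variant, cross-ratio preservation only makes $\varphi$ a similarity with a ratio $\lambda$ that depends on the chosen base points. It cannot force $\lambda=1$. This is harmless, since $(B,r)\mapsto(\varphi(B),\lambda r)$ is still an isometry of the trees (a shift of Busemann heights by $\log\lambda$).
\end{itemize}
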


\begin{proof} 
First, observe that if $\partial T_i$ consists of less than 4 points, then the trees $T_i$ are both a line or an infinite tripod, and the result follows. Now assume $\partial T_i$ has at least $4$ points.

In \cite[Theorem 1.1]{Davies}, the author considers the case of affine buildings of rank $1$ (namely simplicial trees with no leaves). Her reconstruction procedure gives the tree $T$ as a quotient of the tangent  space $\ufi(T)$ defined as the set of triples $((\xi_1,\xi_2,\xi_3),t)\in (\partial T)^{(3)}\times \R$ of at infinity together with a real parameter $t$. The point $((\xi_1,\xi_2,\xi_3),t)$ corresponds to the point on the oriented geodesics $(\xi_1,\xi_2)$ with parameter $t$ for the arc-length parametrization with origin at the projection of $\xi_3$ on $(\xi_1,\xi_2)$. The same construction works for a real tree $T$ that is the union of bi-infinite geodesics, or equivalently that is the convex hull of its boundary $\partial T$.

The equivariant statement follows from the construction.
\end{proof}

\subsection{Recognition of previous compactifications}

For a finitely generated torsion-free group $\Gamma$ containing a free group of rank $2$ and an for integer  $n\geq2$,
following \cite{Paulin_1988}, let $\mathcal{H}_{fd}^n(\Gamma)$ be the subspace of $\mathcal{H}^n(\Gamma)$ corresponding to faithful and discrete representations $\Gamma\to \Isom(\Hyp^n)$: it is endowed the equivariant Gromov-Hausdorff topology, and let $\Proj\mathcal{H}_{fd}^n(\Gamma)$ be its quotient by the homothety relation with the quotient topology.
Note that for $n_1,n_2\in \N$, if $n_1<n_2$ then $\mathcal{H}^{n_1}_{fd} \subset \mathcal{H}^{n_2}_{fd}$. 
The quotient map $\mathcal{H}_{fd}^n(\Gamma)\to \Proj\mathcal{H}_{fd}^n(\Gamma)$ is an homeomorphism. 

Recall that the Gromov-Hausdorff space of minimal non-elementary actions on trees $\Proj\mathcal{T}_{mne}(\Gamma)$ contains the subspace $\Proj\mathcal{TS}(\Gamma)$ of those having small stabilizers.
Note that $\Proj\mathcal{TS}(\Gamma)$ is compact.
Paulin proved in \cite[Théorème 6.4]{Paulin_1988} that the space $\Proj\mathcal{H}_{fd}^n(\Gamma)\sqcup \Proj\mathcal{TS}(\Gamma)$ is compact.
Thus $\Proj\mathcal{TS}(\Gamma)$ contains the \emph{equivariant Gromov-Hausdorff compactification of the space of faithful and discrete representations of $\Gamma$ on $\Hyp^n$}.

We will show that this compactification also holds in the corresponding spaces of homothety classes of functions of hyperbolic type.
For this, denote $\Proj\mathcal{C}_{fd}^n(\Gamma)$ the space of homothety classes of functions of hyperbolic type associated to faithful and discrete actions of $\Gamma$ on $\Hyp^n$, namely the image of $\mathcal{H}^n_{fd}(\Gamma)\subset \mathcal{H}_{mnn}(\Gamma)$ by the (injective and continuous) function $\mathcal{H}_{mnn}(\Gamma)\to \Proj \mathcal{C}(\Gamma)$ defined by $\rho \mapsto \class{F_{\rho}}$ obtained in \Cref{lem:continuity_hyp_rep}.

\begin{theorem}[recovering compactification of faithful and discrete]
\label{thm:identification_compactification} 
Let $\Gamma$ be a finitely generated torsion-free group containing a free group of rank $2$. 
The Gromov-Hausdorff space $\Proj\mathcal{H}_{fd}^n(\Gamma)\sqcup \Proj\mathcal{TS}(\Gamma)$ is homeomorphic to its image $\Proj\mathcal{C}_{fd}^n(\Gamma)\sqcup \Proj\mathcal{TS}(\Gamma)$ in $\Proj\mathcal{C}(\Gamma)$. 
\end{theorem}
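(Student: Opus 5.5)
The plan is to build a continuous injective map from the compact Hausdorff space $\Proj\mathcal{H}_{fd}^n(\Gamma)\sqcup \Proj\mathcal{TS}(\Gamma)$ (compact by \cite[Théorème 6.4]{Paulin_1988}) into $\Proj\mathcal{C}(\Gamma)$. Its image will lie in a Hausdorff subspace, so the map will be a homeomorphism onto its image. The map is the disjoint union of the map of \Cref{lem:continuity_hyp_rep}, restricted to faithful discrete representations, and the map of \Cref{lem:continuity_trees}, restricted to $\Proj\mathcal{TS}(\Gamma)\subset\Proj\mathcal{T}_{mnn}(\Gamma)$.

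First step: check that the whole image lies in $\Proj\mathcal{C}_{nn}(\Gamma)$, which is Hausdorff by \Cref{thm:length-non-neutral-classes}. Faithful discrete actions of a group containing $F_2$ on $\Hyp^n$ are non-elementary, so they are non-neutral; after passing to the span of the limit set we may take them minimal. Actions in $\Proj\mathcal{TS}(\Gamma)$ are minimal non-elementary by definition. Injectivity on each piece follows from \Cref{lem:continuity_hyp_rep} and \Cref{lem:continuity_trees}. For disjointness of the two images, observe that an arboreal class has infinite critical exponent (\Cref{prop:characterisation_kernel_hyperbolic_type_tree}). For a minimal non-neutral action on $\Hyp^n$, homothety with an arboreal $F$ would, by \Cref{cor:homothetic_length_spectrum}, make $\rho$ conjugate to $\chi_t\circ\pi_e\circ\rho_T$, whose minimal subspace is infinite dimensional; this is impossible in $\Hyp^n$. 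A cleaner route to disjointness: the boundary cross-ratio determined by the length function (\Cref{prop:ell(Gamma)_Cr-Lambda(Gamma)}) satisfies Ptolemy with equality $\Cr{\cdot}^{-\varepsilon}$ for all $\varepsilon$ on trees, which fails on a non-elementary limit set in $\Hyp^n$.

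Second step, continuity of the combined map. Each piece is continuous separately, but a sequence $\rho_m\in\Proj\mathcal{H}^n_{fd}$ converging in Paulin's topology to a tree action needs separate treatment. Here I invoke \Cref{thm:geometric-convergence-tree}. If $\rho_m\to T$ in the equivariant Gromov–Hausdorff sense after rescaling, the minimal displacements $\Delta(\rho_m(S))$ diverge (otherwise the limit would be an action on $\Hyp^n$). Then, along a subsequence, $\chi_{t_m}\circ\rho_m$ converges pointwise to $\pi_e\circ\rho_{T'}$ for some tree $T'$, giving $F_{\rho_m}^{t_m}\to F_{\rho_{T'}}$ pointwise at the basepoint $o$. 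It remains to identify $T'$ with Paulin's limit $T$. Both are obtained by rescaling distances $\dH(\rho_m(\gamma)o_m,o_m)$ by $1/\Delta_m$ at minimally displacing basepoints, and their length functions are both limits of $\ell_{\rho_m}/\Delta_m$, using continuity of $\ell$ (\Cref{lem:F-to-ellF-continuity}). By injectivity of $\Proj\ell$ on $\Proj\mathcal{C}_{nn}$, the classes coincide. Since every subsequence has a further subsequence converging to the correct class, continuity follows; the spaces are first countable, $\Gamma$ being countable.

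The main obstacle is this identification of the limiting tree, especially ensuring the limit is non-neutral so that injectivity of $\Proj\ell$ applies. Paulin's theorem gives that the limit lies in $\Proj\mathcal{TS}(\Gamma)$, so it is non-elementary. I would therefore compare functions of hyperbolic type directly, $\log F_{\rho_m}^{t_m}(\gamma)\approx t_m\dH(\rho_m(\gamma)o,o)\to d_T(\rho_T(\gamma)o,o)$, rather than going through length functions. Finally, the continuous bijection from a compact space onto a Hausdorff image is a homeomorphism.
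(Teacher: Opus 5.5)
Your proposal is correct and follows essentially the same route as the paper. Both arguments get continuity on each piece from \Cref{lem:continuity_hyp_rep} and \Cref{lem:continuity_trees}, treat sequences degenerating to trees with \Cref{thm:geometric-convergence-tree}, identify the two limiting trees through their projectivized length functions and \Cref{cor:homothetic_length_spectrum}, and finish with compactness of the domain and Hausdorffness of $\Proj\mathcal{C}_{nn}(\Gamma)$. Two remarks on the comparison. First, for Paulin's limit $T$, \Cref{lem:F-to-ellF-continuity} does not give you that its length function is the limit of the rescaled $\ell_{\rho_m}$; you need continuity of length functions for the equivariant Gromov--Hausdorff topology \cite[Theorem 4.2]{Paulin_Gromov-topogy-R-trees_1989}, which the paper cites at exactly this step and you use without naming. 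Second, your additional checks (disjointness of the two images, divergence of $\Delta(\rho_m(S))$, non-neutrality of the limit) are points the paper leaves implicit.
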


\begin{proof} 
By \Cref{lem:continuity_hyp_rep} we have a continuous injective function $\mathcal{H}_{mnn}^n(\Gamma) \to \Proj\mathcal{C}(\Gamma)$ given by $\rho \mapsto \class{F_{\rho}}$.
Its restriction to $\mathcal{H}_{fd}^n(\Gamma)$ and then quotients to a function $\Proj \mathcal{H}_{mnn}^n(\Gamma) \to \Proj\mathcal{C}_{fd}^n(\Gamma)$ which by \Cref{cor:homothetic_length_spectrum} is still injective.
Its image denoted $\Proj\mathcal{C}_{fd}^n(\Gamma)$ consists of the homothety classes of functions of hyperbolic type associated to faithful and discrete actions of $\Gamma$ on $\Hyp^n$.
We thus obtain a continuous and bijective function $\Proj \mathcal{H}_{fd}^n(\Gamma) \to \Proj\mathcal{C}_{fd}^n(\Gamma)$.

By Lemmas \ref{lem:continuity_hyp_rep} and \ref{lem:continuity_trees}, the well-defined map $\Psi \colon \Proj \mathcal{H}^n(\Gamma) \sqcup \Proj \mathcal{T}(\Gamma) \to \Proj\mathcal{C}^n(\Gamma)$ is continuous in restriction to $\Proj\mathcal{H}_{fd}^n(\Gamma)$ and to $\Proj\mathcal{TS}(\Gamma)$, and we wish to show that $\Psi$ is continuous in restriction to $\Proj\mathcal{H}_{fd}^n(\Gamma)\sqcup\Proj\mathcal{TS}(\Gamma)$. Since $\Proj\mathcal{TS}(\Gamma)$ is compact and $\Proj\mathcal{H}_{fd}^n(\Gamma)\cup \Proj\mathcal{TS}(\Gamma)$ is metrizable, it remains to show that if a sequence of $\alpha_m \in \mathcal{H}^n_{fd}(\Gamma)$ has homothety classes converging to some $T_1 \in \Proj\mathcal{TS}(\Gamma)$, then the same is true for the associated equivalence classes of functions of hyperbolic type. But this follows from \Cref{thm:tree-embedding-Hyp-infty}: it actually constructs a real tree $T_2$ with a point, and a sequence of functions of hyperbolic type $F_{\rho_m}$ equivalent those associated to $\alpha_m$ such that $F_{\rho_m}$ converges to the function of hyperbolic associated to the action on $T_2$. 
By continuity of the length spectrum for the equivariant Gromov-Hausdorff topology \cite[Theorem 4.2]{Paulin_Gromov-topogy-R-trees_1989}, the homothety classes of the length spectrum for $T_1$ and $T_2$ are the same and thus by \Cref{cor:homothetic_length_spectrum} the classes of functions of hyperbolic type are the same. So the map $\Psi$ is continuous. It is injective by rigidity of the length spectrum for actions on $\Hyp^n$ and on real trees \cite[Remark 4.6]{Paulin_Gromov-topogy-R-trees_1989}. Since $\Proj\mathcal{H}_{fd}^n(\Gamma)\sqcup \Proj\mathcal{TS}(\Gamma)$ is compact and $\Proj\mathcal{C}_{nn}(\Gamma)$ is Hausdorff, $\Psi$ is a homeomorphism on its image.
\end{proof}

\begin{remark}[recovering previous compactifications]
It follows from \Cref{thm:identification_compactification}, that the compactification of the character variety $\mathcal{H}_{fd}^n(\Gamma)$ in $\Proj\mathcal{C}(\Gamma)$ is homeomorphic to the one obtained by Culler, Morgan, Otal and Shalen in various works \cite{Morgan-Shalen_valuations-trees-degernations-hyperbolic_1984, Morgan_actions-trees-compactification-SOn1-representation_1986, Culler-Morgan_group-actions-R-trees_1987, Morgan_Otal_relative-growth-geodesics_1993}.
Hence, our compactification encompasses these compactifications for all values of $n$.
\end{remark}

\begin{remark}[fundamental groups of hyperbolic manifolds]

When $\Gamma$ is the fundamental group of a closed surface of genus $\ge 2$, it follows from \cite[Theorem 3.2]{Skora_splittings_1996} that for all $n\ge 2$, the closure of $\mathcal{H}^n_{fd}(\Gamma)$ in this compactification is all of $\Proj\mathcal{TS}(\Gamma)$, and this coincides with Thurston's compactification of the Teichm\"uller space by measured laminations.
\end{remark}

\begin{example}[small actions trees not in the closure]
\label{eg:strict-inclusions-closure-PCn-fd}
There are finitely generated groups $\Gamma$ for which the inclusion $\bigcup_n \overline{\Proj\mathcal{C}^n_{fd}(\Gamma)} \subset \bigcup_n \Proj\mathcal{C}^n_{fd}(\Gamma)\sqcup \Proj\mathcal{TS}(\Gamma)$ is strict.

In fact, we can find such groups for which the set of faithful (maybe non-discrete) classes of actions $\Proj\mathcal{C}^n_{fd}(\Gamma)$ is empty whereas $\Proj \mathcal{TS}(\Gamma)$ is not. 

For this, consider two finitely generated groups $A,B$ such that for all $n\in \N$ they do not admit a faithful action on $\Hyp^n$ (for instance any non-$\R$-linear group, or any group having property-$T$ such as $\SL_k(\Z)$ for $k\ge 3$). 
The amalgam $\Gamma=A\star B$ also has $\bigcup_n \Proj\mathcal{C}^n_{fd}(\Gamma)=\emptyset$, but it acts on its Bass-Serre tree with trivial (hence small) edge stabilizers so $\Proj\mathcal{TS}(\Gamma)\ne \emptyset$.
\end{example}

\begin{remark}[when $\Proj \mathcal{TS} \subsetneq \Proj \mathcal{T}$]
\label{eg:strict-inclusion-PST-PT}
Note that $\Proj \mathcal{TS}(\Gamma)$ is closed in $\Proj \mathcal{T}(\Gamma$, so any point in the difference $\Proj \mathcal{T}(\Gamma)\setminus \Proj \mathcal{TS}(\Gamma)$ is isolated from the compactification $\bigcup_n \Proj\mathcal{C}^n_{fd}(\Gamma) \sqcup \Proj\mathcal{TS}_n(\Gamma)$.

Examples of $\Gamma$ for which $\Proj \mathcal{T}(\Gamma)\setminus \Proj \mathcal{TS}(\Gamma)\ne \emptyset$ include free groups on $\ge 2$ generators and automorphism groups of simplicial trees that are $k$-regular with valency $k\ge 3$.
\end{remark}

The following example suggests that $\Proj\mathcal{C}(\Gamma)$ can be much larger than $\bigcup_n \Proj \mathcal{C}^n(\Gamma) \sqcup \Proj \mathcal{T}(\Gamma)$.

\begin{example}[bendings]
\label{eg:bendings}
Let $\Gamma=\pi_1(\Sigma)$ be the fundamental group of a closed orientable surface of genus $\ge 2$. In \cite[Theorem 1.3]{Xu_convex-cocmpact-rep-infinite-dim-hyp_2024}, Xu exhibits a deformation space of representations of $\Gamma\to \Isom(\Hyp^\omega)$ of infinite dimension (parametrized by some infinite dimensional Lie group). 
These representations are not arboreal and we expect that most of them have critical exponent $1$ (namely they are not exotic deformations of representations into some $\Isom(\Hyp^n)$ for finite $n\in \N$, nor limits of such (since this would yield an arboreal one)).

In short, these representations are obtained from an initial representation $\rho$ by the following bending procedure. 

First, consider a simple closed curve $\gamma\subset \Sigma$ separating the surface $\Sigma$ to obtain (by the Van Kampen theorem) a presentation of its fundamental group as an amalgam $\Gamma = A*_\Z B$ where $A,B$ are the fundamental groups of the holed surfaces with genus $\ge 1$ on each side of $\gamma$.
Choose a representation $\rho\colon\Gamma\to \Isom(\Hyp^\kappa)$ which is an exotic deformation of a representation $\Gamma\to\Isom(\Hyp^2)$ obtained from a hyperbolic metric on $\Sigma$. 
\cite[Proposition 4.11]{Xu_convex-cocmpact-rep-infinite-dim-hyp_2024} shows that $\rho(A)$ is irreducible.

Next, for $g\in \Isom(\Hyp^\kappa)$ centralizing $\rho(\Z)$, define the bending $\rho^g_{A}\colon \Gamma \to \Isom(\Hyp^\kappa)$ by $g$-conjugating the values on $\beta\in B$ as $\rho^g_{A}(\alpha)=g\rho(\beta)g^{-1}$ while unchanging the values of $\alpha\in A$ as $\rho^g_{A}(\alpha)=\rho(\alpha)$.
For all $g$ the representation $\rho^g_A$ remains non-elementary, and choosing a path of such elements from $g$ to the identity (for instance varying in a one-parameter subgroup) we obtain a continuous path of non-elementary representations from $\rho$ to $\rho^g_A$.

Finally, if $\ell(g_1)\ne \ell(g_2)$ then the bendings $\rho_{g_1}$ and $\rho_{g_2}$ yield distinct points in $\Proj\mathcal{C}(\Gamma)$, as we now explain.
On the one hand, by \cite[Proposition 4.19]{Xu_convex-cocmpact-rep-infinite-dim-hyp_2024} $\rho_A^{g_1}$ and $\rho_A^{g_2}$ cannot be conjugated since $g_1\ne g_2$.
On the other hand, the $\rho_A^{g_i}$ are non-elementary and coincide on $A$ which contains a loxodromic element, so if two such representations yield  functions of hyperbolic type that are homothetic, then the homothety factor $t>0$ must be $1$, hence both representations must have the same the length spectrum, so by Proposition \ref{prop:same-length-spectrum-means-conjugate} they are conjugated, contradicting the previous observation.
\end{example}

\section{Global rigidity for certain group actions on \texorpdfstring{$\Hyp^\kappa$}{Hinfty} }\label{sec:rigidity}

In this section, we apply the unicity of abstract cross-ratio (up to powers) obtained in \Cref{subsec:unicity_cross_ratio} to prove that for some groups, the space of homothety classes of non-elementary functions of hyperbolic type $\Proj\mathcal{C}_{ne}(\Gamma)$ is reduced to a point, namely that there is a unique non-elementary representation in some $\Isom(\Hyp^\kappa)$ up to conjugacy and exotic deformations.

\subsection{Boundedness properties of groups}\label{subsec:boundedness}

To deal with the groups we have in mind, we will first need to extend our scope beyond locally compact groups and rely on the coarse geometry of topological groups developed by Rosendal \cite{Rosendal_coarse-geometry_2021}. 
We briefly recall the few notions we use and refer to the previous reference for a general treatment or to \cite{Rosendal_geometrisation_2023} for a shorter introduction.

The main idea is to have a well-defined notion of boundedness to extend geometric group theory outside the class of locally compact groups. Following Roe, Rosendal introduces coarse structures $\mathcal{E}$ on groups in \cite[Definition 2.2]{Rosendal_coarse-geometry_2021}. 
For a topological group $G$, we will only use two such coarse structures: the left coarse structure $\EE_L$, and the coarse structure  $\EE_d$ coming from a continuous isometric group action of $G$ on $(X,d)$. Actually, we will only need and recall the notions of bounded subsets relative to these coarse structures.  

\begin{definition}[$\EE_d$-bounded subsets] 
Let $G$ be topological group.
Consider a continuous isometric action on a metric space $(X,d)$. 
A subset $B\subset G$ is $\EE_d$-\emph{bounded} when for any $x\in X$ we have $\sup \{ d(gx,hx) \colon g,h\in B\} <\infty$.
\end{definition}
\begin{proof}
The triangle inequality shows that the condition is independent of $x$ (justifying the use of ``any''), and amounts to saying that $B$ has bounded image under any orbit map $g\mapsto gx$. 
\end{proof}

\begin{remark}[inverse and products]
The collection of $\EE_d$-bounded subsets in $G$ is stable under inverse and finite products: if $A,B\subset G$ are $\EE_d$-bounded then $A^{-1}$ and $AB$ are $\EE_d$-bounded.
\end{remark}

\begin{definition}[bornologous and coarsely proper actions]
Let $G$ be a subgroup of some $\Isom(X,d)$ where $(X,d)$ is metric space. An action of $G$ on another metric space $(X',d')$ by continuous isometries is:
\begin{itemize}[align=left, noitemsep]
	\item[$\EE_d$-\emph{bornologuous}] when every $\EE_d$-bounded subset is $\EE_{d'}$-bounded.
	\item[$\EE_d$-\emph{coarsely proper}] when every $\EE_{d'}$-bounded subset is $\EE_d$-bounded.
\end{itemize}
\end{definition}

\begin{definition}[coarsely bounded subset]
In a topological group $G$, a subset $B$ is \emph{coarsely bounded} when it is $\EE_d$-bounded for every continuous isometric action of $G$ on a metric space $(X,d)$.
In particular, the topological group $G$ itself is coarsely bounded when every continuous isometric action on a metric space has bounded orbits. 
\end{definition}

\begin{remark}[groups with an action witnessing coarse boundedness]
For some topological groups $G$, there is a faithful isometric action on a metric space $(X,d)$ so that coarsely bounded subsets coincide with $\EE_d$-bounded subsets. 
In this case, by definition of coarse boundedness, every continuous isometric action of $G$ on a metric space will be $\EE_d$-bornologous.
This holds for the action of $\Isom(\Hyp^\omega)$ on its symmetric space $\Hyp^\omega$ as we will see in \Cref{prop:proper_H^k}.
\end{remark}

\begin{example}[second countable locally compact groups]
A second countable locally compact group has a left invariant proper metric $d$, so its coarsely bounded subsets are precisely its $\EE_d$-bounded subsets and correspond to its relatively compact subsets.
\end{example}

\begin{definition}[locally bounded group]
A topological group $G$ is \emph{locally bounded} when it has a neighborhood of the identity that is coarsely bounded.
This encompasses local compactness.
\end{definition}

\begin{definition}[cobounded action]
For a metric space $(X,d)$ and a subgroup $G\subset \Isom(X,d)$, the action of $G$ on $X$ is \emph{cobounded} when there exists $o\in X$ and $M\geq0$ such that for every $x\in X$ we have $d(x,Go)\leq M$. 
\end{definition}
\begin{remark}[quasi-isometric]
Let us note that when an action of $G$ on a geodesic space $X$ is both coarsely proper and cobounded, the spaces $G$ and $X$ are \emph{quasi-isometric} in the terminology of Rosendal (see \cite[Theorem 2.57]{Rosendal_coarse-geometry_2021}).
\end{remark}

Our first example for this coarse geometry is given by $\Isom(\Hyp^\kappa)$ and its action on $\Hyp^\kappa$. The following solves in particular \cite[Problem A.11]{Rosendal_coarse-geometry_2021}.

\begin{proposition}[bounded properties of $\Isom(\Hyp^\kappa)$]
\label{prop:proper_H^k}
For a countable cardinal $\kappa$, the Polish group $\Isom(\Hyp^\kappa)$ is generated by a coarsely bounded neighborhood of the identity and it acts on $\Hyp^\kappa$ coarsely properly coboundedly.
In particular, a subset of $\Isom(\Hyp^\kappa)$ is coarsely bounded if and only if it is $\EE_{\dH}$-bounded for its action on $(\Hyp^\kappa,\dH)$.
\end{proposition}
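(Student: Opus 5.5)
The plan has three parts: an orbit map adapted to the Cartan decomposition, a transfer of boundedness from orbits to subsets of the group, and the ``in particular'' clause.

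\textbf{Orbit map and coboundedness.} I fix the base point $o\in\Hyp^\kappa$ and write $K=\Stab(o)\simeq \operatorname{PO}(\kappa)$. I use the Cartan decomposition from \Cref{subsec:Hn-models}: every $g\in\Isom(\Hyp^\kappa)$ factors as $g=k_1A(t)k_2$ with $k_i\in K$ and $t=\dH(o,go)$. Transitivity of $\Isom(\Hyp^\kappa)$ on $\Hyp^\kappa$ gives that the action is cobounded (with $M=0$).

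\textbf{$K$ is coarsely bounded.} This is the key step. I would first try to cite Rosendal's list of coarsely bounded Polish groups, which includes the orthogonal group of separable Hilbert space in the strong operator topology. Failing that, I would argue directly via Roelcke precompactness of $\operatorname{O}(\kappa)$, which implies coarse boundedness. For finite $\kappa$ the group $K$ is compact, so nothing needs to be done. I expect this to be the main obstacle: the Roelcke precompactness of the infinite orthogonal group is classical, but it must be checked that it is compatible with the Polish topology of \cite{Duchesne_Polish-topology-isom-H-infinity_2023} restricted to $K$.

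\textbf{Coarse properness and boundedness of $\EE_{\dH}$-bounded sets.} Let $B\subset\Isom(\Hyp^\kappa)$ be $\EE_{\dH}$-bounded, so $\dH(o,go)\le R$ for all $g\in B$. Then $B\subset K\,A([0,R])\,K$. The set $A([0,R])$ is compact, being a continuous image of an interval, hence coarsely bounded. Coarsely bounded sets are stable under finite products, so $B$ is coarsely bounded. Conversely, every coarsely bounded set is $\EE_{\dH}$-bounded by definition. This gives the ``in particular'' statement, and also shows the action is coarsely proper.

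\textbf{Generation by a coarsely bounded neighbourhood.} Take $V=\{g\colon \dH(o,go)<1\}$, which is open because the topology is pointwise convergence on $\Hyp^\kappa$. It is coarsely bounded by the previous step. For any $g$, the Cartan decomposition writes $g=k_1A(t)k_2$, and $A(t)$ is a product of $\lceil 2t\rceil$ elements $A(s)$ with $|s|<1$, each lying in $V$. Since $K\subset V$, the set $V$ generates the whole group.
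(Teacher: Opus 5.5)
Your proposal is correct and follows essentially the paper's proof: coarse boundedness of the point stabilizer $K=\operatorname{PO}(\kappa)$ (compact for finite $\kappa$, by Rosendal's results for $\kappa=\omega$), the Cartan decomposition $G=KA(\R)K$ placing every $\EE_{\dH}$-bounded set inside some $K\,A([0,R])\,K$, and transitivity for coboundedness. The topological compatibility you flag is only needed in the easy direction: strong convergence of elements of $K$ on $T_o\Hyp^\kappa\simeq\R^\kappa$ implies pointwise convergence on $\Hyp^\kappa$, so every continuous isometric action of $G$ restricts to a strongly continuous action of $K$, which therefore has bounded orbits.
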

\begin{proof}
Recall that $G=\Isom(\Hyp^\kappa)=\operatorname{PO}(1,\kappa)$ contains the subgroup $K=\operatorname{PO}(\kappa)$, and acts by left multiplication on the homogeneous space $X=\Hyp^\kappa = G/K$: the stabilizer of $o=\operatorname{1}K$ is the group $K$ which is coarsely bounded in $G$ (when $\kappa\in \N$ it is compact and when $\kappa=\infty$ is countably infinite one may find this in \cite[§6]{Rosendal_2009} or \cite[§3]{Duchesne_Polish-topology-isom-H-infinity_2023} for details).

The Cartan decomposition $G=KA(\R)K$ (where $A \colon \R \to G$ is a one parameter hyperbolic group of transformations $A(t)$ acting by translations of $\lvert \ell(A(t))\rvert=t$ along the common axis) shows that $G$ is generated by $KA([0,1])K$.
The subset $A([0,1])$ is coarsely bounded since it is compact.
This proves that $G$ is generated by a coarsely bounded neighborhood of the identity $K\cdot A([0,1]) \cdot K$, in particular it is locally bounded.
Moreover this shows that a subset $B\subset G$ is coarsely bounded if and only if there is $R\ge 0$ such that $B\subset K\cdot A([0,R])\cdot K$.

The action of $G$ on $X=G/K$ is transitive (hence cobounded) and the orbit map of any point is isometrically conjugate to that of the point $o$.
Since $K$ is coarsely bounded, a subset of $X=G/K$ is bounded if and only if its preimage in $G$ by the orbit map of $o$ is coarsely bounded, hence the action is coarsely proper.
\end{proof}

\subsection{Rigidity of groups acting fully on \texorpdfstring{$\operatorname{CAT}(-1)$}{CAT-1} spaces}
The main \Cref{thm:point_character_variety} in this section is a general rigidity result: certain groups can admit at most one continuous isometric action on $\Hyp^\kappa$ up to conjugacy and exotic deformations. 

\begin{definition}[$3$-full actions on $\operatorname{CAT}(-1)$ spaces]
\label{def:full-action-CAT-1}
Consider a topological group $G$ acting on a $\operatorname{CAT}(-1)$-space $(X,d)$ continuously by isometries.

We say that the action of $G$ on $(X,d)$ is \emph{$3$-full} when it is non-elementary and such that:
\begin{itemize}[noitemsep, align=left]
	\item[\emph{Boundary action is $3$-transitive}:] \label{item:full-CAT-3transitive} the action of $G$ on $\partial X$ is $3$-transitive,
	\item[\emph{Boundary stabilizers are amenable}:] \label{item:full-CAT-amenable} for every $\xi\in\partial X$, its stabilizer $G_\xi$ is amenable,
	\item[\emph{Coarse Cartan decompositions of rank-$1$}:] \label{item:full-CAT-BAB} for every loxodromic $a\in G$, there is a $\EE_d$-bounded subset $B\subset G$ such that $G=BAB$ where $A$ is the subgroup generated by $a$.
\end{itemize}
\end{definition}
\begin{remark}
The transitivity of the $G$ action on $\partial X$ implies that for all $\xi,\xi'\in\partial X$ the groups $G_\xi$ and $G_{\xi'}$ are conjugated in $G$, thus every $G_\xi$ contains an element acting loxodromically on $X$ (since otherwise none of them would, contradicting non-elementarity of the action).

Moreover, the $3$-transitivity of the $G$ action on $\partial X$ implies that for all $\xi\in\partial X$ the action of $G_\xi$ on $\partial X\setminus\{\xi\}$ is transitive, so for any triple of distinct points $\xi_1,\xi_2,\xi_3\in \partial X$ we have $G=G_{\xi_1}G_{\xi_2}\cup G_{\xi_3}G_{\xi_2}$.
\end{remark}

\begin{remark}[geometry of $\operatorname{CAT}(-1)$-spaces]
Let us briefly recall some properties that we will use about a $\operatorname{CAT}(-1)$-space $(X,d)$.

As a strongly-$1$-hyperbolic space, its geometry satisfies all results covered in the previous sections.
In particular, its cross-ratio generates the topology in the sense of \Cref{def:abstract-cross-ratio}.

For every pair of distinct boundary points $\xi,\xi'\in \partial X$, there is a unique geodesic connecting them $(\xi,\xi')=\{p\in X\colon (\xi,\xi')_p=0\}$, and any two points $\eta,\eta'\in X\sqcup \partial X$ admit a unique projections $p, p' \in (\xi',\xi)$, and those satisfy $\lvert \log \Cr{\xi,\eta,\xi',\eta'} \rvert = d(p,p') $ (see \cite[§1]{Bourdon_cross-ratio-CAT(-1)_1996}).
\end{remark}

\begin{theorem}[rigidity of groups acting fully on $\operatorname{CAT}(-1)$ spaces]
\label{thm:point_character_variety}
Consider a topological group $G$ acting $3$-fully on a $\operatorname{CAT}(-1)$-space $(X,d)$.

If every non-elementary continuous action of $G$ on $\Hyp^\kappa$ is $\EE_d$-bornologous, then $\Proj\mathcal{C}_{ne}(G)$ has at most one point.  
\end{theorem}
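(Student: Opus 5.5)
Given two non-elementary continuous actions $\rho_1,\rho_2$ of $G$ on some $\Hyp^{\kappa_i}$, I will show that their functions of hyperbolic type are homothetic. I would first replace each $\rho_i$ by its restriction to the span of its limit set, which is minimal and still realizes a comparable function of hyperbolic type. The plan is to build, for each $i$, a $G$-equivariant boundary map $\varphi_i\colon \partial X\to \partial\Hyp^{\kappa_i}$. Pulling back the cross-ratio of $\partial\Hyp^{\kappa_i}$ through $\varphi_i$ gives a $G$-invariant abstract cross-ratio on $\partial X$. Then \Cref{prop:power_cross_ratio}, using $3$-transitivity, forces it to be $\Cr{\cdot}^{t_i}$. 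Consequently \Cref{lem:translation-length-from-cross-ratio} yields $\ell_{\rho_i}=t_i\ell_X$ on $G$. Once $\ell_{\rho_1}$ and $\ell_{\rho_2}$ are homothetic, \Cref{cor:homothetic_length_spectrum} (equivalently \Cref{prop:same-length-spectrum-means-conjugate} after an exotic deformation $\chi_t$) shows that the homothety classes coincide.

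\textbf{Step 1: the boundary map.} Fix $\xi\in\partial X$. Its stabilizer $G_\xi$ is amenable, and $\rho_i$ is non-elementary, so $\rho_i(G_\xi)$ fixes a point or a pair of points in $\overline{\Hyp^{\kappa_i}}$. This is the infinite-dimensional version of Adams–Ballmann, via Furstenberg-type fixed point arguments in the bordification. Next, $G_\xi$ contains an $X$-loxodromic $a$, and the coarse Cartan decomposition $G=B\langle a\rangle B$ with $B$ being $\EE_d$-bounded is available. Together with $\EE_d$-bornology of $\rho_i$, this should force $\rho_i(a)$ to be loxodromic. Otherwise $\rho_i(\langle a\rangle)$ would have sublinear or bounded displacement, and then $\rho_i(G)=\rho_i(B)\rho_i(\langle a\rangle)\rho_i(B)$ would make $\rho_i$ neutral or elliptic, contradicting non-elementarity. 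I would then define $\varphi_i(\xi)$ as the attracting fixed point $\rho_i(a)^+$, for a loxodromic $a\in G_\xi$ with $a^+=\xi$. Well-definedness means independence of $a$. For this I would show that $\rho_i(G_\xi)$ has a unique fixed point at infinity that matches the attracting direction, using that two $G_\xi$-loxodromics with the same attracting point $\xi$ have length additive on their monoid, together with \Cref{lem:equal-fixed-points-from-length}. Equivariance is automatic, since $G_{g\xi}=gG_\xi g^{-1}$. Injectivity follows because distinct $\xi,\xi'$ admit a common loxodromic with $a^\pm=\xi,\xi'$ (by $2$-transitivity), and $\rho_i(a)^\pm$ are distinct.

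\textbf{Step 2: bicontinuity, which I expect to be the main obstacle.} To apply \Cref{cor:power_cross_ratio}, $\varphi_i$ must be a homeomorphism onto its image. Equivalently, the pulled-back cross-ratio $B_i=\Cr{\varphi_i(\cdot)}$ must be continuous and must generate the topology of $\partial X$. Here boundedness enters in both directions.

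For continuity, I would normalize three points $0,1,\infty$. If $w_n\to 0$ in $\partial X$, write $w_n=g_n1$ with $g_n\in G_0\cap G_\infty$; this is possible by $3$-transitivity up to finitely many cases. The condition $w_n\to 0$ means $\ell_X(g_n)\to\infty$ in the correct direction. Via $B\langle a\rangle B$ and bornology, this should translate into the $\rho_i$-displacement of $g_n$ going to infinity along the axis $(\varphi_i(0),\varphi_i(\infty))$, hence $\varphi_i(w_n)\to\varphi_i(0)$.

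For the converse (generating the topology), I would use that $\rho_i(g_n)$ being unbounded forces $g_n$ to be unbounded in $X$. This does not follow from bornology alone. I would derive it from the rank-one structure: the group $G_0\cap G_\infty$ is, by the decomposition, $\EE_d$-boundedly close to $\langle a\rangle$, and $\rho_i(a)$ is loxodromic. So $\rho_i$-unboundedness of $g_n$ means the $a$-exponent is unbounded, which in turn means $X$-unboundedness.

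\textbf{Step 3: conclusion.} Apply \Cref{prop:power_cross_ratio} to get $B_i=\Cr{\cdot}^{t_i}$ with $t_i>0$. For $g\in G$ loxodromic on $X$, \Cref{lem:translation-length-from-cross-ratio} combined with $\varphi_i(g^\pm)=\rho_i(g)^\pm$ gives $\ell_{\rho_i}(g)=t_i\ell_X(g)$. Non-loxodromic elements have vanishing length for both, by Step 1. Hence $\ell_{\rho_1}=(t_1/t_2)\ell_{\rho_2}$, and \Cref{cor:homothetic_length_spectrum} gives $\class{F_{\rho_1}}=\class{F_{\rho_2}}$ in $\Proj\mathcal{C}_{ne}(G)$.
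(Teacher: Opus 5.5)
Your architecture matches the paper's: amenable boundary stabilizers plus the coarse Cartan decomposition give the boundary map, bornology gives bicontinuity, and \Cref{prop:power_cross_ratio} gives $\Cr{\varphi_i(\cdot)}=\Cr{\cdot}^{t_i}$. Ending through length functions and \Cref{cor:homothetic_length_spectrum} is also the route announced in the paper's strategy paragraph.

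\textbf{Gap 1: the boundary map is not shown to be well defined.} Additivity of $\ell_X$ on $\Mon(a,b)$ when $a^+=b^+=\xi$ says nothing about $\rho_i$. To invoke \Cref{lem:equal-fixed-points-from-length} in $\Hyp^{\kappa_i}$ you would need additivity of $\ell_{\rho_i}$ on $\Mon(a,b)$, which you do not know at that stage. Even then the lemma only yields $\rho_i(a)^+=\rho_i(b)^+$ \emph{or} $\rho_i(a)^-=\rho_i(b)^-$. The amenable trichotomy only tells you that $\rho_i(a)$ and $\rho_i(b)$ share \emph{some} fixed point. Two ingredients from the proof of \Cref{lem:boundary_map} are missing.
\begin{itemize}
\item[(i)] Rule out that $\rho_i(G_\xi)$ merely stabilizes a line. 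Such a line must be the axis of $\rho_i(a)$, so it is also stabilized by $\rho_i(G_{a^-})$. Transitivity of $G_\xi$ on $\partial X\setminus\{\xi\}$ then makes it $\rho_i(G)$-invariant, a contradiction. So $\varphi_i(\xi)$ can be \emph{defined} as the unique fixed point of $\rho_i(G_\xi)$, which is well defined and equivariant for free.
\item[(ii)] Fix the orientation. For $b\in G_\xi$ and $x$ on the axis of $a$, the rays $[x,\xi)$ and $[bx,\xi)$ are asymptotic, so $d(ba^nx,a^nx)\le d(bx,x)$. Hence $\{a^{-n}ba^n\}_{n\ge 0}$ is $\EE_d$-bounded. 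By bornology $\rho_i(b)$ moves the points $\rho_i(a)^n y\to\rho_i(a)^+$ by a bounded amount, so it fixes $\rho_i(a)^+$. Therefore $\varphi_i(\xi)=\rho_i(a)^+$.
\end{itemize}
Your Step 2 silently uses this orientation: it is what tells you that $\rho_i(g_n)=\rho_i(a)^{k_n}\rho_i(b_n)$ pushes $\varphi_i(1)$ toward $\varphi_i(0)$ rather than toward $\varphi_i(\infty)$.

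\textbf{Gap 2: parabolic elements in Step 3.} The claim that non-loxodromic elements have vanishing $\rho_i$-length ``by Step 1'' is not justified. Step 1 gives $X$-loxodromic $\Rightarrow$ $\rho_i$-loxodromic, and bornology handles $X$-elliptic elements. $X$-parabolic elements are not covered, and they do occur, for instance for $G=\Isom(\Hyp^\kappa)$. Yet \Cref{cor:homothetic_length_spectrum} needs $\ell_{\rho_1}=t\,\ell_{\rho_2}$ on all of $G$. Here is one repair. Let $g$ be $X$-parabolic fixing $\xi$, and take $a\in G_\xi$ loxodromic with $a^+=\xi$.
\begin{itemize}
\item The Busemann characters at $\xi$ on $G_\xi$ and at $\varphi_i(\xi)$ on $\rho_i(G_\xi)$ are homomorphisms whose absolute values are the translation lengths (\Cref{lem:Buseman-length-function}). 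The first one vanishes at $g$.
\item For $m\ge 1$, the element $a^mg$ is therefore $X$-loxodromic with $\ell_X(a^mg)=m\ell_X(a)$.
\item Your identity then gives $t_i m\ell_X(a)=\lvert \pm t_i m\ell_X(a)+c\rvert$ for all large $m$, where $c$ is the character of $\rho_i(g)$. This forces $c=0$, that is, $\ell_{\rho_i}(g)=0$.
\end{itemize}
Alternatively, finish as the paper does. Compose one representation with an exotic representation to equalize the exponents (\Cref{lem:powers-cross-ratio}). Then $\varphi_2\circ\varphi_1^{-1}$ is an equivariant cross-ratio-preserving bijection between total subsets of the minimal invariant subspaces. \Cref{cor:Poincare-extension} conjugates the restrictions directly, with no element-by-element comparison.
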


The strategy of the proof is to find a $G$-equivariant boundary map $\partial X\to\partial \Hyp^\kappa$ that is a homeomorphism on its image and  pull back the cross-ratio from $\partial \Hyp^\kappa$ to $\partial X$. 
We may then conclude using the unicity (up to taking powers) of the cross-ratio proved in \Cref{cor:power_cross_ratio}, and the fact that the cross-ratio determines the length spectrum of the representation \Cref{lem:translation-length-from-cross-ratio}, hence the conjugacy class of the representation by \Cref{cor:homothetic_length_spectrum}.

\begin{lemma}[boundary map]
\label{lem:boundary_map}
Consider a topological group $G$ acting $3$-fully on a $\operatorname{CAT}(-1)$-space $(X,d)$ as in \Cref{def:full-action-CAT-1}.

Consider a non-elementary $\EE_d$-bornologous continuous representation $\rho\colon G\to\Isom(\Hyp^{\kappa})$.
For every $\xi\in\partial X$, the group $\rho(G_\xi)$ fixes a unique point $\varphi(\xi)\in\partial\Hyp^\kappa$, and for every loxodromic $a\in G$ with $a^+=\xi$ its image $\rho(a)$ is hyperbolic with $\rho(a)^+=\varphi(\xi)$.

Moreover, the action of $G$ on $\Hyp^\kappa$ is $\EE_d$-coarsely proper. 
\end{lemma}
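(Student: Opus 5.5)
The plan has three stages. First, show that each $\rho(G_\xi)$ fixes a point of $\partial\Hyp^\kappa$, using amenability. Second, show this point is unique and equals the attracting point $\rho(a)^+$ of any loxodromic $a\in G_\xi$ with $a^+=\xi$. Third, use the coarse Cartan decomposition to get coarse properness.

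\textbf{Existence of a fixed point.} Fix $\xi\in\partial X$. The stabilizer $G_\xi$ is amenable by $3$-fullness, so $\rho(G_\xi)$ is an amenable group acting continuously by isometries on $\Hyp^\kappa$. I would invoke the standard dichotomy for amenable groups acting on $\operatorname{CAT}(-1)$ spaces (or infinite-dimensional hyperbolic spaces): such an action either fixes a point of $\Hyp^\kappa\sqcup\partial\Hyp^\kappa$ or preserves a pair of boundary points. This is the Adams--Ballmann type result, known in this infinite-dimensional setting from work of Monod--Py and Duchesne.

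By the remark after \Cref{def:full-action-CAT-1}, $G_\xi$ contains a loxodromic $a$ for $X$, and we may assume $a^+=\xi$ after replacing $a$ by $a^{-1}$.

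\textbf{Loxodromic image and uniqueness.} The first claim is that $\rho(a)$ is hyperbolic. Suppose instead that $\rho(A)$ has bounded orbits or is parabolic, where $A=\langle a\rangle$. Then $G=BAB$ with $B$ an $\EE_d$-bounded set. Since $\rho$ is $\EE_d$-bornologous, $\rho(B)$ is $\EE_{\dH}$-bounded. If $\rho(A)$ had bounded orbits, all of $\rho(G)$ would have bounded orbits, hence be elliptic, contradicting non-elementarity. The parabolic case needs more care: $\rho(A)$ has unbounded orbits but zero translation length. I would try to rule it out by combining $G=BAB$ with $3$-transitivity to produce an element of $G$ whose $\rho$-image fixes the same parabolic point while conjugating $a$ to $a^{-1}$ up to bounded error. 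A cleaner route may be the following. $G_\xi$ acts transitively on $\partial X\setminus\{\xi\}$, so some $h\in G_\xi$ satisfies $h a^{-}\neq a^-$. Then $a$ and $hah^{-1}$ are both loxodromic on $X$ with common attracting point $\xi$ and distinct repelling points, so suitable products like $a^n(ha^{-1}h^{-1})^{n}$ stay $\EE_d$-bounded. I would use such bounded products, together with \Cref{lem:F-recovers-type-eph}, to force $\rho(a)$ to be non-parabolic.

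Once $\rho(a)$ is hyperbolic, any point of $\partial\Hyp^\kappa$ fixed by $\rho(G_\xi)$ lies in $\{\rho(a)^\pm\}$. The subgroup $\rho(G_\xi)$ also contains $\rho(hah^{-1})$, which is hyperbolic. Its repelling point is different, because $\rho(h)$ moves $\rho(a)^-$: otherwise one derives an elementary $\rho(G)$ via $G=G_{\xi}G_{\xi'}\cup\cdots$. Its attracting point is shared with $\rho(a)$. This forces the fixed set to be exactly $\{\rho(a)^+\}$, which gives uniqueness, and we set $\varphi(\xi)=\rho(a)^+$. Independence from the choice of $a$ follows from uniqueness.

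\textbf{Coarse properness.} Let $C\subset G$ be $\EE_{\dH}$-bounded. Write each $g\in C$ as $b_1a^{n}b_2$ with $b_i\in B$. Then
\[
\dH(\rho(a^n)o,o)\le \dH(\rho(g)o,o)+2\sup_{b\in B}\dH(\rho(b)o,o),
\]
which is bounded. Since $\rho(a)$ is hyperbolic, the corresponding $n$ are bounded, so $C\subset BA_{[-N,N]}B$, which is $\EE_d$-bounded.

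\textbf{Main obstacle.} The step I expect to be hardest is excluding the parabolic (and elliptic) behaviour of $\rho(a)$ and pinning down the fixed point as $\rho(a)^+$. This is where amenability alone is insufficient, and the interplay between $3$-transitivity and bornologousness must be used carefully.
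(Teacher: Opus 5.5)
Your elliptic exclusion via $G=BAB$ and your coarse-properness step agree with the paper. The two steps that carry the content of the lemma, however, are not proved.

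First, you try to show that $\rho(a)$ is hyperbolic before extracting any structure from amenability, so you get stuck on the parabolic case. Your suggested products are also mis-signed: $a^{n}(ha^{-1}h^{-1})^{n}=a^{n}ha^{-n}h^{-1}$ is not $\EE_d$-bounded, since $a^{-n}x\to a^-$ and $h$ moves $a^-$; it is $a^{-n}ha^{n}$ that is bounded. The paper proceeds in the other order, applying the amenability trichotomy to all the conjugate groups $\rho(G_\xi)$ at once.
\begin{itemize}
\item A fixed point in $\Hyp^\kappa$ is excluded, via $G=G_{\xi_1}G_{\xi_2}\cup G_{\xi_3}G_{\xi_2}$ or your $BAB$ argument.
\item An invariant line $L$ forces $\rho(a)$ to be elliptic or loxodromic. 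Elliptic is excluded by $BAB$. Loxodromic makes $L$ the axis of $\rho(a)$, preserved by both $\rho(G_\xi)$ and $\rho(G_{a^-})$. Transitivity of $G_\xi$ on $\partial X\setminus\{\xi\}$ then makes $L$ invariant under $\rho(G)$, a contradiction.
\end{itemize}
So each $\rho(G_\xi)$ has a unique fixed point $\varphi(\xi)\in\partial\Hyp^\kappa$. Parabolicity of $\rho(a)$ is then excluded without any bornology: since $a\in G_\xi\cap G_{a^-}$, its unique fixed point would equal both $\varphi(\xi)$ and $\varphi(a^-)$, and transitivity of $G_\xi$ again gives a global fixed point.

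Second, your assertion that $\rho(hah^{-1})$ has the same attracting point as $\rho(a)$ is circular. Indeed $\rho(hah^{-1})^+=\rho(h)\rho(a)^+$ equals $\rho(a)^+$ if and only if $\rho(h)$ fixes $\rho(a)^+$. That holds if and only if the fixed point of $\rho(G_\xi)$ is $\rho(a)^+$ rather than $\rho(a)^-$, which is exactly what must be shown; nothing in your plan excludes $\varphi(\xi)=\rho(a)^-$. This is where bornology is really needed.

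Take $b\in G_\xi\setminus G_{a^-}$ and $x$ on the axis of $a$. CAT($-1$) comparison gives $d(ba^nx,a^nx)\le d(bx,x)$, so $\{a^{-n}ba^n\}_{n\ge 0}$ is $\EE_d$-bounded. By bornology, $\dH(\rho(b)\rho(a)^ny,\rho(a)^ny)$ then stays bounded for $y$ on the axis of $\rho(a)$.

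Now suppose $\varphi(\xi)=\rho(a)^-$. The equivariant map $\varphi$ is injective: otherwise $2$-transitivity of $G$ on $\partial X$ makes it constant, contradicting non-elementarity. Hence $\varphi(a^-)=\rho(a)^+$, so $\rho(b)\rho(a)^+=\varphi(ba^-)\neq\rho(a)^+$. The displacement above therefore tends to infinity, a contradiction.

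Your bounded-conjugate idea, correctly signed, is thus the right tool, but for this orientation step rather than for excluding parabolics.
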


\begin{proof}
The continuous actions of the amenable groups $\rho(G_\xi)$ (which are all conjugate) on $\Hyp^\kappa$ must, by \cite[Proposition 5.1]{Duchesne_Polish-topology-isom-H-infinity_2023}, simultaneously fall into (at least) one of the following cases:
\begin{enumerate}[noitemsep]
	\item $\forall \xi \in \partial X$, the group $\rho(G_\xi)$ fixes a point in $\Hyp^\kappa$,
	\item $\forall \xi \in \partial X$, the group $\rho(G_\xi)$ stabilizes a geodesic line,
	\item $\forall \xi \in \partial X$, the group $\rho(G_\xi)$ fixes a single point in $\partial\Hyp^\kappa$
\end{enumerate}
and we will show that neither of the cases 1 nor 2 can occur, so that only case 3 occurs.

Suppose that case 1 occurs: for any triple of distinct points $\xi_1,\xi_2,\xi_3\in \partial X$ the group $\rho(G_{\xi_i})$ has a fixed point in $\Hyp^\kappa$, so $G=G_{\xi_1}G_{\xi_2}\cup G_{\xi_3}G_{\xi_2}$ has bounded orbits (by the triangle inequality), hence a fixed point, contradicting non-elementarity.

Suppose now that only case 2 occurs: for every $\xi\in \partial X$ the action of $\rho(G_\xi)$ does not fix a point in $\Hyp^\kappa$ but stabilizes a geodesic line $L$. 
We may choose $a\in G_\xi$ which is loxodromic for the action on $X$ with $a^+=\xi$, and denote $\xi'=a^-$. Since $\rho(a)$ preserves $L$, it is either loxodromic or elliptic. If it were elliptic, then by the property that $G=BAB$ for some $\EE_d$-bounded subset $B\subset G$ and $A$ the group generated by $a$, the image $\rho(G)$ would have a bounded orbit, contradicting non-elementarity of its action on $\Hyp^\kappa$.
Thus $\rho(a)$ is loxodromic and since $a\in G_\xi\cap G_{\xi'}$, the geodesic line $L$ invariant by $\rho(G_{\xi'})$ is the axis of $\rho(a)$. Finally, since $G_{\xi}$ acts transitively on $\partial X\setminus\{\xi\}$ while fixing $L$, for every $\eta\in \partial X$ the group $\rho(G_\eta)$ stabilizes $L$, thus $L$ is $G$-invariant, contradicting non-elementarity.

Now we know that every $\rho(G_\xi)$ fix a single point in $\partial \Hyp^\kappa$. 

Consider a loxodromic $a\in G_\xi$ with $a^+=\xi$, and denote $a^- = \xi' \in \partial X$ its repulsive fixed point. The same reasoning as before shows that $\rho(a)$ cannot be elliptic. If $\rho(a)$ is parabolic, then it has unique fixed point $\zeta\in\partial \Hyp^\kappa$, hence $G_{\xi'}$ also fixes $\zeta$, and as before the transitivity of $G_{\xi}$ on $\partial X\setminus\{\xi\}$ shows that for $\eta\in\partial X$ the group $\rho(G_\eta)$ fixes $\zeta$, hence $\zeta$ is fixed by $\rho(G)$, contradicting non-elementarity. Thus $\rho(a)$ must be loxodromic.
Let us now show that the fixed point of $\rho(G_\xi)$ is $\rho(a)^+$ and not $\rho(a)^-$. 
Choose a point $x\in X$ on the axis of $a$ and $b\in G_\xi\setminus G_{\xi'}$.
A comparison inequality in the triangle $(x,bx,\xi)$ shows that for all $n\in \N$ we have $d(a^{-n}ba^nx,x)=d(b(a^nx),a^nx)\leq d(bx,x)$. In particular, there is a $\EE_d$-bounded subset $B\subset G$ such that for all $n\in \N$ we have $a^{-n}ba^{n}\in B$. However $d(a^{n}ba^{-n}x,x)=d(b(a^{-n}x),a^{-n}x)\to\infty$ since $\xi'\ne b\xi'$. The same inequalities hold for $\rho(a)$ and $\rho(b)$ and their axes and fixed points: since the representation is bornologous, we deduce that $\rho(a)^+$ is the fixed point of $\rho(G_\xi)$.

To show that the action is coarsely proper, consider a loxodromic $a$, and let $G=BAB$ the coarse Cartan decomposition given by \Cref{item:full-CAT-BAB}.
We know from the previous that $\rho(a)$ is loxodromic: choose a point $y\in\Hyp^\kappa$ on its axis. The supremum  $M=\sup\{\dH(\rho(b)y,y)\colon b\in B\}$ is finite since $B$ is $\EE_d$-bounded and the action of $G$ on $\Hyp^\kappa$ is $\EE_d$-bornologous. For an element $g=b_1a^nb_2\in BAB$, two triangle inequalities show that $\dH(\rho(g)y,y)\geq \lvert n\rvert \ell(\rho(a))-2M$.
Two more triangular inequalities show that  for every bounded set $Y$, a similar inequality holds for all $y\in Y$, hence the action of $G$ on $\Hyp^\kappa$ is $\EE_d$-coarsely proper.
\end{proof}

The following lemma will serve to deduce bicontinuity of the boundary map $\varphi$ in \Cref{lem:boundary_map} from its (un)boundedness.

\begin{lemma}
\label{lem:proper}
Consider a topological group $G$ acting continuously by isometries on a $\operatorname{CAT}(-1)$ space $(Y,d)$, triply transitively on $\partial Y$. Let $a\in G$ be a loxodromic element. Consider a sequence of boundary points $\xi_n\in \partial Y \setminus \{ a^\pm\}$.
Denote $A_n(\xi) = \{g\in G_{a^-}\cap G_{a^+} \colon  g(\xi_0)=\xi_n\}$.

We have  $\xi_n \to a^+$ if and only if both of the following hold:
\begin{itemize}[noitemsep]
	\item the sets $A_n(\xi)$ leave any $\EE_{d}$-bounded set in $G$ as $n\to \pm \infty$
	\item there is an $\EE_{d}$-bounded set $B\subset G$  such that for $\forall n\in \N,\, \exists k_n\in \N \colon a^{-k_n}A_n(\xi)\subset B$.
\end{itemize} 
\end{lemma}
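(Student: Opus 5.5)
We work on the geodesic axis $L=(a^-,a^+)$ of $a$, which exists and is unique since $Y$ is $\operatorname{CAT}(-1)$. Fix $x\in L$. Let $H=G_{a^-}\cap G_{a^+}$; it preserves $L$ and acts on it by translations, with translation cocycle given by the homomorphism $\Vec{\ell}(h)=\log\Cr{h\eta,a^-;\eta,a^+}$ used in the proof of \Cref{prop:power_cross_ratio}, which does not depend on $\eta$. For $\xi\in\partial Y\setminus\{a^\pm\}$, let $p(\xi)\in L$ be its projection. By the $\operatorname{CAT}(-1)$ remark, $\lvert\log\Cr{\xi,a^-,\xi_0,a^+}\rvert=d(p(\xi),p(\xi_0))$, up to orientation and sign conventions. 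Moreover $\xi_n\to a^+$ if and only if $\log\Cr{\xi_n,a^-;\xi_0,a^+}\to+\infty$, because the cross-ratio generates the topology of $\partial Y$ in the sense of \Cref{def:abstract-cross-ratio}. The plan is to translate both bullet conditions into statements about the signed displacement $s_n$ of $p(\xi_n)$ relative to $p(\xi_0)$ along $L$.

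\textbf{Key estimate.} For $g\in A_n(\xi)$ we have $g(\xi_0)=\xi_n$ and $gL=L$, so $g\,p(\xi_0)=p(\xi_n)$. Since $g$ acts on $L$ by a translation,
\[
d(gx,x)=d\bigl(p(\xi_n),p(\xi_0)\bigr)=\lvert s_n\rvert .
\]
Hence every element of $A_n(\xi)$ moves $x$ by exactly $\lvert s_n\rvert$. Because $A_n(\xi)$ is a single coset $g_nH_{\xi_0}$, a bounded subset $B$ meets $A_n(\xi)$ only if $\lvert s_n\rvert\le\sup_{b\in B} d(bx,x)$. Note also that $A_n(\xi)$ is nonempty by $3$-transitivity. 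So the first bullet is equivalent to $\lvert s_n\rvert\to\infty$.

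\textbf{Sign via the second bullet.} The element $a$ translates $L$ towards $a^+$ by $\ell(a)$. Therefore $a^{-k}g$ translates $L$ by $s_n-k\ell(a)$. Suppose $s_n\to+\infty$. Choose $k_n=\lfloor s_n/\ell(a)\rfloor$, which is in $\N$ for large $n$ (and take $k_n=0$ for the finitely many others). Then every $a^{-k_n}g$ with $g\in A_n(\xi)$ moves $x$ by at most $\max\{\ell(a),\sup_n\lvert s_n\rvert\text{ over the finite exceptions}\}$, so all these sets lie in one $\EE_d$-bounded set $B=\{h: d(hx,x)\le C\}$.

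Conversely, if the second bullet holds, then $\lvert s_n-k_n\ell(a)\rvert\le C$ with $k_n\ge0$, so $s_n\ge -C$. Combined with $\lvert s_n\rvert\to\infty$ from the first bullet, this gives $s_n\to+\infty$. If instead $s_n\to-\infty$ (the case $\xi_n\to a^-$), no choice of $k_n\ge0$ keeps the translation bounded, so the second bullet fails. This sign dichotomy is what separates convergence to $a^+$ from convergence to $a^-$.

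\textbf{Main obstacle.} The hard part is to justify carefully that $s_n\to+\infty$ is equivalent to $\xi_n\to a^+$, including the sign convention. Using the visual metric from $x$, the Gromov product $(\xi_n\mid a^+)_x$ grows like the distance from $x$ to $p(\xi_n)$ when $p(\xi_n)$ moves towards $a^+$, and stays bounded when $p(\xi_n)$ moves towards $a^-$. This should follow from the $\operatorname{CAT}(-1)$ comparison (Bourdon's formulas), but the orientation conventions in $\Cr{\cdot}$ need to be checked precisely.

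A second point to handle is the "leave any bounded set as $n\to\pm\infty$" wording. It should be read as $n\to\infty$, and the argument must run along subsequences: if $\xi_n\not\to a^+$, then a subsequence has $s_n$ bounded or $s_n\to-\infty$, and in either case one of the two bullets fails.
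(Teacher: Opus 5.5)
Your proposal is correct and follows essentially the paper's argument: project onto the axis, observe that each $g\in A_n(\xi)$ translates $(a^-,a^+)$ by the signed distance $s_n$ from $p(\xi_0)$ to $p(\xi_n)$, read $\lvert s_n\rvert\to\infty$ off the first condition, and read the sign off the bounded displacement of $a^{-k_n}A_n(\xi)$ with $k_n\ge 0$. You are also slightly more careful than the paper, which uses the non-emptiness of $A_n(\xi)$ without comment and does not treat the finitely many $n$ with $s_n<0$, both of which you handle. The step you flag, $\xi_n\to a^+\iff s_n\to+\infty$, is simply asserted in the paper as $\xi_n\to a^+\iff p_n\to a^+$, and your sign worry is settled by \Cref{lem:translation-length-from-cross-ratio} (with the \eqref{eq:Cr-Inversion} relation when the attracting point is $a^-$): it shows that $\Vec{\ell}(h)=\log\Cr{h\xi_0,a^-;\xi_0,a^+}$ is the translation of $h\in G_{a^-}\cap G_{a^+}$ along the axis counted positively towards $a^+$, e.g.\ $\Vec{\ell}(a)=\ell(a)>0$.
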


\begin{proof}
Recall the definition and properties of the unique projection $p_n$ of $\xi_n$ on $(a^-,a^+)$. We have $\xi_n\to a_+ \iff p_n\to a_+$. Moreover every $g \in A_n(\xi)$ satisfies $g(p_0)=p_n$. 

If $\xi_n\to a^+$ then for any $g_n\in A_n(\xi)$ we have $d(g_n p_0,p_0)\to\infty$ so for any $\EE_{d}$-bounded subset $B\subset G$ and for $n$ large enough, $A_n(\xi)\cap B=\emptyset$. Moreover, for any $n$, we may find $k_n\in\N$ such that $a^{-k_n}p_n\in B(p_0,\ell(a))$ and thus $a^{-k_n}A_n(\xi)$ is $\EE_{d}$-bounded.

Conversely if $A_n(\xi)$ escapes any $\EE_{d}$-bounded subset of $G$ then $d(p_0,p_n)\to\infty$ so up to extraction $p_n$ converges to $a^\pm$, but since the distance from $\{a^{-k_n}p_n\colon k\in \N \}$ to $p_0$ is bounded the limit must actually be $a^+$.
\end{proof}

\begin{proof}[Proof of \Cref{thm:point_character_variety}]
Consider a non-elementary continuous representation $\rho\colon G\to\Isom(\Hyp^{\kappa})$. 
By \Cref{lem:boundary_map}, we have a boundary function $\varphi\colon\partial X\to\partial \Hyp^\kappa$ sending $\xi\in\partial X$ to the unique fixed point of $\rho(G_\xi)$.
Let us show that $\varphi$ is a homeomorphism on its image.

Since the topology on $\partial X$ and $\partial \Hyp^\kappa$ are metrizable, we will prove that a sequence $\xi_n\in \partial X$ converges if and only if $\varphi(\xi_n)\in \partial \Hyp^\kappa$ converges.
Fix distinct $\xi^\pm\in \partial X$ and choose a loxodromic $a\in G$ such that $a^\pm=\xi^\pm$. 
We now apply \Cref{lem:proper} with $Y=X$: a sequence $(\xi_n)$ in $\partial X\setminus\{\xi^+\}$ converges to $\xi^+$ if and only the subsets $A_n(\xi)=\{g\in G_{a^+}\cap G_{a^-} \colon g(\xi_0)=g(\xi_n)\}$ leave any $\EE_d$-bounded subset of $G$ and there is a $\EE_d$-bounded subset $B\subset G$ such that $\forall n\in \N, \exists k_n\in\N \colon a^{-k_n} A_n(\xi) \subset B$. 
Since the action of $G$ on $\Hyp^\kappa$ is $\EE_d$-coarsely proper (by \Cref{lem:boundary_map}) and $\EE_d$-bornologous, this is equivalent to saying that the image under $\rho$ of
\begin{equation*}
	A_n'(\xi) = \{g \in G \colon \rho(g)(\varphi(\xi^\pm))=\varphi(\xi^\pm) \textrm{ and } \rho(g)(\varphi(\xi_0))=\rho(g)(\varphi(\xi_n))\}
\end{equation*}
escapes as $n\to +\infty$ from any $\EE_{\dH}$-bounded subset of $\Isom(\Hyp^\kappa)$ and there is a $\EE_{\dH}$-bounded subset $B\subset \Isom(\Hyp^\kappa)$ such that for all $n\in \N$ there is $k_n\in \N$ such that $\rho(a)^{-k_n} A_n'(\xi) \subset B$.

By \Cref{lem:proper} for $Y=\Hyp^\kappa$ this is equivalent to the convergence of $\varphi(\xi_n)$ to $\varphi(\xi^+)$.
The proves that $\varphi \colon \partial X \to \partial \Hyp^\kappa$ is a homeomorphism on its image.

To complete the proof consider for $i\in\{1,2\}$ two non-elementary representations $\rho_i\colon G\to\Isom(\Hyp^\kappa)$ with boundary maps $\varphi_i\colon\partial X\to\partial \Hyp^\kappa$. By \Cref{cor:power_cross_ratio} there is $t>0$ such that for all $\xi_1,\xi_2,\xi_3,\xi_4\in(\partial X)^{(4)}$, 
\begin{equation*}
	\Cr{\varphi_1(\xi_1),\varphi_1(\xi_2);\varphi_1(\xi_3),\varphi_1(\xi_4)}
	=
	\Cr{\varphi_2(\xi_1),\varphi_2(\xi_2);\varphi_2(\xi_3),\varphi_2(\xi_4)}^t
\end{equation*}
In particular, up to composing $\rho_1$ with the exotic representation $\rho_t$ or $\rho_2$ with $\rho_{1/t}$ (depending on  whether $t\leq1$ or $t>1$), we may reduce to the case $t=1$ and deduce by \Cref{cor:Poincare-extension} that the restriction of the $\rho_i$ to their minimal invariant totally geodesic subspaces are conjugated, hence their associated functions of hyperbolic type belong to the same class.
\end{proof}

\subsection{Rigidity for \texorpdfstring{$\Isom(\Hyp^\kappa), \Aut(T_\kappa), \PGL_2(\K)$}{IsomHk-AutTk-PGL2K)}}

\begin{corollary}[rigidity of classical groups]
\label{cor:point_character_variety}
If $G$ is one the following topological groups:
\begin{enumerate}
	\item the isometry group $\Isom(\Hyp^\kappa)$ of the real hyperbolic space $\Hyp^\kappa$ of countable dimension $\kappa$,       
	\item the isometry group $\Aut(T_\kappa)$ of the $\kappa$-regular simplicial tree $T_\kappa$ where $\kappa\ge 3$ is countable,
	\item the group $\PGL_2(\K)$ where $\K$ a complete non-Archimedean valued field
\end{enumerate}
then the space $\Proj\mathcal{C}_{ne}(G)$ has exactly one point. 
\end{corollary}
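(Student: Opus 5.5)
The corollary reduces to two things for each of the three groups $G$: existence of at least one non-elementary continuous action on some $\Hyp^\kappa$, which gives $\Proj\mathcal{C}_{ne}(G)\neq\emptyset$, and applicability of \Cref{thm:point_character_variety}, which gives at most one point. For existence we take the tautological action of $\Isom(\Hyp^\kappa)$ on $\Hyp^\kappa$. For $\Aut(T_\kappa)$ we take $\pi_\lambda$ from \Cref{thm:tree-embedding-Hyp-infty}. For $\PGL_2(\K)$ we take $\pi_\lambda$ composed with the action on the Bruhat--Tits tree $T_\K$, a real tree (simplicial when the valuation is discrete). These actions are non-elementary: the length functions are scaled by \Cref{tree-length-cross-ratio}, and the boundary map is a homeomorphism onto an infinite set. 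Continuity follows from the equivariance and continuity of $\exoT_\lambda$ for the pointwise topologies.

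For the upper bound we check the three conditions of \Cref{def:full-action-CAT-1} with $X=\Hyp^\kappa$, $X=T_\kappa$, and $X=T_\K$ respectively. \emph{$3$-transitivity} on the boundary is in \Cref{subsec:Hn-models} for $\Hyp^\kappa$. For $\Aut(T_\kappa)$ one maps a tripod with center $v$ to another by a tree automorphism built inductively. For $\PGL_2(\K)$ we use the simply transitive action on triples of distinct points of $\Proj^1(\K)=\partial T_\K$, which requires $\K$ complete so that the ends are exactly $\Proj^1(\K)$. \emph{Amenability} of boundary stabilizers: $G_\xi$ is a semidirect product of its horospherical part, which is a union of compact groups for trees or unipotent and abelian for $\PGL_2$, by an abelian quotient, namely $\Z$ or $\R$ via the Busemann character. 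For $\Isom(\Hyp^\kappa)$ the stabilizer is $(\R^{\kappa-1}\rtimes \operatorname{O}(\kappa-1))\rtimes\R_{>0}$, and $\operatorname{O}(\kappa-1)$ is amenable as a Polish group (extremely amenable for $\kappa=\omega$). Amenability is preserved under extensions in the Polish setting. \emph{Coarse Cartan decomposition}: take $B=K\cup\{\text{bounded set}\}$ where $K$ is the stabilizer of a point on the axis of $a$. Such a $K$ is $\EE_d$-bounded, and transitivity of $K$ on directions or unit vectors gives $G=KA'K$ with $A'$ the full translation group along the axis. Each element of $A'$ differs from a power of $a$ by a bounded element, so $G=BAB$ after enlarging $B$ by a bounded set.

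The remaining and main obstacle is the hypothesis that \emph{every} non-elementary continuous action on $\Hyp^\kappa$ is $\EE_d$-bornologous. For $\Isom(\Hyp^\kappa)$ this is \Cref{prop:proper_H^k}: $\EE_d$-bounded sets are exactly the coarsely bounded ones, and these are bounded in any continuous isometric action. For $\Aut(T_\kappa)$ we would argue the same way. The vertex stabilizers are open and coarsely bounded, since they are compact when $\kappa$ is finite, and for countable $\kappa$ they are Roelcke-precompact (iterated wreath products of $\operatorname{Sym}(\kappa)$, which are coarsely bounded by Rosendal). Cartan gives $G=K\{a^n\}K$, so $\EE_d$-bounded sets are contained in $K\{a^n:\lvert n\rvert\le R\}K$, which is coarsely bounded. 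For $\PGL_2(\K)$ the stabilizer $\PGL_2(\OO)$ is compact only when the residue field is finite. In general we would show it is coarsely bounded, for instance because it is generated by $\operatorname{O}$-points of the Bruhat pieces. One could also bypass coarse boundedness: use amenability of $\rho(K)$-stabilizers together with the Cartan trick (case 1 of \Cref{lem:boundary_map}) to show that $\rho(K)$ has a bounded orbit, since otherwise $\rho(K)$ fixes a boundary point, and combining two conjugates of $K$ yields an elementary action. This bounded-orbit step for non-locally-compact $K$ is where I expect the real work. Once it holds, \Cref{thm:point_character_variety} applies, and together with existence $\Proj\mathcal{C}_{ne}(G)$ is exactly one point.
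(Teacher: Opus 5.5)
Apart from one slip, your treatment of $\Isom(\Hyp^\kappa)$ and $\Aut(T_\kappa)$ follows the paper: existence, the three conditions of Definition~\ref{def:full-action-CAT-1}, and the bornologous hypothesis via coarse properness. The slip is that for $\kappa=\omega$ the group $G^0_\xi$ is not a union of compact groups, since vertex stabilizers of $T_\omega$ surject onto a full symmetric group. The paper instead gets amenability from a dense increasing union of finite subgroups.

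\textbf{The genuine gap is the bornologous hypothesis for $\PGL_2(\K)$.} Your main route is to show that the point stabilizer $\PGL_2(\OO)$ is coarsely bounded in $\PGL_2(\K)$. This fails in general; the paper itself leaves ``$\EE_{d_\infty}$-bounded implies coarsely bounded'' open for $\PGL_2(\K)$, and the answer is negative.
\begin{itemize}
\item Take $\K=\Q((x))$. The squares in $\K^\times$ contain $1+x\Q[[x]]$, so they form an open subgroup.
\item Hence $\det$ modulo squares is a continuous homomorphism from $\PGL_2(\K)$ onto the countable discrete group $\K^\times/(\K^\times)^2$. It maps $\PGL_2(\OO)$ onto the infinite subgroup $\Q^\times/(\Q^\times)^2$.
\item Letting this quotient act on itself with a proper invariant metric gives a continuous isometric action in which $\PGL_2(\OO)$ has unbounded orbits.
\end{itemize}
Your fallback does not close the gap either. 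The step ``otherwise $\rho(K)$ fixes a boundary point'' needs $K$ amenable. But $\PGL_2(\OO)$ maps continuously onto the discrete group $\PGL_2(\Kres)$, which is non-amenable for instance when $\Kres=\Q$.

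\textbf{What the paper does instead.} It never bounds $\PGL_2(\OO)$ a priori. It works with the given non-elementary $\rho$ along the decomposition $\PGL_2(\K)=KAK$, where $K$ is the image of $\SL_2(\OO)$, and it uses two ingredients missing from your plan.
\begin{itemize}
\item First, this smaller $K$ is coarsely bounded in the ambient group (\Cref{lem:bounded_PSL2K}).
  \begin{itemize}
  \item Any $g\in\SL_2(\K)$ with $\|g\|\le M$ factors as $S^iL^\mu D^\lambda R^\nu S^j$. Up to $S$, the diagonal factor is a product of three unipotents by the braid identity.
  \item One fixed diagonal matrix with large entries conjugates all $L^\mu,R^\nu$ with parameters in $\OO_M$ into any prescribed identity neighbourhood.
  \item So the whole $\|\cdot\|$-ball lies in $(F_\varepsilon U_\varepsilon)^{16}$ for a finite set $F_\varepsilon$.
  \end{itemize}
\item Second, the diagonal torus, whose unit part is the piece that need not be coarsely bounded, is controlled through $\rho$ itself.
  \begin{itemize}
  \item Amenability of $\rho(G_\xi)$ gives the boundary map $\varphi$, and the diagonal group preserves the line $(\varphi(0),\varphi(\infty))$.
  \item $\Xi(\lambda)=\pm\ell(\rho(A^\lambda))$ is a homomorphism whose kernel is exactly $\OO^\times$. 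This uses continuity of $\rho$, the contraction $A^{\lambda^{-n}}R^\mu A^{\lambda^n}=R^{\mu/\lambda^n}\to\mathbf{1}$, and coarse boundedness of $\{R^\nu\colon \lvert\nu\rvert=\lvert\mu\rvert\}$ in $\SL_2(\K)$.
  \item Hence $\Xi=c\log\lvert\cdot\rvert$, and units fix that line pointwise.
  \end{itemize}
\item Together these give that $\rho(KA_MK)$ is bounded for $A_M=\{A^\lambda\colon \lambda^{\pm1}\in\OO_M\}$, which is the required bornologous property.
\end{itemize}
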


\begin{remark}[recovering previous results]
\Cref{cor:point_character_variety} recovers (with a different proof) the following cases.
The case of $\Isom(\Hyp^\kappa)$ for $\kappa\in \N$ (in particular of $\PGL_2(\K)$ for $\K\in \{\R,\C\}$) and of $\Isom(\Hyp^\omega)$ are treated respectively in \cite{Monod-Py_exotic-deformation-hyperbolic_2014} and \cite{Monod-Py_self-representations-Mobius_2019}.
The case of $\PGL_2(\K)$ where $\K$ is a non-Archimedean local field is treated in \cite[Theorem C]{Burger-Iozzi-Monod_embedding-trees-hyperbolic-spaces_2005}.
\end{remark}

\begin{remark}[new examples for $\PGL_2(\K)$]
In 
\Cref{cor:point_character_variety}, the case of $\PGL_2(\K)$ is new when the field $\K$ is nonlocal, so it provides in particular the following sources of new examples.

We emphasize that when $\K$ is Polish, the group $\PGL_2(\K)$ is Polish for the topology induced by the uniform topology (as we will see in \Cref{cor:case_PGL2K}). 

\begin{itemize}
	\item $\K=\C_p$, the completion of the algebraic closure of a $p$-adic field $\Q_p$, which is Polish.
	\item $\K=\Kres((t))$, the field of Laurent series over any field $\Kres$, which is Polish if $\Kres$ is countable.
	\item $\K=\Kres((t^\Lambda))$, the field of Hahn series over a field $\Kres$ with exponents in a subgroup $\Lambda\subset \R$ (the previous case being $\Lambda=\Z$), which is Polish if $\Kres$ is countable and $\Lambda$ is discrete. 
\end{itemize}
\end{remark}

\begin{remark}[from $\Aut(T_k)$ to Burger-Mozes groups]
\label{rem:Burger-Mozes}
The case of the automorphism group $\Aut(T_k)$ of the regular tree with infinite countable valency should be extendable to the following.

Consider the infinite regular tree $T_\N$ whose vertex links are in bijection with $\N$.
The group of all permutations of a countable set $\mathfrak{S}(\N)$ is a non-Archimedean Polish group.
To a closed subgroup $\Gamma \subset \mathfrak{S}(\N)$ is associated a universal Burger-Mozes groups $U(\Gamma)$ consisting of those automorphisms of $T_\N$ whose local actions on vertex links belongs to $\Gamma$.

If the action of $\Gamma$ on $\N$ is $3$-transitive then the same is true for the action of $U(\Gamma)$ on $\partial T_\N$.
If the action of $\Gamma$ on $\N$ is oligomorphic then, by the argument in the proof of \cite[Proposition 5.2]{Duchesne-Tarocchi_homeo-fractals_2025}, the action $U(\Gamma)$ on $T_\N$ has coarsely bounded vertex stabilizers. The coarse Cartan decompositions in \Cref{item:full-CAT-BAB} follows from this coarse boundedness of vertex stabilizers and the fact that the triple transitivity at infinity implies that the stabilizer of a vertex acts transitively on $\partial T_\omega$. 
Under these assumptions, the only remaining condition to check in \Cref{item:full-CAT-amenable} is the amenability of stabilizers of points at infinity: it is less straightforward, but may follow from additional properties for the action of $\Gamma$ on $\N$ such as amenability of its point stabilizers. 
When it holds, we may apply our \Cref{thm:point_character_variety} to encompass such new examples, including the homeomorphism groups of the Basilica Julia set and other rabbits Julia sets \cite{Duchesne-Tarocchi_homeo-fractals_2025}.
\end{remark}

\begin{remark}[dependence on the topology]
\Cref{cor:point_character_variety} is stated for topological groups. 

In some cases, the continuity of the representation is not a restriction. For example, the group $\Isom(\Hyp^\omega)$ has the automatic continuity property, so that when $\kappa$ is countable any group representation in $\Isom(\Hyp^\kappa)$ is automatically continuous (see \cite[Theorem III]{Monod-Py_self-representations-Mobius_2019} for these specific representations and \cite[Theorem 1.4]{Duchesne_Polish-topology-isom-H-infinity_2023} for the general automatic continuity property).

For other groups, the topology is a key ingredient. 
The field $\C$ and $\C_p$ are algebraically isomorphic (being the unique algebraically closed field with characteristic zero and continuum cardinality) but not homeomorphic since $\C$ is locally compact and Archimedean whereas $\C_p$ is not locally compact and non-Archimedean.
Hence the groups $\PGL_2(\C)$ and $\PGL_2(\C_p)$ are algebraically isomorphic but not homeomorphic. 

\Cref{cor:point_character_variety} shows that both topological groups have a unique homothety class of non-elementary functions of hyperbolic type. However their associated representations are different. 
Indeed for $\PGL_2(\C)$ the restrictions of these representations to $\PGL_2(\Z)$ have unbounded orbits whereas for $\PGL_2(\C_p)$ the restrictions to $\PGL_2(\Z)$ are bounded.
\end{remark}

\begin{remark}[Gelfand pairs]
The proof of \cite[Theorem C]{Burger-Iozzi-Monod_embedding-trees-hyperbolic-spaces_2005} relies on Gelfand pairs $(G,K)$, which are well defined only when $G$ is locally compact (so that it has a Haar measure). 

The notion of Gelfand pair has been extended to topological groups acting by isometries by Rosendal in \cite{Rosendal_equivariant-geometry-Banach_2014}. One can check that the hypotheses and in particular the triple transitivity at infinity in \Cref{thm:point_character_variety} implies that the action of $G$ on $X$ yields a geometric Gelfand pair according to the terminology of Rosendal. Our proof does not use this notion.
\end{remark}

In the remaining subsections, we prove \Cref{cor:point_character_variety} case by case. Observe that the non-emptiness of $\Proj\mathcal{C}_{ne}(G)$ follows, in each case, from the existence of a non-elementary action on some $\Hyp^\kappa$ or on some real tree.

\subsubsection{The case of $\Isom(\Hyp^\kappa)$}

\begin{proof}[Proof of \Cref{cor:point_character_variety} for $\Isom(\Hyp^\kappa)$ with its Polish topology]
We check the conditions in \Cref{def:full-action-CAT-1} and \Cref{thm:point_character_variety} for $G=\Isom(\Hyp^\kappa)$ and $X=\Hyp^\kappa$ with $\kappa$ countable. 

We saw that $G$ acts triply transitively on $\partial X$ in \ref{subsec:Hn-models}.
%
%
The stabilizer of a point at infinity is amenable by \cite[Lemma 5.4]{Duchesne_Polish-topology-isom-H-infinity_2023}. 
The coarse Cartan decompositions of rank-$1$ in \Cref{item:full-CAT-BAB} follow from the Cartan decompositions of $\operatorname{PO}(1,\infty)$ recalled in \ref{subsec:Hn-models} and the transitivity of the action of $\operatorname{PO}(1,\infty)$ on $\Hyp^\omega$.

Finally, the action of $G$ on $(X,d)$ is coarsely proper by \Cref{prop:proper_H^k}, so any $\EE_d$-bounded subset is coarsely proper, hence any continuous isometric action is $\EE_d$-bornologous. 
\end{proof}

\subsubsection{The case of $\Aut(T_\kappa)$}

Let $T_\kappa$ be the simplicial $\kappa$-regular tree with countable valency $\kappa\ge 3$.
Its automorphism group $G=\Aut(T_\kappa)$ endowed with the pointwise convergence on the set of vertices of $T_\kappa$ is a non-Archimedean Polish group. 

For a finite integer $\kappa\ge 3$, the group $\Aut(T_\kappa)$ is locally compact (see for example \cite{Garrido-Glasner-Tornier_2018}) and it is well known that its action on $T_\kappa$ satisfies \Cref{def:full-action-CAT-1}. 
Hence we will focus on the case where $\kappa=\omega$ is the infinite countable degree, even though all our arguments can be adapted for finite cardinals.

\begin{proof}[Proof of \Cref{cor:point_character_variety} for $\Aut(T_\omega)$]

We first check the conditions in \Cref{def:full-action-CAT-1}.
The fact that the action on $G$ on $\partial T_\omega$ is $3$-transitive follows from a standard back and forth argument.
We will prove that points in $\partial T_\omega$ have amenable stabilizers in \Cref{prop:prop:amenabe-stab-point-infinity} below.
Let us show now the coarse Cartan decomposition.
First let us note that the stabilizer of a point is coarsely bounded by \cite[Example 3.6]{Rosendal_geometrisation_2023}, hence for a segment $[x,x']\subset T_\omega$ the subset $B(x,x')=\{g\in \Aut(T_\omega)\colon gx\in [x,x']\}$ is a finite union of translates of a coarsely bounded subset, hence coarsely bounded. 
Now consider a loxodromic $a\in G$ with translation length $\ell(a)=l\in \N_{>0}$. Choose a point $x$ on its axis $L$ and let $B=\{g\in G\colon gx \in [x,ax]\}$.
For $g\in G$, there is $s\in \Stab(x)$ such that $s^{-1}gx\in L$, hence $m\in\Z$ such that $a^ms^{-1}g(x)\in [x,x']$ hence $a^ms^{-1}g x\in B$, that is $g\in BAB$.

Finally, the action of $G$ on $T_\omega$ is coarsely proper by \cite[Example 3.6]
{Rosendal_geometrisation_2023}, in particular $\EE_d$-bounded subsets coincide with coarsely bounded subsets so every continuous isometric action is $\EE_d$-bornologous.
\end{proof}

For $\xi\in\partial T_\omega$, its stabilizer $G_\xi$ is endowed with its Busemann character $\Bus_\xi \colon G_\xi \to \Z$, whose kernel $G_\xi^0 \subset G_\xi$ consists of elements which stabilize each horosphere centred at $\xi$.
Note that by regularity, the Busemann character is surjective so we have an extension $G_\xi^0 \to G_\xi \to \Z$.

\begin{proposition}[amenability of $G_\xi$]
\label{prop:prop:amenabe-stab-point-infinity}
There is an increasing sequence of finite subgroups $G_\xi^0(n)\subset G_\xi^0$ indexed by $n\in \N$ such that for every $g\in G_\xi^0$ and every finite subset $F$ of vertices in $T_\omega$, there exists $n\in\N$ and $h\in G_\xi^0(n)$ such that $g$ and $h$ coincide on $F$.

Consequently, $G_\xi^0$ is amenable, hence $G_\xi$ is amenable.
\end{proposition}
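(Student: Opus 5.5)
We build the finite subgroups from horoballs. Fix a geodesic ray $r=(v_0,v_1,v_2,\dots)$ converging to $\xi$, and for $n\in\N$ let $H_n$ be the horoball centred at $\xi$ bounded by the horosphere through $v_n$. Every $g\in G_\xi^0$ preserves each horosphere, hence each horoball. It also fixes a tail of the ray: $g r$ and $r$ both converge to $\xi$ and $\Bus_\xi$ is preserved, so $g r$ and $r$ eventually coincide with matching indices. Enumerate the edges at each vertex by $\N$, and for each vertex $v$ let $W_v$ be the set of the first $n$ edges at $v$, where we always include the edge towards $\xi$.

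We then define $G_\xi^0(n)$ as the set of $g\in G_\xi^0$ that satisfy three conditions:
\begin{itemize}
\item $g$ fixes $v_n$ and hence the ray $(v_n,v_{n+1},\dots)$;
\item $g$ permutes the finite subtree $F_n$ formed by the vertices in the ball of radius $n$ around $v_n$ that are reached through edges of the type $W_v$;
\item outside $F_n$, $g$ acts by transporting the canonical labelling.
\end{itemize}
The third condition means that for a vertex $v\in F_n$ and an edge $e$ at $v$ not in $W_v$, the map $g$ sends $e$ to the edge at $gv$ with the same label, and it extends label-preservingly on the branches beyond. In the horosphere-ordered picture, $g$ is determined by a permutation of the finite set $F_n$ that preserves the tree structure, the distances to $v_n$ and the edge towards $\xi$, together with the canonical extension.

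This gives a finite group. It increases with $n$, because the canonical extension from $F_n$ to $F_{n+1}$ is compatible with label transport, so each element of $G_\xi^0(n)$ acts on $F_{n+1}$ and again lies in the family. Since these are honest automorphisms fixing $v_n$ and preserving Busemann levels, they lie in $G_\xi^0$.

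For the approximation statement, take $g\in G_\xi^0$ and a finite set $F$ of vertices. We choose $n$ with the following properties:
\begin{itemize}
\item $g$ fixes $v_n$;
\item $F\cup gF$ lies in the ball of radius $n$ around $v_n$, which is possible since $F$ is finite and every vertex lies in some $H_m$ at bounded distance below $v_m$;
\item every edge used by the geodesics from $v_n$ to points of $F\cup gF$ has label less than $n$.
\end{itemize}
The restriction of $g$ to the subtree spanned by $F$ and $v_n$ is then a partial isomorphism between finite subtrees of $F_n$ that fixes $v_n$ and preserves levels. We extend it to an automorphism of the finite rooted labelled tree $F_n$; each vertex of $F_n$ has exactly $n$ children in $F_n$, so the usual back-and-forth extension of partial isomorphisms of a finite spherically homogeneous rooted tree applies. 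This yields an element $h\in G_\xi^0(n)$ that agrees with $g$ on $F$.

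Amenability then follows. The union $\bigcup_n G_\xi^0(n)$ is locally finite, hence amenable as a discrete group, and it is dense in $G_\xi^0$ for the pointwise convergence topology. Its closure is therefore amenable as a topological group: the fixed point property on compact convex sets for continuous affine actions passes to the closure, because fixed points of a dense subgroup are fixed by the whole group by continuity. Finally $G_\xi$ is an extension of the amenable group $G_\xi^0$ by $\Z$, and is therefore amenable.

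The main obstacle is the bookkeeping that makes $G_\xi^0(n)$ both a group and nested. I expect this to be the delicate part of the argument, and the label-transport convention is what I would rely on to get it right.
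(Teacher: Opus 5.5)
Your overall strategy is the paper's: finite groups of automorphisms that are rigid for a colouring outside a finite rooted subtree, nested and exhausting, density via extension of partial isomorphisms of a finite rooted tree, then amenability from a dense locally finite subgroup and the $\Z$-extension. The amenability part is fine as you wrote it.

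\emph{The density step fails, because your subtrees do not exhaust $T_\omega$.} You have $F_n\subset B(v_n,n)$. Take a vertex $u$ whose ray to $\xi$ joins $r$ at $v_m$, and put $d=d(u,v_m)$. Then $d(u,v_n)=n+(d-m)$ for all $n\ge m$, and one checks that $u$ lies in some $B(v_n,n)$ if and only if $d\le m$. In other words, $u$ must lie in the closed horoball $H_0$ through $v_0$. Since your $H_m$ decrease, ``every vertex lies in some $H_m$'' is exactly the false claim that every vertex lies in $H_0$.

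For instance, a neighbour $u\neq v_1$ of $v_0$ satisfies $d(u,v_n)=n+1$ for every $n$, so it lies in no $F_n$. Let $g\in G_\xi^0$ fix $v_0$ and the ray and exchange two such neighbours $u,u'$. Any $h\in\bigcup_n G_\xi^0(n)$ with $h(u)=u'$ must fix $v_0$, since levels are preserved and $v_0$ is the parent of $u'$. But $v_0$ is at distance $n$ from $v_n$, so its downward edges leave $F_n$ and are label-transported, which forces $h(u)=u$. So the approximation property fails already for $F=\{u\}$.

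The paper avoids this by taking depth $2n$ below the root. It sets $G_\xi^0(n)=G_{n,2n}(w_n)$, supported on $S_{n,2n}(w_n)$, the downward tree of depth $2n$ at $w_n$ using colours $<n$. A vertex $v$ lies in $S_{n,2n}(w_n)$ as soon as $n>d(v,w_0)$ and the colours on the path from $w_0$ to $v$ are $<n$.

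\emph{The enumeration must also be normalized along the ray.} With an arbitrary enumeration, the label at $v_{j+1}$ of the edge $(v_{j+1},v_j)$ may grow faster than $j$. Then your requirement that every edge on the geodesics from $v_n$ to $F\cup gF$ has label $<n$ can fail for every $n$, already for $F=\{v_0\}$. The inclusion $F_n\subset F_{n+1}$ can fail too, since $v_n$ need not lie in $F_{n+1}$. The paper imposes $C_{w_{i+1}}(w_{i+1}w_i)=0$ for exactly this reason.

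Putting the $\xi$-edge into $W_v$ and including the part of the ball above $v_n$ also makes your count of ``exactly $n$ children'' wrong. It forces the extension to fix the upward ray as well. Working only with downward subtrees, as the paper does, removes this.

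With depth $2n$ and a colouring normalized along the ray, your argument goes through and coincides with the paper's proof.
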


\begin{proof}
Fix a base vertex $w_0$ and denote $(w_i)_{i\in\N}$ its path to $\xi$.
Note that for $g\in G_\xi$, there is $m\in \N$ such that $[w_0,\xi) \cap [gw_0,\xi) = [w_m,\xi)$ and if $g\in G_\xi^0$ then $i\geq m \implies g(w_i)=w_i$.
Moreover for every finite subset $F$ of vertices, we may consider the subtree $\Conv(w_m,F)$ induced by the vertices in $F$ and $w_m$.
We will see how to construct a finite order element $h\in \Aut(T_\omega)$ whose action in restriction to $\Conv(w_m,F)$ coincides with that of $f$, and, moreover, how to exhaust all such finite order elements by a sequence of finite groups.

For each vertex $v$ of $T_\omega$, we denote by $E_v$ be the set of edges incident to $v$ with $e_{v,\xi}\in E_v$ the unique one on the path from $v$ to $\xi$, and choose a bijection (called a colouring) $C_v\colon E_v\setminus\{e_{v,\xi}\}\to\N$ with the condition that for all $i\in\N$ we have $C_{w_{i+1}}(w_{i+1}w_i)=0$, as in the following Figure. 
\begin{center}
	\begin{tikzpicture}[scale=1.2]
		
		\draw[thick] (-1,0) -- (4,0);
		\draw[dashed] (-1,0) -- (-1.2,0);
		
		\foreach \x/\lab in {0/$w_0$,1/$w_1$,2/$w_2$,3/$w_3$,4/$w_4$} {
			\fill (\x,0) circle (2pt);
			\node[above] at (\x,0.15) {\lab};
		}
		
		\foreach \x in {-0.5,0.5,1.5,2.5,3.5} {
			\node[above,font=\scriptsize] at (\x,0.1) {0};
		}
		
		\foreach \x in {0,1,2,3,4} {
			\draw (\x,0) -- ++(-0.25,-0.6);
			\draw (\x,0) -- ++(0,-0.7);
			\draw (\x,0) -- ++(0.25,-0.6);
		}
		
		\foreach \x in {0,1,2,3,4} {
			\node[font=\scriptsize] at (\x,-0.9) {$1\;\;\;\;2\,\ldots$};
			\node[font=\scriptsize] at (\x,-0.9) {$1\;\;\;\;2\,\ldots$};
			\node[font=\scriptsize] at (\x,-0.9) {$1\;\;\;\;2\,\ldots$};
			\node[font=\scriptsize] at (\x,-0.9) {$1\;\;\;\;2\,\ldots$};
		}
		\draw[dashed] (4,0) -- (6.2,0);
		
		\node[right] at (6.8,0) {$\xi$};
		
	\end{tikzpicture}
\end{center}

For $g\in G_\xi$, its local action at a vertex $v$ is the bijection $E_v\setminus\{e_{v,\xi}\} \to E_{gv}\setminus\{e_{gv,\xi}\}$ encoded as the permutation $\sigma(g,v)\colon \N\to\N$ defined by $C_{gv}\circ g_{\mid E_v\setminus\{e_{v,\xi}\}}\circ C_v^{-1}$. 
The local actions satisfy the cocycle relation $\sigma(gh,v)=\sigma(g,hv)\circ\sigma(h,v)$.

For a vertex $v$ and $k,n\in \N_{>0}$, we define the finite subtree $S_{k,n}(v)\subset T_\omega$ which is the union of all paths $(v=v_0,\dots,v_n)$ of length at most $n$ with outgoing edges having labels $C_{v_i}((v_i,v_{i+1}))\in\N_{<k}$.
Thus $S_{k,n}(v)$ is a $k$-regular rooted tree at $v$ of depth $n$.

Now let $G_{k,n}(v)$ be the subset of $g\in G_\xi$ such that for very vertex $u\in S_{k, n}(v)$ that is not a leaf the permutation $\sigma(g,u)$ fixes pointwise $\N_{\geq k}$, and for every other vertex $u\in T_\omega$ we have $\sigma(g,u)=\mathbf{1}$. 
Thus $G_{k,n}(v)$ is the subgroup of $G_\xi$ preserving $S_{k,n}(v)$ and acting rigidly on all edges outside $S_{k,n}(v)$ (namely preserving colours of edges not in $E(S_{k,n}(v)$). 
In particular an element of $G_{k,n}(v)$ is completely determined (from the initial colouring) by its restriction on $S_{k,n}(v)$. 
Hence the group $G_{k,n}(v)$ is isomorphic to the automorphism group of the $k$-regular rooted tree of depth $n$, which is finite. 

We note that $S_{n,2n}(w_n) \subset S_{n+1,2(n+1)}(w_{n+1})$, and we claim that $\bigcup_{n\in\N_{\ge 1}}S_{n,2n}(w_n) =T_\omega$.
Indeed, a vertex $v$ belongs to $S_{n,2n}(w_n)$ as soon as $n$ is larger than $d(v,w_0)$ and all colours of edges on the path from $w_0$ to $v$ are smaller than $n$.

For $n\in \N_{\geq2}$, denote $G^0_\xi(n) = G_{n,2n}(w_n)$.
Since $S_{n,2n}(w_n)\subset S_{n+1,2(n+1)}(w_{n+1})$ we have $G_\xi^0(n)\subset G_\xi^0(n+1)$ and we finally show that $\bigcup G_\xi^0(n)$ is dense in $G_\xi^0$.

Let $g\in G_\xi^0$ and $m\in \N$ such that $i\geq m \implies g(w_i)=w_i$.
For a finite subset $F$ of vertices, there is $n\geq m$ large enough such that $F\cup g(F)\subset S_{n,2n}(w_n)$ (it suffices to consider the subtree $\Conv(w_m,F)$ and its edge colours). Since any partial automorphism of $S_{n,2n}(w_n)$ (that is an isomorphism between subtrees containing the root $w_n$) can be extended to an automorphism of $S_{n,2n}(w_n)$, there is an element of $G_\xi^0(n)$ coinciding with $g$ on $F$.

This proves that $G_\xi^0$ contains a countable dense union of finite subgroups.
Therefore $G_\xi^0$ is amenable, so its $\Z$-extension $G_\xi$ is amenable.
\end{proof}

\subsubsection{The case of $\PGL_2(\K)$.}
\label{cor:case_PGL2K}
Fix a (commutative) field $\K$ with a non-trivial complete non-Archimedean norm $\lvert \cdot \rvert$.
The valuation ring $\OO=\{x\in \K \colon  \lvert x\rvert \leq1\}$ has maximal ideal $\mathfrak{m} = \{x\in \K \colon \lvert x\rvert <1\}$ and residue field $\Kres=\OO/\mathfrak{m}$.
Note that the residue map $\OO \to \Kres$ is continuous for the discrete topology on $\Kres$ (which is induced by the trivial norm), hence for any other topology (including those induced by non-trivial norms).

More generally for $r\in \R_{>0}$ denote $\OO_r=\{x\in \K \colon  \lvert x\rvert \leq r\} \supset \mathfrak{m}_r = \{x\in \K \colon \lvert x\rvert < r\}$.
These are both clopen subsets of $\K$ and yield as $r \to 0$ a basis of neighbourhoods of $0 \in \K$.
For all $r\in \R_{>0}$, the subsets $\mathfrak{m}_r\subset \OO_r$ are fractional ideals of $\K$, and their quotient $\OO_r/\mathfrak{m}_r$ is isomorphic to $\Kres$ or $0$ according to whether $\K$ has or has no elements of norm $r$ (if $\lambda \in \mathcal{O}_r\setminus \mathfrak{m}_r$ then multiplication by $\lambda$ sends the pair of ideals $\mathcal{O}\supset \mathfrak{m}$ to the pair of ideals $\mathcal{O}_r\supset \mathfrak{m}_r$).

For $n\in \N_{\ge 1}$, the framed $\K$-vector space $\K^n$ is endowed with the norm extending $\lvert \rvert$ on $\K$, defined using its canonical framing on any $v=(v_1,\dots,v_n) \in V$ by $\lvert v\rvert =\max\{ \lvert v_i \rvert \colon 1\le i\le n\}$, and yielding an ultrametric on $\K^n$.
Hence the $\K$-algebra of matrices $\operatorname{M}_n(\K)$, is endowed with the \emph{operator norm} defined by
\(
\|g\|:= \sup \left\{\lvert gv\rvert / \lvert v\rvert \colon v\in \K^n\setminus\{0\} \right\}
\)
which induces the topology given by uniform convergence for its action on $\K^n$.
It can be computed, using the ultrametric inequality for $\lvert \cdot \rvert$ as $\|g\| = \max\{\lvert g_{i,j}\rvert \colon 1\le i,j\le n\}$, so it also coincides with the canonical norm on the $\K$-vector space $\operatorname{M}_n(\K)$ and induces the product topology of its components. 

The group of invertibles $\GL_n(\K)\subset \operatorname{M}_n(\K)$ with the induced topology is a topological group, and its projectivization $\PGL_n(\K)= \GL_n(\K)/\K^\times\mathbf{1}$ is endowed with the quotient topology.
If $\K$ is separable then $\K$ it is a Polish field, and $\GL_n(\K), \PGL_n(\K)$ are Polish groups.

The group $\GL_n(\K)$ acts on its Bruhat-Tits affine building and its essentialization: we only recall the definition of the building as metric space, and of the isometric action of $\GL_n(\K)$ but for more details we refer to \cite[§6.2]{Rousseau_2023} and \cite{Parreau_2023} (or its French original \cite{Parreau_2000}). 

On a $\K$-vector space $V$ of dimension $n\in \N$, a \emph{norm} is a function $\eta\colon V\to\R_+$ such that 
\begin{itemize}[noitemsep]
\item $\eta(v)=0\iff v=0$,
\item for all $v\in V$ and $\lambda\in \K$ we have $\eta(\lambda v)=\lvert \lambda\rvert \eta(v)$
\item for all $u,v\in V$ we have $\eta(u+v)\leq\max(\eta(u),\eta(v))$
\end{itemize}
The distance $d_\infty$ between two norms $\eta_1,\eta_2$ on $V$ is given by 
\begin{equation*}
d_\infty(\eta_1,\eta_2)=\sup\left\{\left|\log\tfrac{\eta_1(v)}{\eta_2(v)}\right| \colon v\in V\setminus\{0\}\right\}
\end{equation*}

Given a basis $(e_1,\dots,e_n)$ of $V$, a norm $\eta$ on $V$ is called \emph{split} when for all $\lambda_1,\dots, \lambda_n\in\K$,
\begin{equation*}\textstyle
\eta\left(\sum_i \lambda_i e_i\right)=\max\{\lvert a_i\rvert\eta(e_i)  \colon 1\le i \le n\}.
\end{equation*}
A norm is \emph{splittable} when there is a basis in which it is split. 
The \emph{affine building} $\mathcal{I}(\GL(V))$ is the set of all splittable norms with the distance $d_\infty$. The action of the group $\GL(V)$ by precomposition is continuous and isometric. 
Its essentialization, denoted $\mathcal{I}^e(\GL(V))$, is the topological space of homothety classes of splittable norms, and the distance between two homothety classes is the infinimum of the distances betwen their representatives.
The action of $\GL(V)$ on $\mathcal{I}(\GL(V))$ by isometries quotients to an action of $\PGL(V)$ by isometries on $\mathcal{I}^e(\GL(V))$.

Once a determinant has been fixed on $V^n$, we may realize $\mathcal{I}^e(\GL(V))$ in $\mathcal{I}(\GL(V))$ as the subspace of splittable norms of volume $1$, hence it is endowed with the induced metric $d_\infty$.
The isometric action of $M\in \PGL(V)$ on $\eta\in \mathcal{I}^e(\GL(V))$ is given for any representative $M\in \GL(V)$ by $(\eta\circ M)/\sqrt[n]{\lvert \det(M)\rvert}$.

We now focus on the case $n=2$ and choose a basis $e=(e_1,e_2)$ to identify $V=\K^2$ and $\GL(V)=\GL_2(\K)$.
We also use the determinant associated to that basis to identify the Bruhat-Tits building $\mathcal{I}^e(\GL_2(\K))$ as above: it is a real tree \cite[§6.2.20]{Rousseau_2023} which from now on we denote by $T$.
When the valuation associated to the norm is  discrete, the construction of the tree can be obtained by using lattices (see \cite[§6.2.23]{Rousseau_2023}), but otherwise the real tree is not the geometric realization of a simplicial tree.

We define the \emph{base point} $\eta_e \in T$ as the homothety class of the standard norm $\eta(a_1e_1+a_2e_2)=\max\{\lvert a_1 \rvert,\lvert a_2\rvert)$; its stabilizer is $\PGL_2(\OO)$.
The orbit under $\PGL_2(\K)$ of $\eta_e$ consists of all splittable norms whose image is contained in $\lvert \K \rvert\subset \R$.
The points of $T$ in this orbit are called \emph{special} \cite[§3B1]{Parreau_2023}; their stabilizers are all conjugate to $\PGL_2(\OO)$.

The projective line $\K\Proj^1$ identifies naturally with the boundary $\partial T$ (see \cite[§ 6.2.20]{Rousseau_2023}), and the projective line $\Kres\Proj^1$ identifies with the set of connected components of the complement $T\setminus \eta_e$ (this follows from \cite[Theorem 6.2.16.1(b)]{Rousseau_2023}).
In particular very special point $x\in T$ is a branch point in the sense that its complement $T\setminus x$ has at $\ge 3$ components (which are open since $T$ is locally connected).

We now show $3$-transitivity of the action on $\partial T$ and describe stabilizers of boundary points.

\begin{lemma}[action on boundary]
\label{lem:Action-PGL2K-boundary-Tree}
The action of the topological group $\PGL_2(\K)$ by isometries of its real tree $(T,d_\infty)$ satisfies the first two conditions in \Cref{def:full-action-CAT-1}:
\begin{itemize}[noitemsep]
	\item the action on $\partial T$ is $3$-transitive
	\item every $\xi\in \partial T$ its stabilizer $G_\xi\subset \PGL_2(\K)$ is amenable.
\end{itemize}
\end{lemma}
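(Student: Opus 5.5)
For $3$-transitivity, the plan is to transport the problem to projective geometry over $\K$, using the $\PGL_2(\K)$-equivariant identification $\partial T\simeq\K\Proj^1$ recalled above. I would first check that this identification is really equivariant: the end of $T$ attached to a line $L\subset\K^2$ is the limit of the ray of split norms $\eta_t(a_1u+a_2v)=\max\{|a_1|,e^{-t}|a_2|\}$, where $u$ spans $L$, as $t\to+\infty$. Precomposing by $M\in\GL_2(\K)$ sends this ray to the ray attached to $M^{-1}L$, so the boundary action is the projective action, possibly twisted by the automorphism $M\mapsto {}^tM^{-1}$, which is harmless for transitivity. Then I would use the classical fact that $\PGL_2(\K)$ acts simply transitively on triples of distinct points of $\K\Proj^1$. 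Given distinct $[u],[v],[w]$, write $w=\alpha u+\beta v$ with $\alpha,\beta\neq0$. The matrix with columns $\alpha u,\beta v$ sends $([e_1],[e_2],[e_1+e_2])$ to $([u],[v],[w])$. Composing two such matrices, one inverted, maps any triple to any other triple.

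For amenability, transitivity on $\partial T$ makes all boundary stabilizers conjugate, so it suffices to treat $\xi=[e_1]=\infty$. Its stabilizer $G_\xi$ is the image in $\PGL_2(\K)$ of the upper triangular group. That group is the affine group $\K\rtimes\K^\times$, acting on $\K$ by $z\mapsto az+b$. This is a topological group (with the quotient topology, which I would check agrees with the natural one) and an extension of the abelian group $\K^\times$ by the abelian normal subgroup $\K$, so it is solvable. Solvable topological groups are amenable, in the sense of admitting an invariant mean on bounded right uniformly continuous functions: abelian topological groups are amenable, and amenability is preserved by extensions, via the usual averaging argument using closed normal subgroups and Hausdorff quotients.

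The main obstacle is that the paper uses amenability for non-locally-compact groups. It enters through the result of \cite[Proposition 5.1]{Duchesne_Polish-topology-isom-H-infinity_2023}, so I must make sure the notion used there, a fixed point property for continuous affine actions on compact convex sets, is the one for which "abelian implies amenable" and "extension-stable" hold in the Polish or topological setting. I would cite the standard references for these two facts (Markov--Kakutani for the abelian case, and stability under extensions with closed normal subgroups), and note that $\K$ is a closed normal subgroup of $G_\xi$ with quotient $\K^\times$. A further subtlety is verifying that $G_\xi$ is closed and that the induced topologies coincide. This follows from continuity of the action on $\partial T$, together with the fact that the unipotent and diagonal parts depend continuously on the matrix entries.
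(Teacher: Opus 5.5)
Your proposal is correct and follows the paper's own proof. As in the paper, you get $3$-transitivity from the equivariant identification $\partial T\simeq\K\Proj^1$ together with the classical transitivity of $\PGL_2(\K)$ on triples of distinct points, and you get amenability from $G_\xi$ being conjugate to the solvable affine group $\K\rtimes\K^\times$. Your remarks on Markov--Kakutani and stability under extensions just make explicit what the paper's ``solvable hence amenable'' leaves implicit for non-locally-compact groups, and no comparison of topologies is needed, since every abelian topological group is amenable.

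There is one slip in your optional equivariance check. The end of the ray $\eta_t(a_1u+a_2v)=\max\{|a_1|,e^{-t}|a_2|\}$ is determined by the line $\K v$ (the direction whose norm tends to $0$), not by $\K u$. Replacing $u$ by $u+cv$ leaves the ray eventually unchanged, whereas replacing $v$ by $v+cu$ with $c\neq0$ makes the two rays diverge. So you should let $v$ span $L$; your conclusion that precomposition sends $L$ to $M^{-1}L$ is unaffected.
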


\begin{proof} 
The boundary $\partial T$ identifies naturally with the projective line $\K\Proj^1$, so that the action of $\PGL_2(\K)$ is $3$-transitive (see \cite[§ 6.2.20]{Rousseau_2023}).

For a point in $\xi\in \partial T$, its stabilizer $G_\xi\subset \PGL_2(\K)$ is conjugated to the projectivized group of upper-triangular matrices, which is solvable hence amenable.
\end{proof}

In the tree $T$, the set of homothety classes of norms split in the basis $e$ forms a subset $T_A$ which is isometric to a real line (this is an apartment in the language of buildings). 
In $\PGL_2(\K)$, the classes of diagonal matrices form a subset $A$ uniquely parametrized by $\lambda\in \K^\times$ according to $
A^\lambda=\begin{bsmallmatrix}\lambda &0\\0&1\end{bsmallmatrix}$. 
Such matrix $A^\lambda$ acts on $T$ as a loxodromic element, with translation axis $T_A$ and translation length $\left|\log \lvert \lambda\rvert \right|$ (this computation follows from the definitions of $\|\cdot\|$ and $d_\infty$, and we refer to \cite[§3B2]{Parreau_2023} for details).

Since the action of $\PGL_2(\K)$ on $\partial T$ is $3$-transitive, for all distinct $\xi_1,\xi_2 \in \partial T$, there exists $g\in \PGL_2(\K)$ sending the geodesic $(\xi_1,\xi_2)$ to $T_A$, in particular the geodesic $(\xi_1,\xi_2)$ contains special points.
In fact, $\PGL_2(\K)$ acts transitively on pairs consisting of a special point included in an apartment (this follows from the fact that $\K$ is complete, implying that the system of apartments is maximal \cite[Remark 6.2.17.3 (c)]{Rousseau_2023} and \cite[Proposition 2.18.]{Parreau_2023}).

We will not need the following \cref{rem:topology_PGL2K} but it gives more consistency to this whole discussion, in particular the next \Cref{lem:Action-PGL2K-boundary-Tree_BAB}.

\begin{remark}[topologies on $\PGL_2(\K)$] 
\label{rem:topology_PGL2K}
It is explained in \cite[\S 6.2.10.1.(2)]{Rousseau_2023} that the topology induced by the operator norm (called the strong topology in this reference) coincides with the so-called building topology (the coarsest topology for which pointwise stabilizers of bounded subsets are open). 

Since $T$ has a non-discrete metric topology, one may also consider the topology of pointwise convergence on $\PGL_2(\K)$, that is the coarsest topology for which orbit maps are continuous. By construction of the distance on $T$, the action of $\PGL_2(\K)$ endowed with the topology induced by the operator norm is continuous, namely the operator topology is finer than the topology of pointwise convergence. 

Let us sketch a proof of the converse. 
Consider any point $x\in T$.
First, we claim that, if a sequence of elements of $g_n\in \PGL_2(\K)$ converges to the identity for the pointwise convergence topology then for $n\in \N$ large enough we have $g_n x=x$. 

First assume that $x\in \PGL_2(\K)\cdot e$ is a special point (that it is a branch point) and choose three points $x_1,x_2,x_3$ in three distinct components of $T\setminus \{x\}$ so that $x$ is the only intersection $\cap_{i\neq j}[x_i,x_j]$.
For $n$ large enough, the element $g_nx_i$ lies in the same (open) connected component of $T\setminus\{x\}$ as $x_i$, hence $g_nx=x$. 

Consequently, for all positive value of the norm $M\in \lvert \K^\times \rvert \subset \R_{>0}$, there exists $n(M)\in \N$ such that if $n>n(M)$ then $g_n$ fixes the base point $\eta\in T$ and the three special points obtained by intersecting the sphere of radius $M$ around $\eta$ with the lines $(\eta,0),(\eta,1),(\eta,\infty)$, hence $g_n \equiv \mathbf{1} \bmod{O_M}$ namely $\|g_n-\mathbf{1}\| \le 1/M$.
This proves that pointwise convergence implies uniform convergence.
\end{remark}

We now check that the action of $\PGL_2(\K)$ on $T$ admits coarse Cartan decompositions.

\begin{lemma}[coarse Cartan decompositions of rank-$1$]
\label{lem:Action-PGL2K-boundary-Tree_BAB}
The action of the topological group $G=\PGL_2(\K)$ by isometries of its real tree $(T,d_\infty)$ admits coarse Cartan decompositions of rank-$1$ as in \Cref{def:full-action-CAT-1}:
for every loxodromic $a\in G$ generating a subgroup $A$, there is a $\EE_{d_\infty}$-bounded subset $B$ such that $G=BAB$.
\end{lemma}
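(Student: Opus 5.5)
The plan is to imitate the argument already used for $\Aut(T_\omega)$: take as $B$ the set of elements moving a fixed special point on the axis of $a$ into a bounded segment of that axis. This requires two ingredients: special point stabilizers are $\EE_{d_\infty}$-bounded, and the stabilizer of a special point acts transitively on the directions we need.

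First, reduce to a convenient position. By $3$-transitivity on $\partial T=\K\Proj^1$ (\Cref{lem:Action-PGL2K-boundary-Tree}), conjugate so that the fixed points of $a$ are $0,\infty$ and its axis is $T_A$. Since $\EE_{d_\infty}$-boundedness is conjugation invariant, this is harmless, and we may even choose the conjugator so that $\eta_e$ lies on the axis. Indeed, $\PGL_2(\K)$ acts transitively on pairs (special point, apartment containing it), and every geodesic contains special points. Let $x=\eta_e\in T_A$ and $K=\Stab(x)=\PGL_2(\OO)$. The set $K$ is trivially $\EE_{d_\infty}$-bounded, since its orbit of $x$ is a point, and translates $gK$ are bounded as well. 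Set
\[B=\{g\in G\colon gx\in [x,ax]\},\]
which is $\EE_{d_\infty}$-bounded because the orbit of $x$ under $B$ lies in a segment of length $\ell(a)$.

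Next, take $g\in G$. We want $k\in K$ with $k^{-1}gx\in T_A$. The point $gx$ is special, and so is $x$. I would use the Cartan decomposition $\PGL_2(\K)=K\,A_{\K^\times}\,K$ for a complete non-Archimedean field: any matrix can be brought to diagonal form $\begin{bsmallmatrix}\lambda&0\\0&1\end{bsmallmatrix}$ by row and column operations in $\GL_2(\OO)$, pivoting on an entry of maximal norm (Smith normal form over the valuation ring, which works even for non-discrete valuations because only finitely many entries are involved). Thus $g=k_1A^\lambda k_2$, so $k_1^{-1}gx=A^\lambda x\in T_A$.

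Now $A^\lambda x$ is a point of $T_A$ at distance $\lvert\log\lvert\lambda\rvert\rvert$ from $x$. Choose $m\in\Z$ with $a^m A^\lambda x$ at distance $<\ell(a)$ from $x$ on the appropriate side. Possibly we first compose with the Weyl element $w=\begin{bsmallmatrix}0&1\\1&0\end{bsmallmatrix}\in K$, which fixes $x$ and reverses $T_A$, so that the point lands in $[x,ax]$. Then $a^m w^{\epsilon} k_1^{-1}g\in B$, hence $g\in K w^{\epsilon} a^{-m}B\subset BAB$ since $K\subset B$.

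The main obstacle is the Cartan decomposition over a general complete non-Archimedean field with possibly non-discrete value group. I expect the elementary pivoting argument in $\GL_2(\OO)$ to handle it directly without lattices. Alternatively, it can be quoted from the transitivity of $\PGL_2(\K)$ on pairs (special point, apartment) cited above: the apartment through $x$ and $gx$ can be moved to $T_A$ while fixing $x$.
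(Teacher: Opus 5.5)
Your proof is correct, and its architecture matches the paper's. Both arguments pick a special point $x$ on the axis of $a$ and a set $B$ of elements moving $x$ a bounded amount: the paper takes $d_\infty(gx,x)\le \ell(a)$, you take $gx\in[x,ax]$. Both then use $\Stab(x)\subset B$ and a power of $a$ to bring $kgx$ back near $x$.

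The one real difference is how you produce $k\in \Stab(x)$ with $kgx$ on the axis.
\begin{itemize}
\item The paper works with the axis of $a$ as it is. It quotes the transitivity of $\PGL_2(\K)$ on pairs (special point, apartment containing it), a building-theoretic fact cited from Rousseau and Parreau that rests on completeness of $\K$ (maximality of the apartment system).
\item You first conjugate $a$ to a diagonal matrix, using $\partial T=\K\Proj^1$ and $2$-transitivity, so that its axis is $T_A\ni \eta_e$. You then obtain $k$ from the matrix Cartan decomposition $G=K\,\{A^\lambda\colon \lambda\in\K^\times\}\,K$ by pivoting in $\GL_2(\OO)$. This is essentially the Gauss reduction the paper itself proves later in \Cref{lem:unipotent-factorisation}.
\end{itemize}
Your route is more explicit and self-contained: the pivoting only uses that the entry of maximal norm divides the others in $\OO$, and that $\GL_2(\OO)$ preserves the standard norm. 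The cost is the preliminary conjugation, which needs the identification $\partial T=\K\Proj^1$. The paper's route is shorter given the cited building facts.

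A minor point: the Weyl element $w$ is superfluous. The loxodromic $a$ acts on its axis by a translation and $m$ ranges over all of $\Z$, so some $a^m$ already lands $k_1^{-1}gx$ in $[x,ax]$. It is harmless, since $w\in K$.
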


\begin{proof}
Consider a loxodromic element $a'\in \PGL_2(\K)$, generating a subgroup denoted $A'$ and with translation axis denoted $T_{A'}$. 
Choose a point $x\in T_{A'}$ which is special (in the $\PGL_2(\K)$ orbit of the base point).
The set $B=\{g\in\PGL_2(\K) \colon d_\infty(gx,x)\leq\ell(a')\}$ is $\EE_{d_\infty}$-bounded by definition, and note that it contains $\Stab(x)$.

Choose $g\in G$. Since $G$ acts transitively on pairs consisting of an apartment with a special point in an apartment, there is $k\in \Stab(x)$ such that $k g(x)\in T_{A'}$. 
Hence there is $a''\in A'$ such that $d(a''kgx,x)\leq\ell(a')$. Thus $a''kg$ belongs to $B$ and $G=BA'B$.
\end{proof}

At this point, we have verified all conditions in \Cref{def:full-action-CAT-1}.
We are left to show the assumptions in \Cref{thm:point_character_variety}.
This will be done \Cref{prop:PGL2K_bornlogous_coarsely-proper} which requires some lemmata.

For $\lambda,\mu,\nu \in \K$ with $\lambda \ne 0$, denote the following matrices of $\SL_2(\K)$:
\begin{equation}
\label{eq:matrices-DLRS}
D^\lambda = \begin{bsmallmatrix}
	\lambda &0\\0&1/\lambda
\end{bsmallmatrix}
\quad
L^\mu=\begin{bsmallmatrix}
	1 & 0 \\ \mu & 1
\end{bsmallmatrix}
\quad
R^\nu=\begin{bsmallmatrix}
	1 & \nu \\ 0 & 1
\end{bsmallmatrix}
\qquad
S=\begin{bsmallmatrix}
	0 & -1 \\ 1 & 0
\end{bsmallmatrix}
\end{equation}
which satisfy the "generalized braid" identities:
\(
L^{-\lambda} R^{1/\lambda} L^{-\lambda} = 
R^{1/\lambda} L^{-\lambda} R^{1/\lambda} = 
- S D^\lambda
\).

\begin{lemma}[unipotent factorisation]
\label{lem:unipotent-factorisation}
For all $g\in \SL_2(\K)$ there exist $i,j\in \{0,1\}$ and $\mu,\nu \in \OO$ and $\lambda\in \K^*$ with $\lvert \lambda \rvert = \|g\|$ such that 
\begin{equation}
	\label{eq:LRS-factorisation}
	g = S^i \cdot L^\mu  \cdot D^\lambda \cdot R^\nu \cdot S^j
	= (S^i \cdot L^\mu S)  \cdot ( R^{1/\lambda} L^{-\lambda} R^{1/\lambda}) \cdot (R^\nu \cdot S^j).
\end{equation}
\end{lemma}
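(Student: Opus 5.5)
The plan is to prove the factorisation by an explicit row/column reduction, normalised by the operator norm. Write $g=\begin{bsmallmatrix} a & b\\ c& d\end{bsmallmatrix}\in\SL_2(\K)$ with $ad-bc=1$, and recall that $\|g\|=\max\{\lvert a\rvert,\lvert b\rvert,\lvert c\rvert,\lvert d\rvert\}$. The idea is to first move an entry of maximal norm into position $(1,1)$ using $S$, and then do Gaussian elimination with unipotent matrices, whose coefficients land in $\OO$ because we divide by the largest entry.

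\emph{Step 1: normalise the pivot.} Left multiplication by $S$ swaps the two rows up to signs, and right multiplication by $S$ swaps the two columns up to signs. Hence there are $i,j\in\{0,1\}$ such that $h:=S^{-i}gS^{-j}=\begin{bsmallmatrix} \alpha & \beta\\ \gamma& \delta\end{bsmallmatrix}$ satisfies $\lvert\alpha\rvert=\|h\|$. Since multiplying by $S^{\pm1}$ only permutes entries and changes signs, it preserves $\|\cdot\|$, so $\|h\|=\|g\|$. In particular $\alpha\neq0$, because $h$ is invertible.

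\emph{Step 2: eliminate.} Set $\lambda=\alpha$, $\mu=\gamma/\alpha$ and $\nu=\beta/\alpha$. By maximality of $\lvert\alpha\rvert$ we get $\lvert\mu\rvert,\lvert\nu\rvert\le1$, so $\mu,\nu\in\OO$. A direct multiplication gives
\[
L^\mu D^\lambda R^\nu=\begin{bsmallmatrix}\lambda & \lambda\nu\\ \mu\lambda & \mu\lambda\nu+1/\lambda\end{bsmallmatrix}=\begin{bsmallmatrix}\alpha & \beta\\ \gamma & (\beta\gamma+1)/\alpha\end{bsmallmatrix},
\]
and the last entry equals $\delta$ because $\alpha\delta-\beta\gamma=1$. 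Thus $h=L^\mu D^\lambda R^\nu$, and hence $g=S^iL^\mu D^\lambda R^\nu S^j$ with $\lvert\lambda\rvert=\|g\|$.

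\emph{Step 3: signs.} Since $S^{-1}=-S$, passing from $S^{-i},S^{-j}$ to $S^i,S^j$ introduces a global sign $\pm\mathbf{1}$. This sign is central, and it can be absorbed by replacing $\lambda$ with $-\lambda$, using $D^{-\lambda}=-D^\lambda$. The replacement keeps $\lvert\lambda\rvert$ unchanged, and one checks that the corresponding $\mu,\nu$ keep the same norms (they change at most by a sign).

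\emph{Step 4: second equality.} The second expression in the statement follows from the generalized braid identity $R^{1/\lambda}L^{-\lambda}R^{1/\lambda}=-SD^\lambda$. Multiplying on the left by $S$ and using $S^2=-\mathbf{1}$ gives $D^\lambda=S\,R^{1/\lambda}L^{-\lambda}R^{1/\lambda}$. Substituting this into the first factorisation gives the stated form.

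The main obstacle is bookkeeping rather than substance. One has to track the signs coming from $S^2=-\mathbf{1}$ so that the identity holds exactly in $\SL_2(\K)$ and not only in $\PGL_2(\K)$. One also has to check that choosing the pivot by maximal absolute value, which relies on the ultrametric formula for $\|\cdot\|$, is exactly what forces $\mu,\nu\in\OO$.
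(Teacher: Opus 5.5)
Your proposal is correct and follows the paper's proof: you use powers of $S$ to move an entry of maximal norm to position $(1,1)$, Gauss-reduce to $L^\mu D^\lambda R^\nu$ with $\mu,\nu\in\OO$, and then substitute $D^\lambda = S R^{1/\lambda}L^{-\lambda}R^{1/\lambda}$ using the braid identity. Your Step~3 is unnecessary: since $h=S^{-i}gS^{-j}$, you get $g=S^ihS^j$ exactly, so there is no sign to absorb.
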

\begin{proof}
Let 
\(g=\begin{bsmallmatrix}
	a&b\\c&d
\end{bsmallmatrix}\in \GL_2(\K)\).
After replacing $g$ by $S^{-i}gS^{-j}$ for some $i,j\in \{0,1\}$ we may assume that $\lvert a\rvert = \|g\|$, and Gauss-reduction yields
\(g = L^{\mu} \operatorname{diag}(a,d') R^{\nu}\)
where $\mu=c/a, \nu = b/a \in \OO$ and $d'=\det(g)/a \in \OO$.
Thus if $g\in \SL_2(\K)$ then $d'=1/a$ so $\operatorname{diag}(a,d')=D^\lambda$ with $\lambda=a$ hence $g=L^\mu D^\lambda R^\nu$ and writing $D^\lambda = SR^{1/\lambda} L^{-\lambda} R^{1/\lambda}$ yields the result.
\end{proof}

The following lemma describes the coarsely bounded subsets of $\SL_2(\K)$ in terms of its action on its building $T$, or of the operator norm $\|\cdot \|$.
A subset $B\subset \operatorname{M}_n(\K)$ is called \emph{$\|\cdot\|$-bounded} when there is $M\in\R_{\geq0}$ such that every $g\in B$ satisfies $\|g\|\leq R$. 

\begin{lemma}[coarsely bounded subsets of $\SL_2(\K)$]
\label{lem:bounded_PSL2K} 
For a subset $B\subset \SL_2(\K)$. The following are equivalent: 
\begin{enumerate}[noitemsep]
	\item $B$ is coarsely bounded
	\item $B$ is $\EE_{d_\infty}$-bounded
	\item $B$ is $\|\cdot \|$-bounded.
\end{enumerate}
\end{lemma}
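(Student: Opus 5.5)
I will prove the cycle of implications $(2)\Leftrightarrow(3)$, $(1)\Rightarrow(2)$ and $(3)\Rightarrow(1)$, working with the action of $\SL_2(\K)$ on the tree $T$ through its image in $\PGL_2(\K)$.

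\textbf{Comparing $d_\infty$ with the operator norm.} Take the special base point $\eta_e$. For $g\in\SL_2(\K)$ we have $\lvert\det g\rvert=1$, so $g$ acts on norms by precomposition with no volume renormalisation. I expect the formula
\[
d_\infty(g\eta_e,\eta_e)=\sup_{v\ne0}\bigl\lvert\log\lvert gv\rvert/\lvert v\rvert\bigr\rvert .
\]
Using \Cref{lem:unipotent-factorisation}, write $g=S^iL^\mu D^\lambda R^\nu S^j$ with $\mu,\nu\in\OO$ and $\lvert\lambda\rvert=\|g\|$. The factors $S$, $L^\mu$ and $R^\nu$ all lie in $\SL_2(\OO)$, which fixes $\eta_e$. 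Hence $d_\infty(g\eta_e,\eta_e)=d_\infty(D^\lambda\eta_e,\eta_e)=2\log\|g\|$. Here I use $\|g\|\ge1$, which holds because $\det g=1$ in an ultrametric setting. The element $D^\lambda$ translates along $T_A$, and the factor $2$ depends on the normalisation; it does not matter here. This identity gives $(2)\Leftrightarrow(3)$ directly: $B$ has bounded $\eta_e$-orbit if and only if $\sup_{g\in B}\|g\|<\infty$.

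\textbf{$(1)\Rightarrow(2)$.} This holds by definition, because the action on $(T,d_\infty)$ is a continuous isometric action. Its continuity in the operator-norm topology was noted in \Cref{rem:topology_PGL2K}.

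\textbf{$(3)\Rightarrow(1)$.} This is the substantive direction. Fix an arbitrary continuous isometric action of $\SL_2(\K)$ on a metric space $(Y,\delta)$ and a point $y\in Y$. I must show that every set $\{g\colon\|g\|\le M\}$ has bounded orbit of $y$. By the factorisation of \Cref{lem:unipotent-factorisation}, such a set is contained in a product
\[
\{1,S\}\cdot L^{\OO}S\cdot R^{\OO_r}L^{\OO_{M}}R^{\OO_r}\cdot R^{\OO}\cdot\{1,S\},
\]
where $r$ is chosen so that $\lvert 1/\lambda\rvert\le r$ for every relevant $\lambda$; since $\lvert\lambda\rvert=\|g\|\ge1$, one can take $r=1$. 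The generalized braid identity $D^\lambda=SR^{1/\lambda}L^{-\lambda}R^{1/\lambda}$ is what converts the diagonal factor into unipotent ones.

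Coarse boundedness is stable under finite products, so it suffices to show that each unipotent set $R^{\OO_M}$ and $L^{\OO_M}$ is coarsely bounded; $S$ is a single element. Conjugation by $D^\lambda$ gives $D^\lambda R^{\nu}D^{-\lambda}=R^{\lambda^2\nu}$. Therefore $R^{\OO_M}=D^{\lambda}R^{\OO}D^{-\lambda}$ for a suitable $\lambda$, or is contained in it when $\lvert\K^\times\rvert$ misses the exact value $M$. Because $D^{\pm\lambda}$ are fixed single elements, this reduces everything to showing that $R^{\OO}$ (and symmetrically $L^{\OO}$) is coarsely bounded.

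\textbf{Main obstacle.} The hard step is proving that $R^{\OO}\cong(\OO,+)$ is coarsely bounded in $\SL_2(\K)$, since $\OO$ is in general not compact. I would run a Rosendal-style argument (compare \cite[Example 3.6]{Rosendal_geometrisation_2023}). By continuity of the action, there is an $r>0$ with $\delta(R^\nu y,y)\le1$ for all $\nu\in\mathfrak m_r$. Choose $\lambda$ with $\lvert\lambda\rvert=\rho>1$, so that $R^{\OO}=D^{\lambda^{-k}}R^{\OO_{\rho^{2k}}}D^{\lambda^{k}}$ shrinks $\OO$ into $\mathfrak m_r$ for $k$ large. More precisely, $D^{\mu}R^{\OO}D^{-\mu}=R^{\mu^2\OO}\subset R^{\mathfrak m_r}$ for small $\mu$. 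This gives
\[
\delta(R^\nu y,y)\le 2\,\delta(D^{\mu}y,y)+1 \quad\text{for all }\nu\in\OO ,
\]
which is uniform in $\nu$. The same argument applies to $L^{\OO}$.

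This last point, that a fixed conjugation contracts an entire non-compact subgroup into a small identity neighbourhood, is the crux. It replaces local compactness, and it is exactly where completeness and the non-Archimedean structure enter. Once it is established, assembling the finite product finishes $(3)\Rightarrow(1)$.
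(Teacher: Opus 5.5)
Your proposal is correct and follows essentially the paper's proof: $(2)\Leftrightarrow(3)$ comes from comparing the displacement of the standard norm $\eta_e$ with $\|g\|$, and $(3)\Rightarrow(1)$ comes from the unipotent factorisation of \Cref{lem:unipotent-factorisation} together with conjugation by diagonal elements $D^{\lambda^{\pm1}}$, which squeezes the unipotent factors into an arbitrarily small identity neighbourhood where continuity of the action bounds the displacement. The paper packages this as $B\subset(F_\varepsilon U_\varepsilon)^{16}$ for a finite set $F_\varepsilon$ of diagonal matrices, whereas you argue directly from stability of coarse boundedness under finite products and treat $S$ as a single element, which is slightly cleaner; as you note, the normalising constant in your formula for $d_\infty(g\eta_e,\eta_e)$ is irrelevant.
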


\begin{proof}
\emph{(1$\implies$2)}
If $B$ is coarsely bounded then by definition it is in particular $\EE_{d_\infty}$-bounded. 

\emph{(2$\implies$3)}
Assume that $B$ is $\EE_{d_\infty}$-bounded. 
Let $\eta\in\mathcal{T}$ be the canonical norm in the basis $e$ of $\K^2$ given by $\eta(\sum_ia_ie_i)=\max_i\lvert a_i\rvert$. 
There is $M>0$ such for all $g\in B$ we have $d_\infty(g\eta,\eta)=d_\infty(\eta,g^{-1}\eta)\leq M$. In particular, for all $j=\{1,2\}$ we have $\max_i\left|\log \lvert g_{ij}\rvert \right|=\left|\log\left(\frac{\eta(g(e_j)}{\eta(e_j)}\right)\right|\leq M$, which implies that $B$ is $\| \cdot \|$-bounded.

\emph{(3$\implies$1)}
Assume that $B$ is $\|\cdot\|$-bounded by some real $M\in \R_{\ge 1}$.
Consider the identity neighbourhoods in $\SL_2(\K)$ defined for $\varepsilon>0$ by $U_\varepsilon=1+\varepsilon \{g\in \SL_2(\K)\colon \| g\| < 1\}$.
Note that these form a basis of neighbourhoods of the identity as $\varepsilon$ decreases to $0$.

Let us first show that for $\varepsilon\in (0,1]$, there is a finite $F_\varepsilon\subset \SL_2(\K)$ such that $B\subset (F_\varepsilon U_\varepsilon)^{16}$.
Let $\lambda\in \K$ such that $\lvert \lambda \rvert \geq\sqrt{M/\varepsilon}\ge 1$ and \(F_\varepsilon=\left\{\begin{bsmallmatrix} \lambda^{s}&0\\0&\lambda^{-s}
\end{bsmallmatrix}\colon s\in \{-1,0,+1\}\right\}\).
The set $(F_\varepsilon U_\varepsilon)^2$ contains $F_\varepsilon U_\varepsilon F_\varepsilon \supset \{L^\mu,R^\nu\colon \mu,\nu \in \OO_M\}$ and contains $U_\varepsilon F_\varepsilon U_\varepsilon\supset \{S^i\colon i\in \Z/4\}$.
The \Cref{eq:LRS-factorisation} shows that $B\subset (F_\varepsilon U_\varepsilon)^{2\times 8}$ as desired. 

Let us finally deduce that $B$ is coarsely bounded.
Consider an action of $\SL_2(\K)$ on a metric space $(X,d)$ by continuous isometries.
Fix $x\in X$ and consider the open neighbourhood of the identity $U=\{g\in\SL_2(\K)\colon d(gx,x)<1\}$.
There is $\varepsilon\in (0,1]$ such that $U_\varepsilon\subset U$, and we may thus find a finite $F_\varepsilon \subset\SL_2(\K)$ as above so that $B\subset(F_\varepsilon U_\varepsilon)^{16}$. Let $M=\max\{d(gx,x)\colon g\in F_\varepsilon\}$. For all $f\in F_\varepsilon$ and $u\in U_\varepsilon$ we have $d(fux,x)\leq d(fux,fx)+d(fx,x)$ so for all $g\in F_\varepsilon U_\varepsilon$ we have $d(gx,x)\leq 1+M$ and $16-1$ similar triangle inequalities show that every $g\in(F_\varepsilon U_\varepsilon)^{16}$ satisfies $d(gx,x)\leq 16(M+1)$, hence the image of $B$ under the orbit map $g\mapsto gx$ is bounded in $X$.
\end{proof}

\begin{remark}[locally bounded]
The topological groups $\SL_2(\K)$, $\GL_2(\K)$ and $\PGL_2(\K)$ are locally bounded.

Indeed, the subgroup $\SL_2(\OO)$ is open and coarsely bounded in both $\SL_2(\K)$ and $\GL_2(\K)$, so its image in $\PGL_2(\K)$ is a coarsely bounded neighbourhood of the identity.
\end{remark}

\begin{example}[field Laurent series is not locally coarsely bounded]
\label{eg:Laurent-R-not-lcb}
Consider a field $\Kres$ with a non-trivial norm $\lvert \cdot \rvert'$, and denote $\OO'=\{c\in \Kres\colon \lvert c\rvert'=1\}$ its valuation ring. 
(Let us recall that a field admits no non-trivial norms if and only if it is an algebraic extension of the finite field $\mathbb{F}_p$ of characteristic $p\ne 0$, see \cite[Chapitre 6]{Bourbaki_Alg-Comm-1-7_1981}). 

Consider the field $\K=\Kres((x))$ of Laurent series over $\Kres$ consisting of formal series $f(x)=\sum_{n\in\Z}c_nx^n$ with only finitely many non-zero coefficients $c_n\in \Kres$ of negative index. 
It is endowed with the discrete valuation $v(f)=\inf\{n \in \Z\colon c_n\neq0\}$ yielding the (non-trivial and complete non-Archimedean) norm $\lvert f \rvert=e^{-v(f)}$.
The valuation ring $\mathcal{O}=\Kres[[x]]$ consisting of regular formal power series $\sum_{n\in\N} c_nx^n$ has maximal ideal $\mathfrak{m}=x\Kres[[x]]$ consisting of those with $c_0=0$, and the residue map $\OO_\K \to \Kres$ is given by $f\mapsto f(0) = c_0$ 

This residue map extends to an additive group homomorphism $(\K, \lvert \cdot \rvert )\to (\Kres, \lvert \cdot \rvert')$ which is continuous and surjective, thus it has unbounded image (since the norm $\lvert \cdot \rvert'$ is non-trivial).
More generally, the $n$-th component map $D^n\colon \Kres[[x]]\to \Kres$ defined by $D^n\colon \sum_{n\in\Z}c_nx^n \mapsto c_n$ is locally constant hence continuous, and it a (split) group homomorphism which is surjective in restriction to $x^n\mathcal{O}$.

Identifying $\Kres$ with the translation subgroup of $\Isom(\Kres)=\Kres \rtimes \OO'$, this yields for very $n\in \N$ a continuous representation of $\K \to \Isom(\Kres)$ such that $x^n \mathcal{O}_\K$ has unbounded image.
Since the sets $x^n\OO$ as $n\in \N$ form a basis of neighbourhoods of the origin in $\K$, we deduce that $\K$ is not locally coarsely bounded.

One may generalize this idea to show that other valued fields $(\K,\lvert \rvert)$ are not locally coarsely bounded, but a general characterization seems out of reach in model-theoretic terms \cite{Kartas_valued-field-total-residue_2024}. Moreover, for every field of positive characteristic $\Kres$ (of which many are infinite, such as $\mathbb{F}_p(T)$), there is a discretely valued ring of zero characteristic $\OO$ of which it is the residue field (if the field $\Kres$ is perfect, then $\K$ can be constructed from Witt vectors, otherwise a result of Cohen ensures existence of such a field $\K$, se \cite[Chapitre 9]{Bourbaki_Alg-Comm-8-9_1983}
Now, its field of fractions $\K=\operatorname{Frac}(\OO)$ of characteristic $0$ has an additive group that is divisible, so it cannot act isometrically on $\Kres$ which has characteristic $p$ (for instance $1/p$ would act trivially).
\end{example}

\begin{remark}[unipotent locally coarsely bounded only inside $\SL_2(\K)$]
The group of unipotent matrices parametrized by $\mu \in\K$ according to $R^\mu = \begin{bsmallmatrix}
	1&\mu \\0&1
\end{bsmallmatrix}$ is isomorphic (as topological group) to $(\K,+)$.
Thus for many fields (such as those in \cref{eg:Laurent-R-not-lcb}), that unipotent is not locally coarsely bounded.
Still we deduce from \Cref{lem:bounded_PSL2K}, that inside $\SL_2(\K)$ this unipotent subgroup is locally coarsely bounded.
\end{remark}

\begin{question}[$\EE_d$-bounded implies coarsely bounded]
The \Cref{lem:bounded_PSL2K} shows that for a representation of $\PSL_2(\K)$, does $\EE_{d_\infty}$-bounded imply coarsely bounded.

Does the same hold for representations of $\PGL_2(\K)$?
\end{question}

From now on $G$ denotes $\PGL_2(\K)$. 
An element of $\PGL_2(\K)$ is called loxodromic when its action on $T$ is  loxodromic.

\begin{proposition}[bornologous and coarsely proper]
\label{prop:PGL2K_bornlogous_coarsely-proper}
Consider a continuous representation $\rho\colon \PGL_2(\K) \to\Isom(\Hyp^\kappa)$ that is non-elementary.

There is a $\PGL_2(\K)$-equivariant boundary map $\varphi \colon \partial T\to\partial \Hyp^\kappa$ which is injective.

Moreover, the action of $\PGL_2(\K)$ on $\Hyp^\kappa$ is $\EE_{d_\infty}$-bornologous.
\end{proposition}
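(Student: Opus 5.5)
The plan is to build the boundary map first, using only the algebraic structure of $G=\PGL_2(\K)$ and amenability, and then obtain bornology from the rank-one Cartan structure. For $\xi\in\partial T=\K\Proj^1$, the stabilizer $G_\xi$ is a conjugate of the upper-triangular Borel subgroup, which is amenable by \Cref{lem:Action-PGL2K-boundary-Tree}. By \cite[Proposition 5.1]{Duchesne_Polish-topology-isom-H-infinity_2023}, the continuous action $\rho(G_\xi)$ on $\Hyp^\kappa$ then falls into one of three cases: it fixes a point of $\Hyp^\kappa$, it stabilizes a geodesic, or it fixes a unique point of $\partial\Hyp^\kappa$.

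To exclude the first two cases without assuming bornology, I will argue algebraically rather than through \Cref{lem:boundary_map}. The unipotent radical $N_\xi\cong(\K,+)$ of $G_\xi$ is normalized by the diagonal torus, which acts on it by $\mu\mapsto\lambda\mu$. If some $\rho(G_\xi)$ fixes a point, then the identity $G=G_{\xi_1}G_{\xi_2}\cup G_{\xi_3}G_{\xi_2}$ together with the triangle inequality gives bounded $\rho(G)$-orbits, which contradicts non-elementarity. If instead $\rho(G_\xi)$ preserves a line $L$, then $\rho(G_{0})\cap\rho(G_\infty)$, the image of the torus, preserves both the line for $0$ and the line for $\infty$. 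Using $3$-transitivity of $G_\xi$ on $\partial T\setminus\{\xi\}$, exactly as in the proof of \Cref{lem:boundary_map}, this makes $L$ $G$-invariant, again contradicting non-elementarity. This last step needs one more input: the torus image must be non-elliptic. I will get this from the commutator identity $D^\lambda R^\nu D^{-\lambda}=R^{\lambda^2\nu}$. If $\rho(D^\lambda)$ were elliptic for all $\lambda$, then, since $G$ is generated by the $R^\nu,L^\mu$ and these are conjugated into an arbitrarily small neighbourhood of the identity by torus elements, continuity of $\rho$ together with \Cref{lem:unipotent-factorisation} would give a bounded orbit. So $\rho(G_\xi)$ fixes a unique point, which I define to be $\varphi(\xi)$; $G$-equivariance follows because $gG_\xi g^{-1}=G_{g\xi}$.

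For injectivity, suppose $\varphi(\xi)=\varphi(\xi')$ with $\xi\neq\xi'$. Then $\rho(\langle G_\xi,G_{\xi'}\rangle)$ fixes this point. Since $G_\xi$ is maximal, $\langle G_\xi,G_{\xi'}\rangle=G$, so $\rho(G)$ has a fixed point at infinity, contradicting non-elementarity.

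Bornology follows next. By \Cref{lem:bounded_PSL2K}, an $\EE_{d_\infty}$-bounded subset of $\SL_2(\K)$ is coarsely bounded, and therefore has bounded image under $\rho$. Since $\PSL_2(\K)$ has finite index, or at least is cobounded, in $\PGL_2(\K)$ up to the diagonal classes $\operatorname{diag}(\lambda,1)$, I will reduce a bounded set $B\subset\PGL_2(\K)$ to $B'\cdot\{A^\lambda\colon |\log|\lambda||\le R\}$ with $B'$ in the image of $\SL_2(\K)$, intersecting with a bounded ball of $T$. The remaining diagonal factor is handled by the Cartan decomposition of \Cref{lem:Action-PGL2K-boundary-Tree_BAB} applied to $A^\lambda$ with $|\lambda|$ in a bounded range. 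Modulo $\PGL_2(\OO)$, which is the image of $\GL_2(\OO)$ and coarsely bounded, there are finitely many such classes of bounded norm.

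The main obstacle is the elimination of the elliptic and lineal cases for $\rho(G_\xi)$ without already knowing bornology, that is, the argument that the torus acts loxodromically. I expect to use the contraction $D^\lambda R^\nu D^{-\lambda}\to 1$ combined with continuity as the key tool there.
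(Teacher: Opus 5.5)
Your injectivity argument and your overall strategy for the boundary map match the paper's, but the bornology step has a genuine gap. Your reduction $B\subset B'\cdot\{A^\lambda\colon |\log|\lambda||\le R\}$ is fine, with $B'$ the image of a $\|\cdot\|$-bounded subset of $\SL_2(\K)$, so that $\rho(B')$ is bounded by \Cref{lem:bounded_PSL2K}. Everything then hinges on bounding $\rho(A^\lambda)$ for $|\log|\lambda||\le R$. This includes $\rho(A^u)$ for units $u\in\OO^\times$ that are not squares, and these do not lie in the image of $\SL_2(\K)$.

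You dispose of these by asserting that $\PGL_2(\OO)$ is coarsely bounded in $\PGL_2(\K)$. Nothing in the paper gives this: the paper explicitly leaves open whether $\EE_{d_\infty}$-bounded implies coarsely bounded in $\PGL_2(\K)$. In fact it is false in general. Take $\K=\Q((t))$. The squares contain the open subgroup $1+t\Q[[t]]$, so the determinant modulo squares is a continuous homomorphism from $\PGL_2(\K)$ onto the discrete group $\K^\times/(\K^\times)^2$, an infinite elementary abelian $2$-group. $\PGL_2(\OO)$ maps onto its infinite subgroup $\Q^\times/(\Q^\times)^2$, which has unbounded orbits for the translation action on the Cayley graph with respect to a basis.

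Two further points in that step also fail:
\begin{itemize}
\item The claim of ``finitely many classes of bounded norm modulo $\PGL_2(\OO)$'' fails when $|\K^\times|$ is dense, e.g.\ $\K=\C_p$.
\item Invoking \Cref{lem:Action-PGL2K-boundary-Tree_BAB} is circular, since its $B$ is only $\EE_{d_\infty}$-bounded.
\end{itemize}
So boundedness of $\rho$ on the torus cannot come from the coarse geometry of $\PGL_2(\K)$ alone; it must use that $\rho$ is non-elementary.

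The missing idea is the paper's analysis of the torus through the boundary map. The diagonal group $\{A^\lambda\}=G_0\cap G_\infty$ preserves the line $L=(\varphi(0),\varphi(\infty))$ and acts on it by a signed-translation homomorphism $\Xi\colon\K^\times\to\R$.
\begin{itemize}
\item For $\mu\neq 0$, $\rho(R^\mu)$ has zero translation length, since its conjugates $R^{\mu\lambda^{-n}}$ tend to $\mathbf{1}$. It sends $L$ to $(\varphi(\mu),\varphi(\infty))\neq L$ by injectivity of $\varphi$. Hence its displacement along $L$ decreases from $+\infty$ near $\varphi(0)$ to $0$ near $\varphi(\infty)$.
\item Since $\dH(\rho(R^\mu)\rho(A^\lambda)^nx,\rho(A^\lambda)^nx)=\dH(\rho(R^{\mu\lambda^{-n}})x,x)$, continuity of $\rho$ shows that $\rho(A^\lambda)$ translates towards $\varphi(\infty)$ when $|\lambda|>1$.
\item For a unit $u$, the set $\{R^{\mu u^{\pm n}}\}$ is norm-bounded in $\SL_2(\K)$, so the right-hand side stays bounded. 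Hence neither $\rho(A^u)$ nor $\rho(A^{u^{-1}})$ can push $x$ towards $\varphi(0)$, which gives $\Xi(\OO^\times)=0$.
\end{itemize}
Thus $\Xi$ is an increasing function of $|\lambda|$, bounded on $\{|\log|\lambda||\le R\}$, and points of $L$ are displaced by exactly $|\Xi(\lambda)|$. The Cartan decomposition with $K$ the image of $\SL_2(\OO)$ then finishes the argument.

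A smaller issue of the same kind sits in your exclusion of the lineal case. $\PGL_2(\K)$ is not generated by the $R^\nu,L^\mu$ unless $\K^\times=(\K^\times)^2$. Also, small conjugates only yield a bounded orbit if their displacements are measured at a common point. Work instead with the full torus $\{A^\lambda\}$. If all $\rho(A^\lambda)$ were elliptic, they would fix a common point $p$ of the invariant line $L_\infty$. Then $R^\mu=A^\lambda R^{\mu/\lambda}A^{\lambda^{-1}}$ with $|\lambda|\to\infty$ forces $\rho(R^\mu)p=p$, so $\rho(G_\infty)$ fixes $p$, which is the first case you already excluded.
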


\begin{proof}
Let $\xi \in \partial T=\K\Proj^1$ and consider its stabilizer $G_\xi$ in $\PGL_2(\K)$.
We saw in \Cref{lem:Action-PGL2K-boundary-Tree} that $\PGL_2(\K)$ acts transitively on $\K\Proj^1$, so the group $G_\xi$ is conjugate to the standard parabolic subgroup $G_\infty$ generated by the homotheties $\{A^\lambda \colon \lambda\in \K^\times\}$ and transvections $\{R^\mu \colon \mu \in \K\}$, in particular it is amenable.
Thus by continuity of $\rho$, the subgroup $\rho(G_\xi)\subset \Isom(\Hyp^\kappa)$ is amenable.
Hence (as in the proof of \Cref{lem:boundary_map}) it must satisfy (at least) one of the following cases:
\begin{enumerate}[noitemsep]
	\item $\forall \xi \in \partial T$, the group $\rho(G_\xi)$ fixes a point in $\Hyp^\kappa$,
	\item $\forall \xi \in \partial T$, the group $\rho(G_\xi)$ stabilizes a geodesic line,
	\item $\forall \xi \in \partial T$, the group $\rho(G_\xi)$ fixes a single point in $\partial\Hyp^\kappa$
\end{enumerate}
and we will show (for $\xi=\infty$) that neither of the cases 1 nor 2 can occur, so that only case 3 occurs.

The first case would imply, as in \Cref{lem:boundary_map}, that $\PGL_2(\K)$ fixes a point in $\Hyp^\kappa$. Observe that if $\rho(G_\infty)$ has either $0$ or $\ge 3$ fixed points in $\partial \Hyp^\kappa$, then it must have a fixed point in $\Hyp^\kappa$. Thus it suffices to show that the fixed point set of $\rho(G_\infty)$ is not of cardinal $2$. 

Assume by contradiction that $\rho(G_\infty)$ has exactly two fixed points $\xi_1,\xi_2\in\partial \Hyp^\kappa$ and no fixed point in $\Hyp^\kappa$. In particular all $A^\lambda$ are elliptic or loxodromic.
Fix $\mu \in\K^\times$, and consider sequence of $\lambda\in \K$ with $\lvert \lambda \rvert \to\infty$.
We have $A^{\lambda^{-1}} R^\mu A^{\lambda}=\begin{bsmallmatrix}1&\mu/\lambda\\0&1\end{bsmallmatrix} \to \mathbf{1}$, hence by continuity of $\rho$ we have $\rho(A^{\lambda^{-1}})\rho(R^\mu)\rho(A^{\lambda})\to \mathbf{1}$. 
Since the length function is invariant by conjugacy and continuous, we have $\ell (\rho(R^\mu))=0$ so $\rho(R^\mu)$ is elliptic or parabolic and this type is independent of $\mu$. Since $\rho(G_\infty)$ has exactly two fixed points in $\partial \Hyp^\kappa$, the only possibility is that $R^\mu$ is elliptic, hence fixes every point on the geodesic $(\xi_1,\xi_2)$. If all $A^\lambda$ are elliptic then the geodesic $(\xi_1,\xi_2)$ is pointwise fixed by $\rho(G_\infty)$, a contradiction. Hence there is $\lambda\in \K^\times$ such that $\rho(A^\lambda)$ is loxodromic, hence has $\{\xi_1,\xi_2\}$ as pair of fixed points. Since $A^\lambda\in G_0$ as well, the points $\xi_1,\xi_2$ are also fixed by $\rho(G_0)$ in $\partial\Hyp^\kappa$. Finally, since $G_\infty$ acts transitively on $\K\Proj^1\setminus\{\infty\}$, the points $\xi_1,\xi_2$ are fixed by $\PGL_2(\K)$. This contradicts the non-elementarity of the action of $\PGL_2(\K)$ on $\Hyp^\kappa$.

This proves that for all $\xi\in \partial T$, the group $\rho(G_\xi)$ has no fixed points in $\Hyp^\kappa$ and a unique fixed point denoted $\varphi(\xi) \in\partial\Hyp^\kappa$. 
We have thus defined a boundary function $\varphi \colon \xi \in \partial T \mapsto \varphi(\xi) \in \partial \Hyp^\kappa$ which is $\rho$-equivariant.
Since $G$ acts $2$-transitively on $\partial T$, the group $\rho(G)$ acts $2$-transitively on the image $\varphi(\partial T)$, so the map $\varphi$ must be either injective or constant, but if it were constant then $\rho$ would have a global fixed point at infinity contradicting non-elementarity, so $\varphi$ is injective. 

The subgroup of upper triangular matrices $G_\infty$ and the subgroup of lower triangular matrices $G_0$ intersect in the group of diagonal matrices $\{A^\lambda\colon \lambda\in \K^\times\}$, thus $\rho(A^\lambda)$ stabilizes the geodesic line $(\varphi(\infty),\varphi(0))$, and it consists of either elliptic or loxodromic elements. 
Define the function $\Xi \colon \K^\times \to \R$ by $\lambda\mapsto s \cdot \ell( \rho(A^\lambda))$ where the sign $s\in \{+1,-1\}$ only needs to be defined when $\rho(A^\lambda)$ is loxodromic, as $s=+1$ if $\rho(A^\lambda)^+=\varphi(\infty)$ and $s=-1$ if $\rho(A^\lambda)^+=\varphi(0)$.
This function $\Xi$ is a homomorphism (as the composition of the homomorphisms $\lambda \in \K^\times \mapsto A^\lambda \in A$ and $s\ell \colon \rho(A) \to \R$).
Let us show that its kernel is equal to the subgroup of units $\OO^\times \subset \K^\times$.

Fix $\mu\in \K\setminus\{0\}$ and $x \in (\varphi(0),\varphi(\infty))$. 
By continuity of $\rho$, for all $\varepsilon>0$, there is $M>0$ such that if $\lambda\in \K^\times$ has $\lvert \lambda \rvert >M$ then $\dH(\rho(A^{\lambda^{-1}} R^\mu A^\lambda)x,x)<\varepsilon$. 
The isometry $\rho(R^\mu)$ sends the geodesic line $(\varphi(\infty),\varphi(0))$ to the geodesic line $(\varphi(\infty),\varphi(\mu))$. 
In particular for fixed $\mu \in \K\setminus\{0\}$, the function of $y\in (\varphi(\infty),\varphi(0))$ defined by $y\mapsto \dH(\rho(R^\mu)y,y)$ is monotonous, with limits $\dH(\rho(R^\mu)y,y)\to0$ as $y\to\varphi(\infty)$ and $\dH(\rho(R^\mu)y,y)\to+\infty$ when $y\to\varphi(0)$. 
\begin{center} 
	\begin{tikzpicture}
		
		\draw[thick] (0,0) -- (8,0);
		\node[left] at (0,0) {$\varphi(0)$};
		\node[right] at (8,0) {$\varphi(\infty)$};
		
		\node[left] at (0,4) {$\varphi(\mu)$};
		
		\draw[thick]
		(0.5,3.8)
		.. controls (3,1.5) and (5,.7) ..
		(8,0.2);
		
		\fill (4,1.34) circle (2pt);
		\node[above right] at (4,1.34) {$\rho(R^\mu)\cdot x$};
		
		\fill (4,0) circle (2pt);
		\node[below] at (4,0) {$x$};
		
	\end{tikzpicture}
\end{center}
Let $\lambda\in \K^\times \setminus \OO^\times$. Up to inversion we may assume $\lvert \lambda \rvert >1$, hence as $n\in \N$ goes to $+\infty$ we have $\lvert \mu / \lambda^n\rvert$ goes to $0$.
We have $\dH(\rho(R^{\mu/\lambda^{-n}})x,x)=\dH(\rho(A^{\lambda^{-n}}R^\mu A^{\lambda^n})x,x)=\dH(\rho(R^\mu)\rho(A^{\lambda^n})x,\rho(A^{\lambda^n})x)$ and by continuity of $\rho$ the left-hand side tends to $\dH(\mathbf{1}\cdot x, x)=0$.
Thus applying the previous observation to $y_n=\rho(A^{\lambda^n})x$, we deduce that $\rho(A^{\lambda^n})x$ tends to $\varphi(\infty)$ as $n\in \N$ tends to $+\infty$, thus $\rho(A^\lambda)$ is loxodromic with (attractive, repulsive) fixed points $(\varphi(\infty), \varphi(0))$.

Let $\lambda\in \OO^\times$.
Suppose by contradiction that $\ell(\rho(A^\lambda))>0$, and to inverting $\lambda$ we may therefore also assume that as $n\in \N$ goes to $+\infty$ we have $\rho(A^{\lambda})^n x\to \varphi(\infty)$ thus by the previous observation we have $\dH(\rho(R^\mu)\rho(A^\lambda)^nx,\rho(A_\lambda)^nx)\to 0$.
However $\dH(\rho(R^\mu)\rho( A_\lambda)^nx, \rho(A_\lambda)^nx)=\dH(\rho(R^{\mu/\lambda^n})x,x)$ and as $\lvert \mu/\lambda^n\rvert = \lvert \mu \rvert$, we have that $\{\rho(R^{\lambda^n\mu})\colon n\in\N\}$ is coarsely bounded by \Cref{lem:bounded_PSL2K} so $\dH(\rho(R^{\mu/\lambda^n})x,x)$ is bounded as $n\to\infty$.
This yields a contradiction.

So far we have shown that $\Xi$ factors through the quotient $\K^\times \bmod{\OO^\times} \to \R$ to a function that is proper in the sense that $\lvert \lambda\rvert \to \infty \implies \lvert \ell( \rho(A^\lambda))\rvert \to \infty$.
Moreover we saw that if $\lvert \lambda\rvert >1$ then $\rho(A^\lambda)^+=\varphi(\infty)$ and if $\lvert \lambda\rvert <1$ then $\rho(A^\lambda)^+=\varphi(0)$.
Hence there is a proper increasing function $f \colon \R_{>0} \to \R$ such that $\Xi(\rho(A^{\lambda}))=f(\lvert \lambda\rvert)$, so $f \colon (\R_{>0},\times) \to \to (\R,+)$ which is a group homomorphism.
Such an increasing homomorphism $f$ must be continuous, so there is $c>0$ such that $f=c\log$. 
In particular for all $M>0$ the set $A_M=\{A^{\lambda} \colon \lambda,\lambda^{-1} \in \OO_M\}$ has bounded images by $\rho$ in $\Isom(\Hyp^\kappa)$. 

Finally, the Cartan decomposition for $\PGL_2(\K)=KAK$ with $K=\PSL_2(\OO)$.
Since $K$ is coarsely bounded by \cref{lem:bounded_PSL2K}, we deduce from with previous paragraph that a subset $B\subset PGL_2(\K)$ is $\mathcal{E}_{d_\infty}$-bounded (namely any point $x\in T$ has bounded displacement under $B$) if and only if there is $M>0$ such that $B\subset KA_MK$. 
This proves that the $\rho$-action of $\PGL_2(\K)$ on $\Hyp^\kappa$ is $\EE_{d_\infty}$-bornologous.
\end{proof}

\begin{proof}[Proof of \Cref{cor:point_character_variety} for $\PGL_2(\K)$]
\Cref{lem:Action-PGL2K-boundary-Tree} and \Cref{lem:Action-PGL2K-boundary-Tree_BAB} show that the action of $\PGL_2(\K)$ on $(T,d_\infty)$ satisfies \Cref{def:full-action-CAT-1}, and \Cref{prop:PGL2K_bornlogous_coarsely-proper} shows that an non-elementary representation $\PGL_2(\K)\to \Isom(\Hyp^\kappa)$ is $\EE_{d_\infty}$-bornologous, thus \Cref{thm:point_character_variety} applies.
\end{proof}

\bibliographystyle{alpha} 
\bibliography{biblio.bib}

\end{document}